\theoremstyle{plain}
\newtheorem{thm}{Theorem}[section]
\newtheorem{lem}[thm]{Lemma}
\newtheorem{prop}[thm]{Proposition}
\newtheorem{cor}[thm]{Corollary}
\newtheorem*{thmmain}{Main Theorem}
\newtheorem*{question*}{Question}
\theoremstyle{definition}
\newtheorem{defn}[thm]{Definition}
\newtheorem{rem}[thm]{Remark}
\newtheorem*{problem*}{Open Problem}
\numberwithin{equation}{section}
\newcommand{\thmref}[1]{Theorem~\ref{#1}}
\newcommand{\propref}[1]{Proposition~\ref{#1}}
\newcommand{\lemref}[1]{Lemma~\ref{#1}}
\newcommand{\corref}[1]{Corollary~\ref{#1}}
\newcommand{\figref}[1]{Figure~\ref{#1}}
\newcommand{\secref}[1]{Section~\ref{#1}}
\newcommand{\subsecref}[1]{Subsection~\ref{#1}}
\newcommand{\appref}[1]{Appendix~\ref{#1}}
\newcommand{\remref}[1]{Remark~\ref{#1}}
\newcommand{\defnref}[1]{Definition~\ref{#1}}
\newcommand{\md}[1]{\;(\operatorname{mod}\; #1)}
\renewcommand{\epsilon}{\varepsilon}
\newcommand{\bbB}{\mathbb B}
\newcommand{\bbC}{\mathbb C}
\newcommand{\bbD}{\mathbb D}
\newcommand{\bbI}{\mathbb I}
\newcommand{\bbL}{\mathbb L}
\newcommand{\bbN}{\mathbb N}
\newcommand{\bbO}{\mathbb O}
\newcommand{\bbP}{\mathbb P}
\newcommand{\bbR}{\mathbb R}
\newcommand{\bbS}{\mathbb S}
\newcommand{\bbZ}{\mathbb Z}
\newcommand{\cB}{\mathcal B}
\newcommand{\cC}{\mathcal C}
\newcommand{\cD}{\mathcal D}
\newcommand{\cE}{\mathcal E}
\newcommand{\cG}{\mathcal G}
\newcommand{\cI}{\mathcal I}
\newcommand{\cK}{\mathcal K}
\newcommand{\cL}{\mathcal L}
\newcommand{\cP}{\mathcal P}
\newcommand{\cR}{\mathcal R}
\newcommand{\cT}{\mathcal T}
\newcommand{\cU}{\mathcal U}
\newcommand{\cV}{\mathcal V}
\newcommand{\cX}{\mathcal X}
\newcommand{\cY}{\mathcal Y}
\newcommand{\cZ}{\mathcal Z}
\newcommand{\bfT}{\mathbf T}
\newcommand{\bfU}{{\mathbf U}}
\newcommand{\bfp}{\mathbf p}
\newcommand{\bft}{\mathbf t}
\newcommand{\hE}{\hat E}
\newcommand{\hF}{\hat F}
\newcommand{\hJ}{\hat J}
\newcommand{\hK}{\hat K}
\newcommand{\hM}{\hat M}
\newcommand{\hN}{\hat N}
\newcommand{\hQ}{\hat Q}
\newcommand{\hT}{\hat T}
\newcommand{\hU}{\hat U}
\newcommand{\hW}{\hat W}
\newcommand{\hZ}{\hat Z}
\newcommand{\hd}{\hat d}
\newcommand{\he}{\hat e}
\newcommand{\tiE}{\tilde E}
\newcommand{\tiM}{\tilde M}
\newcommand{\tiN}{\tilde N}
\newcommand{\tiQ}{\tilde Q}
\newcommand{\tiU}{\tilde U}
\newcommand{\tiW}{\tilde W}
\newcommand{\tiZ}{\tilde Z}
\newcommand{\tic}{\tilde c}
\newcommand{\til}{\tilde l}
\newcommand{\tim}{\tilde m}
\newcommand{\tip}{\tilde p}
\newcommand{\tix}{\tilde x}
\newcommand{\tiy}{\tilde y}
\newcommand{\hdelta}{\hat \delta}
\newcommand{\bepsilon}{{\bar \epsilon}}
\newcommand{\bL}{{\bar L}}
\newcommand{\hGamma}{\hat \Gamma}
\newcommand{\hLambda}{\hat \Lambda}
\newcommand{\hPhi}{\hat \Phi}
\newcommand{\chPhi}{\check \Phi}
\newcommand{\chU}{\check U}
\newcommand{\tiPhi}{\tilde \Phi}
\newcommand{\tiOmega}{\tilde \Omega}
\newcommand{\ticU}{\tilde \cU}
\newcommand{\bcB}{\bar \cB}
\newcommand{\bdelta}{{\bar \delta}}
\newcommand{\bK}{{\bar K}}
\newcommand{\udelta}{\underline{\delta}}
\newcommand{\uepsilon}{\underline{\epsilon}}
\newcommand{\bkappa}{\bar{\kappa}}
\newcommand{\ukappa}{\underline{\kappa}}
\newcommand{\baeta}{\bar\eta}
\newcommand{\hcT}{{\hat\cT}}
\newcommand{\chcT}{{\check\cT}}
\newcommand{\chcU}{\check{\cU}}
\newcommand{\ticV}{\tilde{\cV}}
\newcommand{\hcD}{\hat{\cD}}
\newcommand{\hcU}{\hat\cU}
\newcommand{\ut}{\underline{t}}
\newcommand{\hpartial}{\hat\partial}
\newcommand{\bpartial}{\bar\partial}
\newcommand{\bLp}{\overline{L'}}
\newcommand{\bPsi}{\bar\Psi}
\newcommand{\Jac}{\operatorname{Jac}}
\newcommand{\diam}{\operatorname{diam}}
\newcommand{\dist}{\operatorname{dist}}
\newcommand{\Id}{\operatorname{Id}}
\newcommand{\loc}{\operatorname{loc}}
\newcommand{\depth}{\operatorname{depth}}
\newcommand{\logl}{\log_\lambda}
\newcommand{\radius}{\operatorname{radius}}
\newcommand{\cd}{\operatorname{cd}}
\newcommand{\vd}{\operatorname{vd}}
\newcommand{\de}{\operatorname{d}}
\newcommand{\ee}{\operatorname{e}}
\newcommand{\Ly}{\operatorname{Ly}}
\newcommand{\entry}{\operatorname{entry}}
\newcommand{\Dis}{\operatorname{Dis}}
\newcommand{\matsp}[1]{\hspace{5mm} \text{#1} \hspace{5mm}}
\newcommand{\comma}{, \hspace{5mm}}
\newcommand{\frU}{{\mathfrak{U}}}
\newcommand{\frH}{{\mathfrak{H}}}
\newcommand{\frHL}{{\mathfrak{H}\mathfrak{L}}}
\title[Renormalization of Unicritical Diffeomorphisms of the Disk]{Renormalization of Unicritical Diffeomorphisms\\ of the Disk}
\author{Sylvain Crovisier,
Mikhail Lyubich,
Enrique Pujals,
Jonguk Yang}
\thanks{The first author was partially supported by the ERC project 692925 -- NUHGD. The second author was partly supported by the NSF, the Hagler and Clay Fellowships, the Institute for Theoretical Studies at ETH (Zurich), and SLMSI (formerly MSRI Berkeley). The fourth author was partially supported by SLMSI, Institut Mittag-Leffler, and the thematic programs `Topological, smooth and holomorphic dynamics, ergodic theory, fractals’ of the Simons Foundation Award No. 663281 for IMPAN}
\begin{document}

\maketitle

\begin{abstract}
We introduce a class of infinitely renormalizable  {\it unicritical} diffeomorphisms of the disk (with a non-degenerate ``critical point"). In this class of dynamical systems, we show that under renormalization, maps eventually become H\'enon-like, and then converge super-exponentially fast to the space of one-dimensional unimodal maps. We also completely characterize the local geometry of every stable and center manifolds that exist in these systems. The theory is based upon a quantitative reformulation of the Oseledets-Pesin theory yielding a {\it unicritical structure}  of the maps in question comprising regular Pesin boxes co-existing with ``critical tunnels''  and ``valuable crescents''. In
forthcoming notes we will show that infinitely renormalizable perturbative H\'enon-like maps of bounded type belong to our class.
\end{abstract}
\tableofcontents


\section{Introduction}\label{sec:intro}

Critical points play a fundamental role in one-dimensional dynamics as a natural source of non-linearity. They allow even very simple maps to have extremely rich and fascinatingly complicated dynamical behaviors.

The simplest setting for studying the dynamical properties of critical points is the class of unimodal maps. A {\it unimodal map} is a $C^2$-map $f : I \to I$ on an interval $I \subset \bbR$ with a unique critical point $c_0 \in I$. In this paper, we always assume that $c_0$ is of {\it quadratic type}: $f'(c_0) = 0$ and $f''(c_0) \neq 0$. We say that $f$ is {\it normalized} if $c_0 = 0$ and $f(c_0) = 1$. Let $r \geq 2$ be an integer. The space of normalized $C^r$-unimodal maps is denoted $\mathfrak{U}^r$.

The model examples of unimodal maps are given by the {\it quadratic family}:
$$
\mathfrak{Q} := \{f_a(x) := x^2 +a \; | \; a \in \bbR\}.
$$
The study of this family has led to some incredibly deep mathematics that spans over two decades of research by numerous authors. At the heart of this topic lies the celebrated renormalization theory of unimodal maps completed by Sullivan, McMullen, Lyubich and Avila.

There have been several works that extend this theory to a higher dimensional setting (see e.g. \cite{CoEK}, \cite{GvST} and \cite{dCLM}). However, the limitation of these approaches is that they are all based on perturbative methods (relying on the robustness of the hyperbolic one-dimensional renormalization operator). Hence, they only apply to systems that are sufficiently close to the space of one-dimensional unimodal maps.

   The main goal of this paper is to develop a non-perturbative renormalization theory of ``unicritical'' diffeomorphisms in dimension two. The most immediate challenge is to identify what being ``unicritical'' should mean in this context. Drawing motivations from the one-dimensional and the perturbative two-dimensional cases, we define a {\it critical point} as a point whose local stable and center manifolds are well-defined and form a quadratic tangency. Clearly, any invariant set that feature such a point cannot be uniformly partially hyperbolic. However, if the failure of uniform regularity is restricted to a slow exponentially shrinking neighborhood of the backward orbit of the critical point, then we say that the map is {\it unicritical}. One may think of this condition as a relaxation of uniform partial hyperbolicity that allows for the existence of a tangency between stable and center manifolds. The main result of this paper is that renormalizations of two-dimensional unicritical diffeomorphisms converge super-exponentially fast to the space of one-dimensional unimodal maps.

\subsection{H\'enon-like maps}

The two-dimensional analog of the quadratic family is given by the {\it H\'enon family}:
$$
\mathfrak{H} := \{F_{a,b}(x,y) := (x^2+a - by, x) \; | \; a,b\in\bbR\}.
$$
We identify the degenerate H\'enon map $F_{a,0}$ with $f_a$. 
This way, $\frH$ can be viewed as a two-dimensional extension of the quadratic family.

As in the one-dimensional case, we would like to consider H\'enon maps as the model example of a more general class of maps. The simplest and the most obvious generalization is given by H\'enon-like maps. Let $V = I \times I \subset \bbR^2$. A $C^2$-diffeomorphism $F : V \to F(V) \Subset V$ is {\it H\'enon-like} if it is of the form
$$
F(x,y) = (f(x,y), x)
\matsp{for}
(x,y) \in V,
$$
where for all $y \in I$, the map $x\mapsto f(x,y)$
is unimodal.
We say that $F$ is a {\it normalized} H\'enon-like map if $0\in V$, and $f(\cdot, 0)$ is a normalized unimodal map. The space of normalized $C^r$-H\'enon-like maps is denoted $\frHL^r$.

For $\beta \in (0,1]$, we say that $F$ is {\it $\beta$-thin (in $C^r$)} if
$$
\|\Jac F\|=\|\partial_y f\|_{C^{r-1}} < \beta.
$$
The space of $\beta$-thin maps in $\frHL^r$ is denoted $\frHL^r_\beta$. We say that $F\in\frHL^r_\beta$ is a {\it perturbative H\'enon-like map} if $\beta \ll 1$.

For any 1D map $f : I \to I$, define a degenerate 2D map $\iota(f) : V \to V$ by
\begin{equation}\label{eq:embed}
\iota(f)(x,y) := (f(x), x).
\end{equation}
We denote $\frHL^r_0 := \iota(\frU^r)$.

\subsection{Renormalizability}

Let $F : \Omega \to F(\Omega) \Subset \Omega$ be a dissipative $C^r$-diffeomorphism defined on a Jordan domain $\Omega \Subset \bbR^2$ (i.e. $\Omega$ is a topological disk whose boundary is a topological circle). We say that $F$ is {\it renormalizable} if there exists an $R$-periodic Jordan subdomain $\cD \Subset \Omega$ for some integer $R > 1$:
$$
F^i(\cD) \cap \cD = \varnothing
\matsp{for}
1\leq i < R,
\matsp{and}
F^R(\cD) \Subset \cD.
$$

The map $F$ is {\it infinitely renormalizable} if there exist a nested sequence of of Jordan domains and an increasing sequence of natural numbers
$$
\Omega =: \cD^0 \Supset \cD^1 \Supset \ldots
\matsp{and}
1 =: R_0 < R_1 < \ldots
$$
such that for $n \in \bbN$, the domain $\cD^n$ is $R_n$-periodic, and
$$
r_{n-1} := R_n/R_{n-1} \geq 2.
$$
We refer to $\cD^n$ as the {\it $n$th renormalization domain}. The primary set of interest is the {\it renormalization limit set}, defined as
$$
\Lambda_F := \bigcap_{n = 0}^\infty \bigcup_{i=0}^{R_n-1} F^i(\cD^n).
$$

The {\it $n$th pre-renormalization of $F$} is the first return map to the domain $\cD^n$:
$$
p\cR^n(F) := F^{R_n}|_{\cD^n}.
$$
Let $\Psi^n : \cD^n \to \Omega^n$ be an appropriately defined ``rescaling map'' (non-linear in general) that turns $\cD^n$ into a domain $\Omega^n \subset \bbR^2$ of some ``standard size.'' 
The {\it $n$th renormalization of $F$} is then:
$$
\cR^n(F) := \Psi^n \circ p\cR^n(F) \circ (\Psi^n)^{-1}.
$$
Since $F$ is assumed to be dissipative:
$
\|\Jac F\| \leq \lambda < 1,
$
a heuristic computation shows that
$$
\|\Jac \cR^n(F)\| \approx \|\Jac F^{R_n}\| \leq \lambda^{R_n} \rightarrow 0
\matsp{as}
n \to \infty.
$$
Thus, one expects that under renormalization, the dissipative two-dimensional map $F$ becomes more and more one-dimensional. Our main goal is to rigorously establish a sufficiently stronger version of this statement for the class of maps that we will introduce below.

\subsection{Mild dissipativity}

The map $F$ is {\it mildly dissipative} if for any ergodic measure $\mu$ (not supported on a hyperbolic sink), the strong-stable manifold $W^{ss}(p)$ of $\mu$-almost every point $p \in \Omega$ is {\it proper} in the following sense: if $W_\Omega^{ss}(p)$ is the connected component of $W^{ss}(p) \cap \Omega$ containing $p$, then $\overline{W^{ss}_\Omega(p)}$ is a Jordan arc whose endpoints are contained in $\partial \Omega$. See \figref{fig:milddiss}.

\begin{figure}[h]
\centering
\includegraphics[scale=0.45]{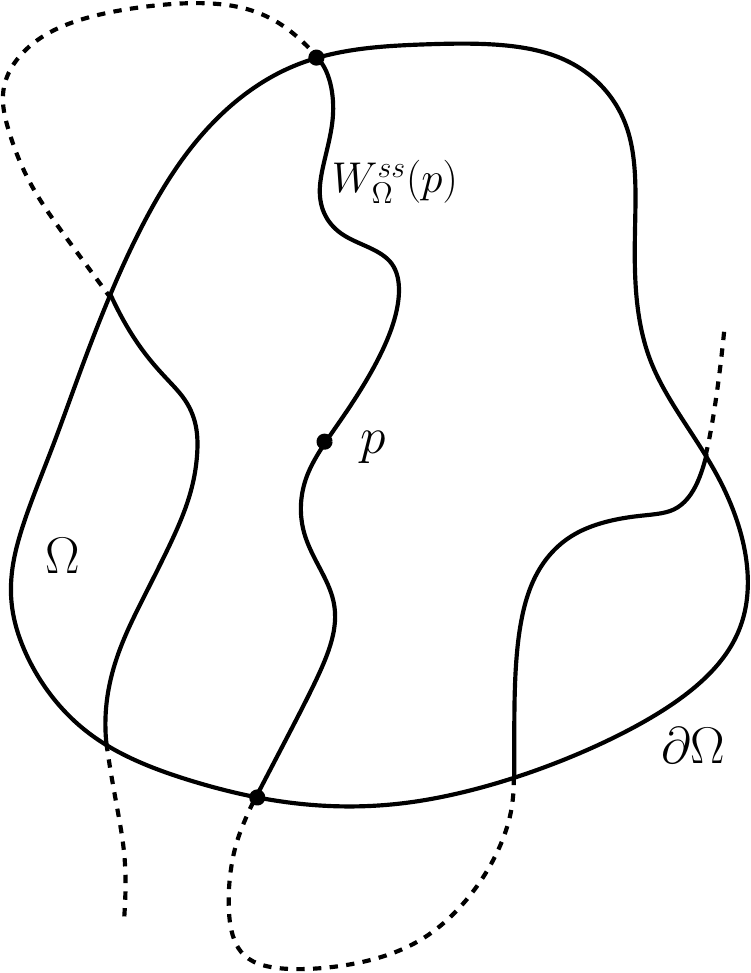}
\caption{Proper strong-stable manifold $W^{ss}(p)$ in $\Omega$.}
\label{fig:milddiss}
\end{figure}

All H\'enon maps $F_{a,b}$ with $|b|<1/4$ 
are known to be mildly dissipative (see \cite{DL} and \cite{CP}). In \cite{CPT}, Crovisier-Pujals-Tresser give an explicit description of the topological structure of mildly dissipative diffeomorphisms of the disk with zero topological entropy. Specifically, they show that any such map must either be infinitely renormalizable (of eventually period-doubling type), or is a generalized Morse-Smale system.


\subsection{Unique ergodicity}\label{subsec:uni erg}

Let $F$ be infinitely renormalizable. Suppose that its renormalization limit set $\Lambda_F$ has a unique invariant measure $\mu$ (this is always true if $F$ is mildly dissipative, see \propref{uni erg}). The Lyapunov exponents of $F$ with respect to $\mu$ are $0$ and $\log \lambda_F < 0$ for some $\lambda_F \in (0,1)$ (see \propref{non hyp}). By the Oseledets theorem, the tangent space at $\mu$-a.e. point $p \in \Lambda_F$ splits as $E^{ss}_p\oplus E^c_p$, so that
\begin{equation}\label{eq:for ly reg}
\lim_{n \to +\infty} \frac{1}{n}\log \|DF^n|_{E_p^{ss}}\| = \log\lambda_F
\matsp{and}
\lim_{n \to +\infty} \frac{1}{n}\log \|DF^{-n}|_{E_p^c}\| = 0.
\end{equation}
Since $F$ is $C^r$ with $r > 1$, it follows from~\eqref{eq:for ly reg} and the unique ergodicity that
$p$ has $C^r$-smooth strong-stable and center manifolds $W^{ss}(p)$ and $W^c(p)$ respectively (see \subsecref{stable-center manifolds} and \propref{lya reg}). The former is tangent to $E^{ss}_p$ and the latter to $E^c_p$. The center manifold $W^c(p)$ is not uniquely defined. However, its $C^r$-jet at $p$ is unique (see \propref{center jet}).


\subsection{Regular unicriticality}

Understanding the dynamics of smooth maps of the interval is centered around the analysis of their critical points. Therefore, when studying smooth maps in dimension two, it is natural to try and identify the ``critical sets'' of these maps that play a similar role.

This strategy is implemented in the landmark paper of Benedicks-Carleson \cite{BCa} to prove the existence of strange attractors for a positive measure set of H\'enon maps. In their setting, the critical set is defined as a certain set of tangencies between stable and unstable manifolds obtained as the limit of an inductive procedure. This topic has since been further developed by numerous authors, including Wang-Young \cite{WYo}, Palis-Yoccoz \cite{PaYo} and Berger \cite{Be}.

A much more general (albeit less explicit) way to identify the critical set is to characterize it as the obstruction locus for dominated splitting (see \defnref{def general crit point}). This idea is explored by Pujals-Rodriguez Hertz in \cite{PR}, and Crovisier-Pujals in \cite{CP2}.

In our paper, we are mainly interested in two-dimensional maps that are infinitely renormalizable. It turns out that such maps can be {\it unicritical}---the critical set can consist of a single point. The notion of the critical set we use incorporates aspects from the two aforementioned bodies of work. The novelty of our approach is the emphasis on the regularity of points that are not ``too close'' to the critical orbit. Since the critical orbit is expected to be dense in the renormalization limit set, this property is rather delicate to formulate. The rigorous definitions are given below.

Let $F$ be an infinitely renormalizable map as considered in \subsecref{subsec:uni erg}.

\begin{defn}
The orbit of a point $p\in \Lambda_F$ is {\it regular critical} if $p$ admits a direction
$$
E^*_{p} := E^{ss}_{p} = E^c_{p}
$$
that satisfies both conditions in~\eqref{eq:for ly reg}.
The criticality is {\it quadratic} if, additionally,
$W^{ss}(p)$ and $W^c(p)$ have a quadratic tangency at $p$.
\end{defn}

\begin{defn}
For $N \in \bbN$; $L \geq 1$ and $\epsilon \in (0,1)$, we say that the point $p$ is {\it $N$-times forward $(L, \epsilon)$-regular} if
it admits a direction $E_p$ such that
\begin{equation}\label{eq:intro reg}
\|DF^n|_{E_p}|\| \leq L\lambda_F^{\epsilon n}
\matsp{for each}
1\leq n \leq N.
\end{equation}
\end{defn}



For $t > 0$ and $p \in \bbR^2$, we denote the ball
$$
\bbD_p(t) := \{q \in \bbR^2 \; | \; \dist(q, p) < t\}.
$$

\begin{defn}\label{def unicrit}
For $0<\epsilon < \delta < 1$, we say that $F$ is {\it $(\delta, \epsilon)$-regularly unicritical on the limit set $\Lambda_F$} if the following conditions hold.
\begin{enumerate}[i)]
\item There is a regular quadratic critical orbit point $c_1 \in \Lambda_F$ (referred to as the {\it critical value}).
\item For all $t >0$, there exists $L(t) \geq 1$ such that for any $N \in \bbN$, if
\begin{equation}\label{eq:away from crit}
p \in \Lambda \setminus \bigcup_{n = 0}^{N-1} \bbD_{F^{-n}(c_{1})}(t\lambda_F^{\epsilon n}),
\end{equation}
then $p$ is $N$-times forward $(L(t), \delta)$-regular.
\end{enumerate}
When $\delta$ and $\epsilon$ are implicit, we simply say that $F$ is {\it regularly unicritical on $\Lambda_F$}.
\end{defn}

\begin{rem}
By \propref{measure in crit disk}, the measure of the set of points satisfying \eqref{eq:away from crit} for all $N \in \bbN$ goes to $1$ as $t$ goes to $0$. This shows that the condition imposed by \defnref{def unicrit} ii) is not vacuous. Moreover, by \propref{unique crit}, a regularly unicritical system has a unique critical orbit as the name implies.
\end{rem}

\begin{rem}\label{1d is unicrit}
The analog of \defnref{def unicrit} holds in the 1D case. See \appref{sec:reg unicrit 1d}.
\end{rem}

\begin{rem}\label{ren unicrit is unicrit}
In the 1D case, an important motivation for the introduction of unimodal maps is that a pre-renormalization of a unimodal map is again unimodal. Observe that in the 2D case, regular unicriticality is likewise preserved under pre-renormalization.
\end{rem}

\begin{rem}\label{rem exists}
In \cite{CLPY2}, we will show that any infinitely renormalizable $C^2$-diffeo\-mor\-phisms of bounded type whose renormalizations are eventually contained in a $C^1$-compact subset of $\frHL^2$ is regularly unicritical on its renormalization limit set.
\end{rem}


\subsection{Statements of the main results}

Consider a H\'enon-like map $G : V \to V$ defined on a square $V = I \times I \subset \bbR^2$ containing $0$. Denote the critical point of $g(\cdot) := G(\cdot, 0)$ by $c \in I$. Let $\bbS_G : \bbR^2 \to \bbR^2$ be the affine scaling map given by 
$$
\bbS_G(x,y) := |g(c)-c|^{-1}(x-c, y).
$$
Observe that $\bbS_G \circ G \circ \bbS_G^{-1}$ is normalized.

\begin{thmmain}
There exist universal constants $\epsilon_0, \alpha \in (0,1)$ such that the following holds.
For an integer $r \geq 2$, let $F : \Omega \to F(\Omega) \Subset \Omega\Subset \bbR^2 $ be a mildly dissipative, infinitely renormalizable $C^r$-diffeomorphism with
renormalization limit set
$$
\Lambda_F := \bigcap_{n=1}^\infty \bigcup_{i=0}^{R_n-1} F^i(\cD^n)
$$
Suppose $F$ is $(\epsilon, \epsilon^{1/\alpha})$-regularly unicritical on $\Lambda_F$ for some $\epsilon \in (0, \epsilon_0)$ with the critical value $c_1$. Then for any $n\in\bbN$ sufficiently large, there exists a $R_n$-periodic quadrilateral $\cV^n $ containing $c_1$, and a $C^r$-diffeomorphism
$$
\Psi^n : (\cV^n, c_1) \to (V^n, 0)
$$
from $\cV^n$ to a square $V^n = I^n\times I^n \subset \bbR^2$ such that the following properties hold.
\begin{enumerate}[i)]
\item We have $\Lambda_F \cap \cD^n\Subset \cV^n \Subset \cV^{n-1}$.
\item We have
$$
\|(\Psi^n)^{\pm 1}\|_{C^r} = O(1)
\matsp{and}
\|\Psi^n - \Psi^{n-1}|_{\cV^n}\|_{C^r} < \lambda^{(1-\epsilon^\alpha)R_n}.
$$
\item Define the $n$th pre-renormalization and the renormalization of $F$ as
$$
F_n := \Psi^n \circ F^{R_n}|_{\cV^n} \circ (\Psi^n)^{-1}
\matsp{and}
\cR^n(F) := \bbS_{F_n} \circ F_n \circ \bbS_{F_n}^{-1}
$$
respectively. Then $\cR^n(F)$ is a $\delta_n$-thin $C^r$-H\'enon-like map with $\delta_n < \lambda^{(1-\epsilon^\alpha)R_n}$.
\end{enumerate}
\end{thmmain}

As a prerequisite for the proof of the Main Theorem, we obtain an extremely detailed understanding of the regularity of $F$ at every point in $\Lambda_F$ (not just on unspecified sets of large or full measure). As a result, we are also able to explicitly describe the local geometry of every strong stable and center manifolds that exist in the system. See \thmref{reg blocks} and the discussion in \subsecref{subsec:local geom}.

In a sequel to this paper (\cite{CLPY2}), we prove {\it a priori} bounds for the class of bounded type infinitely renormalizable $C^r$-diffeomorphisms of the disk that satisfy the conclusion of the Main Theorem. As a consequence, we have the following corollaries for the map $F$.

\begin{cor}\label{tot disconnect}
Suppose $F$ is of bounded type. Then the Hausdorff dimension of $\Lambda_F$ is strictly less than $1$. In particular, $\Lambda_F$ is totally disconnected and minimal.
\end{cor}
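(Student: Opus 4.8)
The plan is to feed the conclusion of the Main Theorem into the \emph{a priori} bounds of \cite{CLPY2} to obtain classical-style real bounds for the nested family $\{F^i(\cD^n)\}$, and then run a standard ``compounding gap'' covering estimate. Concretely, I would first record the output of the \emph{a priori} bounds: since $F$ is of bounded type, there is $\bar r\ge 2$ with $r_n=R_{n+1}/R_n\le\bar r$ for all $n$, together with an integer $N$ and constants $K\ge1$, $\eta_0\in(0,1)$, such that for every $n\ge N$ the pieces $F^0(\cD^n),\dots,F^{R_n-1}(\cD^n)$ (which are pairwise disjoint because $\cD^n$ is $R_n$-periodic) have \emph{bounded geometry}: their diameters are pairwise comparable up to the factor $K$, and the total diameter contracts by a definite amount from one level to the next,
\[
\Sigma_n:=\sum_{i=0}^{R_n-1}\diam F^i(\cD^n)\ \le\ (1-\eta_0)\,\Sigma_{n-1}.
\]
Here the super-exponential convergence $\cR^n(F)\to\frHL^r_0$ furnished by the Main Theorem, together with bounded type, is exactly what lets one import the one-dimensional real bounds and control the transverse thinness of the quadrilaterals; I would state the displayed properties as a lemma extracted from \cite{CLPY2}. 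Iterating, $\Sigma_n\le(1-\eta_0)^{n-N}\Sigma_N$ with $\Sigma_N=O(1)$, and comparability gives $\ell_n:=\max_i\diam F^i(\cD^n)\le K\Sigma_n/R_n$; in particular $\ell_n\to0$.

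Next comes the dimension bound. Fix $d<1$ close enough to $1$ that
\[
\bar r^{\,1-d}(1-\eta_0)^{d}<1,
\]
which is possible because the left side tends to $1-\eta_0<1$ as $d\to1^-$ (explicitly, any $d>\log\bar r/\bigl(\log\bar r+\log\tfrac1{1-\eta_0}\bigr)$ works, and this threshold is $<1$). Using $R_n\le\bar r^{\,n-N}R_N$, the cover $\Lambda_F\subseteq\bigcup_{i}F^i(\cD^n)$ of mesh $\ell_n\to0$, and the two estimates above,
\[
\sum_{i=0}^{R_n-1}\bigl(\diam F^i(\cD^n)\bigr)^{d}\ \le\ R_n\,\ell_n^{\,d}\ \le\ (K\Sigma_N)^{d}\,R_n^{\,1-d}(1-\eta_0)^{d(n-N)}\ \le\ C\bigl(\bar r^{\,1-d}(1-\eta_0)^{d}\bigr)^{n}\ \xrightarrow[n\to\infty]{}\ 0 .
\]
Hence $\cH^{d}(\Lambda_F)=0$, so $\dim_H\Lambda_F\le d<1$. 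It is the \emph{comparability} of the diameters within a level that produces the factor $R_n^{1-d}$ rather than $R_n$, and this is what makes the estimate close up for some $d$ strictly below $1$ without any quantitative assumption relating $\eta_0$ to $\bar r$.

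Finally, the two stated consequences follow formally. A connected set with more than one point admits a $1$-Lipschitz surjection onto a nondegenerate interval (e.g.\ the distance to one of its points), hence has Hausdorff dimension at least $1$; since $\dim_H\Lambda_F<1$, the set $\Lambda_F$ contains no such subset and is therefore totally disconnected (and, being compact, infinite, and perfect — each piece meeting $\Lambda_F$ contains at least $r_n\ge2$ level-$(n+1)$ subpieces meeting $\Lambda_F$, which shrink as $n\to\infty$ — it is a Cantor set). For minimality, note that $F(\Lambda_F)\subseteq\Lambda_F$ and that $F$ permutes the pieces cyclically, $F(F^i(\cD^n))\subseteq F^{(i+1)\bmod R_n}(\cD^n)$ using $F^{R_n}(\cD^n)\Subset\cD^n$; thus the forward orbit of any $p\in\Lambda_F$ meets every $F^i(\cD^n)$ and so is $\ell_n$-dense in $\Lambda_F$. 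As $\ell_n\to0$, every forward orbit is dense, hence any nonempty closed $F$-invariant subset of $\Lambda_F$ equals $\Lambda_F$.

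The main obstacle is not the argument above — that part is routine once the inputs are in hand — but the verification that the \emph{a priori} bounds of \cite{CLPY2} genuinely yield the ``bounded geometry'' package in the precise form used here: uniform (in $n$, and in the index $i$) comparability of all piece diameters within a level, and a uniform per-level contraction of the total diameter, including control of the transverse direction of the extremely thin H\'enon-like pieces. This is where the super-exponential estimates of the Main Theorem do the essential work, and I would isolate it as the single quoted lemma from the sequel on which the corollary rests.
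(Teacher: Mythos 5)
The paper never proves this corollary in the text; it merely states it as a consequence of the Main Theorem plus the \emph{a priori} bounds of \cite{CLPY2}, so there is no proof to compare against. Your framework (cover by the level-$n$ pieces, show $\sum(\diam)^d\to 0$ for some $d<1$, then derive total disconnectedness from $\dim_H<1$ via the Lipschitz projection trick, and minimality from the cyclic permutation of shrinking pieces) is the right one, and your final threshold $\bar r^{1-d}(1-\eta_0)^d<1$ is correct. The parts about total disconnectedness and minimality are clean. However, the load-bearing step in your dimension estimate is wrong: the diameters of the pieces $F^i(\cD^n)$ at a fixed level $n$ are \emph{not} pairwise comparable up to a uniform $K$, and no version of the \emph{a priori} bounds gives this. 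This is already false in the 1D model. Since the critical point of $f$ is quadratic, the piece $I^n_1=f(I^n_0)$ containing the critical value has length $|I^n_1|\asymp|I^n_0|^2$, so the ratio of the largest to the smallest piece at level $n$ grows without bound. The \emph{a priori} bounds give only \emph{local} bounded geometry (each level-$(n+1)$ piece is a definite fraction of its level-$n$ parent), not the global comparability $\ell_n\le K\Sigma_n/R_n$ you invoke. You do flag this package as the point to ``verify,'' but it is not merely unverified --- it fails.

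The good news is that the estimate you write is reachable without comparability. For $d<1$ the map $t\mapsto t^d$ is concave, so Jensen (equivalently, the power-mean inequality) gives
\[
\sum_{i=0}^{R_n-1}\bigl(\diam F^i(\cD^n)\bigr)^d\;\le\;R_n^{\,1-d}\Bigl(\sum_{i}\diam F^i(\cD^n)\Bigr)^d\;=\;R_n^{\,1-d}\,\Sigma_n^{\,d},
\]
which, combined with $\Sigma_n\le(1-\eta_0)^{n}\Sigma_0$ and $R_n\le\bar r^{\,n}R_0$, yields exactly your final display with no reference to $\ell_n$. (Equivalently, one can run the standard \emph{local} recursion: for each level-$n$ piece $J$ with children $J_1,\dots,J_{r_n}$, the same concavity bound gives $\sum_i|J_i|^d\le r_n^{1-d}(1-\eta_0)^d|J|^d\le\theta|J|^d$ with $\theta<1$, and iterate.) What you actually need from \cite{CLPY2}, then, is only the per-level total-diameter contraction $\Sigma_n\le(1-\eta_0)\Sigma_{n-1}$ together with bounded type; the pairwise comparability should be dropped.
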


\begin{cor}\label{renorm conv}
Suppose $F$ is of bounded type and at least $C^4$. Then the sequence of renormalizations of $F$ given in the Main Theorem converge exponentially fast to a compact subset of $\frHL^r_0$ that can be identified with the renormalization horseshoe of 1D unimodal maps.
\end{cor}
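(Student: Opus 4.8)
The plan is to feed the Main Theorem into one-dimensional renormalization theory. The Main Theorem already says that the renormalizations $\cR^n(F)$ are H\'enon-like maps whose thinness $\delta_n$ decays super-exponentially, and the sequel \cite{CLPY2} supplies the \emph{a priori} bounds keeping the sequence in a compact part of $\frHL^r$; together these confine the $\cR^n(F)$ to shrinking neighborhoods of the degenerate space $\frHL^r_0\cong\frU^r$, on which two-dimensional renormalization reduces to the one-dimensional operator $\cR_{\mathrm{1D}}$, whose hyperbolicity then does the rest. Concretely, I would first record compactness: by the \emph{a priori} bounds of \cite{CLPY2} for bounded-type infinitely renormalizable maps satisfying the conclusion of the Main Theorem, for $n$ large $\cR^n(F)$ lies in a fixed compact subset of $\frHL^r$ with uniform $C^r$ bounds (hence precompact in $C^{r-1}$, with all accumulation points in $\frHL^r$), and the renormalization types $r_n=R_{n+1}/R_n$ are bounded by some $\bar r$. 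Next, by part iii) of the Main Theorem, $\cR^n(F)(x,y)=(f_n(x,y),x)$ is $\delta_n$-thin with $\delta_n<\lambda^{(1-\epsilon^\alpha)R_n}$; since $F$ has bounded type, $R_n\ge 2^n$, so $\delta_n\to0$ super-exponentially and $\|\partial_y f_n\|_{C^{r-1}}\to0$. Hence every accumulation point is degenerate, of the form $\iota(\phi_\infty)$ with $\phi_\infty\in\frU^r$ a normalized unimodal map, and the $\omega$-limit set $\cA_F:=\bigcap_{m}\overline{\{\cR^n(F):n\ge m\}}$ is a non-empty compact subset of $\frHL^r_0$.

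The core of the argument is the comparison of the two renormalization operators in the thin regime. Because part ii) of the Main Theorem makes the charts $\Psi^n$ consistent up to an error $\lambda^{(1-\epsilon^\alpha)R_n}$, the map $\cR^{n+1}(F)$ agrees up to a super-exponentially small correction with the type-$r_n$ two-dimensional renormalization $\cR_{\mathrm{2D}}^{(r_n)}(\cR^n(F))$. On the degenerate subspace one has $\cR_{\mathrm{2D}}^{(k)}(\iota(f))=\iota(\cR_{\mathrm{1D}}^{(k)}(f))$, and since $\cR_{\mathrm{2D}}^{(r_n)}$ is a first-return construction of uniformly bounded length $r_n\le\bar r$ followed by uniformly bounded rescalings, it is Lipschitz on the compact family above; combining this with the $\delta_n$-thinness of $\cR^n(F)$ and writing $\phi_n:=f_n(\cdot,0)$, I would obtain
$$
\big\|\phi_{n+1}-\cR_{\mathrm{1D}}^{(r_n)}(\phi_n)\big\|_{C^{r-1}}=O(\delta_n).
$$
Thus $(\phi_n)_{n\ge n_0}$ is an exponentially accurate pseudo-orbit of $\cR_{\mathrm{1D}}$ staying in the renormalizable domain with types bounded by $\bar r$; in particular each $\phi_n$ is itself infinitely renormalizable of bounded type, and the $\omega$-limit set of $(\phi_n)$ is $\cR_{\mathrm{1D}}$-invariant.

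Finally I would invoke the global hyperbolicity of one-dimensional renormalization in the $C^r$ category (Lyubich; de Faria--de Melo--Pinto), which is where the hypothesis $r\ge 4$ enters: the space of $C^r$ infinitely renormalizable unimodal maps of type $\le\bar r$ has the renormalization horseshoe $\cA$ as its attractor, with orbits---and, by hyperbolicity, also pseudo-orbits with super-exponentially small errors---converging exponentially fast to $\cA$. Applied to $(\phi_n)$ this yields $\dist_{C^{r-1}}(\phi_n,\cA)\le C\rho^n$ for some $\rho\in(0,1)$, whence
$$
\dist_{C^{r-1}}\big(\cR^n(F),\iota(\cA)\big)\le\big\|\cR^n(F)-\iota(\phi_n)\big\|_{C^{r-1}}+\dist_{C^{r-1}}\big(\iota(\phi_n),\iota(\cA)\big)\le C'(\delta_n+\rho^n)\le C''\rho^n.
$$
So $\cR^n(F)$ converges exponentially fast to the compact set $\cA_F\subseteq\iota(\cA)\subset\frHL^r_0$, which under $\frHL^r_0\cong\frU^r$ is a compact subset of the one-dimensional renormalization horseshoe. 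The step I expect to be the main obstacle is the comparison in the thin regime: one must show that two-dimensional renormalization is a genuine $O(\delta_n)$-perturbation of the one-dimensional operator, which needs both the quantitative chart-consistency from the Main Theorem and uniform Lipschitz control of the return-map construction, the latter resting squarely on the \emph{a priori} bounds of \cite{CLPY2}; without those bounds the renormalization orbit could in principle escape and the whole comparison would collapse.
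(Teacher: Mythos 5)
The paper does not actually supply a proof of Corollary~\ref{renorm conv}: both it and Corollary~\ref{tot disconnect} are stated as consequences of the Main Theorem together with the \emph{a priori} bounds that are deferred entirely to the sequel~\cite{CLPY2}. So there is no ``paper's own proof'' to compare against; the best I can do is assess your sketch on its own terms.

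Your outline is the natural one and I believe it is essentially what~\cite{CLPY2} will do: (a) \emph{a priori} bounds $\Rightarrow$ the sequence $\cR^n(F)$ lives in a fixed $C^r$-compact family with bounded types; (b) part~iii) of the Main Theorem with $R_n\ge 2^n$ $\Rightarrow$ super-exponential thinness $\delta_n$; (c) part~ii) and a uniform Lipschitz estimate for the $r_n$-fold return-map construction on the compact family $\Rightarrow$ $(\phi_n)$ is a pseudo-orbit of $\cR_{\mathrm{1D}}$ with errors $O(\delta_n)$; (d) hyperbolicity of $\cR_{\mathrm{1D}}$ and exponential attraction to the horseshoe $\cA$ $\Rightarrow$ exponential convergence. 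You also correctly identify where the $C^4$ hypothesis enters, namely the regularity required by the one-dimensional hyperbolicity theory.

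One place where the write-up glosses over something real: the final chain $\dist(\iota(\phi_n),\iota(\cA))\le C\rho^n$ does not follow from the triangle inequality alone. Being a pseudo-orbit with errors $\delta_n\to 0$ does not by itself put $\phi_n$ within $C\rho^n$ of $\cA$; one must run a shadowing/attractor argument (e.g.\ shadow the tail $(\phi_m)_{m\ge N}$ by a genuine orbit within $O(\sup_{m\ge N}\delta_m)$, use exponential attraction of genuine orbits to $\cA$, and then optimize over $N$ using the super-exponential decay of $\delta_n$). This is standard and works, but as written it looks like a free inequality when in fact it is the place where the hyperbolicity is actually used quantitatively. A second, smaller caveat: the Lipschitz control of the 2D return-map construction in step~(c) is not automatic from compactness alone—it uses both the chart consistency $\|\Psi^{n+1}-\Psi^n\|_{C^r}<\lambda^{(1-\epsilon^\alpha)R_n}$ from the Main Theorem and the \emph{a priori} geometric bounds to keep domains and rescalings uniformly controlled over the (bounded) number $r_n$ of return steps, exactly as you flag at the end. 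With those two points tightened, the argument is sound.
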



\subsection{The boundary of chaos in the H\'enon family}

For $\beta \in (0, 1]$, denote
$$
\frH_\beta := \{F_{a,b} \in \frH \; | \; b \in [0, \beta)\} \quad \text{and} \quad\frH_0 := \{F_{a,0} \in \frH\}.
$$
Denote by $\partial^1\frH_\beta$ the set of all maps in $\frH_\beta$ that have zero topological entropy and can be approximated by H\'enon maps with positive entropy (note that $\partial^1 \frH_\beta \subsetneq \partial \frH_\beta$). We refer to $\partial^1\frH_\beta$ as the {\it boundary of chaos in} $\frH_\beta$. If $F_{a,b} \in \partial^1\frH_{1/4}$, then $F_{a,b}$ is mildly dissipative and has zero entropy. Hence $F_{a,b}$ is infinitely renormalizable by \cite{CPT}.

A priori, the boundary of chaos could be a highly pathological (disconnected) continuum. However, in \cite{dCLM}, de Carvalho-Lyubich-Martens characterize this set in the perturbative case ($\partial^1 \frH_\delta$ with $\delta \ll 1$) in the following way.

In the quadratic family, the transition from zero to positive entropy happens precisely at the limit $f_{a_*}$ of period-doubling bifurcations. Moreover, under renormalization, $\cR^n(f_{a_*})$ converges to some universal map $f_* \in \frU^r$ as $n\to \infty$. Since $\frH_0$ can be identified as the embedding of the quadratic family under $\iota$ (see \eqref{eq:embed}), we have $\partial^1 \frH_0 = \{F_{a_*, 0} = \iota(f_{a_*})\}$, and $\cR^n(F_{a_*,0})$ converges to some universal degenerate $0$-H\'enon-like map $F_* = \iota(f_*)$ as $n \to \infty$. By the hyperbolicity of renormalization, it follows that there is a real analytic curve
$$
\{F^*_b := F_{a_*(b), b}\; | \; 
b \in [0, \delta)\}
$$
of infinitely renormalizable H\'enon maps of period-doubling type extending from $F^*_0 = F_{a_*, 0}$ such that $\cR^n(F^*_b) \to F_*$ as $n \to \infty$. De Carvalho-Lyubich-Martens show that this curve coincides with $\partial^1\frH_\delta$.



We conclude this subsection with the following motivating open problem.

\begin{problem*}
Characterize the boundary of chaos $\partial^1 \frH_\beta$ for some definite non-perturbative value of $\beta \in (0,1]$.
\end{problem*}

Conjecturally, $\partial^1 \frH^0_1$ consists of infinitely renormalizable H\'enon maps that converge to the period-doubling renormalization fixed point $F_*$ under renormalization. Note that by \remref{rem exists}, all such maps must be regularly unicritical on its renormalization limit set. If the conjeture holds, then one would be able to conclude that $\partial^1 \frH_1$ is a piecewise analytic curve. Towards this open problem, we state the following corollary of the Main Theorem and \corref{renorm conv}.

\begin{cor}
Let $r \geq 4$ be an integer. Consider a $C^r$-diffeomorphism $F : \Omega \to F(\Omega) \Subset \Omega$ defined on a Jordan domain $\Omega \Subset \bbR^2$. Suppose $F$ is mildly dissipative, infinitely renormalizable and regularly unicritical on its renormalization limit set. If $F$ has zero topological entropy, then $F$ is in the boundary of chaos. More precisely, in any neighborhood of $F$ in the $C^r$-topology, there exists a diffeomorphism with positive topological entropy.
\end{cor}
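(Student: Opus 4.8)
The plan is to combine the structure supplied by the Main Theorem with the classical fact that the $1$D renormalization horseshoe lies on the boundary of chaos, and then to ``un-renormalize'' a positive-entropy perturbation of $\cR^n(F)$ into a positive-entropy perturbation of $F$.

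\emph{Step 1 (reduction and the limit of renormalizations).} Since $F$ is mildly dissipative, infinitely renormalizable and has zero topological entropy, the theorem of Crovisier--Pujals--Tresser \cite{CPT} forces $F$ to be of eventually period-doubling type; in particular $F$ is of bounded type, so (using $r\ge 4$) \corref{renorm conv} applies: the renormalizations $\cR^n(F)$ converge in $C^r$, exponentially fast, to a compact subset of $\frHL^r_0$ that is identified with the renormalization horseshoe of $1$D unimodal maps. For each large $n$ fix a point $\iota(h_n)$ of this horseshoe with $\|\cR^n(F)-\iota(h_n)\|_{C^r}=:e_n\to 0$; thus $h_n$ is an infinitely renormalizable unimodal map. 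By the Main Theorem we may also write
$$
\cR^n(F)=\Xi^n\circ F^{R_n}|_{\cV^n}\circ(\Xi^n)^{-1},\qquad \Xi^n:=\bbS_{F_n}\circ\Psi^n ,
$$
and by the uniform bounds $\|(\Psi^n)^{\pm 1}\|_{C^r}=O(1)$ (together with the uniform control of the affine normalizations $\bbS_{F_n}$ built into the construction) the diffeomorphisms $\Xi^n$ and $(\Xi^n)^{-1}$ have $C^r$-norms bounded independently of $n$.

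\emph{Step 2 (positive entropy near the limit).} An infinitely renormalizable unimodal map is classically on the boundary of chaos: truncating its renormalization cascade at a large finite level $k$ and replacing the $k$-th renormalization by a $C^r$-unimodal map of positive entropy produces a $C^r$-approximant with positive topological entropy. Hence $h_n$ is a $C^r$-limit of unimodal maps $\bar g$ with $h_{\mathrm{top}}(\bar g)>0$. For such a $\bar g$ the degenerate map $\iota(\bar g)$ satisfies $h_{\mathrm{top}}(\iota(\bar g))\ge h_{\mathrm{top}}(\bar g)>0$ (since $(x,y)\mapsto x$ semiconjugates $\iota(\bar g)$ onto $\bar g$), and the horseshoe underlying the positive entropy of $\bar g$ persists, as a topological (crooked) horseshoe inside $\Int V^n$, under $C^r$-small H\'enon-like perturbation of $\iota(\bar g)$ --- this is the standard mechanism behind the existence of positive-entropy maps near $\frHL^r_0$. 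Consequently $\iota(h_n)$, and therefore $\cR^n(F)$, is $C^r$-approximated arbitrarily well by a genuine $C^r$-diffeomorphism $G$ with $h_{\mathrm{top}}(G)>0$; we fix such a $G$ with $\|G-\cR^n(F)\|_{C^r}\le 2e_n$, whose positive-entropy invariant set lies in $\Int V^n$ and which differs from $\cR^n(F)$ only on a compact subset of $\Int V^n$.

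\emph{Step 3 (un-renormalization).} Set $\hat\phi:=G\circ(\cR^n(F))^{-1}$, which is the identity off the support of $G-\cR^n(F)$; it extends to a self-diffeomorphism of $V^n$ equal to the identity near $\partial V^n$, with $\|\hat\phi-\mathrm{id}\|_{C^r}=O(e_n)$. Let $\phi:=(\Xi^n)^{-1}\circ\hat\phi\circ\Xi^n$ on $\cV^n$, which is the identity near $\partial\cV^n$ and hence extends by the identity to a $C^r$-diffeomorphism of $\bbR^2$ supported in $\cV^n$, with $\|\phi-\mathrm{id}\|_{C^r}=O(e_n)$ by Step 1. Put $\tilde F:=\phi\circ F$; then $\|\tilde F-F\|_{C^r}=\|(\phi-\mathrm{id})\circ F\|_{C^r}\to 0$ as $n\to\infty$, and $\tilde F$ is a $C^r$-diffeomorphism of $\Omega$ of the same kind as $F$. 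Because $\cV^n,F(\cV^n),\dots,F^{R_n-1}(\cV^n)$ are pairwise disjoint and $\phi=\mathrm{id}$ off $\cV^n$, one checks $\tilde F=F$ on $\bigcup_{i=0}^{R_n-2}F^i(\cV^n)$, so the first return map of $\tilde F$ to $\cV^n$ is $\phi\circ F^{R_n}|_{\cV^n}$; conjugating by $\Xi^n$ this equals $\hat\phi\circ\cR^n(F)=G$. Hence $\tilde F^{R_n}|_{\cV^n}$ is topologically conjugate to $G$ on the corresponding maximal invariant sets, and
$$
h_{\mathrm{top}}(\tilde F)\ \ge\ \frac{1}{R_n}\,h_{\mathrm{top}}\!\left(\tilde F^{R_n}|_{\cV^n}\right)\ =\ \frac{1}{R_n}\,h_{\mathrm{top}}(G)\ >\ 0 .
$$
As $n$ may be taken arbitrarily large and $\|\tilde F-F\|_{C^r}\to 0$, every $C^r$-neighborhood of $F$ contains a diffeomorphism with positive topological entropy; thus $F$ lies on the boundary of chaos.

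\emph{Main obstacle.} The delicate point is Step 3: a $C^r$-small perturbation of the deeply iterated return map $F^{R_n}|_{\cV^n}$ need not a priori come from a $C^r$-small perturbation of $F$. It does here, because the perturbation is localized to the single tower piece $F^{R_n-1}(\cV^n)$ and because conjugation by $\Xi^n$ does not inflate $C^r$-size --- for which one uses precisely the uniform bounds on $(\Psi^n)^{\pm1}$ and $\bbS_{F_n}$ from the Main Theorem. (Should the conjugation by $\Xi^n$ instead amplify $C^r$-norms by a factor growing geometrically in $n$, one would use the super-exponential thinness $\delta_n<\lambda^{(1-\epsilon^\alpha)R_n}$ of the Main Theorem, with $R_n\to\infty$ exponentially, to choose the perturbation of $\cR^n(F)$ small enough to absorb it.) The only other non-formal ingredient is the classical persistence of a unimodal horseshoe under thin H\'enon-like perturbation.
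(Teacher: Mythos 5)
There is a genuine gap in Step 3, and it is located precisely where you flag the ``main obstacle.'' Your claim that ``conjugation by $\Xi^n$ does not inflate $C^r$-size'' is false, and the cited uniform bounds do not support it. The Main Theorem gives $\|(\Psi^n)^{\pm 1}\|_{C^r}=O(1)$, but it gives no uniform bound on $\bbS_{F_n}$: by definition $\bbS_{F_n}(x,y)=|g_n(c_n)-c_n|^{-1}(x-c_n,y)$ is the dilation by $\sigma_n:=|g_n(c_n)-c_n|^{-1}$, and since the quadrilaterals $\cV^n$ nest down towards $\{c_1\}$ (so $\diam\cV^n\to 0$, hence $|I^n|\to 0$), $\sigma_n\to\infty$ exponentially in $n$. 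Conjugating a near-identity diffeomorphism through a dilation of factor $\sigma_n$ gives, in the model case $\phi(z)=\sigma_n^{-1}\hat\phi(\sigma_n z)$, the scaling $D^k(\phi-\Id)(z)=\sigma_n^{\,k-1}D^k(\hat\phi-\Id)(\sigma_n z)$, so $\|\phi-\Id\|_{C^r}\asymp\sigma_n^{\,r-1}\|\hat\phi-\Id\|_{C^r}$. Your fallback parenthetical does not repair this: the size of the perturbation one must make to $\cR^n(F)$ in order to create positive entropy is governed by its distance to the boundary of chaos, i.e.\ the distance $e_n$ to the renormalization horseshoe, which Corollary~\ref{renorm conv} controls only \emph{exponentially}, not by the super-exponential thinness $\delta_n$. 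Since $e_n$ decays at a rate governed by the stable spectrum of the one-dimensional renormalization operator and $\sigma_n$ grows at a rate governed by the one-dimensional scaling ratio, the product $\sigma_n^{\,r-1}e_n$ has no reason to tend to $0$ for $r\geq 4$ (and for the period-doubling combinatorics one should expect it to diverge). A secondary issue is the unsupported estimate $\|\hat\phi-\Id\|_{C^r}=O(e_n)$ for $\hat\phi=G\circ(\cR^n(F))^{-1}$: the inverse of a $\delta_n$-thin H\'enon-like map has $C^1$-norm $\gtrsim\delta_n^{-1}$, so this bound fails unless the perturbation $G-\cR^n(F)$ is chosen to depend only on the first coordinate — a fix that, in any case, does not touch the $\sigma_n^{\,r-1}$ inflation.

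More conceptually, ``pull back a fixed $C^r$-small perturbation of a deep renormalization through the rescaling chain'' is exactly the operation that the geometry of renormalization obstructs, and the scaling inflation above is the reason the one-dimensional theory handles this kind of statement differently. The standard route is via hyperbolicity of the renormalization operator: one perturbs $F$ itself in a direction transverse to the stable set of the horseshoe; the expansion of $D\cR$ in the unstable direction then forces $\cR^n(F_t)$, for any fixed small $t\neq 0$ of the right sign, to leave a neighborhood of the horseshoe after finitely many steps and land at a definite distance in a region of positive entropy, which descends to positive entropy of $F_t$. That argument requires knowing that $F$ sits on (a stable leaf of) the two-dimensional renormalization horseshoe, which is precisely what Corollary~\ref{renorm conv} --- proved in \cite{CLPY2} --- is meant to supply. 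Steps 1 and 2 of your proposal are fine; it is Step 3 that needs to be replaced by an argument along these hyperbolicity lines rather than by a direct un-renormalization of the perturbation.
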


\subsection{Conventions}

Unless otherwise specified, we adopt the following conventions. 

Any diffeomorphism on a domain in $\bbR^2$ is assumed to be orientation-preserving. The projective tangent space at a point $p \in \bbR^2$ is denoted by $\bbP^2_p$.
\medskip

We typically denote constants by $K \geq 1$, $k >0$ (and less frequently $C \geq 1$, $c>0$).
Given a number $\kappa > 0$, we use $\bkappa$ to denote any number that satisfy
$$
\kappa < \bkappa < C\kappa^D
$$
for some uniform constants $C > 1$ and $D > 1$ (if $\kappa >1$) or $D \in (0, 1)$ (if $\kappa < 1$). We allow $\bkappa$ to absorb any uniformly bounded coefficient or power. So for example, if $\bkappa >1$, then we may write
$$
\text{``}\;\; 10\bkappa^5 = \bkappa\;\;\text{''}.
$$
Similarly, we use $\ukappa$ to denote any number that satisfy
$$
c\kappa^d < \ukappa < \kappa
$$
for some uniform constants $c \in (0, 1)$ and $d \in (0, 1)$ (if $\kappa >1$) or $d >1$ (if $\kappa <1$). As before, we allow $\ukappa$ to absorb any uniformly bounded coefficient or power. So for example, if $\ukappa >1$, then we may write
$$
\text{``}\;\;\tfrac{1}{3}\ukappa^{1/4} = \ukappa\;\;\text{''}.
$$
These notations apply to any positive real number: e.g. $\bepsilon > \epsilon$, $\udelta < \delta$, $\bL > L$, etc.

Note that $\bkappa$ can be much larger than $\kappa$ (similarly, $\ukappa$ can be much smaller than $\kappa$). Sometimes, we may avoid the $\bkappa$ or $\ukappa$ notation when indicating numbers that are somewhat or very close to the original value of $\kappa$. For example, if $\kappa \in (0,1)$ is a small number, then we may denote $\kappa':=(1-\bkappa)\kappa$. Then $\ukappa \ll \kappa' < \kappa$.

\medskip

For any set $X_m \subset \Omega$ with a numerical index $m \in \bbZ$, we denote
$$
X_{m+l} := F^l(X_m)
$$
for all $l \in \bbZ$ for which the right-hand side is well-defined. Similarly, for any direction $E_{p_m} \in \bbP^2_{p_m}$ at a point $p_m \in \Omega$, we denote
$$
E_{p_{m+l}} := DF^l(E_{p_m}).
$$

We use $n, m, i, j$ to denote integers (and less frequently $l, k$). Typically (but not always), $n \in\bbN$ and $m \in \bbZ$. We sometimes use $l >0$ for positive geometric quantities (such as length). The letter $i$ is never the imaginary number.

We typically use $N, M$ to indicate fixed integers (often related to variables $n, m$).

We use calligraphic font $\cU, \cT, \cI,$ etc, for objects in the phase space and regular fonts $U, T, I,$ etc, for corresponding objects in the linearized/uniformized coordinates. A notable exception is for the invariant manifolds $W^{ss}, W^c$.

We use $p, q$ to indicate points in the phase space, and $z, w$ for points in linearized/uniformized coordinates.


\section{Presentation of the Main Concepts and Arguments}
We now present dynamical and geometrical features of unicritical diffeomorphisms that are developed in the following sections, and explain key concepts for the proofs.

Let $r \geq 2$ be an integer. Consider a mildly dissipative, infinitely renormalizable $C^r$-diffeomorphism $F : \Omega \to F(\Omega) \Subset \Omega$ of a Jordan domain $\Omega \subset \bbR^2$,
which is $(\epsilon, \uepsilon)$-regularly unicritical on its renormalization limit set
$$
\Lambda := \bigcap_{n=1}^\infty \bigcup_{i=0}^{R_n-1} F^i(\cD^n)
$$
Let $0$ and $\log\lambda < 0$ be the Lyapunov exponents of $F$ with respect to the unique invariant probability measure $\mu$ on $\Lambda$.




\subsection{Regular blocks}
Let $\epsilon \in (0, 1)$ be a constant such that $\bepsilon < 1$. We may assume without loss of generality that $F$ is sufficiently homogeneous (see \secref{sec:homog}).

For $L \geq 1$, we say that $p\in \Lambda$ is {\it (infinitely) forward $(L, \epsilon)$-regular} if it has a strong-stable direction $E^{ss}_p \in \bbP^2_p$ such that
\begin{equation}\label{eq:hom for reg}
\|DF^n|_{E_p^{ss}}\| < L \lambda^{(1-\epsilon) n}
\matsp{for all}
n \in \bbN.
\end{equation}
Similarly, $p$ is {\it (infinitely) backward $(L, \epsilon)$-regular} if it has a center direction $E^c_p \in \bbP^2_p$ such that
\begin{equation}\label{eq:hom back reg}
\|DF^{-n}|_{E_p^c}\| < L \lambda^{-\epsilon n}
\matsp{for all}
n \in \bbN.
\end{equation}
Lastly, we say that $p$ is {\it Pesin $(L, \epsilon)$-regular} if \eqref{eq:hom for reg} and \eqref{eq:hom back reg} hold, and $\measuredangle(E^{ss}_p, E^c_p) > 1/L$. The constants $L$ and $\epsilon$ are referred to as a {\it regularity factor} and a {\it marginal exponent} respectively.

If $p$ is forward/backward $(L, \epsilon)$-regular, then the forward/backward iterates of $F$ can be nearly linearized in a neighborhood $\cU_p$ of $p$, called a {\it regular neighborhood at $p$}, with $\diam(\cU_p) = 1/\bL$ (see \thmref{reg chart}). If $p$ is Pesin $(L, \epsilon)$-regular, then the local linearization can be done for iterates of $F$ going in both directions (see \thmref{reg chart}).

The sets of all points $p \in \Lambda$ such that $p$ is infinitely forward, backward and Pesin $(L, \epsilon)$-regular are referred to as the {\it forward, backward} and {\it Pesin $(L, \epsilon)$-regular blocks}, and are denoted by $\Lambda^+_{L, \epsilon}$, $\Lambda^-_{L, \epsilon}$ and $\Lambda^P_{L, \epsilon}$ respectively.

If $p \in \Lambda^{+/-}_{L, \epsilon}$, then $p$ has a $C^r$ strong-stable/center manifold $W^{ss/c}(p)$. The {\it local strong-stable/center manifold $W^{ss/c}(p)$} at $p$ is defined as the connected component of $W^{ss/c}(p) \cap \cU_p$ containing $p$. The $C^r$-geometry of $W^{ss/c}_{\loc}(p)$ is bounded by $\bL$ (see \propref{ss c geo}). Moreover, $W^{ss/c}_{\loc}(p)$ varies $C^{1+\alpha}$-continuously on $p \in \Lambda^{+/-}_{L, \epsilon}$ (see Propositions \ref{ss c contin}).

By classical Oseledets-Pesin theory, we have $\mu(\Lambda^P_{L, \epsilon}) \to 1$ as $L \to \infty$. However, for the proof of the Main Theorem, we need a much more complete description of the regular blocks of $F$ in $\Lambda$. This is provided by the following theorem, which is proved in \subsecref{sec:refine uni}.

\begin{thm}\label{reg blocks}
Suppose that $F$ is $(\epsilon, \uepsilon)$-regularly unicritical on its renormalization limit set $\Lambda$. Let $\{c_m := F^m(c_0)\}_{m \in \bbZ}$ be the regular critical orbit of $F$. Denote $\epsilon' := (1-\bepsilon)\epsilon$. Then there exists a 
constant $\bft \in (0,1)$ such that for any $t \in (0,\bft]$, we have
\begin{enumerate}[i)]
\item $\Lambda^+_{1/\ut, \epsilon} \supset \displaystyle \Lambda \setminus \bigcup_{n = 0}^\infty \bbD_{c_{-n}}(t\lambda^{\epsilon' n})$,
\item $\Lambda^-_{1/\ut, \epsilon} \supset \displaystyle \Lambda \setminus \bigcup_{n = 1}^\infty \bbD_{c_n}(t\lambda^{\epsilon' n})$, and
\item $\Lambda^P_{1/\ut, \epsilon} \supset \displaystyle \Lambda \setminus \bigcup_{m = -\infty}^\infty \bbD_{c_m}(t\lambda^{\epsilon' |m|})$.
\end{enumerate}
\end{thm}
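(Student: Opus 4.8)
The plan is to reduce the three statements of the theorem to the single hypothesis provided by $(\epsilon,\uepsilon)$-regular unicriticality (\defnref{def unicrit}), by converting the finite-time forward regularity granted there into the infinite-time forward and backward regularity required by the definitions of $\Lambda^+_{L,\epsilon}$, $\Lambda^-_{L,\epsilon}$ and $\Lambda^P_{L,\epsilon}$. First I would fix the critical orbit $\{c_m\}_{m\in\bbZ}$, recall that $c_1$ is the critical value from \defnref{def unicrit} i), and choose $\bft$ so small that for $t \le \bft$ the exclusion balls $\bbD_{c_{-n}}(t\lambda^{\epsilon' n})$ are well inside $\Omega$ and shrink fast enough that their union has the expected small measure (using \propref{measure in crit disk}); the constant $L(t)$ from \defnref{def unicrit} ii) is what will feed the regularity factor $1/\ut$.

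For part i), take $p \in \Lambda$ lying outside every $\bbD_{c_{-n}}(t\lambda^{\epsilon' n})$, $n \ge 0$. The key point is that, since $c_1$ is a \emph{regular} critical orbit point, its backward orbit $c_{-n} = F^{-n}(c_1)$ contracts geometrically with rate $\lambda^{-(\cdot)}$ toward itself under $F$ in a quantitative way, so the exclusion condition in \defnref{def unicrit} ii), written with $F^{-n}(c_1)$ and rate $\lambda^{\epsilon n}$, is (after absorbing constants into the $\bft$ and $\ut$ notation and passing from $\epsilon$ to $\epsilon' = (1-\bepsilon)\epsilon$) implied by staying outside the balls around $c_{-n}$ at the slightly better rate $\lambda^{\epsilon' n}$. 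Hence $p$ is $N$-times forward $(L(t),\delta)$-regular for \emph{every} $N$, with $\delta$ the marginal exponent supplied by regular unicriticality; since the bound $\|DF^n|_{E_p}\| \le L(t)\lambda^{\delta n}$ holds uniformly in $N$, the direction $E_p$ stabilizes to a genuine strong-stable direction $E^{ss}_p$ satisfying \eqref{eq:hom for reg} with factor $1/\ut$ and exponent $\epsilon$ (here one uses $\delta \le 1-\epsilon$, i.e. the relation between the two marginal exponents $\epsilon < \uepsilon$ in the hypothesis). This gives $p \in \Lambda^+_{1/\ut,\epsilon}$.

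Part ii) is obtained from part i) by the time-reversal/renormalization symmetry: backward regularity at $c_n$-avoiding points is forward regularity for a dynamically conjugate picture. Concretely, by \remref{ren unicrit is unicrit} regular unicriticality is preserved under pre-renormalization, and more to the point the center manifold and the backward contraction estimates are controlled by exactly the same finite-time mechanism applied to $F^{-1}$ along the forward critical orbit $c_n = F^n(c_1)$; so the same argument, with the roles of $c_{-n}$ and $c_n$ interchanged and $DF$ replaced by $DF^{-1}$ along the center direction, yields \eqref{eq:hom back reg} and hence $p \in \Lambda^-_{1/\ut,\epsilon}$ whenever $p$ avoids all $\bbD_{c_n}(t\lambda^{\epsilon' n})$, $n \ge 1$. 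Finally, part iii): a point avoiding \emph{all} of $\bbD_{c_m}(t\lambda^{\epsilon'|m|})$, $m\in\bbZ$, lies in both $\Lambda^+_{1/\ut,\epsilon}$ and $\Lambda^-_{1/\ut,\epsilon}$ by i) and ii), so it only remains to bound the angle $\measuredangle(E^{ss}_p,E^c_p)$ from below. This I would get from the two one-sided regularity estimates together with a standard argument: if the angle were smaller than $\ut$, then comparing growth of $DF^{\pm n}$ along the two nearly-coincident directions would force, for some moderate $n$, a point of the orbit of $p$ to come within $t\lambda^{\epsilon' |m|}$ of some $c_m$ (the only place where the dominated splitting degenerates), contradicting the avoidance hypothesis; quantitatively this uses the non-uniform hyperbolicity estimates and the fact that the critical direction $E^*_{c_m}$ is where $E^{ss}$ and $E^c$ merge.

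The main obstacle I expect is the bookkeeping in parts i)–ii): one must track how the exclusion radii and the marginal exponents transform when one passes between "distance to $F^{-n}(c_1)$ with rate $\lambda^{\epsilon n}$" (the hypothesis) and "distance to $c_{-n}$ with rate $\lambda^{\epsilon' n}$" (the conclusion), absorbing the distortion of $F$ near the slowly-shrinking critical tube and the loss from $\epsilon$ to $\epsilon'=(1-\bepsilon)\epsilon$, all while keeping the regularity factor of the form $1/\ut$. The angle estimate in part iii) is conceptually the subtlest point, but it is the kind of estimate that the Oseledets–Pesin machinery of \secref{sec:refine uni} is designed to deliver, so I would expect it to follow once the quantitative reformulation of Pesin theory used elsewhere in the paper is in hand.
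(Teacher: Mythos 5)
Your proposal contains a fundamental error in part i) that undermines the whole argument. You claim that the exclusion condition in \defnref{def unicrit} ii) — namely $p\notin\bigcup_{n=0}^{N-1}\bbD_{F^{-n}(c_1)}(t\lambda^{\uepsilon n})$ — is \emph{implied} by $p$ staying outside the balls $\bbD_{c_{-n}}(t\lambda^{\epsilon' n})$. The implication goes the wrong way. Since $\uepsilon < \epsilon' < \epsilon$ and $\lambda<1$, we have $\lambda^{\uepsilon n} > \lambda^{\epsilon' n}$, so the excluded balls in the theorem's conclusion are \emph{strictly smaller} than the ones in the hypothesis. A point $p$ avoiding the small balls $\bbD_{c_{-n}}(t\lambda^{\epsilon' n})$ may well sit inside some of the larger balls $\bbD_{c_{-n}}(t\lambda^{\uepsilon n})$, in which case the regular unicriticality hypothesis gives you \emph{no} finite-time regularity at $p$ directly. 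This gap is exactly the content of the theorem, not a matter of bookkeeping: one must propagate regularity through excursions of the orbit into the annular region between the two families of balls. The paper does this via the machinery of Section~\ref{sec:refine uni}: \thmref{reg blocks} is deduced from the tunnel/crescent formulation (\thmref{ref uni}), whose proof uses the pinching at the critical point (\thmref{pinching}), Pliss moments along the critical orbit, the recovery/obstruction lemmas (\propref{for reg rec}, \ref{for reg no rec}, \ref{back reg rec}, \ref{back reg no rec}), tunnel/crescent escape times, and a delicate concatenation argument (e.g.\ \lemref{cutoff for reg}, \lemref{full tunnel for reg}, \lemref{trans back to tunnel}).

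Your treatment of part ii) by ``time-reversal symmetry'' also does not work: $F$ is dissipative, not conjugate to $F^{-1}$, the strong-stable and center directions behave entirely differently, and the critical-tunnel/valuable-crescent structure is intrinsically asymmetric. In the paper, backward regularity (\thmref{ref uni}~b)) is a separate inductive argument that threads through tunnel and crescent escape moments and converts forward vertical regularity into backward horizontal regularity via \lemref{back reg ver to hor} and \lemref{trans back to tunnel}. It is not obtained from part a) by any symmetry. The angle estimate you sketch for part iii) is roughly the right idea once i) and ii) are in hand, but it cannot get off the ground without the correct proofs of i) and ii).
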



\subsection{Local geometry of the stable and center manifolds}\label{subsec:local geom}

Using \thmref{reg blocks}, we are able to explicitly describe the local geometry of every strong stable and center manifolds that exist in the system. This description centers around the critical orbit $\{c_m\}_{m\in \bbZ}$, and relies on two key results: \thmref{unif reg crit} and \thmref{pinching}. The first result  is about the local dynamics of $F$ near $\{c_m\}_{m\in\bbZ}$. The second result is about the local geometry of $\Lambda$ near $c_0$.

In the following discussion, we designate the ``vertical'' direction as the direction of strong contraction, and the ``horizontal'' direction as the neutral direction (when such directions are well-defined). We also distinguish $c_0$ and $c_1$ as the {\it critical point} and the {\it critical value} (the choice of $c_0$ within the critical orbit can be done arbitrarily).

Consider a cover of the critical orbit by slow exponentially shrinking disks 
\begin{equation}\label{eq:intro crit disk}
c_m \in \bbD_m := \bbD_{c_m}(\bft\lambda^{\epsilon' |m|})
\matsp{for}
m \in \bbZ,
\end{equation}
where $\bft$ is given in \thmref{reg blocks}. \thmref{unif reg crit} states that under a suitable change of coordinates, the dynamics of $F$ on these disks can be uniformized in the following way. On $\bbD_m$ with $m \neq 0$, iterates of $F$ become $C^1$-close to iterates under the linear map $A(x,y) = (x, \lambda y)$. On $\bbD_0$, an application of $F$ uniformizes to an application of the H\'enon map $H(x,y) = (x^2-\lambda y,x)$. See \figref{fig:critnbh}. This is analogous to the one-dimensional case, where for a unimodal map $f$, iterations away from the critical point is effectively linear, while at the critical point, $f$ can be uniformized to the power map $P(x) = x^2$.

\begin{figure}[h]
\centering
\includegraphics[scale=0.21]{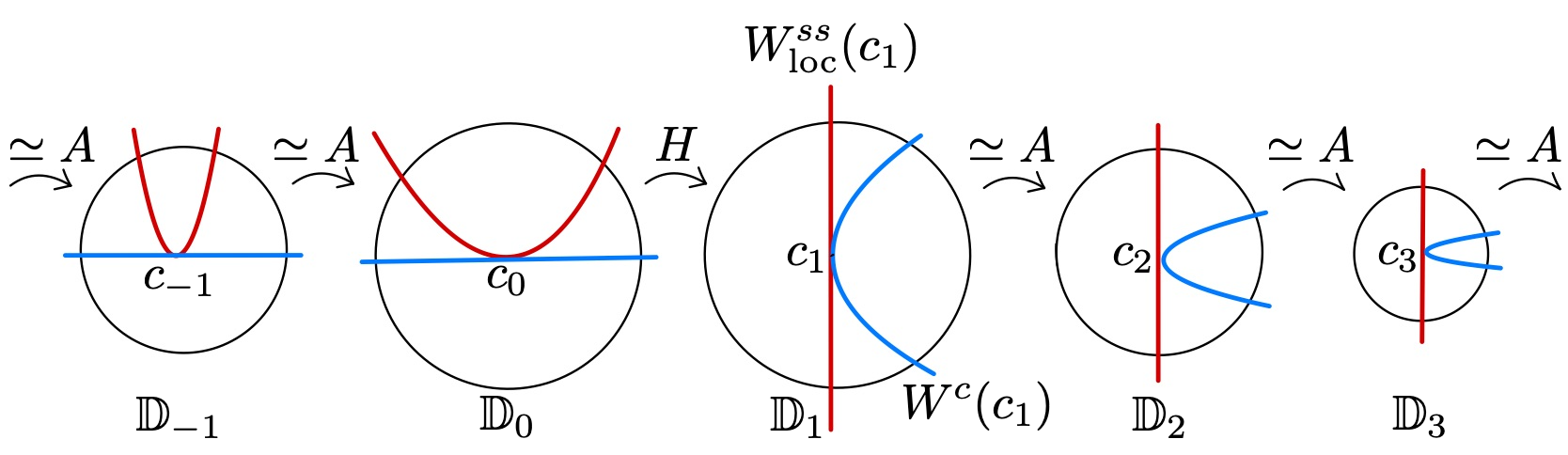}
\caption{Uniformization of the local dynamics of $F$ near the critical orbit $\{c_m\}_{m\in\bbZ}$.}
\label{fig:critnbh}
\end{figure}

In the uniformizing coordinates, the local strong stable manifold $W^{ss}_{\loc}(c_n)$ with $n \geq 1$ is a vertical line segment, while the center manifold $W^c(c_{-m})$ with $m \leq 0$ is a horizontal line segment. At the critical moment, $F$ maps straight horizontal curves in $\bbD_0$ to quadratic vertical curves in $\bbD_1$, and $F^{-1}$ maps straight vertical curves in $\bbD_1$ to quadratic horizontal curves in $\bbD_0$. Observe that the curvature of $W^c(c_n)$ for $n \geq 1$ increases exponentially fast near $c_n$ under forward iterates of $F$. Similarly, the curvature of $W^{ss}(c_{-n})$ for $n \leq 0$ increases exponentially fast near $c_{-n}$ under backward iterates of $F$.

\thmref{pinching} states that $\Lambda$ ``pinches'' at $c_0$. That is, if $\Phi_{c_0} : \bbD_0 \to \Phi_{c_0}(\bbD_0)$ is the change of coordinate on $\bbD_0$ that uniformizes the dynamics of $F$, then
$$
\Phi_{c_0}(\Lambda \cap \bbD_0) \subset T_0 := \{(x,y) \; | \; |y| \leq |x|^{1/\epsilon}\}.
$$
Let $\cT_{c_0} := \Phi_{c_0}^{-1}(T_0)$. It follows that $\Lambda \cap \bbD_m$ is contained in a pinched region $\cT_{c_m}$ for all $m \in \bbZ$, so that for $n \geq 0$, we have
$$
F^n(\cT_{c_{-n}}) \subset \cT_{c_0}
\matsp{and}
F^{-(1+n)}(\cT_{c_{1+n}}) \subset \cT_{c_0}.
$$
We call $\cT_{c_{-n}}$ the {\it critical tunnel at $c_{-n}$}, and $\cT_{c_{1+n}}$ the {\it valuable crescent at $c_{1+n}$}. We also show that for all $m, l \in \bbZ$ with $|m| < |l|$ we have
$$
\cT_{c_m} \cap \cT_{c_l} = \varnothing
\matsp{or}
\cT_{c_m} \supset \cT_{c_l}.
$$
The {\it generation of $\cT_{c_{-n}}$} is the number of critical tunnels that contain $\cT_{c_{-n}}$. Similarly, the {\it generation of $\cT_{c_{1+n}}$} is the number of valuable crescents that contain $\cT_{c_{1+n}}$.

Lastly, we introduce the vertical manifold
$$
W^v(c_0) := \Phi_{c_0}^{-1}(\{(0, y) \; | \; y \in \bbR\})\cap \bbD_0.
$$
For $n \geq 0$, define the {\it vertical manifold $W^v(c_{-n})$} and the {\it horizontal manifold $W^h(c_{1+n})$} as the longest arcs in $\bbD_{-n}$ and $\bbD_{1+n}$ containing $c_{-n}$ and $c_{1+n}$ respectively such that
$$
F^n(W^v(c_{-n})) \subset W^v(c_0)
\matsp{and}
F^{-(1+n)}(W^h(c_{1+n}))\cap \bbD_0 \subset W^v(c_0).
$$
One should consider $W^v(c_{-n})$ and $W^h(c_{1+n})$ as ``stand ins'' for the strong stable and center manifolds for $c_{-n}$ and $c_{1+n}$ respectively. See \figref{fig:tunnel}

\begin{figure}[h]
\centering
\includegraphics[scale=0.27]{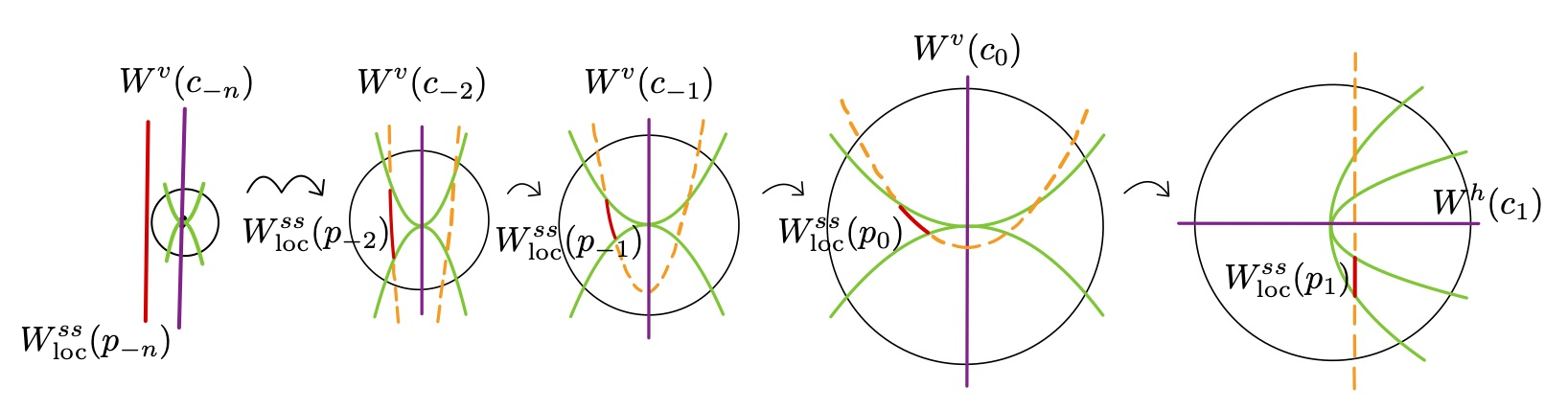}
\caption{Critical tunnels, valuable crescents, and vertical and horizontal manifolds. The local geometry of the strong-stable manifold of $p_{-i} \in \cT_{c_{-i}}$ for $i \geq 0$ is initially distorted near the critical orbit, but becomes vertical again under the backward dynamics of $F$.}
\label{fig:tunnel}
\end{figure}

We are now ready to describe the local geometry of all invariant manifolds that exist in $\Lambda$. We begin with strong stable manifolds. Consider a sufficiently small neighborhood $\cU_{p}$ of a uniformly forward regular point $p$. For $k\geq 0$, let $\bfT_{p}^k$ be the set of all critical tunnels of generation $k$ contained in $\cU_{p}$.

If $q \in \cU_{p}$ is uniformly forward regular, then $W^{ss}_{\loc}(q)$ is a proper straight vertical curve in $\cU_p$. Moreover, if $\cT_{c_{-m}} \in \bfT_p^0$, then $W^v(c_{-m})$ is also a proper straight vertical curve in $\cU_p$. If, on the other hand, $q$ is not uniformly forward regular, then $q \in \cT_{c_{-m}}$ for some $\cT_{c_{-m}} \in \bfT_p^0$. See \figref{fig:tunnelinbox}.

\begin{figure}[h]
\centering
\includegraphics[scale=0.21]{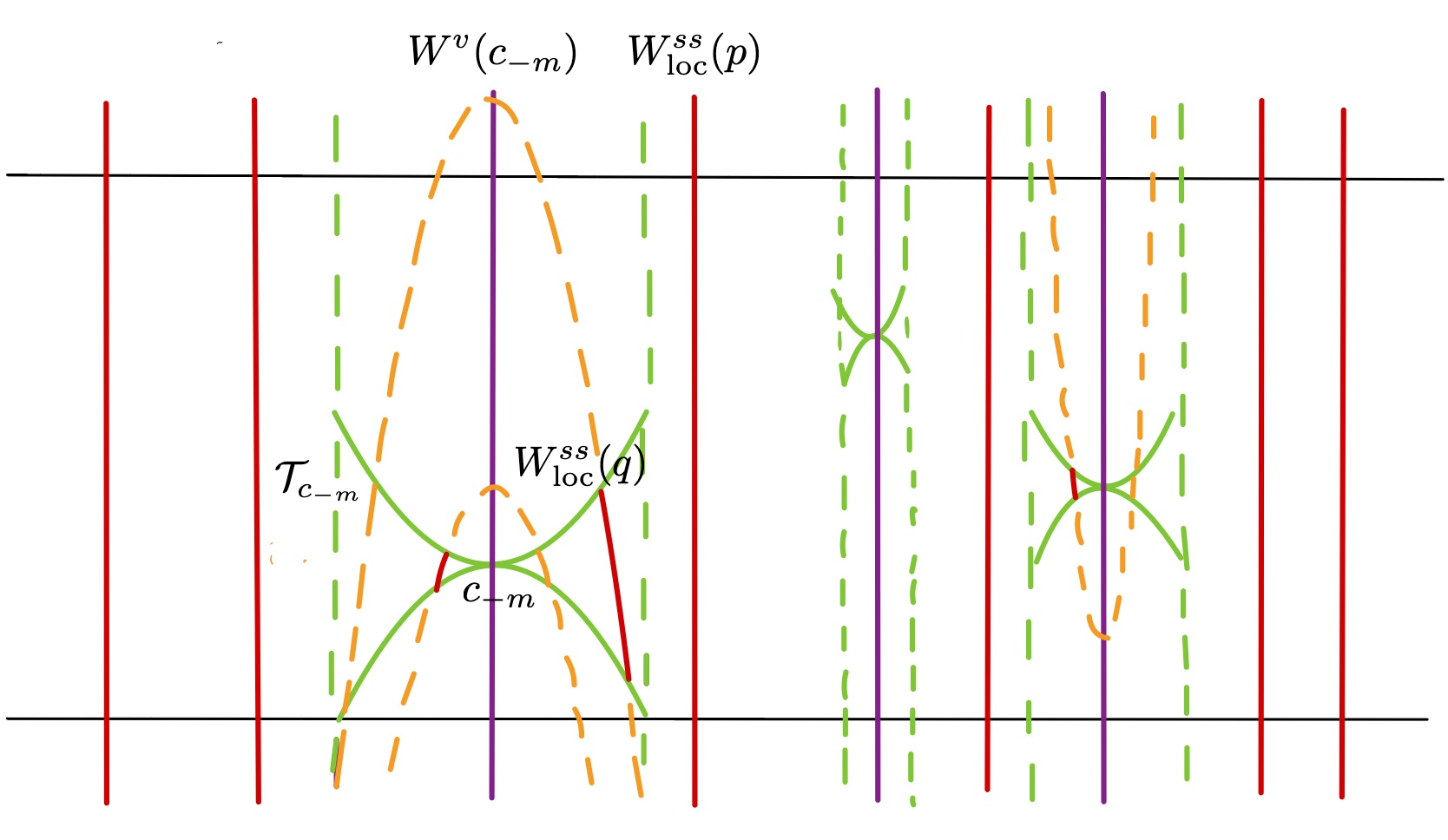}
\caption{Stable lamination in a neighborhood $\cU_p$ of a uniformly forward regular point $p$. If $q \in \cU_p$ is likewise uniformly forward regular, then $W^{ss}_{\loc}(q)$ is proper and vertical in $\cU_p$. If $q$ is not uniformly forward regular, then $q\in \cT_{c_{-m}}$ for some $m\geq 0$. If $\cT_{c_{-m}}$ is of generation $0$, then $W^v(c_{-m})$ is also proper and vertical.}
\label{fig:tunnelinbox}
\end{figure}

To complete our description, we need to understand what local strong stable manifolds look like inside the critical tunnels of generation $0$. Towards this end, let $p = c_1$ in the above description. For $k \geq 0$, applying $F^{-1}$ increases the the generation of critical tunnels in $\bfT_{c_1}^k$ by one:
$$
\bfT_{c_0}^{k+1} = \{\cT_{c_{-m-1}} \; | \; \cT_{c_{-m}} \in \bfT_{c_1}^k\}.
$$
Hence, we see that if $q_0 \in \cU_{c_0}$ is not contained in any $\cT_{c_{-m}} \in \bfT_{c_0}^1$, then $q_1 \in \cU_{c_1}$ is uniformly forward regular, and $W^{ss}_{\loc}(q_1)$ is a proper straight vertical curve. Thus, we conclude that
$$
W^{ss}_{\loc}(q_0) \subset F^{-1}(W^{ss}_{\loc}(q_1)) \cap \cT_{c_0}
$$
is a subarc of a quadratic horizontal curve (proper in $\cT_{c_0}$). See \figref{fig:tunnel}. Pulling back under the dynamics of $F$, we obtain the local stable manifolds of every point contained in a critical tunnel of generation $0$, but not contained in a critical tunnel of generation $1$.

This observation also explains the mechanism by which poor forward regularity can improve under the dynamics. By uniformization, we see that iterating by $F^{-1}$ on $\cT_{c_0}$ is effectively iterating by the linear map $A^{-1}(x,y) = (x, \lambda^{-1}y)$. Under this strong vertical expansion, subarcs of quadratic horizontal curves in $\cT_{c_0}$ become straighter and more vertical. So for $q_0 \in \cT_{c_0}$ not contained in any critical tunnels of higher generation, at the moment $-n$ when $q_{-n} \notin \cT_{c_{-n}}$, the forward regularity at $q_{-n}$ is fully recovered to be once again uniform. See \figref{fig:tunnel}.

\begin{figure}[h]
\centering
\includegraphics[scale=0.22]{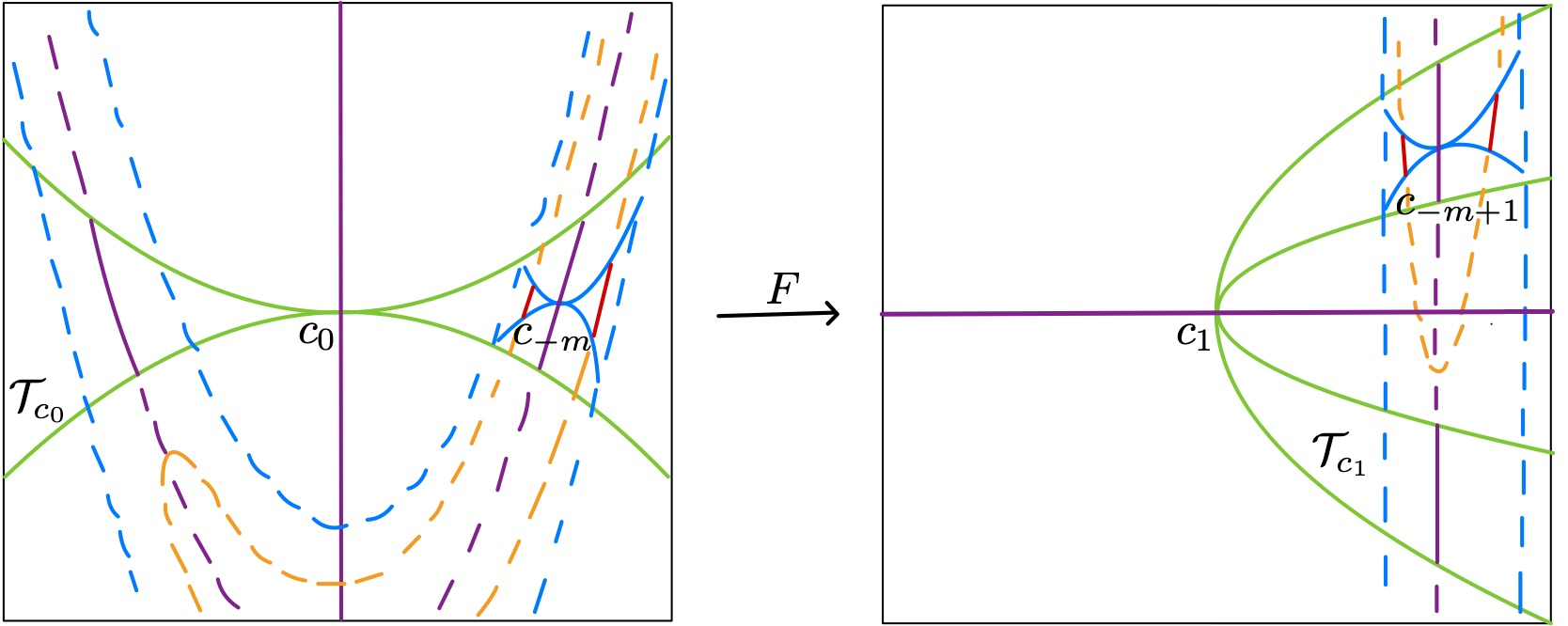}
\caption{Local strong-stable manifold contained in a critical tunnel $\cT_{c_{-m}} \subset \cT_{c_0}$ of generation $1$ (which is the preimage of the critical tunnel $\cT_{c_{-m+1}} \subset \cT_{c_1}$ of generation $0$).}
\label{fig:map}
\end{figure}

Proceeding by induction, we obtain a description of the local geometry of the strong stable manifold of every point in $\Lambda$ that is contained in at most finitely many critical tunnels. See \figref{fig:map}. We conclude by observing that for $q \in \Lambda$, the deeper the nest of critical tunnels which contains $q$, the worse the forward regularity of $F$ at $q$ becomes. Consequently, the forward regularity of $F$ deteriorates completely at points contained in infinite nests of critical tunnels, and the portions of their stable manifolds that have controlled geometry get reduced to nothing more than the points themselves. The set $\Delta_{c_0}^c$ of all such points has zero measure (see \propref{inv dust}), and is referred to as {\it the critical dust of $c_0$}.

Replacing forward regularity with backward regularity, vertical with horizontal, and critical tunnels with valuable crescents in the above discussion, we obtain a description of the local geometry of the center manifold of every point in $\Lambda$ except those contained in infinitely many valuable crescents. These exceptional points have completely deteriorated backward regularity. They form a zero-measure set $\Delta_{c_1}^v$ referred to as {\it the valuable dust of $c_1$}.

\subsection{Summary of the proof of the main theorem}\label{sec:summary}


For $i \in \{0, 1\}$, let $\Phi_{c_i} : (\bbD_i, c_i) \to (\Phi_{c_i}(\bbD_i), 0)$ be the change of coordinates on $\bbD_i$ given in \eqref{eq:intro crit disk} that uniformizes the local dynamics of $F$ along the critical orbit $\{c_m\}_{m\in \bbZ}$. In particular,
\begin{equation}\label{eq:intro henon}
\Phi_{c_1}\circ F\circ \Phi_{c_0}^{-1}(x,y) = (x^2-y, x)
\matsp{for}
(x,y) \in \Phi_{c_0}(\bbD_0).
\end{equation}
In these coordinates, $W^{ss}_{\loc}(c_1)$ becomes a genuine vertical segment, and $W^c(c_0)$ becomes a genuine horizontal segment.

For $n \geq 1$, let $\cD^n$ be the renormalization domain of depth $n$ that contains $c_1$. A {\it renormalization piece of depth $n$} is the set
$$
\Lambda^n_i := \Lambda \cap F^{i-1}(\cD^n)
\matsp{for}
1 \leq i <R_n.
$$
Note that $\Lambda^n_{R_n+1} = \Lambda^n_1$. Hence, $\Lambda^n_{R_n} \ni c_0$. We show that $\displaystyle\bigcap_{n=1}^\infty \Lambda^n_1 = \{c_1\}$ (see \thmref{crit recur}). To simplify our discussion, we will assume that $\Lambda$ is totally disconnected (to make the argument work in the general case, we prove that any non-trivial connected component of $\Lambda$ is a $C^r$-curve that coincides with the center manifold, see \thmref{component arc}).

The main tool used in the proof of the Main Theorem is a structure called a {\it unicritical cover $\cC$ of $\Lambda$}. It is defined as follows. For any uniformly Pesin regular point $p \in \Lambda$, one can define a {\it regular chart} $\Phi_p : (\cB_p, p) \to (B_p, 0)$, which is a $C^r$-diffeomorphism from a quadrilateral $\cB_p \ni p$ (called a {\it regular box at $p$}) to a rectangle $B_p$ with $\|\Phi_p^{\pm 1}\|_{C^r} = O(1)$,
that satisfies the following properties:
\begin{itemize}
\item The local strong stable manifold $W^{ss}_{\loc}(p)$ and the center manifold $W^c(p)$ map to genuine vertical and horizontal line segments respectively.
\item If there is another uniformly Pesin regular point $p' \in \Lambda$ such that $F(\cB_p) \cap \cB_{p'} \ni q$, then a horizontal direction at $F^{-1}(q)$ (with respect to $\Phi_p$) is sent to a sufficiently horizontal direction at $q$ (with respect to $\Phi_{p'}$) under $DF$. Similarly, a vertical direction at $q$ is sent to a sufficiently vertical direction at $F^{-1}(q)$ under $DF^{-1}$.
\end{itemize}
The study of uniformly (partially) hyperbolic systems is greatly facilitated by the fact that the invariant set
can be entirely covered by a finite set of regular boxes, which endows the system with what is referred to as a {\it local product structure}.

In our setting, any union of regular boxes leaves out points in $\Lambda$ that are too close to the critical orbit. To completely cover $\Lambda$, we include critical tunnels and valuable crescents defined in \subsecref{subsec:local geom}. This leads to the following definition of a {\it unicritical cover of $\Lambda$}:
$$
\cC := \{\cB_p\}_{p \in \Lambda^P_{L, \epsilon}} \cup \{\cT_{c_m}\}_{m\in\bbZ}
$$
(where $L \geq 1$ is some sufficiently large regularity constant). By the compactness of $\Lambda$, we may extract a finite subcover $\cC_0 \subset \cC$. If $\Lambda$ is totally disconnected, then for any $n \geq N$ and $i \in \bbN$, there exists $Q \in \cC_0$ such that $\Lambda^n_i \Subset Q$.

For the Main Theorem, we need to ensure that a unicritical cover $\cC_0$ of $\Lambda$ for $F$ can fulfill a similar function as local product structures for uniformly (partially) hyperbolic systems. This requires the full use of our understanding of the forward and backward regularity of $F$ on $\Lambda$ summarized in \subsecref{subsec:local geom}.

For each $Q \in \cC_0$, we endow $\Lambda \cap Q$ with a total order induced by the strong-stable lamination as follows. First consider the case when $Q$ is either a regular box $\cB_p$ for some uniformly Pesin regular point $p \in \Lambda^P_{L, \epsilon}$, or a valuable crescent $\cT_{c_n}$ for some $n \in \bbN$. If $q \in \Lambda^+_{L, \epsilon} \cap Q$ is a uniformly forward regular point, then $W^{ss}_{\loc}(q)$ is a proper straight vertical curve in $Q$. Hence, the strong-stable lamination clearly induces a total order on $\Lambda^+_{L, \epsilon} \cap Q$ (say, their order of appearance from left to right).

Next, consider the case where $Q$ is a critical tunnel $\cT_{c_{-m}}$ for some $m \geq 0$. Let $\Lambda^+(\cT_{c_{-m}})$ be the set of all points $q \in \cT_{c_{-m}}$ such that $\cT_{c_{-m}}$ is the tunnel of the largest generation containing $q$ (i.e. there is no deeper tunnel $\cT_{c_{-k}} \subset \cT_{c_{-m}}$ with $k > m$ containing $q$).

If $q_0 \in \Lambda^+(\cT_{c_0})$, then its image $q_1 \in \cT_{c_1}$ is not contained in any critical tunnel. Hence, $q_1\in\Lambda^+_{L,\epsilon}$, and $W^{ss}_{\loc}(q_1)$ is a proper straight vertical curve in a neighborhood $\cU_{c_1}$ of $c_1$. This means that, in particular, the component $W^{ss}_{\cT_{c_1}}(q_1)$ of $W^{ss}_{\loc}(q_1) \cap \cT_{c_1}$ containing $q_1$ is proper in $\cT_{c_1}$. Define
$$
W^{ss}_{\loc}(q_0) := F^{-1}(W^{ss}_{\cT_{c_1}}(q_1)).
$$
It is easy to see that the strong-stable lamination
$$
\{W^{ss}_{\loc}(q) \; | \; F(q) \in \Lambda^+_{L, \epsilon} \cap \cT_{c_1}\}
$$
induces a total order on $\Lambda^+(\cT_{c_0})$. Pulling this back by the dynamics, we also obtain a total order on the set $\Lambda^+(\cT_{c_{-m}})$ for any $m \geq 0$.

We now explain how these total orders on separate subsets of $\Lambda$ fit together to form a single total order on $\Lambda \cap Q$ for any $Q \in \cC_0$. Suppose $Q$ is either a regular box or a valuable crescent. Let $\bfT_Q^0$ be the set of critical tunnels in $Q$ of generation $0$. For $\cT_{c_{-m}} \in \bfT_Q^0$, we consider a suitable thin proper nearly-vertical strip $S_{c_{-m}}$ that contains $\cT_{c_{-m}}$.
Then the collection 
$$
\{W^{ss}_{\loc}(q)\; | \; q\in\Lambda^+_{L, \epsilon}\cap Q \} \cup \{S_{c_{-m}} \; | \; \cT_{c_{-m}} \in \bfT_Q^0\}
$$
is pairwise disjoint. Hence, the total order on $\Lambda^+_{L, \epsilon}\cap Q$ can be extended to include $\Lambda^+(\cT_{c_{-m}})$ with $\cT_{c_{-m}} \in \bfT_Q^0$.

So far, for any $Q \in \cC_0$, we have a total order on the set of all points in $\Lambda \cap Q$ that are contained in at most one deeper critical tunnel $\cT_{c_{-m}} \subset Q$. Proceeding by induction, we obtain a total order on the set of all points in $\Lambda \cap Q$ contained in at most finitely many critical tunnels. In other words, we have a total order on the set $(\Lambda\setminus  \Delta_{c_0}^c) \cap Q$, where $\Delta_{c_0}^c$ denotes the critical dust of $c_0$ defined in \subsecref{subsec:local geom}. Since the intersection of any infinite nest of critical tunnels is a singleton, the total order extends to $\Delta_{c_0}^c \cap Q$ as well. For more details, see \secref{sec:tot order}.

Now that we have the unicritical cover structure $\cC_0$ with well-defined total orders on its elements, we are able to give a general outline of the proof of the Main Theorem. For $n \geq N$ with $N \in\bbN$ sufficiently large, let $\cV_1^n$ be a suitable thin tubular neighborhood of  $W^{ss}_{\loc}(c_1)$ in $\bbD_1 \ni c_1$ such that $\cV_1^n \supset \Lambda_1^n$. Denote $\cV_i^n := F^{i-1}(\cV_1^n)$ for $1 \leq i <R_n$.
We have $\cV^n_{R_n} = F^{R_n-1}(\cV^n_1) \ni c_0$. We need to show that under $F^{R_n-1}$:
\begin{enumerate}[1)]
\item There is a compression of $\cV^n_1$ in the vertical direction by a factor of $\lambda^{(1-\bepsilon)R_n}$.
\item A horizontal foliation over $\cV^n_1$ is mapped to a horizontal foliation close to $W^c(c_0)$.
\end{enumerate}
Then by applying $F$ one more time in a neighborhood $\cU_{c_0}$ of the critical point $c_0$, the vertically-compressed, horizontally-aligned rectangular set $\cV^n_{R_n} \subset \cU_{c_0}$ gets bent into the image of a H\'enon-like map. See \eqref{eq:intro henon} and \figref{fig:thm}.

\begin{figure}[h]
\centering
\includegraphics[scale=0.22]{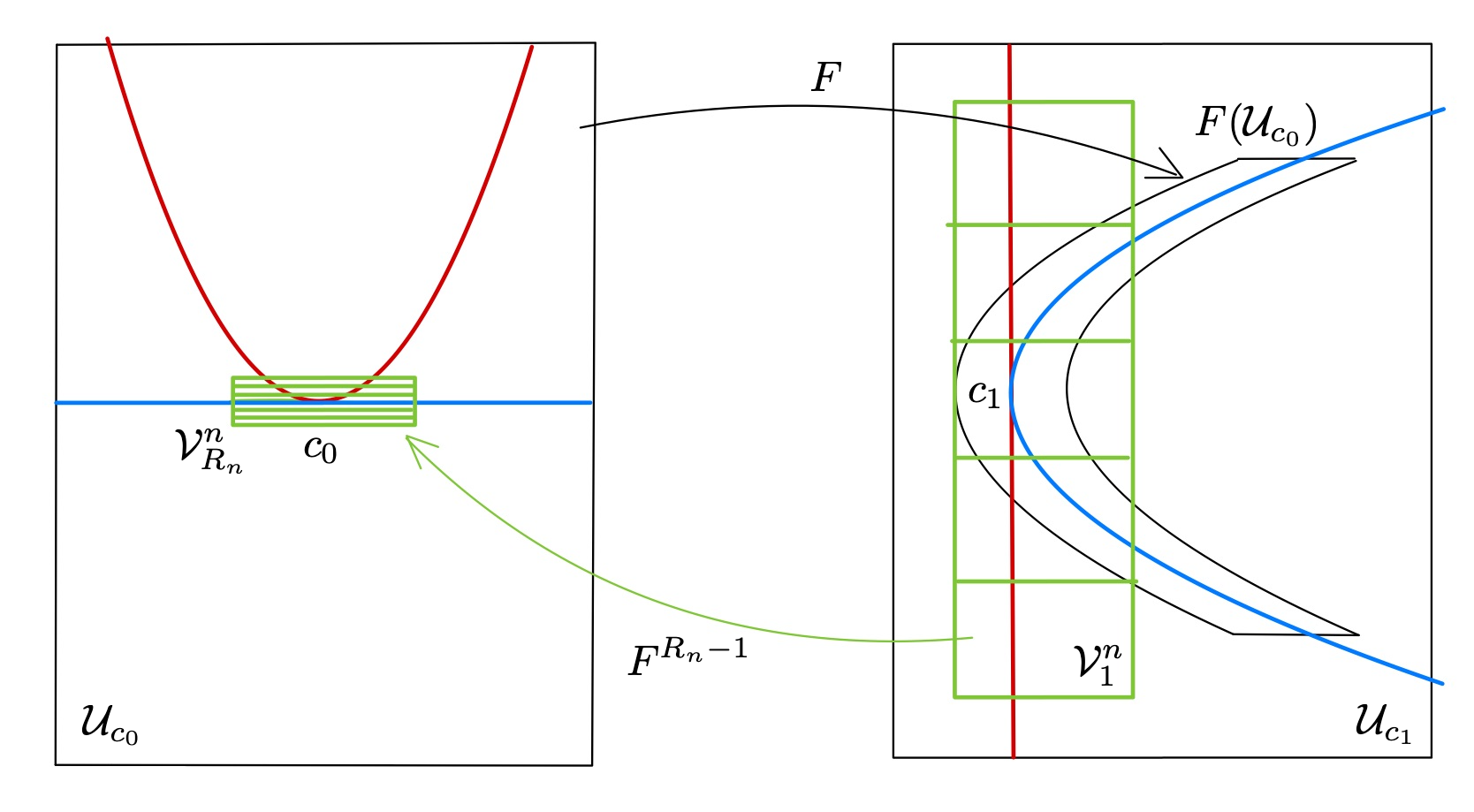}
\caption{An illustration of the statement of the Main Theorem.}
\label{fig:thm}
\end{figure}

There are two crucial differences between the unicritical cover structure $\cC_0$ and the local product structure. The first is that because the covering element $\cT_{c_0} \in \cC_0$ at $c_0$ is pinched, it is a much more delicate problem to ensure that a domain of interest stays contained inside elements in $\cC_0$ under the dynamics. The second is that unlike in the local product structure, horizontal-like and vertical-like directions are not everywhere preserved. Indeed, horizontal directions near $c_0$ switch to vertical-like directions near $c_1$. These difficulties are navigated using the total order on $\cC_0$, which enables us to prove the following crucial facts.
\begin{enumerate}[i)]
\item For $1 \leq i< R_n$, if $\Lambda^n_i \subset Q \in \cC_0$, then the extremal points of $\Lambda^n_i$ with respect to the total order on $\Lambda\cap Q$ are $c_{1+i}$ and $c_{1+R_n+i}$. Moreover, $\cV^n_i$ stay horizontally confined between the strong-stable manifolds of these extremal points.
\item We have
$$
\Lambda^{n+1}_1 \cap \cT_{c_{-R_n+1}} = \varnothing
\matsp{for}
n \geq N.
$$
\item We have
$$
\Lambda^{n+1}_{R_n} \cap \cT_{c_{R_n}} = \varnothing
\matsp{for}
n \geq N.
$$
\end{enumerate}
Facts i) and ii) are used to show that if $\Lambda^n_i \subset Q \in \cC_0$ for some $1 \leq i < R_n$, then $\cV^n_i$ is contained in a slight thickening $\hQ$ of $Q$. Fact iii) is used to show that after passing through the critical value $c_1$, horizontal directions in $\cV^n_i$ recover their alignment to the local neutral direction before the next closest approach to the critical point $c_0$. This means that the vertical directions in $\cV^n_i$ typically stay transverse to the neutral direction. Hence, they must contract under the dynamics. The Main Theorem follows. For more details, see Sections \ref{sec:order on piece} and \ref{sec:henon return}.

\subsection{Outline of the paper}\label{sec:organization}

To help the reader, we outline now the main content of each section and we refer to the main result proved in each one. Some of the sections  revisit results proved somewhere else.

In Section \ref{sec:quant pesin} we present the classical Pesin theory insisting in certain  quantitative  properties that are needed in our context, in particular, to deal with (forward/backward) ``regular finite pieces of orbits"  that are not necessary part of regular orbits in the  classical sense of 
Pesin theory. The main result is Theorem \ref{reg chart} that provides linearizing coordinates for (finite) regular points. 

In Section \ref{sec:homog} we revisit a quantitative notion of dissipation that was introduced in \cite{CP} and proved to be satisfied for odometers in \cite{CPT}. It allows us to show that the regularity conditions presented in Section \ref{sec:quant pesin} are satisfied.

In Section \ref{sec:crit orb} we obtain linearization coordinates at the critical point and value (see Proposition \ref{unif reg crit}) and we estimate the distance of the closest return of the critical point to itself (see Proposition \ref{no crit in crit nbh}).

In the first part of Section \ref{sec:reg near crit orb} we show how the finite (forward/backward) regularity of points in scaled neighborhood of the critical point is controlled (Propositions \ref{for reg rec} and \ref{back reg rec} for obtaining regularity and Propositions \ref{for reg no rec} and \ref{back reg no rec} for obstructing regularity). In the second part (Section \ref{subsec:crit tunnel}), we introduce the notions of crescents and tunnels (and the scaled ones related to the pre-iterates of the tunnels).

In Section \ref{sec:mild diss} we revisit the notion of mild dissipativity. Recall that this notion states that the stable manifold of any regular point separates the attracting disk where the dynamics is defined. In this section we  estimate quantitatively  how uniform pieces of local stable manifold separate forward iterates of the attracting disk.

In Section \ref{sec:inf ren} we recall a classical result about the unique ergodicity of odometers and we characterize the trivial connected components of the limit set. 

In Section \ref{sec:unicrit} we show that the odometer nearby the critical point is contained inside the pinched tunnel (Theorem \ref{pinching}). 

In Section \ref{sec:tot order} we show how scaled tunnels fits in the regular boxes and in the initial tunnel, providing first a local total order and later a global total order. More precisely, using regular boxes and scaled tunnels, in Section \ref{subsec:unicrit cover} we introduce the notion of unicritical cover and we show that such a cover is ``totally ordered" (Theorem \ref{thm:total order}).


\section{Quantitative Pesin Theory}\label{sec:quant pesin}

Let $r \geq 2$ be an integer. Consider a dissipative $C^r$-diffeomorphism $F : \Omega \to F(\Omega) \Subset \Omega$ defined on a Jordan domain $\Omega \subset \bbR^2$. Let $\Lambda \Subset \Omega$ be a totally invariant compact set, and let $\mu$ be an ergodic probability measure supported on $\Lambda$. Suppose that the Lyapunov exponents of $F$ with respect to $\mu$ are $0$ and $\log \lambda <0$. Let $\epsilon >0$ be a small constant such that $\bepsilon < 1$.


\subsection{Definitions of regularity}

Let $N, M \in \bbN\cup \{0,\infty\}$ and $L \geq 1$. A point $p \in \Lambda$ is {\it $N$-times forward $(L, \epsilon)_v$-regular along $E^+_p \in \bbP^2_p$} if for $s \in \{-r, r-1\}$, we have
\begin{equation}\label{eq:for reg}
L^{-1}\lambda^{(1+\epsilon)n}\leq \frac{(\Jac_pF^n)^s}{\|DF^n|_{E^+_p}\|^{s-1}} \leq L \lambda^{(1-\epsilon)n}
\matsp{for all}
1 \leq n \leq N.
\end{equation}
Similarly, $p$ is {\it $M$-times backward $(L, \epsilon)_v$-regular along $E^-_p \in \bbP^2_p$} if for $s \in \{-r, r-1\}$, we have
\begin{equation}\label{eq:back reg}
L^{-1}\lambda^{-(1-\epsilon)n}\leq \frac{(\Jac_pF^{-n})^s}{\|DF^{-n}|_{E^-_p}\|^{s-1}} \leq L \lambda^{-(1+\epsilon)n}
\matsp{for all}
1 \leq n \leq M.
\end{equation}
If \eqref{eq:for reg} and \eqref{eq:back reg} are both satisfied with $E_p^v := E_p^+ = E_p^-$, then we say that $p$ is {\it $(M, N)$-times $(L, \epsilon)_v$-regular along $E_p^v$}. If additionally, we have $M=N = \infty$, then we say that $p$ is {\it Pesin $(L, \epsilon)_v$-regular along $E_p^v$}.

We say that $p$ is {\it $N$-times forward $(L, \epsilon)_h$-regular along $\tiE_p^+ \in \bbP_p^2$} if for $s \in \{-r+1, r\}$, we have
\begin{equation}\label{eq:hor for reg}
L^{-1}\lambda^{(1+\epsilon)n}  \leq \frac{\Jac_p F^n}{\|D_pF^n|_{\tiE_p^+}\|^{s+1}} \leq L\lambda^{(1-\epsilon)n}
\matsp{for}
1 \leq n \leq N.
\end{equation}
Similarly, we say that $p$ is {\it $M$-times backward $(L, \epsilon)_h$-regular along $\tiE_p^- \in \bbP_p^2$} if for $s \in \{-r+1, r\}$, we have
\begin{equation}\label{eq:hor back reg}
L^{-1}\lambda^{-(1-\epsilon)n}  \leq  \frac{\Jac_p F^{-n}}{\|D_pF^{-n}|_{\tiE_p^-}\|^{s+1}} \leq L\lambda^{-(1+\epsilon)n}
\matsp{for}
1 \leq n \leq M.
\end{equation}
If both \eqref{eq:hor for reg} and \eqref{eq:hor back reg} hold with $E_p^h := \tiE_p^+ = \tiE_p^-$, then $p$ is {\it $(M, N)$-times $(L, \epsilon)_h$-regular along $E_p^h$}. If, additionally, we have $M = N =\infty$, then $p$ is {\it Pesin $(L, \epsilon)_h$-regular along $E_p^h$}.


\begin{prop}[Vertical forward regularity $=$ horizontal forward regularity]\label{hom transverse for reg}
Suppose $p$ is $N$-times forward $(L, \epsilon)_v$-regular along $E_p^v \in \bbP_p^2$. Let $E_p^h \in \bbP_p^2 \setminus \{E_p^v\}$. If $\measuredangle (E_p^v, E_p^h) > \theta$, then the point $p$ is $N$-times forward $(\bL/\theta^2, \epsilon)_h$-regular along $E_p^h$.

Conversely, if $p$ is $N$-times forward $(L, \epsilon)_h$-regular along $E_p^h \in \bbP_p^2$, then there exists $E_p^v \in \bbP_p^2$ such that $p$ is $N$-times forward $(\bL, \bepsilon)_v$-regular along $E_p^v$.
\end{prop}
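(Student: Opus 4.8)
The plan is to first rewrite each regularity condition as a pair of two‑sided bounds — one on $\|DF^n|_E\|$ and one on $\Jac_pF^n$ — and then pass between a vertical and a horizontal direction by the elementary parallelogram identity
\[
\Jac_pF^n\cdot\bigl|\sin\measuredangle(E,E')\bigr|=\bigl\|DF^n|_E\bigr\|\cdot\bigl\|DF^n|_{E'}\bigr\|\cdot\bigl|\sin\measuredangle(DF^nE,DF^nE')\bigr|,
\]
valid for any $E,E'\in\bbP^2_p$ and any $n$. For the \emph{reduction}, fix $E$ and $n\le N$ and write $v=\|DF^n|_E\|$, $J=\Jac_pF^n$. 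Taking $\log_\lambda$ turns the two inequalities defining $(L,\epsilon)_v$‑regularity (the cases $s=r-1$ and $s=-r$) into a non‑degenerate $2\times 2$ linear system in $(\log_\lambda v,\log_\lambda J)$; solving it shows that $p$ is $N$‑times forward $(L,\epsilon)_v$‑regular along $E$ exactly when $\lambda^{(1+\epsilon)n}/L\le v\le L\lambda^{(1-\epsilon)n}$ and $\lambda^{(1+\epsilon)n}/L\le J\le L\lambda^{(1-\epsilon)n}$ for all $n\le N$ — i.e.\ ``$\|DF^n|_E\|$ and $\Jac_pF^n$ are both comparable to $\lambda^n$'', the reverse implication costing only a bounded power of $L$ and a bounded multiple of $\epsilon$. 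The same computation with $s\in\{-r+1,r\}$ shows that $(L,\epsilon)_h$‑regularity along $E$ is equivalent, up to the same kind of controlled loss, to ``$\|DF^n|_E\|$ comparable to $1$ and $\Jac_pF^n$ comparable to $\lambda^n$'' for $n\le N$. In particular the $\Jac_pF^n$‑part of the conclusion is free on both sides of the Proposition, and it remains only to control the size of $\|DF^n|_{E^h}\|$ (resp.\ $\|DF^n|_{E^v}\|$).

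For the implication $v\Rightarrow h$: the reduction gives $\|DF^n|_{E^v}\|\asymp\lambda^n$ and $\Jac_pF^n\asymp\lambda^n$, and together with $\sigma_2(DF^n_p)\le\|DF^n|_{E^v}\|\le\sigma_1(DF^n_p)$, $\sigma_1\sigma_2=\Jac_pF^n$, and the quantitative finite‑time Pesin estimates of \secref{sec:quant pesin} underlying \thmref{reg chart}, I would upgrade this to finite‑time partial hyperbolicity of the orbit segment $p,\dots,F^N(p)$: the singular values of $DF^n_p$ are comparable to $1$ and to $\lambda^n$, $E^v$ lies within angle $O(\lambda^{cn})$ of the most contracted singular direction of $DF^n_p$, and these directions converge along $n$ at a geometric rate. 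If $\measuredangle(E^v,E^h)>\theta$, then $E^h$ makes angle $\gtrsim\theta$ with that singular direction, so $DF^nE^v$ and $DF^nE^h$ stay near an orthogonal pair and $\bigl|\sin\measuredangle(DF^nE^v,DF^nE^h)\bigr|$ is bounded below in terms of $\theta$; feeding this, with $\|DF^n|_{E^v}\|\asymp\lambda^n$, $\Jac_pF^n\asymp\lambda^n$ and $|\sin\measuredangle(E^v,E^h)|\in(\sin\theta,1]$, into the parallelogram identity solved for $\|DF^n|_{E^h}\|$ yields $\|DF^n|_{E^h}\|$ comparable to $1$ with the angle loss absorbed by the factor $\bL/\theta^2$, and the reduction step closes the argument.

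For the converse I would take $E^v:=e_2(DF^N_p)$, the most contracted singular direction of the full‑time map. Here the reduction gives $\|DF^n|_{E^h}\|\asymp1$, $\Jac_pF^n\asymp\lambda^n$ for $n\le N$; the same property is inherited, with a controlled worse constant, by each iterated point $F^n(p)$ along $DF^nE^h$, which (via \secref{sec:quant pesin}) forces all the tail cocycles $DF^{N-n}_{F^n(p)}$ to be quantitatively partially hyperbolic and pins $E^v$ within $O(\lambda^{cn})$ of $e_2(DF^n_p)$ for every $n\le N$. Hence $\|DF^n|_{E^v}\|\asymp\lambda^n$ for all $n\le N$, and the reduction step yields $(\bL,\bepsilon)_v$‑regularity along $E^v$.

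The hard part is the transversality of the images: bounding $\bigl|\sin\measuredangle(DF^nE^v,DF^nE^h)\bigr|$ below in the first implication, and $\measuredangle\bigl(E^v,e_2(DF^n_p)\bigr)$ above in the second. The parallelogram identity only supplies the opposite inequalities for free, and the missing bounds genuinely use the finite‑time partial hyperbolicity — that $E^v$ tracks the strong‑stable singular direction, so a $\theta$‑transverse $E^h$ is swept by $DF^n$ into the center cone rather than collapsing onto $DF^nE^v$, and that the finite‑time strong‑stable directions form a geometrically Cauchy sequence. These are exactly the inputs the quantitative Pesin theory of \secref{sec:quant pesin} is designed to provide; once they are in hand, everything else is the exponent bookkeeping of the reduction step together with the parallelogram identity.
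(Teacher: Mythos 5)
Your opening reduction is correct and well executed: taking $\log_\lambda$ of~\eqref{eq:for reg} for $s\in\{-r,\,r-1\}$ gives a nondegenerate $2\times2$ linear system (determinant $1-2r\neq0$) in $\log_\lambda\|DF^n|_E\|$ and $\log_\lambda\Jac_pF^n$, whose solution shows that $N$-times forward $(L,\epsilon)_v$-regularity along $E$ is equivalent, up to a bounded power of $L$ and a bounded multiple of $\epsilon$, to the pair of two-sided estimates $\|DF^n|_E\|\asymp_L\lambda^n$ and $\Jac_pF^n\asymp_L\lambda^n$; the analogous computation with $s\in\{-r+1,\,r\}$ shows that $(L,\epsilon)_h$-regularity is equivalent to $\|DF^n|_E\|\asymp_L 1$ and $\Jac_pF^n\asymp_L\lambda^n$. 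The parallelogram identity
\[
\Jac_pF^n\cdot\bigl|\sin\measuredangle(E^v,E^h)\bigr|
= \|DF^n|_{E^v}\|\cdot\|DF^n|_{E^h}\|\cdot\bigl|\sin\measuredangle(DF^nE^v,DF^nE^h)\bigr|
\]
is likewise the right algebraic bridge between the two. So the skeleton of the argument is sound.

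The gap, however, is real, and it is precisely the transversality you flag at the end — but I do not think it can be outsourced in the way you propose. First, a quantitative point: the trivial bound $|\sin\measuredangle(DF^nE^v,DF^nE^h)|\le 1$ gives only
\[
\|DF^n|_{E^h}\|\;\ge\;\frac{\Jac_pF^n\,\sin\theta}{\|DF^n|_{E^v}\|}\;\ge\;\bL^{-1}\theta\,\lambda^{2\epsilon n},
\]
and if one feeds this back through the reduction it does \emph{not} give $(\bL/\theta^2,\epsilon)_h$-regularity: the exponent $2\epsilon$ decays too fast (one only recovers a marginal exponent of order $(2r+1)\epsilon$) and the power of $\theta$ comes out as $\theta^{r+1}$ rather than $\theta^2$. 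So not only is the lower bound on the image angle genuinely needed for the upper bound on $\|DF^n|_{E^h}\|$, it is also needed to sharpen the lower bound to what the proposition actually asserts. Second, and more fundamentally: the ``finite-time partial hyperbolicity'' you want to invoke — that $\sigma_2(DF^n_p)\gtrsim\lambda^{(1+\bepsilon)n}$, that $E^v$ lies within $O(\lambda^{cn})$ of the most contracted singular direction $e_2(DF^n_p)$, and that these directions form a geometric Cauchy sequence — is not a black box supplied by \thmref{reg chart}. It is part of what the quantitative Pesin theory of \secref{sec:quant pesin} is \emph{establishing}, and Propositions~\ref{hom transverse for reg}--\ref{hom for back reg} are stated \emph{before} \thmref{reg chart} and serve as inputs to its construction. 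Appealing to \thmref{reg chart} here is therefore circular as written. The correct order is: prove $\sigma_2(DF^n_p)\gtrsim\lambda^{(1+\bepsilon)n}$ for all $n\le N$ directly from the hypothesis that $\|DF^m|_{E^v}\|\asymp\lambda^m$ and $\Jac_pF^m\asymp\lambda^m$ hold for \emph{every} $1\le m\le N$ (this is where the ``at all intermediate times'' part of the hypothesis does real work, typically via an induction on $n$ using the boundedness of $DF$ and $DF^{-1}$), and only then deduce the angle estimate and, from it, $h$-regularity. The same circularity afflicts the converse: taking $E^v=e_2(DF^N_p)$ is the right choice, but proving that $\|DF^n|_{E^v}\|\asymp\lambda^n$ for \emph{all} $n\le N$ (not just $n=N$) requires the very Cauchy property of the finite-time strong-stable directions that you defer to Pesin theory.

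In short: the reduction and the parallelogram identity are exactly the right ingredients, but the missing piece is a self-contained finite-time estimate $\sigma_2(DF^n_p)\ge\bL^{-1}\lambda^{(1+\bepsilon)n}$ (equivalently, a lower bound on $|\sin\measuredangle(DF^nE^v,DF^nE^h)|$), and it must be proved from the definitions — not imported from \thmref{reg chart}, which depends on this proposition. Until that estimate is supplied, the proof is incomplete in both directions.
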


\begin{prop}[Horizontal backward regularity $=$ vertical backward regularity]\label{hom transverse back reg}
Suppose $p$ is $M$-times backward $(L, \epsilon)_h$-regular along $E_p^h \in \bbP_p^2$. Let $E_p^v \in \bbP_p^2 \setminus \{E_p^h\}$. If $\measuredangle (E_p^h, E_p^v) > \theta$, then the point $p$ is $M$-times backward $(\bL/\theta^2, \epsilon)_v$-regular along $E_p^v$.

Conversely, if $p$ is $M$-times backward $(L, \epsilon)_v$-regular along $E_p^v \in \bbP_p^2$, then there exists $E_p^h \in \bbP_p^2$ such that $p$ is $M$-times backward $(\bL, \bepsilon)_h$-regular along $E_p^h$.
\end{prop}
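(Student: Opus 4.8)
The plan is to treat the two implications separately, with a single elementary area identity as the common engine. For unit vectors spanning $E_p^h$ and $E_p^v$, the standard exterior‑algebra computation gives, for every $n$,
\[
\Jac_p F^{-n}\cdot \sin\measuredangle(E_p^h,E_p^v) \;=\; \|DF^{-n}|_{E_p^h}\|\cdot\|DF^{-n}|_{E_p^v}\|\cdot \sin\theta_n,
\]
where $\theta_n:=\measuredangle\big(DF^{-n}(E_p^h),DF^{-n}(E_p^v)\big)\in(0,\pi)$. Everything else will be bookkeeping with this identity together with the definitions \eqref{eq:back reg} and \eqref{eq:hor back reg}, in the same spirit as the forward statement \propref{hom transverse for reg}.

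For the first implication I would assume backward $(L,\epsilon)_h$‑regularity along $E_p^h$ together with $\measuredangle(E_p^h,E_p^v)>\theta$, solve the identity for $\|DF^{-n}|_{E_p^v}\|$, and substitute into the ratio in \eqref{eq:back reg}. This yields the clean formula
\[
\frac{(\Jac_p F^{-n})^{s}}{\|DF^{-n}|_{E_p^v}\|^{\,s-1}} \;=\; \Big(\frac{\sin\theta_n}{\sin\measuredangle(E_p^h,E_p^v)}\Big)^{s-1}\cdot \frac{\Jac_p F^{-n}}{\|DF^{-n}|_{E_p^h}\|^{\,1-s}},
\]
and the key observation is that, as $s$ runs over $\{-r,r-1\}$, the exponent $1-s$ runs over $\{r+1,\,2-r\}$ — precisely the set of exponents $s'+1$, $s'\in\{-r+1,r\}$, occurring in \eqref{eq:hor back reg}. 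Hence the second factor already lies in $[L^{-1}\lambda^{-(1-\epsilon)n},L\lambda^{-(1+\epsilon)n}]$ by hypothesis, and the whole task reduces to bounding the ``angle factor'' $(\sin\theta_n/\sin\measuredangle(E_p^h,E_p^v))^{s-1}$ by a power of $\theta$ (absorbed into the factor $\bL/\theta^2$, the precise degradation with $r$ being swallowed by the $\bL$‑convention). Since $\sin\measuredangle(E_p^h,E_p^v)>\sin\theta$ and $\sin\theta_n\le 1$, this comes down to a lower bound $\sin\theta_n\gtrsim\theta$, which I would extract from the regularity of $E_p^h$ itself: backward $(L,\epsilon)_h$‑regularity forces $\|DF^{-n}|_{E_p^h}\|\approx 1$ while $\Jac_p F^{-n}\approx\lambda^{-n}$, so $E_p^h$ essentially realizes the least backward expansion of $DF^{-n}$, and a direction at angle $>\theta$ from it is pushed by $DF^{-n}$ toward the (nearly orthogonal) maximally expanded line.

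For the converse, given backward $(L,\epsilon)_v$‑regularity along $E_p^v$, I would first read off from the two values of $s$ in \eqref{eq:back reg} that $\|DF^{-n}|_{E_p^v}\|$ and $\Jac_pF^{-n}$ are each $\lambda^{-n}$ up to a factor $\bL\lambda^{\pm\bepsilon n}$, i.e. that $DF^{-n}(E_p^v)$ is (viewed from $p_{-n}$) the direction along which $F^{n}$ contracts at the full rate. The remaining task is to produce a transverse direction $E_p^h$ with $\|DF^{-n}|_{E_p^h}\|\approx 1$ for all $1\le n\le M$; for this I would take $E_p^h$ to be the direction selected by the usual Pesin/Oseledets construction adapted to a finite horizon — a suitable intersection of the backward cones $\{u:\|DF^{-k}|_u\|\le \bL\lambda^{-\bepsilon k}\}$, $k\le M$, equivalently (up to the $\bepsilon$‑slack) the image under $DF^{M}$ of the least‑stretched direction of $DF^{-M}$ — and verify, again through the area identity applied along the orbit, that its norm growth stays bounded at every intermediate time. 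Feeding these estimates into the displayed formula with the roles of $v$ and $h$ interchanged then gives \eqref{eq:hor back reg} with factor $\bL$ and marginal exponent $\bepsilon$.

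The main obstacle in both parts is exactly the control of the image angle — $\theta_n$ in the first implication, and the companion fact in the converse that the constructed $E_p^h$ has bounded backward norm growth not merely at the final time $M$ but at every $n\le M$. There is no a priori uniform cone field available on $\Lambda$ (that would amount to uniform partial hyperbolicity, which fails near the critical orbit), so this separation/near‑invariance has to be manufactured from the quantitative regularity data; this is the genuinely delicate point, and I expect it to be handled exactly as in \propref{hom transverse for reg}.
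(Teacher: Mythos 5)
The paper does not actually prove this proposition: like the rest of the quantitative Pesin theory in Section~3, it is stated without proof and deferred to the companion paper [CLPY1], so there is no in-text argument to compare against. Assessed on its own terms, your proposal gets the setup exactly right — the exterior-algebra identity and the exponent matching $\{1-s : s\in\{-r,r-1\}\}=\{s'+1 : s'\in\{-r+1,r\}\}$ are correct, and they do reduce the first implication to controlling the ``angle factor.'' But the step you flag as delicate is not merely delicate; as written it is circular. You assert that $\|DF^{-n}|_{E_p^h}\|\approx 1$ together with $\Jac_pF^{-n}\approx\lambda^{-n}$ forces $E_p^h$ to ``essentially realize the least backward expansion,'' and from this you want $\sin\theta_n\gtrsim\theta$. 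In fact these hypotheses only give $\sigma_2(DF^{-n})\le\|DF^{-n}|_{E_p^h}\|\approx 1$ and hence $\sigma_1(DF^{-n})=\Jac/\sigma_2\gtrsim\lambda^{-n}$; they do \emph{not} by themselves exclude the configuration $\sigma_2\ll 1$, $\sigma_1\gg\lambda^{-n}$, with $E_p^h$ sitting at angle $\sim 1/\sigma_1$ from the $\sigma_2$--axis so that $\|DF^{-n}|_{E_p^h}\|\approx 1$ still holds. In that configuration a transverse $E_p^v$ would pick up the $\sigma_1$-factor and $\|DF^{-n}|_{E_p^v}\|\gg\lambda^{-n}$, killing the conclusion. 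The claim ``$E_p^h$ is the minimizer'' is equivalent to $\sigma_1\approx\lambda^{-n}$, which is essentially what one must prove, so the heuristic uses the output as input.

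The standard way to break this circularity is to first produce (e.g.\ by the cone/limit construction you outline in your converse) a direction $E_p^{v,0}$ that is backward $v$-regular along the whole finite orbit and uniformly transverse to $E_p^h$, and then bound the operator norm by
\[
\sigma_1(DF^{-n}) \;\le\; \frac{\|DF^{-n}|_{E_p^h}\| + \|DF^{-n}|_{E_p^{v,0}}\|}{\sin\measuredangle(E_p^h,E_p^{v,0})},
\]
from which $\sigma_1\lesssim\lambda^{-n}$ and hence the needed lower bound on $\sin\theta_n$ both follow. In other words, the first implication actually \emph{depends on} the converse (or a parallel construction), so the two parts cannot be handled in the order you propose, and in particular you cannot simply defer to \propref{hom transverse for reg} — that statement is equally unproved in this paper and faces the identical issue. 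A secondary but real concern: your angle factor is raised to the power $s-1\in\{-(r+1),r-2\}$, which if controlled only through a crude $\sin\theta_n\gtrsim\theta$ would yield a deterioration like $\theta^{-(r+1)}$, not the stated $\theta^{-2}$; getting the sharp $\theta^{-2}$ requires tracking both the lower bound $\|DF^{-n}|_{E_p^v}\|\gtrsim\theta\lambda^{-n}$ (from the identity plus $\sin\theta_n\le 1$) and the upper bound $\|DF^{-n}|_{E_p^v}\|\lesssim\lambda^{-n}/\theta$ (from the $\sigma_1$-control above) at the level of norms before plugging back into the ratio, rather than bounding the $(s-1)$-th power of the angle ratio directly. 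These are the two places where the proposal needs substantive additional work.
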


\begin{prop}[Pesin regularity $=$ vertical forward regularity $+$ horizontal backward regularity $+$ transversality]\label{hom for back reg}
Suppose $p$ is $N$-times forward $(L, \epsilon)_v$-regular along $E_p^v \in \bbP_p^2$ and $M$-times backward $(L, \epsilon)_h$-regular along $E_p^h \in \bbP_p^2$ with $\theta := \measuredangle (E_p^v, E_p^h) > 0$. Let $\cL \geq 1$ be the minimum value such that $p$ is $(M, N)$-times $(\cL, \bepsilon)_v$-regular along $E_p^v$ and $(\cL, \bepsilon)_h$-regular along $E_p^h$. Then we have $k/\theta^2 < \cL < \bL/\theta^2$ for some uniform constant $k > 0$.
\end{prop}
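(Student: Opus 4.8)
This is the quantitative ``assembly'' proposition: it identifies being Pesin regular along a transverse frame $(E_p^v,E_p^h)$ with the conjunction of forward $v$-regularity along $E_p^v$, backward $h$-regularity along $E_p^h$, and transversality of the frame, and it pins down the combined regularity factor as $\asymp\theta^{-2}$. I would prove the two inequalities $\cL<\bL/\theta^2$ and $k/\theta^2<\cL$ separately.

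\emph{Upper bound $\cL<\bL/\theta^2$.} The plan is to complete each given one-sided regularity into a two-sided one using the transversality. Since $p$ is $N$-times forward $(L,\epsilon)_v$-regular along $E_p^v$ and $\measuredangle(E_p^v,E_p^h)=\theta$, $\propref{hom transverse for reg}$ gives that $p$ is $N$-times forward $(\bL/\theta^2,\epsilon)_h$-regular along $E_p^h$; dually, from the backward $(L,\epsilon)_h$-regularity along $E_p^h$ and the same angle, $\propref{hom transverse back reg}$ gives that $p$ is $M$-times backward $(\bL/\theta^2,\epsilon)_v$-regular along $E_p^v$. Combining these with the two hypotheses, $p$ is simultaneously $(M,N)$-times $(\bL/\theta^2,\epsilon)_v$-regular along $E_p^v$ and $(M,N)$-times $(\bL/\theta^2,\epsilon)_h$-regular along $E_p^h$. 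Since $\lambda<1$, replacing $\epsilon$ by the larger $\bepsilon$ only relaxes the defining inequalities \eqref{eq:for reg}--\eqref{eq:hor back reg}, so the same holds with marginal exponent $\bepsilon$; hence $\cL\le\bL/\theta^2$.

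\emph{Lower bound $k/\theta^2<\cL$.} Here the point is that the transversality loss just incurred is essentially sharp, so that no $\cL$ much smaller than $\theta^{-2}$ can work. Suppose $p$ is $(M,N)$-times $(\cL,\bepsilon)_v$-regular along $E_p^v$ and $(\cL,\bepsilon)_h$-regular along $E_p^h$. Solving the pairs of relations packaged in \eqref{eq:for reg} (the cases $s=r-1$ and $s=-r$) and in \eqref{eq:hor for reg} (the cases $s=-r+1$ and $s=r$), together with their backward analogues, extracts for a universal $c=c(r)$ the one-step estimates: $\|DF^{\pm1}|_{E_p^v}\|$ and $\Jac_pF^{\pm1}$ are comparable to $\lambda^{\pm1}$, and $\|DF^{\pm1}|_{E_p^h}\|$ is comparable to $1$, in each case up to a bounded power of $\cL$ (and of $\lambda^{\bepsilon}$). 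Feeding these into the area identity
\[
\Jac_pF^{\pm1}=\|DF^{\pm1}|_{E_p^v}\|\cdot\|DF^{\pm1}|_{E_p^h}\|\cdot\frac{\sin\measuredangle(DF^{\pm1}E_p^v,\;DF^{\pm1}E_p^h)}{\sin\theta},
\]
and using that the operator norm of the change of coordinates between the (generally non-orthonormal) frame $(E_p^v,E_p^h)$ and an orthonormal one is $\asymp 1/\sin\theta$, one should see that the distortion measured by $\cL$ must dominate a power of $1/\sin\theta$; the frame condition number enters once in comparing the $E_p^v$-adapted norm with the Euclidean one and once more in comparing with the $E_p^h$-adapted norm, hence quadratically, which gives $\cL\gtrsim 1/\sin^2\theta\asymp\theta^{-2}$, i.e.\ $\cL>k/\theta^2$.

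The main obstacle is this lower bound. The upper bound is an immediate consequence of $\propref{hom transverse for reg}$ and $\propref{hom transverse back reg}$, but making rigorous why the condition number of the frame is forced to enter to the \emph{second} power (so that one obtains $\theta^{-2}$ rather than merely $\theta^{-1}$), and carrying the $\theta$-dependence through uniformly in $M,N$ — in particular in the case $M=N=\infty$ — within the $\bkappa$-bookkeeping, is the delicate part; I would expect to do it either through the area identity as above or, equivalently, by reversing the estimates that go into $\propref{hom transverse for reg}$ and $\propref{hom transverse back reg}$.
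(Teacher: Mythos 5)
Your upper bound argument is correct and is almost certainly the intended one: combining the hypothesis with \propref{hom transverse for reg} and \propref{hom transverse back reg} yields simultaneous $(M,N)$-times $(\bL/\theta^2,\epsilon)$-regularity along both frame directions, and since $\lambda<1$ passing to the larger marginal exponent $\bepsilon$ only relaxes \eqref{eq:for reg}--\eqref{eq:hor back reg}, so $\cL\le\bL/\theta^2$. (Minor point: the two cited propositions are stated with $\measuredangle(E_p^v,E_p^h)>\theta$; you should either quote them for an arbitrary $\theta'<\theta$ and let $\theta'\to\theta$, or note they hold with $\geq$ as well.) The paper gives no proof for this proposition, so there is nothing to compare to on the paper's side.

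The lower bound as you present it, however, is not a proof — you acknowledge as much — and I think your heuristic does not actually close. The regularity conditions \eqref{eq:for reg}--\eqref{eq:hor back reg} constrain only the scalar quantities $\Jac_pF^{\pm n}$, $\|DF^{\pm n}|_{E_p^v}\|$ and $\|DF^{\pm n}|_{E_p^h}\|$; the area identity you write relates these to $\sin\theta_n/\sin\theta$, but since $\sin\theta_n$ is a free parameter in $(0,1]$, plugging the one-step estimates into that identity only produces an inequality $\sin\theta_n\lesssim\cL^{O(1)}\lambda^{-O(\bepsilon) n}\sin\theta$, which is no obstruction at all. In other words, the area identity gives you a one-sided bound on the evolved angle, not a lower bound on $\cL$ in terms of the initial angle. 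The ``condition number enters twice'' slogan is plausible at the level of a change-of-basis heuristic (the frame $(E_p^v,E_p^h)$ has condition number $\asymp 1/\sin\theta$ and appears both as $P$ and $P^{-1}$), but the definitions in the paper are written intrinsically in terms of restricted norms, not conjugated matrices, so that argument needs to be translated into the actual inequalities — and when one does the bookkeeping at $r=2$ through \eqref{eq:hor for reg}--\eqref{eq:hor back reg}, the power of $\theta^{-1}$ that falls out is not transparently $2$. To make the lower bound rigorous one would need to exhibit, from the hypotheses alone, a concrete $n\in[1,N]$ or $n\in[1,M]$ at which one of the defining inequalities with the supposed constant $\cL<k/\theta^2$ must fail; this in turn requires controlling the full singular-value data of $DF^n$, not just the restricted norms along the frame, and you have not identified where that extra information would come from. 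This is the genuine gap in the proposal.
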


Suppose $p_0 \in \Lambda$ is $(M, N)$-times $(L, \epsilon)_{v/h}$-regular along $E_{p_0}^{v/h} \in \bbP^2_{p_0}$. Define the {\it regularity} of $p_m$ as the smallest value $\cL_{p_m} \geq 1$ such that $p_m$ is $(M+m, N-m)$-times $(\cL_{p_m}, \epsilon)_{v/h}$-regular along $E_{p_m}^{v/h}$.

\begin{prop}[Decay in regularity]\label{decay reg}
For $-M \leq m \leq N$, we have
$$
\cL_{p_m} < L^2\lambda^{-2\epsilon |m|}.
$$
\end{prop}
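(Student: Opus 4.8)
The plan is to prove the bound on the regularity $\cL_{p_m}$ by tracking how the defining inequalities~\eqref{eq:for reg} and~\eqref{eq:back reg} (or their horizontal counterparts) transform under the shift of the base point from $p_0$ to $p_m$. The key observation is that the regularity of $p_m$ is governed by \emph{cocycle relations}: for the Jacobian, $\Jac_{p_0}F^{n} = \Jac_{p_m}F^{n-m}\cdot \Jac_{p_0}F^{m}$, and for the derivative along the invariant direction, $\|DF^{n}|_{E^{v/h}_{p_0}}\| = \|DF^{n-m}|_{E^{v/h}_{p_m}}\|\cdot\|DF^{m}|_{E^{v/h}_{p_0}}\|$, using that $E^{v/h}_{p_m} = DF^m(E^{v/h}_{p_0})$ by the conventions set earlier. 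Both the Jacobian and the directional norm are genuine multiplicative cocycles along the orbit, so the quotients appearing in~\eqref{eq:for reg}--\eqref{eq:back reg} are as well.

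First I would reduce to the vertical case; the horizontal case is identical after replacing the exponent $s-1$ by $s+1$ and the exponent set appropriately, and one may also invoke Propositions~\ref{hom transverse for reg}--\ref{hom for back reg} if one wishes to convert between the two, though a direct computation is cleaner. Fix $m$ with $0 \le m \le N$ (the case $-M\le m\le 0$ is symmetric, using backward regularity, or equivalently by applying the forward argument to $F^{-1}$). To estimate $\cL_{p_m}$ we must bound, for $s\in\{-r,r-1\}$ and $1\le n' \le N-m$, the quantity $(\Jac_{p_m}F^{n'})^s / \|DF^{n'}|_{E^v_{p_m}}\|^{s-1}$ from above by $\cL_{p_m}\lambda^{(1-\epsilon)n'}$ and from below by $\cL_{p_m}^{-1}\lambda^{(1+\epsilon)n'}$, and likewise for the backward iterates $1 \le n'' \le M+m$. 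Writing $n = n'+m$ for the forward case, the cocycle relations give
\begin{equation*}
\frac{(\Jac_{p_m}F^{n'})^s}{\|DF^{n'}|_{E^v_{p_m}}\|^{s-1}}
= \frac{(\Jac_{p_0}F^{n})^s}{\|DF^{n}|_{E^v_{p_0}}\|^{s-1}}\cdot
\left(\frac{(\Jac_{p_0}F^{m})^s}{\|DF^{m}|_{E^v_{p_0}}\|^{s-1}}\right)^{-1}.
\end{equation*}
The first factor on the right lies in $[L^{-1}\lambda^{(1+\epsilon)n}, L\lambda^{(1-\epsilon)n}]$ by forward $(L,\epsilon)_v$-regularity of $p_0$ (note $n = n'+m \le N$, so this is legitimate), and the reciprocal of the second factor lies in $[L^{-1}\lambda^{-(1-\epsilon)m}, L\lambda^{-(1+\epsilon)m}]$, again by forward regularity of $p_0$ applied at time $m$. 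Multiplying these ranges and using $n-m = n'$ one gets the upper bound $L^2\lambda^{(1-\epsilon)n'}\lambda^{-2\epsilon m} = L^2\lambda^{-2\epsilon m}\lambda^{(1-\epsilon)n'}$ and, symmetrically, the lower bound $L^{-2}\lambda^{2\epsilon m}\lambda^{(1+\epsilon)n'}$. Since $m \ge 0$ here, $\lambda^{-2\epsilon m} = \lambda^{-2\epsilon|m|}$, and the factor $L^2\lambda^{-2\epsilon|m|}$ is precisely the claimed bound on $\cL_{p_m}$; the same factor handles the lower bound because $\lambda < 1$ forces $L^{-2}\lambda^{2\epsilon|m|} \ge (L^2\lambda^{-2\epsilon|m|})^{-1}$.

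For $1 \le n'' \le M+m$ (the backward direction seen from $p_m$) one splits $F^{-n''}$ at $p_m$ through $p_0$ in the two possible ways depending on whether $n'' \le m$ (so $F^{-n''}$ from $p_m$ only involves forward-in-time-from-$p_0$ data, i.e. $p_{m-n''}$ with $m-n''\ge 0$) or $n'' > m$ (so it reaches backward iterates $p_{-(n''-m)}$ of $p_0$); in the first subcase one uses only forward $(L,\epsilon)_v$-regularity of $p_0$, in the second one combines forward regularity at time $m$ with backward regularity at time $n''-m \le M$. In every case the two $L$'s multiply to $L^2$ and the exponent mismatch between $(1\pm\epsilon)$ and the split produces at worst a factor $\lambda^{\mp 2\epsilon m}$, bounded by $\lambda^{-2\epsilon|m|}$. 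Taking the maximum over $s\in\{-r,r-1\}$ and over all admissible $n', n''$ yields $\cL_{p_m} < L^2\lambda^{-2\epsilon|m|}$, as claimed. The case $-M\le m < 0$ is handled by the mirror-image argument (swap the roles of~\eqref{eq:for reg} and~\eqref{eq:back reg}, and $|m| = -m$). I expect the only mildly delicate point to be the bookkeeping of which regularity hypothesis of $p_0$ (forward versus backward, and at which intermediate time) is invoked in each subcase of the split; the arithmetic of the exponents is routine once the cocycle identities are written down, and no constant worse than $L^2\lambda^{-2\epsilon|m|}$ ever appears because each split introduces exactly one extra factor of $L$ and one extra factor of $\lambda^{\pm\epsilon m}$ beyond what $p_0$'s hypotheses already supply — there being no cross terms since the angle/transversality condition plays no role in the one-sided estimates.
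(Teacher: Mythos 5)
The paper does not include a proof of Proposition~\ref{decay reg} (the proofs of the quantitative Pesin statements in Section~\ref{sec:quant pesin} are deferred to the companion manuscript [CLPY1]), so there is no argument to compare against directly. Your proof is correct: the governing quantities in~\eqref{eq:for reg}--\eqref{eq:back reg} are multiplicative cocycles (both $\Jac_{p}F^n$ and $\|DF^n|_{E^{v/h}_p}\|$ satisfy the exact cocycle identity along the orbit because $E^{v/h}_{p_m} = DF^m(E^{v/h}_{p_0})$), and splitting the defining ratio at the new base point $p_m$ through $p_0$ yields the bound after propagating the hypotheses on $p_0$ through the factorization. Your exponent arithmetic is right in every subcase I checked: for $m\geq 0$ and forward time $n'$ the loss is exactly $\lambda^{-2\epsilon m}$; for $m\geq 0$ and backward time $n''\leq m$ the same $\lambda^{-2\epsilon m}$ loss appears but on the less restrictive side of the inequality chain; for $n''>m$ the two factors produce $L^2\lambda^{-(1+\epsilon)n''}$ with no additional $\lambda^{-2\epsilon m}$ loss at all. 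Two cosmetic points: the statement's strict inequality is really a $\leq$ from your argument (if the worst case is attained at the same $n'$ for both factors you get equality), which is harmless; and you should flag that when $m-n''=0$ in the intermediate splitting the ``first factor'' degenerates to $1$ and the forward regularity hypothesis at time $m-n''$ is not invoked. Neither affects the conclusion, and your observation that the angle/transversality condition plays no role in these one-sided estimates is accurate, since the $(M,N)$-times regularity in \eqref{eq:for reg}--\eqref{eq:back reg} is along a single invariant direction and imposes no transversality.
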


\begin{rem}
By \propref{hom transverse for reg}, \ref{hom transverse back reg} and \ref{hom for back reg}, we see that regularity along a vertical direction and regularity along a horizontal direction are interchangeable conditions (at the cost of increasing the constants of regularity by some uniform amount). Henceforth, the vertical direction will be taken as the default unless otherwise stated.
\end{rem}


\subsection{Linearization along regular orbits}

For the remainder of this section, we assume that $p_0 \in \Lambda$ is $(M, N)$-times $(L, \epsilon)_v$-regular along $E_{p_0}^v \in \bbP^2_{p_0}$ for some $M, N, \in \bbN \cup \{\infty\}$ and $L \geq 1$.

For $l, w > 0$, denote
$$
\bbI(l) := (-l, l) \subset \bbR,
\hspace{5mm}
\bbB(l, w) := \bbI(l)\times \bbI(w) \subset \bbR^2
\matsp{and}
\bbB(l) := \bbB(l,l).
$$

\begin{thm}\label{reg chart}
For $-M \leq m \leq N$, let
$$
l_{p_m} := \bL^{-1}\lambda^{\bepsilon |m|}
\matsp{and}
U_{p_m} := \bbB(l_{p_m}).
$$
Then there exists a $C^r$-diffeomorphism $\Phi_{p_m} : (\cU_{p_m}, p_m) \to (U_{p_m}, 0)$ such that
$$
\|\Phi_{p_m}^{\pm 1}\|_{C^r} = O(\bL \lambda^{-\bepsilon |m|})
\comma
D\Phi_{p_m}(E^v_{p_m}) = E^{gv}_0,
$$
and $\Phi_{p_{n+1}} \circ F|_{\cU_{p_m}} \circ \Phi_{p_m}^{-1}$ extends to a globally defined $C^r$-diffeomorphism $F_{p_m} : (\bbR^2, 0) \to (\bbR^2, 0)$ satisfying the following properties.
\begin{enumerate}[i)]
\item We have $\displaystyle \|F_{p_m}^{\pm 1}\|_{C^r} = O(1)$.
\item The map $F_{p_m}$ is uniformly $C^1$-close to
$$
D_0F_{p_m} = A_m = \begin{bmatrix}
a_m & 0\\
0 & b_m
\end{bmatrix},
$$
with $\lambda^{1+\bepsilon} < b_m < \lambda^{1-\bepsilon}$ and $\lambda^\bepsilon < a_m < \lambda^{-\bepsilon}$.
\item We have
$$
F_{p_m}(x,y) = (f_{p_m}(x), e_{p_m}(x,y))
\matsp{for}
(x,y) \in \bbR^2,
$$
where $f_{p_m}:(\bbR, 0) \to (\bbR, 0)$ is a $C^r$-diffeomorphism, and $e_{p_m} : \bbR^2 \to \bbR$ is a $C^r$-map with $e_{p_m}(\cdot ,0) \equiv 0$.
\end{enumerate}
\end{thm}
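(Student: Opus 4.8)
The plan is to carry out the classical Lyapunov (Pesin) chart construction, but along the \emph{finite} orbit segment $(p_m)_{-M\le m\le N}$ rather than a full regular orbit, keeping every constant explicit. First I would fix an adapted frame along the orbit: at each $p_m$ the vertical line is the given direction $E^v_{p_m}$, and the horizontal line is taken tangent to a local ``fake center curve'' $\Gamma_m$, namely the unstable-type invariant manifold for the dominated pair $E^{ss}\prec E^c$ obtained by graph transform from an arbitrary curve through $p_{-M}$ transverse to $E^v_{p_{-M}}$ and pushed forward (such a curve is not unique, just as $W^c$ is not, cf.\ \secref{sec:intro}). With this choice $DF|_{p_m}$ fixes both lines, hence is diagonal in the frame; the angle $\measuredangle(E^v_{p_m},E^h_{p_m})$ is bounded below by $1/\bL$ by domination; and by Propositions~\ref{hom transverse for reg}, \ref{hom transverse back reg} and \ref{hom for back reg} the $(M+m,N-m)$-times $(L,\epsilon)_v$-regularity of $p_m$ along $E^v_{p_m}$ upgrades to two-sided regularity of the whole frame, with regularity factor degrading like $\bL\lambda^{-\bepsilon|m|}$ in the spirit of \propref{decay reg}.

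Next I would pass to adapted inner products given by finite Lyapunov sums: a forward-weighted sum on $E^v_{p_m}$ that makes the contraction there have one-step rate in $(\lambda^{1+\bepsilon},\lambda^{1-\bepsilon})$, and a two-sided weighted sum on $E^h_{p_m}$ that confines the one-step fluctuation of the neutral line to $(\lambda^{\bepsilon},\lambda^{-\bepsilon})$. The regularity inequalities \eqref{eq:for reg}--\eqref{eq:back reg} --- including the $s\in\{-r,r-1\}$ cases, which are exactly the ingredient needed to control $r$-th order terms later --- force these norms to be comparable to the Euclidean metric up to a factor $O(\bL\lambda^{-\bepsilon|m|})$. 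Taking $L_m$ to be the resulting linear orthonormalizer that sends $E^v_{p_m}$ to the vertical axis, the map $\Phi^{\mathrm{lin}}_{p_m}:=L_m\circ(z\mapsto z-p_m)$ conjugates $DF|_{p_m}$ to a diagonal $A_m$ with the asserted bounds on $a_m,b_m$ and has distortion $O(\bL\lambda^{-\bepsilon|m|})$; on a ball of radius $\asymp l_{p_m}$ --- chosen so small (the smallness absorbed into $\bL$) that $\|F\|_{C^2}=O(1)$ renders the nonlinear part negligible after rescaling --- the conjugate $\Phi^{\mathrm{lin}}_{p_{m+1}}\circ F\circ(\Phi^{\mathrm{lin}}_{p_m})^{-1}$ is already $C^1$-close to $A_m$.

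To reach the triangular normal form iii) I would then straighten two fake invariant objects simultaneously: the finite-time strong-stable foliation of $\cU_{p_m}$ (the curves through the points of $\Gamma_m$, forming a $C^{r-1}$ foliation with uniformly $C^r$ leaves), straightened to the vertical lines $\{x=\mathrm{const}\}$, and the curve $\Gamma_m$, straightened to the $x$-axis. Since $x$ then labels strong-stable leaves, which $F$ permutes, the first coordinate of the new conjugate depends on $x$ alone; since $\Gamma_m$ is invariant, one gets $e_{p_m}(\cdot,0)\equiv0$ and hence $D_0F_{p_m}=A_m$. This fibered change of coordinates is $C^r$ with $C^r$-norm $O(1)$ after the rescaling built into $l_{p_m}$, so it does not spoil the earlier estimates, and $\Phi_{p_m}$ is its composition with $\Phi^{\mathrm{lin}}_{p_m}$. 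Finally, to make the conjugate globally defined I would set $F_{p_m}(z):=A_m z+\beta(z)\big(\widetilde F_{p_m}(z)-A_m z\big)$, with $\widetilde F_{p_m}$ the locally defined conjugate and $\beta$ a bump function equal to $1$ near $0$ and supported in $U_{p_m}=\bbB(l_{p_m})$: the correction term is $C^r$-bounded and $C^1$-small, so $F_{p_m}$ is a global $C^r$-diffeomorphism of $\bbR^2$, $C^1$-close to $A_m$, which coincides with $\widetilde F_{p_m}$ near $0$ and has the claimed form.

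The hardest part will be the simultaneous quantitative bookkeeping. One has to pick the chart scale $l_{p_m}$ small enough (in terms of $\bL$) for the conjugated map to be genuinely $C^1$-close to $A_m$ and $C^r$-bounded by $O(1)$, yet large enough that the fake center curves $\Gamma_m$ and the strong-stable foliation exist, are transverse, and are $C^r$ on all of $\cU_{p_m}$; one must then check that the metric distortion of $\Phi_{p_m}$ and all relevant $C^r$-norms stay within $O(\bL\lambda^{-\bepsilon|m|})$ and $O(1)$ respectively. Producing the invariant manifolds and the foliation with $C^r$ regularity and controlled geometry from only \emph{finite-time} regularity data --- rather than from a genuine Pesin orbit --- is the step where the slack exponents $s\in\{-r,r-1\}$ in \eqref{eq:for reg}--\eqref{eq:back reg} are indispensable; getting that dependence right is the technical crux.
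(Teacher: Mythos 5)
The paper itself does not prove \thmref{reg chart}; it is one of the results deferred to the companion work [CLPY1] (cited as ``in preparation''), so there is no in-paper argument to compare against. That said, your sketch is the standard finite-time Pesin chart construction, and in outline it is the right approach: diagonalize the frame, pass to a Lyapunov inner product, straighten a vertical foliation and a fake center curve to obtain the triangular normal form, and globalize with a bump function. You also correctly flag that the $s\in\{-r,r-1\}$ slack in \eqref{eq:for reg}--\eqref{eq:back reg} is exactly what controls the $r$-th order terms.

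There are, however, a few places where the write-up oversells the argument or slides past a real difficulty. Your frame-building step invokes ``domination'' to bound $\measuredangle(E^v_{p_m},E^h_{p_m})$ below by $1/\bL$ and to produce $\Gamma_m$ as ``the unstable-type invariant manifold for the dominated pair $E^{ss}\prec E^c$''. There is no dominated splitting available here --- only finite-time regularity of a single orbit --- so ``domination'' is at best shorthand for a cone/graph-transform estimate that still has to be carried out quantitatively; and the angle lower bound cannot be $1/\bL$ uniformly in $m$ but must degrade like $\lambda^{\bepsilon|m|}/\bL$, as \propref{decay reg} dictates (you allude to this in the next clause, but the stated bound is what would be used in the chart estimates, so the slip matters). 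More substantively, the step ``straighten the finite-time strong-stable foliation'' is not a simple pullback from $\cU_{p_N}$: the chart radii $l_{p_m}$ shrink on \emph{both} sides of $m=0$, so neither $F(\cU_{p_m})\supset\cU_{p_{m+1}}$ nor $F^{-1}(\cU_{p_{m+1}})\supset\cU_{p_m}$ holds in general, and the pulled-back foliation covers only $\cU_{p_m}\cap F^{-1}(\cU_{p_{m+1}})$. One must extend it to the rest of $\cU_{p_m}$ and check that the extension, iterated along the whole segment, keeps the stated $C^r$ bounds; the $F$-compatibility of the extensions from chart to chart is precisely where the $(s=-r,\,s=r-1)$ inequalities and the $\lambda^{\bepsilon|m|}$ shrinking of $l_{p_m}$ have to be played off against each other. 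You name this as ``the technical crux'' but do not resolve it, so the proposal as written is a correct roadmap with an acknowledged but genuine gap at its center, rather than a complete proof.
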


The construction in \thmref{reg chart} is referred to as {\it a linearization of $F$ along the $(M,N)$-orbit of $p_0$ with vertical direction $E^v_{p_0}$}. For $-M \leq m \leq N$, we refer to $l_{p_m}$, $\cU_{p_m}$, $\Phi_{p_m}$ and $F_{p_m}$ as a {\it regular radius}, a {\it regular neighborhood}, a {\it regular chart} and a {\it linearized map at $p_m$} respectively. For $1 \leq n \leq N-m$, we denote
$$
F_{p_m}^n := F_{p_{m +n-1}} \circ \ldots \circ F_{p_{m+1}}\circ F_{p_m}.
$$
Let $X_m \subset U_{p_m}$ be a set. We denote
$$
X_{m+l} := F_{p_m}^l(X_m)
$$
for all $l \in \bbZ$ for which the right-hand side is well-defined. Similarly, for any direction $E_{z_m} \in \bbP^2_{z_m}$ at a point $z_m \in U_{p_m}$, we denote
$$
E_{z_{m+l}} := DF_{p_m}^l(E_{z_m}).
$$

\begin{rem}
Note that we have the freedom to make the regular radii used in \thmref{reg chart} uniformly smaller whenever necessary (i.e. replace $l_{p_m}$ by $\underline{l_{p_m}}$ for all $-M \leq m \leq N$).
\end{rem}

\begin{prop}\label{reg nbh size}
For $-M \leq m \leq N$, we have
$$
\diam(\cU_{p_m}) \asymp \bL^{-1}\lambda^{\bepsilon |m|}.
$$
\end{prop}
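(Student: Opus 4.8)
The plan is to read the estimate off directly from \thmref{reg chart}. By that theorem $\Phi_{p_m}$ is a $C^r$-diffeomorphism of $\cU_{p_m}$ onto the rescaled box $U_{p_m}=\bbB(l_{p_m})$, normalized so that $\Phi_{p_m}(p_m)=0$, with $l_{p_m}=\bL^{-1}\lambda^{\bepsilon|m|}$. Since $\bbB(l_{p_m})=\bbI(l_{p_m})\times\bbI(l_{p_m})$ we have $\diam(U_{p_m})=2\sqrt{2}\,l_{p_m}\asymp\bL^{-1}\lambda^{\bepsilon|m|}$, so it suffices to prove $\diam(\cU_{p_m})\asymp\diam(U_{p_m})$. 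Because $\cU_{p_m}=\Phi_{p_m}^{-1}(U_{p_m})$ and $p_m=\Phi_{p_m}^{-1}(0)$, this reduces to two-sided Lipschitz control of $\Phi_{p_m}$ on the ball $U_{p_m}$.

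The lower bound on $\diam(\cU_{p_m})$ is the easy direction: $\operatorname{Lip}(\Phi_{p_m})\le\|\Phi_{p_m}\|_{C^r}=O(\bL\lambda^{-\bepsilon|m|})$ on $\cU_{p_m}$, so applying this to an arbitrary pair of points and taking a supremum gives $\diam(U_{p_m})\le O(\bL\lambda^{-\bepsilon|m|})\,\diam(\cU_{p_m})$, i.e. $\diam(\cU_{p_m})\gtrsim\bL^{-2}\lambda^{2\bepsilon|m|}$, which is consistent with the claimed $\asymp$ under the conventions of \secref{sec:intro}. For the upper bound I would use the complementary estimate $\operatorname{Lip}(\Phi_{p_m}^{-1}|_{U_{p_m}})=O(1)$, which yields $\diam(\cU_{p_m})\le O(1)\,\diam(U_{p_m})\asymp\bL^{-1}\lambda^{\bepsilon|m|}$. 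The displayed $C^r$-bound of \thmref{reg chart} only gives $\|\Phi_{p_m}^{-1}\|_{C^r}=O(\bL\lambda^{-\bepsilon|m|})$, which by itself is too weak for this; so here I would instead unwind the construction of the regular chart in \secref{sec:quant pesin}. There $\Phi_{p_m}$ appears as the normalizing linear map $D_{p_m}\Phi_{p_m}$ --- whose norm and co-norm are uniformly bounded, since the target scale $l_{p_m}$ of the rescaled box is chosen to match the linearization scale of $F$ at $p_m$ in phase space up to uniform constants --- composed with a nonlinear correction whose derivatives of order $\ge 2$ carry the whole $\bL\lambda^{-\bepsilon|m|}$ contribution to $\|\Phi_{p_m}^{\pm1}\|_{C^r}$; on the tiny ball $\bbB(l_{p_m})$ those higher-order terms contribute only $O\bigl(l_{p_m}^{\,j-1}\cdot\bL\lambda^{-\bepsilon|m|}\bigr)=O(1)$ to the $C^1$-size of $\Phi_{p_m}^{\pm1}$, so $\Phi_{p_m}^{-1}$ is indeed uniformly Lipschitz on $U_{p_m}$. (Alternatively one can propagate such a $C^1$-bound along the conjugacy $F_{p_m}\circ\Phi_{p_m}=\Phi_{p_{m+1}}\circ F$ using properties i) and ii) of $F_{p_m}$, starting from the base case $m=0$.)

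I expect the only real content of the argument to be this last point --- upgrading the $C^r$-norm bound of \thmref{reg chart} to the uniform $C^1$-control of $\Phi_{p_m}^{-1}$ that the upper bound needs; the lower bound and the comparison $\diam(U_{p_m})\asymp l_{p_m}$ are routine and use only what \thmref{reg chart} already states.
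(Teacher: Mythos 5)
You are right that the displayed bound $\|\Phi_{p_m}^{\pm1}\|_{C^r}=O(\bL\lambda^{-\bepsilon|m|})$ from \thmref{reg chart}, taken on its own, is too weak for the upper bound: it only gives $\diam(\cU_{p_m})\le\operatorname{Lip}(\Phi_{p_m}^{-1})\cdot\diam(U_{p_m})=O(\bL\lambda^{-\bepsilon|m|})\cdot O(\bL^{-1}\lambda^{\bepsilon|m|})=O(1)$, and $O(1)$ is not of the form $\bL^{-1}\lambda^{\bepsilon|m|}$ for large $|m|$ or large $L$. Identifying that extra input is needed is the correct first step, and the lower-bound half of your argument is fine under the paper's conventions.

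The problem is with the structural claim you use to close the gap. You assert that $D_{p_m}\Phi_{p_m}$ has \emph{both} norm and co-norm $O(1)$ (uniformly bilipschitz at the center), with the $\bL\lambda^{-\bepsilon|m|}$ in the $C^r$-bound carried solely by derivatives of order $\geq 2$. That is not what a Pesin/Lyapunov chart does: the linear part of $\Phi_{p_m}$ must (among other things) open up the angle between the strong-stable and center directions, which for a Pesin $(L,\epsilon)$-regular point can be as small as $1/L$; so $\|D_{p_m}\Phi_{p_m}\|$ is generically of size $\gtrsim L$, not $O(1)$. (And if the linear part really were uniformly bilipschitz, then $\|\Phi_{p_m}^{\pm1}\|_{C^1}=O(1)$ and the proposition would be trivial, making it strange that the paper records it at all.) The fact that actually delivers the upper bound is the \emph{asymmetric} normalization built into the Lyapunov chart: the Lyapunov norm dominates the standard norm, so $\Phi_{p_m}$ is non-contracting at the linear level, i.e.\ $\|D_0\Phi_{p_m}^{-1}\|=O(1)$, while $\|D_{p_m}\Phi_{p_m}\|$ may be as large as $\bL\lambda^{-\bepsilon|m|}$. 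Combined with your (correct) small-ball Taylor estimate for the order-$\geq2$ terms, this gives $\operatorname{Lip}(\Phi_{p_m}^{-1}|_{U_{p_m}})=O(1)$ and hence $\diam(\cU_{p_m})\lesssim\diam(U_{p_m})\asymp\bL^{-1}\lambda^{\bepsilon|m|}$. But this non-contraction is a property of the chart's construction (in the companion paper \cite{CLPY1}), not something that follows from the statement of \thmref{reg chart}; your justification of it — that ``$l_{p_m}$ is chosen to match the linearization scale in phase space'' — is essentially the proposition restated, so as written it begs the question. Your alternative, propagating a $C^1$-bound through the conjugacy $\Phi_{p_{m+1}}=F_{p_m}\circ\Phi_{p_m}\circ F^{-1}$, also does not close the gap: each step multiplies the $C^1$-norm by an $O(1)$ factor that need not be $\leq 1$, so iterating gives exponential growth in $|m|$ rather than a bound uniform in $m$.
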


The following result states that restricted to the regular neighborhoods, iterates of $F$ are nearly linear.

\begin{prop}[Linear dynamics inside regular neighborhoods]\label{loc linear}
For any constant $k > 0$, the regular radii in \thmref{reg chart} can be chosen sufficiently small so that the following holds. Let $-M\leq m \leq N$ and $-M - m \leq l \leq N - m$. Suppose that $q_{m + i} \in \cU_{p_{m + i}}$ for $i \in [m, m+l] \cap \bbZ$.
Write $z_m := \Phi_{p_m}(q_m) \in U_{p_m}$. Then for all $v \in \bbR^2$, we have
$$
\|D_{z_m} F_{p_m}^l(v) - D_0 F_{p_m}^l(v)\| < k \|D_0F_{p_m}^l(v)\|
$$
and
$$
\|D_{q_m} F^l(v) - D_{p_m} F^l(v)\| < k \|D_{p_m}F^l(v)\|.
$$
Moreover,
$$
1-k< \frac{\Jac_{z_m}F_{p_m}^l}{\Jac_0F_{p_m}^l}, \frac{\Jac_{q_m}F^l}{\Jac_{p_m}F^l} < 1+k.
$$
\end{prop}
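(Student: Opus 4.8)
The plan is to work first in the linearized coordinates furnished by \thmref{reg chart}, where the skew-product normal form reduces the claim to an estimate on a triangular cocycle, and then to transport the conclusion to the original coordinates through the regular charts $\Phi_{p_m}$. Recall from \thmref{reg chart} that each $F_{p_j}$ is a skew product $F_{p_j}(x,y)=(f_{p_j}(x),e_{p_j}(x,y))$ with $e_{p_j}(\cdot,0)\equiv 0$, so its differential is lower triangular, with $(1,1)$-entry $f'_{p_j}(x)$, $(2,2)$-entry $\partial_y e_{p_j}(x,y)$ and $(2,1)$-entry $\partial_x e_{p_j}(x,y)$, where $A_j=D_0F_{p_j}=\operatorname{diag}(a_j,b_j)$ with $\lambda^\bepsilon<a_j<\lambda^{-\bepsilon}$, $\lambda^{1+\bepsilon}<b_j<\lambda^{1-\bepsilon}$, $\partial_x e_{p_j}(0,0)=0$; moreover $\|F_{p_j}\|_{C^2}=O(1)$, so $f'_{p_j},\partial_x e_{p_j},\partial_y e_{p_j}$ are uniformly Lipschitz. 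Assume $l\ge 0$ (the case $l<0$ is identical, using $F^{-1}$ and backward regularity). Setting $z_{m+j}:=F^j_{p_m}(z_m)$, the hypothesis gives $\|z_{m+j}\|<l_{p_{m+j}}=\bL^{-1}\lambda^{\bepsilon|m+j|}$ for $0\le j\le l$. Then $A^{(l)}_m:=D_0F^l_{p_m}=\operatorname{diag}(\alpha_l,\beta_l)$ with $\alpha_l=\prod_{j<l}a_{m+j}$, $\beta_l=\prod_{j<l}b_{m+j}$, while $B^{(l)}_m:=D_{z_m}F^l_{p_m}$ is lower triangular with diagonal entries $\tilde\alpha_l=\prod_{j<l}f'_{p_{m+j}}(x_{m+j})$, $\tilde\beta_l=\prod_{j<l}\partial_y e_{p_{m+j}}(z_{m+j})$, and a subdiagonal entry $\gamma_l$ which is a sum over $0\le j<l$ of the terms $\big(\prod_{j<i<l}\partial_y e_{p_{m+i}}(z_{m+i})\big)\,\partial_x e_{p_{m+j}}(z_{m+j})\,\big(\prod_{i<j}f'_{p_{m+i}}(x_{m+i})\big)$.

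The key step is to prove that $S:=B^{(l)}_m(A^{(l)}_m)^{-1}-\Id$ has operator norm $<k$ once the regular radii are chosen small enough; its entries are $\tilde\alpha_l/\alpha_l-1$, $\tilde\beta_l/\beta_l-1$ and $\gamma_l/\alpha_l$. The crucial input is that the regular neighbourhoods shrink geometrically along the orbit, so $\sum_{j<l}\|z_{m+j}\|\le\bL^{-1}\sum_j\lambda^{\bepsilon|m+j|}\le 2\bL^{-1}/(1-\lambda^\bepsilon)=:\sigma=O(\bL^{-1})$. By Lipschitzness $|f'_{p_{m+j}}(x_{m+j})-a_{m+j}|=O(\|z_{m+j}\|)$ and $a_{m+j}>\lambda^\bepsilon$, so $\sum_j|f'_{p_{m+j}}(x_{m+j})/a_{m+j}-1|=O(\sigma)$ and hence $|\tilde\alpha_l/\alpha_l-1|=O(\bL^{-1})$; similarly $|\tilde\beta_l/\beta_l-1|=O(\bL^{-1})$ using $b_{m+j}>\lambda^{1+\bepsilon}$. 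For the subdiagonal entry, $|\partial_x e_{p_{m+j}}(z_{m+j})|=O(\|z_{m+j}\|)$, the trailing product of $|\partial_y e|$ is $\lesssim\prod_{j<i<l}|b_{m+i}|$, the leading product of $|f'|$ is $\lesssim\prod_{i<j}|a_{m+i}|$, and dividing by $\alpha_l$ telescopes these against the denominator to leave $|\gamma_l/\alpha_l|\lesssim\lambda^{-\bepsilon}\sum_{j<l}\|z_{m+j}\|\prod_{j<i<l}(|b_{m+i}|/|a_{m+i}|)\lesssim\lambda^{-\bepsilon}\sigma=O(\bL^{-1})$, where $|b_j|/|a_j|<\lambda^{1-2\bepsilon}<1$ since $\bepsilon$ is small. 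Thus $\|S\|=O(\bL^{-1})<k$. Since $D_{z_m}F^l_{p_m}(v)-D_0F^l_{p_m}(v)=(B^{(l)}_m-A^{(l)}_m)v=S\,A^{(l)}_m v$, the first inequality follows for every $v$; and $\Jac_{z_m}F^l_{p_m}/\Jac_0F^l_{p_m}=\det B^{(l)}_m/\det A^{(l)}_m=(\tilde\alpha_l/\alpha_l)(\tilde\beta_l/\beta_l)=(1+O(\bL^{-1}))^2\in(1-k,1+k)$.

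For the original coordinates, $F^l=\Phi_{p_{m+l}}^{-1}\circ F^l_{p_m}\circ\Phi_{p_m}$ on the region where the orbit remains in the regular neighbourhoods, so $D_{q_m}F^l=(D_{q_{m+l}}\Phi_{p_{m+l}})^{-1}B^{(l)}_m\,D_{q_m}\Phi_{p_m}$ and $D_{p_m}F^l=(D_{p_{m+l}}\Phi_{p_{m+l}})^{-1}A^{(l)}_m\,D_{p_m}\Phi_{p_m}$. The charts are uniformly bi-Lipschitz on their domains — $\|D_q\Phi_{p_j}\|$ and $\|(D_q\Phi_{p_j})^{-1}\|$ are $O(1)$, the $C^r$ blow-up in \thmref{reg chart} living only in the higher-order derivatives — and after shrinking the regular radii, $D_{q_j}\Phi_{p_j}$ is as close to $D_{p_j}\Phi_{p_j}$ as we like, because $\|D^2\Phi_{p_j}\|=O(l_{p_j}^{-1})$ while $\dist(q_j,p_j)$ is a small fraction of $l_{p_j}$. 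Substituting $\|S\|=O(\bL^{-1})$ and these two facts into the two displayed identities — and using in addition that $\Phi_{p_m}$ carries the local strong-stable foliation to the vertical foliation (by the skew-product normal form of \thmref{reg chart}(iii)), so that pre- and post-composition by these chart differentials preserves a multiplicative estimate up to a bounded worsening of the constant — yields, after a final shrinking of the radii, $\|D_{q_m}F^l(v)-D_{p_m}F^l(v)\|<k\|D_{p_m}F^l(v)\|$ and $\Jac_{q_m}F^l/\Jac_{p_m}F^l\in(1-k,1+k)$. The step I expect to be the main obstacle is the bound $|\gamma_l/\alpha_l|=O(\bL^{-1})$: the cocycle $B^{(l)}_m$ is genuinely hyperbolic, so a priori an error committed at one step could be amplified at every later step and the accumulated error would grow with $l$; it is controlled only because (i) the skew-product normal form of \thmref{reg chart}(iii) forces the cocycle to be lower triangular, (ii) the separation $|b_j|<|a_j|$ makes the relevant products of transition factors contracting, and (iii) the regular radii decay geometrically along the orbit, so that $\sum_{j<l}\|z_{m+j}\|$ is a convergent geometric series of size $O(\bL^{-1})$ independent of $l$; everything else is routine bookkeeping.
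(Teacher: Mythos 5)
Your argument for the first displayed inequality (in linearized coordinates) is sound and captures the heart of the proposition: the skew-product normal form of \thmref{reg chart}(iii) makes each $D_{z_j}F_{p_j}$ lower triangular, the condition $\partial_x e_{p_j}(0,0)=0$ kills the constant term in the subdiagonal, the geometric decay $\|z_{m+j}\| < \bL^{-1}\lambda^{\bepsilon|m+j|}$ yields a convergent geometric series $\sum_j\|z_{m+j}\| = O(\bL^{-1})$ that is uniform in $l$, and the transition factors $r_i/a_{m+i}\approx b_{m+i}/a_{m+i}<\lambda^{1-2\bepsilon}<1$ keep the cocycle from amplifying the accumulated error. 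The Jacobian ratio in linearized coordinates also follows since the determinant only sees the diagonal.

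The transport to the second displayed inequality (original coordinates) has a genuine gap, and it is precisely the same obstacle you flag for $\gamma_l/\alpha_l$ reappearing in a place where your treatment does not control it. Writing $P:=D_{q_m}\Phi_{p_m}$, $Q:=D_{p_m}\Phi_{p_m}$, $G:=D_{q_{m+l}}\Phi_{p_{m+l}}$, $H:=D_{p_{m+l}}\Phi_{p_{m+l}}$, $A:=D_0F^l_{p_m}$, $B:=D_{z_m}F^l_{p_m}$, one has
$D_{q_m}F^l\,(D_{p_m}F^l)^{-1}=G^{-1}(BA^{-1})\bigl(A\,(PQ^{-1})\,A^{-1}\bigr)H$,
and the inner conjugation $A(PQ^{-1})A^{-1}$ is the problem: $A=\operatorname{diag}(\alpha_l,\beta_l)$ has $\alpha_l/\beta_l\approx\lambda^{-(1-2\bepsilon)l}$, so any nonzero $(1,2)$ entry of $PQ^{-1}-\Id$ is multiplied by a factor exponential in $l$. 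Your phrase ``pre- and post-composition by these chart differentials preserves a multiplicative estimate up to a bounded worsening of the constant'' is therefore not justified; the worsening is unbounded unless $PQ^{-1}-\Id$ is essentially lower triangular. But \thmref{reg chart} does not give that: the vertical direction $E^v_q=D\Phi_{p_m}^{-1}(E^{gv})$ is the pullback of the stable foliation and \emph{varies} with $q$, so $P$ and $Q$ do not map a common direction to vertical, and $PQ^{-1}$ has a nonzero upper-right entry of order $\|D^2\Phi_{p_m}\|\cdot\dist(q_m,p_m)$, not smaller. (A concrete instance: take the linearized dynamics to be the linear map $\operatorname{diag}(2,1/2)$ and the chart $\Phi_{p_m}(u,w)=(u+\delta w^2,w)$, which meets every stated hypothesis of \thmref{reg chart}; then one computes that $D_{q_m}F^l\,(D_{p_m}F^l)^{-1}$ has $(1,2)$ entry of order $4^l\delta\eta$, where $\eta$ is the second coordinate of $q_m$, so the original-coordinates estimate fails for $l$ large with $\eta$ of order $l_{p_m}$, which the hypotheses permit.) Closing the gap requires a further structural property of the chart that your proof does not establish — for instance that $D_q\Phi_{p_m}(D_{p_m}\Phi_{p_m})^{-1}$ is lower triangular (equivalently that $E^v_q$ is independent of $q$), or a direct telescoping argument in original coordinates that keeps the comparison along an invariant cone field rather than reducing to a straight operator-norm transport.
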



\subsection{Vertical and horizontal directions}

For $z \in \bbR^2$, denote the genuine vertical/horizontal direction at $z$ by $E^{gv/gh}_z \in \bbP^2_z$ respectively.

Let $-M \leq m \leq N$. For $q \in \cU_{p_m}$, write $z := \Phi_{p_m}(q)$. The {\it vertical/horizontal direction at $q$ in $\cU_{p_m}$} is defined as $E^{v/h}_q := D\Phi_{p_m}^{-1}(E^{gv/gh}_z)$. For $t\geq 0$, we say that $E_q \in \bbP^2_q$ is {\it $t$-vertical/horizontal in $\cU_{p_0}$} if
$$
\frac{\measuredangle(E_q, E_q^{v/h})}{\measuredangle(E_q, E_q^{h/v})} <t.
$$
Similarly, we say that a $C^1$-curve $\gamma \subset \cU_{p_0}$ is {\it $t$-vertical/horizontal in $\cU_{p_0}$} if
$$
\frac{\measuredangle(\gamma'(s), E_{\gamma(s)}^{v/h})}{\measuredangle(\gamma'(s), E_{\gamma(s)}^{h/v})} <t
\matsp{for}
s \in (0, |\gamma|).
$$

By the construction of regular charts in \thmref{reg chart}, vertical directions are invariant under $F$:
$$
\text{i.e.}\hspace{5mm}
DF(E_q^v) = E_{F(q)}^v
\matsp{for}
q \in \cU_{p_m}.
$$
Note that the same is not true for horizontal directions. However, the following result states that they are still nearly invariant under $F$.

\begin{prop}[Near invariance of horizontal directions]\label{hor near inv}
Let $-M\leq m \leq N$ and $-M - m \leq l \leq N - m$. Suppose that $q_{m + i} \in \cU_{p_{m + i}}
\matsp{for}
i \in [m, m+l] \cap \bbZ$. Let $\tiE_{q_{m+l}}^h := D_{q_m}F^l(E_{q_m}^h)$.
Write $z_m = (x_m, y_m) := \Phi_{p_m}(q_m)$ and
$$
\tiE_{z_{m+l}}^h := D_{z_m}F_{p_m}^l(E_{z_m}^{gh}) = D_{q_{m+l}}\Phi_{p_{m+l}}(\tiE_{q_{m+l}}^h).
$$
Then we have
$$
\measuredangle(\tiE_{z_{m+l}}^h, E_{z_{m+l}}^{gh}), \measuredangle(\tiE_{q_{m+l}}^h, E_{q_{m+l}}^h) < K|y_{m+l}|^{1-\bepsilon}
$$
for some uniform constant $K > 1$.
\end{prop}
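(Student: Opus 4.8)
The plan is to pass to the linearizing coordinates of \thmref{reg chart} and reduce the statement to a scalar recursion for the ``slope'' of the iterated horizontal direction. Set $z_j:=\Phi_{p_j}(q_j)=(x_j,y_j)$ for the indices $j$ running from $m$ to $m+l$; by \thmref{reg chart}iii) each $D_{z_j}F_{p_j}$ is lower triangular, with diagonal entries $f_{p_j}'(x_j)$ and $\partial_y e_{p_j}(z_j)$ and lower-left entry $\partial_x e_{p_j}(z_j)$. Writing the vector $D_{z_m}F_{p_m}^{\,j-m}(E^{gh}_{z_m})$ with first coordinate normalized to $1$ and second coordinate $s_j$ (so $s_m=0$, and $\measuredangle(\tiE^h_{z_{m+l}},E^{gh}_{z_{m+l}})\asymp|s_{m+l}|$ once we know $|s_{m+l}|$ is small), the triangular form gives
\[
s_{j+1}=\beta_j s_j+\delta_j,\qquad
\beta_j:=\frac{\partial_y e_{p_j}(z_j)}{f_{p_j}'(x_j)},\qquad
\delta_j:=\frac{\partial_x e_{p_j}(z_j)}{f_{p_j}'(x_j)}.
\]
Two inputs are needed. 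First, $|\beta_j|$ is bounded away from $1$: by \thmref{reg chart}ii) and \propref{loc linear}, $|\partial_y e_{p_j}(z_j)|\asymp|b_j|<\lambda^{1-\bepsilon}$ while $|f_{p_j}'(x_j)|\asymp|a_j|\in(\lambda^{\bepsilon},\lambda^{-\bepsilon})$, so forward iteration contracts slopes, $|\beta_j|\le\lambda^{1-\bepsilon}$. Second — and this is where the height enters — since $e_{p_j}(\,\cdot\,,0)\equiv0$ we also have $\partial_x e_{p_j}(\,\cdot\,,0)\equiv0$, so the mean value theorem and the uniform $C^{2}$-bound of \thmref{reg chart}i) give $|\partial_x e_{p_j}(z_j)|=O(|y_j|)$, hence $|\delta_j|=O(|y_j|)$.

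Next I solve the recursion, $s_{m+l}=\sum_k\delta_{m+k}\prod_{i>m+k}\beta_i$ (and analogously with the factors $\beta_i^{-1}$ when $l<0$), and here the decisive step is a cancellation. Using the vertical dynamics $y_{i+1}=e_{p_i}(x_i,y_i)=y_i\,\partial_y e_{p_i}(z_i)\,(1+O(|y_i|))$, I rewrite each intermediate height $|y_{m+k}|$ appearing in $|\delta_{m+k}|$ in terms of the terminal height $|y_{m+l}|$; the factors $\partial_y e_{p_i}(z_i)$ then cancel against those occurring in $\prod_{i>m+k}\beta_i$, leaving only a product of the near-neutral factors $f_{p_i}'(x_i)^{\pm1}\asymp a_i^{\pm1}\in(\lambda^{\bepsilon},\lambda^{-\bepsilon})$. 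Because the heights $|y_j|$ decay geometrically along the orbit, the accumulated errors $(1+O(|y_j|))$ multiply to a bounded constant, and summing the resulting geometric series yields $|s_{m+l}|\le C'\,|y_{m+l}|\,\lambda^{-\bepsilon|l|}$.

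It remains to absorb $\lambda^{-\bepsilon|l|}$ into a power of $|y_{m+l}|$. The hypothesis that the whole orbit segment stays in the regular neighborhoods $\cU_{p_j}$ — whose radii shrink like $\bL^{-1}\lambda^{\bepsilon|j|}$ — together with the height dynamics (here $|y_{j+1}/y_j|<\lambda^{1-\bepsilon}$ under forward iteration) bounds the terminal height so that an inequality of the form $\lambda^{-\bepsilon|l|}\le C''\,|y_{m+l}|^{-\bepsilon/(1-\bepsilon)}$ holds; combining the two estimates gives $|s_{m+l}|\le K\,|y_{m+l}|^{\,1-\bepsilon}$ after renaming the marginal exponent (note $\bepsilon/(1-\bepsilon)$ is again admissible as an ``$\bepsilon$''). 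The bound for $\tiE^h_{q_{m+l}}$ then follows immediately: these two directions, together with $E^h_{q_{m+l}}$ and $E^{gh}_{z_{m+l}}$, correspond under $D\Phi_{p_{m+l}}^{\pm1}$, whose conformal distortion is uniformly bounded, so the two angles are comparable up to a uniform constant.

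The step I expect to be the main obstacle is the middle one: making the $\partial_y e_{p_i}$-cancellation rigorous while keeping precise control of the nonlinear error terms, and arranging the bookkeeping so that the final estimate is governed by the \emph{terminal} height $|y_{m+l}|$ rather than by the larger heights occurring earlier along the orbit — it is precisely here, and in the subsequent conversion of $\lambda^{-\bepsilon|l|}$ into a power of $|y_{m+l}|$, that the slack built into the exponent ``$1-\bepsilon$'' of the statement is consumed. One must also treat $l\ge0$ and $l<0$ side by side, the latter by running the recursion in the $\beta_i^{-1}$ direction; the sign reversal in the $a_i$-exponents is harmless because it disappears after the cancellation.
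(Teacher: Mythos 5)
Your reduction to the scalar slope recursion $s_{j+1}=\beta_j s_j+\delta_j$ using the lower-triangular structure of $D_{z_j}F_{p_j}$, the bound $|\delta_j|=O(|y_j|)$ coming from $e_{p_j}(\cdot,0)\equiv0$, and the rewriting of $|y_{m+k}|$ in terms of $|y_{m+l}|$ to cancel the $\partial_y e$-factors, is the right framework and correctly handles the forward case $l\ge 0$: after the cancellation the summand is $O\bigl(|y_{m+l}|\,\prod_{i>k}|a'_{m+i}|^{-1}\bigr)$, the geometric sum in the near-neutral factors gives $|s_{m+l}|=O(|y_{m+l}|\,\lambda^{-\bepsilon l})$, and the conversion $\lambda^{-\bepsilon l}\le \bL^{\bepsilon/(1-\bepsilon)}|y_{m+l}|^{-\bepsilon/(1-\bepsilon)}$ follows from $|y_{m+l}|<|y_m|\lambda^{(1-\bepsilon)l}\le \bL^{-1}\lambda^{(1-\bepsilon)l}$, which is exactly the forward height decay you invoke.

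The gap is in the backward case $l<0$, and it is not where you expect it. Two remarks. First, your phrase that the $a_i$-products ``disappear after the cancellation'' is not quite right: the $\partial_y e$-factors cancel, but the near-neutral factors $a'_i$ remain and accumulate into the geometric series $\sum_{k}\prod|a'|^{\pm1}\asymp\lambda^{-\bepsilon|l|}$; you do in fact carry this factor, so this is only a wording issue. Second, and more seriously, the conversion step $\lambda^{-\bepsilon|l|}\le C''|y_{m+l}|^{-\bepsilon/(1-\bepsilon)}$ is derived from the inequality $|y_{j+1}/y_j|<\lambda^{1-\bepsilon}$, which only expresses that heights \emph{decay} under forward iteration. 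Going backward the heights \emph{grow}: $|y_{m+l}|>|y_m|\lambda^{-(1-\bepsilon)|l|}$, so the needed inequality $|y_{m+l}|\lesssim\lambda^{(1-\bepsilon)|l|}$ is precisely the wrong direction. The only other bound available is the regular-radius constraint $|y_{m+l}|<\bL^{-1}\lambda^{\bepsilon|m+l|}$, and the resulting requirement $\lambda^{-\bepsilon|l|}\le K\bL^{\bepsilon'}\lambda^{-\bepsilon\bepsilon'|m+l|}$ reads $|l|\le\bepsilon'|m+l|+O(1)$, which fails whenever $|m+l|$ is small relative to $|l|$ (e.g.\ $m=0$, $l=-n$). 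To make the backward case go through one cannot simply ``run the recursion in the $\beta_i^{-1}$ direction and absorb''; one needs an independent input — either a sharper control of the partial products $\prod a'_i$ (coming from the two-sided $s\in\{-r,r-1\}$ regularity conditions and \propref{decay reg} rather than the crude pointwise bound $\lambda^{\bepsilon}<a_m<\lambda^{-\bepsilon}$), or a different bookkeeping that keeps track of how the regular-radius constraint at \emph{every} intermediate index, not just the terminal one, caps the allowed $|y_j|$ and hence the allowed values of $n=|l|$. As written, the proposal silently uses the forward height dynamics for both directions, and this is the step that would fail.

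A smaller point: the final transfer from $\measuredangle(\tiE^h_{z_{m+l}},E^{gh}_{z_{m+l}})$ to $\measuredangle(\tiE^h_{q_{m+l}},E^h_{q_{m+l}})$ via $D\Phi_{p_{m+l}}$ requires the conformal distortion of the regular chart at the point to be bounded uniformly; the stated bound $\|\Phi_{p_{m+l}}^{\pm1}\|_{C^r}=O(\bL\lambda^{-\bepsilon|m+l|})$ alone only gives a distortion $O(\bL^2\lambda^{-2\bepsilon|m+l|})$, so you should either quote that $D_{p_{m+l}}\Phi_{p_{m+l}}$ has uniformly bounded eccentricity (which is part of how the charts are constructed) or absorb the extra $\lambda^{-\bepsilon|m+l|}$-factors into a further shrinkage of the exponent using again $|y_{m+l}|<\bL^{-1}\lambda^{\bepsilon|m+l|}$.
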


\begin{prop}[Vertical alignment of forward contracting directions]\label{vert angle shrink}
Let $q_0 \in \cU_{p_0}$ and $\tiE_{q_0}^v \in \bbP^2_{q_0}$. Suppose for some $0 < n \leq N$, we have $q_i \in \cU_{p_ i}$ for $0 \leq i \leq n$. If $\nu := \|DF^n|_{\tiE_{q_0}^v}\| < \bL^{-1}\lambda^{\bepsilon n}$,
then
$$
\measuredangle(\tiE_{q_0}^v, E_{q_0}^v) < \bL\lambda^{-\bepsilon n}\nu + \bL\lambda^{(1-\bepsilon) n}.
$$
\end{prop}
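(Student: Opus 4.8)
The plan is to transport the statement into the regular chart at $q_0$, replace $DF^n$ along the orbit by its diagonal linear model, and then exploit the large gap between the two contraction rates of that model.

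First I would fix the linearization of $F$ along the $(M,N)$-orbit of $p_0$ with vertical direction $E^v_{p_0}$ given by Theorem \ref{reg chart}; the hypotheses $q_i \in \cU_{p_i}$ for $0 \le i \le n$ are exactly what is needed to invoke Proposition \ref{loc linear} below. Write $z_0 := \Phi_{p_0}(q_0)$, choose a unit $v \in \tiE^v_{q_0}$, and set $\tilde v := D_{q_0}\Phi_{p_0}(v)$, rescaled to be a unit vector; since $\|\Phi_{p_0}^{\pm1}\|_{C^r}=O(\bL)$, the chart distorts angles at $q_0$ by at most a factor $\bL$, and $E^v_{q_0}$ corresponds to the genuine vertical axis $E^{gv}_{z_0}$, so it suffices to bound $\measuredangle(\tilde v, E^{gv}_{z_0})$. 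Setting $T := D_{z_0}F^n_{p_0}$, Proposition \ref{loc linear} (with $m=0$, $l=n$) says $T$ is within relative error $k$ of $\cA_n := D_0 F^n_{p_0} = A_{n-1}\cdots A_1 A_0$, which is diagonal with entries $\alpha_n := \prod_{i<n} a_i \in (\lambda^{\bepsilon n},\lambda^{-\bepsilon n})$ and $\beta_n := \prod_{i<n} b_i \in (\lambda^{(1+\bepsilon)n},\lambda^{(1-\bepsilon)n})$; thus $\beta_n \ll \alpha_n$, the axis $E^{gv}_{z_0}$ is the strongly-contracted direction of $\cA_n$, and the ratio of contraction rates is $\beta_n/\alpha_n < \lambda^{(1-\bepsilon)n}$.

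The two terms in the conclusion then come out of a triangle inequality through the strongly-contracted direction $u_2$ of $T$ (smallest singular value $\sigma_2$; let $u_1 \perp u_2$ have largest singular value $\sigma_1$). For the first term: translating the hypothesis into the chart, $\nu = \|D\Phi_{p_n}^{-1}(T\tilde v)\|/\|v\|$, and since $\|\Phi_{p_n}^{\pm1}\|_{C^r} = O(\bL\lambda^{-\bepsilon n})$ while $\|v\|$ is comparable to $1$ up to a factor $\bL$, this gives $\|T\tilde v\| \le \bL\lambda^{-\bepsilon n}\nu$; decomposing $\tilde v = \tilde v_1 u_1 + \tilde v_2 u_2$ and using $\sigma_1 \ge (1-k)\alpha_n \ge (1-k)\lambda^{\bepsilon n}$ yields $|\tilde v_1| \le \|T\tilde v\|/\sigma_1 \le \bL\lambda^{-\bepsilon n}\nu$, hence $\measuredangle(\tilde v, u_2) < \bL\lambda^{-\bepsilon n}\nu$. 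For the second term: if $\phi := \measuredangle(u_2, E^{gv}_{z_0})$, then $\sigma_2 = \|Tu_2\| \ge (1-k)\|\cA_n u_2\| \ge (1-k)\alpha_n\sin\phi$, whereas $\sigma_2 \le (1+k)\beta_n$, forcing $\sin\phi \le \tfrac{1+k}{1-k}\,\beta_n/\alpha_n < \bL\lambda^{(1-\bepsilon)n}$, i.e. $\measuredangle(u_2, E^{gv}_{z_0}) < \bL\lambda^{(1-\bepsilon)n}$. Adding the two estimates and transferring back to $q_0$ finishes the proof.

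The step I expect to be the main obstacle is the distortion bookkeeping in the previous paragraph: relating the ambient quantities $\nu = \|DF^n|_{\tiE^v_{q_0}}\|$ and $\measuredangle(\tiE^v_{q_0}, E^v_{q_0})$ to their chart counterparts using only the $C^r$-bounds on $\Phi_{p_0}$ and $\Phi_{p_n}$ from Theorem \ref{reg chart}. The factor $\lambda^{-\bepsilon n}$ coming from the chart at time $n$ is genuinely present, and it is exactly why the bound carries $\lambda^{-\bepsilon n}$ in front of $\nu$; the additive term $\bL\lambda^{(1-\bepsilon)n}$ reflects that the true strongly-contracted direction of $D_{q_0}F^n$ can only be pinned to the chart's vertical axis up to the resolution afforded by Proposition \ref{loc linear}. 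The hypothesis $\nu < \bL^{-1}\lambda^{\bepsilon n}$ is used to stay in the regime where all the angles in question are genuinely $\ll 1$ (the statement is otherwise vacuous, its right-hand side exceeding $\pi/2$) and to keep $u_2$ the strongly-contracted axis of $T$.
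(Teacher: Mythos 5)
The paper does not actually spell out a proof of Proposition~\ref{vert angle shrink} — it is one of the quantitative Pesin estimates deferred to the companion paper [CLPY1] — so there is no author argument to compare against line by line. That said, your argument is correct and is the natural one given the tools available (Theorem~\ref{reg chart} and Proposition~\ref{loc linear}): pass to the regular charts, compare $T = D_{z_0}F^n_{p_0}$ with the diagonal model $\cA_n$, and split the angle via the strongly-contracted singular direction of $T$. The two halves of the conclusion fall out exactly as you describe: the $\bL\lambda^{-\bepsilon n}\nu$ term from the bound $|\tilde v_1| \le \|T\tilde v\|/\sigma_1$ together with $\sigma_1 \gtrsim \lambda^{\bepsilon n}$ and the chart distortion $\|D\Phi_{p_n}^{-1}\| = O(\bL\lambda^{-\bepsilon n})$, and the $\bL\lambda^{(1-\bepsilon)n}$ term from $\sigma_2/\sigma_1 \lesssim \beta_n/\alpha_n$. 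Two small accounting points worth keeping explicit in a write-up: (a) dividing by $\sigma_1 \ge (1-k)\lambda^{\bepsilon n}$ produces $\lambda^{-2\bepsilon n}$, which is legitimately absorbed into $\lambda^{-\bepsilon n}$ only because $\bepsilon$ absorbs uniformly bounded multiples of $\epsilon$ — this is by convention, not by magic; and (b) you should confirm the hypothesis $\nu < \bL^{-1}\lambda^{\bepsilon n}$ forces $|\tilde v_1| < 1$ so that $\sin^{-1}|\tilde v_1|$ is defined and comparable to $|\tilde v_1|$, which you do note at the end. With those two points made explicit the proof is complete and correct.
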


\begin{prop}[Horizontal alignment of backward neutral directions]\label{hor angle shrink}
Let $q_0 \in \cU_{p_0}$ and $\tiE_{q_0}^h \in \bbP^2_{q_0}$. Suppose for some $0 < m \leq M$, we have $q_{-i} \in \cU_{p_ {-i}}$ for $0 \leq i \leq m$. If $\mu := \|DF^{-m}|_{\tiE_{q_0}^h}\| < \bL^{-1}\lambda^{-(1-\bepsilon) m}$, then
$$
\measuredangle(\tiE_{q_0}^h, E_{q_0}^h) < \bL\lambda^{(1-\bepsilon) m}(1+\mu).
$$
\end{prop}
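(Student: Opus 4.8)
This proposition is dual to \propref{vert angle shrink}: there, smallness of the \emph{forward} growth of a direction forces it to align with the (exactly invariant, strongly contracting) vertical direction, while here smallness of the \emph{backward} growth $\mu$ of $\tiE^h_{q_0}$ should force it to align with the (neutral, merely nearly invariant) horizontal direction. The mechanism is the same in both cases: the backward growth of $\tiE^h_{q_0}$ is bounded below by the norm of its vertical component times the backward expansion of the vertical direction, which is $\gtrsim\bL^{-1}\lambda^{-(1-\bepsilon)m}$, so a small $\mu$ pins down the vertical component. The one new feature is that, unlike vertical directions, horizontal directions are only \emph{nearly} invariant, so one must control how much of the horizontal part of $\tiE^h_{q_0}$ leaks into the vertical direction under $DF^{-m}$; this is precisely what \propref{hor near inv} quantifies.

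Fix a unit vector $v_0$ spanning $\tiE^h_{q_0}$ and decompose $v_0=v_0^h+v_0^v$ along the uniformly transverse frame $\{E^h_{q_0},E^v_{q_0}\}$ (so $\|v_0^h\|,\|v_0^v\|\leq\bL$). Apply $D_{q_0}F^{-m}$. Since vertical directions are exactly $F$-invariant, $w^v:=D_{q_0}F^{-m}(v_0^v)$ lies on $E^v_{q_{-m}}$ with
$$\|w^v\|=\|v_0^v\|\cdot\|D_{q_0}F^{-m}|_{E^v_{q_0}}\|\ \gtrsim\ \bL^{-1}\lambda^{-(1-\bepsilon)m}\|v_0^v\|,$$
the lower bound coming from the backward $(L,\epsilon)_v$-regularity of $p_0$ together with \propref{loc linear} (equivalently, from $b_{-i}<\lambda^{1-\bepsilon}$ for the linearized maps). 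For the horizontal part, \propref{hor near inv} gives that $w^h:=D_{q_0}F^{-m}(v_0^h)$ makes an angle $<K|y_{-m}|^{1-\bepsilon}$ with $E^h_{q_{-m}}$, where $y_{-m}$ is the vertical coordinate of $\Phi_{p_{-m}}(q_{-m})$; since $q_{-m}\in\cU_{p_{-m}}$ we have $|y_{-m}|\lesssim\bL^{-1}\lambda^{\bepsilon m}$, while $\|w^h\|\lesssim\bL\lambda^{-\bepsilon m}$ (using $a_{-i}<\lambda^{-\bepsilon}$ and \propref{loc linear}), so the $E^v_{q_{-m}}$-component of $w^h$ has norm $\lesssim\bL$ — up to a residual $\lambda^{-\bepsilon^2 m}$ factor that the flexible convention for $\bepsilon$ absorbs.

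Now decompose $D_{q_0}F^{-m}(v_0)=w^h+w^v$ along the uniformly transverse frame $\{E^h_{q_{-m}},E^v_{q_{-m}}\}$: its $E^v_{q_{-m}}$-component has norm $\leq\bL\mu$, and it equals $w^v$ plus the $E^v_{q_{-m}}$-component of $w^h$, whence $\|w^v\|\leq\bL\mu+\bL=\bL(1+\mu)$. Combining with the lower bound on $\|w^v\|$ gives $\|v_0^v\|\leq\bL\lambda^{(1-\bepsilon)m}(1+\mu)$. Finally, if $\|v_0^v\|\leq\tfrac{1}{2}$ then $\|v_0^h\|\geq\tfrac{1}{2}$ and $\measuredangle(\tiE^h_{q_0},E^h_{q_0})\lesssim\|v_0^v\|\leq\bL\lambda^{(1-\bepsilon)m}(1+\mu)$, as desired; while if $\|v_0^v\|>\tfrac{1}{2}$ then $\bL\lambda^{(1-\bepsilon)m}(1+\mu)>\tfrac{1}{2}$ and the trivial bound $\measuredangle(\tiE^h_{q_0},E^h_{q_0})\leq\pi/2$ already suffices after absorbing $\pi$ into $\bL$.

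The step I expect to be most delicate is the control of the horizontal-to-vertical leakage: it rests on \propref{hor near inv}, on $q_{-m}$ staying in a sufficiently small regular neighborhood of $p_{-m}$ (one may need to shrink the regular radii), and on balancing the competing exponentials $\lambda^{\bepsilon m}$ (size of the neighborhood, through $|y_{-m}|$) against $\lambda^{-\bepsilon m}$ (backward neutral growth of $w^h$). That their product is only polynomially bounded in $\bL$ — up to the harmless $\lambda^{-\bepsilon^2 m}$ factor swallowed by the flexible exponent — is exactly what makes the leakage contribute the benign additive ``$+1$'' rather than a term comparable to the main one.
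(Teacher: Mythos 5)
Your argument is correct. The paper states Proposition~\ref{hor angle shrink} without proof (along with its companion \propref{vert angle shrink} and the other quantitative-Pesin estimates of \secref{sec:quant pesin}; these are deferred to the companion paper [CLPY1]), so there is no in-text proof to compare against. On its own merits the proposal is sound: you correctly identify that the mechanism is dual to \propref{vert angle shrink}, with the one new wrinkle being that $E^h$ is only nearly invariant, so the horizontal part of $\tiE^h_{q_0}$ leaks a vertical component under $DF^{-m}$ which must be controlled via \propref{hor near inv}. The decomposition $v_0=v_0^h+v_0^v$ in the uniformly transverse frame, the lower bound $\|DF^{-m}|_{E^v_{q_0}}\|\gtrsim\bL^{-1}\lambda^{-(1-\bepsilon)m}$ from backward $v$-regularity plus \propref{loc linear}, and the balance of $|y_{-m}|\lesssim\bL^{-1}\lambda^{\bepsilon m}$ against the backward neutral growth $\lesssim\lambda^{-\bepsilon m}$ of $w^h$ are all used correctly, and the residual $\lambda^{-\bepsilon^2 m}$ in the leakage term is indeed swallowed by the flexible exponent convention (replacing $\bepsilon$ by $\bepsilon+\bepsilon^2$, which is still a valid ``$\bepsilon$''). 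That is exactly what produces the additive ``$+1$'' in the $(1+\mu)$, as you say.

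One small imprecision worth fixing: you write that the vertical component of $w^h$ has norm ``$\lesssim\bL$ — up to a residual $\lambda^{-\bepsilon^2 m}$ factor that the flexible convention for $\bepsilon$ absorbs,'' and later that the product is ``polynomially bounded in $\bL$.'' Strictly, the leakage is $\lesssim\bL\lambda^{-\bepsilon^2 m}$, which is \emph{not} polynomially bounded in $\bL$ uniformly in $m$; the absorption happens in the exponent of $\lambda$ in the final bound $\bL\lambda^{(1-\bepsilon)m}(1+\mu)$ (i.e.\ $1-\bepsilon-\bepsilon^2$ becomes $1-\bepsilon$), not by inflating $\bL$. Your displayed chain of inequalities is correct; only the prose commentary around it is slightly off.
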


Let $\cE : \cD \to T^1\cD$ be a unit vector field on a domain $\cD \subset \Omega$. Define
$$
DF_*(\cE)(p) := \frac{DF(\cE(p))}{\|DF(\cE(p))\|} \in T^1_{F(p)}F(\cD)
\matsp{for}
p \in \cD.
$$
If $\cE(p)$ is $t$-vertical for all $p \in \cD \cap \cU_{p_0}$, then $\cE$ is said to be {\it $t$-vertical in $\cU_{p_0}$}.

For $-N \leq m \leq N$, define $\cE^v_{p_m} : \cU_{p_m} \to T^1(\cU_{p_m})$ to be the unit vector field given by
$$
\cE^v_{p_m}(q) \in E^v_q
\matsp{for}
q \in \cU_{p_m}.
$$

\begin{prop}[Vertical alignment of non-horizontal vector fields under backward iterates]\label{vert fol conv}
Let $\cD_0 \subset \cU_{p_0}$ be a domain, and $0 \leq n \leq N$. Suppose
$$
\cD_i := F^i(\cD_0) \subset \cU_{p_i}
\matsp{for}
0\leq i \leq n.
$$
Let $\cE : \cD_n \to T^1(\cD_n)$ be a $C^{r-1}$-unit vector field. If $\cE$ is $t$-vertical in $\cU_{p_n}$ for some $t \geq 0$, then we have
$$
\|DF^{-n}_*(\cE) - \cE^v_{p_0}|_{\cD_0}\|_{C^{r-1}} \leq (1+t^2)(1+\|\cE\|_{C^{r-1}})\bL\lambda^{(1-\bepsilon)n}.
$$
\end{prop}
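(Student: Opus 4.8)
The plan is to work in the linearized coordinates of \thmref{reg chart}, in which the genuine vertical field $E^{gv}$ is exactly $F$-invariant, and to show that the projectivized backward dynamics contracts vertical-like fields toward $E^{gv}$ at rate $\lambda^{(1-\bepsilon)n}$ in the $C^{r-1}$ norm. Concretely, I would first pass to the charts: write $D_i := \Phi_{p_i}(\cD_i) \subset U_{p_i}$, let $\tilde\cE$ be the unit field on $D_n$ obtained by normalizing $D\Phi_{p_n}(\cE)$, and let $\tilde\cE_0$ be the unit field on $D_0$ obtained from $\tilde\cE$ by the normalized inverse derivative of $F_{p_0}^n$. Then $\Phi_{p_0}$ conjugates $\tilde\cE_0$ to $DF^{-n}_*(\cE)$ on $\cD_0$ and conjugates $E^{gv}$ to $\cE^v_{p_0}$. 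Since $D\Phi_{p_i}$ carries $E^{v/h}$ to $E^{gv/gh}$, the field $\tilde\cE$ is $t$-vertical in the genuine sense, and the $C^{r-1}$-norms of $\tilde\cE - E^{gv}$ on $D_n$ and of $\cE^v_{p_0} - E^{gv}$ transferred to $\cD_0$ are controlled by $\|\cE\|_{C^{r-1}}$ together with the chart norms $\|\Phi_{p_i}^{\pm 1}\|_{C^r}$, which are $O(\bL)$ at $i=0$ and are absorbed into the $\bL\lambda^{-\bepsilon n}$-type quantities at $i=n$. Thus it suffices to bound $\|\tilde\cE_0 - E^{gv}\|_{C^{r-1}}$ on $D_0$.

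Next I would set up the one-step recursion. Representing a vertical-like unit field $\cF$ on $D_{i+1}$ by its inverse slope $\eta_\cF$ (so $\cF$ points along $(\eta_\cF(z),1)$, genuine vertical is $\eta\equiv 0$, and $t$-verticality reads $\|\eta_\cF\|_{C^0}\lesssim t$), the triangular form $F_{p_i}(x,y) = (f_{p_i}(x), e_{p_i}(x,y))$ with $e_{p_i}(\cdot,0)\equiv 0$ from \thmref{reg chart}~iii) gives, after a direct computation, that the normalized pullback of $\cF$ by $F_{p_i}$ has inverse slope
$$
\eta_i(z) = \eta_\cF\bigl(F_{p_i}(z)\bigr)\cdot\Psi_i\bigl(z,\eta_\cF(F_{p_i}(z))\bigr),
$$
where $\Psi_i(z,\eta) = \partial_y e_{p_i}(z)/(f_{p_i}'(z) - \eta\,\partial_x e_{p_i}(z))$ is $C^{r-1}$-bounded on the relevant domain by \thmref{reg chart}~i), satisfies $\Psi_i(z,0) = \partial_y e_{p_i}(z)/f_{p_i}'(z)$, and obeys $|\Psi_i|\le\lambda^{1-\bepsilon}$ there because $\partial_y e_{p_i}\approx b_i\in(\lambda^{1+\bepsilon},\lambda^{1-\bepsilon})$, $f_{p_i}'\approx a_i\in(\lambda^{\bepsilon},\lambda^{-\bepsilon})$, and $\partial_x e_{p_i}$ is $O(l_{p_i})$ near the $x$-axis (using \thmref{reg chart}~ii) and \propref{loc linear}). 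Iterating this recursion from level $n$ down to level $0$ produces $\tilde\cE_0$ and its inverse slope $\eta_0$.

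For the $C^0$ estimate one reads off $\|\eta_0\|_{C^0}\lesssim\lambda^{(1-\bepsilon)n}\|\eta_{\tilde\cE}\|_{C^0}\lesssim(1+t^2)^{1/2}\lambda^{(1-\bepsilon)n}$; equivalently one may apply \propref{vert angle shrink} pointwise to $\tiE^v_{q_0} := DF^{-n}_*(\cE)(q_0)$ after checking $\nu := \|DF^n|_{\tiE^v_{q_0}}\| < \bL^{-1}\lambda^{\bepsilon n}$, which holds because the vertical component of $\cE(q_n)$ (of size $\gtrsim(1+t^2)^{-1/2}$) is dilated by $\|DF^n|_{E^v_{q_0}}\|^{-1}\approx\lambda^{-(1-\bepsilon)n}$ under $DF^{-n}$, using \propref{loc linear} and the forward regularity of $p_0$. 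For the $C^{r-1}$ estimate I would bootstrap on the order $k\le r-1$ of differentiation: differentiating the recursion for $\eta_i$ and using the uniform $C^r$-bounds on $F_{p_i}$ and the gain of a factor $b_i\approx\lambda$ for each $\partial_y$, one finds that the order-$k$ derivatives of $\eta_i$ are bounded by $\lambda^{1-\bepsilon}$ times those of $\eta_{i+1}$ plus a forcing term built only from strictly lower-order derivatives of $\eta_{i+1}$, which have already been bounded by $O(\lambda^{(1-\bepsilon)(n-i-1)})$ at the previous stage of the induction. Summing these geometric recursions from $i=n$ to $i=0$, absorbing a $\mathrm{poly}(n)$ factor into a slightly larger $\bepsilon$ and the chart distortions into $\bL$, and transferring back through $\Phi_{p_0}$ (whose $C^r$-norm is $O(\bL)$) gives $\|DF^{-n}_*(\cE)-\cE^v_{p_0}|_{\cD_0}\|_{C^{r-1}}\le(1+t^2)(1+\|\cE\|_{C^{r-1}})\bL\lambda^{(1-\bepsilon)n}$.

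The main obstacle is this last $C^{r-1}$ step: composing $n$ nonlinear maps can in principle blow up high-order derivatives exponentially, so the argument must genuinely exploit the triangular normal form and the uniform gap $b_i/a_i\le\lambda^{1-\bepsilon}<1$ to see that the derivatives of the slope field contract at essentially the same rate as the field itself, with the lower-order terms — already exponentially small by the induction — entering only as negligible forcing. Keeping the chart-distortion factors $\lambda^{-\bepsilon n}$ arising at the intermediate levels from degrading the final exponent relies on the full factor $\lambda$ gained in the $y$-derivative at each backward step together with the conventions for $\bL$ and $\bepsilon$ absorbing the remainder.
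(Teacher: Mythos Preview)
The paper does not actually supply a proof of this proposition: it is stated in Section~\ref{sec:quant pesin} as part of the quantitative Pesin-theory background, with the proofs deferred to the companion paper \cite{CLPY1}. So there is no in-paper argument to compare against.

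That said, your approach is the natural one and is essentially correct. Passing to the regular charts, representing a vertical-like unit field by its inverse slope $\eta$, and using the triangular form $F_{p_i}(x,y)=(f_{p_i}(x),e_{p_i}(x,y))$ from \thmref{reg chart}~iii) to derive the one-step recursion
\[
\eta_i(z)=\eta_{i+1}\bigl(F_{p_i}(z)\bigr)\cdot\frac{\partial_y e_{p_i}(z)}{f_{p_i}'(x)-\eta_{i+1}(F_{p_i}(z))\,\partial_x e_{p_i}(z)}
\]
is exactly the graph-transform computation one expects. The contraction factor $|\Psi_i|\le\lambda^{1-\bepsilon}$ follows from $b_i/a_i\le\lambda^{1-2\bepsilon}$ together with the smallness of $\partial_x e_{p_i}$ (which, as you note, vanishes on $\{y=0\}$ and is $O(|y|)\le O(l_{p_i})$ by \thmref{reg chart}~i),~iii)). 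The $C^0$ estimate then follows directly, and your inductive scheme for the higher derivatives---differentiating the recursion, observing that the top-order term is multiplied by $DF_{p_i}\cdot\Psi_i$ of size $\lambda^{1-\bepsilon}\cdot\lambda^{-\bepsilon}=\lambda^{1-2\bepsilon}$, and that the forcing terms involve only lower-order derivatives already controlled by the inductive hypothesis---is the standard bootstrap. Summing the resulting geometric series and absorbing the polynomial-in-$n$ losses into $\bepsilon$ and the chart distortions into $\bL$ is consistent with the paper's conventions.

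Two minor points worth tightening. First, when $t$ is large the field $\cE$ can be nearly horizontal, so the inverse-slope parametrization $(\eta,1)$ is ill-suited for the first $O(\log_\lambda(1/t))$ backward steps; you should either use the slope parametrization there or simply note that a single backward step already makes the field $\lambda^{1-\bepsilon}t$-vertical, so after finitely many steps (absorbed into $\bL$) one is in the regime $|\eta|<1$ where your recursion applies cleanly. Second, the chart at the starting level has $\|\Phi_{p_n}^{\pm1}\|_{C^r}=O(\bL\lambda^{-\bepsilon n})$, not $O(\bL)$; this extra $\lambda^{-\bepsilon n}$ must be tracked when you transfer $\cE$ to $\tilde\cE$ and is then absorbed into the final $\lambda^{(1-\bepsilon)n}$ by enlarging $\bepsilon$, exactly as you indicate---just make sure the bookkeeping is explicit.
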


Let $\gamma \subset \bbR^2$ be a $C^r$-curve. We define
$$
\|\gamma\|_{C^r} := \|\phi_\gamma\|_{C^r},
$$
where $\phi_\gamma : [0, |\gamma|] \to \gamma$ is the parameterization of $\gamma$ by its arclength.

\begin{prop}[Horizontal alignment of non-vertical curves under forward iterates]\label{hor curv conv}
Let $\gamma_0 \subset \cU_{p_0}$ be a $C^r$-curve, and $0 \leq n \leq N$. Suppose
$$
\gamma_i := F^i(\gamma_0) \subset \cU_{p_i}
\matsp{for}
0 \leq i \leq n.
$$
If $\gamma_0$ is $t$-horizontal in $\cU_{p_0}$ for some $t \geq 0$, then $\Phi_{p_n}(\gamma_n)$ is the horizontal graph of some $C^r$-map $g_n : J_n \subset \bbI(l_{p_n}) \to \bbI(l_{p_n})$ with
$$
\|g_n\|_{C^r} \leq (1+t^{r+1})\|\gamma_0\|_{C^r}\bL\lambda^{(1-\bepsilon)n}.
$$
\end{prop}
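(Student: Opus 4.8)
The plan is to work entirely in the linearizing charts of \thmref{reg chart} and to track the image curves $\tig_i := \Phi_{p_i}(\gamma_i) \subset U_{p_i}$, which satisfy $\tig_{i+1} = F_{p_i}(\tig_i)$. The key structural input is the triangular normal form $F_{p_i}(x,y) = (f_{p_i}(x), e_{p_i}(x,y))$ from part (iii) of \thmref{reg chart}, together with $e_{p_i}(\cdot,0)\equiv 0$: writing $h_i := f_{p_i}^{-1}$ and, by Hadamard's lemma, $e_{p_i}(x,y) = y\,\tie_{p_i}(x,y)$, part (ii) of \thmref{reg chart} and \propref{loc linear} give, after shrinking the regular radii, the bounds $\|h_i\|_{C^r}, \|\tie_{p_i}\|_{C^{r-1}} = O(1)$ on the relevant neighborhoods, together with $|h_i'| < \lambda^{-\bepsilon}$ and $|\tie_{p_i}| < \lambda^{1-\bepsilon}$ (absorbing the additive $C^1$-closeness error into $\bepsilon$). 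Since $f_{p_i}$ is a diffeomorphism of $\bbR$, each $\tig_i$ is automatically a horizontal graph $y = g_i(x)$ over an interval $J_i \subset \bbI(l_{p_i})$ as soon as $\tig_0$ is one, and the recursion reads
$$
g_{i+1}(x') = g_i(h_i(x'))\cdot \tie_{p_i}\bigl(h_i(x'),\, g_i(h_i(x'))\bigr).
$$
(\propref{hor near inv} provides the complementary geometric fact that the slope of $\tig_i$ in fact decays along the iteration; this is not strictly needed below but confirms that the curves stay genuine graphs.)

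First I would handle the base step: converting the arclength parametrization of $\gamma_0$ into the graph $g_0 : J_0 \to \bbI(l_{p_0})$. Because $\gamma_0$ is $t$-horizontal, its tangent stays bounded away from the vertical, so this is a reparametrization by $u^{-1}$, where $u$ is the first coordinate of $\phi_{\gamma_0}$, with $|u'|$ bounded below by $c/(1+t)$ for a uniform $c>0$; the higher-order chain rule (Fa\`a di Bruno) then gives $\|g_0\|_{C^r} \leq (1+t^{r+1})\|\gamma_0\|_{C^r}$ up to a uniform constant, while $\|g_0\|_{C^0} \leq l_{p_0} = \bL^{-1}$ because $\tig_0 \subset U_{p_0}$.

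The main work is the $C^r$-recursion. From the displayed formula, the bound $|\tie_{p_i}| < \lambda^{1-\bepsilon}$ immediately yields $\|g_i\|_{C^0} \leq \bL^{-1}\lambda^{(1-\bepsilon)i}$. For higher derivatives one expands $g_{i+1}^{(k)}$ by the Leibniz and Fa\`a di Bruno formulas. The crucial observation is that the single top-order term is $(g_i^{(r)}\circ h_i)(h_i')^r\,(\tie_{p_i}\circ\Gamma_i)$, with $\Gamma_i := (h_i,\, g_i\circ h_i)$, whose sup norm is at most $\lambda^{-r\bepsilon}\cdot\lambda^{1-\bepsilon}\cdot\|g_i\|_{C^r} = \lambda^{\,1-(r+1)\bepsilon}\|g_i\|_{C^r}$ --- a genuine contraction, since $\bepsilon$ may be chosen with $(r+1)\bepsilon < 1$. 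Every other term contains as a factor either $g_i$ itself (hence the exponentially small quantity $\|g_i\|_{C^0} \leq \bL^{-1}\lambda^{(1-\bepsilon)i}$) or a derivative of $g_i$ of order $\leq r-1$, multiplied by quantities that are $O(1)$ in $C^{r-1}$; running the estimate as an induction on the derivative order, with the $C^0$ bound as base case, these are all bounded by $(1+t^{r+1})\,\bL\,\lambda^{(1-\bepsilon)i}\,\|\gamma_0\|_{C^r}$ times a uniform constant. This produces a recursion of the shape
$$
\|g_{i+1}\|_{C^r} \leq \lambda^{\,1-(r+2)\bepsilon}\|g_i\|_{C^r} + (1+t^{r+1})\,\bL\,\lambda^{(1-\bepsilon)i}\,\|\gamma_0\|_{C^r},
$$
and summing the geometric series from $g_0$ gives $\|g_n\|_{C^r} \leq (1+t^{r+1})\,\bL\,\lambda^{(1-\bepsilon)n}\,\|\gamma_0\|_{C^r}$, the finitely many uniform constants being absorbed into $\bL$ and the loss $(r+2)\bepsilon$ into the marginal exponent. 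Since $\Phi_{p_n}(\gamma_n) = \tig_n$ is precisely the graph of $g_n$, this is the assertion.

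The main obstacle is the bookkeeping in the last paragraph: one must check that, apart from the single contracting top-order term, every term in $g_{i+1}^{(k)}$ genuinely carries either a factor of the exponentially small $\|g_i\|_{C^0}$ or only $g_i$-derivatives of order $< r$ --- never the full $\|g_i\|_{C^r}$ with an $O(1)$, non-contracting coefficient --- so that the nested induction on $(i,k)$ closes with the stated rate. A secondary point is tracking the $t$-dependence: the factor $(1+t^{r+1})$ enters only through the initial arclength-to-graph conversion, and \propref{hor near inv} together with the relative contraction of the vertical over the horizontal direction guarantees that it is not amplified along the iteration.
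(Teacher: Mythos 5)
The paper itself does not include a proof of \propref{hor curv conv} (the results of \secref{sec:quant pesin} are deferred to \cite{CLPY1}, still in preparation), so a direct comparison is not possible; I assess the proposal on its own terms.

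The overall strategy is the natural one and is sound in broad outline: pass to the linearizing charts, exploit the triangular normal form $F_{p_i}(x,y)=(f_{p_i}(x),e_{p_i}(x,y))$ with $e_{p_i}(\cdot,0)\equiv 0$, write the graph recursion $g_{i+1}=\pi_2\circ F_{p_i}(\cdot\,,g_i(\cdot))\circ f_{p_i}^{-1}$, and close a nested induction on $(i,k)$ using Fa\`a di Bruno, with the top-order contraction driven by $\lambda^{1-\bepsilon}\cdot\lambda^{-r\bepsilon}<1$. This is how one would expect the argument to go.

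There are, however, two concrete technical obstructions in the sketch that you should resolve.

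First, the Hadamard factorization $e_{p_i}(x,y)=y\,\tie_{p_i}(x,y)$ only yields $\tie_{p_i}\in C^{r-1}$ when $e_{p_i}\in C^r$, as you yourself record when you state $\|\tie_{p_i}\|_{C^{r-1}}=O(1)$. But the $j=0$ term of your Leibniz expansion of $g_{i+1}^{(r)}$ is $(g_i\circ h_i)\cdot(\tie_{p_i}\circ\Gamma_i)^{(r)}$, and the second factor requires $\tie_{p_i}\in C^r$, i.e.\ $e_{p_i}\in C^{r+1}$, which you do not have. The cure is simply not to factor: work directly with $g_{i+1}=e_{p_i}\circ\Gamma_i$, expand by the multivariate Fa\`a di Bruno (which only needs $e_{p_i}\in C^r$), and use $e_{p_i}(\cdot,0)\equiv 0$ to get $|\partial_x^a e_{p_i}(x,y)|=O(|y|)$ for $a\le r-1$. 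The Hadamard form is fine for the $C^0$ step, where you only need $|e_{p_i}(x,y)|\le\lambda^{1-\bepsilon}|y|$, but it should not be carried into the $C^r$ bookkeeping.

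Second, and more substantively, in the $e_{p_i}$-form one Fa\`a di Bruno term does not obviously carry the needed exponential decay: the pure-$x$ top term $\partial_x^r e_{p_i}(\Gamma_i)\cdot(h_i')^r$. Here $\partial_x^r e_{p_i}(\cdot,0)\equiv 0$, so this term does tend to $0$ as $\|g_i\|_{C^0}\to 0$, but only via the modulus of continuity of $\partial_x^r e_{p_i}$; for a map that is merely $C^r$ (and $\|F_{p_i}\|_{C^r}=O(1)$, which is all \thmref{reg chart} provides), there is no reason for this modulus to be linear. To get the asserted rate $\lambda^{(1-\bepsilon)n}$ for the full $C^r$ norm one needs, in effect, $\partial_x^r e_{p_i}$ Lipschitz in $y$ (uniformly in $i$), i.e.\ one order of regularity beyond what is written down. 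Your ``every other term contains a factor of $g_i$ or only derivatives of $g_i$ of order $<r$'' accounting is a feature of the Hadamard form which, as noted, does not literally make sense at order $r$; in the well-defined $e_{p_i}$-form, the $\alpha=(r,0)$ term has neither feature. This is the real gap in the sketch. You should either locate the extra regularity (perhaps hidden in the construction of the regular charts in \cite{CLPY1}, or in a Lipschitz control of the top derivative of $F$ that you can propagate through the conjugacy), or weaken the assertion to $\|g_n\|_{C^{r-1}}$ plus a modulus-of-continuity bound on $g_n^{(r)}$.

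A secondary point: you assert that the factor $(1+t^{r+1})$ ``enters only through the initial arclength-to-graph conversion,'' but do not derive it. Inverting the $x$-coordinate $u$ of $\Phi_{p_0}\circ\phi_{\gamma_0}$ costs a factor $\sim |u'|^{-(2r-1)}$ in the top derivative of $u^{-1}$, and $|u'|\gtrsim \bL^{-1}/(1+t)$, so the naive Fa\`a di Bruno count gives a $t$-exponent larger than $r+1$ for $r\ge 3$. Since $t$ is an independent parameter not absorbed by $\bL$, this should be checked; in the intended applications $t$ is $1/\bLp\ll 1$, so the precise power is harmless downstream, but as stated the estimate deserves a short justification.
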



\subsection{Truncated and pinched regular neighborhoods}\label{subsec:trunc pinch}

Let $\delta \in (\bepsilon, 1)$ be a small constant such that $\bdelta < 1$. For $n \geq 0$, the {\it $n$-times $\delta$-truncated regular neighborhood of $p_0$} is defined as
\begin{equation}\label{eq:trun neigh}
\cU_{p_0}^{\delta n} := \Phi_{p_0}^{-1}\left(U_{p_0}^{\delta n}\right)\subset \cU_{p_0},
\matsp{where}
U_{p_0}^{\delta n} := \bbB\left(\lambda^{\delta n}l_{p_0}, l_{p_0}\right).
\end{equation}

The purpose of truncating a regular neighborhood is to ensure that its iterated images stay inside regular neighborhoods.

\begin{lem}\label{trunc neigh fit}
For $0 \leq n \leq N$, we have
$F^i(\cU_{p_0}^{\bepsilon n}) \subset \cU_{p_i}$ for $0 \leq i \leq n$.
\end{lem}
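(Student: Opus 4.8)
The plan is to fix $n$ with $0\le n\le N$ and argue by induction on $i\in\{0,1,\dots,n\}$, working throughout in the linearized coordinates of \thmref{reg chart}. The hypothesis I would carry at stage $i$ is: (a) $\Phi_{p_i}\circ F^i=F_{p_0}^{\,i}\circ\Phi_{p_0}$ on $\cU_{p_0}^{\bepsilon n}$, and (b)
$$
F_{p_0}^{\,i}\bigl(U_{p_0}^{\bepsilon n}\bigr)\subset\bbB\bigl(\lambda^{\bepsilon(n-i)}l_{p_0},\,\lambda^{(1-\bepsilon)i}l_{p_0}\bigr)\subset\bbB(l_{p_i})=U_{p_i}.
$$
Part (b) at stage $i$ gives $F^i(\cU_{p_0}^{\bepsilon n})=\Phi_{p_i}^{-1}\bigl(F_{p_0}^{\,i}(U_{p_0}^{\bepsilon n})\bigr)\subset\cU_{p_i}$, which is exactly the lemma. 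The case $i=0$ is immediate from~\eqref{eq:trun neigh}. For the step, (b) at stage $i-1$ tells us $F_{p_0}^{\,i-1}(U_{p_0}^{\bepsilon n})\subset U_{p_{i-1}}$, so we may apply $F_{p_{i-1}}$ to it; and since $F_{p_{i-1}}$ is the global extension of $\Phi_{p_i}\circ F\circ\Phi_{p_{i-1}}^{-1}$ and $F^{i-1}(\cU_{p_0}^{\bepsilon n})\subset\cU_{p_{i-1}}$ by (b), relation (a) propagates to stage $i$, and $F_{p_0}^{\,i}(U_{p_0}^{\bepsilon n})=F_{p_{i-1}}\bigl(F_{p_0}^{\,i-1}(U_{p_0}^{\bepsilon n})\bigr)$.

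It remains to estimate $F_{p_{i-1}}$ on the box $\bbB(\lambda^{\bepsilon(n-i+1)}l_{p_0},\lambda^{(1-\bepsilon)(i-1)}l_{p_0})$. I would use that $F_{p_{i-1}}(x,y)=(f_{p_{i-1}}(x),e_{p_{i-1}}(x,y))$ with $f_{p_{i-1}}(0)=0$ and $e_{p_{i-1}}(\cdot,0)\equiv 0$, together with the $C^1$-closeness of $F_{p_{i-1}}$ to $A_{i-1}=\mathrm{diag}(a_{i-1},b_{i-1})$ with $\lambda^{\bepsilon}<a_{i-1}<\lambda^{-\bepsilon}$ and $\lambda^{1+\bepsilon}<b_{i-1}<\lambda^{1-\bepsilon}$: after shrinking the regular radii as permitted by \propref{loc linear} we may take $|f_{p_{i-1}}'|<\lambda^{-\bepsilon}$ and $|\partial_y e_{p_{i-1}}|<\lambda^{1-\bepsilon}$ on the relevant domain, so the $x$-extent of the box is multiplied by at most $\lambda^{-\bepsilon}$ and, using $e_{p_{i-1}}(\cdot,0)\equiv 0$, the $y$-extent by at most $\lambda^{1-\bepsilon}$. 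This produces the box of (b) at stage $i$, and it remains to verify the last inclusion there. Since $l_{p_0}=\bL^{-1}$ and $l_{p_i}=\bL^{-1}\lambda^{\bepsilon i}$, the vertical part is $\lambda^{(1-\bepsilon)i}\le\lambda^{\bepsilon i}$, true because $1-\bepsilon>\bepsilon$; the horizontal part is $\lambda^{\bepsilon(n-i)}\le\lambda^{\bepsilon i}$, i.e.\ $\bepsilon n\ge 2\bepsilon i$, which — taking the truncation exponent at least as large as the sum of the exponents supplied by \thmref{reg chart}, as the $\bepsilon$-calculus allows — holds for every $0\le i\le n$. This closes the induction.

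The sole genuine difficulty is that horizontal inequality $\bepsilon n\ge 2\bepsilon i$. The ``horizontal'' direction is merely neutral, so the $x$-coordinate may expand by as much as $\lambda^{-\bepsilon}$ at each step, while the regular neighborhoods $\cU_{p_i}$ themselves shrink at rate $\lambda^{\bepsilon}$; without truncation the forward images would overflow $\cU_{p_i}$ already around $i\approx n/2$. The whole point of truncating as in~\eqref{eq:trun neigh} is to pre-compress the horizontal size by $\lambda^{\bepsilon n}$, which is exactly enough to absorb both the worst-case horizontal expansion over $i\le n$ steps and the shrinking of the target; the residual bookkeeping — controlling the accumulated $C^1$-errors of the near-linearization via \propref{loc linear}, and tracking the $\bepsilon$'s — is routine.
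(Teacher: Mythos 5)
Your proof is correct and is the natural argument. The paper in fact omits a proof of this lemma (it is one of the statements referred forward to the companion paper \cite{CLPY1} on quantitative Pesin theory), so there is nothing to compare against directly; but your inductive tracking of the image box $F_{p_0}^{\,i}(U_{p_0}^{\bepsilon n})\subset\bbB(\lambda^{\bepsilon(n-i)}l_{p_0},\lambda^{(1-\bepsilon)i}l_{p_0})$, using the triangular form $F_{p_m}(x,y)=(f_{p_m}(x),e_{p_m}(x,y))$ with $e_{p_m}(\cdot,0)\equiv 0$ and the $C^1$-closeness to $A_m$ from \thmref{reg chart}~ii) and \propref{loc linear}, is exactly what is needed.

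One small caution on presentation. As literally written, the horizontal inequality $\bepsilon n\geq 2\bepsilon i$ (with the same $\bepsilon$ on both sides) is false for $i>n/2$, and this is precisely the subtlety. What actually has to hold is $\bepsilon_T\, n\geq(\bepsilon_E+\bepsilon_R)\,i$, where $\bepsilon_T$ is the truncation exponent, $\bepsilon_E$ the worst-case horizontal expansion exponent in \thmref{reg chart}~ii) (after shrinking radii via \propref{loc linear}), and $\bepsilon_R$ the shrinking exponent of the regular radii $l_{p_i}$. This forces one to take $\bepsilon_T\geq\bepsilon_E+\bepsilon_R$ --- and then the inequality holds for every $i\leq n$, with equality at $i=n$. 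You do say exactly this (``taking the truncation exponent at least as large as the sum of the exponents supplied by \thmref{reg chart}''), and your closing paragraph shows you understand the mechanism; but in a polished write-up you should not first display the false inequality and then walk it back. State from the outset that the various $\bepsilon$'s are distinct barred quantities and that the truncation one is chosen to dominate the sum of the other two; the $\bkappa$-convention explicitly licenses this.

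A second minor point: the induction (a) and (b) must run simultaneously, and one should note that the box estimate at stage $i$ actually lands compactly inside $U_{p_i}$ (after choosing $\bepsilon$'s with a little room). This ensures that, combined with the continuity of $F$ and the fact that $F_{p_{i-1}}$ extends $\Phi_{p_i}\circ F|_{\cU_{p_{i-1}}}\circ\Phi_{p_{i-1}}^{-1}$ on a neighborhood of the origin, relation (a) does propagate to the entire truncated box, rather than only to the connected component of its intersection with the agreement domain. You gesture at this by invoking \propref{loc linear}; spelling out the compact-containment step would close it cleanly.
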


For $\omega > 1$ and ${t} \in (0, l_{p_0})$, define
$$
T_{p_0}^\omega({t}) := \{(x, y) \in U_{p_0} \; | \; |x| < {t} \; \text{and} \; |y| < |x|^\omega\}.
$$
We refer to
$$
\cT_{p_0}^\omega({t}) := \Phi_{p_0}^{-1}(T_{p_0}^\omega({t})) \subset \cU_{p_0}
$$
as a {\it $\omega$-pinched regular neighborhood of length ${t}$ at $p_0$}. Note that $p_0 \not\in \cT_{p_0}^\omega({t})$. For $n \geq 0$, we also denote
$$
T_{p_0}^{\omega, \delta n}({t}) := T_{p_0}^\omega({t}) \cap U_{p_0}^{\delta n}
\matsp{and}
\cT_{p_0}^{\omega, \delta n}({t}):= \Phi_{p_0}^{-1}(T_{p_0}^{\omega, \delta n}({t}))=\cT_{p_0}^\omega({t}) \cap \cU_{p_0}^{\delta n}.
$$
If the length ${t}$ is not indicated, then it is assumed to be equal to $l_{p_0}$ (i.e. $\cT_{p_0}^\omega := \cT_{p_0}^\omega(l_{p_0})$).

The purpose of pinching a regular neighborhood is to ensure that its iterated preimages stay inside regular neighborhoods.

\begin{lem}\label{pinch neigh fit}
Let $0 \leq n \leq M$. If $\omega > (1+\bepsilon)/\delta$, then $F^{-i}(\cT_{p_0}^{\omega, \delta n}) \subset \cU_{p_{-i}}$ for $0 \leq i \leq n$.
\end{lem}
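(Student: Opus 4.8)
The plan is to work entirely in the linearized coordinates of \thmref{reg chart}, in direct parallel with \lemref{trunc neigh fit}, replacing the forward-contracting role of truncation by the cusp geometry of the pinched region. Fix $q \in \cT_{p_0}^{\omega, \delta n}$ and write $z = (x_0, y_0) := \Phi_{p_0}(q) \in T_{p_0}^{\omega, \delta n}$, so that $|x_0| < \lambda^{\delta n} l_{p_0}$ and $|y_0| < |x_0|^\omega$, where $l_{p_0} = \bL^{-1}$. Set $F_{p_0}^{-i} := F_{p_{-i}}^{-1} \circ \cdots \circ F_{p_{-1}}^{-1}$ and $z_{-i} := F_{p_0}^{-i}(z)$ for $0 \le i \le n \le M$. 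The first step is the reduction: it suffices to prove the purely analytic statement that $z_{-i} \in U_{p_{-i}}$ for all $0 \le i \le n$. Indeed, granting this, an induction on $i$ using the conjugacy $F|_{\cU_{p_{-i}}} = \Phi_{p_{-(i-1)}}^{-1} \circ F_{p_{-i}} \circ \Phi_{p_{-i}}$ from \thmref{reg chart} (applied at the point $\Phi_{p_{-i}}^{-1}(z_{-i}) \in \cU_{p_{-i}}$, which is legitimate precisely because $z_{-i} \in U_{p_{-i}}$) shows $F^{-i}(q) = \Phi_{p_{-i}}^{-1}(z_{-i}) \in \cU_{p_{-i}}$, which is the claim.

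To prove $z_{-i} \in U_{p_{-i}} = \bbB(l_{p_{-i}})$ with $l_{p_{-i}} = \bL^{-1}\lambda^{\bepsilon i}$, I would induct on $i$, carrying along the quantitative bounds $|x_{-i}| \lesssim \lambda^{-\bepsilon i}|x_0|$ and $|y_{-i}| \lesssim \lambda^{-(1+\bepsilon)i}|y_0|$ uniformly over $z \in T_{p_0}^{\omega,\delta n}$. In the inductive step the orbit segment $z_0, z_{-1}, \dots, z_{-(i-1)}$ lies in the respective regular neighborhoods by the inductive hypothesis, so \propref{loc linear} applies and gives that $F_{p_0}^{-(i-1)}$ is, with a closeness constant uniform in $i$, $C^1$-close along this segment to its linear part $\operatorname{diag}(\prod_{j=1}^{i-1} a_{-j}^{-1}, \prod_{j=1}^{i-1} b_{-j}^{-1})$; composing with the single map $F_{p_{-i}}^{-1}$, which is uniformly $C^1$-close to $A_{-i}^{-1} = \operatorname{diag}(a_{-i}^{-1}, b_{-i}^{-1})$ by \thmref{reg chart}(ii), introduces only one further small multiplicative error rather than a compounding one. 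Using $\lambda^{\bepsilon} < a_m^{-1} < \lambda^{-\bepsilon}$ and $\lambda^{-(1+\bepsilon)} < b_m^{-1} < \lambda^{-(1-\bepsilon)}$, this produces the displayed bounds on $x_{-i}$ and $y_{-i}$ (with implied constants absorbed into the bar-notation), and in particular $z_{-i} \in U_{p_{-i}}$, which then licenses \propref{loc linear} at stage $i+1$. Finally one checks the two estimates close up: for the horizontal coordinate, $|x_{-i}| \lesssim \lambda^{-\bepsilon i}|x_0| < \lambda^{\delta n - \bepsilon i}\,\bL^{-1} \le \bL^{-1}\lambda^{\bepsilon i} = l_{p_{-i}}$ for $0 \le i \le n$, the gain $\lambda^{\delta n}$ from truncation dominating both the $\lambda^{-\bepsilon i}$ growth of $|x|$ and the shrinking of $l_{p_{-i}}$ since $\delta > \bepsilon$; for the vertical coordinate the cusp condition enters, $|y_{-i}| \lesssim \lambda^{-(1+\bepsilon)i}|y_0| < \lambda^{-(1+\bepsilon)i}|x_0|^\omega < \lambda^{-(1+\bepsilon)i + \omega\delta n}\,\bL^{-\omega} \le \bL^{-1}\lambda^{\bepsilon i} = l_{p_{-i}}$, using $\bL^{-\omega} \le \bL^{-1}$ and the hypothesis $\omega > (1+\bepsilon)/\delta$, which forces $\omega\delta n \ge (1+\bepsilon)i$ (with the exponents absorbed, $\omega \delta n \ge (1+2\bepsilon)i$) for $0 \le i \le n$.

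I expect the main obstacle to be the control of the non-linearity. A naive per-step estimate compounds the $C^1$-error of each $F_{p_m}$ over $n$ (possibly large) backward iterates and is useless; the remedy is to push the bulk of the estimate through \propref{loc linear}, whose closeness constant is uniform in the number of iterates, and to organize the induction so that the standing hypothesis ``the orbit stays in regular neighborhoods'' required to invoke \propref{loc linear} is exactly the conclusion being bootstrapped. The other point worth stressing is conceptual rather than technical: the pinched region must have the parabolic cusp shape $|y| < |x|^{\omega}$ with $\omega$ large enough, precisely so that the factor $\lambda^{-(1+\bepsilon)i}$ coming from $i$-fold backward expansion in the strongly contracted ($y$) direction is overpowered by the extra smallness $|x_0|^{\omega} \le (\lambda^{\delta n} l_{p_0})^{\omega}$ of the initial vertical coordinate; the threshold $\omega > (1+\bepsilon)/\delta$ is exactly what makes this balance work. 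Everything else reduces to bookkeeping of $\lambda$-powers, which the bar-notation conventions of \subsecref{subsec:trunc pinch} are designed to absorb.
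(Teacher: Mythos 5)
The paper omits a proof of this lemma entirely (as it does for its companion, Lemma~\ref{trunc neigh fit}), so there is nothing to compare against directly; your argument supplies exactly the kind of bookkeeping the authors leave implicit. Your reduction to linearized coordinates, the bootstrap through \propref{loc linear} (whose uniformity in the number of iterates is indeed essential — a naive per-step $C^1$-error estimate would compound), and the two exponent checks for $x_{-i}$ and $y_{-i}$ are all correct. The only point worth flagging — and you flag it yourself — is the $(1+\bepsilon)$ vs.\ $(1+2\bepsilon)$ discrepancy in the vertical estimate: strictly one needs $\omega\delta \geq 1+2\bepsilon$ rather than $\omega\delta > 1+\bepsilon$, but since the paper's $\bkappa$-convention explicitly allows $\bepsilon$ to absorb bounded multiplicative constants, and the hypothesis on $\omega$ is itself stated in bar-notation, the threshold in the lemma should be read with enough slack to cover this; the same absorption handles $\delta \geq 2\bepsilon$ in the horizontal estimate. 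Your proof is sound.
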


\begin{lem}\label{pinching neigh}
Let $\omega \in (1, \bepsilon^{-1}]$. For $q_0 \in \cU_{p_0} \setminus \{p_0\}$, write $z_0 = (x_0, y_0) := \Phi_{p_0}(q_0) \in U_{p_0}$. If $q_{-n} \in \cU_{p_{-n}}$ for some
$$
n > (1+\bepsilon)\omega\logl |x_0|,
$$
then $q_0 \in \cT_{p_0}^\omega$.
\end{lem}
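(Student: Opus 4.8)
The plan is to carry out the whole argument inside the linearizing charts of \thmref{reg chart} and to bound directly the second coordinate $y_0$ of $z_0 = (x_0, y_0) := \Phi_{p_0}(q_0) \in U_{p_0} = \bbB(l_{p_0})$. Since $q_0 \in \cU_{p_0}$ already forces $|x_0| < l_{p_0}$, membership $q_0 \in \cT_{p_0}^\omega = \cT_{p_0}^\omega(l_{p_0})$ is equivalent to the single inequality $|y_0| < |x_0|^\omega$, and that is what I would prove. I record at the outset that $\logl |x_0| > 0$ (since $\lambda, |x_0| \in (0,1)$), so the hypothesis $n > (1+\bepsilon)\omega \logl |x_0|$ translates into $\lambda^n < |x_0|^{(1+\bepsilon)\omega}$. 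I also use that the backward orbit of $q_0$ is regular for $n$ steps, i.e.\ $q_{-i} \in \cU_{p_{-i}}$ for all $0 \le i \le n$ (so that the $n$-step linearization of \thmref{reg chart} is available along it); this is the form in which the lemma is applied. Write $z_{-i} = (x_{-i}, y_{-i}) := \Phi_{p_{-i}}(q_{-i}) \in U_{p_{-i}} = \bbB(l_{p_{-i}})$.

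The device is to follow a vertical segment rather than the single point $q_{-n}$. By \thmref{reg chart}(iii) each linearized map $F_{p_m}$ has the triangular form $(x,y) \mapsto (f_{p_m}(x), e_{p_m}(x,y))$ with $e_{p_m}(\,\cdot\,, 0) \equiv 0$; hence $F_{p_m}$ preserves the $x$-axis and carries each vertical line $\{x = c\}$ onto the vertical line $\{x = f_{p_m}(c)\}$ (its image is a closed connected curve lying in that line and must separate $\bbR^2$, hence is all of it). Therefore the vertical segment $\sigma$ joining $(x_{-n}, 0)$ to $z_{-n}$ is carried by $F_{p_{-n}}^i$ onto the vertical segment joining $(x_{-n+i}, 0)$ to $z_{-n+i}$, which lies in $U_{p_{-n+i}}$, for every $0 \le i \le n$; after $n$ steps it lands on the vertical segment joining $(x_0, 0)$ to $z_0$, whose length is exactly $|y_0|$. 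So $|y_0|$ equals the length of the $F_{p_{-n}}^n$-image of a vertical segment of length $|y_{-n}|$ whose whole forward orbit stays inside the regular neighborhoods.

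Now I would invoke \propref{loc linear}, with its error constant $k > 0$ chosen small (which retroactively shrinks the regular radii): along this orbit the derivative of $F_{p_{-n}}^n$ in the vertical direction is within a factor $1 + k$ of its linear model, whose vertical factor is $\big\| D_0 F_{p_{-n}}^n|_{E_0^{gv}} \big\| = \prod_{i=1}^n b_{-i} < (\lambda^{1-\bepsilon})^n$ by \thmref{reg chart}(ii). Integrating over $\sigma$ gives $|y_0| \le (1+k)\,|y_{-n}|\,\lambda^{(1-\bepsilon)n}$, and since $q_{-n} \in \cU_{p_{-n}}$ forces $|y_{-n}| < l_{p_{-n}} = \bL^{-1}\lambda^{\bepsilon n}$, this yields $|y_0| < (1+k)\,\bL^{-1}\,\lambda^n$. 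Finally I combine with $\lambda^n < |x_0|^{(1+\bepsilon)\omega}$ and with $|x_0| < l_{p_0} = \bL^{-1}$ (hence $|x_0|^{\bepsilon\omega} < \bL^{-\bepsilon\omega}$) to obtain
\[
|y_0| \;<\; (1+k)\,\bL^{-1}\,|x_0|^{(1+\bepsilon)\omega} \;=\; (1+k)\,\bL^{-1}\,|x_0|^{\bepsilon\omega}\,|x_0|^\omega \;<\; (1+k)\,\bL^{-(1+\bepsilon\omega)}\,|x_0|^\omega \;\le\; |x_0|^\omega ,
\]
the last inequality valid once $k$ is small and $\bL$ large; this is precisely $q_0 \in \cT_{p_0}^\omega$.

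The chart bookkeeping in the first two paragraphs is routine; the step I expect to be the main obstacle is justifying that \propref{loc linear} really applies here — that the entire auxiliary segment $\sigma$, and not just the endpoint $q_{-n}$, has its whole forward orbit confined to the regular neighborhoods. This is exactly what the triangular normal form of \thmref{reg chart}(iii) buys: because vertical lines map to vertical lines and the $x$-axis is invariant, the orbit of $\sigma$ is pinned in the $x$-variable to the orbit of $q_{-n}$ and is, in the $y$-variable, a monotone interval with endpoints $0$ and $y_{-n+i}$, so it cannot escape $U_{p_{-n+i}}$. A secondary, purely bookkeeping point is the reading of the hypothesis: the vertical-expansion estimate uses the regularity of all intermediate iterates $q_{-i}$, so the lemma is being invoked with $F^{-i}(q_0) \in \cU_{p_{-i}}$ for every $0 \le i \le n$.
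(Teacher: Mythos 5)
The paper leaves Lemma~\ref{pinching neigh} unproved (it is one of the quantitative Pesin-theory tools delegated to the companion note \cite{CLPY1}), so there is no in-text proof to compare against; I therefore assess only the soundness of your argument, which I believe is correct. Your reading of the hypothesis --- that $q_{-i} \in \cU_{p_{-i}}$ for all $0 \le i \le n$ --- is the only one under which the identity $z_0 = F_{p_{-n}}^n(z_{-n})$ is available (the conjugacy $\Phi_{p_{m+1}} \circ F \circ \Phi_{p_m}^{-1} = F_{p_m}$ holds only on $\cU_{p_m}$), and it is the reading that makes the lemma usable, so flagging it is right but it is not a genuine gap. The reduction to $|y_0| < |x_0|^\omega$ is correct, the translation $\lambda^n < |x_0|^{(1+\bepsilon)\omega}$ is correct, and the device of tracking the vertical segment $\sigma$ is exactly what one needs to turn the pointwise conclusion of \propref{loc linear} into an honest mean-value bound on $|y_0|$. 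Two small points worth making explicit when you write this up: (1) the justification that the whole forward orbit of $\sigma$ stays in $U_{p_{-n+i}}$ uses not just the triangular form but the convexity of the squares $U_{p_m} = \bbB(l_{p_m})$, so say so; (2) in the closing chain of inequalities you use $\bepsilon\omega \le 1$ (from $\omega \le \bepsilon^{-1}$) to ensure $1 + \bepsilon\omega \in (1,2]$, so that $(1+k)\,\bL^{-(1+\bepsilon\omega)} \le 1$ reduces to a single uniform condition on $\bL$ and $k$ --- again worth a word, since this is the one place the upper bound $\omega \le \bepsilon^{-1}$ enters.
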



\subsection{Regular boxes}\label{subsec:reg box}

Let $L'$ be a constant sufficiently uniformly larger than $L$. In this subsection, we assume that
\begin{equation}\label{eq:consistent box}
M, N > \logl \bLp.
\end{equation}

A  {\it regular box at $p_0$} is defined as
$$
\cB_{p_0} := \Phi_{p_0}^{-1}(B_{p_0}) \Subset \cU_{p_0},
\matsp{where}
B_{p_0} := \bbB(1/L').
$$
Denote
$$
\bbL^h_b := \{(x, b) \; | \; x \in \bbR\}.
$$
The {\it top-bottom boundary of $\cB_{p_0}$} is defined as
$$
\hpartial \cB_{p_0} := \Phi_{p_0}^{-1}(\hpartial B_{p_0}),
\matsp{where}
\hpartial B_{p_0} := \partial B_{p_0} \cap (\bbL^h_{1/L'} \cup \bbL^h_{-1/L'}).
$$

A Jordan curve $\Gamma : (0, 1) \to \cB_{p_0}$ is said to be {\it vertically proper in $\cB_{p_0}$} if $\Gamma$ extends continuously to $[0, 1]$, and $\Gamma(0)$ and $\Gamma(1)$ are in distinct components of $\hpartial \cB_{p_0}$. More generally, a Jordan curve $\Gamma$ in $\Omega$ is said to be vertically proper in $\cB_{p_0}$ if $\Gamma \cap \cB_{p_0}$ is.

Let $q \in \cU_{p_0}$. We say that $E_q \in \bbP^2_q$ is {\it sufficiently vertical/horizontal in $\cU_{p_0}$} if $E_q$ is $1/\bLp$-vertical/horizontal in $\cU_{p_0}$. Similarly, we say that a $C^1$-curve $\Gamma \subset \cU_{p_0}$ is {\it sufficiently vertical/horizontal in $\cU_{p_0}$} if $\Gamma$ is $1/\bLp$-vertical/horizontal in $\cU_{p_0}$.

\begin{prop}[Consistent vertical and horizontal directions inside regular boxes]\label{consist ver hor}
Let $\tiM, \tiN \in \bbN\cup\{0, \infty\}$. Suppose $\tip_0 \in \cB_{p_0}$ is $(\tiM, \tiN)$-times $(L, \epsilon)_v$-regular along $\tiE^v_{\tip_0} \in \bbP^2_{\tip_0}$. Denote the vertical/horizontal direction and the local vertical/horizontal manifold at $\tip_0$ associated with the linearization of $F$ along the $(\tiM, \tiN)$-orbit of $\tip_0$ with vertical direction $\tiE^v_{\tip_0}$ by $\tiE_{\tip_0}^{v/h} \in \bbP^2_{\tip_0}$ and $\tiW^{v/h}_{\loc}(\tip_0) \subset \cU_{\tip_0}$ respectively.
\begin{enumerate}[i)]
\item  If $\tiN > \logl \bLp$, then $\tiE^v_{\tip_0}$ and $\tiW^v(\tip_0)$ are sufficiently vertical in $\cU_{p_0}$.
\item If $\tiM > \logl \bLp$, then $\tiE^h_{\tip_0}$ and $\tiW^h(\tip_0)$ are sufficiently horizontal in $\cU_{p_0}$.
\end{enumerate}
\end{prop}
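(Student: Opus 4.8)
The plan is to read both items off the angle‑comparison estimates \propref{vert angle shrink} and \propref{hor angle shrink}, applied to the linearization of $F$ along the $(M,N)$-orbit of $p_0$, using as ``test direction'' at $\tip_0$ the vertical (resp.\ horizontal) direction coming from the linearization along the orbit of $\tip_0$. Write $\tiz_0 = (\tix_0,\tiy_0) := \Phi_{p_0}(\tip_0)\in B_{p_0}=\bbB(1/L')$ and set $n := \lceil\logl\bLp\rceil$, which by \eqref{eq:consistent box} and the hypotheses is at most $\min\{N,\tiN\}$ in case i) and at most $\min\{M,\tiM\}$ in case ii). Note that $E^v_{\tip_0}$ and $E^h_{\tip_0}$ (the $p_0$-structure directions at $\tip_0$) are well defined since $\tip_0\in\cB_{p_0}\subset\cU_{p_0}$, and that $\measuredangle(E^v_{\tip_0},E^h_{\tip_0})\gtrsim 1/\bL$ because $\Phi_{p_0}$ has bounded distortion.

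For part i) I would first check that the forward orbit $\tip_i := F^i(\tip_0)$, $0\leq i\leq n$, stays inside the neighborhoods $\cU_{p_i}$: since $|\tix_0|<1/L'<\lambda^{\bepsilon n}l_{p_0}$ — the second inequality holding because $L'$ is chosen sufficiently uniformly larger than $L$ and $\epsilon$ is small enough that $n\asymp\logl\bLp<\tfrac1{\bepsilon}\logl(L'/\bL)$ — we get $\tip_0\in\cU_{p_0}^{\bepsilon n}$, and \lemref{trunc neigh fit} applies. Next, as $\tip_0$ is $\tiN$-times forward $(L,\epsilon)_v$-regular along $\tiE^v_{\tip_0}$ with $n\leq\tiN$, multiplying the two upper bounds in \eqref{eq:for reg} (for $s=r-1$ and $s=-r$) and using $\Jac F^n\leq\lambda^n$ yields $\nu := \|DF^n|_{\tiE^v_{\tip_0}}\|<\bL\lambda^{(1-\bepsilon)n}<\bL^{-1}\lambda^{\bepsilon n}$. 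So \propref{vert angle shrink} applies with $q_0=\tip_0$ and gives
$$
\measuredangle(\tiE^v_{\tip_0},E^v_{\tip_0})<\bL\lambda^{-\bepsilon n}\nu+\bL\lambda^{(1-\bepsilon)n}<\bL^2\lambda^{(1-2\bepsilon)n}.
$$
For $n$ a sufficiently large multiple of $\logl\bLp$ the right‑hand side is $<1/(\bL\bLp)$, so $\tiE^v_{\tip_0}$ is $1/\bLp$-vertical, hence sufficiently vertical, in $\cU_{p_0}$. For the manifold statement: every $q\in\tiW^v_{\loc}(\tip_0)\cap\cU_{p_0}$ lies on the strong‑stable manifold of $\tip_0$, so its forward orbit stays in the $\tip_0$-regular neighborhoods and contracts at the strong rate; re‑running the argument at $q$ (its $\Phi_{p_0}$-$x$-coordinate is controlled by the near‑verticality just obtained, so $q\in\cU_{p_0}^{\bepsilon n}$) shows its tangent direction is $1/\bLp$-vertical in $\cU_{p_0}$, so the arc $\tiW^v(\tip_0)\cap\cU_{p_0}$ is sufficiently vertical.

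Part ii) is the formal dual: forward becomes backward, \propref{vert angle shrink} becomes \propref{hor angle shrink}, truncated neighborhoods become pinched neighborhoods, and \lemref{trunc neigh fit} becomes \lemref{pinch neigh fit}. By \propref{hom transverse back reg}, $\tip_0$ is $\tiM$-times backward $(\bL,\bepsilon)_h$-regular along $\tiE^h_{\tip_0}$, and \eqref{eq:hor back reg} then gives $\mu := \|DF^{-n}|_{\tiE^h_{\tip_0}}\|<\bL\lambda^{-\bepsilon n}<\bL^{-1}\lambda^{-(1-\bepsilon)n}$. Once one knows $\tip_{-i}\in\cU_{p_{-i}}$ for $0\leq i\leq n$, \propref{hor angle shrink} yields $\measuredangle(\tiE^h_{\tip_0},E^h_{\tip_0})<\bL\lambda^{(1-\bepsilon)n}(1+\mu)<\bL^2\lambda^{(1-2\bepsilon)n}<1/(\bL\bLp)$, which together with $\measuredangle(E^h_{\tip_0},E^v_{\tip_0})\gtrsim1/\bL$ shows $\tiE^h_{\tip_0}$ is $1/\bLp$-horizontal in $\cU_{p_0}$; the manifold statement follows as before.

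The hard part will be the proviso in ii) that the backward orbit of $\tip_0$ remains in $\cU_{p_{-i}}$ for $\asymp\logl\bLp$ steps: a generic point of $\cB_{p_0}$ only has $|\tiy_0|<1/L'$, which is not small enough to survive that many backward steps in the shrinking neighborhoods. I expect this to be handled by exploiting that $\tip_0$ is backward regular at all, which forces it into a pinched regular neighborhood $\cT_{p_0}^{\omega}$ with $\omega>(1+\bepsilon)/\delta$ — via \lemref{pinching neigh} applied along the backward orbit of $p_0$ — where $|\tiy_0|<|\tix_0|^{\omega}$ is tiny and \lemref{pinch neigh fit} applies. The other delicate point is the bookkeeping of the constants $\bL$, $L'$ and $\bLp$ so that the angle bounds genuinely fall below $1/\bLp$; this is exactly why the hypotheses $M,N>\logl\bLp$ and $\tiM,\tiN>\logl\bLp$ must be read (via the $\bkappa$-convention, choosing $\bLp$ large within its allowed range) as ``at least a definite multiple of $\logl L'$''.
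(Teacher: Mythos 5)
Your part i) is essentially correct, and the bookkeeping (checking $\tip_0\in\cU_{p_0}^{\bepsilon n}$ via \lemref{trunc neigh fit}, feeding the forward contraction into \propref{vert angle shrink}, and reading the constants via the $\bkappa$-convention) is the right way to organize it. The manifold claim is also handled appropriately by re-running the angle estimate pointwise along $\tiW^v(\tip_0)$, since those points are themselves forward regular along their tangents with nearly the same $x$-coordinate.

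Part ii) is where you correctly locate the difficulty but do not close it, and your proposed fix does not work as stated. For \propref{hor angle shrink} to apply with $m\asymp -\logl\bLp$, you need $\tip_{-i}\in\cU_{p_{-i}}$ for $0\le i\le m$; via the linearization this requires $|\tiy_0|<\bL^{-1}\lambda^{(1+\bepsilon)m}\approx\bL^{-1}(\bLp)^{-(1+\bepsilon)}$, whereas membership in $\cB_{p_0}$ only yields $|\tiy_0|<1/L'$. Since $\bLp\ge L'$ under the bar-convention, the needed inequality $\bL(\bLp)^{1+\bepsilon}<L'$ fails, so the raw box estimate is genuinely insufficient. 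Your remedy via \lemref{pinching neigh} is circular: that lemma \emph{concludes} $q_0\in\cT^\omega_{p_0}$ from the \emph{hypothesis} that $q_{-n}\in\cU_{p_{-n}}$ for some $n>(1+\bepsilon)\omega\logl|x_0|$, which is precisely the positional control you are trying to obtain. You have not identified any independent source of the positional fact (e.g.\ that $\tip_0$ lies within a pinched neighborhood of $p_0$, or that its backward orbit shadows that of $p_0$); the backward $(L,\epsilon)_v$-regularity of $\tip_0$ is a derivative condition on $\tip_0$'s own orbit, and by itself does not force $\tip_{-m}$ to stay near $p_{-m}$. Until that step is supplied, the chain $\mu<\bL^{-1}\lambda^{-(1-\bepsilon)n}\Rightarrow$ \propref{hor angle shrink} $\Rightarrow$ sufficient horizontality is unsupported, and part ii) remains a gap.
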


\begin{prop}[Verticality and horizontality of curves preserved inside regular boxes]\label{pres ver hor curv}
For $0 \leq k \leq K < \infty$, let $p^k \in \Omega$ be $(M, N)$-times $(L, \epsilon)_v$-regular along $E^v_{p^k}$. If $\Gamma_0 \subset \Omega$ is a $C^1$-curve such that
$\Gamma_k \subset \cB_{p^k}$ for all $0 \leq k \leq K$, then the following holds.
\begin{enumerate}[i)]
\item If $\Gamma_0$ is sufficiently horizontal in $\cU_{p^0}$, then $\Gamma_k$ is sufficiently horizontal in $\cU_{p^k}$ for all $0 \leq k \leq K$. If, additionally, $\Gamma_0$ is $C^r$ and $\|\Gamma_0\|_{C^r} < \bL$,
then $\|\Gamma_K\|_{C^r} < \bL$.
\item If $\Gamma_K \in \bbP^2_{q_K}$ is sufficiently vertical in $\cU_{p^K}$, then $\Gamma_k$ is sufficiently vertical in $\cU_{p^k}$ for all $0 \leq k \leq K$. Moreover,
\begin{equation}\label{eq:no stray}
|\Gamma_0| > \bL^{-1}|\Gamma_K|.
\end{equation}
If, additionally, $\Gamma_K$ is $C^r$ and $\|\Gamma_K\|_{C^r} < \bL$, then $\|\Gamma_0\|_{C^r} < \bL$.
\end{enumerate}
\end{prop}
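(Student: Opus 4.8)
The plan is to prove both items by induction on the index, reducing the whole statement to a single one-step transition from $\cB_{p^k}$ to $\cB_{p^{k+1}}$, which I would analyze inside the linearizing coordinates of \thmref{reg chart} applied to the orbit of $p^k$ (resp.\ of $p^{k+1}$). Since $L'$ is taken uniformly larger than $L$, one first checks that $\Gamma_k \subset \cB_{p^k} \Subset \cU_{p^k}$ forces $\Gamma_{k+1} = F(\Gamma_k) \Subset \cU_{F(p^k)}$, so that a single application of $F$ to $\Gamma_k$ is genuinely a one-step iteration between the consecutive regular charts $\Phi_{p^k}$ and $\Phi_{F(p^k)}$ of the $p^k$-orbit, to which Propositions \ref{hor near inv}--\ref{hor curv conv} apply with $n=1$. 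Throughout I use from \thmref{reg chart} that $F_{p^k}(x,y)=(f_{p^k}(x),e_{p^k}(x,y))$ with $e_{p^k}(\cdot,0)\equiv 0$, which makes $F$ carry genuine-vertical lines to genuine-vertical lines in these charts.

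For item i) I would first settle the $C^1$ (horizontality) claim. In the chart $\Phi_{F(p^k)}$, \propref{hor near inv} with $l=1$ shows that the $F$-image of a sufficiently horizontal direction differs from the genuine horizontal by at most $K|y|^{1-\bepsilon}$, and since $|y|\le 1/L'$ throughout $\cB_{p^k}$ this is $\ll 1/\bLp$; so $\Gamma_{k+1}$ is comfortably sufficiently horizontal in $\cU_{F(p^k)}$. It remains to transport this to $\cU_{p^{k+1}}$, which is done by the consistency of overlapping regular charts: on $F(\cB_{p^k})\cap \cB_{p^{k+1}}\supset\Gamma_{k+1}$ the genuine-horizontal field of $\Phi_{F(p^k)}$ is itself sufficiently horizontal in $\cU_{p^{k+1}}$ (using \propref{consist ver hor} together with the fact that both charts' ``vertical'' curves are pieces of the intrinsic strong-stable lamination). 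Iterating over $k$ gives the first assertion. The $C^r$ refinement then follows from the graph transform: writing $\Phi_{p^k}(\Gamma_k)$ as a horizontal graph and using that $F_{p^k}$ is $C^1$-close to $A_k=\operatorname{diag}(a_k,b_k)$ with $b_k<\lambda^{1-\bepsilon}$ and $a_k>\lambda^\bepsilon$, the leading ($r$-th order) term of the transform contracts the $C^r$ norm by a factor $\asymp b_k a_k^{-r}<\lambda^{1-(r+1)\bepsilon}<1$, while all remaining contributions are controlled by $\|F_{p^k}^{\pm1}\|_{C^r}=O(1)$, the already-bounded lower-order norms of the graph, and the bounded distortion of the chart transition, hence are $O(\bL)$. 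This yields an affine recursion $\|\Gamma_{k+1}\|_{C^r}\le \theta\|\Gamma_k\|_{C^r}+O(\bL)$ with $\theta<1$, so $\|\Gamma_K\|_{C^r}\le \|\Gamma_0\|_{C^r}+O(\bL)<\bL$ (absorbing), using also $\|\Phi_{p^K}^{\pm1}\|_{C^r}=O(\bL)$.

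Item ii) is dual and I would run the induction backward from $k=K$. The crucial structural input is that in these charts the genuine-vertical direction is exactly $DF$-invariant, so $F^{-1}$ carries the genuine vertical of $\Phi_{p^{k+1}}$ precisely onto that of $\Phi_{F(p^k)}$; combined with chart consistency on $F^{-1}(\cB_{p^{k+1}})\cap\cB_{p^k}$ this transports ``sufficiently vertical'' back to $\cU_{p^k}$, and because $F_{p^k}^{-1}$ is $C^1$-close to $\operatorname{diag}(a_k^{-1},b_k^{-1})$ with $b_k^{-1}>\lambda^{-(1-\bepsilon)}>1$ dominating $a_k^{-1}<\lambda^{-\bepsilon}$, a nearly-vertical direction becomes strictly more vertical at each backward step (this is \propref{vert angle shrink}/\propref{vert fol conv} applied one step at a time). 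Estimate \eqref{eq:no stray} follows because each backward step stretches the (nearly vertical) curve by a factor $\ge (1-o(1))b_k^{-1}\ge 1$ in chart coordinates, so no length is lost along the chain and the factor $\bL^{-1}$ only absorbs the single passage between arc length and the coordinates of $\Phi_{p^0}$. The $C^r$ bound in ii) uses the same affine-recursion scheme applied to the backward graph transform, whose leading contraction factor is $\asymp a_k^{-1}b_k^{r}<\lambda^{r-(r+1)\bepsilon}<1$ (where $r\ge 2$ is used), plus an additive $O(\bL)$.

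The hard part, in both items, is the chart-consistency step: the base points $F(p^k)$ and $p^{k+1}$ are a priori unrelated and their regular boxes overlap only along (a neighborhood of) $\Gamma_{k+1}$, yet one must compare ``sufficiently vertical/horizontal'' in the two charts uniformly along the whole chain $0\le k\le K$. The way I would handle this is to exploit that the ``vertical'' direction at a Pesin-regular point is its canonical strong-stable direction and that the local strong-stable lamination is intrinsic, so the two charts' vertical curves literally coincide on the overlap, whereas the ``horizontal'' direction is pinned down only up to the $O(1)$-Hölder variation of the Oseledets splitting near the base points; carrying all of these comparisons through while keeping every constant inside the $\bL$/$\bepsilon$ conventions is the delicate bookkeeping underlying the proposition.
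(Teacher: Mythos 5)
Your overall scaffold — one-step induction, linearizing charts from \thmref{reg chart}, the graph-transform contraction $b_k a_k^{-r}<1$ for the $C^r$ bound, and the appeal to \propref{vert angle shrink}/\propref{hor near inv} — is the right kind of argument, and parts ii) and the length estimate \eqref{eq:no stray} are handled soundly.  However, the decisive step in i) contains a quantitative gap.  You assert that, after one step of $F$, the error from \propref{hor near inv} with $l=1$ is $K|y_{k+1}|^{1-\bepsilon}$, and that since $|y_{k+1}|\le 1/L'$ this is ``$\ll 1/\bLp$.''  That inequality does not hold: with $K>1$ and $\bLp>L'$ one has
$$
K\,(1/L')^{1-\bepsilon}=K\,(L')^{\bepsilon}\cdot\tfrac{1}{L'}>\tfrac{1}{L'}>\tfrac{1}{\bLp},
$$
so the one-step error \emph{exceeds} the ``sufficiently horizontal'' threshold.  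Since the induction hypothesis at step $k$ gives only a margin of size $\asymp 1/\bLp$, and the contraction factor per step (the $b_k/a_k$ ratio) is only a fixed constant $<1$, the stable fixed point of the affine recursion you are running is $\asymp K(L')^{\bepsilon-1}$, which is strictly larger than $1/\bLp$.  In other words, the one-step argument does not close the induction — the angle does not stay below $1/\bLp$.

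The missing idea is that $\cB_{p^k}$ is a \emph{deeply truncated} regular neighborhood of $p^k$: by \lemref{trunc neigh fit}, the forward orbit of $\cB_{p^k}$ stays inside $\{\cU_{F^i(p^k)}\}$ for $\asymp -\bepsilon^{-1}\logl L'$ steps, so one should apply \propref{hor near inv} (resp. \propref{hor curv conv}) with $l$ on that scale \emph{along the orbit of $p^k$}, rather than for a single step.  After $l\ge -\logl L'$ iterates the vertical coordinate obeys $|y_{k+l}|\lesssim\lambda^{(1-\bepsilon)l}/L'$, so $K|y_{k+l}|^{1-\bepsilon}\lesssim K(L')^{-1-(1-\bepsilon)^2+\bepsilon}\ll 1/\bLp$ — now genuinely below threshold — and only \emph{then} does one change charts to $\cU_{p^{k+l}}$, using \propref{consist ver hor} and \propref{vert angle shrink} for the transition.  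Relatedly, the claim that ``both charts' vertical curves are pieces of the intrinsic strong-stable lamination, so they literally coincide'' is not correct for $(M,N)$-finite regularity: vertical lines in the $\Phi_p$-chart are only approximately strong-stable, and the comparison between charts must go through \propref{vert angle shrink} applied with $n$ up to $-\logl\bLp$ (available because $M,N>-\logl\bLp$), not through an appeal to an invariant lamination.  Once both fixes are in place the rest of your argument (the $C^r$ graph transform, the dual item ii), and \eqref{eq:no stray}) goes through.
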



\subsection{Stable and center manifolds}\label{stable-center manifolds}

If $N = \infty$, then \propref{vert angle shrink} implies that $E_{p_0}^v$ is the unique direction along which $p_0$ is infinitely forward $(L, \epsilon, \lambda)$-regular. In this case, we denote
$$
E_{p_0}^{ss} := E_{p_0}^v,
$$
and refer to this direction as the {\it strong stable direction at $p_0$}. Moreover, we define the {\it local strong stable manifold at $p_0$} as
$$
W^{ss}_{\loc}(p_0) := \Phi_{p_0}^{-1}(\{(0, y) \in U_{p_0}\}),
$$
and the {\it strong stable manifold at $p_0$} as
$$
W^{ss}(p_0) := \{q \in \Omega \; | \; F^n(q) \in W^{ss}_{\loc}(p_m) \; \; \text{for some} \; \; n \geq 0\}.
$$

If $M = \infty$, we denote
$$
E_{p_0}^c := E_{p_0}^h,
$$
and refer to this direction as the {\it center direction at $p_0$}. Moreover, we define the {\it (local) center manifold at $p_0$} as
$$
W^c(p_0) := \Phi_{p_0}^{-1}(\{(x, 0) \in U_{p_0}\}).
$$
Unlike strong-stable manifolds, center manifolds are not uniquely and canonically defined. However, the following result states that it still has a canonical jet.

\begin{prop}[Canonical $C^r$-jets of center manifolds]\label{center jet}
Suppose $M = \infty$. Let $\Gamma_0: (-t, t) \to \cU_{p_0}$ be a $C^r$-curve parameterized by its arclength such that $\Gamma_0(0) = p_0$, and for all $n \in \bbN$, we have
$$
\|DF^{-n}|_{\Gamma_0'(t)}\| < \lambda^{-(1-\bepsilon) n}
\matsp{for}
|t| < \lambda^{\epsilon n}.
$$
Then $\Gamma_0$ has a degree $r$ tangency with $W^c(p_0)$ at $p_0$.
\end{prop}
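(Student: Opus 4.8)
\emph{Setup.} The plan is to move to the linearizing charts of \thmref{reg chart} and argue by contradiction. Since $M=\infty$, the $(\infty,N)$-linearization of $F$ along the orbit of $p_0$ with vertical direction $E^v_{p_0}$ gives charts $\Phi_{p_{-m}}$ for every $m\ge0$; in these coordinates $W^c(p_0)=\Phi_{p_0}^{-1}(\{y=0\})$ is the horizontal axis, the backward maps $F_{p_{-m}}^{-1}$ are lower triangular (the first coordinate of $F_{p_m}$ depends only on $x$ and $e_{p_m}(\cdot,0)\equiv0$), the vertical direction is backward invariant, $F_{p_{-m}}^{-1}$ expands the vertical coordinate by a factor in $(\lambda^{-(1+\bepsilon)},\lambda^{-(1-\bepsilon)})$ and distorts the horizontal coordinate only by a factor in $(\lambda^{\bepsilon},\lambda^{-\bepsilon})$, and (by \propref{reg nbh size} and the bound $\|\Phi_{p_{-m}}^{\pm1}\|_{C^r}=O(\bL\lambda^{-\bepsilon m})$) norms transfer between $\Omega$ and the charts up to controlled factors. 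So it is enough to prove the statement for $\gamma_0:=\Phi_{p_0}(\Gamma_0)$ and the axis.

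\emph{First order tangency.} I would first note $\Gamma_0'(0)=E^c_{p_0}$: the hypothesis at $t=0$ gives $\|DF^{-n}|_{\Gamma_0'(0)}\|<\lambda^{-(1-\bepsilon)n}$ for all $n$, while any direction $\ne E^c_{p_0}=E^h_{p_0}$ has a nonzero $E^v_{p_0}$-component, which expands backward at the rate $\asymp\|DF^{-n}|_{E^v_{p_0}}\|\gtrsim\lambda^{-(1-\epsilon)n}>\lambda^{-(1-\bepsilon)n}$ for large $n$ (equivalently, \propref{hor angle shrink} with $\mu_m$ small, letting $m\to\infty$). Hence near $0$ the curve is a graph $\gamma_0=\{(x,g(x))\}$ with $g(0)=g'(0)=0$.

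\emph{The contradiction.} Suppose the $r$-jet of $g$ at $0$ does not vanish, and let $k\in\{2,\dots,r\}$ be minimal with $g^{(k)}(0)\ne0$, so $|g(x)|\asymp|x|^{k}$ and $|g'(x)|\asymp|x|^{k-1}$ for $|x|$ small. Fix a large $n$ and take the point $q\in\Gamma_0$ with chart coordinates $(\rho,g(\rho))$, $\rho:=\lambda^{n/k}l_{p_0}$. Then $q=\Gamma_0(t)$ with $|t|\asymp\rho=\lambda^{n/k}<\lambda^{\epsilon n'}$ for all $n'\le n/(k\epsilon)$, so the hypothesis applies to $q$ with up to $N^*:=\lfloor n/(k\epsilon)\rfloor$ iterates; note $N^*>n$ since $k\epsilon<1$ (as $k\le r$ and $\epsilon$ is small). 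Tracking $q_{-m}$ and the unit tangent $v_{-m}:=DF^{-m}(\Gamma_0'(t))$ in the charts — legitimate, by \propref{loc linear}, \lemref{pinch neigh fit} and \lemref{pinching neigh}, for $m$ up to the step $m_*$ at which $q_{-m}$ reaches the top--bottom boundary of $\cU_{p_{-m}}$, which by the lower-triangular form and $|g(\rho)|\asymp\rho^{k}=\lambda^{n}$ is $m_*\asymp n$ — one finds that $v_{-m}$ becomes sufficiently vertical at a step $\asymp(k-1)n/k<m_*$ (use \propref{vert angle shrink} backward) and that $\|v_{-m_*}\|\asymp\lambda^{-n/k}$ up to $\lambda^{\pm\bepsilon n}$. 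At $m_*$ the point sits over a horizontal coordinate $\ll l_{p_{-m_*}}$, i.e. essentially on the strong-stable side of the (backward-)regular point $p_{-m_*}$, with $v_{-m_*}$ aligned to the contracting direction. The crux is that the remaining $N^*-m_*\asymp n\frac{1-k\epsilon}{k\epsilon}$ backward iterates continue to expand $v$ at the \emph{uniform} strong-stable rate $\gtrsim\lambda^{-(1-\epsilon)}$ per step: the backward orbit of $q$ cannot escape (the hypothesis forces $q\in\bigcap_m F^m(\Omega)$, which is compact and totally invariant), $v$ keeps a nondegenerate strong-stable component (vertical invariance in the charts, \propref{vert angle shrink} outside), and — because $p_0$ is backward regular, so its backward orbit and a neighborhood tracking $q$ stay away from the critical orbit for long enough — the relevant stretch of orbit lies in the uniformly partially hyperbolic part of $\Lambda$ (here one invokes \lemref{pinch neigh fit}, \lemref{pinching neigh} and mild dissipativity). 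Then
$$
\|DF^{-N^*}|_{\Gamma_0'(t)}\|\;\gtrsim\;\lambda^{-n/k}\,\lambda^{-(1-\epsilon)(N^*-m_*)}\;\asymp\;\lambda^{-\frac{1}{k\epsilon}\,(1-\epsilon)n + (1-\epsilon-1/k)n}\,\lambda^{-n/k},
$$
and a short computation shows that, because $\epsilon\ll\bepsilon$ forces $(k-1)\epsilon<\bepsilon$ (hence $\frac{1-\bepsilon-\epsilon}{1-k\epsilon}<1-\epsilon$) for every $k\le r$, the right-hand side exceeds $\lambda^{-(1-\bepsilon)N^*}$ for $n$ large, contradicting the hypothesis. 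Therefore the $r$-jet of $g$ vanishes, i.e. $\Gamma_0$ has a degree $r$ tangency with $W^c(p_0)$ at $p_0$.

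\emph{Main obstacle.} The hard part is the "crux" above: showing that once the tracked orbit leaves the chart tower of $p_0$ (which it must, near step $n$, as soon as $g$ is not flat to order $r$), the nearly vertical tangent keeps expanding at the sharp rate $\lambda^{-(1-\epsilon)}$ per step over the extra $\asymp n\frac{1-k\epsilon}{k\epsilon}$ iterates. This forces one to locate where the backward orbit of $q$ goes after it escapes the linearization of $p_0$'s orbit, and to certify that it stays in the uniformly partially hyperbolic region; it is exactly here that the pinched/truncated neighborhood lemmas, the backward regularity of $p_0$, mild dissipativity, and the precise hierarchy $\epsilon\ll\bepsilon\ll1$ (and its smallness relative to the fixed $r$) are all used.
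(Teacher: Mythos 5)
Your setup, the reduction to the linearizing charts, and the first-order step are all fine, and the final arithmetic (taking $|t|\asymp\lambda^{n/k}$, $N^*\asymp n/(k\epsilon)$, $m_*\asymp n$, and comparing $\lambda^{-n/k}\lambda^{-(1-\epsilon)(N^*-m_*)}$ against $\lambda^{-(1-\bepsilon)N^*}$ using $\bepsilon/\epsilon\gg r$) does check out \emph{provided} the expansion rate you posit after step $m_*$ actually holds. But that posited rate — what you yourself call the ``crux'' — is precisely where the argument has a genuine gap, and the ingredients you invoke to fill it cannot do so.

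Concretely, once the backward orbit $q_{-m}$ of $q=\Gamma_0(t)$ exits the tube $\{\cU_{p_{-m}}\}$ of $p_0$'s chart tower at $m=m_*$, you no longer have any chart, any reference vertical direction, or any regularity information at $q_{-m}$ for $m\in(m_*,N^*]$. The claim that the nearly-vertical tangent $v_{-m_*}$ continues to expand at the sharp rate $\lambda^{-(1-\epsilon)}$ per step over the remaining $\asymp n\frac{1-k\epsilon}{k\epsilon}$ iterates is unsupported: a sub-rate like $\lambda^{-1/2}$ does \emph{not} suffice (plugging $\rho$ per step into your inequality requires $\rho>\frac{1-\bepsilon-\epsilon}{1-k\epsilon}$, which is forced to be $1-O(\epsilon)$), so you really need the optimal strong-stable rate along an orbit you do not control. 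Lemmas~\ref{pinch neigh fit} and~\ref{pinching neigh} only govern backward iterates \emph{within} the tube of $p_0$'s orbit — the first gives a criterion for staying inside, the second a converse — so they cannot say anything about the orbit after it has left; moreover, for the pinching lemma to even apply to $q$ you would need $\omega>(1+\bepsilon)/\delta$ together with $|y_0|<|x_0|^\omega$, which forces $k>(1+\bepsilon)/\delta$ and hence fails for small $k$ once $\delta$ is chosen small as in Section~\ref{sec:quant pesin}. Finally, mild dissipativity and the existence of a regular critical orbit with a ``uniformly partially hyperbolic complement'' in $\Lambda$ are not hypotheses anywhere in Section~\ref{sec:quant pesin} — they are introduced only in Sections~\ref{sec:crit orb},~\ref{sec:mild diss} and~\ref{sec:unicrit} — so a proof of Proposition~\ref{center jet} cannot appeal to them. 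In short: your contradiction needs $N^*-m_*$ iterates of sharp strong-stable expansion outside the chart tower, and nothing in the Section~3 toolbox (nor the upper bound furnished by the hypothesis itself, which constrains expansion from above but never forces it) delivers that. This step needs a different mechanism — for instance an argument that stays entirely inside the chart tower of $p_0$'s orbit (perhaps by passing to a linearization anchored further down the backward orbit, or by tracking the full $r$-jet of the graph $g_m$ rather than just the tangent vector).
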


\begin{prop}\label{ss c geo}
If $p$ is infinite-time forward/backward $(L, \epsilon)$-regular. Then
$$
\left\|W^{ss/c}_{\loc}(p)\right\|_{C^r} < \bL.
$$
\end{prop}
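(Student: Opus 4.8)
The plan is to read the bound off directly from the linearizing chart of \thmref{reg chart}; the substance of the statement lives in that theorem, and \propref{ss c geo} is essentially its corollary.

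Suppose first that $p$ is infinite-time forward $(L,\epsilon)$-regular along its direction $E^{ss}_p = E^v_p$. Then $p$ is $(0,\infty)$-times $(L,\epsilon)_v$-regular along $E^v_p$ (the backward condition \eqref{eq:back reg} being vacuous for $M=0$), so \thmref{reg chart} applied to the $(0,\infty)$-orbit of $p$ furnishes a regular chart $\Phi_p : (\cU_p, p) \to (U_p, 0)$ with $U_p = \bbB(l_p)$, $l_p = \bL^{-1}$, and $\|\Phi_p^{\pm1}\|_{C^r} = O(\bL)$. By the definition in \subsecref{stable-center manifolds}, $W^{ss}_{\loc}(p) = \Phi_p^{-1}\big(\{0\}\times \bbI(l_p)\big)$, i.e. it is the image under $\psi := \Phi_p^{-1}$ of a straight vertical segment. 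If instead $p$ is infinite-time backward $(L,\epsilon)$-regular, apply \thmref{reg chart} to the $(\infty,0)$-orbit of $p$ and note that $W^c_{\loc}(p) = \Phi_p^{-1}\big(\bbI(l_p)\times\{0\}\big)$ is the $\psi$-image of a straight horizontal segment. (The chart, and hence the center manifold, is not canonical, but the estimate below holds for any such choice.)

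It remains to check that the arclength reparameterization of the $\psi$-image of a straight segment has $C^r$-norm at most a uniform power of $\bL$, hence $<\bL$ in the sense of \secref{sec:intro}. In the vertical case write $c(t):=\psi(0,t)$. For $1\le j\le r$ we have $\|c^{(j)}(t)\|\le \|\psi\|_{C^r}=O(\bL)$, while $\|c'(t)\| = \|(D\Phi_p)^{-1}e_2\|\ge \|D\Phi_p\|^{-1}\ge k\bL^{-1}$ for a uniform $k>0$, using $\|\Phi_p\|_{C^r}=O(\bL)$. Since $\|c'\|^2=\langle c',c'\rangle$ is a $C^{r-1}$-function bounded below by $k^2\bL^{-2}$ and above by a uniform power of $\bL$, its square root $\|c'\|$ is $C^{r-1}$ with $C^{r-1}$-norm of polynomial size in $\bL$; hence the arclength function $s(t):=\int_0^t\|c'(\tau)\|\,d\tau$ has $s'=\|c'\|$ bounded between $k\bL^{-1}$ and $O(\bL)$ with all derivatives up to order $r-1$ of polynomial size in $\bL$, so by the inverse function theorem $s^{-1}$ is a $C^r$-diffeomorphism whose $C^r$-norm is again a uniform power of $\bL$. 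By the standard estimate for the $C^r$-norm of a composition, the arclength parameterization $\phi = c\circ s^{-1}$ of $W^{ss}_{\loc}(p)$ has $C^r$-norm a uniform power of $\bL$, i.e. $\|W^{ss}_{\loc}(p)\|_{C^r}<\bL$. The horizontal case, with $c(t):=\psi(t,0)$ and $\phi = c\circ s^{-1}$ parameterizing $W^c_{\loc}(p)$, is word for word the same.

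There is no serious obstacle here; the only point requiring attention is that $\|\gamma\|_{C^r}$ is defined through the arclength parameterization, so one must control the reparameterization $s^{-1}$. This is exactly where the two-sided estimate $\|\Phi_p^{\pm1}\|_{C^r}=O(\bL)$ is used: the upper bound on $\|\Phi_p\|_{C^r}$ keeps the speed $\|c'\|$ bounded away from $0$ (the straight segment is not infinitesimally crushed), while the bound on $\|\Phi_p^{-1}\|_{C^r}$ controls it, and its higher derivatives, from above. All quantities appearing are uniform powers of $\bL$, which the conventions of \secref{sec:intro} absorb into the symbol $\bL$.
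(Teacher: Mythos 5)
Your proof is correct and takes the natural route: the paper states \propref{ss c geo} without a separate proof, implicitly treating it as a corollary of \thmref{reg chart}, and your argument simply makes that implication explicit. You correctly identify that the one non-trivial point is controlling the arclength reparameterization (since $\|\gamma\|_{C^r}$ is defined via $\phi_\gamma$), and you handle it with the two-sided bound $\|\Phi_p^{\pm 1}\|_{C^r} = O(\bL)$ at $m=0$, noting that all the resulting constants are uniform powers of $L$ and hence absorbed by the $\bL$ convention of \secref{sec:intro}.
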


\begin{prop}\label{ss c contin}
The strong stable manifold $W^{ss}(p)$ depends $C^r$-continuously on $p \in \Lambda^+_{L, \epsilon}$. The center manifold $W^c(p)$ depends $C^{r-\bepsilon}$-continuously on $p \in \Lambda^-_{L, \epsilon}$.
\end{prop}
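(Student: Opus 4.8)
\emph{Overview of the plan.} The plan is to obtain both statements from the quantitative linearization of \thmref{reg chart}, the uniform convergence estimates of \propref{vert fol conv} and \propref{hor curv conv}, and the uniform $C^r$-bound of \propref{ss c geo}, via a closed-graph (invariant-section) argument. The strong-stable assertion is the model case; the center assertion needs one extra ingredient because $W^c(p)$ is not canonically defined, and this is where the loss of smoothness in the dependence comes from.

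\emph{Strong stable manifolds.} First I would note that for $p\in\Lambda^+_{L,\epsilon}$ the direction $E^{ss}_p$ is the unique direction along which $p$ is infinitely forward $(L,\epsilon)$-regular (\propref{vert angle shrink} with $N=\infty$); since the defining inequalities are closed conditions on the compact set $\Lambda^+_{L,\epsilon}$, the assignment $p\mapsto E^{ss}_p$ is continuous. By \propref{ss c geo} together with the bound $\|\Phi_p^{\pm1}\|_{C^r}=O(\bL)$ from \thmref{reg chart}, the family $\{W^{ss}_{\loc}(p)\}_{p\in\Lambda^+_{L,\epsilon}}$ consists of $C^r$-arcs through $p$, tangent to $E^{ss}_p$, with $\|W^{ss}_{\loc}(p)\|_{C^r}<\bL$ and length $\asymp\bL^{-1}$, uniformly in $p$. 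Now let $p_k\to p_\infty$ in $\Lambda^+_{L,\epsilon}$; after passing to a subsequence, $W^{ss}_{\loc}(p_k)\to\Gamma$ in $C^{r-1}$, with $\Gamma$ a $C^r$-arc through $p_\infty$ tangent to $E^{ss}_{p_\infty}$ and $\|\Gamma\|_{C^r}<\bL$. The decisive point is that the forward contraction is uniform over the block: reading \propref{loc linear} in the regular charts, if $q_k\in W^{ss}_{\loc}(p_k)$ then
$$
\dist\bigl(F^n(q_k),\,F^n(p_k)\bigr)\;<\;\bL\,\lambda^{(1-\bepsilon)n}
\qquad\text{for all }n\ge 0,
$$
with $\bL$ independent of $k$. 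Letting $k\to\infty$ with $q_k\to q\in\Gamma$ gives $\dist(F^n(q),F^n(p_\infty))\le\bL\lambda^{(1-\bepsilon)n}\to 0$, so $q\in W^{ss}(p_\infty)$; hence $\Gamma\subset W^{ss}(p_\infty)$. Being a connected $C^r$-arc through $p_\infty$ inside the $C^r$ one-manifold $W^{ss}(p_\infty)$, tangent to $E^{ss}_{p_\infty}$ and of the standard length, $\Gamma$ coincides with $W^{ss}_{\loc}(p_\infty)$. Since every subsequential limit equals $W^{ss}_{\loc}(p_\infty)$, the whole family converges; and the uniform rate estimate of \propref{vert fol conv}---applied to the pullbacks $F^{-N}(\gamma_N)$ of sufficiently vertical reference curves $\gamma_N$ through $p_N$, which approximate $W^{ss}_{\loc}(p)$ at rate $\bL\lambda^{(1-\bepsilon)N}$ uniformly in $p$---provides the equicontinuity of the top-order derivatives needed to upgrade the convergence to $C^r$.

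\emph{Center manifolds, and the main obstacle.} The mirror argument uses backward iterates, \propref{hor curv conv}, and backward regularity along the direction $E^c_p$, which is again unique (by \propref{hor angle shrink} with $M=\infty$) and hence continuous on $\Lambda^-_{L,\epsilon}$. Two differences arise. First, $W^c(p)$ is only pinned down up to its canonical $r$-jet (\propref{center jet}), so I would \emph{fix} the representative by declaring $W^c_{\loc}(p_\infty)$ to be the limiting arc $\Gamma$ produced above: the tangent field of $\Gamma$ inherits the weak backward bound with the uniform constant $\bL$, so $\Gamma$ has the canonical $r$-jet at $p_\infty$ by \propref{center jet}, and one checks the limit is independent of the approximating sequence and of the subsequence, making this a consistent global choice. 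Second---and this is the main obstacle---the backward graph transform that governs $W^c$ controls the center only through $\|DF^{-m}|_{E^c}\|<L\lambda^{-\epsilon m}$ rather than through a strong rate, so passing to the $j$-th derivative costs a factor $\sim\lambda^{-j\bepsilon}$; the resulting estimates close only with a margin that shrinks as $j$ approaches $\sim\bepsilon^{-1}$, and propagating them yields equicontinuity of derivatives only up to order $r-\bepsilon$. This gives $C^{r-\bepsilon}$-continuity of $p\mapsto W^c_{\loc}(p)$, and no better in general. The delicate part of the proof is precisely this bookkeeping: carrying the $C^{r-\bepsilon}$-bounds through the backward graph transform while tracking the non-uniqueness via \propref{center jet}. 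Everything else is a routine transcription of the strong-stable case.
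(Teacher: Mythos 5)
Your plan---combine the uniform graph-transform convergence rates of Propositions~\ref{vert fol conv}/\ref{hor curv conv} with the uniform $C^r$-bound of Proposition~\ref{ss c geo} and a compactness/closed-graph argument in regular charts---is the natural Pesin-theoretic approach, and the strong-stable half is sound: uniqueness and continuity of $p\mapsto E^{ss}_p$, $C^{r-1}$ precompactness from the uniform $C^r$ bound, identification of the subsequential limit with $W^{ss}_{\loc}(p_\infty)$ via the uniform forward contraction estimate, and the $C^r$ upgrade from the uniform-in-$p$ rate $\bL\lambda^{(1-\bepsilon)N}$ supplied by Proposition~\ref{vert fol conv}. (The first six steps are somewhat redundant once you have the uniform approximation in hand, but they are not wrong.) The paper itself states this result without proof and defers the quantitative Pesin theory to the companion work \cite{CLPY1}, so there is no in-text argument to match against.

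The gap is in the center-manifold half, and it is a genuine one: the numerical mechanism you offer does not produce the exponent you claim. You say the $j$-th derivative costs a factor $\sim\lambda^{-j\bepsilon}$ against a convergence rate $\sim\lambda^{(1-\bepsilon)n}$, so the margin $1-(j+1)\bepsilon$ closes only as $j$ approaches $\bepsilon^{-1}$. But for any fixed integer $r\ll\bepsilon^{-1}$ that margin is still positive at $j=r$, so running the argument exactly as in the strong-stable case would yield $C^r$-continuity---\emph{stronger} than the asserted $C^{r-\bepsilon}$ and stronger than what is true. The phrase ``propagating them yields equicontinuity of derivatives only up to order $r-\bepsilon$'' is precisely the step you do not justify, and your own heuristic points the other way. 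The real asymmetry with the stable case must come from the fact that $W^{ss}$ is canonically determined by the dynamics while $W^c$ is determined only to $r$-jet order at $p$ (Proposition~\ref{center jet}); you acknowledge this but then assert, without mechanism, that fixing the representative as the limiting arc is a ``consistent global choice'' and move on. You would need either to exhibit concretely why the chosen representative (e.g.\ $W^c_{\loc}(p)=\Phi_p^{-1}(\{y=0\})$) cannot depend $C^r$-continuously on $p$---for instance, because the regular charts themselves, which straighten the center curve, are only $C^{r-\bepsilon}$ in the base point, a degradation tied to the $\lambda^{-2\epsilon|m|}$ decay of Proposition~\ref{decay reg} and the irregular spacing of backward Pliss moments---or at minimum acknowledge that the per-derivative bookkeeping you wrote does not close to $r-\bepsilon$.
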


\begin{rem}
Henceforth, we say that $p$ is infinite-time forward/backward $(L, \epsilon)$-regular (without specifying $v$/$h$ in the subscript) if $p$ is infinite-time forward/backward $(L, \epsilon)_{v/h}$-regular along $E^{ss/c}_p$. Furthermore, we say that $p$ is Pesin $(L, \epsilon)$-regular if $p$ is infinite-time forward and backward $(L, \epsilon)$-regular, and we have $\measuredangle(E^{ss}_p, E^c_p) > 1/L$ (see \propref{hom for back reg}). The sets of all infinite-time forward/backward/Pesin $(L, \epsilon)$-regular points in $\Lambda$ are referred to as the {\it forward, backward} and {\it Pesin $(L, \epsilon)$-regular blocks}, and are denoted by $\Lambda^+_{L, \epsilon}$, $\Lambda^-_{L, \epsilon}$ and $\Lambda^P_{L, \epsilon}$ respectively.
\end{rem}

\begin{prop}\label{classic pesin}
We have $\mu(\Lambda^P_{L, \epsilon}) \to 1$ as $L \to \infty$.
\end{prop}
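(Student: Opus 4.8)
The statement is a soft consequence of classical Oseledets--Pesin theory, and the plan is to reduce it to the finiteness of the regularity factor $\mu$-a.e.\ together with continuity of $\mu$ from below.

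\emph{Step 1 (Lyapunov regularity a.e.).} By Oseledets' multiplicative ergodic theorem applied to $(F, \mu)$, for $\mu$-a.e.\ $p \in \Lambda$ the tangent space splits as $E^{ss}_p \oplus E^c_p$ with $\tfrac1n \log \|DF^n|_{E^{ss}_p}\| \to \log \lambda$ and $\tfrac1n \log \|DF^{-n}|_{E^c_p}\| \to 0$, and with $\measuredangle(E^{ss}_{F^n p}, E^c_{F^n p})$ tempered; in particular $\measuredangle(E^{ss}_p, E^c_p) > 0$ for $\mu$-a.e.\ $p$. Since $\log \Jac F = \log|\det DF|$ is continuous on the compact invariant set $\Lambda$, Birkhoff's ergodic theorem (the sum of the Lyapunov exponents being $0 + \log\lambda = \log\lambda$) gives $\tfrac1n \log \Jac_p F^{\pm n} \to \pm \log\lambda$ for $\mu$-a.e.\ $p$. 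Fix such a point $p$ and the small constant $\epsilon > 0$.

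\emph{Step 2 (such $p$ lies in a regular block).} I claim $p \in \Lambda^P_{L,\epsilon}$ for some finite $L = L_p$. For the forward vertical condition \eqref{eq:for reg} with $E^+_p := E^{ss}_p$: both $\Jac_p F^n$ and $\|DF^n|_{E^{ss}_p}\|$ equal $\lambda^n$ up to a subexponential factor, so for each $s \in \{-r,\, r-1\}$ a direct computation shows the ratio $(\Jac_p F^n)^s / \|DF^n|_{E^{ss}_p}\|^{s-1}$ is again $\lambda^n$ up to a subexponential factor; because $\lambda < 1$, the $\pm\epsilon$ slack in the exponent absorbs this error for all $n \geq n_0(p)$, and the finitely many $n < n_0(p)$ are handled by enlarging the constant (using that $\|DF^{\pm 1}\|$ is bounded on $\Lambda$). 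The backward horizontal condition \eqref{eq:hor back reg} with $\tiE^-_p := E^c_p$ is verified the same way, now using $\|D_p F^{-n}|_{E^c_p}\| = \lambda^{o(n)}$ and $\Jac_p F^{-n} = \lambda^{-n(1 + o(1))}$. Finally $\measuredangle(E^{ss}_p, E^c_p)$ is a fixed positive number, hence exceeds $1/L$ once $L$ is large. Taking $L = L_p$ large enough to accommodate all three requirements, $p$ is infinite-time forward $(L_p,\epsilon)$-regular along $E^{ss}_p$, backward $(L_p,\epsilon)$-regular along $E^c_p$, and $\measuredangle(E^{ss}_p, E^c_p) > 1/L_p$; that is, $p \in \Lambda^P_{L_p, \epsilon}$. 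Since every defining inequality of $\Lambda^P_{L,\epsilon}$ is monotone in $L$, in fact $p \in \Lambda^P_{L,\epsilon}$ for all $L \geq L_p$.

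\emph{Step 3 (measure continuity).} Consequently the measurable function $p \mapsto L_p$ (the least admissible regularity factor) is finite $\mu$-a.e., so the sets $\{p \in \Lambda : L_p \leq L\} \subseteq \Lambda^P_{L,\epsilon}$ increase to a set of full $\mu$-measure as $L \to \infty$; by continuity of $\mu$ from below, $\mu(\Lambda^P_{L,\epsilon}) \to 1$.

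There is no genuine obstacle here, as the content is entirely classical. The only point requiring attention is the bookkeeping in Step 2: one must check that the particular algebraic combinations of Jacobians and directional norms in \eqref{eq:for reg} and \eqref{eq:hor back reg}, with their $(1 \pm \epsilon)$-exponents, are indeed controlled by the Oseledets and Birkhoff limits. This goes through precisely because the two Lyapunov exponents $0$ and $\log\lambda$ are matched against the Jacobian exponent $\log\lambda = 0 + \log\lambda$, so that only the subexponential Oseledets corrections — nothing worse — need to be absorbed by the $\epsilon$-slack.
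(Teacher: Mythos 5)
Your proof is correct, and it is the standard Oseledets--Pesin argument that the paper itself invokes implicitly without writing it out: Proposition~\ref{classic pesin} is stated in the paper with no proof attached (the label ``classic pesin'' signals that it is being treated as a textbook fact). Step~2 is the only place that requires any care specific to this paper's formulation, and you handle it correctly: since for $\mu$-a.e.\ $p$ both $\Jac_p F^{\pm n}$ and $\|DF^{n}|_{E^{ss}_p}\|$ (resp.\ $\|DF^{-n}|_{E^c_p}\|$) are $\lambda^{\pm n}$ (resp.\ $\lambda^n$ and $\lambda^{o(n)}$) up to subexponential corrections, the ratio in \eqref{eq:for reg} is $\lambda^{n(s-(s-1))}=\lambda^n$ up to $e^{o(n)}$ for either choice $s\in\{-r,r-1\}$, and similarly for \eqref{eq:hor back reg}, so the $\pm\epsilon n$ slack absorbs the error for $n$ large and a finite $L_p$ absorbs the rest. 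One cosmetic remark: in Step~3 the inclusion $\{L_p\le L\}\subseteq\Lambda^P_{L,\epsilon}$ is not quite accurate if $L_p$ is defined as an infimum that need not be attained; but you do not actually need it, since the monotone increase of $\Lambda^P_{L,\epsilon}$ in $L$ and $\mu\bigl(\bigcup_{L}\Lambda^P_{L,\epsilon}\bigr)=1$ already give the conclusion by continuity from below.
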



\section{Homogeneity}\label{sec:homog}

Let $F$ be the diffeomorphism considered in \secref{sec:quant pesin}. For $\eta \in (0,1)$ with $\baeta < 1$, we say that $F$ is {\it $\eta$-homogeneous on $\Lambda$} if for all $p \in \Lambda$ and $E_p \in \bbP_p^2$, we have:
\begin{enumerate}[i)]
\item $ \lambda^{1+\eta} < \|D_pF|_{E_p}\| < \lambda^{-\eta}$ and
\item $\lambda^{1+\eta} < \Jac_p F < \lambda^{1-\eta}$.
\end{enumerate}
Homogeneity considerably simplifies the regularity conditions given in \eqref{eq:for reg}, \eqref{eq:back reg}, \eqref{eq:hor for reg} and \eqref{eq:hor back reg}. Let $N, M \in \bbN\cup \{0,\infty\}$; $L \geq 1$ and $\epsilon \in (\baeta, 1)$ with $\bepsilon < 1$. Then a point $p \in \Lambda$ is:
\begin{itemize}
\item $N$-times forward $(L, \epsilon)_v$-regular along $E^v_p \in \bbP^2_p$ if
$$
\|DF^n|_{E^v_p}\| \leq L \lambda^{(1-\epsilon)n}
\matsp{for}
1 \leq n \leq N;
$$
\item $M$-times backward $(L, \epsilon)_v$-regular along $E^v_p$ if
$$
\|DF^{-n}|_{E^v_p}\| \geq L^{-1} \lambda^{-(1-\epsilon) n}
\matsp{for}
1 \leq n \leq M;
$$
\item $N$-times forward $(L, \epsilon)_h$-regular along $E^h_p \in \bbP^2_p$ if
$$
\|DF^n|_{E^h_p}\| \geq L^{-1} \lambda^{\epsilon n}
\matsp{for}
1 \leq n \leq N;
$$
\item $M$-times backward $(L, \epsilon)_h$-regular along $E^h_p \in \bbP^2_p$ if
$$
\|DF^{-n}|_{E^h_p}\| \leq L \lambda^{-\epsilon n}
\matsp{for}
1 \leq n \leq M.
$$
\end{itemize}

\begin{prop}\label{get homog}
Suppose $F|_\Lambda$ is uniquely ergodic. Then for any $\eta>0$, there exists $N = N(\eta) \in \bbN$ such that if $n \geq N$, then the map $F^n$ is $\eta$-homogeneous on $\Lambda$.
\end{prop}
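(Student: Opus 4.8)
The plan is to deduce the proposition from two soft ergodic-theoretic inputs applied to the uniquely ergodic system $(F|_\Lambda,\mu)$: the uniform (Oxtoby) ergodic theorem for continuous additive cocycles, and the uniform version of Kingman's subadditive ergodic theorem for continuous subadditive cocycles. First observe that ``$F^n$ is $\eta$-homogeneous'' means replacing $\lambda$ by the contraction factor $\lambda^n$ of $F^n$, so one must produce, for every $p\in\Lambda$ and every $E_p\in\bbP^2_p$, the bounds
$$
(\lambda^n)^{1+\eta} < \Jac_p F^n < (\lambda^n)^{1-\eta}
\matsp{and}
(\lambda^n)^{1+\eta} < \|D_pF^n|_{E_p}\| < (\lambda^n)^{-\eta}.
$$
Since $\|D_pF^n|_{E_p}\|$ always lies between $\sigma_{\min}(D_pF^n)$ and $\|D_pF^n\|$, and in dimension two $\sigma_{\min}(D_pF^n)=\|D_{F^n(p)}F^{-n}\|^{-1}$, it suffices to control three quantities uniformly over the compact set $\Lambda$: $\Jac_pF^n$, $\|D_pF^n\|$ (from above), and $\|D_qF^{-n}\|$ (from above, $q$ ranging over $\Lambda$).

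For the Jacobian, I would note that $\phi:=\log\Jac F$ is continuous on $\Lambda$ (as $F\in C^r$, $r\ge 2$) and that $\log\Jac_pF^n=\sum_{i=0}^{n-1}\phi(F^i(p))$ is a Birkhoff sum. Unique ergodicity gives $\tfrac1n\sum_{i=0}^{n-1}\phi\circ F^i\to\int\phi\,d\mu$ \emph{uniformly} on $\Lambda$, and by ergodicity together with the chain rule $\int\phi\,d\mu$ equals the sum of the Lyapunov exponents, namely $0+\log\lambda=\log\lambda$. Since $\log\lambda<0$, this forces $(1+\eta)\log\lambda<\tfrac1n\log\Jac_pF^n<(1-\eta)\log\lambda$ for all $p\in\Lambda$ once $n$ is large, which is exactly the Jacobian bound.

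For the operator norms, set $a_n(p):=\log\|D_pF^n\|$, a subadditive sequence of continuous functions: $a_{m+n}(p)\le a_m(p)+a_n(F^m(p))$. Kingman's theorem gives $\tfrac1n a_n\to\bar a:=\inf_m\tfrac1m\int a_m\,d\mu$ $\mu$-a.e., and $\bar a$ is the top Lyapunov exponent of $F$, i.e. $\bar a=\max\{0,\log\lambda\}=0$. The point I need is the \emph{uniform} upper bound over the uniquely ergodic base, $\limsup_n\sup_{p\in\Lambda}\tfrac1n a_n(p)\le\bar a=0$: for fixed $k$, iterating subadditivity over the $k$-blocks based at each residue $0,\dots,k-1$ and averaging yields $k\,a_n(p)\le\sum_{i=0}^{n-1}a_k(F^i(p))+C_k$ with $C_k$ a finite constant bounding boundary terms, whence uniform Birkhoff convergence for the continuous function $a_k$ gives $\limsup_n\sup_p\tfrac1n a_n(p)\le\tfrac1k\int a_k\,d\mu$; now let $k\to\infty$. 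Applied to $F$ this yields $\|D_pF^n\|<(\lambda^n)^{-\eta}$ for all $p$ and $n$ large, hence the upper bound $\|D_pF^n|_{E_p}\|<(\lambda^n)^{-\eta}$. Applied verbatim to $F^{-1}$ — whose top Lyapunov exponent with respect to $\mu$ is $-\log\lambda=|\log\lambda|$ — it yields $\|D_qF^{-n}\|<(\lambda^n)^{-(1+\eta)}$ for all $q\in\Lambda$ and $n$ large; taking $q=F^n(p)$ and using $\sigma_{\min}(D_pF^n)=\|D_qF^{-n}\|^{-1}$ gives $\|D_pF^n|_{E_p}\|\ge\sigma_{\min}(D_pF^n)>(\lambda^n)^{1+\eta}$. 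Letting $N(\eta)$ be the maximum of the three thresholds finishes the argument.

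\textbf{Main obstacle.} The only non-formal step is the passage from the a.e. conclusions of the Birkhoff and Kingman theorems to bounds that are \emph{uniform over all of $\Lambda$} (not just $\mu$-a.e.): this is precisely what unique ergodicity buys, but for the subadditive cocycles it must be obtained through the block-decomposition estimate above rather than by a direct appeal to Kingman. The remaining work is purely bookkeeping — matching each cocycle's asymptotic slope with the correct Lyapunov exponent ($0$ for $\|DF^n\|$, $|\log\lambda|$ for $\|DF^{-n}\|$, $\log\lambda$ for $\Jac F^n$) and tracking signs, since $\log\lambda<0$ reverses several inequalities upon exponentiation.
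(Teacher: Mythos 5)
Your proof is correct. The paper leaves Proposition~\ref{get homog} without a proof, so there is no author argument to compare against, but what you wrote is exactly the argument one would expect to fill in. The only step that requires care---upgrading Kingman's a.e.\ conclusion for the subadditive cocycles $\log\|D_pF^{\pm n}\|$ to upper bounds uniform over the compact set $\Lambda$---is handled properly by the block decomposition, which reduces matters to a uniform Birkhoff average of the fixed continuous function $\log\|DF^{\pm k}\|$, where unique ergodicity applies directly; and the reduction of the lower bound on $\|D_pF^n|_{E_p}\|$ to an upper bound on $\|D_{F^n(p)}F^{-n}\|$ via the identity $\sigma_{\min}(D_pF^n)=\|D_{F^n(p)}F^{-n}\|^{-1}$, together with the correct identification of each cocycle's asymptotic slope with the relevant Lyapunov exponent ($\log\lambda$ for $\log\Jac F^n$, $0$ for $\log\|D_pF^n\|$, and $-\log\lambda$ for $\log\|D_qF^{-n}\|$), closes the argument.
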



\subsection{Pliss moments}

Let $\eta, \epsilon, \delta \in (0,1)$ with $\baeta < \epsilon < \delta$ and $\bdelta < 1$.
Suppose that $F$ is $\eta$-homogeneous on $\Lambda$. We show that for such maps, regularity is a statistically common occurrence.

Let $p_0 \in \Lambda$. Suppose that along its forward/backward/full orbit, $p_0$ exhibits asymptotically regular behavior with marginal exponent $\epsilon$ (in the sense made rigorous in the propositions stated below). Loosely speaking, an integer $m$ is a Pliss moment for the orbit of $p_0$ if $p_m$ is $(1, \delta)$-regular.

\begin{prop}[Frequency of Pliss moments along forward orbit]\label{freq pliss for}
Let $p_1 \in \Lambda$. Suppose there exist $E_{p_1}^v$ or $E_{p_1}^h \in \bbP^2_{p_1}$ such that
$$
\liminf_{n \to \infty} \frac{1}{n}\logl\|DF^n|_{E_{p_1}^v}\| > 1-\epsilon
$$
or
$$
\limsup_{n \to \infty} \frac{1}{n}\logl\|DF^n|_{E_{p_1}^h}\| < \epsilon.
$$
Let $\{n_i\}_{i=1}^\infty \subset \bbN$ be the increasing set of all moments such that $p_{n_i}$ is $(n_i, \infty)$-times $(1, \delta)_{v/h}$-regular along $E_{p_{n_i}}^{v/h}$. Then
$$
\liminf_{i \to \infty}\frac{i}{n_i} > 1- \bepsilon/\delta
$$
If $p_1$ is infinitely forward $(L, \epsilon)_{v/h}$-regular along $E_{p_1}^{v/h}$ for some $L \geq 1$, then
$$
i > \left(1-\bepsilon/\delta\right)n_i + \delta^{-1}\logl L
\matsp{and}
n_i  > \left(1-\bepsilon/\delta\right)n_{i+1} + \delta^{-1}\logl L.
$$
\end{prop}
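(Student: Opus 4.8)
The plan is to take logarithms of the relevant cocycle, invoke $\eta$-homogeneity, and reduce the statement to a two-sided Pliss-type counting estimate for a single real sequence. I work out the vertical case; the horizontal case is identical after the substitution indicated below. For $j\ge 1$ set $a_j:=\logl\|D_{p_j}F|_{E^v_{p_j}}\|$ with $E^v_{p_j}:=DF^{j-1}(E^v_{p_1})$, so that multiplicativity of the norm along a one-dimensional direction gives $S_n:=\sum_{j=1}^n a_j=\logl\|DF^n|_{E^v_{p_1}}\|$. Homogeneity gives $\|D_pF|_{E_p}\|\in(\lambda^{1+\eta},\lambda^{-\eta})$, hence $a_j\in(-\eta,1+\eta)$. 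Put $T_0:=0$ and $T_n:=S_n-(1-\delta)n$; its increments lie in $(\delta-1-\eta,\ \delta+\eta)$, so $T_n-T_{n-1}<\gamma:=\delta+\eta$ and $T_n<\gamma n$. The hypothesis $\liminf_n\tfrac1nS_n>1-\epsilon$ gives $\liminf_n\tfrac1nT_n\ge\delta-\epsilon>0$, hence $T_n\to+\infty$; the stronger assumption that $p_1$ is infinitely forward $(L,\epsilon)_v$-regular gives $S_n\ge(1-\epsilon)n+\logl L$, i.e.\ $T_n\ge(\delta-\epsilon)n+\logl L$, for every $n\ge 0$ (recall $\logl L\le 0$). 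In the horizontal case one uses the hypothesis $\limsup_n\tfrac1n\logl\|DF^n|_{E^h_{p_1}}\|<\epsilon$ together with the homogeneous form of $h$-regularity and works with $U_n:=\delta n-\logl\|DF^n|_{E^h_{p_1}}\|$, which satisfies the same three properties.

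Unwinding the homogeneous definitions of ($\infty$-times) forward and ($n$-times) backward $(1,\delta)_v$-regularity at $p_n$ along $E^v_{p_n}$ and taking $\logl$, one checks that $n$ is a Pliss moment exactly when $T_n\ge T_k$ for $0\le k<n$ (the backward, $n$-times, part) and $T_n\le T_m$ for $m>n$ (the forward, $\infty$-times, part); equivalently, $n$ is simultaneously a ladder point (record) and a future minimum of $T$, i.e.\ $T^+_n=T^-_n$ where $T^+_n:=\max_{0\le k\le n}T_k$ and $T^-_n:=\min_{m\ge n}T_m$ (both finite since $T_n\to\infty$, and $T^-_n\le T_n\le T^+_n$). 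Now run the classical Pliss argument. Consecutive ladder points raise $T^+$ by less than $\gamma$; since $T^+$ goes from $T^+_0=0$ up to $T^+_N\ge T_N\ge(\delta-\epsilon)N+\logl L$ over $[0,N]$, there are at least $\big((\delta-\epsilon)N+\logl L\big)/\gamma$ ladder points in $[1,N]$ (under the weaker hypothesis $\logl L$ is replaced by an $o(N)$ term). Time-reversing — each future minimum raises $T^-$ by less than $\gamma$, while $T^-$ goes from $T^-_0=\inf_mT_m\le 0$ up to $T^-_N\ge(\delta-\epsilon)N+\logl L$ — gives the same bound for the number of future minima in $[1,N]$. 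Since $\epsilon>\eta$, intersecting the two sets leaves at least $\big(1-\tfrac{2(\epsilon+\eta)}{\delta+\eta}\big)N-O(1)$ Pliss moments in $[1,N]$, which for a suitable $\bepsilon$ exceeds $(1-\bepsilon/\delta)N$; taking $N=n_i$ (which contains exactly $i$ Pliss moments) gives $\liminf_i i/n_i>1-\bepsilon/\delta$.

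For the two displayed inequalities in the uniformly regular case I would argue directly on the stretch following a Pliss moment, since the crude intersection above double-counts the additive $\logl L$-error whereas the statement asks for the single weight $\delta^{-1}\logl L$. Fix a Pliss moment $n_i$; the future-minimum property gives $T_m\ge T_{n_i}$ for $m\ge n_i$, so $\hat T_k:=T_{n_i+k}-T_{n_i}$ has $\hat T_0=0$, increments $<\gamma$, $\hat T_k\ge 0$, and (combining $T_{n_i}<\gamma n_i$ with the uniform lower bound) $\hat T_k\ge(\delta-\epsilon)k-\big((\epsilon+\eta)n_i-\logl L\big)$ for all $k$; moreover the Pliss moments of $\hat T$ of index $k\ge 1$ are exactly the original Pliss moments exceeding $n_i$. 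Since $\hat T_k\ge v$ is forced once $k\ge\big(v+(\epsilon+\eta)n_i-\logl L\big)/(\delta-\epsilon)$, every value is permanently attained from below within a controlled time, which locates the first $k\ge 1$ at which $\hat T$ is both a ladder point and a future minimum and yields a bound of the shape $n_{i+1}-n_i\le\frac{\epsilon+\eta}{\delta-\epsilon}\,n_i+\frac{1}{\delta-\epsilon}|\logl L|+O(1)$, which rearranges (absorbing the $O(1)$ into $\bepsilon$) to $n_i>(1-\bepsilon/\delta)n_{i+1}+\delta^{-1}\logl L$. Running the same estimate from the beginning of the orbit — where $\inf_mT_m\ge\logl L$ is attained by time $\le|\logl L|/(\delta-\epsilon)$ — places the first Pliss moment within $O(|\logl L|)$ of the start, and carrying the additive term through the global ladder-point/future-minimum count then gives $i>(1-\bepsilon/\delta)n_i+\delta^{-1}\logl L$.

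The reduction to sequences and the one-sided (classical Pliss) counts are routine, and the density statement $\liminf_i i/n_i>1-\bepsilon/\delta$ falls out of the naive intersection; the real work — and the main obstacle — is the two-sided bookkeeping with the sharp error term. To land exactly on the weight $\delta^{-1}\logl L$ one cannot merely intersect the two one-sided Pliss sets but must exploit the uniform lower bound $T_n\ge(\delta-\epsilon)n+\logl L$ directly, since it controls precisely how long the cocycle takes to climb out of a neighborhood of each of its running minima.
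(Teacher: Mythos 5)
Your reduction is essentially the one the paper intends: the appendix's Pliss counting lemmas (\lemref{plisslem} and \lemref{abspliss}) formalize exactly the ``ladder point / future minimum'' dichotomy you use, under the names direction-reversing and direction-preserving Pliss moments, and your choice $T_n = S_n - (1-\delta)n$ (resp.\ $U_n$ in the horizontal case) is the standard change of variables that turns the $\eta$-homogeneous log-cocycle into a sequence satisfying \eqref{pliss1}/\eqref{pliss2} with $\alpha_1=-(1+\eta)$, $\alpha_2=-(1-\epsilon)$, $\alpha_3=-(1-\delta)$. Plugging these into \lemref{abspliss} reproduces your density $1-\bepsilon/\delta$.

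Where you go beyond a direct citation of the appendix is in the two $L$-dependent displays. \lemref{abspliss} yields only the cumulative count $i\geq \frac{2\alpha_2-\alpha_1-\alpha_3}{\alpha_1-\alpha_3}\,n_i-2$ and says nothing about consecutive gaps $n_{i+1}-n_i$ or about the additive $\delta^{-1}\logl L$ term, so the second half of the proposition does require the argument you sketch: use $T_n\geq(\delta-\epsilon)n+\logl L$ directly to bound how long the cocycle can dwell below a given level after a future minimum. That argument is sound, up to two minor points worth tidying. First, there is an off-by-one shift (with the paper's convention $p_{1+l}=F^l(p_1)$, the moment $n$ is Pliss iff $T_{n-1}$ is simultaneously a record and a future minimum), which does not affect any estimate. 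Second, when you absorb the residual $O(1)$ into $\bepsilon$ at the end, you should note that this is legitimate precisely because both target inequalities are strict and the paper's $\bepsilon$-notation is permitted to eat uniform constants; for very small $n_i$ the absorption only works because the case $n_1=1$ holds trivially under uniform $(L,\epsilon)$-regularity (the starting point itself satisfies the forward bound since $\epsilon<\delta$). With those caveats, the proposal is correct and follows the same route the paper's appendix sets up.
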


\begin{prop}[Frequency of Pliss moments along backward orbit]\label{freq pliss back}
Let $p_{-1} \in \Lambda$. Suppose there exists $E_{p_{-1}}^v$ or $E_{p_{-1}}^h \in \bbP^2_{p_{-1}}$ such that
$$
\liminf_{m \to -\infty} \frac{1}{m}\logl\|DF^m|_{E_{p_{-1}}^v}\| > 1-\epsilon
$$
or
$$
\limsup_{m \to -\infty} \frac{1}{m}\logl\|DF^m|_{E_{p_{-1}}^h}\| <\epsilon.
$$
Let $\{-m_i\}_{i=1}^\infty \subset -\bbN$ be the decreasing set of all moments such that $p_{-m_i}$ is $(\infty, m_i)$-times $(1, \delta)_{v/h}$-regular along $E_{p_{-m_i}}^{v/h}$. Then
$$
\liminf_{i \to \infty}\frac{i}{m_i} > 1- \bepsilon/\delta.
$$
If $p_{-1}$ is infinitely backward $(L, \epsilon)_{v/h}$-regular along $E_{p_{-1}}^{v/h}$ for some $L \geq 1$, then
$$
i> (1-\bepsilon/\delta)m_i + \delta^{-1}\logl L
\matsp{and}
m_i > (1-\bepsilon/\delta)m_{i+1} + \delta^{-1}\logl L.
$$
\end{prop}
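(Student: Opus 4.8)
The plan is to run, with time reversed, the same argument that proves \propref{freq pliss for}. First, by \propref{hom transverse back reg} the statements phrased along $E^v$ and along $E^h$ are interchangeable up to replacing $(L,\epsilon)$ by $(\bL,\bepsilon)$, so it suffices to treat the vertical alternative: suppose there is $E^v_{p_{-1}}\in\bbP^2_{p_{-1}}$ with $\liminf_{m\to-\infty}\tfrac1m\logl\|DF^m|_{E^v_{p_{-1}}}\|>1-\epsilon$. Propagating $E^v_{p_{-1}}$ along the backward orbit (following the convention $E_{p_{m+l}}:=DF^l(E_{p_m})$), I would set $b_j:=\logl\|D_{p_{-j}}F|_{E^v_{p_{-j}}}\|$ for $j\ge1$. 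Since $F$ is $\eta$-homogeneous on $\Lambda$ we have $-\eta<b_j<1+\eta$, and since $\|DF^{-n}|_{E^v_{p_{-1}}}\|=\|DF^n|_{E^v_{p_{-1-n}}}\|^{-1}=\lambda^{-(b_2+\dots+b_{n+1})}$, the hypothesis becomes the Ces\`aro bound $\liminf_{n\to\infty}\tfrac1n\sum_{j=1}^n b_j>1-\epsilon$; moreover, the hypothesis forces $E^v_{p_{-1}}$ to be the strong-stable direction, so in fact $\tfrac1n\sum_{j=1}^n b_j\to1$ by unique ergodicity, a fact that will be needed to get two-sided control.

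Next I would apply the Pliss lemma, in the two-sided (``hyperbolic times'') form used in the proof of \propref{freq pliss for}, to the bounded sequence $(b_j)_{j\ge1}$ with upper bound $A=1+\eta$ and thresholds $1-\delta<1-\epsilon$. This produces a set $\{m_i\}_{i\ge1}\subset\bbN$ of indices at which every backward window of $(b_j)$ ending at $m_i$, and every forward window starting just after $m_i$, has average at least $1-\delta$, and whose lower density is at least $\frac{(1-\epsilon)-(1-\delta)}{(1+\eta)-(1-\delta)}=\frac{\delta-\epsilon}{\eta+\delta}=1-\frac{\eta+\epsilon}{\eta+\delta}$. Since $\baeta<\epsilon$, the term $\eta$ is negligible against $\epsilon$, so this density exceeds $1-\bepsilon/\delta$, giving $\liminf_i i/m_i>1-\bepsilon/\delta$. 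Reading the two window estimates at $m=m_i$ back through the cocycle identities $\|DF^k|_{E^v_{p_{-m_i}}}\|=\lambda^{\,b_{m_i}+\dots+b_{m_i-k+1}}$ (for $1\le k\le m_i$) and $\|DF^{-k}|_{E^v_{p_{-m_i}}}\|=\lambda^{-(b_{m_i+1}+\dots+b_{m_i+k})}$, the first shows $p_{-m_i}$ is $m_i$-times forward $(1,\delta)_v$-regular and the second that it is infinitely backward $(1,\delta)_v$-regular, i.e. $(\infty,m_i)$-times $(1,\delta)_v$-regular along $E^v_{p_{-m_i}}$; hence $\{m_i\}$ lies inside the set named in the statement and the first conclusion follows.

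For the sharpened conclusions when $p_{-1}$ is in addition infinitely backward $(L,\epsilon)_v$-regular, I would feed into the quantitative \emph{finite} Pliss lemma the \emph{exact} partial-sum bound $\sum_{j=1}^n b_j\ge(1-\epsilon)n+\logl L$, valid for every $n\ge1$ rather than only asymptotically: the extra additive slack $\logl L$ turns the counting step into $i>(1-\bepsilon/\delta)m_i+\delta^{-1}\logl L$, and the standard bound on the gap between consecutive hyperbolic times gives $m_i>(1-\bepsilon/\delta)m_{i+1}+\delta^{-1}\logl L$. The $h$-alternative runs identically with $b_j$ replaced by $-b_j^h$, where $b_j^h:=\logl\|D_{p_{-j}}F|_{E^h_{p_{-j}}}\|\in(-\eta,1+\eta)$ by homogeneity, the hypothesis now being an upper Ces\`aro bound $\limsup_n\tfrac1n\sum_{j=1}^n b_j^h<\epsilon$ and $h$-regularity corresponding to upper rather than lower bounds on the windows. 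The main obstacle, exactly as in \propref{freq pliss for}, is that a single family of Pliss moments must simultaneously supply the infinite-sided \emph{and} the finite-sided control built into ``$(\infty,m_i)$-times regular''; this is precisely the two-sided form of the Pliss lemma, and it is the only step where the bookkeeping has to be done with genuine care (it is what the remark $\tfrac1n\sum b_j\to1$ above is meant to enable).
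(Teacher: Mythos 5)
Your overall plan is correct and matches what the paper intends: time-reverse the forward Pliss argument, define $b_j := \logl\|D_{p_{-j}}F|_{E^v_{p_{-j}}}\|$ along the propagated direction, translate the hypothesis into the Ces\`aro bound $\liminf_n \frac1n\sum_j b_j > 1-\epsilon$, and apply the two-sided (absolute) Pliss lemma of Appendix B (after flipping signs, via the remark following \lemref{abspliss}). The defining conditions for $(\infty,m_i)$-times $(1,\delta)_v$-regularity unpack exactly into a direction-reversing Pliss condition over the finite initial window $\{1,\dots,m_i\}$ together with a direction-preserving Pliss condition over the infinite tail $\{m_i+1,\dots\}$, which is precisely what \lemref{abspliss} delivers with $N=\infty$ under hypothesis~\eqref{pliss2}. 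The $h$-alternative follows the same pattern with $-b_j^h$, and the quantitative refinements are the finite-$N$ form of the same lemma with the additive $\logl L$ offset threaded through the counting (standard, and acknowledged as such).

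There is, however, one genuinely misplaced step that you should excise. You write that the Ces\`aro bound actually improves to $\frac1n\sum b_j \to 1$ by unique ergodicity, and you claim that this is ``what enables'' the two-sided bookkeeping. Neither part is right. First, unique ergodicity of $F|_\Lambda$ gives convergence of Birkhoff averages for \emph{continuous} functions on $\Lambda$, whereas $q\mapsto\logl\|D_qF|_{E^v_q}\|$ depends on the propagated Oseledets direction, which is measurable but not continuous, so this convergence is not available for an arbitrary $p_{-1}$ satisfying only the stated $\liminf$ hypothesis. Second, and more to the point, the exact limit is \emph{not needed}: \lemref{abspliss} under~\eqref{pliss2} already produces absolute Pliss moments with density at least $\frac{2\alpha_2-\alpha_1-\alpha_3}{\alpha_1-\alpha_3}$ from just the $\liminf>1-\epsilon$ (equivalently $\limsup<\alpha_2$ after sign flip), so the digression buys nothing and rests on an unsound invocation.

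A smaller bookkeeping point: your density $\frac{(1-\epsilon)-(1-\delta)}{(1+\eta)-(1-\delta)}=\frac{\delta-\epsilon}{\eta+\delta}$ is the \emph{one-sided} Pliss density (\lemref{plisslem}). The two-sided lemma you actually need gives $\frac{2\alpha_2-\alpha_1-\alpha_3}{\alpha_1-\alpha_3}=\frac{\delta-2\epsilon-\eta}{\eta+\delta}$, which is smaller by roughly a factor of two in the numerator's $\epsilon$-term. Both quantities exceed $1-\bepsilon/\delta$ once the $\bepsilon$-convention absorbs the constants and $\eta<\uepsilon$, so your final inequality is unaffected, but the intermediate formula is not the one the argument produces. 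State the two-sided density from the start and drop the unique-ergodicity remark.
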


\begin{lem}\label{conditional pliss full}
Let $p_0 \in \Lambda$ be Pesin $(L, \epsilon)_{v/h}$-regular along $E^{v/h}_{p_0} \in \bbP^2_{p_0}$, and let
$$
n > -\delta^{-1}\logl \bL.
$$
If $p_n$ is $(n, \infty)$-times $(1, \udelta)_{v/h}$-regular along $E^{v/h}_{p_n}$, then $p_n$ is Pesin $(1, \delta)_{v/h}$-regular along $E_{p_n}^{v/h}$. Similarly, if $p_{-n}$ is $(\infty, n)$-times $(1, \udelta)_{v/h}$-regular along $E^{v/h}_{p_{-n}}$, then $p_{-n}$ is Pesin $(1, \delta)_{v/h}$-regular along $E_{p_{-n}}^{v/h}$. 
\end{lem}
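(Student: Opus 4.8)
The plan is to reduce the statement to the multiplicativity of the one-dimensional cocycle $k\mapsto\|DF^{\pm k}|_{E^v_{p_m}}\|$ along the orbit, and from there to a single elementary inequality between exponents. I will carry out the case of the forward iterate $p_n$; the case of $p_{-n}$ is obtained by reversing time, and the $h$-variants by replacing the homogeneous $(\cdot)_v$-inequalities from \secref{sec:homog} by their $(\cdot)_h$-counterparts (the arithmetic comes out the same, see below). Since $F$ is $\eta$-homogeneous I will use the simplified characterisations of $(L,\epsilon)_v$-regularity recalled at the start of \secref{sec:homog}, and the notational convention $E^v_{p_n}=DF^n(E^v_{p_0})$, so that for $a,b\in\bbZ$ one has $\|DF^{a+b}|_{E^v_{p_m}}\|=\|DF^a|_{E^v_{p_{m+b}}}\|\cdot\|DF^b|_{E^v_{p_m}}\|$. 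Note also that no transversality hypothesis is involved here, because the $(L,\epsilon)_{v/h}$ notions are defined along a single direction.

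Two thirds of the conclusion are immediate. For the forward estimate: since $\udelta<\delta$ and $\lambda<1$ we have $\lambda^{(1-\udelta)k}\le\lambda^{(1-\delta)k}$, so the hypothesis that $p_n$ is infinitely forward $(1,\udelta)_v$-regular already gives that it is infinitely forward $(1,\delta)_v$-regular. Likewise, for $1\le k\le n$ the hypothesis that $p_n$ is $n$-times backward $(1,\udelta)_v$-regular gives $\|DF^{-k}|_{E^v_{p_n}}\|\ge\lambda^{-(1-\udelta)k}\ge\lambda^{-(1-\delta)k}$, again because $\udelta<\delta$. So the only thing to prove is the backward estimate for the long steps $k>n$, and this is where the Pesin regularity of $p_0$ enters.

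For $k=n+j$ with $j\ge1$, I would factor $\|DF^{-k}|_{E^v_{p_n}}\|=\|DF^{-j}|_{E^v_{p_0}}\|\cdot\|DF^{-n}|_{E^v_{p_n}}\|$, bound the first factor below by $L^{-1}\lambda^{-(1-\epsilon)j}$ using that $p_0$ is infinitely backward $(L,\epsilon)_v$-regular along $E^v_{p_0}$, and bound the second factor below by $\lambda^{-(1-\udelta)n}$ using the $k=n$ case of the previous paragraph. Multiplying and comparing $L^{-1}\lambda^{-(1-\epsilon)j-(1-\udelta)n}$ with the target $\lambda^{-(1-\delta)(n+j)}$, and recalling that $\lambda^a\ge\lambda^b\iff a\le b$, reduces the desired bound to the numerical inequality $-\logl L\le(\delta-\epsilon)j+(\delta-\udelta)n$. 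Since $\epsilon<\delta$ the $j$-term is nonnegative for $j\ge1$ and may be discarded; and since $\udelta$ is a definite fraction below $\delta$, the gap $\delta-\udelta$ is $\asymp\delta$, so $(\delta-\udelta)n\ge -\logl L$ as soon as $n$ exceeds a fixed multiple of $-\delta^{-1}\logl L$ — which is exactly the hypothesis $n>-\delta^{-1}\logl\bL$, the uniform multiplicative constant being absorbed into the $\bL$-notation (concretely one may take $\bL$ a fixed power of $L$). Assembling the three estimates, $p_n$ is $(\infty,\infty)$-times $(1,\delta)_v$-regular along $E^v_{p_n}$, i.e. Pesin $(1,\delta)_v$-regular.

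I do not expect a serious conceptual obstacle; the only real step is the long-backward-step estimate, and within it the delicate point is purely bookkeeping: making sure that the slack produced by relaxing the marginal exponent from $\udelta$ to $\delta$ (plus the free slack $\delta-\epsilon$ inherited from $p_0$) is exactly enough to absorb the regularity factor $L$ of $p_0$ into the clean constant $1$ for $p_n$, with the threshold on $n$ coming out in precisely the stated form. One should also treat the edge step $k=n$ consistently (it falls under the ``short steps'' case), check that the identical computation with the inequalities reversed handles $p_{-n}$, and verify that interchanging the $v$- and $h$-inequalities from \secref{sec:homog} (forward lower bounds $\leftrightarrow$ backward upper bounds) yields exactly the same inequality $-\logl L\le(\delta-\epsilon)j+(\delta-\udelta)n$ for the $h$-case.
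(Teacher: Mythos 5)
Your proof is correct. The three-case split (forward immediate, short backward $k\le n$ immediate, long backward $k=n+j$ by the cocycle factorization through $p_0$) is the natural and only sensible decomposition, and the key inequality $-\logl L\le(\delta-\epsilon)j+(\delta-\udelta)n$ is derived correctly; discarding the nonnegative $j$-term and using $\delta-\udelta\asymp\delta$ recovers the stated threshold $n>-\delta^{-1}\logl\bL$ up to the uniform constants that the $\bL$-notation absorbs. The paper omits the proof of this lemma entirely (it is stated between Propositions~\ref{freq pliss back} and~\ref{freq pliss full} with no argument given), so there is nothing to compare against; your argument fills that gap in the way it was surely intended. The symmetric treatment of $p_{-n}$ and the $h$-case is also carried out correctly — both lead to the identical numerical inequality, as you say.
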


\begin{prop}[Frequency of Pliss moments along full orbit]\label{freq pliss full}
Let $p_0 \in \Lambda$. Suppose there exists $E_{p_0}^v$ or $E_{p_0}^h \in \bbP^2_{p_0}$ such that
$$
\liminf_{|m| \to \infty} \frac{1}{m}\logl\|DF^m|_{E_{p_0}^v}\| > 1-\epsilon
$$
or
$$
\limsup_{|m| \to \infty} \frac{1}{m}\logl\|DF^m|_{E_{p_0}^h}\| <\epsilon.
$$
Let $\{m_i\}_{i=-\infty}^\infty \subset \bbZ$ be the increasing set of all moments with
$$
m_n \geq 0
\matsp{and}
m_{-n} \leq 0
\matsp{for}
n \geq 0,
$$
such that $p_{m_i}$ is Pesin $(1, \delta)_{v/h}$-regular along $E_{p_{m_i}}^{v/h}$. Then
$$
\liminf_{|i| \to \infty}\frac{i}{m_i} > 1- \bepsilon/\delta.
$$
\end{prop}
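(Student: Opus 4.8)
The plan is to combine the one-sided density estimates \propref{freq pliss for} and \propref{freq pliss back} with \lemref{conditional pliss full}; the last of these is what converts the (abundant) forward and backward Pliss moments into genuine two-sided Pesin $(1,\delta)_{v/h}$-regular moments. The first step is to observe that the standing hypothesis already makes the point $p_0$ itself Pesin regular, with some non-uniform constant.

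Suppose the $v$-alternative of the hypothesis holds, i.e.\ there is $E^v_{p_0}\in\bbP^2_{p_0}$ with $\liminf_{|m|\to\infty}\tfrac1m\logl\|DF^m|_{E^v_{p_0}}\|>1-\epsilon$. Reading this off along $m\to+\infty$ gives $\|DF^n|_{E^v_{p_0}}\|\le L_0\lambda^{(1-\epsilon)n}$ for all $n\ge1$, and reading it off along $m\to-\infty$ gives $\|DF^{-n}|_{E^v_{p_0}}\|\ge L_0^{-1}\lambda^{-(1-\epsilon)n}$ for all $n\ge1$, where $L_0\ge1$ absorbs the finitely many initial moments. By homogeneity these are exactly the conditions \eqref{eq:for reg} and \eqref{eq:back reg} for $M=N=\infty$, so $p_0$ is Pesin $(L_0,\epsilon)_v$-regular along $E^v_{p_0}$; the $h$-alternative is identical, the $\limsup$ condition forcing near-neutrality of $E^h_{p_0}$ both forwards and backwards. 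In either case $p_0$ satisfies the hypothesis of \lemref{conditional pliss full}.

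I would then run \propref{freq pliss for} along the forward orbit of $p_0$, but with its target regularity exponent set to $\udelta$ rather than $\delta$; this is legitimate because $\epsilon$ is taken small enough relative to $\delta$ that $\baeta<\epsilon<\udelta<\delta$, and it is needed because the slack between $\udelta$ and $\delta$ is precisely what lets \lemref{conditional pliss full} absorb the constant $L_0$. This produces an increasing sequence $\{n_i\}_{i\ge1}\subset\bbN$ at which $p_{n_i}$ is $(n_i,\infty)$-times $(1,\udelta)_{v/h}$-regular along $E^{v/h}_{p_{n_i}}$, with $\liminf_{i\to\infty} i/n_i>1-\bepsilon/\udelta$. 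Applying \lemref{conditional pliss full} with base point $p_0$ to each $n_i$ exceeding the $L_0$-threshold of that lemma promotes $p_{n_i}$ to a Pesin $(1,\delta)_{v/h}$-regular point, and dropping the finitely many $n_i$ below the threshold does not change the density. Symmetrically, \propref{freq pliss back} and the backward half of \lemref{conditional pliss full} produce a decreasing sequence $\{-m_j\}_{j\ge1}\subset-\bbN$ of Pesin $(1,\delta)_{v/h}$-regular moments with $\liminf_{j\to\infty} j/m_j>1-\bepsilon/\udelta$.

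Finally I would assemble the two halves. Writing $S\subset\bbZ$ for the set of all $m$ at which $p_m$ is Pesin $(1,\delta)_{v/h}$-regular along $E^{v/h}_{p_m}$, and $\{m_i\}_{i\in\bbZ}$ for its enumeration normalized by $m_n\ge0$ and $m_{-n}\le0$ for $n\ge0$, the inclusions $S\supset\{n_i\}_{i\ge i_0}$ and $S\supset\{-m_j\}_{j\ge j_0}$ (for suitable $i_0,j_0$) give, on comparing the enumerations, $\liminf_{i\to+\infty} i/m_i\ge\liminf_{i\to\infty} i/n_i>1-\bepsilon/\udelta$, and symmetrically on the negative side. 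Since $1-\bepsilon/\udelta$ is of the form $1-\bepsilon/\delta$ under our conventions (the bounded power and coefficient coming from $\udelta=\underline{\delta}$ being absorbed into $\bepsilon$), this is exactly $\liminf_{|i|\to\infty} i/m_i>1-\bepsilon/\delta$. The only part that demands real care is the bookkeeping of the nested exponents $\baeta<\epsilon<\udelta<\delta$, which must be arranged so that \propref{freq pliss for}, \propref{freq pliss back} and \lemref{conditional pliss full} are simultaneously applicable and so that the two one-sided densities combine to the stated bound (a naive intersection of the forward-regular and backward-regular moment sets would lose a factor of two that the $\bepsilon$-notation harmlessly absorbs); producing the Pesin regular base point and passing from a positive lower density of good moments to the $\liminf i/m_i$ estimate are routine.
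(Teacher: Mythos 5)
Your proposal is correct and follows the route the paper intends: recognize that the hypothesis makes $p_0$ Pesin $(L_0,\epsilon)_{v/h}$-regular, run \propref{freq pliss for} and \propref{freq pliss back} with the tighter exponent $\udelta$ to get one-sided Pliss moments of density $>1-\bepsilon/\udelta$, and then invoke \lemref{conditional pliss full} to promote each such moment (past the $L_0$-dependent threshold $n>-\delta^{-1}\log_\lambda\bL$) to a Pesin $(1,\delta)_{v/h}$-regular moment. The passage from $\bepsilon/\udelta$ to $\bepsilon/\delta$ is absorbed by the paper's $\bepsilon$ convention, exactly as you note.
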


The numbers $n_i$ in \propref{freq pliss for}, $-m_i$ in \propref{freq pliss back} and $m_i$ in \propref{freq pliss full} are referred to as {\it Pliss moments}.

Define
$$
\Lambda^\pm_\epsilon := \bigcup_{L \geq 1} \Lambda^\pm_{L, \epsilon}
\matsp{and}
\Lambda^P_\epsilon := \bigcup_{L \geq 1} \Lambda^P_{L, \epsilon}.
$$
Note that these sets are totally invariant. Also, recall that $\Lambda^\pm_{\Ly}$ and $\Lambda^P_{\Ly}$ denote the set of forward/backward and Pesin Lyapunov regular points in $\Lambda$ respectively.

\begin{prop}\label{lya reg}
We have
$
\Lambda^\pm_{\Ly}\subset \Lambda^\pm_\epsilon
$ and $
\Lambda^P_{\Ly} \subset \Lambda^P_{\epsilon}.
$
\end{prop}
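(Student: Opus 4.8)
The plan is to check, directly from the definitions, that a Lyapunov regular point (forward, backward, or Pesin) lies in $\Lambda^\pm_{L,\epsilon}$ (resp. $\Lambda^P_{L,\epsilon}$) for a suitably large regularity factor $L$ depending on the point. Start with the forward inclusion $\Lambda^+_{\Ly}\subset\Lambda^+_\epsilon$, and let $p\in\Lambda^+_\Ly$. Forward Lyapunov regularity provides the forward Oseledets filtration at $p$; since the exponents are $0$ and $\log\lambda<0$, there is a one-dimensional subspace $E^{ss}_p\in\bbP^2_p$ along which $\tfrac1n\log\|D_pF^n|_{E^{ss}_p}\|\to\log\lambda$, and---crucially, this is part of the Lyapunov regularity hypothesis, not merely of the existence of exponents---the volume growth is regular, $\tfrac1n\log\Jac_pF^n\to\log\lambda$. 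Writing $a_n:=\|D_pF^n|_{E^{ss}_p}\|$ and $J_n:=\Jac_pF^n$, this says $\tfrac1n\logl a_n\to1$ and $\tfrac1n\logl J_n\to1$. Hence for each of the two exponents $s\in\{-r,r-1\}$ occurring in \eqref{eq:for reg} we have $\tfrac1n\logl\!\big(J_n^{\,s}a_n^{1-s}\big)=s\,\tfrac{\logl J_n}{n}-(s-1)\tfrac{\logl a_n}{n}\to 1$, so there is $N_0$ with $(1-\epsilon)n<\logl\!\big(J_n^{\,s}a_n^{1-s}\big)<(1+\epsilon)n$ for all $n\ge N_0$ and both $s$, i.e.\ \eqref{eq:for reg} holds with $L=1$ for $n\ge N_0$. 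The finitely many values of $J_n^{\,s}a_n^{1-s}$ with $1\le n<N_0$ are then absorbed into a single constant $L\ge1$, after which $p$ is infinite-time forward $(L,\epsilon)_v$-regular along $E^{ss}_p$, so $p\in\Lambda^+_{L,\epsilon}\subset\Lambda^+_\epsilon$.

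The backward inclusion $\Lambda^-_{\Ly}\subset\Lambda^-_\epsilon$ runs in exactly the same way, this time through the horizontal condition \eqref{eq:hor back reg}. Here I would use that at a backward Lyapunov regular $p$ the \emph{center} direction $E^c_p$ is the unique one along which $\|D_pF^{-m}|_{E^c_p}\|$ grows subexponentially (the strong-stable direction is expanded at rate $-\log\lambda>0$ under $F^{-1}$, so backward it is the fast direction), and that regularity of the cocycle of $F^{-1}$ gives $\tfrac1m\log\|D_pF^{-m}|_{E^c_p}\|\to0$ together with $\tfrac1m\log\Jac_pF^{-m}\to-\log\lambda$. With $b_m:=\|D_pF^{-m}|_{E^c_p}\|$ and $\hat J_m:=\Jac_pF^{-m}$, for each $s\in\{-r+1,r\}$ one gets $\tfrac1m\logl\!\big(\hat J_m b_m^{-(s+1)}\big)\to-1$, and the same cofinite-then-finite argument yields an $L$ with $p$ infinite-time backward $(L,\epsilon)_h$-regular along $E^c_p$; thus $p\in\Lambda^-_{L,\epsilon}\subset\Lambda^-_\epsilon$.

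Finally, if $p\in\Lambda^P_\Ly$ then $p$ is simultaneously forward and backward Lyapunov regular, and its forward and backward Oseledets splittings coincide, so $E^{ss}_p$ and $E^c_p$ are two distinct lines enclosing a definite angle $\theta>0$. By the two preceding paragraphs $p\in\Lambda^+_{L,\epsilon}\cap\Lambda^-_{L,\epsilon}$ for some $L$; enlarging $L$ so that also $1/L<\theta$ gives $\measuredangle(E^{ss}_p,E^c_p)>1/L$, hence $p$ is Pesin $(L,\epsilon)$-regular in the sense of the remark preceding \propref{classic pesin}, i.e.\ $p\in\Lambda^P_{L,\epsilon}\subset\Lambda^P_\epsilon$.

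The only point requiring real care is invoking the correct notion of Lyapunov regularity---the one that includes regularity of the determinant (volume) growth, not just the existence of exponents along individual vectors---since that is precisely what controls the $\Jac_pF^{\pm n}$ factors appearing in \eqref{eq:for reg} and \eqref{eq:hor back reg}; a secondary bookkeeping subtlety is keeping straight that in the backward estimates it is $E^{ss}_p$ that plays the ``vertical'' role and $E^c_p$ the ``horizontal'' one. Everything else is an elementary $\epsilon$--$N$ estimate, with the finitely many initial times folded into the constant $L$.
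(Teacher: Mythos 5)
The paper states \propref{lya reg} without proof (it is one of several assertions deferred to the companion paper on quantitative Pesin theory, \cite{CLPY1}). Your argument is correct and is essentially the expected one: from the Lyapunov-regular limits $\tfrac1n\log\|DF^n|_{E^{ss}_p}\|\to\log\lambda$ and $\tfrac1n\log\Jac_pF^n\to\log\lambda$ (and their backward analogues), the quantities $J_n^{\,s}a_n^{1-s}$ in \eqref{eq:for reg}, resp.\ $\hat J_m b_m^{-(s+1)}$ in \eqref{eq:hor back reg}, have $\logl$-average tending to $1$, resp.\ $-1$, so the two-sided bounds hold with $L=1$ for all $n$ beyond some $N_0(p)$, and the finitely many initial ratios are then packaged into a constant $L\ge 1$; the Pesin inclusion follows because for a Pesin Lyapunov-regular point the forward and backward Oseledets data form a genuine splitting $E^{ss}_p\oplus E^c_p$ and hence give a positive angle, which one folds into $L$ as well.

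Two things you flag are indeed the only points needing care, and you handle both correctly. First, the Jacobian regularity $\tfrac1n\log\Jac_pF^{\pm n}\to\pm\log\lambda$ is genuinely part of the Lyapunov-regularity hypothesis (it is the sum-of-exponents statement of the multiplicative ergodic theorem) and is precisely what is needed to control the $\Jac_pF^{\pm n}$ terms in \eqref{eq:for reg}--\eqref{eq:hor back reg}; this is not automatic from exponents along individual vectors. Second, the blocks $\Lambda^\pm_{L,\epsilon}$ are by convention (the remark preceding \propref{classic pesin}) defined via the \emph{vertical} condition \eqref{eq:for reg} along $E^{ss}_p$ for the forward block and the \emph{horizontal} condition \eqref{eq:hor back reg} along $E^c_p$ for the backward block, which is exactly the pair you verify. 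The argument does not in fact use the $\eta$-homogeneity standing in Section~\ref{sec:homog}, which makes it slightly more robust than strictly necessary; that is harmless.
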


\begin{prop}\label{get point reg}
Let $p_0\in \Lambda$ be a Lyapunov forward/backward/Pesin regular point. Then there exists $N = N(p_0) \geq 0$ such that for $n \geq N$, the point $p_0$ is infinitely forward/backward/Pesin $(1, \epsilon)$-regular for the map $F^n$.
\end{prop}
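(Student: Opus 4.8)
The plan is to upgrade the conclusion of \propref{lya reg} --- which only yields a regularity factor $L$ that may be large --- to the sharp factor $1$ by passing to a sufficiently high iterate $F^n$. The mechanism is that a Lyapunov regular orbit satisfies \emph{exact} exponential asymptotics, not merely one-sided bounds, and that every power of $G:=F^n$ is $G^k=F^{nk}$ with $nk\ge n\ge N$; so a statement of the form ``the relevant ratio has exponential rate $\log\lambda$'' becomes, for the system $G$, a genuine two-sided estimate holding at \emph{every} iterate, with constant $1$ and multiplier $\lambda_G=\lambda^n$.

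Consider first the forward case. By \propref{lya reg}, $p_0\in\Lambda^+_{L,\epsilon}$ for some $L\ge1$, so the strong-stable direction $E^{ss}_{p_0}$ is well defined; Lyapunov forward regularity forces its exponent to be $\log\lambda$, i.e.\ $\frac1m\log\|DF^m|_{E^{ss}_{p_0}}\|\to\log\lambda$, and the determinant condition in the definition of forward regularity (equivalently, the rates $0$ and $\log\lambda$ of the two singular values of $D_{p_0}F^m$) gives $\frac1m\log\Jac_{p_0}F^m\to\log\lambda$. Hence, for $s\in\{-r,r-1\}$, the quantity occurring in \eqref{eq:for reg},
$$
\phi^+_s(m)\ :=\ \frac{(\Jac_{p_0}F^m)^{s}}{\|DF^m|_{E^{ss}_{p_0}}\|^{\,s-1}},
$$
has exponential rate $[\,s-(s-1)\,]\log\lambda=\log\lambda$, independently of $s$. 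Since $\log\lambda<0$, the number $\log\lambda$ lies strictly between $(1+\epsilon)\log\lambda$ and $(1-\epsilon)\log\lambda$, so there is $N=N(p_0)$ with
$$
\lambda^{(1+\epsilon)m}\ <\ \phi^+_s(m)\ <\ \lambda^{(1-\epsilon)m}\qquad\text{for all }m\ge N,\ s\in\{-r,r-1\}.
$$
Fix $n\ge N$, write $G:=F^n$, and apply this with $m=nk$ for each $k\ge1$: using $\Jac_{p_0}G^k=\Jac_{p_0}F^{nk}$, $\|DG^k|_{E^{ss}_{p_0}}\|=\|DF^{nk}|_{E^{ss}_{p_0}}\|$, and $nk\ge n\ge N$, we obtain $(\lambda^n)^{(1+\epsilon)k}<(\Jac_{p_0}G^k)^{s}/\|DG^k|_{E^{ss}_{p_0}}\|^{\,s-1}<(\lambda^n)^{(1-\epsilon)k}$ for every $k\ge1$ and $s\in\{-r,r-1\}$, which is exactly \eqref{eq:for reg} for $G=F^n$ with regularity factor $1$ and $E^+_{p_0}=E^{ss}_{p_0}$. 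Thus $p_0$ is infinitely forward $(1,\epsilon)$-regular for $F^n$.

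The backward case is the same argument run for $F^{-1}$: for a Lyapunov backward regular $p_0$ the center direction $E^c_{p_0}$ (the unique direction with backward exponent $0$) is well defined, $\frac1m\log\|DF^{-m}|_{E^c_{p_0}}\|\to0$ and $\frac1m\log\Jac_{p_0}F^{-m}\to-\log\lambda$, so for $s\in\{-r+1,r\}$ the quantity $\Jac_{p_0}F^{-m}/\|DF^{-m}|_{E^c_{p_0}}\|^{\,s+1}$ in \eqref{eq:hor back reg} has rate $-\log\lambda$ regardless of $s$, and evaluating at $m=nk$ gives \eqref{eq:hor back reg} for $F^n$ with factor $1$ and $\tiE^-_{p_0}=E^c_{p_0}$. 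If $p_0$ is Lyapunov Pesin regular, both arguments apply, and $\measuredangle(E^{ss}_{p_0},E^c_{p_0})>0$ is a fixed quantity unaffected by passing to $F^n$; so for $n\ge N$ the point $p_0$ is forward $(1,\epsilon)_v$-regular along $E^{ss}_{p_0}$ and backward $(1,\epsilon)_h$-regular along $E^c_{p_0}$ for $F^n$ with transverse stable and center directions, i.e.\ Pesin $(1,\epsilon)$-regular for $F^n$ (one may route this through \propref{hom for back reg} to bundle the vertical and horizontal estimates).

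The one step that is not purely formal is the exponent bookkeeping: one must check that the composite ratios in \eqref{eq:for reg}--\eqref{eq:hor back reg} --- $(\Jac F^m)^s/\|DF^m|_E\|^{s-1}$ and its backward/horizontal analogue --- carry Lyapunov exponent exactly $\log\lambda$ (resp.\ $-\log\lambda$) along the relevant invariant direction. This is precisely where one uses that $p_0$ is \emph{Lyapunov} regular (the defining limits exist, together with the determinant identity), rather than merely $(L,\epsilon)$-regular as in \propref{lya reg}. Everything after that is the elementary remark that $nk\ge n\ge N$, which converts ``rate $=\log\lambda$'' into a uniform bound with constant $1$ for the system $F^n$; I do not anticipate any further obstacle.
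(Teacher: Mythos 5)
Your argument is correct, and since the paper leaves this proposition unproved it is also essentially the intended one. The two ingredients you isolate are exactly right: (i) Lyapunov regularity of $p_0$ (in the Oseledets sense, including the determinant/exterior-power part) upgrades the one-sided bounds of Proposition~\ref{lya reg} to \emph{exact} asymptotics, so that for each $s$ the combination appearing in \eqref{eq:for reg} has exponential rate exactly $\log\lambda$, strictly between $(1+\epsilon)\log\lambda$ and $(1-\epsilon)\log\lambda$; and (ii) once you have a two-sided sandwich for all $m\ge N$, replacing $F$ by $G:=F^n$ with $n\ge N$ makes \emph{every} iterate $G^k=F^{nk}$ land in the good range $nk\ge N$, which is what collapses the regularity factor to $1$ (with the natural replacement $\lambda\rightsquigarrow\lambda^n$). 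The exponent bookkeeping $s\log\lambda-(s-1)\log\lambda=\log\lambda$ (resp.\ $-\log\lambda$ for the backward/horizontal quantity) is where the structure of \eqref{eq:for reg}--\eqref{eq:hor back reg} is used, and you verify this correctly.

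One caveat worth flagging concerns the Pesin case, which you gloss with ``with transverse stable and center directions, i.e.\ Pesin $(1,\epsilon)$-regular.'' Taken literally, the paper's definition of Pesin $(L,\epsilon)$-regularity also requires $\measuredangle(E^{ss}_{p_0},E^c_{p_0})>1/L$, and for $L=1$ this asks for angle $>1$ radian, which is certainly not guaranteed for an arbitrary Lyapunov--Pesin regular point; moreover the angle is invariant under replacing $F$ by $F^n$, so passing to an iterate cannot help here. In practice this is an imprecision of the paper's bookkeeping rather than of your argument --- the factor ``$1$'' should be read as ``$O(1)$'' in the paper's $\bkappa$-conventions, or the Pesin statement should be routed through Proposition~\ref{hom for back reg} (as you suggest), which then returns a regularity factor of order $1/\theta^2$ with $\theta=\measuredangle(E^{ss}_{p_0},E^c_{p_0})>0$ fixed and depending on $p_0$, consistent with $N=N(p_0)$. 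It would be cleaner to state the Pesin conclusion in exactly that form and note that the subsequent applications (e.g.\ Proposition~\ref{get crit reg}, where $E^{ss}=E^c$ and only the separate forward and backward cases are invoked) only need the forward and backward $(1,\epsilon)$-regularity you actually prove.
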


\begin{prop}\label{uni erg pos meas}
Suppose that $\mu$ is the unique invariant probability measure for $F|_\Lambda$. Then
$$
\mu(\Lambda^{\pm}_{1, \epsilon}) > \frac{\epsilon}{\epsilon+\eta}
\matsp{and}
\mu(\Lambda^P_{1, \epsilon}) > \frac{\epsilon - \eta}{\epsilon+\eta}.
$$
\end{prop}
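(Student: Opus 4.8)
The plan is to reduce both inequalities to a single quantitative Pliss estimate for a bounded observable, and then to deduce the Pesin bound from the forward and backward ones by inclusion--exclusion. First I would record what unique ergodicity forces: $\mu$ is ergodic, so by the Oseledets theorem there is a totally invariant full-measure set $\Lambda_{\Ly}\subseteq\Lambda$ of Lyapunov regular points carrying a measurable splitting $E^{ss}_p\oplus E^c_p$ with $\tfrac1n\log\|DF^n|_{E^{ss}_p}\|\to\log\lambda$ and $\tfrac1n\log\|DF^n|_{E^c_p}\|\to0$; by \propref{lya reg} the strong stable and center directions that enter the definitions of $\Lambda^\pm_{1,\epsilon}$ are then defined $\mu$-a.e. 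I would work with the bounded measurable functions
$$
\phi(p):=\logl\|D_pF|_{E^{ss}_p}\|,\qquad \psi(p):=\logl\|D_pF|_{E^c_p}\|\qquad(p\in\Lambda_{\Ly}).
$$
Applying $\logl$ to $\eta$-homogeneity condition i) gives $-\eta<\phi,\psi<1+\eta$ pointwise; and since $E^{ss},E^c$ are one-dimensional $DF$-invariant line fields, Birkhoff's theorem identifies $\int\phi\,d\mu$ and $\int\psi\,d\mu$ with $(\log\lambda)^{-1}$ times the respective Lyapunov exponents, hence $\int\phi\,d\mu=1$ and $\int\psi\,d\mu=0$.

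Next I would rewrite the regularity blocks as level sets of partial sums. Unwinding the homogeneous definitions of \secref{sec:homog} with $L=1$, and using $\log\|DF^{\pm n}|_E\|=\sum\log\|DF^{\pm1}|_E\|$ along an invariant line, one gets, up to the $\mu$-null set $\Lambda\setminus\Lambda_{\Ly}$,
$$
\Lambda^+_{1,\epsilon}=\Bigl\{p:\ \textstyle\sum_{k=0}^{n-1}\phi(F^kp)\ge(1-\epsilon)n\ \text{ for all }n\ge1\Bigr\},
$$
$$
\Lambda^-_{1,\epsilon}=\Bigl\{p:\ \textstyle\sum_{k=1}^{n}\psi(F^{-k}p)\le\epsilon\,n\ \text{ for all }n\ge1\Bigr\}.
$$
For $\mu(\Lambda^+_{1,\epsilon})$ I would invoke the deterministic Pliss lemma --- if $a_1,\dots,a_N\le H$ have mean $\bar a>c$, then at least $\tfrac{\bar a-c}{H-c}N$ indices $\ell\in\{0,\dots,N-1\}$ satisfy $\sum_{i=\ell+1}^m a_i\ge c(m-\ell)$ for all $m\in(\ell,N]$, and symmetrically for lower bounds --- applied to $a_i=\phi(F^{i-1}p)$ with $H=1+\eta$, $c=1-\epsilon$. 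For $\mu$-a.e.\ $p$ the means tend to $\int\phi\,d\mu=1>c$; any such Pliss index $\ell\le N-M$ certifies that $F^\ell p$ is $M$-times forward $(1,\epsilon)_v$-regular along $E^{ss}$; and letting $N\to\infty$ (Birkhoff applied to the indicator of the set $A_M$ of $M$-times forward regular points, from which at most $M$ indices are lost near the right endpoint) gives $\mu(A_M)\ge\tfrac{\epsilon}{\epsilon+\eta}$ for every $M$. Since $A_M\downarrow\Lambda^+_{1,\epsilon}$, this yields $\mu(\Lambda^+_{1,\epsilon})\ge\tfrac{\epsilon}{\epsilon+\eta}$. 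The mirror-image argument, applied to $a_i=\psi(F^{-i}p)$ with lower bound $-\eta$ and threshold $\epsilon$ and Birkhoff for $F^{-1}$, gives $\mu(\Lambda^-_{1,\epsilon})\ge\tfrac{\epsilon}{\epsilon+\eta}$. In both cases strictness follows from the strictness of the homogeneity bounds, which keeps $\phi$ and $\psi$ off the extreme values that would make the Pliss count tight.

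Finally, for the Pesin block I would note $\Lambda^+_{1,\epsilon}\cap\Lambda^-_{1,\epsilon}\subseteq\Lambda^P_{1,\epsilon}$: a point forward $(1,\epsilon)$-regular along $E^{ss}_p$ and backward $(1,\epsilon)$-regular along $E^c_p$ has $\measuredangle(E^{ss}_p,E^c_p)$ uniformly bounded below, since otherwise $E^c_p$ would inherit the forward contraction of its $E^{ss}_p$-component and break its backward bound --- the mechanism of \propref{hom for back reg}. Hence, by inclusion--exclusion,
$$
\mu(\Lambda^P_{1,\epsilon})\ \ge\ \mu(\Lambda^+_{1,\epsilon})+\mu(\Lambda^-_{1,\epsilon})-1\ >\ \frac{2\epsilon}{\epsilon+\eta}-1\ =\ \frac{\epsilon-\eta}{\epsilon+\eta}.
$$
The step I expect to be the main obstacle is the clean passage from the finite, window-dependent Pliss count to the infinite-time blocks $\Lambda^\pm_{1,\epsilon}$ while preserving the sharp density $\tfrac{\epsilon}{\epsilon+\eta}$; interpolating through the auxiliary sets $A_M$ (and the bookkeeping of the $\le M$ discarded indices) is designed precisely to handle this. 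A secondary point is to confirm that the two equivalent conventions for backward regularity in \secref{sec:homog} --- the $v$-condition along $E^{ss}$ and the $h$-condition along $E^c$ --- both lead to the constant $\tfrac{\epsilon}{\epsilon+\eta}$, the first through the upper homogeneity bound and the second through the lower one.
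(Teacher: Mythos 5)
Your overall strategy -- Birkhoff to compute $\int\logl\|DF|_{E^{ss}}\|\,d\mu=1$ and $\int\logl\|DF|_{E^{c}}\|\,d\mu=0$, and then a quantitative Pliss count using the $\eta$-homogeneity bounds $-\eta<\phi,\psi<1+\eta$ -- is exactly what the constants $\tfrac{\epsilon}{\epsilon+\eta}$ and $\tfrac{\epsilon-\eta}{\epsilon+\eta}$ come from, and those formulas are precisely what \lemref{plisslem} and \lemref{abspliss} in the appendix were set up to produce. The computations of the forward and backward bounds, including the approximation by the finite-time sets $A_M$, are correct, and the observation that strictness comes from the slack in the homogeneity constants is the right way to get $>$ rather than $\ge$.

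The step I would not accept as written is the Pesin inclusion. You assert $\Lambda^+_{1,\epsilon}\cap\Lambda^-_{1,\epsilon}\subseteq\Lambda^P_{1,\epsilon}$ and then invoke inclusion--exclusion, but with the conventions of the remark following \propref{ss c contin}, $\Lambda^-_{1,\epsilon}$ is backward $h$-regularity along $E^c_p$, $\Lambda^+_{1,\epsilon}$ is forward $v$-regularity along $E^{ss}_p$, and membership in $\Lambda^P_{1,\epsilon}$ additionally requires the angle bound $\measuredangle(E^{ss}_p,E^c_p)>1/L=1$. The mechanism you cite (\propref{hom for back reg}) in fact runs the other way: it says that forward $v$-regularity plus backward $h$-regularity with angle $\theta$ gives a \emph{combined} Pesin constant $\cL$ with $k/\theta^2<\cL<\bL/\theta^2$, so a small angle inflates $\cL$ rather than being excluded. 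Your heuristic that a small angle would ``break the backward bound'' would require a lower bound on $\|DF^{-n}|_{E^{ss}_p}\|$, which is the backward $v$-regularity of $p$ along $E^{ss}_p$ and is not implied by $p\in\Lambda^+_{1,\epsilon}\cap\Lambda^-_{1,\epsilon}$. So as it stands the inclusion is a gap, not a shortcut.

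The fix is cheap and already in the paper's toolkit: do not split into two observables. Work throughout with the single sequence $a_i=\logl\|D_{F^ip}F|_{E^{ss}_{F^ip}}\|$, for which \lemref{plisslem} gives a lower density $\tfrac{\epsilon}{\epsilon+\eta}$ of direction-preserving and of direction-reversing Pliss moments (yielding $\mu(\Lambda^\pm_{1,\epsilon})$), and \lemref{abspliss} gives a lower density $\tfrac{2\alpha_2-\alpha_1-\alpha_3}{\alpha_1-\alpha_3}=\tfrac{\epsilon-\eta}{\epsilon+\eta}$ of \emph{absolute} Pliss moments, which are exactly the Pesin $(1,\epsilon)_v$-regular points (forward and backward along $E^{ss}$ simultaneously). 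This bypasses inclusion--exclusion and bypasses the need for any transversality argument. If one insists on reading $\Lambda^P_{1,\epsilon}$ with the $E^c$/angle formulation, one then appeals to \propref{hom transverse back reg} and \propref{hom for back reg} to pass between the $v$- and $h$-formulations, but then the constants $1$ and $\epsilon$ degrade to $\bL$ and $\bepsilon$, and your inequality as stated would need that degradation to be tracked; the absolute-Pliss reading of $\Lambda^P_{1,\epsilon}$ is the one for which the stated constant $\tfrac{\epsilon-\eta}{\epsilon+\eta}$ comes out clean.
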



\section{Critical Orbits}\label{sec:crit orb} 

Let $F : \Omega \to F(\Omega) \Subset \Omega$ be the diffeomorphism considered in \secref{sec:quant pesin} (not necessarily homogeneous). For $p \in \Lambda$ and $n \in \bbN$, let $r_p : \bbR/\bbZ \to \bbR^+$ and $\theta_p : \bbR/\bbZ \to \bbR/\bbZ$ be given by
$$
r_p(t)(\cos(2\pi\theta_p(t)), \sin(2\pi\theta_p(t))) := D_pF(\cos(2\pi t), \sin(2\pi t)).
$$
For
$$
E_p^t = \{l(\cos(2\pi t) , \sin(2\pi t)) \; | \; l \in \bbR\} \in \bbP^2_p,
$$
the {\it projective derivative of $F$ at $E_p^t$} is defined as
$$
\partial_\bbP F(E_p^t) := \theta_p'(t) = \frac{\Jac_pF}{\|D_pF|_{E_p^t}\|^2}.
$$

\begin{defn}\label{def general crit point}
A point $c \in \Lambda$ is a {\it (general) critical point} if there exists a tangent direction $E_c^* \in \bbP_c^2$ such that
$$
\partial_\bbP F^{\pm n}(E_c^*) \geq 1
\matsp{for}
n \in \bbN.
$$
In this case, $E_c^*$ is referred to as a {\it (general) critical direction at $c$}.
\end{defn}

Related to critical point, the following theorem is proved in \cite{CP2}.

\begin{thm}\label{dom split obs}
The set $\Lambda$ admits a dominated splitting if and only if it does not contain any critical point of $F$.
\end{thm}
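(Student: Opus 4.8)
The plan is to prove the two implications of \thmref{dom split obs} separately: the implication ``a dominated splitting forbids critical points'' is a short computation, while the converse (in contrapositive form) is the substantive half and is the content of \cite{CP2}, which I would recall as outlined below. Throughout I would lean on the two elementary facts about the projective derivative recorded before \defnref{def general crit point}: the multiplicativity $\partial_\bbP F^n(E_p)=\prod_{i=0}^{n-1}\partial_\bbP F\big(DF^i(E_p)\big)$, and the closed formula $\partial_\bbP F^n(E_p)=\Jac_pF^n/\|D_pF^n|_{E_p}\|^2$. Writing $\sigma_1\ge\sigma_2>0$ for the singular values of $D_pF^n$, the latter gives $\partial_\bbP F^n(E_p)\in[\sigma_2/\sigma_1,\ \sigma_1/\sigma_2]$ as the line varies, so the direction most contracted by $D_pF^n$ is exactly the one along which $\partial_\bbP F^n\ge1$; and $\partial_\bbP F^{-k}(w)\ge1$ is equivalent, via the chain rule applied to $F^{-1}\circ F=\mathrm{id}$, to $\prod_{i=1}^k\partial_\bbP F(DF^{-i}(w))\le1$.

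For ``dominated splitting $\Rightarrow$ no critical point'', let $T_\Lambda\bbR^2=E\oplus E'$ be the splitting with $E$ dominated: there are $C\ge1$, $\lambda_0\in(0,1)$ with $\|D_pF^n|_{E_p}\|/\|D_pF^n|_{E'_p}\|\le C\lambda_0^n$ for all $p\in\Lambda$, $n\ge0$, and $\measuredangle(E_p,E'_p)$ is bounded below, so $\Jac_pF^n\asymp\|D_pF^n|_{E_p}\|\cdot\|D_pF^n|_{E'_p}\|$ uniformly. Fix $c\in\Lambda$ and a line $E^*_c\in\bbP^2_c$. If $E^*_c\ne E_c$, then $E^*_c$ has a nonzero $E'$-component, so for large $n$ one has $\|D_cF^n|_{E^*_c}\|\asymp\|D_cF^n|_{E'_c}\|$ and hence $\partial_\bbP F^n(E^*_c)\asymp\|D_cF^n|_{E_c}\|/\|D_cF^n|_{E'_c}\|\le C\lambda_0^n\to0$, so $\partial_\bbP F^n(E^*_c)<1$ for some $n$ and the forward condition in \defnref{def general crit point} fails. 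If instead $E^*_c=E_c$, then by invariance of $E$ and the domination bound at $c_{-n}$ with forward time $n$, $\partial_\bbP F^{-n}(E_c)\asymp\|D_{c_{-n}}F^n|_{E_{c_{-n}}}\|/\|D_{c_{-n}}F^n|_{E'_{c_{-n}}}\|\le C\lambda_0^n\to0$, so the backward condition fails. Either way $E^*_c$ is not a critical direction; as $E^*_c$ was arbitrary, $c$ is not a critical point.

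For the converse I would argue by contraposition: assuming $\Lambda$ has no dominated splitting, produce a critical point. The input is the cone-field characterization of domination, which tells us that for every scale $\ell\in\bbN$ the set $\Lambda$ fails to be $\ell$-dominated; by the classical argument of Ma\~n\'e and Pujals--Sambarino this yields, for each $\ell$, a finite orbit segment of $\Lambda$ together with a line along it whose projective derivative is not contracted over the segment, \emph{with the ``worst moment'' of the segment at a uniformly bounded distance from both of its endpoints}. Concretely one obtains a point $y_\ell\in\Lambda$, a line $w_\ell\in\bbP^2_{y_\ell}$, and integers $k_\ell\to\infty$ with $\partial_\bbP F^{k}(w_\ell)\ge1$ and $\partial_\bbP F^{-k}(w_\ell)\ge1$ for all $0\le k\le k_\ell$; these two families of inequalities come from applying the Pliss lemma to the additive cocycle $t\mapsto\log\partial_\bbP F\big(DF^t(u)\big)$ along the segment --- choosing the index realizing the minimum of the running sum, exactly as in the discussion of Pliss moments in \secref{sec:homog}, forward partial sums from the minimum being nonnegative and backward ones nonpositive, which translates into the two displayed inequalities via the equivalence noted above. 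Passing to a subsequence along which $y_\ell\to c\in\Lambda$ and $w_\ell\to E^*_c\in\bbP^2_c$ (compactness of $\Lambda$ and of the projectivized tangent bundle), continuity of $\partial_\bbP F$ gives $\partial_\bbP F^{\pm n}(E^*_c)\ge1$ for every $n\in\bbN$, so $c$ is a critical point with critical direction $E^*_c$.

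The main obstacle is precisely the italicized clause in the previous paragraph: extracting from the failure of $\ell$-domination a bad segment whose Pliss moment is bounded away from \emph{both} endpoints, so that the limit point $c$ inherits projective non-contraction in both time directions simultaneously rather than being only ``half-critical''. Ruling out a Pliss moment that hugs one end of the window is exactly where the quantitative form of non-domination is used --- bounded intermediate singular-value ratios along the segment, forcing the running sum of $\log\partial_\bbP F$ to be essentially flat instead of concentrated --- and this is the heart of the Ma\~n\'e / Pujals--Sambarino / Crovisier--Pujals machinery, carried out in \cite{CP2}; the rest is the bookkeeping and limiting procedure sketched above.
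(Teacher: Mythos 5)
The paper does not actually supply a proof of \thmref{dom split obs}: immediately before the statement it writes that the theorem is proved in \cite{CP2} and gives no argument of its own. So your proposal is a reconstruction of the cited result rather than a rival proof, and as such it is sound. Your computation for the easy implication (dominated splitting $\Rightarrow$ no critical point) is correct: the identity $\partial_\bbP F^n(E)=\Jac_pF^n/\|D_pF^n|_E\|^2$, together with the uniform transversality of $E\oplus E'$ which gives $\Jac_pF^n\asymp\|D_pF^n|_{E}\|\cdot\|D_pF^n|_{E'}\|$, reduces both cases $E^*_c\neq E_c$ and $E^*_c=E_c$ to decay of the domination ratio $\|DF^n|_E\|/\|DF^n|_{E'}\|\le C\lambda_0^n$, forward in the first case and (pulled back along the orbit of $c_{-n}$) backward in the second. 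For the converse you correctly single out the crux --- obtaining, for each $\ell$, an orbit segment witnessing failure of $\ell$-domination whose Pliss moment lies a uniformly bounded distance from \emph{both} endpoints, so that the accumulation point is projectively non-contracted in both time directions --- and you correctly attribute that step to \cite{CP2} and the Ma\~n\'e / Pujals--Sambarino machinery rather than pretending to derive it from scratch. Since the paper itself treats the theorem as imported from \cite{CP2}, your account is, if anything, more detailed than what the paper offers, and contains no error to flag.
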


\begin{rem}
In particular, if $F|_\Lambda$ is dissipative, it follows that it  contains a critical point of $F$ if and only if $F|_\Lambda$ is not uniformly partially hyperbolic. Moreover, as explained in \cite{CPT}, since odometers can not be partially hyperbolic, then they has to contain (general) critical points.
\end{rem}


\subsection{Critical points obstruct linearization}

Let $\epsilon \in (0,1)$ be a small constant such that $\bepsilon <1$.

\begin{prop}[No critical point in regular neighborhoods]\label{no crit in reg nbh}
Let $p_0 \in \Lambda$ be $(M, N)$-times $(L, \epsilon)_v$-regular along $E_{p_0}^v \in \bbP^2_{p_0}$ for some $M,N \in \bbN$ and $L \geq 1$. Let
$$
\{\Phi_{p_m} : \cU_{p_m} \to U_{p_m}\}_{m = -M}^{N}
$$
be a linearization of the $(M,N)$-orbit of $p_0$ with vertical direction $E_{p_0}^v$. Suppose there exists a point $q_0 \in \cU_{p_0}$ such that
$q_m \in \cU_{p_m}$ for all $-M\leq m \leq N.$
If 
$$
M, N > -K\logl L+K
$$
for some uniform constant $K \geq 1$, then $q_0$ cannot be a critical point.
\end{prop}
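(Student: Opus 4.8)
The plan is to argue by contradiction. Suppose $q_0$ is a critical point with critical direction $E^*_{q_0}\in\bbP^2_{q_0}$, so that (by multiplicativity of $\Jac$ and of the directional norm along an orbit)
$\partial_\bbP F^{\pm n}(E^*_{q_0}) = \Jac_{q_0}F^{\pm n}/\|D_{q_0}F^{\pm n}|_{E^*_{q_0}}\|^2 \ge 1$ for every $n\in\bbN$. The goal will be to violate this for a single $n$ of size $O(-\logl L)$, which is exactly why the hypothesis asks that $M,N>-K\logl L+K$.

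First I would move everything into the linearizing charts $\{\Phi_{p_m}\}_{m=-M}^{N}$ of \thmref{reg chart}. The hypothesis $q_m\in\cU_{p_m}$ for $-M\le m\le N$ lets us apply \propref{loc linear} along this whole orbit segment: up to a fixed multiplicative constant (as close to $1$ as we like, at the price of shrinking the regular radii once and for all), the cocycle $D_{q_0}F^{\pm l}$ is replaced by the diagonal cocycle generated by the matrices $A_m$ of \thmref{reg chart}, whose diagonal entries satisfy $a_m\in(\lambda^\bepsilon,\lambda^{-\bepsilon})$ (neutral) and $b_m\in(\lambda^{1+\bepsilon},\lambda^{1-\bepsilon})$ (strongly contracting); in particular $\Jac_{q_0}F^{\pm n}$ is comparable to $\prod(a_mb_m)^{\pm1}$ and $\|D_{q_0}F^{\pm n}|_{E^{gv}}\|$ to $\prod b_m^{\pm1}$. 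Passing between a chart and the phase space distorts such quantities — and angles — by at most a factor $\bL^{O(1)}\lambda^{\pm O(\bepsilon n)}$, coming from the bounds $\|\Phi_{p_m}^{\pm1}\|_{C^r}=O(\bL\lambda^{-\bepsilon|m|})$. Since $\bepsilon<1$ these corrections only degrade genuine exponential rates $\lambda^{\pm n}$ to $\lambda^{\pm(1-\bepsilon)n}$, and since $\bL<CL^D$ we have $-\logl\bL=O(-\logl L)+O(1)$; I will use these two reductions without further comment.

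Next I would set up the dichotomy. For the diagonal cocycle the chart-horizontal line $E^{gh}$ is the least forward-contracted direction and $E^{gv}$ the most backward-expanded one, so a unit direction $v=(\xi,\eta)$ obeys $\|D_0F_{p_0}^n(v)\|\ge|\xi|\prod a_m\ge|\xi|\lambda^{\bepsilon n}$ and $\|D_0F_{p_0}^{-n}(v)\|\ge|\eta|\prod b_m^{-1}\ge|\eta|\lambda^{-(1-\bepsilon)n}$. Fix a threshold $\theta_0$ that is small compared with the chart distortion $\bL^{O(1)}$, and split according to whether $E^*_{q_0}$ is $\theta_0$-vertical in $\cU_{p_0}$. \emph{If $E^*_{q_0}$ is $\theta_0$-vertical}, then its chart representative $v=(\xi,\eta)$ has $|\eta|\ge\tfrac12$; combining the lower bound on $\|D_0F_{p_0}^{-n}(v)\|$ with $\Jac_{q_0}F^{-n}\le\bL^{O(1)}\lambda^{-(1+\bepsilon)n}$ (from $a_mb_m>\lambda^{1+2\bepsilon}$) and the chart-to-phase comparison yields $\partial_\bbP F^{-n}(E^*_{q_0})\le\bL^{O(1)}\lambda^{(1-\bepsilon)n}$, which drops below $1$ once $n>-K\logl L+K$ for a suitable uniform $K$; since $M>-K\logl L+K$ we may choose such an $n\le M$, contradicting $\partial_\bbP F^{-n}(E^*_{q_0})\ge1$. \emph{If $E^*_{q_0}$ is not $\theta_0$-vertical}, then its chart representative has $|\xi|\ge\bL^{-O(1)}$; combining the lower bound on $\|D_0F_{p_0}^{n}(v)\|$ with $\Jac_{q_0}F^{n}\le\bL^{O(1)}\lambda^{(1-2\bepsilon)n}$ (from $a_mb_m<\lambda^{1-2\bepsilon}$) gives $\partial_\bbP F^{n}(E^*_{q_0})\le\bL^{O(1)}\lambda^{(1-\bepsilon)n}$, again $<1$ for $n>-K\logl L+K$, and since $N>-K\logl L+K$ we pick such an $n\le N$, a contradiction. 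As the two cases exhaust $\bbP^2_{q_0}$, no critical direction at $q_0$ can exist, so $q_0$ is not a critical point.

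The routine parts are the two chains of inequalities bounding $\partial_\bbP F^{\mp n}(E^*_{q_0})$ and the verification that the various multiplicative errors stay within $\bL^{O(1)}\lambda^{\pm O(\bepsilon n)}$. The one genuinely delicate point — and the place where I expect to spend effort — is the calibration of the dichotomy threshold $\theta_0$ against the chart distortion $\bL^{O(1)}$: it must be small enough that being ``$\theta_0$-vertical'' in $\cU_{p_0}$ still forces the chart vertical component $|\eta|$ to be bounded below by an absolute constant, while its complement forces the chart horizontal component $|\xi|$ to be bounded below only by a quantity polynomially small in $\bL$. It is precisely this polynomial (rather than exponential) lower bound on $|\xi|$ — together with $-\logl\bL=O(-\logl L)+O(1)$ — that keeps the required sizes of $M$ and $N$ of the stated form $-K\logl L+K$.
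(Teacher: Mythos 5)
Your proof is correct but organizes the contradiction differently from the paper. The paper's argument is a single chain: from $\partial_\bbP F^N(E^*_{q_0}) \geq 1$ it deduces $\|D_{q_0}F^N|_{E^*_{q_0}}\|^2 \leq \Jac_{q_0}F^N$, invokes \propref{vert angle shrink} to conclude that $E^*_{q_0}$ must lie within $k$ of the chart vertical $E^v_{q_0}$, and then uses the resulting backward expansion $\|D_{q_0}F^{-M}|_{E^*_{q_0}}\| > \bL^{-1}\lambda^{-(1-\bepsilon)M}$ to force $\partial_\bbP F^{-M}(E^*_{q_0}) < 1$. You instead set up a dichotomy on whether $E^*_{q_0}$ is $\theta_0$-vertical and derive contradictions in both branches directly from the diagonal structure of the linearized cocycle; your ``not near-vertical'' branch is the contrapositive of the paper's appeal to \propref{vert angle shrink}, and your ``near-vertical'' branch is the paper's backward-expansion step. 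Both routes rest on the same regular-chart infrastructure (\thmref{reg chart} and \propref{loc linear}), but yours bypasses \propref{vert angle shrink} by working directly with the chart components $(\xi,\eta)$, which makes the argument slightly more self-contained and symmetric in forward/backward. The one point you correctly identify as delicate --- calibrating $\theta_0$ against the chart distortion $\bL^{O(1)}$ so that the vertical branch gives $|\eta|$ bounded below absolutely while the non-vertical branch gives $|\xi| \geq \bL^{-O(1)}$ --- is exactly the source of the stated threshold $M, N > -K\logl L + K$, which matches the paper's use of the inequality $\bL\lambda^{-\bepsilon N}\lambda^{(1-\eta)N/2} < k$ to produce the constant $K$.
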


\begin{proof}
Suppose towards a contradiction that $q_0$ is a critical point with a critical direction $E_{q_0}^*$. Then
$\partial_\bbP F^N(E_{q_0}^*) \geq 1$
and this implies
$$
\|D_{q_0}F^N|_{E_{q_0}^*}\| \leq (\Jac_{q_0}F^N)^{1/2} < \lambda^{(1-\eta)N/2}.
$$
By \propref{loc linear} and \ref{vert angle shrink}, we see that
$$
\measuredangle(E_{q_0}^*, E_{q_0}^v)<\bL \lambda^{-\bepsilon N}\lambda^{(1-\eta)N/2} <k
$$
for some sufficiently small uniform constant $k > 0$. It follows that
$$
\|D_{q_0}F^{-M}|_{E_{q_0}^*}\| > \bL^{-1}\lambda^{-(1-\bepsilon)M}.
$$
Thus,
$$
\partial_\bbP F^{-M}(E_{q_0}^*) = \frac{\Jac_{q_0}F^{-M}}{\|D_{q_0}F^{-M}|_{E_{q_0}^*}\|^2} < \frac{\bL\lambda^{-(1+\epsilon)M}}{\bL^{-2}\lambda^{-2(1-\bepsilon)M}} < \bL\lambda^{(1-\bepsilon)M} < 1.
$$
This is a contradiction.
\end{proof}

\begin{prop}\label{no crit in pesin reg nbh}
Let $p_0 \in \Lambda$ be Pesin $(L, \epsilon)$-regular for some $L \geq 1$, and let
$
\{\Phi_{p_m} : \cU_{p_m} \to U_{p_m}\}_{m \in \bbZ}
$
be a linearization of the full orbit of $p_0$. Then the regular radii (specified in \thmref{reg chart}) can be chosen uniformly small so that $\cU_{p_m}$ does not contain a critical point for all $m \in \bbZ$.
\end{prop}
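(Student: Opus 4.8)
The plan is to deduce this from \propref{no crit in reg nbh}, which I would apply not at $p_0$ but, separately, at each point $p_m$ of the orbit. Since $p_0$ is Pesin $(L,\epsilon)$-regular, here $M=N=\infty$, so \propref{decay reg} shows that every $p_m$ is Pesin $(\cL_{p_m},\epsilon)_v$-regular along $E^v_{p_m}$ with $\cL_{p_m}<L^2\lambda^{-2\epsilon|m|}$; in particular $p_m$ is Pesin $(L_m,\epsilon)$-regular with the explicit factor $L_m:=L^2\lambda^{-2\epsilon|m|}$. Fix $m\in\bbZ$ and choose an integer $M_m$ exceeding the threshold $-K\logl L_m+K$ appearing in \propref{no crit in reg nbh}; since $\logl L_m$ grows only linearly in $|m|$, $M_m$ may be taken of size $\asymp 2K\epsilon|m|$. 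Keeping the same charts $\Phi_{p_{m-M_m}},\dots,\Phi_{p_{m+M_m}}$ of the given linearization, but shrinking their neighborhoods so as to fit, one obtains a linearization of the $(M_m,M_m)$-orbit of $p_m$ with vertical direction $E^v_{p_m}$; denote its regular neighborhoods by $\cU^{(m)}_{p_{m+i}}\subset\cU_{p_{m+i}}$.

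The content of the argument is to shrink the regular radii $l_{p_j}$ of the given full-orbit linearization so that, for every $m$ and every $q\in\cU_{p_m}$, the orbit satisfies $q_{m+i}\in\cU^{(m)}_{p_{m+i}}$ for all $|i|\le M_m$. Granting this, \propref{no crit in reg nbh} applied at $p_m$ forbids $q$ from being a critical point, and the proposition follows since $m$ was arbitrary. To get the orbit to stay put, I would use \propref{loc linear}: inside the regular charts, $F^{\pm 1}$ acts as a small $C^1$-perturbation of the diagonal map $A_j=\operatorname{diag}(a_j,b_j)$ with $|a_j^{\pm1}|<\lambda^{-\bepsilon}$ and $\lambda^{1+\bepsilon}<b_j<\lambda^{1-\bepsilon}$, so a point at distance $\rho$ from $p_m$ has its $i$-th iterate within $\rho\,\lambda^{-(1+\bepsilon)|i|}$ of $p_{m+i}$ — the vertical direction under backward iteration being the binding case. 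Since the neighborhoods $\cU^{(m)}_{p_{m+i}}$ shrink only at the definite rate $\lambda^{\bepsilon|i|}$, it suffices to take $\rho$ at most a positive power of $L_m^{-1}$ times $\lambda^{(1+2\bepsilon)M_m}$; using $M_m\asymp 2K\epsilon|m|$ and $L_m=L^2\lambda^{-2\epsilon|m|}$, this amounts to replacing $l_{p_m}=\bL^{-1}\lambda^{\bepsilon|m|}$ by some $\underline{l_{p_m}}\asymp L^{-c_1}\lambda^{c_2\epsilon|m|}$. One then checks that $k\,l_{p_m}^{\,d}<\underline{l_{p_m}}<l_{p_m}$ for uniform constants $k,d$, so this is a permissible uniform shrinking of the regular radii in the sense of the paper's conventions; and because the $\cU^{(m)}_{p_{m+i}}$ were cut out with the same charts $\Phi_{p_{m+i}}$, after the shrinking $q\in\cU_{p_m}$ automatically lies in the domain $\cU^{(m)}_{p_m}$.

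I expect the main obstacle to be precisely this rate comparison: the regular neighborhoods contract exponentially along the orbit (at rate $\lambda^{\bepsilon}$), while the regularity factor $\cL_{p_m}$ — and hence the number $M_m$ of iterates one must keep under control — deteriorates as $|m|$ grows, so it is not a priori clear that the extra shrinking of $l_{p_m}$ needed to absorb $M_m$ backward iterates remains ``uniform''. It does, because $M_m$ grows only logarithmically in $\cL_{p_m}$ (hence only linearly in $|m|$), whereas the room available in $\cU_{p_m}$ is a genuine exponential $\lambda^{\bepsilon|m|}$; carrying out this balance carefully is the one point requiring attention, everything else being a direct appeal to \propref{decay reg}, \propref{loc linear}, and \propref{no crit in reg nbh}.
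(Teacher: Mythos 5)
Your approach is essentially the paper's: assume a critical point $q_0\in\cU_{p_m}$, track its orbit through regular neighborhoods, and apply \propref{no crit in reg nbh} with $p_m$ as the base point. What you make explicit — and it is the right way to run the argument — is the appeal to \propref{decay reg}: the regularity factor at $p_m$ satisfies $\cL_{p_m}<L^2\lambda^{-2\epsilon|m|}$, so the threshold $-K\logl\cL_{p_m}+K$ that $M_m$ must exceed grows linearly in $|m|$ with coefficient $\asymp K\epsilon$. The paper's proof works with the starting and shrunken uniformization radii $cL^{-C}\lambda^{\bepsilon|m'|}$ and $dL^{-D}\lambda^{\bepsilon|m'|}$, both contracting at the same rate, and produces $M,N>(1+\bepsilon)^{-1}\bigl(\logl(d/c)-(D-C)\logl L\bigr)$, a lower bound that does not display any $m$-dependence, before invoking \propref{no crit in reg nbh}; the linear-in-$|m|$ growth of the threshold at $p_m$ is left implicit there. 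Your accounting is the one that actually closes the gap: $M_m$ grows linearly at rate $\asymp\epsilon$, the required extra contraction $\lambda^{(1+2\bepsilon)M_m}$ is a factor $\lambda^{O(\epsilon|m|)}$, and because $\bepsilon=\epsilon^\alpha\gg\epsilon$ for $\epsilon$ small this factor is absorbed by a fixed power of $l_{p_m}\asymp\bL^{-1}\lambda^{\bepsilon|m|}$, so the shrinkage is uniform in the paper's $\underline{\,\cdot\,}$-convention. This is precisely the crux you flag in your last paragraph, and you resolve it correctly. One small caveat: re-using the charts $\Phi_{p_{m+i}}$ of the full-orbit linearization, their $C^r$-norms are $O(\bL\lambda^{-\bepsilon|m+i|})$, which as a linearization centered at $p_m$ corresponds to a regularity constant of order $L\lambda^{-\bepsilon|m|}$ rather than the sharper $\cL_{p_m}$; with these inherited charts the threshold in fact scales as $\bepsilon|m|$ rather than $\epsilon|m|$. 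The balance still closes — you just need a slightly larger uniform exponent $d$ in the shrinkage — so the proof goes through; a fresh $(M_m,M_m)$-linearization of $p_m$'s orbit built from the regularity factor of \propref{decay reg} would give exactly the $\epsilon|m|$ scaling you quote.
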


\begin{proof}
Let  $\{\Phi_{p_m} : \hcU_{p_m} \to \hU_{p_m}\}_{m \in \bbZ}$ be a starting uniformization of the orbit of $p_0$. Recall that
$$
\hU_{p_m} := \bbB(cL^{-C} \lambda^{\bepsilon |m|})
$$
for some uniform constants $c \in (0,1)$ and $C \geq 1$. Let $d \in (0,c)$ and $D > C$, and denote
$$
U_{p_m} := \bbB(dL^{-D} \lambda^{\bepsilon |m|})
\matsp{and}
\cU_{p_m} := \Phi_{p_m}^{-1}(U_{c_m}).
$$

Suppose towards a contradiction that $q_0$ is a critical point and $q_0 \in \cU_{p_m}$ for some $m \in \bbZ$. Then for $i \in \bbN$ such that $q_{-i} \in \hcU_{p_{m-i}}$, we have
$$
\dist(p_{m-i}, q_{-i}) < \lambda^{-(1+\bepsilon) i}dL^{-D}\lambda^{\bepsilon |m|}.
$$
Thus, we have $q_{-i} \in \hcU_{p_{m-i}}$ as long as
$$
cL^{-C} \lambda^{\bepsilon |m-i|} > dL^{-D} \lambda^{-(1+\eta) i }\lambda^{\bepsilon |m|},
$$
or more simply,
$$
(d/c)L^{C-D} < \lambda^{(1+\bepsilon)i}.
$$
Let $i = M$ be the first value for which the above inequality fails. Then
$$
M > (1+\bepsilon)^{-1}\left(\logl(d/c)-(D-C)\logl L \right).
$$

We also have
$$
\dist(p_{m+i}, q_i) < \lambda^{-\bepsilon i} dL^{-D}\lambda^{\bepsilon |m|}
\matsp{for}
i \geq 0.
$$
Thus, we have $q_i \in \hcU_{p_{m+i}}$ as long as
$$
cL^{-C} \lambda^{\bepsilon |m+i|} > \lambda^{-\bepsilon i}dL^{-D}\lambda^{\bepsilon |m|},
$$
or more simply,
$$
(d/c)L^{C-D} < \lambda^{\bepsilon i}.
$$
Let $i = N$ be the smallest value for which the above inequality fails. Then
$$
N > \bepsilon^{-1}\left(\logl(d/c)-(D-C)\logl L\right).
$$

Hence, by choosing $d$ and $D$ sufficiently smaller and larger than $c$ and $C$ respectively, the result follows from \propref{no crit in reg nbh}.
\end{proof}


\subsection{Regular critical values}

A point $c_1 \in \Lambda$ is an {\it $\epsilon$-regular critical value} if $c_1$ is both infinite-time forward and backward $(1, \epsilon)$-regular, and 
$$
E_{c_1}^* := E_{c_1}^{ss} = E_{c_1}^c.
$$
We refer to $c_0 := F^{-1}(c_1)$
as an {\it $\epsilon$-regular critical point}. Note that the strong stable manifold $W^{ss}(c_1)$ and the center manifold $W^c(c_1)$ have a tangency at $c_1$. If this tangency is of order $2$, then we say that $c_1$ is {\it of quadratic type}.

\begin{prop}\label{unique crit value}
Let $c_1 \in \Lambda$ be an $\epsilon$-regular critical value. Then $c_1$ is the unique general critical point in its orbit.
\end{prop}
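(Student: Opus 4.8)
I would work with the full orbit $\{c_m := F^m(c_0)\}_{m\in\bbZ}$ of $c_1$ (so $c_0 = F^{-1}(c_1)$), writing $\lambda := \lambda_F$. The plan has three ingredients: first, propagate the degeneracy $E^{ss}=E^c$ from $c_1$ to a distinguished direction $E^*_{c_m}$ along the whole orbit; second, show that \emph{any} critical direction at \emph{any} orbit point must coincide with $E^*_{c_m}$; third, translate the defining inequalities of \defnref{def general crit point} into statements about products of the numbers $a_i := \partial_\bbP F(E^*_{c_i})$ and derive a contradiction unless $m=1$.

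For the first ingredient, the strong-stable and center directions are $DF$-equivariant ($DF(E^{ss}_{c_m})=E^{ss}_{c_{m+1}}$ and $DF(E^c_{c_m})=E^c_{c_{m+1}}$) and agree at $c_1$ by hypothesis, so they agree along the orbit; call the common direction $E^*_{c_m}$. Since $c_1$ is $(\infty,\infty)$-times $(1,\epsilon)_v$-regular along $E^*_{c_1}$, \propref{decay reg} gives that each $c_m$ is $(\infty,\infty)$-times $(\cL_m,\epsilon)_v$-regular along $E^*_{c_m}$ for some $\cL_m\geq 1$; in particular every $c_m$ is infinitely forward and backward regular and obeys \eqref{eq:for reg} and \eqref{eq:back reg} along $E^*_{c_m}$. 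By multiplicativity of $\partial_\bbP$, of $\Jac$, and of the restricted norm along the orbit, $\partial_\bbP F^n(E^*_{c_m}) = \prod_{i=m}^{m+n-1}a_i$ and $\partial_\bbP F^{-n}(E^*_{c_m}) = \left(\prod_{i=m-n}^{m-1}a_i\right)^{-1}$ for $n\geq 1$.

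Next I would treat $c_1$ itself. Combining the two instances $s\in\{-r,r-1\}$ of \eqref{eq:for reg} at $c_1$ (with $L=1$) and solving for $\log_\lambda\Jac_{c_1}F^n$ and $\log_\lambda\|DF^n|_{E^*_{c_1}}\|$ yields $\Jac_{c_1}F^n \geq \lambda^{(1+\epsilon)n}$ and $\|DF^n|_{E^*_{c_1}}\| \leq \lambda^{(1-\epsilon)n}$; the analogous manipulation of \eqref{eq:hor back reg} yields $\Jac_{c_1}F^{-n} \geq \lambda^{-(1-\epsilon)n}$ and $\|DF^{-n}|_{E^*_{c_1}}\| \leq \lambda^{-\epsilon n}$. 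Substituting into $\partial_\bbP F^{\pm n}(E) = \Jac F^{\pm n}/\|DF^{\pm n}|_E\|^2$ gives $\partial_\bbP F^{\pm n}(E^*_{c_1}) \geq \lambda^{(3\epsilon-1)n} > 1$ for all $n\geq 1$, once $\epsilon$ is small (say $3\epsilon<1$, a smallness we keep throughout). Hence $c_1$ is a general critical point with critical direction $E^*_{c_1}$, and in the notation above $\prod_{i=1}^{M}a_i > 1$ for every $M\geq 1$ while $\prod_{i=1-n}^{0}a_i < 1$ for every $n\geq 1$. For the alignment step, suppose $c_m$ is a general critical point with critical direction $E_{c_m}$: then $\partial_\bbP F^n(E_{c_m})\geq 1$ forces $\|DF^n|_{E_{c_m}}\|^2 \leq \Jac_{c_m}F^n$, and the Jacobian bound for $c_m$ coming from \eqref{eq:for reg} makes the left-hand side decay at rate at least $(1-\epsilon)/2 > \bepsilon$, so $\|DF^n|_{E_{c_m}}\| < \bL^{-1}\lambda^{\bepsilon n}$ for $n$ large; since $c_m$ is infinitely forward regular, \propref{vert angle shrink} then forces $\measuredangle(E_{c_m},E^{ss}_{c_m})=0$, i.e.\ $E_{c_m}=E^{ss}_{c_m}=E^*_{c_m}$ (the backward inequality together with \propref{hor angle shrink} gives the same conclusion through $E^c_{c_m}$).

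Finally, combining the pieces: if $c_m$ is a general critical point then $E^*_{c_m}$ is a critical direction, so $\prod_{i=m}^{m+n-1}a_i \geq 1$ and $\prod_{i=m-n}^{m-1}a_i \leq 1$ for all $n\geq 1$. If $m\geq 2$, taking $n=m-1$ in the second inequality gives $\prod_{i=1}^{m-1}a_i \leq 1$, contradicting the strict products at $c_1$; if $m\leq 0$, taking $n=1-m$ in the first gives $\prod_{i=m}^{0}a_i \geq 1$, again a contradiction. Hence $m=1$, and $c_1$ is the unique general critical point in its orbit. The step I expect to be most delicate is extracting the sharp two-sided bounds on $\Jac_{c_1}F^{\pm n}$ and $\|DF^{\pm n}|_{E^*_{c_1}}\|$ from the two-parameter families \eqref{eq:for reg}, \eqref{eq:hor back reg}, together with checking that the smallness hypotheses on $\epsilon$ survive the accumulated $\bepsilon$-slack in the alignment estimate; the equivariance, the multiplicativity identities, and the final combinatorics are routine once those bounds are in place.
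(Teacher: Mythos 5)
Your overall strategy---propagate the degenerate direction $E^*$ along the orbit, force any critical direction at $c_m$ to coincide with $E^*_{c_m}$ by an alignment argument, and then contradict the strict inequalities $\partial_\bbP F^{\pm n}(E^*_{c_1})>1$ via the multiplicativity of $\partial_\bbP$---is reasonable, and the final combinatorics with the one-step products $a_i$ are correct, as is the extraction of the two-sided $\Jac$ and $\|DF^{\pm n}|_{E^*}\|$ bounds from the exponent pairs $\{-r,r-1\}$ and $\{-r+1,r\}$. The paper leaves this proposition unproved, so there is no authors' proof to compare against.

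There is, however, a genuine error near the beginning: the claim that $c_1$ is $(\infty,\infty)$-times $(1,\epsilon)_v$-regular along $E^*_{c_1}$ is false, and \propref{decay reg} applied to that false statement is what lets you assert that every $c_m$ satisfies both \eqref{eq:for reg} and \eqref{eq:back reg} along $E^*_{c_m}$. By definition an $\epsilon$-regular critical value is forward $(1,\epsilon)_v$-regular (i.e.\ \eqref{eq:for reg}) \emph{and} backward $(1,\epsilon)_h$-regular (i.e.\ \eqref{eq:hor back reg}) along $E^*=E^{ss}_{c_1}=E^c_{c_1}$; it is \emph{not} backward $(1,\epsilon)_v$-regular. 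The two are incompatible for small $\epsilon$: \eqref{eq:back reg} would give $\|DF^{-n}|_{E^*_{c_1}}\|\sim\lambda^{-n}$, whereas \eqref{eq:hor back reg} confines it to $[\lambda^{\epsilon n},\lambda^{-\epsilon n}]$---and it is precisely this coexistence of a strong-stable forward rate with a neutral backward rate that \emph{is} the critical degeneracy. The correct propagation is one-sided: \propref{decay reg} applied to the $(0,\infty)_v$-data gives that $c_m$ is $(m-1,\infty)$-times $(\cL_m,\epsilon)_v$-regular along $E^*_{c_m}$ only for $m\geq 1$, and applied to the $(\infty,0)_h$-data gives $(\infty,1-m)_h$-regularity only for $m\leq 1$. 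For $m\leq 0$ the direction $E^*_{c_m}$ is a center direction that does not obey \eqref{eq:for reg} at all for $n\leq 1-m$, so the forward alignment step via \propref{vert angle shrink} cannot be run there; you must instead use the backward Jacobian expansion bound from the $h$-regularity together with \propref{hor angle shrink}, working in the linearization of the backward orbit of $c_0$ (in which $E^h_{c_m}=E^*_{c_m}$). You offer the backward route parenthetically as ``the same conclusion by another means,'' but for $m\leq 0$ it is the \emph{only} available route. Once the alignment argument is split by the sign of $m$ in this way, the combinatorial contradiction you wrote goes through unchanged.
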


\begin{prop}\label{get crit reg}
Suppose $F$ is $\eta$-homogeneous for some $\eta \in (0, \uepsilon)$. If $c_1 \in \Lambda$ is a regular critical orbit point, then there exists $N \in \bbN$ such that for $n \geq N$, the point $c_1$ is an $\epsilon$-regular critical value for the map $F^n$.
\end{prop}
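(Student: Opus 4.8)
The plan is to observe that being a regular critical orbit point is a statement about the asymptotic Lyapunov behavior of the forward and backward orbit of $c_1$ — a property that does not change when $F$ is replaced by an iterate $F^n$ — and then to invoke \propref{get point reg} to promote this asymptotic regularity to genuine $(1,\epsilon)$-regularity for a sufficiently high iterate.

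First I would unpack the hypothesis: by definition the orbit of $c_1$ being regular critical means that $c_1$ carries a direction $E^*_{c_1}$ with $\tfrac1n\log\|DF^n|_{E^*_{c_1}}\| \to \log\lambda$ and $\tfrac1n\log\|DF^{-n}|_{E^*_{c_1}}\| \to 0$. In a two-dimensional dissipative setting with Lyapunov exponents $0$ and $\log\lambda$, the first condition identifies $E^*_{c_1}$ as the strong-stable direction (the unique direction with forward exponent $\log\lambda$) and the second identifies it as the center direction (the unique direction with backward exponent $0$). Since the asymptotic exponents of $DF^n$ along a fixed direction coincide with those of $D(F^n)^k$, these identifications persist for the iterate: $c_1$ is Lyapunov forward regular for $F^n$ with strong-stable direction $E^*_{c_1}$, and Lyapunov backward regular for $F^n$ with center direction $E^*_{c_1}$. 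I would also record that $F^n$ inherits $\eta$-homogeneity from $F$ (with contraction rate $\lambda^n$), and that $\eta\in(0,\uepsilon)$ places $F$ in the setting of \secref{sec:homog} with marginal exponent $\epsilon$ in the admissible range, so that \propref{get point reg} applies to $c_1$.

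Next I would apply \propref{get point reg} to $c_1$, once for its forward-regular structure and once for its backward-regular structure. This yields $N_1$ such that for $n\geq N_1$ the point $c_1$ is infinitely forward $(1,\epsilon)$-regular for $F^n$ (along $E^{ss}_{c_1}=E^*_{c_1}$), and $N_2$ such that for $n\geq N_2$ the point $c_1$ is infinitely backward $(1,\epsilon)$-regular for $F^n$ (along $E^c_{c_1}=E^*_{c_1}$). Taking $N:=\max\{N_1,N_2\}$, for every $n\geq N$ the point $c_1$ is simultaneously infinitely forward and backward $(1,\epsilon)$-regular for $F^n$, and its strong-stable and center directions for $F^n$ are both equal to $E^*_{c_1}$ — which is precisely the definition of $c_1$ being an $\epsilon$-regular critical value for $F^n$. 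The one point needing a genuine (if brief) argument rather than bookkeeping is the coincidence $E^{ss}_{c_1}=E^c_{c_1}$ for the iterate: this I would obtain from the uniqueness of the infinitely forward regular direction (as in \subsecref{stable-center manifolds}, via \propref{vert angle shrink} applied to $F^n$) together with the uniqueness of the infinitely backward regular direction, which follows from the angle estimate in \propref{hor angle shrink}. I expect this — carrying the regular direction through the whole argument so that the tangency $E^{ss}_{c_1}=E^c_{c_1}$ is preserved under the iterate — to be the main thing to be careful about.
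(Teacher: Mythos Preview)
Your proposal is correct and follows exactly the paper's approach: the paper's proof is the single line ``The result follows immediately from \propref{get point reg}.'' Your additional care about tracking the direction $E^*_{c_1}$ through the iterate and verifying $E^{ss}_{c_1}=E^c_{c_1}$ for $F^n$ is a reasonable elaboration of what the paper leaves implicit, though arguably the uniqueness of the strong-stable direction (and the fact that Lyapunov exponents for $F^n$ are just $n$ times those for $F$) makes this automatic once you observe that $E^*_{c_1}$ is the common candidate.
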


\begin{proof}
The result follows immediately from \propref{get point reg}.
\end{proof}

For $a, b \in \bbR$, denote
$$
\bbL^h_b := \{(x, b) \; | \; x\in\bbR\}
\matsp{and}
\bbL^v_a := \{(a, y) \; | \; y\in\bbR\}.
$$
Additionally, let $f_a(x) := x^2 + a$, and denote
$$
\bbP^h_a := \{(x, f_a(x)) \; | \; x \in \bbR\}
\matsp{and}
\bbP^v_a := \{(f_a(y), y) \; | \; y \in \bbR\}.
$$

\begin{thm}[Uniformization along a regular critical orbit]\label{unif reg crit}
Let $c_1$ be an $\epsilon$-regular critical value of quadratic type with the critical direction $E_{c_1}^* \in \bbP^2_{c_1}$. Then there exist linearizations
$$
\{\Phi_{c_{1+n}} : \cU_{c_{1+n}} \to U_{c_{1+n}}\}_{n =0}^\infty
\matsp{and}
\{\Phi_{c_{-n}}: \cU_{c_{-n}} \to U_{c_{-n}}\}_{n=0}^\infty
$$
of the forward orbit of $c_1$ and the backward orbit of $c_0$ respectively that induce the following H\'enon-like transition at the critical moment:
$$
F_{c_0}(x,y) := \Phi_{c_1} \circ F|_{\cU_{c_0}} \circ \Phi_{c_0}^{-1}(x,y) = (x^2 - \lambda y, x)
\matsp{for}
(x,y) \in U_{c_0}.
$$
\end{thm}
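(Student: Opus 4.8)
The plan is to build the forward linearization of $c_1$ and the backward linearization of $c_0$, and then to \emph{define} the chart at $c_0$ by an explicit formula that forces the transition across the critical moment to be exactly the H\'enon map $H(x,y)=(x^2-\lambda y,x)$. The structural input that makes this possible is that, $c_1$ being a regular critical value, $E^{ss}_{c_1}=E^c_{c_1}=E^*_{c_1}$, and pulling back by $F^{-1}$ also $E^{ss}_{c_0}=E^c_{c_0}=E^*_{c_0}$; in the linearizing coordinates this forces $E^*$ to be the \emph{horizontal} (center) axis at $c_0$ and the \emph{vertical} (strong-stable) axis at $c_1$, so that $D_{c_0}F$ interchanges the two coordinate axes, matching $D_0H=\left(\begin{smallmatrix}0&-\lambda\\1&0\end{smallmatrix}\right)$.

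First, since $c_1$ is infinite-time forward $(1,\epsilon)$-regular along $E^{ss}_{c_1}$, \thmref{reg chart} applied to the $(0,\infty)$-orbit of $c_1$ with vertical direction $E^{ss}_{c_1}$ produces a forward linearization $\{\Phi_{c_{1+n}}\colon\cU_{c_{1+n}}\to U_{c_{1+n}}\}_{n\ge 0}$ with $W^{ss}_{\loc}(c_1)=\Phi_{c_1}^{-1}(\{x=0\})$. Because $c_1$ is also backward regular with $E^c_{c_1}=E^{ss}_{c_1}$, the center manifold $W^c(c_1)$ is tangent to the vertical axis at $0$; by \propref{center jet} its $C^r$-jet is canonical, and by the quadratic-type hypothesis it is the graph $\{x=\beta y^2+O(y^3)\}$ with $\beta\ne 0$. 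A Morse-type coordinate change at $c_1$ fixing the vertical axis, propagated along the forward orbit (using the truncated regular neighbourhoods and the near-invariance estimates of \secref{sec:quant pesin}) so as to preserve the linearization estimates, normalizes $\Phi_{c_1}$ so that $W^c(c_1)=\Phi_{c_1}^{-1}(\{x=y^2\})$ exactly.

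Next, writing $(\Phi_{c_1,1},\Phi_{c_1,2})$ for the components of $\Phi_{c_1}$, put $\cU_{c_0}:=F^{-1}(\cU_{c_1})$ (cut down to the standard size) and \emph{define}
\[
\Phi_{c_0,1}:=\Phi_{c_1,2}\circ F, \qquad \Phi_{c_0,2}:=\tfrac1\lambda\big((\Phi_{c_1,2}\circ F)^2-\Phi_{c_1,1}\circ F\big).
\]
A chain-rule computation at $c_0$ gives $D_{c_0}\Phi_{c_0}=\left(\begin{smallmatrix}0&1\\-1/\lambda&0\end{smallmatrix}\right)D_{c_1}\Phi_{c_1}\,D_{c_0}F$, which is invertible, so $\Phi_{c_0}$ is a $C^r$-diffeomorphism onto a neighbourhood of $0$ with $\|\Phi_{c_0}^{\pm1}\|_{C^r}=O(\bL)$ and $\Phi_{c_0}(c_0)=0$. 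Unwinding the definition, $\Phi_{c_0}(q)=(x,y)$ forces $\Phi_{c_1}(F(q))=(x^2-\lambda y,x)$, i.e. the transition map is exactly $H$; and since $W^c(c_1)=\{x=y^2\}$ and $W^{ss}_{\loc}(c_1)=\{x=0\}$ in the $\Phi_{c_1}$-coordinates, one reads off that $\Phi_{c_0}(W^c(c_0))=\{y=0\}$ (which therefore carries the canonical jet at $c_0$) and $\Phi_{c_0}(W^{ss}_{\loc}(c_0))=\{y=x^2/\lambda\}$. One then checks that $\Phi_{c_0}$ is the initial chart of a genuine backward linearization of $c_0$: its vertical direction $E^v_{c_0}:=D\Phi_{c_0}^{-1}(\{x=0\})$ is the image under $DF^{-1}$ of the horizontal direction of the $\Phi_{c_1}$-chart, hence transverse (with angle $\gtrsim 1/\bL$) to $E^c_{c_0}$; since $c_0$ is infinite-time backward $(\bL,\epsilon)$-regular along $E^c_{c_0}$ (inherited from $c_1$ via \propref{decay reg}), \propref{hom transverse back reg} shows $c_0$ is infinite-time backward $(\bL,\epsilon)_v$-regular along $E^v_{c_0}$. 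Thus $\Phi_{c_0}$ satisfies every normalization required of the base chart in \thmref{reg chart}, and the remaining charts $\Phi_{c_{-n}}$, $n\ge 1$, are supplied by that theorem, with the freedom in its construction used to match $\Phi_{c_0}$ at the base.

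The main difficulty is the normalization of $W^c(c_1)$ to a genuine parabola: it must be carried out so that the full forward linearization still obeys the quantitative estimates of \thmref{reg chart} along $c_1,c_2,\dots$, which requires a careful propagation of a bounded coordinate change together with control on the accumulated nonlinearity (and entails a harmless loss of smoothness from Morse's lemma). This is also the only place the order-$2$ tangency enters essentially: for a tangency of order $k>2$ the same scheme would yield the transition $(x^k-\lambda y,x)$ instead of the H\'enon map. A secondary technical point is the analogous compatibility check for $\Phi_{c_0}$ inside the backward linearization, which the transversality argument above takes care of.
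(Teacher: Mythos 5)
Your approach is genuinely different from the paper's, and both routes rest on a similar unstated compatibility assertion. The paper modifies \emph{both} initial charts $\Psi_{c_0}$ and $\Psi_{c_1}$ by a chain of five coordinate changes $\Phi_{0,0}, \Phi_{1,0}, \Phi_{1,1}, \Phi_{1,2}, \Phi_{0,3}$ (shears and a square-root Morse-type change), each designed to incrementally normalize the tangency locus, its image, the one-dimensional transition, the image foliation to parabolas, and the preimage foliation to vertical lines. You instead normalize only the forward chart $\Phi_{c_1}$ (so that $W^{ss}_{\loc}(c_1)=\{x=0\}$ and $W^c(c_1)=\{x=y^2\}$) and then \emph{define} $\Phi_{c_0}:=H^{-1}\circ\Phi_{c_1}\circ F$, which forces the transition to be exactly the H\'enon map and automatically carries the derivative and Jacobian conditions. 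Your computation that $\Phi_{c_0}(W^c(c_0))=\{y=0\}$, that $\Phi_{c_0}(W^{ss}_{\loc}(c_0))=\{y=x^2/\lambda\}$, and that $E^v_{c_0}=D\Phi_{c_0}^{-1}(E^{gv}_0)=DF^{-1}(E^h_{c_1})$ is transverse to $E^c_{c_0}$ and backward $(\bL,\epsilon)_v$-regular via \propref{hom transverse back reg} is correct. What both proofs take on faith, and neither carries out in full, is the claim that the modified base chart really does seed a linearization with all the quantitative estimates of \thmref{reg chart} along the rest of the orbit. You assert that ``the freedom in the construction'' of \thmref{reg chart} lets you prescribe the base chart $\Phi_{c_0}$; as stated, \thmref{reg chart} only asserts existence of \emph{some} charts with a prescribed vertical direction, and its (unstated here) construction must be seen to tolerate a prescribed $C^r$ base chart and vertical foliation at the base. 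This is plausible because the vertical direction is backward-dominant, so the pulled-back foliation $\cF_{-n}=F^{-n}(\Phi_{c_0}^{-1}(\{\text{vertical lines}\}))$ should straighten in the rescaled coordinates, but the check requires propagating uniform $C^r$ bounds along the backward orbit, exactly the labor the paper hides behind ``straightforward to check.'' The same remark applies to your Morse normalization of $W^c(c_1)$ propagated along the forward orbit, and to the mild loss of derivatives in the square-root change, which the paper's $\Phi_{1,2}$ also incurs. With those propagation steps supplied (as you flag), the argument is correct, and your decomposition is cleaner: one normalization plus one explicit pullback through $H$, replacing the paper's five-step cascade.
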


\begin{proof}
Let $\{\Psi_{c_{1+n}}\}_{n = 0}^\infty$ and $\{\Psi_{c_{-n}}\}_{n=0}^\infty$ be some linearizations of the forward orbit of $c_1$ and backward orbit of $c_0$. Note that
$$
\Psi_{c_1}(W^{ss}_{\loc}(c_1)) \subset \bbL^v_0
\matsp{and}
\Psi_{c_0}(W^c(c_0)) \subset \bbL^h_0.
$$
We may assume that the regular radius $\rho_0 := l_{c_0}$ at $c_0$ is chosen uniformly smaller than the regular radius $\rho_1 := l_{c_1}$ at $c_1$ such that $F(\cU_{c_0}) \subset \cU_{c_1}$. Denote $I^i := (-\rho_i, \rho_i)$ for $i \in \{0,1\}$, and $\hF_0 := \Psi_{c_1}\circ F \circ \Psi_{c_0}^{-1}$.

The curve
$$
\hF_0(I^0 \times \{0\}) = \Psi_{c_1}(W^c(c_1))
$$
is a quadratic vertical graph that has a unique point of genuine vertical tangency given by
$$
v_0 = (0, 0) = \Psi_{c_1}(c_1) \in W^{ss}_{\loc}(c_1) \cap W^c(c_1).
$$
Consequently, for $|y| < \rho_0$, the curve $\hF_0(I^0\times\{y\})$ is a quadratic vertical graph that has a unique point $v_y$ whose tangent vector is genuine vertical. Moreover, there exist an interval $J^1 \subset I^1$ containing $0$; and $C^r$-maps $g_0 : I^0 \to I^0$ and $g_1 : J^1 \to I^1$ such that:
$$
\hF_0(g_0(y), y) = v_y = (u, g_1(u)).
$$

The vertical graph $\Gamma^v(g_0)$ can be straightened to $\{0\}\times I^0$ by a map of the form
$$
\Phi_{0, 0}(x,y) = (\phi_{0, 0}(x, y), y).
$$
Likewise, the horizontal graph $\Gamma^h(g_1)$ can be straightened to $J^1\times\{0\}$ by a map of the form
$$
\Phi_{1, 0}(u,w) = (u, \phi_{1, 0}(u,w)).
$$
Since the derivatives $D\Phi_{0, 0}$ and $D\Phi_{1, 0}$ preserve the horizontal and vertical directions respectively, we see that the former preserves horizontal tangencies, while the latter preserves vertical tangencies.

Observe that $F_{0, 0} := \Phi_{1,0} \circ \hF_0 \circ \Phi_{0,0}^{-1}$ maps $\{0\}\times I^0 \subset \bbL^v_0$ to $J^1 \times \{0\} \subset \bbL^h_0$. Define
$$
\psi_{1,1}(y) := \pi_h \circ F_{0,0}(0, y)
\matsp{for}
y \in I^0;
$$
and let
$$
\phi_{1,1}(u) := \lambda\psi_{1,1}^{-1}(u)
\matsp{for}
u \in J^1.
$$
Extend $\phi_{1,1}$ to a $C^r$-diffeomorphism of $I^1 \supset J^1$, and let $\Phi_{1, 1}(u,w) = (\phi_{1,1}(x), y)$. Then for $F_{0,1} := \Phi_{1,1} \circ F_{0,0}$, we have $F_{0,1}(0,y) = (-\lambda y,0)$.

The collection $\{F_{0,1}(I^0\times\{y\})\}_{y \in I^0}$ forms a $C^r$-foliation of $F_{0,1}(U_{c_0})$ by quadratic vertical curves in $U_{c_1}$. Extend this foliation to a $C^r$-foliation $\{\gamma^v_z\}_{z\in \hJ^1}$ of a domain $\hU_{c_1} \supset U_{c_1}$ by quadratic vertical curves, so that $\hU_{c_1} \cap \bbL^h_0 = \hJ^1$, and for $(u, w) \in U_{c_1}$, there exists a unique value $\tau(u, w) \in \hJ^1$ such that $(u, w) \in \gamma^v_{\tau(u,w)}$. Define
$$
\Phi_{1, 2}(u,w) := (u, \phi_{1,2}(u,w))
\matsp{for}
(u,w) \in U_{c_1},
$$
where
$$
\phi_{1,2}(u,w):= 
\left\{
\begin{array}{cl}
\sqrt{u-\tau(u,w)} &: w \geq 0\\
-\sqrt{u-\tau(u,w)} &: w < 0;
\end{array}
\right.
$$
and let $F_{0, 2} := \Phi_{1,2} \circ F_{0, 1}$. Then for $y \in I^0$, we have $F_{0,2}(I^0\times\{y\}) \subset \bbP^v_{-\lambda y}$.

Finally, let
$$
\Phi_{0, 3}(x,y) = (\phi_{0,3}(x,y), y)
$$
be the map which straightens the vertical foliation given by $\{F_{0,2}^{-1}(I^1\times\{w\})\}_{w \in I^1}$ to the genuine vertical foliation. Since $\Phi_{0,3}$ preserves the genuine horizontal foliation, the image of a genuine horizontal line under $F_{c_0} := F_{0,2} \circ \Phi_{0,3}^{-1}$ is the same as $F_{0,2}$. It is now straightforward to check that
$$
\Phi_{c_0} := \Phi_{0, 3} \circ \Phi_{0, 0} \circ \Psi_{c_0}
\matsp{and}
\Phi_{c_1} := \Phi_{1, 2} \circ \Phi_{1, 1} \circ \Phi_{1, 0} \circ \Psi_{c_1}
$$
provide the desired regular charts near $c_1$ and $c_0$ respectively.
\end{proof}


\subsection{Closest returns for regular critical orbits}

Let $c_1\in \Lambda$ be an $\epsilon$-regular critical value. Let $\delta \in (\bepsilon, 1)$ with $\bdelta < 1$.

\begin{prop}\label{no crit in crit nbh}
Let
$
\{\Phi_{c_m} : \cU_{c_m} \to U_{c_m}\}_{m \in \bbZ}
$
be a uniformization of the orbit of $c_0$ given in \thmref{unif reg crit}. Then the regular radii can be chosen uniformly small so that $c_0 \not\in \cU_{c_m} \setminus \{c_m\}$ for $m \in \bbZ$.
\end{prop}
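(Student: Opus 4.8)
The plan is to argue by contradiction, essentially rerunning the obstruction-to-linearization argument from the proof of \propref{no crit in reg nbh}, but anchored at the critical point $c_0$, whose defining feature along the critical orbit is that its strong-stable and center directions coincide, $E^{ss}_{c_0}=E^c_{c_0}$.

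First I would dispose of a uniformly bounded range of indices $0<|m|\le K_0$. Since $F|_\Lambda$ is uniquely ergodic with non-atomic invariant measure, the critical orbit contains no periodic point, so $c_m\neq c_0$ for every $m\neq 0$; hence $d_0:=\min\{\dist(c_0,c_m):0<|m|\le K_0\}>0$. As the regular radii are comparable to $\lambda^{\bepsilon|m|}$ and bounded below on $|m|\le K_0$, a single uniform rescaling of all radii makes $\diam\cU_{c_m}<d_0$ there, so that $c_0\notin\cU_{c_m}$ for $0<|m|\le K_0$. The threshold $K_0$ is fixed (uniformly) in the next step.

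For $|m|>K_0$ I would treat $m\ge 2$, the case $m\le -2$ being symmetric (replace the linearization of the forward orbit of $c_1$ by that of the backward orbit of $c_0$ from \thmref{unif reg crit}, and run the same argument with $F$ and $F^{-1}$ exchanged and ``forward/vertical'' and ``backward/horizontal'' exchanged). Assume, for contradiction, $c_0\in\cU_{c_m}$, and let $N,M\ge 1$ be uniform constants to be fixed. The point $c_0$ is forward $(L_0,\epsilon)$-regular along $E^{ss}_{c_0}$ for a uniformly bounded $L_0$ (it lies one step before the critical value $c_1$), and it is backward $(L_0,\epsilon)$-regular along $E^{c}_{c_0}=E^{ss}_{c_0}$ with the same $L_0$. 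Since $\|DF^N|_{E^{ss}_{c_0}}\|\le L_0\lambda^{(1-\epsilon)N}$ is exponentially small, \propref{loc linear} and \propref{vert angle shrink} — once the forward segment $c_0,\dots,c_N$ is known to lie in $\cU_{c_m},\dots,\cU_{c_{m+N}}$ — force $E^{ss}_{c_0}$ to be nearly the chart-vertical direction $E^v_{c_0}$ of $\cU_{c_m}$ at $c_0$; consequently — once the backward segment $c_0,c_{-1},\dots,c_{-M}$ is known to lie in $\cU_{c_m},\dots,\cU_{c_{m-M}}$ — \propref{loc linear} gives $\|DF^{-M}|_{E^{ss}_{c_0}}\|>\bL^{-1}\lambda^{-(1-\bepsilon)M}$, since the chart-vertical direction expands backward by $\approx\lambda^{-1}$ per step inside these charts. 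But $E^{ss}_{c_0}=E^c_{c_0}$, and backward $(L_0,\epsilon)$-regularity gives $\|DF^{-M}|_{E^{c}_{c_0}}\|\le L_0\lambda^{-\epsilon M}$; for $M$ larger than a uniform constant this contradicts the previous lower bound. Hence $c_0\notin\cU_{c_m}$. (Alternatively one may run the argument at the critical value $c_1$, which by \propref{unique crit value} is the unique general critical point of the orbit, deriving $\partial_\bbP F^{-M}(E^*_{c_1})<1$ exactly as in the proof of \propref{no crit in reg nbh}; the version above is slightly cleaner since it uses only the regularity of $c_0$.)

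The step I expect to be the main obstacle is precisely the parenthetical hypotheses above: guaranteeing that the finite forward orbit $c_0,\dots,c_N$ and the finite backward orbit $c_0,\dots,c_{-M}$ remain inside the regular neighborhoods $\cU_{c_{m\pm j}}$, so that \propref{loc linear} and \propref{vert angle shrink} genuinely apply — and this is exactly what the uniform shrinkage of the radii announced in the statement is for. The forward segment is controlled by shrinking $\cU_{c_m}$ into an $N$-times truncated neighborhood (a uniform rescaling suffices, as only $N$ forward steps are used) and invoking \lemref{trunc neigh fit}. The backward segment is more delicate, since along it the strong direction expands faster than the neighborhoods grow; here one uses that only a uniformly bounded number $M$ of backward steps is needed, together with the pinching estimates \lemref{pinch neigh fit} and \lemref{pinching neigh}, which pin down the vertical coordinate of $c_0$ within $\cU_{c_m}$ well enough that $c_{-1},\dots,c_{-M}$ cannot escape $\cU_{c_{m-1}},\dots,\cU_{c_{m-M}}$. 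Fixing a single uniform shrinkage factor and the constants $K_0,N,M$ so that all of these conditions fit together, uniformly in $m$, completes the proof.
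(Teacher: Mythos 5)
Your overall strategy is essentially the paper's: dispose of a bounded range of $|m|$ by shrinking radii, and for large $|m|$ run a regularity contradiction exploiting the coincidence $E^{ss}_{c_0}=E^c_{c_0}$. Where the paper cites \propref{no crit in reg nbh} (with chain lengths established as in the proof of \propref{no crit in pesin reg nbh}), you re-derive its content directly: forward regularity pins $E^{ss}_{c_0}$ near the chart-vertical of $\cU_{c_m}$ via \propref{vert angle shrink}, and backward expansion along the chart-vertical then contradicts the slow backward growth along $E^c_{c_0}$. That is a correct reading of the mechanism and an acceptable alternative to citing the proposition.

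The genuine gap is the repeated claim that the numbers $N$ and $M$ of forward and backward steps needed are ``uniformly bounded.'' They are not: every estimate you invoke (\propref{vert angle shrink}, \propref{loc linear}, the backward expansion rate along $E^v$) carries constants of the form $\bL$, which are polynomial in the regularity factor of the \emph{base} point of the linearization you are working in. Here that base is $c_m$, whose regularity factor, by \propref{decay reg}, degrades like $\lambda^{-2\epsilon|m-1|}$ as one moves along the orbit of $c_1$. So the angle bound you need is $\measuredangle(E^{ss}_{c_0},E^v_{c_0})\lesssim \bL\,\lambda^{(1-\bepsilon)N}$ with $\bL\asymp\lambda^{-\bepsilon|m|}$, and the backward contradiction requires overcoming the same $\bL$; both force $N,M\gtrsim \bepsilon|m|$, growing linearly in $|m|$, exactly as in the hypothesis $M,N>-K\logl L+K$ of \propref{no crit in reg nbh}. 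Consequently the chain-fitting step ($c_j\in\hcU_{c_{m+j}}$ for $-M<j<N$) cannot be dismissed as a matter of a ``uniformly bounded number of steps,'' and indeed this is precisely where the competing growth rates have to be compared against $|m|$ and the shrinkage parameter. Relatedly, \lemref{trunc neigh fit}, \lemref{pinch neigh fit} and \lemref{pinching neigh} are not the right tools here: they concern iterating truncated or pinched \emph{regions} within a single linearization, whereas what you need is a pointwise distance bound along the orbit of $c_0$ compared against the radii $\asymp\lambda^{\bepsilon|m+j|}$ of the $\hcU_{c_{m+j}}$, namely the estimate carried out in the proof of \propref{no crit in pesin reg nbh} (forward spread at rate $\lambda^{-\bepsilon}$, backward spread at rate $\lambda^{-(1+\bepsilon)}$). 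Until you replace the ``uniformly bounded'' step with that explicit comparison, the proof is incomplete precisely where you flagged it as delicate.
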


\begin{proof}
Let 
$
\{\Phi_{c_m} : \hcU_{c_m} \to \hU_{c_m}\}_{m \in \bbZ}
$
be a starting uniformization of the orbit of $c_0$. Recall that $\hU_{c_m} := \bbB(k \lambda^{-\bepsilon |m|})$ for some uniform constant $k \in (0,1)$. Let $l \in (0,k)$, and denote
$$
U_{c_m} := \bbB(l \lambda^{\bepsilon |m|})
\matsp{and}
\cU_{c_m} := \Phi_{c_m}^{-1}(U_{c_m}).
$$

Suppose towards a contradiction that $c_0 \in \cU_{c_m} \setminus \{c_m\}$. Note that by choosing $l$ small, we can assume that $|m|$ is uniformly large. Let $1 \leq M, N \leq |m|$ be the largest values for which we have $c_i \in \hcU_{c_{m+i}}$ for $-M< i <N$. Arguing in a similar way as in the proof of \propref{no crit in pesin reg nbh}, we obtain
$$
M > \min\{(1+\bepsilon)^{-1}\logl(l/k), |m|\},
$$
and
$$
N > \min\{\bepsilon^{-1}\logl(l/k), |m|\}.
$$
Thus, by choosing $l$ sufficiently smaller than $k$, the result follows from \propref{no crit in reg nbh}.
\end{proof}

\begin{prop}[Slow recurrence of the critical orbit]\label{return crit disc}
There exists a uniformly small constant $r >0$ such that the following holds. If
$$
\bbD_{c_m}(r\lambda^{\delta |m|}) \cap \bbD_{c_0}(r) \neq \varnothing
$$
for some $m \in \bbZ$, then
$$
|m| > \frac{1}{\bepsilon}\logl \left(\dist(c_m, c_0)\right).
$$
\end{prop}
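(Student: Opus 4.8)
The plan is to read this off from \propref{no crit in crit nbh}. Apply that result to the linearization of the critical orbit furnished by \thmref{unif reg crit}: it yields regular charts $\{\Phi_{c_m}\colon\cU_{c_m}\to U_{c_m}\}_{m\in\bbZ}$ with $U_{c_m}=\bbB(l_{c_m})$, $l_{c_m}=\bL^{-1}\lambda^{\bepsilon|m|}$, whose radii may be chosen uniformly small so that $c_0\notin\cU_{c_m}\setminus\{c_m\}$ for every $m\in\bbZ$. Since $F$ is infinitely renormalizable, the critical orbit is not periodic --- if $F^p(c_1)=c_1$ with $0<p<R_n$, then $c_1\in F^p(\cD^n)\cap\cD^n=\varnothing$, while $R_n\to\infty$ --- so $c_m\neq c_0$ whenever $m\neq 0$ (the case $m=0$ being vacuous). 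Hence $c_0\notin\cU_{c_m}$ for all $m\neq 0$.

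First I would turn this into a Euclidean estimate. The chart $\Phi_{c_m}$ carries $\cU_{c_m}$, which has Euclidean diameter $\asymp\bL^{-1}\lambda^{\bepsilon|m|}$ by \propref{reg nbh size}, onto the square $U_{c_m}$ of the same scale, and by the $C^r$-bounds in \thmref{reg chart} the $\lambda^{-\bepsilon|m|}$-growth of $\|\Phi_{c_m}^{\pm1}\|_{C^r}$ is carried entirely by higher-order derivatives on a domain of that size; so $\Phi_{c_m}$ has uniformly bounded distortion at its center $c_m$. Consequently $\cU_{c_m}$ contains the round ball $\bbD_{c_m}(\bL^{-1}\lambda^{\bepsilon|m|})$, and since $c_0$ lies outside $\cU_{c_m}$,
\[
\dist(c_m,c_0)\ \geq\ \bL^{-1}\lambda^{\bepsilon|m|}\qquad(m\neq 0).
\]
Applying $\logl$ (which is decreasing, with $\logl\bL\leq 0$ since $\bL\geq 1$) gives $\logl\dist(c_m,c_0)\leq-\logl\bL+\bepsilon|m|$, i.e.
\[
|m|\ \geq\ \tfrac1\bepsilon\logl\dist(c_m,c_0)\,+\,\tfrac1\bepsilon\logl\bL .
\]

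Finally, fix $r$ uniformly small enough that $2r\leq\bL^{-2}$. If the hypothesis holds for some $m\neq 0$, choose $q$ with $\dist(q,c_m)<r\lambda^{\delta|m|}$ and $\dist(q,c_0)<r$; then $\dist(c_m,c_0)<r\lambda^{\delta|m|}+r<2r\leq\bL^{-2}$ (the exponent $\delta$ playing no further role, as $\lambda^{\delta|m|}<1$). In particular $0<\dist(c_m,c_0)<1$ and, applying $\logl$ once more, $\logl\dist(c_m,c_0)>-2\logl\bL$, hence $\tfrac1\bepsilon\logl\bL>-\tfrac1{2\bepsilon}\logl\dist(c_m,c_0)$. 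Substituting into the last display,
\[
|m|\ >\ \tfrac1\bepsilon\logl\dist(c_m,c_0)-\tfrac1{2\bepsilon}\logl\dist(c_m,c_0)\ =\ \tfrac1{2\bepsilon}\logl\dist(c_m,c_0),
\]
which is the assertion (the marginal exponent $2\bepsilon$ being again an admissible choice of $\bepsilon$).

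The substance of the argument lives in \propref{no crit in crit nbh} (hence ultimately in \propref{no crit in reg nbh}); the rest is bookkeeping with the $\bepsilon$/$\bL$ conventions. The one point deserving care --- and the step I'd expect to be the main, if mild, obstacle --- is the choice of $r$: it must be taken uniformly small not merely so that the regular radii of \propref{no crit in crit nbh} are admissible, but precisely so as to swallow the additive error $\tfrac1\bepsilon\logl\bL$ contributed by the regularity factor $\bL$ of the critical orbit, which is why the slow-recurrence bound can be asserted only at scales below a definite threshold.
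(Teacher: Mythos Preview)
Your proof is correct and follows essentially the same approach as the paper's: both extract from \propref{no crit in crit nbh} the lower bound $\dist(c_m,c_0)\geq\bL^{-1}\lambda^{\bepsilon|m|}$ (the paper writes this as $l_{c_0}\lambda^{\bepsilon|m|}$), then choose $r$ small enough that the additive error $\bepsilon^{-1}\logl\bL$ is dominated by a fixed fraction of $\bepsilon^{-1}\logl\dist(c_m,c_0)$ and hence absorbed into a modified $\bepsilon$. One minor contextual remark: your appeal to infinite renormalizability to rule out periodicity of the critical orbit is out of place here, since \secref{sec:crit orb} does not assume that hypothesis; the paper's own proof simply leaves this edge case implicit.
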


\begin{proof}
For $m \in \bbZ$, let $\Phi_{c_m}$, $\hcU_{c_m}$ and $\cU_{c_m}$ be the objects defined in the proof of \propref{no crit in crit nbh}. Then
$$
\dist(c_m, c_0) > \radius(\cU_{c_m}) > l_{c_0}\lambda^{\bepsilon |m|}.
$$
Let $r := \underline{l_{c_0}}$. Then we have
$$
\dist(c_m, c_0) < r(1+\lambda^{\delta |m|}) < \underline{l_{c_0}}.
$$
Thus,
$$
|m| > \bepsilon^{-1}\left(\logl \left(\dist(c_m, c_0)\right)- \logl l_{c_0}\right) = \bepsilon^{-1}\logl\left( \dist(c_m, c_0)\right).
$$
\end{proof}

\begin{lem}\label{no pliss near crit}
There exists a uniform constant $r>0$ such that the following holds. Let $0 \leq n < m$. If $\dist(c_{\pm m}, c_0) < r\lambda^{\delta n}$, then we have $m > \bepsilon^{-1} n$. Moreover, for all $k \in \bbZ$ such that $|k| \leq \bepsilon^{-1} n$, the value $\pm m + k$ is not a Pliss moment in the forward/backward orbit of $c_{1/0}$.
\end{lem}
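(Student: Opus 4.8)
The plan is to prove the two assertions separately, both by exploiting the slow recurrence of the critical orbit.

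For the bound $m>\bepsilon^{-1}n$, I would invoke \propref{return crit disc} directly. Choosing the constant $r$ smaller than the one furnished there, the hypothesis $\dist(c_{\pm m},c_0)<r\lambda^{\delta n}<r$ forces the small disk $\bbD_{c_{\pm m}}(r\lambda^{\delta|m|})$ to meet $\bbD_{c_0}(r)$, so \propref{return crit disc} gives $m=|\pm m|>\bepsilon^{-1}\logl(\dist(c_{\pm m},c_0))$. Since $\dist(c_{\pm m},c_0)<r\lambda^{\delta n}$ we get $\logl(\dist(c_{\pm m},c_0))>\logl r+\delta n\ge\delta n$, and because $\delta>\bepsilon$ this yields $m>\bepsilon^{-1}n$ once the remaining multiplicative constant is absorbed into the flexible $\bepsilon$-notation.

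For the second assertion I would argue by contradiction: fix $k$ with $|k|\le\bepsilon^{-1}n$, assume that $\pm m+k$ is a Pliss moment, and treat the case $+m$ (the case $-m$ being symmetric under reversal of time). By the first part $m>\bepsilon^{-1}n\ge|k|$, so $j:=m+k\ge1$ and $c_j$ is a genuine point of the forward orbit of $c_1$ that is $(j,\infty)$-times $(1,\delta)_v$-regular along some direction $E^v_{c_j}$, after normalizing to the vertical direction using Propositions \ref{hom transverse for reg}--\ref{hom for back reg}. I would then linearize the $(j,\infty)$-orbit of $c_j$ via \thmref{reg chart}, obtaining regular charts $\Phi_{c_{j+i}}$ and globally defined maps $F_{c_{j+i}}$ uniformly $C^1$-close to the diagonal matrices $D_0F_{c_{j+i}}=A_i$; in particular the one-step map $F_{c_m}$ at the point $c_m$ (which sits at index $-k$ of this orbit) is $C^1$-close to a diagonal linear map and therefore preserves the vertical and horizontal directions. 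Against this I would set the uniformization of the critical orbit from \thmref{unif reg crit}: since $c_{\pm m}$ lies within $r\lambda^{\delta n}$ of the critical point $c_0$, the local dynamics of $F$ near $c_{\pm m}$ is, up to the changes of chart $\Phi_{c_0}\circ\Phi_{c_m}^{-1}$ and $\Phi_{c_1}\circ\Phi_{c_{m+1}}^{-1}$, that of the H\'enon transition $(x,y)\mapsto(x^2-\lambda y,x)$ at $c_0$, whose differential swaps the vertical and horizontal directions. Tracking the strong-stable directions, which are the vertical directions of the regular charts and vary continuously and align as in Propositions \ref{ss c contin} and \ref{vert angle shrink}, one sees that no pair of such chart changes can reconcile this coordinate swap with the near-linear behavior guaranteed by \thmref{reg chart}(ii). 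Equivalently, and more cleanly, I would exhibit the critical value $c_1$ --- the unique general critical point in its orbit by \propref{unique crit value} --- inside a regular neighborhood of $c_{m+1}$ along whose orbit $F$ is linearized for a uniformly bounded number of iterates in both time directions, in contradiction with \propref{no crit in reg nbh} (applied with regularity factor $1$, so the required number of iterates is a uniform constant, met because $m$ is large).

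The hard part, I expect, is making the last step work uniformly over the full range $|k|\le\bepsilon^{-1}n$: the regular neighborhood produced by a Pliss moment at time $m+k$ shrinks like $\lambda^{\bdelta|k|}$ when it is transported back toward $c_0$, whereas the distance from $c_{\pm m}$ to $c_0$ is only controlled by $r\lambda^{\delta n}$, so one must balance these two scales. This is exactly where the first part of the lemma is used --- it makes the window $\bepsilon^{-1}n$ a small fraction of $m$ --- in conjunction with the decay of regularity factors from \propref{decay reg}, which governs the transported neighborhoods, and the slow-recurrence estimate \propref{return crit disc}, which controls how close successive points of the critical orbit can return to $c_0$; the hypothesis $\epsilon<\delta$ is what makes this balance close.
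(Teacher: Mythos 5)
Your proposal matches the paper's argument: the first claim is read off directly from \propref{return crit disc}, and the second is obtained by placing $c_0$ inside the regular neighborhoods of the linearized orbit of $c_{\pm m}$ (obtained from the hypothetical Pliss moment at $\pm m+k$) and invoking the obstruction that a (general) critical point cannot survive in such neighborhoods --- the paper phrases this as ``arguing as in the proof of \propref{no crit in crit nbh},'' which reduces to \propref{no crit in reg nbh}, exactly what you cite. Your side remark about the H\'enon transition swapping vertical and horizontal directions is a valid heuristic for why the contradiction arises, but the clean formulation you give afterward is the one the paper actually uses; and your diagnosis that the delicate point is the competition between the exponential shrinkage $\lambda^{\bepsilon|k|}$ of the transported regular radii (via \propref{decay reg}) and the scale $r\lambda^{\delta n}$ of the hypothesis, resolved by the inequality $m>\bepsilon^{-1}n$ and $\epsilon<\delta$, correctly pinpoints the one estimate the paper's terse proof leaves to the reader.
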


\begin{proof}
We prove the claim for $c_m$. The proof of the statement for $c_{-m}$ is identical.

The first claim follows immediately from \propref{return crit disc}. Denote $p_0 := c_m$, and let 
$$
\{\hPhi_{p_l} : \hcU_{p_l} \to \hU_{p_l}\}_{-m < l < \infty}
$$
be a linearization of the $(m-1, \infty)$-orbit of $p_0$. Then for $k \in \bbZ$ such that $|k| \leq n$, we have
$$
\diam(\hcU_{p_k}) \asymp \lambda^{\bepsilon k} > r\lambda^{\epsilon' n}.
$$
Hence, $c_0 \in \hcU_{p_k}$. Arguing as in the proof of \propref{no crit in crit nbh}, we obtain a contradiction.
\end{proof}


\section{Regularity Near a Critical Orbit}
\label{sec:reg near crit orb}

Let $F$ be the diffeomorphism considered in \secref{sec:quant pesin}. Let $\eta, \epsilon, \delta \in (0, 1)$ be small constants satisfying $\baeta < \epsilon$, $\bepsilon < \delta$ and $\bdelta < 1$. Suppose that $F$ is $\eta$-homogeneous on $\Lambda$. Moreover, suppose $F$ has an $\epsilon$-regular critical value $c_1 \in \Lambda$ of quadratic type with the critical direction $E_{c_1}^* \in \bbP^2_{c_0}$. Let
$$
\{\Phi_{c_m} : (\cU_{c_m}, c_m) \to (U_{c_m}, 0)\}_{m \in \bbZ}
$$
be the uniformization of the orbit of $c_0$ given in  \thmref{unif reg crit}.

\begin{prop}[Recovery of forward regularity]\label{for reg rec}
Let $p_0 \in \cU_{c_0}$ and $\tiE_{p_0}^v \in \bbP^2_{p_0}$. Write $z_0 = (x_0, y_0) := \Phi_{c_0}(p_0)$. Let $-n \leq 0$ be a Pliss moment in the backward orbit of $c_0$. If
$p_{-i} \in \cU_{c_{-i}}$ for $0\leq i \leq n$, and we have
$$
\measuredangle(\tiE_{p_0}^v, E^v_{p_0}) > k|x_0|
\matsp{and}
n > (1+\bepsilon)\delta^{-1}\logl |x_0|,
$$
for some uniform constant $k \in (0,1)$, then $p_{-n}$ is $n$-times forward $(O(1), \delta)_v$-regular along $\tiE_{p_{-n}}^v$.
\end{prop}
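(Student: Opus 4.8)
The plan is to transport everything into the uniformizing charts $\{\Phi_{c_m}\}_{m\in\bbZ}$ of \thmref{unif reg crit} and to exploit that the H\'enon transition of $F$ occurs at the step $c_0\to c_1$, which lies strictly outside the orbit segment $c_{-n},c_{-n+1},\dots,c_0$ under consideration. On each $\cU_{c_{-i}}$ with $1\le i\le n$ the map $F$ is $C^1$-close to the linear model $A_{-i}=\operatorname{diag}(a_{-i},b_{-i})$ with $a_{-i}\approx 1$ and $b_{-i}\approx\lambda$ (\thmref{reg chart} and \propref{loc linear}), and $F$ preserves the vertical line field exactly (\thmref{reg chart}). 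Hence along this segment $DF^{n}$ acts as a near-diagonal map which carries $E^v_{p_{-n}}$ to $E^v_{p_0}$, contracts it by $\asymp\prod_{i=1}^{n}b_{-i}\in[\lambda^{(1+\bepsilon)n},\lambda^{(1-\bepsilon)n}]$, and leaves the genuine-horizontal factor essentially unchanged.

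First I would translate the two hypotheses into the chart $\Phi_{c_0}$: the angle condition reads $\measuredangle(\tiE^v_{z_0},E^{gv}_{z_0})\gtrsim|x_0|$ (up to the $O(1)$ distortion of $\Phi_{c_0}$), and $n>(1+\bepsilon)\delta^{-1}\logl|x_0|$ rewrites as $\lambda^{\delta n}<|x_0|^{1+\bepsilon}$, which, since $|x_0|\le l_{c_0}$, also forces $n$ to be uniformly large. Using that $DF^{n}$ restricted to the segment is near-diagonal with vertical contraction factor $\asymp\lambda^{n}$, and that $DF^{n}(\tiE^v_{p_{-n}})=\tiE^v_{p_0}$ is the (bounded, roughly vertical) direction $\tiE^v_{p_0}$, transverse to $E^v_{p_0}$ by at least $k|x_0|$, a direct angle computation --- equivalently \propref{vert angle shrink} applied backward along the segment --- traps the pulled-back direction in an exponentially thin cone around the vertical direction at $c_{-n}$:
$$
\measuredangle\bigl(\tiE^v_{p_{-n}},E^v_{p_{-n}}\bigr)\ \le\ \bL\,\lambda^{(1-\bepsilon)n}.
$$
Here the lower bound on the starting angle is what keeps the transport of directions non-degenerate at the scale $|x_0|$ forced by the quadratic critical transition, while $\lambda^{\delta n}<|x_0|^{1+\bepsilon}$ is what lets this alignment beat the accumulated $C^1$-errors and stay compatible with the target marginal exponent $\delta$.

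Finally I would invoke that $-n$ is a Pliss moment of the backward orbit of $c_0$: by \propref{freq pliss back} and \propref{hom transverse for reg}, $c_{-n}$ admits an $n$-step forward linearization along a (near-vertical) direction $E^v_{c_{-n}}$ for which $\|DF^{j}|_{E^v_{c_{-n}}}\|\le\bL\,\lambda^{(1-\delta)j}$, $1\le j\le n$. Writing a unit vector along $\tiE^v_{p_{-n}}$ as its $E^v_{c_{-n}}$-component (of size $\asymp 1$) plus a transverse component (of size $\le\bL\,\lambda^{(1-\bepsilon)n}$, by the cone estimate), and pushing it forward $j\le n$ steps --- the first component obeying the Pliss bound, the second expanding by at most $\lambda^{-\bepsilon j}$ by $\eta$-homogeneity --- gives
$$
\|DF^{j}|_{\tiE^v_{p_{-n}}}\|\ \le\ \bL\,\lambda^{(1-\delta)j}+\bL\,\lambda^{(1-\bepsilon)n-\bepsilon j}\ \le\ \bL\,\lambda^{(1-\delta)j}\qquad(1\le j\le n),
$$
where in the last step one uses $j\le n$, $\bepsilon<\delta$, and the $\bL,\bepsilon$ absorption conventions. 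In the homogeneous setting this is precisely the assertion that $p_{-n}$ is $n$-times forward $(O(1),\delta)_v$-regular along $\tiE^v_{p_{-n}}$.

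The step I expect to be the main obstacle is the middle one --- the cone estimate --- namely proving that the backward iterates of $\tiE^v_{p_0}$ genuinely collapse onto the contracting vertical cone at $c_{-n}$ at the required exponential rate, rather than lingering near the exceptional direction that stays non-contracting over the $n$ steps preceding the critical transition. This is where the two quantitative hypotheses (the angle bound and the depth bound $n>(1+\bepsilon)\delta^{-1}\logl|x_0|$) must be used together, and where the bookkeeping of the marginal exponents ($\baeta<\epsilon<\delta$ with the $\bepsilon/\bdelta$ slack) has to be arranged so that every estimate closes.
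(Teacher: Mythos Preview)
Your overall strategy (align under backward iteration, then invoke the Pliss regularity at $c_{-n}$) is the right one, but the cone estimate $\measuredangle(\tiE^v_{p_{-n}},E^v_{p_{-n}})\le\bL\lambda^{(1-\bepsilon)n}$ is both unjustified and, in the relevant regime, false. Your appeal to \propref{vert angle shrink} is circular: that proposition takes as \emph{input} a bound on $\|DF^n|_{\tiE^v_{p_{-n}}}\|$, which is precisely the conclusion you are trying to establish. More importantly, you are implicitly assuming that $\tiE^v_{p_0}$ is ``roughly vertical'' (bounded away from horizontal), but nothing in the hypothesis guarantees this. In the intended application --- pulling a stable-aligned direction at $p_1$ back through the H\'enon transition $F_{c_0}$ --- the direction $\tiE^v_{p_0}$ lands at angle only $\theta_0\asymp|x_0|$ from the \emph{horizontal}. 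Under the near-linear backward dynamics, $\theta_{-i}\asymp\lambda^{-(1-\bepsilon)i}\theta_0$, so it takes $j\approx\logl|x_0|$ steps before the direction leaves the horizontal cone, and the angle with vertical at time $-n$ is only $\asymp\lambda^{n-j}$, not $\lambda^{(1-\bepsilon)n}$. Plugging this weaker bound into your final decomposition, the transverse term at $j'=n$ fails to be dominated by $\lambda^{(1-\delta)n}$.

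The paper's proof resolves this by making the switchover time $j$ explicit rather than hiding it inside a global cone estimate. One tracks $\theta_{-i}:=\measuredangle(\tiE^v_{z_{-i}},E^{gh}_{z_{-i}})$ directly: from $\theta_0\gtrsim|x_0|$ one gets $j<(1+\bepsilon)\logl|x_0|$, and the depth hypothesis $n>(1+\bepsilon)\delta^{-1}\logl|x_0|$ then forces $j<(1-\epsilon)\delta n$. For $-n\le m\le -j$ the direction $\tiE^v_{p_m}$ is uniformly close to vertical, so \propref{loc linear} together with the Pliss regularity of $c_{-n}$ gives $\|DF^l|_{\tiE^v_{p_{-n}}}\|<K\lambda^{(1-\bepsilon)l}$ for $0\le l\le n-j$. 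No contraction is claimed on the remaining $j$ steps; instead one simply uses $\lambda^{(1-\bepsilon)(n-j)}<\lambda^{(1-\delta)n}$, which holds exactly because $j<\delta n$. The two-phase split at $j$ is the missing idea: it is where the angle bound (which fixes $j$) and the depth bound (which makes $j$ small relative to $n$) are actually combined.
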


\begin{proof}
Denote
$$
\tiE^v_{z_0} := D\Phi_{c_0}(\tiE^v_{p_0})
\matsp{and}
\theta_{-i} := \measuredangle(\tiE^v_{z_{-i}}, E_{z_{-i}}^{gh})
\matsp{for}
0 \leq i \leq n.
$$
By \thmref{reg chart} ii) and \propref{loc linear}, there exist uniform constants $C, c > 0$ such that if $\theta_{-i} < C$, then
$$
\theta_{-i} > c\lambda^{-(1-\bepsilon)i}\theta_0,
$$
and if $\theta_{-i} > C$, then
$$
|\theta_{-i - 1} - \pi/2| < \lambda^{(1-\bepsilon)} |\theta_{-i} - \pi/2|. 
$$

Let $-j \leq 0$ be the last moment such that $\theta_{-j} < C$. Observe 
$$
C > \theta_{-j} > ck\lambda^{-(1-\bepsilon)j}|x_0|.
$$
This implies
$$
j < (1+\bepsilon)\logl |x_0| < (1-\bepsilon)\delta n.
$$

Observe that for $-n \leq m \leq -j$, we have 
$$
\measuredangle\left(\tiE^v_{p_m}, E^v_{p_m}\right) < \nu
$$
for some sufficiently small $\nu > 0$. Thus, by \propref{loc linear} and the fact that $-n$ is Pliss, we obtain
$$
\|DF^l|_{\tiE^v_{p_{-n}}}\| < K\lambda^{(1-\bepsilon) l}
\matsp{for}
0\leq l \leq n-j,
$$
where $K > 0$ is a uniform constant. The result now follows from the fact that
$$
\lambda^{(1-\bepsilon) (n-j)} < \lambda^{(1-\delta) n}.
$$
\end{proof}

\begin{prop}[Obstruction to forward regularity]\label{for reg no rec}
There exists a uniform constant $L_0 \geq 1$ such that the following holds. For $L \geq L_0$, denote $L' := L^{2+\bdelta}$. Let $p_1 \in \bbD_{c_1}(1/L')$; $\tiE_{p_1}^v \in \bbP^2_{p_1}$ and $n \in \bbN$. If
$p_{-i} \in \cU_{c_{-i}}$ for $0\leq i \leq n$, and we have
$$
\measuredangle(\tiE_{p_1}^v, E^v_{p_1}) < 1/L'
\matsp{and}
-(1+\bdelta)\logl L < n <  -\frac{1-\bdelta}{\delta}\logl L,
$$
then $p_{-n}$ is not $n$-times forward $(L, \delta)_v$-regular along $\tiE_{p_{-n}}^v$.
\end{prop}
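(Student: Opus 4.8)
The claim localizes at step $k=n$: it suffices to exhibit $k\in\{1,\dots,n\}$ with $\|DF^{k}|_{\tiE^v_{p_{-n}}}\|>L\lambda^{(1-\delta)k}$, and I would take $k=n$. Writing $\tiE^v_{p_0}:=DF^{-1}(\tiE^v_{p_1})=DF^{n}(\tiE^v_{p_{-n}})$, one has $\|DF^{n}|_{\tiE^v_{p_{-n}}}\|=\|DF^{-n}|_{\tiE^v_{p_0}}\|^{-1}$, so the whole proof reduces to the single estimate
\[
\|DF^{-n}|_{\tiE^v_{p_0}}\| < L^{-1}\lambda^{-(1-\delta)n}.
\]

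First I would pull the direction back through the critical transition. By \thmref{unif reg crit}, $\Phi_{c_1}\circ F\circ\Phi_{c_0}^{-1}=H$ with $H(x,y)=(x^2-\lambda y,x)$, and $DH^{-1}$ at the origin interchanges the vertical and horizontal axes. Since $p_1\in\bbD_{c_1}(1/L')$ forces $|z_1|,|z_0|=\bo(1/L')$ with $z_i=\Phi_{c_i}(p_i)$, and $\measuredangle(\tiE^v_{p_1},E^v_{p_1})<1/L'$, plugging $\tiE^v_{p_1}$ into $DH^{-1}(z_1)$ and using $\|\Phi_{c_i}^{\pm1}\|_{C^r}=\bo(1)$ shows that $\tiE^v_{p_0}$ is $\bkappa/L'$-horizontal in $\cU_{c_0}$. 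Equivalently, in the chart frame $\tiE^v_{p_0}=\alpha\,E^h_{p_0}+\beta\,E^v_{p_0}$ with $|\alpha|\le\bkappa$ and $|\beta|\le\bkappa/L'$.

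Next I would bound the two relevant backward rates along $c_0,c_{-1},\dots,c_{-n}$. Since $c_1$ is an $\epsilon$-regular critical value with regularity factor $1$, \propref{decay reg} makes $c_0=F^{-1}(c_1)$ Pesin $(\bo(1),\epsilon)$-regular, and $\Phi_{c_0}$ straightens $W^c(c_0)$ onto the horizontal axis; hence the horizontal direction $E^h_{c_0}$ carries the slow (neutral) backward behaviour while $E^v_{c_0}$ is the direction of maximal backward expansion. Using $p_{-i}\in\cU_{c_{-i}}$ for $0\le i\le n$ together with \propref{loc linear} (regular radii chosen small) and $\eta$-homogeneity, one gets uniform $K_1,K_2\ge1$ with $\|DF^{-n}|_{E^h_{p_0}}\|\le K_1\lambda^{-\bepsilon n}$ and $\|DF^{-n}|_{E^v_{p_0}}\|\le K_2\lambda^{-(1+\bepsilon)n}$, so by the triangle inequality applied to the decomposition of Step 1,
\[
\|DF^{-n}|_{\tiE^v_{p_0}}\| \ \le\ \bkappa K_1\,\lambda^{-\bepsilon n} \ +\ \frac{\bkappa K_2}{L'}\,\lambda^{-(1+\bepsilon)n}.
\]

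Finally I would choose $n$ so that both terms fall below $\tfrac12 L^{-1}\lambda^{-(1-\delta)n}$. For the first term this is $\lambda^{-(1-\delta-\bepsilon)n}>2\bkappa K_1 L$, i.e. $n>(\ell+\bo(1))/(1-\delta-\bepsilon)$ with $\ell:=-\logl L$; this follows from the lower bound $n>-(1+\bdelta)\logl L$ once $L\ge L_0$, using $(1+\bdelta)(1-\delta-\bepsilon)>1$ for the fat choice of $\bdelta$. For the second term, using $L/L'=L^{-(1+\bdelta)}$, the requirement is $\lambda^{(\delta+\bepsilon)n}>2\bkappa K_2\,L^{-(1+\bdelta)}$, i.e. $n<((1+\bdelta)\ell-\bo(1))/(\delta+\bepsilon)$; since $\bepsilon$ is small relative to $\delta$ (so that $\tfrac{1-\bdelta}{\delta}\le\tfrac{1+\bdelta}{\delta+\bepsilon}$), this follows from the upper bound $n<-\tfrac{1-\bdelta}{\delta}\logl L$. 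Summing, $\|DF^{-n}|_{\tiE^v_{p_0}}\|<L^{-1}\lambda^{-(1-\delta)n}$, hence $\|DF^{n}|_{\tiE^v_{p_{-n}}}\|>L\lambda^{(1-\delta)n}$, so $p_{-n}$ is not $n$-times forward $(L,\delta)_v$-regular along $\tiE^v_{p_{-n}}$. The main obstacle is the second step: correctly reading off, in the twisted chart produced by \thmref{unif reg crit}, that a $\bkappa/L'$-horizontal direction at $p_0$ expands under $DF^{-n}$ only at the slow rate $\lambda^{-\bepsilon n}$ apart from a genuinely fast component of size $\lesssim 1/L'$, and arranging $L_0$, the fatness of $\bdelta$, and the size of $\bepsilon$ relative to $\delta$ so that the lower and upper bounds on $n$ precisely annihilate the two terms.
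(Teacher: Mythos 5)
Your proof is correct and reaches the same conclusion, but by a route that is organized differently from the paper's. The paper's proof tracks the angle $\theta_{-i}:=\measuredangle(\tiE^v_{z_{-i}},E^{gh}_{z_{-i}})$ in the charts, locates the transition moment $j\approx -(1-\bepsilon)\logl L'$ at which the direction stops being aligned with the slow horizontal, and handles the two regimes $n<j$ (the direction stayed horizontal the whole time, giving $\|DF^n|_{\tiE^v_{p_{-n}}}\|>\lambda^{\bepsilon n}$, which beats $L\lambda^{(1-\delta)n}$ by the lower bound on $n$) and $n>j$ (after time $j$ the direction expands at the fast rate, giving a factor $\lambda^{-(1+\eta)(n-j)}$, which beats $L$ by the upper bound on $n$). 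You instead fix the decomposition $\tiE^v_{p_0}=\alpha\,E^h_{p_0}+\beta\,E^v_{p_0}$ once and for all at time $0$, with $|\alpha|=\bo(1)$ and $|\beta|\lesssim 1/L'$ coming from the H\'enon transition at the critical moment, use linearity of $DF^{-n}$ and the two chart rates $\|DF^{-n}|_{E^h_{p_0}}\|\lesssim\lambda^{-\bepsilon n}$ and $\|DF^{-n}|_{E^v_{p_0}}\|\lesssim\lambda^{-(1+\bepsilon)n}$ (from \thmref{reg chart} ii), \propref{loc linear} and $\eta$-homogeneity), and check that each of the two resulting terms is below $\tfrac12 L^{-1}\lambda^{-(1-\delta)n}$ on the given range of $n$. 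The two approaches are morally the same computation — the crossover of your two terms happens precisely at the paper's $j$ — but yours avoids the explicit case split and the angle bookkeeping, at the cost of having to justify the chart-rate estimate for the horizontal direction directly (which indeed follows from the diagonal structure $A_m$ in \thmref{reg chart} and \propref{loc linear}, so this is fine).

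One small terminological slip: $c_0$ is not "Pesin $(\bo(1),\epsilon)$-regular". By construction $E^{ss}_{c_0}=E^c_{c_0}$ (it is the regular critical point), so $c_0$ fails the transversality requirement $\measuredangle(E^{ss},E^c)>1/L$ in the definition of Pesin regularity, just as $c_1$ does. What your argument actually uses — and what \thmref{unif reg crit} actually supplies — is only that $c_0$ is infinitely backward regular, so that the backward linearization $\{\Phi_{c_{-i}}\}_{i\ge 0}$ exists with $W^c(c_0)$ sent to the horizontal axis and the diagonal rates $a_m\sim 1$, $b_m\sim\lambda$; that suffices, and the claim of Pesin regularity should be removed.
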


\begin{proof}
Denote
$$
z_{-i} := \Phi_{c_{-i}}(p_{-i})
\comma
\tiE_{z_{-i}}^v := D\Phi_{c_{-i}}(\tiE_{p_{-i}}^v)
\matsp{and}
\theta_{-i} := \measuredangle(\tiE_{z_{-i}}^v, E_{z_{-i}}^{gh}).
$$
By Theorems \ref{unif reg crit} and \ref{reg chart} ii), and \propref{loc linear}, we have
$$
\theta_{-i} < K\lambda^{-(1+\bepsilon)i}/L'
$$
for some uniform constant $K \geq 1$.

If $\theta_{-i} < \lambda^{\bepsilon i}\nu$, where $\nu >0$ is a sufficiently small uniform constant, then
$$
\measuredangle(\tiE^v_{p_{-i}}, E^h_{p_{-i}}) < \bar\nu.
$$
Thus, by \propref{loc linear}, we obtain
$$
\|DF^{-i}|_{\tiE^v_{p_0}}\| < \lambda^{-\bepsilon i}.
$$

Let $j >0$ be the smallest integer such that
$$
K\lambda^{-(1+\bepsilon)j}/L' > \lambda^{\bepsilon j}\nu.
$$
Simplifying, we obtain
\begin{equation}\label{eq:for reg no rec}
j > -(1-\bepsilon)\logl L'.
\end{equation}

Suppose $n < j$. By the assumed lower bound on $n$, we have
$
L^{-1} > \lambda^{(1-\bdelta)n}.
$
It follows that
$$
\|DF^n|_{\tiE^v_{p_{-n}}}\| > \lambda^{\bepsilon n} > L\lambda^{(1-\delta)n}.
$$
Hence, $p_{-n}$ is not $n$-times forward $(L, \delta)_v$-regular along $\tiE^v_{p_{-n}}$.

Suppose $n > j$. Observe that
$$
\|DF^n|_{\tiE^v_{p_{-n}}}\| > \lambda^{\bepsilon j}\lambda^{(1+\eta)(n-j)}=\lambda^{-(1-\bepsilon)j}\lambda^{(\delta + \eta)n}\lambda^{(1-\delta)n}.
$$
By \eqref{eq:for reg no rec} and the assumed upper bound on $n$, we have
$$
\lambda^{-(1-\bepsilon)j}\lambda^{(\delta + \eta)n}> (L')^{(1-\bepsilon)^2} L^{-\frac{(1-\bdelta)(\delta+\eta)}{\delta}} > L.
$$
Hence, we again see that $p_{-n}$ is not $n$-times forward $(L, \delta)_v$-regular along $\tiE^v_{p_{-n}}$.
\end{proof}

\begin{prop}[Recovery of backward regularity]\label{back reg rec}
Let $p_1 \in \cU_{c_1}$ and $\tiE_{p_1}^h \in \bbP^2_{p_1}$. Write $z_1 = (x_1, y_1) := \Phi_{c_1}(p_1).$ Let $n \geq 1$ be a Pliss moment in the forward orbit of $c_1$. If
$p_i \in \cU_{c_i}$ for $1\leq i \leq n$, and we have
$$
\measuredangle(\tiE_{p_1}^h, E^v_{p_1}) > k|x_1|
\matsp{and}
n > (1+\bepsilon)\delta^{-1}\logl |x_1|
$$
for some uniform constant $k \in (0,1)$, then $p_n$ is $n$-times backward $(O(1), \delta)_h$-regular along $\tiE_{p_n}^h$.
\end{prop}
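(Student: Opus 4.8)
The plan is to transpose the proof of \propref{for reg rec}, reversing time and exchanging the roles of the vertical/horizontal directions and of $c_0/c_1$. There one pulls a direction that is not too close to the strong-stable direction \emph{backward} along the pre-critical orbit of $c_0$ and shows it straightens to a genuinely vertical direction; here I would push $\tiE^h_{p_1}$ \emph{forward} along the post-critical orbit $p_1,\dots,p_n$ and show it straightens to the neutral (horizontal) direction, after which the Pliss property at $c_n$ governs its backward derivatives. Concretely: work in the charts $\Phi_{c_i}$ of \thmref{unif reg crit}; put $z_i:=\Phi_{c_i}(p_i)$, $\tiE^h_{z_i}:=D\Phi_{c_i}(\tiE^h_{p_i})$, and monitor the angle $\theta_i:=\measuredangle(\tiE^h_{z_i},E^{gv}_{z_i})$ to the genuine vertical direction. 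Using that $\Phi_{c_1}$ is the uniformizing chart at the critical value and $|x_1|$ is the $x$-coordinate of $z_1$, the hypothesis $\measuredangle(\tiE^h_{p_1},E^v_{p_1})>k|x_1|$ transcribes to a lower bound of the form $\theta_1\gtrsim k|x_1|$.

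The first step is the angle recursion. By \thmref{reg chart} ii) each linearized map $F_{c_i}$ is $C^1$-close to $A_i=\operatorname{diag}(a_i,b_i)$ with $|a_i|/|b_i|$ of order $\lambda^{-1}$, so \propref{loc linear} furnishes uniform constants $C,c>0$ with: $\theta_i>c\,\lambda^{-(1-\bepsilon)(i-1)}\theta_1$ as long as $\theta_{i-1}<C$ (the direction drifts away from the vertical under forward iteration), and once $\theta_i\ge C$ the direction is uniformly transverse to the vertical and — by \propref{hor near inv} — converges to the genuine horizontal direction at a definite geometric rate. Let $j\ge 1$ be the last index with $\theta_j<C$. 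Then $C>\theta_j\gtrsim c\,\lambda^{-(1-\bepsilon)j}\,k|x_1|$, so $j<(1+\bepsilon)\logl|x_1|$; combined with the hypothesis $n>(1+\bepsilon)\delta^{-1}\logl|x_1|$ and the slack in the $\bepsilon$'s, this gives $j<(1-\bepsilon)\delta n$, i.e.\ the ``straightening phase'' occupies a negligible fraction of the $n$ steps.

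The second step converts this into backward regularity. For $j\le i\le n$, $\tiE^h_{p_i}$ stays uniformly transverse to the strong-stable direction, hence by \propref{loc linear} and \propref{hor near inv} it is uniformly close to $E^h_{p_i}$ on that range. Since $n$ is a Pliss moment in the forward orbit of $c_1$, the point $c_n$ is $n$-times backward $(1,\delta)_h$-regular along $E^h_{c_n}$, and feeding this and the near-linearity of \propref{loc linear} in, one obtains $\|DF^{-l}|_{\tiE^h_{p_n}}\|<K\lambda^{-\bepsilon l}$ for $0\le l\le n-j$. As $\bepsilon<\delta$ this already yields $\|DF^{-l}|_{\tiE^h_{p_n}}\|\le O(1)\lambda^{-\delta l}$ on that range; for $n-j<l\le n$ one extends across the remaining $\le j$ steps using $\eta$-homogeneity (a factor at most $\lambda^{-(1+\eta)j}$), and since $j<(1-\bepsilon)\delta n$ the product is still $O(1)\lambda^{-\delta l}$. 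Hence $p_n$ is $n$-times backward $(O(1),\delta)_h$-regular along $\tiE^h_{p_n}$.

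As in \propref{for reg rec}, the delicate point is keeping precise track of the angle $\theta_i$ through the first few iterates, where $\tiE^h$ may still be tilted toward the strong-stable direction and its backward derivatives are genuinely expanding: one must show the length $j$ of this phase satisfies $j<(1-\bepsilon)\delta n$, so that the regularity lost there is more than recouped by the surplus exponent ($\bepsilon$ rather than $\delta$) available on the remaining Pliss-controlled steps. This is exactly where the hypotheses $n>(1+\bepsilon)\delta^{-1}\logl|x_1|$ and $\measuredangle(\tiE^h_{p_1},E^v_{p_1})>k|x_1|$ are used, and the bookkeeping is the mirror image of that in the proof of \propref{for reg rec}.
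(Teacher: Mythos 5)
Your proposal follows the paper's proof of Proposition~\ref{back reg rec} essentially step for step: the same angle observable $\theta_i=\measuredangle(\tiE^h_{z_i},E^{gv}_{z_i})$, the same two-phase recursion (drift away from the vertical while $\theta_i<C$, then geometric alignment to horizontal), the same threshold $j<(1-\bepsilon)\delta n$ obtained from the hypotheses, and the same concatenation of Pliss-controlled backward estimates on the range $l\le n-j$ with an $\eta$-homogeneity bound on the remaining $j$ steps. The only differences are cosmetic (slight indexing shifts and summarizing the post-threshold recursion qualitatively rather than writing out $|\theta_{i+1}-\pi/2|<\lambda^{1-\bepsilon}|\theta_i-\pi/2|+\lambda^{(1-\bepsilon)i}$); the argument is correct.
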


\begin{proof}
Denote
$$
\tiE_{z_1}^h := D\Phi_{c_1}(\tiE_{p_1}^h)
\matsp{and}
\theta_i := \measuredangle(\tiE_{z_i}^h, E_{z_i}^{gv})
\matsp{for}
1 \leq i \leq n.
$$
By \thmref{reg chart} ii), \propref{loc linear} and \lemref{hor near inv}, there exist uniform constants $C, c > 0$ such that if $\theta_i < C$, then
$$
\theta_i > c\lambda^{-(1-\bepsilon)i}\theta_1,
$$
and if $\theta_i > C$, then
$$
|\theta_{i+ 1}-\pi/2| < \lambda^{(1-\bepsilon)} |\theta_i-\pi/2| + \lambda^{(1-\bepsilon)i}.
$$

Let $j \geq 1$ be the last moment such that $\theta_j < C$. Observe 
$$
C > \theta_j > ck\lambda^{-(1-\bepsilon)j}|x_1|.
$$
This implies
$$
j < (1+\bepsilon)\logl |x_1| < (1-\bepsilon)\delta n.
$$

Observe that for $j \leq m \leq n$, we have 
$$
\measuredangle\left(\tiE^h_{p_m}, E^h_{p_m}\right) < \nu
$$
for some sufficiently small $\nu > 0$. Thus, by \propref{loc linear} and the fact that $n$ is Pliss, we obtain
$$
\|DF^{-l}|_{\tiE^h_{p_n}}\| < K\lambda^{-\bepsilon l}
\matsp{for}
0\leq l < n-j,
$$
where $K > 0$ is a uniform constant. Thus,
$$
\|DF^{-n+j -l}|_{\tiE^h_{p_n}}\| < K\lambda^{-\bepsilon (n-j)}\lambda^{-(1+\eta)l}
\matsp{for}
0\leq l < j.
$$
The result now follows from the fact that
$$
\lambda^{-(1+\eta)j} < \lambda^{-(1-\bepsilon)\delta n}.
$$
\end{proof}

\begin{prop}[Obstruction to backward regularity]\label{back reg no rec}
There exists a uniform constant $L_0 \geq 1$ such that the following holds. For $L \geq L_0$, denote $L' := L^{2+\bdelta}$. Let $p_1 \in \bbD_{c_1}(1/L')$; $\tiE_{p_1}^h \in \bbP^2_{p_1}$ and $n \in \bbN$. If
$p_i \in \cU_{c_i}$ for $1\leq i \leq n$, and we have
$$
\measuredangle(\tiE_{p_1}^h, E^v_{p_1}) < 1/L'
\matsp{and}
-(1+\bdelta)\logl L < n <  -\frac{1-\bdelta}{\delta}\logl L,
$$
then $p_n$ is not $n$-times backward $(L, \delta)_h$-regular along $\tiE_{p_n}^h$.
\end{prop}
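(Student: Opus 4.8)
The plan is to follow the proof of \propref{for reg no rec} almost verbatim, the only change being that we now track the direction $\tiE^h_{p_1}$ \emph{forward} along the orbit (toward $c_n$) instead of a vertical direction backward. Write $z_i := \Phi_{c_i}(p_i)$, $\tiE^h_{z_i} := D\Phi_{c_i}(\tiE^h_{p_i})$ and $\theta_i := \measuredangle(\tiE^h_{z_i}, E^{gv}_{z_i})$ for $1 \le i \le n$, so that $\theta_1 = O(1/L')$ by the hypothesis on $\tiE^h_{p_1}$. By Theorems \ref{unif reg crit} and \ref{reg chart} ii) the maps $F_{c_i}$ along the forward orbit of $c_1$ are $C^1$-close to the diagonal maps $A_i = \mathrm{diag}(a_i, b_i)$ with $\lambda^{\bepsilon} < a_i < \lambda^{-\bepsilon}$ and $\lambda^{1+\bepsilon} < b_i < \lambda^{1-\bepsilon}$; since genuine vertical directions are exactly $F$-invariant whereas the horizontal one is the dominant eigendirection, the nearly vertical direction $\tiE^h_{z_i}$ drifts toward genuine horizontal at rate at most $a_i/b_i < \lambda^{-(1+\bepsilon)}$ per step. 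Combined with \propref{loc linear} this gives $\theta_i < K\lambda^{-(1+\bepsilon)i}/L'$ for a uniform $K \ge 1$.

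Let $j \ge 1$ be the smallest integer with $K\lambda^{-(1+\bepsilon)j}/L' > \lambda^{\bepsilon j}\nu$, for a small uniform constant $\nu > 0$; solving this gives $j > -(1-\bepsilon)\logl L'$ exactly as for \eqref{eq:for reg no rec}. For every $i < j$ we then have $\theta_i < \lambda^{\bepsilon i}\nu$, and — the threshold is made to shrink with $i$ precisely to absorb the $O(\bL\lambda^{-\bepsilon i})$ distortion of the charts — the intrinsic angle $\measuredangle(\tiE^h_{p_i}, E^v_{p_i})$ stays below a small uniform constant. Thus $\tiE^h_{p_i}$ is nearly vertical in $\cU_{c_i}$, so \propref{loc linear} and \thmref{reg chart} ii) give the backward-expanding estimate $\|DF^{-1}|_{\tiE^h_{p_i}}\| > \lambda^{-(1-\bepsilon)}$.

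Now write $\|DF^{-n}|_{\tiE^h_{p_n}}\| = \prod_{i=1}^n \|DF^{-1}|_{\tiE^h_{p_i}}\|$ and split as in \propref{for reg no rec}. If $n < j$, then every factor with $2 \le i \le n$ is $> \lambda^{-(1-\bepsilon)}$, while the factor at $i = 1$ — which is $DF^{-1}$ for the inverse H\'enon transition $F_{c_0}^{-1}(x,y) = (y,(y^2-x)/\lambda)$ applied to a nearly vertical direction near $0$ — is bounded above and below by uniform constants; hence $\|DF^{-n}|_{\tiE^h_{p_n}}\| > \lambda^{-(1-\bepsilon)n}$ (constants absorbed into $\bepsilon$), and the lower bound $n > -(1+\bdelta)\logl L$ forces this to exceed $L\lambda^{-\delta n}$, precisely as in the forward case. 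If $n \ge j$, keep the factors with $i < j$ as above and bound each of the remaining $n - j + 1$ factors below by $\lambda^\eta$ using $\eta$-homogeneity ($\|DF|_E\| < \lambda^{-\eta}$ for every direction, together with \propref{loc linear}). This yields
$$
\|DF^{-n}|_{\tiE^h_{p_n}}\| > \lambda^{-(1-\bepsilon)j}\,\lambda^{\eta(n-j)},
$$
and combining $j > -(1-\bepsilon)\logl L'$ with the upper bound $n < -\tfrac{1-\bdelta}{\delta}\logl L$ and the identity $L' = L^{2+\bdelta}$ gives, just as in \propref{for reg no rec},
$$
\lambda^{-(1-\bepsilon)j}\,\lambda^{(\eta+\delta)n} > (L')^{(1-\bepsilon)^2}\,L^{-\frac{(1-\bdelta)(\eta+\delta)}{\delta}} > L,
$$
so $\|DF^{-n}|_{\tiE^h_{p_n}}\| > L\lambda^{-\delta n}$. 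In either case $p_n$ fails to be $n$-times backward $(L,\delta)_h$-regular along $\tiE^h_{p_n}$.

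The conceptual content is just the time-reversed analogue of \propref{for reg no rec}: iterating the ``wrong'' nearly vertical direction $\tiE^h_{p_1}$ forward keeps it nearly vertical for about $-(1-\bepsilon)\logl L'$ steps, during which $DF^{-1}$ stretches it by roughly $\lambda^{-1}$ per step — far more than the $\lambda^{-\delta}$ per step allowed by backward $h$-regularity. I expect the main bookkeeping difficulty to be, as in \propref{for reg no rec}, threading the chart-distortion factors $O(\bL\lambda^{-\bepsilon i})$ through the angle and norm estimates so that the ``$i<j$'' regime genuinely produces nearly vertical directions, and choosing $\epsilon$ small relative to $\delta$ (with $\bdelta$ taken accordingly) so that the two closing inequalities $\lambda^{-(1-\bepsilon)n} > L\lambda^{-\delta n}$ and $(L')^{(1-\bepsilon)^2}L^{-(1-\bdelta)(\eta+\delta)/\delta} > L$ hold with room to spare.
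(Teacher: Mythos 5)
Your argument is correct and is exactly the mirror of the paper's proof of Proposition~\ref{for reg no rec} — which is presumably why the paper omits the proof of Proposition~\ref{back reg no rec} altogether. Tracking the angle $\theta_i$ to genuine vertical, bounding it by $K\lambda^{-(1+\bepsilon)i}/L'$, defining $j$ by the threshold crossing, and then splitting into the cases $n<j$ (per-step expansion $\lambda^{-(1-\bepsilon)}$ along nearly vertical directions) and $n\ge j$ (per-step bound $\lambda^{\eta}$ from $\eta$-homogeneity) is precisely the intended time-reversed argument; the one genuine extra point — that the single inverse critical transition at $i=1$ contributes only a uniformly bounded factor — you have correctly isolated and handled.
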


For $p \in \bbR^2$ and $t > 0$, let
$$
\bbD_p(t) := \{q \in \bbR^2 \; | \; \dist(q, p)< t\}.
$$

\begin{prop}\label{measure in crit disk}
For $t > 0$, denote
$$
\Lambda_{c_0}^t := \Lambda \cap \bigcup_{n=0}^\infty\bbD_{c_{- n}}(t\lambda^{\delta n}).
$$
Then we have
$
\mu\left(\Lambda_{c_0}^t\right) \to 0
$ as $
t \to 0.
$
\end{prop}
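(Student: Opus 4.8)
The plan is to use the $F$-invariance of $\mu$ to push each ball $\bbD_{c_{-n}}(t\lambda^{\delta n})$ forward onto a ball centered at the critical point $c_0$, thereby reducing the assertion to a concentration estimate (logarithmic integrability of $\dist(\cdot,c_0)$) for $\mu$ near $c_0$. First I would linearize along the backward critical orbit: by \thmref{unif reg crit} there are regular charts $\Phi_{c_{-n}}\colon \cU_{c_{-n}}\to U_{c_{-n}}$ for $n\ge 0$, with $\diam\cU_{c_{-n}}\asymp\lambda^{\bepsilon n}$, on which $F$ is $C^1$-close to $A_m=\operatorname{diag}(a_m,b_m)$ with $\lambda^\bepsilon<a_m<\lambda^{-\bepsilon}$ and $\lambda^{1+\bepsilon}<b_m<\lambda^{1-\bepsilon}$. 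For $t$ below a uniform threshold $t_0$, since $\delta>\bepsilon$ we have $t\lambda^{\delta n}\ll\lambda^{\bepsilon n}\asymp\diam\cU_{c_{-n}}$, so $\bbD_{c_{-n}}(t\lambda^{\delta n})$ lies well inside $\cU_{c_{-n}}$, and by \propref{loc linear} its first $n$ iterates stay inside the successive regular neighborhoods, distorted by a uniformly bounded factor from the linear model; the neutral direction expands by at most $\lambda^{-\bepsilon}$ per step, so
$$
F^n\big(\Lambda\cap\bbD_{c_{-n}}(t\lambda^{\delta n})\big)\ \subseteq\ \Lambda\cap\bbD_{c_0}\big(Ct\lambda^{(\delta-\bepsilon)n}\big)
$$
for a uniform $C\ge 1$, with $\delta-\bepsilon>0$. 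Writing $\beta(s):=\mu(\Lambda\cap\bbD_{c_0}(s))$, subadditivity and invariance of $\mu$ give
$$
\mu(\Lambda_{c_0}^t)\ \le\ \sum_{n=0}^\infty\mu\big(F^n(\Lambda\cap\bbD_{c_{-n}}(t\lambda^{\delta n}))\big)\ \le\ \sum_{n=0}^\infty\beta\big(Ct\lambda^{(\delta-\bepsilon)n}\big).
$$

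Next I would show that this series tends to $0$ as $t\to0$. Since $F|_\Lambda$ is uniquely ergodic and $\Lambda$ is infinite, $\mu$ is non-atomic, so $\beta(s)\downarrow 0$ as $s\downarrow 0$, and each term tends to $0$. To pass this to the sum I would use dominated convergence: for $t\le t_0$ monotonicity gives $\beta(Ct\lambda^{(\delta-\bepsilon)n})\le\beta(Ct_0\lambda^{(\delta-\bepsilon)n})$, so it suffices that $\sum_{n\ge 0}\beta(Ct_0\lambda^{(\delta-\bepsilon)n})<\infty$. This summability along a geometric sequence of radii is equivalent to $\log\dist(\cdot,c_0)\in L^1(\mu)$, which I would extract from the renormalization structure: the $R_k$ level-$k$ pieces $\Lambda^k_i=\Lambda\cap F^{i-1}(\cD^k)$ are pairwise disjoint, cyclically permuted by $F$, and cover $\Lambda$, so $\mu(\Lambda^k_i)=R_k^{-1}$; since $c_0$ lies in the nested sequence $\Lambda^k_{R_k}\downarrow\{c_0\}$, with pieces of comparable diameters shrinking at least geometrically while $R_k\ge 2^k$, one gets $\beta(s)\lesssim R_{k(s)}^{-1}$ with $k(s)\to\infty$ at least logarithmically in $1/s$, which makes the series converge. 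Granting this, dominated convergence yields $\mu(\Lambda_{c_0}^t)\le\sum_n\beta(Ct\lambda^{(\delta-\bepsilon)n})\to 0$, as claimed.

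The technical heart — and the step I expect to be the main obstacle — is precisely this logarithmic-integrability/summability claim. Minimality of $F|_\Lambda$ forbids any soft argument, since every neighborhood of $c_0$ already has $\mu$-full union of $F$-preimages; one must genuinely exploit the odometer geometry, namely that $\mu$ spreads uniformly over the $R_k$ level-$k$ pieces while those pieces contract at a controlled rate near the critical orbit. Making the ``comparable diameters, controlled separations'' of the pieces precise — presumably through the bounded geometry of the renormalization rescalings — is where the real work lies; everything else is bookkeeping with the linearization of \thmref{unif reg crit} and the invariance of $\mu$.
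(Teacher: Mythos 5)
Your approach is genuinely different from the paper's, and it has a real gap at exactly the step you flag as ``the technical heart.''

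The paper's proof does not push the balls forward to $c_0$ and sum. Instead, it shows (via \propref{decay reg}, \propref{vert angle shrink}, and crucially \propref{for reg no rec}, the obstruction to forward regularity) that a point lying in $\bbD_{c_{-n}}(t_0\lambda^{\delta n})$ with $n$ large cannot be infinitely forward $(K,\udelta)$-regular once $K$ is tied to $n$; hence $\Lambda_{c_0}^t$ avoids the forward regular block $\Lambda^+_{K(t),\udelta}$ with $K(t)\to\infty$, and \propref{classic pesin} (the classical Pesin estimate $\mu(\Lambda^P_{L,\epsilon})\to 1$) finishes. The whole argument is qualitative: it never needs a \emph{rate} of decay for $\mu(\bbD_{c_0}(s))$.

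Your argument, by contrast, reduces the statement to the summability
$$
\sum_{n\ge 0}\beta\big(Ct_0\lambda^{(\delta-\bepsilon)n}\big)<\infty,
\qquad \beta(s):=\mu\big(\Lambda\cap\bbD_{c_0}(s)\big),
$$
which is a genuine quantitative concentration estimate for $\mu$ at $c_0$ (equivalent, up to bookkeeping, to $\log\dist(\cdot,c_0)\in L^1(\mu)$). This is the gap. First, non-atomicity of $\mu$ gives only $\beta(s)\to 0$, with no rate; along the geometric sequence $s_n=Ct_0\lambda^{(\delta-\bepsilon)n}$ a slow decay such as $\beta(s)\sim 1/\log\log(1/s)$ would give a divergent series, and nothing at this stage of the paper rules that out. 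Second, the ingredient you propose to supply the rate --- ``$\beta(s)\lesssim R_{k(s)}^{-1}$ with $k(s)\gtrsim\log(1/s)$, from comparable diameters of level-$k$ renormalization pieces'' --- is precisely an \textit{a priori} bounds statement: it asserts that the renormalization pieces near $c_0$ shrink at a definite exponential rate and are uniformly separated. In this paper no such bound is available: \textit{a priori} bounds are deferred to \cite{CLPY2} (see \corref{tot disconnect} and \corref{renorm conv}), and they rest on the Main Theorem, which in turn rests on \propref{measure in crit disk}. So, in the present logical order, your route is circular.

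Two smaller points. Your first inequality $F^n\big(\Lambda\cap\bbD_{c_{-n}}(t\lambda^{\delta n})\big)\subset\Lambda\cap\bbD_{c_0}(Ct\lambda^{(\delta-\bepsilon)n})$ is fine, but it follows already from $\eta$-homogeneity and does not need the full strength of \thmref{unif reg crit}; the exponent can be taken $\delta-\eta$, which is slightly better. And even granting the diameter bound, $\beta(s)\le R_{k(s)}^{-1}$ requires that $\bbD_{c_0}(s)\cap\Lambda$ meets only the single level-$k$ piece $\Lambda^k_{R_k}$, i.e.\ a \emph{gap} estimate $\dist(c_0,\Lambda\setminus\Lambda^k_{R_k})\gtrsim\sigma^k$, which is again an \textit{a priori} bound. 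In short: the mechanism the paper exploits is non-regularity of points trapped near the critical orbit (a Pesin-block argument, no rates needed), whereas your mechanism needs a quantitative local dimension bound for $\mu$ that is not yet in hand.
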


\begin{proof}
Fix a sufficiently small constant $t_0 > 0$ such that
$$
\bbD_{c_{-n}}(t_0\lambda^{\delta n}) \subset \cU_{c_{-n}}
\matsp{for all}
n \geq 0.
$$
Let $K \geq 1$. We claim that there exists $N = N(K)\in \bbN$ such that if $p_0 \in \Lambda \cap \bbD_{c_{-n}}(t_0\lambda^{\delta n})$ for some $n \geq N$, then $p_0$ is not infinitely forward $(K, \udelta)$-regular.

Suppose towards a contradiction that $p_0$ is forward regular. Then by \propref{decay reg}, $p_{n+1}$ is infinitely forward $(K\lambda^{-\delta n}, \udelta)$-regular. Write $z_0 =(x_0, y_0):= \Phi_{c_0}(p_0)$. By \thmref{reg chart}, \propref{loc linear} and $\eta$-homogeneity, we have
$$
|x_0| < t_0\lambda^{(\delta -\eta)n}
\matsp{and}
|y_0| < t_0\lambda^n.
$$
By \thmref{unif reg crit},
$$
\dist(p_{n+1}, c_1) < \lambda^{(\delta -\baeta)n}.
$$
It follows that $p_{n+m} \in \cU_{c_m}$ for all $1\leq m \leq M$, where $M \in \bbN$ is the smallest integer such that
$$
\lambda^{(\delta -\baeta)n}\lambda^{-\eta M} < \lambda^{\bepsilon M}.
$$
This implies that $M > n/\bepsilon$. By \propref{vert angle shrink}, we have
$$
\measuredangle(E^{ss}_{p_{n+1}}, E^v_{p_{n+1}}) < \lambda^{n/\bepsilon} < \dist(p_{n+1}, c_1)
$$
for all $n$ sufficiently large. Observe that
$$
\frac{1-\bdelta}{(2+\bdelta)\udelta n}\logl (\dist(p_{n+1}, c_1)) \to \infty
\matsp{as}
n \to \infty.
$$
Thus, by \propref{for reg no rec}, $p_0$ is not infinitely forward $(K, \udelta)$-regular for any finite constant $K$.

For $t \in (0, t_0)$, let $N(t)\in \bbN \cup\{\infty\}$ be the largest integer such that for all $q \in \Lambda_{c_0}^t$, there exists $n = n(t, q) \geq N(t)$ such that $q \in \bbD_{c_{-n}}(t_0\lambda^{\delta n})$. Clearly, $N(t) \to \infty$ as $t \to 0$. Thus, by above, we have
$$
\Lambda_{c_0}^t \cap \Lambda^+_{K(t), \udelta} = \varnothing,
$$
with $K(t) \to \infty$ as $t \to 0$. The result now follows immediately from \propref{classic pesin}.
\end{proof}


\subsection{Critical tunnels and valuable crescents}\label{subsec:crit tunnel}

Let ${t} \in (0, l_{c_0}]$. Denote
$$
\delta > \delta' := (1- \bdelta) \delta > \delta'' := (1- \bdelta) \delta'.
$$
For $n \geq 0$, let $j_n \geq 0$ be the largest integer less than $n$ such that $-j_n$ is a Pliss moment in the backward orbit of $c_0$. Define the {\it ($1/\delta''$-pinched, $\delta'$-truncated) critical tunnel at $c_{-n}$ (of length ${t}$)} as
$$
\cT_{c_{-n}} = \cT_{c_{-n}}({t}) := F^{-n}\left(\cT_{c_0}^{1/\delta'', \delta' j_n}({t})\right)
$$
(see \subsecref{subsec:trunc pinch}). Note that $F(\cT_{c_{-n-1}}) \subseteq \cT_{c_{-n}}$, and the equality holds unless
$$
n = j_n > \frac{\logl {t}}{\delta'}.
$$

Similarly, for $n \geq 1$, let $i_n \geq 1$ be the largest integer less than $n$ such that $i_n$ is a Pliss moment in the forward orbit of $c_1$. Define the {\it ($1/\delta''$-pinched, $\delta'$-truncated) valuable crescent at $c_n$ (of length ${t}$)} by
$$
\cT_{c_n} = \cT_{c_n}({t}) := F^{n-1}\left(\cT_{c_1}({t})\cap \cU_{c_1}^{\delta' i_n}\right)
\matsp{where}
\cT_{c_1}({t}) := F(\cT_{c_0}({t})).
$$
Note that $F^{-1}(\cT_{c_n}) \subseteq \cT_{c_{n-1}}$, and the equality holds unless
$$
n = i_n > \frac{2\logl {t}}{\delta'}.
$$

\begin{lem}\label{diff length same tunnel}
For ${t} \in (0, l_{c_0}]$, let
$$
N := \frac{(1+\bepsilon)\logl {t}}{\delta'}.
$$
Then
$$
\cT_{c_{-n}}({t}) = \cT_{c_{-n}}(l_{c_0})
\matsp{for}
n > N
$$
and
$$
\cT_{c_{1+n}}({t}) = \cT_{c_{1+n}}(l_{c_0})
\matsp{for}
n > 2N.
$$
\end{lem}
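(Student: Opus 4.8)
The plan is to run the whole argument inside the linearizing chart $\Phi_{c_0}$ at the critical point furnished by \thmref{unif reg crit}, where the tunnels and crescents have explicit descriptions, and to reduce the claim to comparing the two pinched--truncated boxes that generate them. Since $\cT_{c_{-n}}({t}) = F^{-n}\bigl(\Phi_{c_0}^{-1}(T_{c_0}^{1/\delta'',\delta'j_n}({t}))\bigr)$, it suffices to show
$$
T_{c_0}^{1/\delta'',\delta'j_n}({t}) = T_{c_0}^{1/\delta'',\delta'j_n}(l_{c_0})
\matsp{for}
n>N,
$$
and because ${t}\le l_{c_0}$ this is an equality of two nested sets.

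Next I would unwind the definitions. By construction $T_{c_0}^\omega(s)\cap U_{c_0}^{\delta'j_n}$ is the set of $(x,y)$ with $|x|<\min(s,\lambda^{\delta'j_n}l_{c_0})$ and $|y|<|x|^\omega$; the constraint $|y|<l_{c_0}$ carried by the truncated neighborhood is automatic since $\omega=1/\delta''>1$ and $|x|<l_{c_0}$. Hence this set depends on $s$ only through $\min(s,\lambda^{\delta'j_n}l_{c_0})$, and it agrees with its counterpart of length $l_{c_0}$ exactly when $\lambda^{\delta'j_n}l_{c_0}\le{t}$, that is, when
$$
j_n\ \ge\ \frac{1}{\delta'}\logl\!\bigl({t}/l_{c_0}\bigr).
$$
So the entire lemma reduces to the statement that the largest backward Pliss moment $j_n$ below $n$ exceeds this fixed threshold once $n>N=\delta'^{-1}(1+\bepsilon)\logl{t}$.

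The core of the proof is the quantitative control of $j_n$. As $c_1$ is an $\epsilon$-regular critical value, $c_0=F^{-1}(c_1)$ is infinitely backward $(\bL,\epsilon)$-regular, so \propref{freq pliss back} applies to its backward orbit: the backward Pliss moments $-m_1>-m_2>\cdots$ satisfy $m_i>(1-\bepsilon/\delta)m_{i+1}+\delta^{-1}\logl\bL$, and since $j_n$ is the largest $m_i$ with $m_i<n$ while the next moment $m_{i+1}\ge n$, we obtain $j_n>(1-\bepsilon/\delta)n+\delta^{-1}\logl\bL$. Substituting $n>N$ and absorbing the bounded terms $\delta^{-1}\logl\bL$ and $\logl l_{c_0}$ then gives $j_n>\delta'^{-1}\logl({t}/l_{c_0})$. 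I expect this final bookkeeping to be the main obstacle: one must reconcile the Pliss density exponent $1-\bepsilon/\delta$, the truncation exponent $\delta'=(1-\bdelta)\delta$, and the margin $(1+\bepsilon)$ built into $N$, verifying that the bounded error terms do not interfere — and, where the crude density estimate is too weak, using the sharper constraints on the recurrence of the critical orbit near close returns (\lemref{no pliss near crit}).

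For the valuable crescents the argument is the mirror image. Here $\cT_{c_{1+n}}({t})=F^{n}\bigl(\Phi_{c_1}^{-1}(\widehat T)\bigr)$, where $\widehat T$ is the intersection of $\Phi_{c_1}(\cT_{c_1}({t}))=F_{c_0}\bigl(T_{c_0}^{1/\delta''}({t})\bigr)$ with the truncated box of exponent $\delta'i_{1+n}$, and $i_{1+n}$ is the largest forward Pliss moment below $1+n$ in the forward orbit of $c_1$. Pushing $T_{c_0}^{1/\delta''}({t})$ through the H\'enon transition $F_{c_0}(x,y)=(x^2-\lambda y,x)$ of \thmref{unif reg crit} squares the horizontal scale — the first coordinate of the image has size of order ${t}^2$ — so the condition for the truncation, rather than the length ${t}$, to be the binding constraint on $\widehat T$ becomes $\delta'i_{1+n}$ exceeds $\logl({t}^2/l_{c_1})$ up to a bounded constant, i.e.\ $i_{1+n}$ must be at least roughly $2\delta'^{-1}\logl{t}$; this is the origin of the hypothesis $n>2N$. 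Applying \propref{freq pliss for} to $i_{1+n}$ exactly as in the previous paragraph then closes the proof.
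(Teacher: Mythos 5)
The paper does not supply a proof of this lemma, so I compare your argument only against the surrounding definitions. Your reduction is right on target: $\cT_{c_{-n}}({t})$ and $\cT_{c_{-n}}(l_{c_0})$ are $F^{-n}$-images of $T_{c_0}^{1/\delta'',\delta'j_n}({t})$ and $T_{c_0}^{1/\delta'',\delta'j_n}(l_{c_0})$, and $T_{c_0}^{\omega}(s)\cap U_{c_0}^{\delta'j_n}$ depends on $s$ only through $\min(s,\lambda^{\delta'j_n}l_{c_0})$, so the lemma is exactly the statement that $j_n\ge\delta'^{-1}\logl(t/l_{c_0})$ once $n>N$, and the mirror statement for $i_{1+n}$ with the extra factor of $2$ coming from the H\'enon transition squaring the $x$-coordinate — all of which you identify correctly.

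The one place you flag uncertainty — whether the Pliss density estimate $j_n>(1-\bepsilon/\delta)n+\delta^{-1}\logl\bL$ pushed through $n>N$ actually closes — does work, and in fact more cleanly than your worry suggests, for two reasons you should make explicit. First, a bare inequality $(1-\bepsilon/\delta)(1+\bepsilon)\ge 1$ would fail since $\delta<1$; what saves it is the paper's $\bkappa$ convention, under which the $\bepsilon$ appearing in $N=\frac{(1+\bepsilon)\logl t}{\delta'}$ is free to absorb the uniformly bounded factor $1/\delta$ coming from the Pliss density deficit, so one may assume $(1+\bepsilon)(1-\bepsilon/\delta)\ge 1$. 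Second, the additive constants do not have to be swept under the rug by sheer largeness of $N$: because $l_{c_0}\asymp\bL^{-1}$ (\thmref{reg chart}), the error term $\delta^{-1}\logl\bL$ from the Pliss estimate and the slack $\logl l_{c_0}$ between $\logl t$ and $\logl(t/l_{c_0})$ are commensurate and carry the right sign, so they nearly cancel rather than interfere. (Concretely, $\delta'\delta^{-1}\logl\bL + \logl l_{c_0}\asymp -\bdelta|\logl\bL|\le 0$, which is favorable.) So your invocation of \lemref{no pliss near crit} as a backup is unnecessary — \propref{freq pliss back} together with the $\bkappa$ conventions already closes the argument. With that bookkeeping spelled out, and the analogous computation for $i_{1+n}$ with threshold $\delta'^{-1}\logl(t^2/l_{c_1})$ in the crescent case, your proof is complete.
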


\begin{prop}\label{tunnel fit}
For $m \in \bbZ$, we have $\cT_{c_m} \subset \cU_{c_m} \setminus \{c_m\}.$
\end{prop}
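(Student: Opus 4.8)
The proof splits according to the sign of $m$: the case $m=0$, the critical tunnels $\cT_{c_{-n}}$ with $n\ge 1$, and the valuable crescents $\cT_{c_{1+n}}$ with $n\ge 1$. In all cases the exclusion $c_m\notin\cT_{c_m}$ is immediate, since $\cT_{c_m}$ is by construction an $F$-iterate of a pinched set in a chart which, by the very definition of $T^{\omega}_{p_0}(t)$ (the constraint $|y|<|x|^{\omega}$ with $\omega>1$ forbids $x=0$), omits the origin; so I will only discuss the inclusion $\cT_{c_m}\subset\cU_{c_m}$. For $m=0$ there is nothing to prove: $\cT_{c_0}=\cT_{c_0}^{1/\delta'',\delta'j_0}(t)\subseteq\cT_{c_0}^{1/\delta''}(t)\subset\cU_{c_0}$ directly from \subsecref{subsec:trunc pinch}. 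Throughout, I will freely shrink the regular radii of the uniformization of \thmref{unif reg crit} so that \propref{loc linear} applies with as small a constant as needed.

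For a critical tunnel $\cT_{c_{-n}}=F^{-n}\bigl(\cT_{c_0}^{1/\delta'',\delta'j_n}(t)\bigr)$ with $n\ge1$, I would factor the backward iteration at the Pliss moment, $F^{-n}=F^{-(n-j_n)}\circ F^{-j_n}$. For the first $j_n$ steps I apply \lemref{pinch neigh fit} to $p_0=c_0$ — which is infinitely backward regular since $c_0=F^{-1}(c_1)$ and $c_1$ is an $\epsilon$-regular critical value — with pinching exponent $\omega=1/\delta''$ and truncation rate $\delta'$; the required inequality $\omega>(1+\bepsilon)/\delta'$ reduces to $(1+\bepsilon)(1-\bdelta)<1$, which holds since $\bepsilon<\delta<\bdelta$. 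This places $Y:=F^{-j_n}\bigl(\cT_{c_0}^{1/\delta'',\delta'j_n}(t)\bigr)$ inside $\cU_{c_{-j_n}}$; reading $Y$ in the chart $\Phi_{c_{-j_n}}$, \propref{loc linear} (the backward linearized dynamics expands the vertical direction by a factor in $[\lambda^{-(1+\bepsilon)j_n},\lambda^{-(1-\bepsilon)j_n}]$ and changes the horizontal one by a factor in $[\lambda^{\bepsilon j_n},\lambda^{-\bepsilon j_n}]$) together with the pinching $|y|<|x|^{1/\delta''}$ shows $\Phi_{c_{-j_n}}(Y)\subset\bbB(\lambda^{\kappa j_n})$ for some uniform $\kappa>0$, positivity of $\kappa$ using $\bepsilon\ll\delta$. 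For the remaining $n-j_n$ backward steps I use that $-j_n$ is the largest backward Pliss moment below $n$, so \propref{freq pliss back} forces $n-j_n<(\bepsilon/\delta)n$; since a backward step expands the vertical direction by at most $\lambda^{-(1+\bepsilon)}$ (\thmref{reg chart}, \propref{loc linear}) while the regular radius at $c_{-n}$ is $\asymp\lambda^{\bepsilon n}$, the inequality $\kappa j_n>\bepsilon n+(1+\bepsilon)(n-j_n)$ — valid once $\bepsilon$ is small relative to $\delta$ — gives $F^{-i}(Y)\subset\cU_{c_{-j_n-i}}$ for $0\le i\le n-j_n$, and hence $\cT_{c_{-n}}\subset\cU_{c_{-n}}$.

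The valuable crescents are handled by the mirror-image argument, with forward in place of backward and with \lemref{trunc neigh fit}, \propref{freq pliss for} replacing \lemref{pinch neigh fit}, \propref{freq pliss back}: here the truncation $\cU_{c_1}^{\delta'i_n}$ constrains the horizontal (neutral) extent, which is exactly the direction that slowly grows under forward iteration, while the vertical (strong-stable) direction only contracts. The one genuinely new point is the base crescent $\cT_{c_1}(t)=F(\cT_{c_0}(t))$: in the chart $\Phi_{c_1}$ the critical transition $F_{c_0}(x,y)=(x^2-\lambda y,x)$ of \thmref{unif reg crit} carries the pinched set $\{|x|<t,\ |y|<|x|^{1/\delta''}\}$ onto $\{|w|<t,\ |u-w^2|<\lambda|w|^{1/\delta''}\}$, a thin crescent about the parabola $\Phi_{c_1}(W^c(c_1))$ that lies in $U_{c_1}$ because $F(\cU_{c_0})\subset\cU_{c_1}$ by construction (and omits $0$ by the same $|y|<|x|^\omega$ argument). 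For $\cT_{c_n}=F^{n-1}\bigl(\cT_{c_1}(t)\cap\cU_{c_1}^{\delta'i_n}\bigr)$ with $n\ge2$, the first $i_n$ forward steps are covered by \lemref{trunc neigh fit} (enlarging its truncation rate from $\bepsilon$ to $\delta'$ only shrinks the set), and the remaining $n-1-i_n<(\bepsilon/\delta)n$ steps by a size estimate as above, using $\delta'i_n>\bepsilon(2n-1)$ from \propref{freq pliss for} and $\bepsilon\ll\delta$.

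The main obstacle is precisely the gap between the truncation level ($j_n$, resp. $i_n$) and the iterate count $n$: Lemmas \ref{pinch neigh fit} and \ref{trunc neigh fit} only control the orbit up to the truncation level, so one must re-initialize regularity at the nearest Pliss moment and carry the (pinched) geometry through the remaining $O(\bepsilon n/\delta)$ iterates by an explicit size argument — and it is exactly here that the standing size relations between $\epsilon$, $\delta$ and their barred versions are consumed. A secondary, milder nuisance is tracking the pinched shape across the nonlinear critical transition in the base crescent $\cT_{c_1}$.
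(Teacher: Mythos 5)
Your proof is correct and follows the paper's basic strategy of invoking \lemref{pinch neigh fit} for the tunnels and \lemref{trunc neigh fit} for the crescents. The paper's own proof is a single sentence citing these two lemmas as giving the result ``immediately,'' but as you have correctly observed, a literal application only controls the first $j_n$ (resp.\ $i_n$) backward (resp.\ forward) iterates, since the tunnel $\cT_{c_{-n}}$ is defined with truncation level $j_n$, the nearest Pliss moment, which is generically strictly less than $n$; Lemma \ref{pinch neigh fit} as stated ties the iteration range to the truncation level. Your factorization $F^{-n} = F^{-(n-j_n)}\circ F^{-j_n}$, the size estimate $\Phi_{c_{-j_n}}(Y)\subset\bbB(\lambda^{\kappa j_n})$ with $\kappa>0$ coming from the excess pinching $1/\delta'' > (1+\bepsilon)/\delta'$, and the Pliss-density bound $n-j_n < (\bepsilon/\delta)n$ to absorb the residual expansion across the remaining iterates, is a valid way to bridge this gap, and the inequality $\kappa j_n > \bepsilon n + (1+\bepsilon)(n-j_n)$ does follow from the standing relations $\bepsilon < \delta$, $\bdelta < 1$ with $\bepsilon$ small relative to $\delta$. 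An essentially equivalent but shorter route --- probably what the authors had in mind by ``immediately'' --- is to reparametrize: since $\cT_{c_0}^{1/\delta'',\delta' j_n} = \cT_{c_0}^{1/\delta'', \delta^* n}$ for $\delta^* := \delta' j_n/n$, one may apply \lemref{pinch neigh fit} with truncation rate $\delta^*$ and level $n$ directly, and Pliss density $j_n > (1-\bepsilon/\delta)n$ is exactly what is needed to verify the hypothesis $1/\delta'' > (1+\bepsilon)/\delta^*$; similarly for the crescents $\cU_{c_1}^{\delta' i_n}\subset\cU_{c_1}^{\bepsilon n'}$ with $n':=\delta' i_n/\bepsilon > n$. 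Either way, the implicit content is the same, and your explicit treatment of the base crescent via the H\'enon normal form of \thmref{unif reg crit} is correct and necessary.
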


\begin{proof}
The result follows immediately from \lemref{trunc neigh fit} (for $m \geq 1$) and \lemref{pinch neigh fit} (for $m \leq 0$).
\end{proof}

\begin{prop}\label{tunnel size}
For $m \in \bbZ$, we have
$
\diam(\cT_{c_m}) < \lambda^{\delta |m|}.
$
\end{prop}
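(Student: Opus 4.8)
The proof splits according to the sign of $m$, and in each case reduces the estimate to tracking the size of an explicit model region under the uniformizing charts of \thmref{unif reg crit}. When $m=0$ there is nothing to iterate: $\cT_{c_0}=\cT_{c_0}^{1/\delta''}(l_{c_0})\subset\cU_{c_0}$, and $\diam(\cU_{c_0})\asymp l_{c_0}$ by \propref{reg nbh size}; since the regular radius $l_{c_0}$ may be shrunk to any uniformly small constant (the remark following \thmref{reg chart}), we get $\diam(\cT_{c_0})<1=\lambda^{\delta\cdot 0}$. I will treat the critical tunnels $m=-n<0$ in detail; the valuable crescents $m=n>0$ are handled by the parallel argument run forward, the only extra ingredient being the H\'enon transition at the critical moment, discussed below. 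By \lemref{diff length same tunnel} the default length $l_{c_0}$ can be replaced by any smaller $t$ for the finitely many small $|m|$, so it suffices to establish the bound for all sufficiently large $|m|$, where the geometry has stabilized.

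For $m=-n<0$ we use $\cT_{c_{-n}}=F^{-n}\bigl(\cT_{c_0}^{1/\delta'',\delta' j_n}\bigr)$. In the chart $\Phi_{c_0}$ the base set $\cT_{c_0}^{1/\delta'',\delta' j_n}$ lies in $\{(x,y):|x|\le\lambda^{\delta' j_n}l_{c_0},\ |y|\le|x|^{1/\delta''}\}$, so its extent in the neutral ($W^c$) direction is $O(\lambda^{\delta' j_n})$ and its extent in the transverse direction is the much smaller $O(\lambda^{\delta' j_n/\delta''})$. Now push this region backward. Since $1/\delta''>(1+\bepsilon)/\delta'$ (which holds because $\bdelta>\bepsilon$), \lemref{pinch neigh fit} guarantees that the backward orbit of the base set stays inside the uniformizing neighborhoods $\cU_{c_{-i}}$ up to the last Pliss moment $-j_n$ below $-n$; past that moment one switches to the fresh linearization supplied by the Pliss property at $c_{-j_n}$ and continues, using that by \propref{freq pliss back} the gap $n-j_n$ is only a fraction $O(\epsilon/\delta)$ of $n$. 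Inside each uniformizing neighborhood \thmref{reg chart} ii) and \propref{loc linear} give near-linear control: one backward step multiplies the transverse (strongly contracting) extent by a factor in $(\lambda^{-(1+\bepsilon)},\lambda^{-(1-\bepsilon)})$ and moves the neutral extent by a factor in $(\lambda^{\bepsilon},\lambda^{-\bepsilon})$, up to bounded distortion; composing over the $n$ steps and converting back via $\|\Phi_{c_{-n}}^{-1}\|_{C^1}=O(\lambda^{-\bepsilon n})$ bounds $\diam(\cT_{c_{-n}})$.

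For the valuable crescents $\cT_{c_n}=F^{n-1}(\cT_{c_1}\cap\cU_{c_1}^{\delta' i_n})$, with $\cT_{c_1}=F(\cT_{c_0})$, the scheme is the same run forward, using \lemref{trunc neigh fit} and \propref{freq pliss for} to keep iterates inside regular neighborhoods between Pliss moments. The new point is the H\'enon transition $(x,y)\mapsto(x^2-\lambda y,x)$ of \thmref{unif reg crit}: it maps the cusp $\cT_{c_0}$ into a region contained in the parabolic strip $\{|u-v^2|\le\lambda|v|^{1/\delta''}\}$ about $W^c(c_1)$, so intersecting with the truncation $\cU_{c_1}^{\delta' i_n}$ (which bounds $|u|\le\lambda^{\delta' i_n}l_{c_1}$) forces $|v|=O(\lambda^{\delta' i_n/2})$; from there one pushes forward exactly as before, the strongly contracting direction now being the former $v$-direction.

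The step I expect to require the most care is the neutral-direction estimate. The strongly contracting/expanding direction is automatic: after $n$ iterates it is hyper-small, and the exponent closes because $\delta'/\delta''=(1-\bdelta)^{-1}$ is large enough relative to $1+\bepsilon$ and $\delta$. The neutral direction, by contrast, can drift by $\lambda^{-\bepsilon n}$, so the gain has to come entirely from the truncation depth $\delta' j_n$ being within a factor $1-\overline{\epsilon/\delta}$ of $\delta' n$ (Pliss frequency), from the slow-recurrence bound \propref{return crit disc}, and from the precisely tuned hierarchy $\delta>\delta'=(1-\bdelta)\delta>\delta''=(1-\bdelta)\delta'$ together with the smallness of $\eta$ and $\epsilon$ relative to $\delta$; simultaneously tracking all of the $\overline{(\cdot)}/\underline{(\cdot)}$ losses so that the resulting exponent still exceeds $\delta n$ is where the bulk of the work lies.
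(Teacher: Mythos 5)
Your argument coincides with the paper's: both push the model region $\cT_{c_0}^{1/\delta'',\delta' j_n}$ (resp.\ $\cT_{c_1}\cap \cU_{c_1}^{\delta' i_n}$) through the nearly-diagonal linearized dynamics of \thmref{reg chart}~ii) and \propref{loc linear}, bound the resulting $x$- and $y$-extents, and convert back to the ambient metric via $\|\Phi_{c_m}^{-1}\|_{C^r}=O(\lambda^{-\bepsilon|m|})$. One small clarification: the paper does not switch to a fresh linearization at the Pliss moment $-j_n$ --- the chart family $\{\Phi_{c_m}\}$ of \thmref{unif reg crit} is fixed once and for all, and the containment $F^{-i}(\cT_{c_0}^{1/\delta'',\delta' j_n})\subset\cU_{c_{-i}}$ needed to apply the chart estimates is exactly \propref{tunnel fit} (proved via \lemref{pinch neigh fit} and \lemref{trunc neigh fit}), which is the first thing the paper cites.
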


\begin{proof}
Suppose $m = -n \leq 0$. By \propref{tunnel fit}, we have
$$
\cT_{c_{-n}} = \Phi_{c_{-n}}^{-1}\circ F_{c_0}^{-n}\left(\cT_{c_0}^{1/\delta'', \delta' j_n}({t})\right).
$$
Let $q_{-n} \in \cT_{c_{-n}}$. Write
$
z_{-n} =(x_{-n}, y_{-n}) := \Phi_{c_{-n}}(q_{-n}).
$
By \thmref{reg chart} ii) and \propref{loc linear}, we have
$$
|x_{-n}| < \lambda^{-\bepsilon n} |x_0| < \lambda^{-\bepsilon n} \lambda^{\delta' n} < \lambda^{(1-\bepsilon)\delta' n}.
$$
Moreover,
$$
|y_{-n}| < \lambda^{-(1+\bepsilon) n} |y_0| < \lambda^{-(1+\bepsilon) n}|x_0|^{1/\delta''} < \lambda^{-(1+\bepsilon) n}\lambda^{(1+\bdelta) n} <\lambda^{\bdelta n}.
$$
The result now follows from the fact that
$$
\|\Phi_{c_{-n}}^{-1}\|_{C^r} = O(\lambda^{-\bepsilon n}).
$$

A similar argument proves the case when $m \geq 1$.
\end{proof}

\begin{prop}[Invariance of tunnels and crescents]\label{tunnel inv}
Let $m \in \bbZ$, and $n > |m|$. Then the following statements hold.
\begin{enumerate}[i)]
\item If
$$
F^{-1}(\cT_{c_{-n+1}}) \cap \cT_{c_m} \neq \varnothing,
$$
then
$$
n > \bepsilon^{-1} |m|
\matsp{and}
\cT_{c_{-n}} = F^{-1}(\cT_{c_{-n+1}}).
$$
\item If
$$
F(\cT_{c_{n-1}}) \cap \cT_{c_m} \neq \varnothing,
$$
then
$$
n > \bepsilon^{-1} |m|
\matsp{and}
\cT_{c_n} = F(\cT_{c_{n-1}}).
$$
\end{enumerate}
\end{prop}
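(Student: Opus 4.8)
The plan is to establish i) and to deduce ii) by the mirror argument (valuable crescents in place of critical tunnels, the forward orbit of $c_1$ in place of the backward orbit of $c_0$, and the indices $i_n$ in place of $j_n$).

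I would begin with a bookkeeping reduction. By the definitions in \subsecref{subsec:crit tunnel},
$$
\cT_{c_{-n}} = F^{-n}\bigl(\cT_{c_0}^{1/\delta'',\,\delta' j_n}(t)\bigr)
\qquad\text{and}\qquad
F^{-1}(\cT_{c_{-n+1}}) = F^{-n}\bigl(\cT_{c_0}^{1/\delta'',\,\delta' j_{n-1}}(t)\bigr);
$$
since $j_{n-1}\le j_n$, one always has $\cT_{c_{-n}}\subseteq F^{-1}(\cT_{c_{-n+1}})$, and the inclusion is strict only if $j_{n-1}<j_n$, which forces $-(n-1)$ to be a Pliss moment of the backward orbit of $c_0$ and $n$ to exceed an absolute constant (the one built into the truncation length). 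Thus it suffices to prove, under the hypothesis of i), that $-(n-1)$ is \emph{not} a Pliss moment of the backward orbit of $c_0$, together with $n>\bepsilon^{-1}|m|$.

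Next I would manufacture a close return of the critical orbit to $c_0$ from the assumed intersection. Pick $q\in F^{-1}(\cT_{c_{-n+1}})\cap\cT_{c_m}$. By \propref{tunnel fit} this point lies in $\cU_{c_{-n}}\cap\cU_{c_m}$, and rerunning the estimate from the proof of \propref{tunnel size} (which uses only \thmref{reg chart} and \propref{loc linear}) shows that $\dist(q,c_{-n})$ and $\dist(q,c_m)$ are both exponentially small with rate close to $\delta$; since $n>|m|$, the second dominates, so $\dist(c_{-n},c_m)<\lambda^{\udelta|m|}$. If $m\le 0$, I would push this forward by $|m|$ steps: the segments $c_{-n},\dots,c_{-n+|m|}$ and $c_m,\dots,c_0$ remain in the corresponding regular neighbourhoods along the uniformly linearised backward orbit of $c_0$ furnished by \thmref{unif reg crit}, so \propref{loc linear} controls the distortion of $F^{|m|}$ there, yielding $\dist(c_{-(n-|m|)},c_0)<\lambda^{\udelta|m|}$. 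If $m\ge 1$, the crescent $\cT_{c_m}$ is a forward image of a small pinched piece near $c_0$, so pulling the intersection back by the appropriate number of iterates again produces an exponentially small close return of the critical orbit to $c_0$; the index $m=0$, and the finitely many configurations with $|m|$ bounded by an absolute constant, are handled directly from \propref{return crit disc}.

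Finally I would feed this into \lemref{no pliss near crit}. Writing the close return as $\dist(c_{-(n-|m|)},c_0)<r\lambda^{\delta n'}$ with $n'$ comparable to $|m|$ — and with ratio $n'/|m|$ close to $1$, the polynomial-in-$\epsilon$ discrepancies being absorbed by the $\bkappa/\ukappa$ conventions — the lemma simultaneously gives $n-|m|>\bepsilon^{-1}n'$, hence $n>\bepsilon^{-1}|m|$, and that no integer $-(n-|m|)+k$ with $|k|\le\bepsilon^{-1}n'$ is a Pliss moment of the backward orbit of $c_0$; taking $k=|m|-1$ (legitimate because $|m|-1\le\bepsilon^{-1}n'$) rules out a Pliss moment at $-(n-1)$, so by the reduction above $\cT_{c_{-n}}=F^{-1}(\cT_{c_{-n+1}})$. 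This proves i), and ii) follows verbatim with the dual objects. The main obstacle is the second step: one must ensure that the close return is fine enough, i.e. that $n'$ stays of order $|m|$ with $n'/|m|$ near $1$, so that the forbidden-Pliss window $|k|\le\bepsilon^{-1}n'$ reaches the moment $-(n-1)$ and so that $n-|m|>\bepsilon^{-1}n'$ genuinely upgrades to $n>\bepsilon^{-1}|m|$; this balancing act is precisely where the separation of scales $\bepsilon\ll\delta$ and the elasticity of the $\bkappa/\ukappa$ notation are needed.
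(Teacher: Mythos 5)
Your proposal is correct and is essentially the paper's own argument: produce an exponentially close return of the critical backward orbit to $c_0$ at moment $-(n-|m|)$, and feed it into Lemma~\ref{no pliss near crit} to simultaneously obtain $n>\bepsilon^{-1}|m|$ and rule out a Pliss moment at $-(n-1)$, which forces $j_n=j_{n-1}$ and hence the tunnel identity. The only difference is bookkeeping: the paper pushes a point of the intersection forward by $|m|$ steps first and bounds $\dist(c_{-n+|m|},c_0)$ directly via $\eta$-homogeneity and \propref{tunnel size}, whereas you first bound $\dist(c_{-n},c_m)$ and then transport; both give the same close return, and your explicit reduction in the first step (that strict inclusion of $\cT_{c_{-n}}\subsetneq F^{-1}(\cT_{c_{-n+1}})$ is equivalent to a Pliss moment at $-(n-1)$ past the truncation threshold) is exactly what the paper leaves implicit in the phrase "the result now follows."
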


\begin{proof}
Suppose
$$
F^{-1}(\cT_{c_{-n+1}}) \cap \cT_{c_{-m}} = \varnothing
$$
for some $0 \leq m < n$.

Note that
$$
\dist(c_{-n+m}, c_0) < \diam(\cT_{c_0} \cap \cU_{c_0}^{\bdelta m})+\lambda^{-\eta m}\diam(F^{-1}(\cT_{c_{-n+1}})) < \lambda^{\bdelta m}.
$$
The result now follows immediately from \lemref{no pliss near crit}.

The other cases can be proved in a similar way.
\end{proof}

If $p \in \cT_{c_{-n}}({t})$ for some $n \geq 0$, then the {\it ${t}$-critical depth of $p$} is defined as the largest integer $0 \leq d_- := \cd_{t}(p) \leq \infty$ such that $p \in \cT_{c_{-d_-(p)}}({t}).$ Similarly, if $q \in \cT_{c_n}({t})$ for some $n \geq 1$, then the {\it ${t}$-valuable depth of $p$} is defined as the largest integer $1 \leq d_+ := \vd_{t}(p) \leq \infty$ such that
$p \in \cT_{c_{d_+(p)}}({t}).$

Define the {\it critical dust of $c_0$} as
$$
\Delta^c_{c_0} := \bigcap_{n = 0}^\infty \bigcup_{i=n}^\infty \cT_{c_{-i}}({t}) = \{p \in \Lambda \; | \; \cd_{t}(p) = \infty\}.
$$
Similarly, define the {\it valuable dust of $c_1$} as
$$
\Delta^v_{c_1} := \bigcap_{n = 1}^\infty \bigcup_{i=n}^\infty \cT_{c_i}({t}) = \{p \in \Lambda \; | \; \vd_{t}(p) = \infty\}.
$$
Lastly, define the {\it full critical dust of $c_0$} as
$$
\Delta^{fc}_{c_0} := \Delta^c_{c_0} \cup \Delta^v_{c_1}.
$$

\begin{prop}\label{crit dust infty depth}
We have
$$
\Delta^c_{c_0} = \bigcap_{{t} > 0}\{p \in \Lambda \; | \; \cd_{t}(p) = \infty\}
$$
and
$$
\Delta^v_{c_1} = \bigcap_{{t} > 0}\{p \in \Lambda \; | \; \vd_{t}(p) = \infty\}.
$$
\end{prop}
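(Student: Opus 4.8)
The plan is to reduce both identities to Lemma~\ref{diff length same tunnel}, which says that a critical tunnel $\cT_{c_{-n}}(t)$ (resp. a valuable crescent $\cT_{c_{1+n}}(t)$) stops depending on the chosen length $t$ once its index exceeds a threshold $N(t)$ (resp. $2N(t)$) that depends only on $t$. The underlying idea is that ``infinite critical depth'' is a statement about membership in tunnels of arbitrarily large index, and such deep tunnels are length-independent, so the condition itself is length-independent. Once this is recognized, the proposition becomes elementary set theory.

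Concretely, I would first record the monotonicity $\cT_{c_{-n}}(t') \subseteq \cT_{c_{-n}}(t)$ for $t' \le t$ in $(0, l_{c_0}]$ and all $n \ge 0$. This is immediate from the construction in Subsection~\ref{subsec:crit tunnel}: one has $T_{c_0}^{1/\delta''}(t') \subseteq T_{c_0}^{1/\delta''}(t)$, the truncation factor $\cU_{c_0}^{\delta' j_n}$ is the same, and $F^{-n}$ is applied identically. Hence $\{p \in \Lambda : \cd_{t'}(p)=\infty\} \subseteq \{p \in \Lambda : \cd_t(p)=\infty\}$, so the family $\{p \in \Lambda : \cd_t(p)=\infty\}$ is nondecreasing in $t$, and in particular $\bigcap_{t>0}\{p \in \Lambda : \cd_t(p)=\infty\} = \bigcap_{t \in (0,l_{c_0}]}\{p \in \Lambda : \cd_t(p)=\infty\}$.

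For the reverse inclusion I would fix $t, t' \in (0,l_{c_0}]$ and a point $p$ with $\cd_t(p)=\infty$, i.e.\ $p \in \cT_{c_{-n}}(t)$ for an unbounded set of $n \ge 0$. By Lemma~\ref{diff length same tunnel} there is $N := \max\{N(t), N(t')\}$ such that $\cT_{c_{-n}}(t) = \cT_{c_{-n}}(l_{c_0}) = \cT_{c_{-n}}(t')$ for every $n > N$; since $p$ lies in $\cT_{c_{-n}}(t)$ for infinitely many $n > N$, it lies in $\cT_{c_{-n}}(t')$ for those same $n$, so $\cd_{t'}(p) = \infty$. Thus $\{p \in \Lambda : \cd_t(p)=\infty\}$ is independent of $t \in (0,l_{c_0}]$, hence equals $\bigcap_{t>0}\{p \in \Lambda : \cd_t(p)=\infty\}$, which by definition is $\Delta^c_{c_0}$. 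The identity for $\Delta^v_{c_1}$ is proved verbatim, replacing $\cT_{c_{-n}}(t)$ by the valuable crescent $\cT_{c_{1+n}}(t)$ and the threshold $N(t)$ by $2N(t)$ in the appeal to Lemma~\ref{diff length same tunnel}, and using the analogous monotonicity in $t$.

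I do not expect a genuine technical obstacle here; the only point requiring a little care is matching the two phrasings of ``infinite depth'' — namely $\cd_t(p)=\infty$ and membership in $\bigcap_{n}\bigcup_{i\ge n}\cT_{c_{-i}}(t)$ — and observing that both simply mean that $p$ lies in critical tunnels of arbitrarily large index, which is exactly the hypothesis under which Lemma~\ref{diff length same tunnel} gives length-independence. All the substantive work has already been done in that lemma.
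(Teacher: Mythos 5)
Your proposal is correct and takes the same approach as the paper, which simply states that the proposition ``follows immediately from Lemma~\ref{diff length same tunnel}''; you have just spelled out the two unstated observations (monotonicity in $t$, and length-independence of deep tunnels) that make that immediacy transparent.
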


\begin{proof}
The result follows immediately from \lemref{diff length same tunnel}.
\end{proof}

\begin{prop}\label{inv dust}
The sets $\Delta^{c/fc}_{c_0}$ and $\Delta_{c_1}^v$ are totally invariant. Consequently, $\mu(\Delta_{c_0}^{fc}) = 0$.
\end{prop}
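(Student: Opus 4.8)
The plan is to prove total invariance of each of the three dust sets and then read off the measure statement. Since $F$ restricts to a homeomorphism of $\Lambda$, it suffices to establish $F(\Delta^c_{c_0}) = \Delta^c_{c_0}$ and $F(\Delta^v_{c_1}) = \Delta^v_{c_1}$, whereupon the total invariance of $\Delta^{fc}_{c_0} = \Delta^c_{c_0}\cup\Delta^v_{c_1}$ follows formally. First I would fix the reference length once and for all, say ${t} = l_{c_0}$, which is harmless by \propref{crit dust infty depth}; then $\Delta^c_{c_0}$ is precisely the set of $p\in\Lambda$ lying in $\cT_{c_{-n}}({t})$ for infinitely many $n\ge 0$, and $\Delta^v_{c_1}$ the set of $p$ lying in the crescents $\cT_{c_n}({t})$ for infinitely many $n\ge 1$.

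The easy halves of the two invariances follow at once from the nesting relations of \subsecref{subsec:crit tunnel}: from $F(\cT_{c_{-n-1}}({t}))\subseteq\cT_{c_{-n}}({t})$ one gets that $F$ sends a point of infinite critical depth to another such point, so $F(\Delta^c_{c_0})\subseteq\Delta^c_{c_0}$, and dually $F^{-1}(\cT_{c_n}({t}))\subseteq\cT_{c_{n-1}}({t})$ gives $F^{-1}(\Delta^v_{c_1})\subseteq\Delta^v_{c_1}$. The content lies in the reverse inclusions, e.g. that $F^{-1}(p)$ still has infinite critical depth for $p\in\Delta^c_{c_0}$, and this is where I expect the main obstacle. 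The point is that $F(\cT_{c_{-n-1}}({t}))$ equals $\cT_{c_{-n}}({t})$ only when $-n$ is \emph{not} a Pliss moment of the backward critical orbit; at a Pliss moment the tunnel is re-truncated at the finer level $\delta'n$, so $F^{-1}(\cT_{c_{-n}}({t}))$ is a priori strictly larger than $\cT_{c_{-n-1}}({t})$ and one cannot directly push $F^{-1}(p)$ into the next deeper tunnel. I would resolve this by working along the nest of tunnels containing $p$ (a genuine decreasing nest, by the disjoint-or-nested dichotomy for tunnels) and using \propref{tunnel inv}: along such a nest the backward critical orbit is forced to approach $c_0$, so the relevant preimage does meet a tunnel, no truncation is lost there, and $\cT_{c_{-n}} = F^{-1}(\cT_{c_{-n+1}})$; \lemref{no pliss near crit} controls where Pliss moments may sit relative to these close returns. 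Combined with \propref{crit dust infty depth} to absorb the truncation-level bookkeeping, this shows $F^{-1}(p)$ lies in $\cT_{c_{-m}}({t})$ for infinitely many $m$, giving $\Delta^c_{c_0}\subseteq F(\Delta^c_{c_0})$. The argument for $\Delta^v_{c_1}\subseteq F^{-1}(\Delta^v_{c_1})$ is the mirror image, with $F$ and $F^{-1}$, tunnels and crescents, and parts i) and ii) of \propref{tunnel inv} interchanged.

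Finally, for the measure statement: $\Delta^{fc}_{c_0}$ is Borel and, by the above, totally invariant; as $\mu$ is the unique invariant probability measure on $\Lambda$ it is ergodic, so $\mu(\Delta^{fc}_{c_0})\in\{0,1\}$, and one must exclude the value $1$. I would do this through regularity: if $p\in\Delta^c_{c_0}$, then for each of the infinitely many $n$ with $p\in\cT_{c_{-n}}({t})$ the point $F^{n+1}(p)$ lies in a neighborhood of $c_1$ of size $O(\lambda^{\delta'j_n})\to 0$ with its forward-contracting direction nearly vertical there, so \propref{for reg no rec}, applied to the finite backward orbit ending at $F^{n+1}(p)$, obstructs forward $(L,\delta)_v$-regularity of $p$, and hence forward $(L,\epsilon)$-regularity, for the regularity constant $L$ attached to $p$; as this recurs at arbitrarily small scales, $p\notin\Lambda^+_\epsilon$. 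Thus $\Delta^c_{c_0}\cap\Lambda^+_\epsilon=\varnothing$, and symmetrically $\Delta^v_{c_1}\cap\Lambda^-_\epsilon=\varnothing$ by \propref{back reg no rec}, so $\Delta^{fc}_{c_0}$ is disjoint from $\Lambda^+_\epsilon\cap\Lambda^-_\epsilon$. Since $\Lambda^\pm_{\Ly}\subset\Lambda^\pm_\epsilon$ by \propref{lya reg} and $\mu(\Lambda^\pm_{\Ly}) = 1$ by the Oseledets theorem, this intersection has full measure, and therefore $\mu(\Delta^{fc}_{c_0}) = 0$.
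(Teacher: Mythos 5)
Your invariance argument is essentially the paper's: the easy inclusions from $F(\cT_{c_{-n-1}})\subseteq\cT_{c_{-n}}$ and $F^{-1}(\cT_{c_n})\subseteq\cT_{c_{n-1}}$, and the hard direction via \propref{tunnel inv} together with \lemref{no pliss near crit} to control the truncation-level jump at Pliss moments along the nest of tunnels containing $p$. The step you flag as the obstacle — showing that $F^{-1}(\cT_{c_{-n}})$ actually meets a shallower tunnel of the nest so that \propref{tunnel inv} i) applies — is left just as implicit in the paper's own proof (which simply asserts ``by \propref{tunnel inv}, $F^{-1}(\cT_{c_{-n}})=\cT_{c_{-n-1}}$'' with the intersection hypothesis unverified), so your being explicit about it is a small improvement rather than a deficiency. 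Where you genuinely diverge is the measure claim: the paper only writes ``Consequently'' after total invariance, implicitly relying on \propref{measure in crit disk} (proved just above, and which already yields $\mu(\Delta^c_{c_0})=0$ directly since $\Delta^c_{c_0}\subset\Lambda^t_{c_0}$ for all small $t$) together with ergodicity. You instead re-derive the regularity obstruction from scratch via \propref{for reg no rec} and its backward analogue \propref{back reg no rec}, concluding disjointness from $\Lambda^+_\epsilon\cap\Lambda^-_\epsilon$. This is sound, and as you observe it actually renders the ergodicity framing unnecessary (disjointness from a full-measure set already gives measure zero), so it is arguably a cleaner route than the one the paper gestures at — though it would be shorter to cite \propref{measure in crit disk} for $\Delta^c_{c_0}$ and argue the backward case symmetrically, rather than re-running the \propref{for reg no rec} mechanism. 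One small caution on that mechanism: to land $n$ inside the window $-(1+\bdelta)\log_\lambda L<n<-\frac{1-\bdelta}{\delta}\log_\lambda L$ of \propref{for reg no rec} while simultaneously ensuring $\dist(F^{n+1}(p),c_1)<1/L'$, one must choose $L=L(n)$ depending on the depth $n$ (rather than keeping the regularity constant of $p$ fixed and seeking $n$ in its window), exactly as in the proof of \propref{measure in crit disk}; your phrasing ``for the regularity constant $L$ attached to $p$'' suggests the reverse order of quantifiers, which would not close the argument as stated.
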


\begin{proof}
Let $p \in \Delta^c_{c_0}$. If $p \in \cT_{c_{-n}}$ with $n > 0$, then $p \in \cT_{c_{-n+1}}$. Thus,
$F(\Delta^c_{c_0}) \subset \Delta^c_{c_0}.$
Let $k \geq 0$ be the smallest integer such that $p \in \cT_{c_{-k}}$. Then by \propref{tunnel inv}, we have
$F^{-1}(\cT_{c_{-n}}) = \cT_{c_{-n-1}}.$
Thus,
$F^{-1}(\Delta^c_{c_0}) \subset \Delta^c_{c_0}.$
The total invariance of $\Delta^v_{c_1}$ can be proved in a similar way.
\end{proof}

The {\it tunnel escape moment} of $p_0 \in \cT_{c_0}$ is the unique integer $-j <0$ such that
$$
p_{-j+1} \in \cT_{c_{-j+1}}
\matsp{and}
p_{-j} \not\in \cT_{c_{-j}}.
$$
Similarly, the {\it crescent escape moment} of $p_1 \in \cT_{c_1}$ is the unique integer $i > 1$ such that
$$
p_{i-1} \in \cT_{c_{i-1}}
\matsp{and}
p_i \not\in \cT_{c_i}.
$$
Write $(x_1, y_1) := \Phi_{c_1}(p_1).$
If $\tiE^v_{p_1}\in \bbP^2_{p_1}$ is $k|x_1|^{1/2}$-vertical in $\cU_{c_1}$ for some sufficiently small uniform constant $k>0$, then we say that $\tiE^v_{p_1}$ is {\it sufficiently stable-aligned in $\cU_{c_1}$}. Similarly, if $\tiE^h_{p_0}\in \bbP^2_{p_0}$ is $k|x_0|^2$-horizontal in $\cU_{c_0}$, then we say that $\tiE^h_{p_1}$ is {\it sufficiently center-aligned in $\cU_{c_0}$}.

\begin{prop}[Forward/backward regularity implies stable/center alignment]\label{stable center align}
Let $p_0 \in \cT_{c_0}$ and $\tiE^{v/h}_{p_0} \in \bbP^2_{p_0}$. Write
$$
z_0 = (x_0, y_0) := \Phi_{c_0}(p_0)
\matsp{and}
\tiE^{v/h}_{z_0} := D\Phi_{c_0}(\tiE^{v/h}_{p_0}).
$$
Let
$$
N > K\logl |x_0|
$$
for some sufficiently large uniform constant $K \geq 1$. If $p_1 \in \cU_{c_1}$ is $N$-times forward $(1, \delta)_v$-regular along $\tiE^v_{p_1}$, then $\tiE^v_{p_1}$ is sufficiently stable-aligned in $\cU_{c_1}$. Similarly, if $p_0$ is $N$-times backward $(1, \delta)_h$-regular along $\tiE^h_{p_0}$, then $E^h_{p_0}$ is sufficiently center-aligned in $\cU_{c_0}$. 
\end{prop}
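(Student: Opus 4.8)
The plan is to treat the two assertions in parallel, working out the forward/stable case in detail (the backward/center case being symmetric). Because $F$ is $\eta$-homogeneous, the hypothesis that $p_1$ is $N$-times forward $(1,\delta)_v$-regular along $\tiE^v_{p_1}$ reduces to $\|DF^n|_{\tiE^v_{p_1}}\|\le\lambda^{(1-\delta)n}$ for $1\le n\le N$, and likewise the hypothesis on $p_0$ reads $\|DF^{-n}|_{\tiE^h_{p_0}}\|\le\lambda^{-\delta n}$ for $1\le n\le N$. I would then run four steps: (a) use the H\'enon transition of \thmref{unif reg crit} to locate $z_1=\Phi_{c_1}(p_1)$ precisely; (b) verify that the forward orbit of $p_1$ stays inside the regular neighborhoods $\cU_{c_{1+i}}$ for roughly $\bepsilon^{-1}\logl|x_0|$ iterates; (c) apply \propref{vert angle shrink} to bound $\measuredangle(\tiE^v_{p_1},E^v_{p_1})$; (d) check that this bound is $\ll|x_1|^{1/2}$ and rephrase it as ``sufficiently stable-aligned''.

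For (a): since $p_0\in\cT_{c_0}$, writing $z_0=(x_0,y_0)$ one has $|y_0|<|x_0|^{1/\delta''}$ with $1/\delta''>2$, hence $\lambda|y_0|\ll|x_0|^2$. The transition $\Phi_{c_1}\circ F|_{\cU_{c_0}}\circ\Phi_{c_0}^{-1}(x,y)=(x^2-\lambda y,x)$ then gives $(x_1,y_1):=\Phi_{c_1}(p_1)=(x_0^2-\lambda y_0,\,x_0)$, so $|x_1|\asymp|x_0|^2$, $|y_1|=|x_0|<l_{c_1}$, and $\dist(p_1,c_1)\asymp|x_1|^{1/2}\asymp|x_0|$. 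For (b): because $|x_1|\asymp|x_0|^2$ and $|y_1|=|x_0|<l_{c_1}$, the point $p_1$ lies in the truncated neighborhood $\cU_{c_1}^{\bepsilon n}$ for every $n\lesssim 2\bepsilon^{-1}\logl|x_0|$, so by \lemref{trunc neigh fit} we get $p_{1+i}\in\cU_{c_{1+i}}$ for $0\le i\le n$ (alternatively one notes $p_1=F(p_0)\in F(\cT_{c_0})=\cT_{c_1}$ sits in the valuable crescent and invokes \propref{tunnel fit} up to the crescent escape moment). Taking the uniform constant $K$ large enough that $K>\bepsilon^{-1}$, I would set $n:=\lfloor\bepsilon^{-1}\logl|x_0|\rfloor$, which satisfies $n\le N$ and keeps $p_{1+i}\in\cU_{c_{1+i}}$ for $0\le i\le n$.

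For (c)--(d): by \propref{loc linear} and $1-\delta-\bepsilon>0$, the quantity $\nu:=\|DF^n|_{\tiE^v_{p_1}}\|\le\lambda^{(1-\delta)n}$ satisfies $\nu<\bL^{-1}\lambda^{\bepsilon n}$ once $l_{c_0}$ is uniformly small, so \propref{vert angle shrink} applies and yields $\measuredangle(\tiE^v_{p_1},E^v_{p_1})<\bL\lambda^{-\bepsilon n}\nu+\bL\lambda^{(1-\bepsilon)n}<\bL\lambda^{(1-\delta-\bepsilon)n}\asymp\bL|x_0|^{(1-\delta-\bepsilon)/\bepsilon}$. Since $\bepsilon$ is small, the exponent $(1-\delta-\bepsilon)/\bepsilon$ exceeds $1$, so this is $\ll|x_0|\asymp|x_1|^{1/2}$ once $|x_0|$ is below a uniform threshold (guaranteed by shrinking $l_{c_0}$, equivalently by enlarging $K$). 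As $\|\Phi_{c_1}^{\pm1}\|_{C^r}=O(1)$ distorts angles by a bounded factor and $E^v_{p_1},E^h_{p_1}$ are the images of the genuine orthogonal directions, this translates into $\measuredangle(\tiE^v_{p_1},E^v_{p_1})/\measuredangle(\tiE^v_{p_1},E^h_{p_1})<k|x_1|^{1/2}$, i.e.\ $\tiE^v_{p_1}$ is $k|x_1|^{1/2}$-vertical in $\cU_{c_1}$, which is the first assertion.

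The backward/center case is handled in the same way with vertical replaced by horizontal and forward orbits by backward orbits: the pinching exponent satisfies $1/\delta''>(1+\bepsilon)/\delta$, so \lemref{pinch neigh fit} and \propref{tunnel fit} keep the backward orbit $p_{-i}$ of $p_0$ in $\cU_{c_{-i}}$ for $0\le i\le m$ with $m\asymp\bepsilon^{-1}\logl|x_0|$ and $m\le N$; then \propref{hor angle shrink} gives $\measuredangle(\tiE^h_{p_0},E^h_{p_0})<\bL\lambda^{(1-\bepsilon)m}(1+\lambda^{-\delta m})<\bL\lambda^{(1-\bepsilon-\delta)m}\asymp\bL|x_0|^{(1-\bepsilon-\delta)/\bepsilon}$, which for small $\bepsilon$ is $\ll|x_0|^2$, so $\tiE^h_{p_0}$ is $k|x_0|^2$-horizontal in $\cU_{c_0}$. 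I expect the main obstacle to be steps (b)--(d): one must keep the orbit inside regular charts for enough iterates that the angle contraction furnished by \propref{vert angle shrink}/\propref{hor angle shrink} is not merely small but \emph{polynomially} dominates the scale $|x_1|^{1/2}$ (resp.\ $|x_0|^2$) --- it is precisely this requirement that forces $\bepsilon$ to be small relative to $1-\delta$ and dictates how large $K$ (and how small the regular radius $l_{c_0}$) must be chosen.
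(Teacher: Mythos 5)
The paper does not record a proof of this proposition, so there is no official argument to compare against; I therefore assess the proposal on its own. Your four-step outline is the natural one, and the forward/stable claim works as written: the H\'enon transition gives $|x_1|\asymp|x_0|^2$ and $|y_1|=|x_0|$ since $|y_0|<|x_0|^{1/\delta''}$ with $1/\delta''>2$; truncation keeps the forward orbit in the charts for $n\lesssim 2\bepsilon^{-1}\logl|x_0|$; \propref{vert angle shrink} then gives an angle bound $\asymp|x_0|^{(1-\delta-\bepsilon)/\bepsilon}\ll|x_0|\asymp|x_1|^{1/2}$; and since $\measuredangle(\tiE^v_{p_1},E^h_{p_1})$ is bounded below, this gives $k|x_1|^{1/2}$-verticality.

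The backward case contains a real, though not fatal, slip. You claim $p_{-i}\in\cU_{c_{-i}}$ for $0\le i\le m$ with $m\asymp\bepsilon^{-1}\logl|x_0|$, citing \lemref{pinch neigh fit} and \propref{tunnel fit}. Neither supports a range this long. \lemref{pinch neigh fit} only yields $F^{-i}(\cT_{c_0}^{1/\delta'',\delta n})\subset\cU_{c_{-i}}$ for $i\le n$, and $p_0\in\cT_{c_0}^{1/\delta'',\delta n}$ forces $|x_0|<\lambda^{\delta n}l_{c_0}$, hence $n\lesssim\delta^{-1}\logl|x_0|$. Directly: $|y_0|<|x_0|^{1/\delta''}$ with $1/\delta''\asymp 1/\delta$, and $F^{-1}$ expands the $y$-coordinate by roughly $\lambda^{-1}$ per step, so $|y_{-i}|\gtrsim\lambda^{-(1-\bepsilon)i}|x_0|^{1/\delta''}$; requiring $|y_{-i}|<l_{c_{-i}}\asymp\lambda^{\bepsilon i}$ caps $i$ at $\asymp\delta^{-1}\logl|x_0|$. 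This matches the tunnel escape estimate $j\asymp\delta'^{-1}\logl|x_0|$ appearing in the proof of \propref{tunnel escape}, and it reflects a genuine asymmetry you overlook: the forward case is bounded by slow horizontal drift across the $\delta n$-truncation (range $\asymp\bepsilon^{-1}\logl|x_1|$), while the backward case is bounded by fast vertical expansion inside the $1/\delta''$-pinching (range $\asymp\delta^{-1}\logl|x_0|$, and $\delta>\bepsilon$). The slip is harmless for the conclusion: with $m\asymp\delta^{-1}\logl|x_0|$, \propref{hor angle shrink} still gives $\measuredangle(\tiE^h_{p_0},E^h_{p_0})\lesssim\bL|x_0|^{(1-\bepsilon-\delta)/\delta}$, and $(1-\bepsilon-\delta)/\delta>2$ once $\delta<(1-\bepsilon)/3$ (which $\bdelta<1$ ensures), so the required bound $\ll|x_0|^2$ survives --- but with far less margin than your ``$\bepsilon$ small relative to $1-\delta$'' remark suggests, and with the constraint $K\gtrsim\delta^{-1}$ rather than $K\gtrsim\bepsilon^{-1}$ governing that side.
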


\begin{prop}\label{tunnel escape}
Let $p_0 \in \cT_{c_0}$. Write
$z_0 = (x_0, y_0) := \Phi_{c_0}(p_0).$
If $\tiE^v_{p_1}$ is sufficiently stable-aligned in $\cU_{c_1}$, then the following statements hold.
\begin{enumerate}[i)]
\item If $-j <0$ is the tunnel escape moment of $p_0$, then $p_{-j}$ is $j$-times forward $(O(1), \delta)_v$-regular along $\tiE_{p_{-j}}^v$.
\item The direction $\tiE^v_{p_{-n}}$ is sufficiently vertical in $\cU_{c_{-n}}$ for all $n > (1+\bepsilon)\logl |x_0|$ such that $p_{-i} \in \cU_{c_{-i}}$ for $i \leq n$.
\item We have $\|DF^n|_{\tiE^v_{p_{-n}}}\| > \lambda^{\bepsilon n}$ for all $0 \leq n < (1-\bepsilon)\logl |x_0|$.
\end{enumerate}
\end{prop}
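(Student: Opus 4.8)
The plan is to push the stable-aligned direction through the quadratic transition $F_{c_0}(x,y)=(x^2-\lambda y,x)$ of \thmref{unif reg crit}, then track it along the backward critical orbit with the almost-linear dynamics of \propref{loc linear}, and finally feed the outcome into \propref{for reg rec}. First I would record the transport across the critical moment. Write $z_0=(x_0,y_0):=\Phi_{c_0}(p_0)$, so $|y_0|<|x_0|^{1/\delta''}$ and, as $1/\delta''>2$, $|y_0|\ll|x_0|^{2}$; hence $z_1=\Phi_{c_1}(p_1)=(x_0^{2}-\lambda y_0,\,x_0)$ has $|x_1|\asymp|x_0|^{2}$, so that ``$\tiE^v_{p_1}$ sufficiently stable-aligned'' ($k|x_1|^{1/2}$-vertical in $\cU_{c_1}$) says precisely that $\tiE^v_{p_1}$ is $\bar k|x_0|$-vertical. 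Pulling back by $DF_{c_0}^{-1}|_{z_1}=\lambda^{-1}\left(\begin{smallmatrix}0&\lambda\\-1&2x_0\end{smallmatrix}\right)$, a representative $\propto(s,1)$ of $D\Phi_{c_1}(\tiE^v_{p_1})$ with $|s|\le\bar k|x_0|$ becomes $\propto(1,(2x_0-s)/\lambda)$, of slope magnitude $\asymp|x_0|$ --- a lower as well as an upper bound, the quadratic term dominating the small $s$. So, modulo the $O(1)$ distortion of $\Phi_{c_0}$, the direction $\tiE^v_{p_0}:=DF^{-1}(\tiE^v_{p_1})$ is $\asymp|x_0|$-horizontal in $\cU_{c_0}$; in particular $\measuredangle(\tiE^v_{p_0},E^v_{p_0})$ is bounded below by a uniform constant.

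For (i), first note that the escape moment $-(j-1)$ (equivalently, up to this harmless off-by-one, $-j$) must be a Pliss moment of the backward orbit of $c_0$, since between consecutive Pliss moments one has $\cT_{c_{-m}}=F^{-1}(\cT_{c_{-m+1}})$, so an orbit cannot leave a tunnel then. The escape condition reads $p_0\in\cT_{c_0}^{1/\delta'',\delta'(j-2)}(t)\setminus\cT_{c_0}^{1/\delta'',\delta'(j-1)}(t)$, which forces $|x_0|\ge\lambda^{\delta'(j-1)}l_{c_0}$, hence $j\ge(\delta')^{-1}\logl|x_0|-O(1)>(1+\bepsilon)\delta^{-1}\logl|x_0|$ for $|x_0|$ small, using $(\delta')^{-1}>(1+\bepsilon)\delta^{-1}$ (which holds since $\bdelta>\bepsilon$). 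Applying \propref{for reg rec} with $n=j-1$: the hypothesis $p_{-i}\in\cU_{c_{-i}}$ for $i\le n$ is automatic, because $p_0,\dots,p_{-(j-1)}$ lie in the tunnels $\cT_{c_0},\dots,\cT_{c_{-(j-1)}}\subset\cU_{c_0},\dots,\cU_{c_{-(j-1)}}$ by \propref{tunnel fit}, and the angle and exponent conditions were just verified; so $p_{-(j-1)}$ is $(j-1)$-times forward $(O(1),\delta)_v$-regular along $\tiE^v_{p_{-(j-1)}}$. One more backward step, with $\|DF|_{\tiE^v_{p_{-j}}}\|\ge\lambda^{-\eta}$ by $\eta$-homogeneity (absorbed into the $O(1)$), promotes this to the statement of (i).

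For (ii) and (iii) I would follow $\tiE^v_{p_{-i}}$ and $\|DF^{-i}|_{\tiE^v_{p_0}}\|$ along $c_0,c_{-1},\dots$ via \propref{loc linear}: near $c_m$ the map is $C^1$-close to $A_m=\operatorname{diag}(a_m,b_m)$, and by $\eta$-homogeneity ($\eta\ll\bepsilon$) one has $b_m\approx\lambda$ and $a_m\approx1$ to within factors $\lambda^{\pm\eta}$, so each backward step multiplies the vertical component of a representative by $\approx\lambda^{-1}$ and the horizontal one by $\approx1$. Starting from the $\asymp|x_0|$-horizontal direction above, after $n$ steps the slope to the horizontal is $\asymp|x_0|\lambda^{-n}$, which exceeds $\bLp$ once $n>(1+\bepsilon)\logl|x_0|$ (the surplus $\bepsilon\logl|x_0|$ swallowing the additive and $\lambda^{\pm\eta n}$ corrections for $|x_0|$ small); this is (ii). Conversely, when $0\le n<(1-\bepsilon)\logl|x_0|$ one has $n<\logl|x_0|$, whence both the horizontal component ($\approx1$) and the vertical component ($\approx|x_0|\lambda^{-n}\le\lambda^{-\bepsilon n}$) of the transported representative are $\le\lambda^{-\bepsilon n}$, so $\|DF^{-n}|_{\tiE^v_{p_0}}\|\le\lambda^{-\bepsilon n}$, i.e. $\|DF^n|_{\tiE^v_{p_{-n}}}\|>\lambda^{\bepsilon n}$ (the bar on $\bepsilon$ absorbing the distortions and the constant $2/\lambda$), which is (iii); the relevant $p_{-i}$ lie in $\cU_{c_{-i}}$ automatically because $(1-\bepsilon)\logl|x_0|<j$.

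The step I expect to be the main obstacle is the quantitative bookkeeping needed for the exponents to come out exactly as stated: the constant $k$ in ``sufficiently stable-aligned'' must be small enough for the one-step pullback to land with slope comparable to $|x_0|$ \emph{from both sides} (this two-sided control is what makes (ii) and (iii) sharp), and one must verify that every stray factor --- the $2/\lambda$ from the quadratic term, the $\lambda^{\pm\eta}$ from $\eta$-homogeneity, the $O(\bL\lambda^{-\bepsilon n})$ distortion of the charts $\Phi_{c_{-n}}$, and the error terms of \propref{loc linear} --- is absorbed into the bar notation precisely where the thresholds $(1\pm\bepsilon)\logl|x_0|$ require it. A secondary prerequisite, used in (i), is the identification of the tunnel-escape moment with a Pliss moment, which follows from unwinding the ``truncated-at-Pliss-moments'' definition of the $\cT_{c_{-n}}$.
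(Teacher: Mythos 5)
Your plan is correct and follows the same mechanism as the paper's proof: transport the stable-aligned direction through the quadratic transition of Theorem~\ref{unif reg crit} to get an $\asymp|x_0|$-horizontal direction at $c_0$, then track the angle-to-horizontal under the near-linear backward dynamics of Proposition~\ref{loc linear}, and feed the setup into Proposition~\ref{for reg rec} for part~i). For i), you make explicit two things the paper leaves implicit but certainly uses: that the angle hypothesis of Proposition~\ref{for reg rec} is verified by the pullback computation through $DF_{c_0}^{-1}$, and that the escape depth is governed by a Pliss moment (the paper cites Lemma~\ref{freq pliss back} for the resulting bound $j\gtrsim(\delta')^{-1}\logl|x_0|$, while you read it off directly from the $\delta'$-truncation at Pliss moments; these are the same fact). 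Parts ii) and iii) are the paper's angle-growth computation (the paper tracks $\theta^{h/v}_{-n}$ and shows $\theta^h_{-n}<K\lambda^{-(1+\bepsilon)n}|x_0|$, you track components of a representative vector --- same estimate). You are right that the residual bookkeeping --- absorbing the $\lambda^{\pm\bepsilon n}$ chart distortion and the homogeneity errors into the bar notation so the thresholds land exactly at $(1\pm\bepsilon)\logl|x_0|$ --- is where the remaining care lies; the paper handles this tersely via the threshold $\theta^{h/v}_{-n}<l\lambda^{\bepsilon n}$, which accounts for the $n$-dependent chart distortion, and your computation would need the same adjustment rather than the fixed threshold $\bLp$.
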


\begin{proof}
By \lemref{freq pliss back}, we have
$$
j > (1-\bepsilon)\frac{\logl |x_0|}{\delta'} > (1+\bdelta)\delta^{-1}\logl |x_0|.
$$
The first claim now follows from \lemref{tunnel fit} and \propref{for reg rec}.

Write
$$
\tiE^v_{z_0} := D\Phi_{c_0}(\tiE^v_{p_0})
\matsp{and}
\theta^{h/v}_0 := \measuredangle(\tiE^v_{z_0}, E^{gh/gv}_{z_0}).
$$

By \thmref{reg chart} and \propref{loc linear}, there exists a uniform constant $k >0$ such that for $0\leq n < j$, if we have
$\theta^{h/v}_{-n} < k,$
then
$$
\lambda^{\mp (1 \pm \bepsilon)} < \frac{\theta^{h/v}_{-n-1}}{\theta^{h/v}_{-n}} < \lambda^{\mp(1 \pm \bepsilon)}.
$$
Note that $\tiE^v_{p_{-n}}$ is sufficiently horizontal/vertical in $\cU_{c_{-n}}$ if
$\theta^{h/v}_{-n} < l\lambda^{\bepsilon n}$
for some uniform constant $l > 0$. Since
$\theta^h_0 \asymp |x_0|$
we see that for some uniform constant $K \geq 1$, we have
$$
\theta^h_{-n} < K\lambda^{-(1 + \bepsilon) n} |x_0| < l\lambda^{\bepsilon n} \bK \lambda^{-(1+\bepsilon)n }|x_0| < l\lambda^{\bepsilon n}
$$
for
$$
0\leq n < (1-\bepsilon)\logl |x_0|.
$$
The third claim follows from \propref{loc linear}.

Observe that if
$\theta^h_{-n} > k,$
then
$\theta^v_{-n -N} < k$
for some uniform constant $N \geq 0$. Arguing similarly as above, we conclude the second claim.
\end{proof}

\begin{prop}\label{crescent escape}
Let $p_1 \in \cT_{c_1}$. Write
$z_1 = (x_1, y_1) := \Phi_{c_1}(p_1).$
If $\tiE^h_{p_0}$ is sufficiently center-aligned in $\cU_{c_0}$, then the following statements hold.
\begin{enumerate}[i)]
\item If $i > 1$ be the crescent escape moment of $p_1$, then $p_i$ is $i$-times backward $(O(1), \delta)_h$-regular along $\tiE^h_{p_i}$.
\item The direction $\hE^h_{p_n}$ is sufficiently horizontal in $\cU_{c_n}$ for all $n > (1+\bepsilon)\logl |x_1|$ such that $p_j \in \cU_{c_j}$ for $j \leq n$.
\item We have $\|DF^{-n}|_{\hE^h_{p_n}}\| > \lambda^{-(1-\bepsilon) n}$ for all $0 \leq n < (1-\bepsilon)\logl |x_1|$.
\end{enumerate}
\end{prop}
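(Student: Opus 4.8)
This proposition is the time-reversed, horizontal analogue of \propref{tunnel escape}, and the plan is to transcribe that proof under the dictionary: forward $\leftrightarrow$ backward, vertical $\leftrightarrow$ horizontal, $c_0 \leftrightarrow c_1$, critical tunnel $\leftrightarrow$ valuable crescent, \propref{for reg rec} $\leftrightarrow$ \propref{back reg rec}, \propref{freq pliss back} $\leftrightarrow$ \propref{freq pliss for}, \lemref{trunc neigh fit} $\leftrightarrow$ \lemref{pinch neigh fit}. First I would set up the transition data. Put $p_0 := F^{-1}(p_1)$; since $p_1 \in \cT_{c_1}(t) = F(\cT_{c_0}(t))$ we have $p_0 \in \cT_{c_0}(t)$, and write $z_0 = (x_0,y_0) := \Phi_{c_0}(p_0)$. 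By the H\'enon-like transition of \thmref{unif reg crit}, $\Phi_{c_1}\circ F\circ\Phi_{c_0}^{-1}(x,y) = (x^2 - \lambda y, x)$, so $y_1 = x_0$; and as $p_0$ lies in the $1/\delta''$-pinched region $\cT_{c_0}$ with $1/\delta'' > 2$, the term $\lambda y_0$ is negligible beside $x_0^2$, so $|x_1| \asymp x_0^2 \asymp y_1^2$ and $\logl|x_1| \asymp 2\logl|x_0|$. Differentiating the transition, $D_{p_0}F$ carries the $k|x_0|^2$-horizontal direction $\tiE^h_{p_0}$ to a direction that is $\asymp|x_0|$-vertical in $\cU_{c_1}$ (no cancellation occurs, since being $k|x_0|^2$-horizontal keeps $\tiE^h_{p_0}$ far from the stable direction $E^{ss}_{p_0}$, whose slope in these coordinates is $\asymp|x_0|$); hence $\measuredangle(\tiE^h_{p_1}, E^v_{p_1}) \asymp |x_0| \gg |x_1|$, so that $\tiE^h_{p_1}$ is (up to adjusting constants) sufficiently stable-aligned in $\cU_{c_1}$.

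For (i), I would first observe that the crescent escape moment $i$ is necessarily a Pliss moment in the forward orbit of $c_1$: by the construction in \subsecref{subsec:crit tunnel}, $F(\cT_{c_{n-1}}) = \cT_{c_n}$ whenever $n$ is not such a Pliss moment, so if $i$ were not one then $p_i = F(p_{i-1}) \in F(\cT_{c_{i-1}}) = \cT_{c_i}$, contradicting the definition of $i$. Next, the density estimate of \propref{freq pliss for} together with the size of the ($\delta'$-truncated) crescents $\cT_{c_n}$ yields $i > (1-\bepsilon)\logl|x_1|/\delta' > (1+\bepsilon)\delta^{-1}\logl|x_1|$ — the second inequality holding because $\bepsilon \ll \delta$ — just as $j > (1-\bepsilon)\logl|x_0|/\delta'$ is obtained in the proof of \propref{tunnel escape}. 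Moreover $p_m \in \cT_{c_m} \subset \cU_{c_m}$ for $1 \le m \le i-1$ by \propref{tunnel fit}, and $p_i \in \cU_{c_i}$ because $\cT_{c_i}$ sits inside a truncated regular neighborhood (\lemref{trunc neigh fit}). Then \propref{back reg rec} applies with $n = i$ — its hypotheses $\measuredangle(\tiE^h_{p_1},E^v_{p_1}) > k|x_1|$ and $i > (1+\bepsilon)\delta^{-1}\logl|x_1|$ having just been checked — giving that $p_i$ is $i$-times backward $(O(1),\delta)_h$-regular along $\tiE^h_{p_i}$.

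For (ii) and (iii), I would track the direction of $\tiE^h_{z_n} := D\Phi_{c_n}(\tiE^h_{p_n})$ relative to the genuine vertical and horizontal directions along the forward orbit, combining \thmref{reg chart}(ii), \propref{loc linear}, and the near-invariance of horizontal directions \propref{hor near inv}. While $\tiE^h_{z_n}$ is closer to vertical than to horizontal, its angle from the genuine vertical grows at rate $\lambda^{-(1\pm\bepsilon)}$ per step (a near-vertical direction rotates toward horizontal under $F \approx \mathrm{diag}(a_i,b_i)$, $a_i \approx 1 \gg b_i \approx \lambda$), starting from $\asymp|x_0|$ at $p_1$; once it passes the midpoint the direction is near-horizontal, and thereafter its angle from the genuine horizontal obeys a contraction with additive error, $\theta_{n+1} < \lambda^{1-\bepsilon}\theta_n + \lambda^{(1-\bepsilon)n}$, so it stays below $1/\bLp$ from a bounded delay after the transition index onward. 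Since that index is $\asymp \logl|x_0| \asymp \tfrac12\logl|x_1|$, which is well below $(1+\bepsilon)\logl|x_1|$, this gives (ii). For (iii), for $n$ in the asserted range $\tiE^h_{p_n}$ still carries a vertical component of controlled size — coming from the initial stable-alignment, and, past the transition, from the near-invariance error of \propref{hor near inv}, which for $p_n \in \cT_{c_n}$ is bounded below appropriately — and this component expands backward at rate $\asymp\lambda^{-1}$ per step, so \propref{loc linear} gives $\|DF^{-n}|_{\tiE^h_{p_n}}\| > \lambda^{-(1-\bepsilon)n}$.

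The soft part is (i): once the crescent escape moment is identified with a Pliss moment, it is a direct application of \propref{back reg rec}. The technical crux is the exponent bookkeeping in (ii)--(iii): one must carry the pinching relation $|x_1| \asymp x_0^2$ through the angle recursion (the H\'enon transition being quadratic, an $|x_0|^2$-scale hypothesis at $p_0$ becomes an $|x_0|$-scale angle at $p_1$, and the transition index and all the ranges must then be reconciled with $\logl|x_1| \asymp 2\logl|x_0|$), and one must control the accumulated near-invariance error $\sum_k\lambda^{(1-\bepsilon)k}$ from \propref{hor near inv} against the true rates $\prod b_j^{\pm1}$ — exactly as in the proof of \propref{tunnel escape}.
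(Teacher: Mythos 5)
Your strategy — transcribe the proof of \propref{tunnel escape} under the forward/backward dictionary — is clearly what the authors intend, since \propref{crescent escape} is stated without proof immediately after its dual. Parts (i) and (ii) of your sketch carry over correctly: the crescent escape moment is indeed a Pliss moment in the forward orbit of $c_1$ (the inclusion $F^{-1}(\cT_{c_n}) \subseteq \cT_{c_{n-1}}$ is strict only at such moments), the resulting lower bound $i > (1-\bepsilon)\logl|x_1|/\delta' > (1+\bepsilon)\delta^{-1}\logl|x_1|$ lets you invoke \propref{back reg rec}, and for (ii) the transition index $\asymp \logl|x_0| \asymp \tfrac12\logl|x_1|$ lies comfortably below the threshold $(1+\bepsilon)\logl|x_1|$.

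Part (iii) has a genuine gap, and your own calculation exposes it. You correctly find $\measuredangle(\tiE^h_{p_1}, E^v_{p_1}) \asymp |x_0| \asymp |x_1|^{1/2}$. Splitting $\tiE^h_{p_1}$ into a vertical part $\asymp 1$ and a horizontal part $\asymp |x_1|^{1/2}$ and pushing both forward through the (near‑diagonal) linearized maps gives $\|DF^{n}|_{\tiE^h_{p_0}}\| \asymp \max\bigl(|x_1|^{1/2}\lambda^{\pm\bepsilon n},\ \lambda^{(1\pm\bepsilon)n}\bigr)$, hence $\|DF^{-n}|_{\tiE^h_{p_n}}\| \asymp \min\bigl(|x_1|^{-1/2}\lambda^{\mp\bepsilon n},\ \lambda^{-(1\mp\bepsilon)n}\bigr)$. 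This exceeds $\lambda^{-(1-\bepsilon)n}$ only for $n \lesssim \logl|x_0| \asymp \tfrac12\logl|x_1|$; for $n$ in the upper half of the stated range $(1-\bepsilon)\logl|x_1|$ the backward derivative saturates at $\asymp |x_1|^{-1/2}$, which is strictly below $\lambda^{-(1-\bepsilon)n}$. Your appeal to \propref{hor near inv} to bound the surviving vertical component from below cannot close this gap: that proposition gives an \emph{upper} bound on $\measuredangle(\tiE^h, E^{gh})$, not a lower bound on the residual vertical part, and even a vertical part of size $|y_n|^{1-\bepsilon} \lesssim \lambda^{\delta n}$ would only yield $\|DF^{-n}\| \gtrsim \lambda^{-(1-\delta)n}$, weaker than $\lambda^{-(1-\bepsilon)n}$ since $\delta > \bepsilon$. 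Note that the literal dual of the range in \propref{tunnel escape}(iii), namely $n < (1-\bepsilon)\logl|x_0|$, translates to $n < (1-\bepsilon)\logl|y_1| \asymp \tfrac12(1-\bepsilon)\logl|x_1|$ — exactly what your argument delivers. So you either need a sharper argument than the straight transcription, or you should flag a suspected slip in the published exponent (the range should be in terms of $\logl|y_1|$, not $\logl|x_1|$, given the quadratic H\'enon transition) and note that your proof is correct once that is adjusted.
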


For $i \in \{0, 1\}$, let $\Gamma_i \subset \cU_{c_i}$ be a $C^1$-curve parameterized by its arclength $|\Gamma_i|$. We say that $\Gamma_1$ is {\it sufficiently stable-aligned in $\cU_{c_1}$} if the direction of $\Gamma_1'$ is sufficiently stable-aligned in $\cU_{c_1}$. Similarly, $\Gamma_0$ is {\it sufficiently center-aligned in $\cU_{c_0}$} if the direction of $\Gamma_0'$ is sufficiently center-aligned in $\cU_{c_0}$.


\section{Mild Dissipativity}\label{sec:mild diss}

Let $F$ the diffeomorphism as considered in \secref{sec:quant pesin}. Suppose additionally that $F$ is mildly dissipative. Let $L \geq 1$ and $\epsilon \in (0,1)$ with $\bepsilon < 1$.

A point $p \in \Lambda^+_{L, \epsilon}$ is said to be {\it proper forward regular} if $W^{ss}(p)$ is proper in $\Omega$. In this case, let $W^{ss}_\Omega(p)$ be the connected component of $W^{ss}(p) \cap \Omega$ containing $p$. Lastly, denote the set of properly forward regular points in $\Lambda^+_{L, \epsilon}$ and $\Lambda^P_{L, \epsilon}$ by $\hLambda^+_{L, \epsilon}$ and $\hLambda^P_{L, \epsilon}$ respectively.

\begin{lem}\label{h compact}
The sets $\hLambda^+_{L, \epsilon}$ and $\hLambda^P_{L, \epsilon}$ are compact.
\end{lem}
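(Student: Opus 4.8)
The plan is to prove both sets are closed; since they sit inside the compact set $\Lambda$, compactness follows. There are two ingredients: closedness of the underlying regular blocks (classical Pesin theory), and the fact that properness of the strong‑stable manifold is inherited in the limit (this is where mild dissipativity enters).

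First I would handle the regular blocks $\Lambda^+_{L,\epsilon}$ and $\Lambda^P_{L,\epsilon}$. Given $p_k\to p$ in $\Lambda$ with $p_k$ in the block and strong‑stable (resp. center) directions $E^{ss}_{p_k}$ (resp. $E^c_{p_k}$), the projectivized tangent bundle over the compact set $\Lambda$ is compact, so after passing to a subsequence $E^{ss}_{p_k}\to E_p$ and $E^c_{p_k}\to E'_p$. For each fixed $n$ the maps $q\mapsto \|D_qF^{\pm n}|_E\|$ and $q\mapsto \Jac_qF^{\pm n}$ are continuous, so the (non‑strict) defining inequalities \eqref{eq:for reg}–\eqref{eq:back reg} pass to the limit, and the transversality angle $\measuredangle(E^{ss},E^c)$ is lower semicontinuous along the sequence; hence $p$ lies in the same block with $E^{ss}_p=E_p$, $E^c_p=E'_p$. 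Moreover, by \propref{ss c contin}, $W^{ss}_{\loc}(p_k)\to W^{ss}_{\loc}(p)$ in $C^r$ (and $W^c_{\loc}(p_k)\to W^c_{\loc}(p)$ in $C^{r-\bepsilon}$).

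It remains to see that properness of $W^{ss}(p)$ is inherited when $p_k\in\hLambda^+_{L,\epsilon}$. Set $\Gamma_k:=\overline{W^{ss}_\Omega(p_k)}$, a Jordan arc in $\overline\Omega$ with endpoints on $\partial\Omega$ and $p_k\in\Gamma_k$. Using the regular‑box transition properties (\propref{consist ver hor} and \propref{pres ver hor curv}), one represents $W^{ss}_\Omega(p_k)$, away from its two short terminal sub‑arcs abutting $\partial\Omega$, as a nested union of $F^{-j}$‑pullbacks of the local stable manifolds $W^{ss}_{\loc}(F^j p_k)$ — all sufficiently vertical and proper in the regular boxes traversed — while the quantitative mild‑dissipativity estimates of \secref{sec:mild diss} furnish a lower bound, uniform over $\hLambda^+_{L,\epsilon}$, on how far such a pulled‑back arc reaches toward $\partial\Omega$, together with uniform geometric control on it. Consequently $\{\Gamma_k\}$ is precompact in the Hausdorff metric; any sublimit $\Gamma$ is a Jordan arc through $p$ whose two ends meet $\partial\Omega$, and by continuity of $F$ together with the previous step, $\Gamma\cap\Omega\subset W^{ss}(p)$. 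Since $W^{ss}(p)$ is an injectively immersed line and a sub‑arc reaching $\partial\Omega$ at both ends cannot be extended within $\Omega$, the component $W^{ss}_\Omega(p)$ coincides with $\Gamma\cap\Omega$, hence is proper; so $p\in\hLambda^+_{L,\epsilon}$. The Pesin case is identical, additionally carrying along the center direction and the transversality angle, which were already handled above.

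The only genuinely nontrivial point is the uniform‑over‑the‑block control invoked above: one must rule out that the proper arcs $\Gamma_k$ degenerate in the limit — becoming disconnected from $\partial\Omega$ through $p$, or accumulating extra pieces of $W^{ss}(p)$. This is exactly what the quantitative separation estimates of \secref{sec:mild diss} are designed to supply (how definite uniform pieces of local stable manifold separate forward iterates of $\Omega$), and I would cite those; I expect that estimate, rather than any point‑set topology, to be the real work here. (If instead \propref{ss c contin} is available in a form uniform up to $\partial\Omega$, i.e. the global manifold $W^{ss}(p)$ and not merely its compact pieces varies continuously, then properness of the limit is immediate and this last step collapses.)
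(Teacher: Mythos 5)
Your proof is correct. The first step (closedness of the regular blocks) coincides with the paper's, which simply notes that since $(L,\epsilon)$-regularity is a closed condition, the sets $\Lambda^+_{L,\epsilon}$ and $\Lambda^P_{L,\epsilon}$ are compact. For the second step (properness passing to the limit), the paper's proof is a one-liner: it says the result now follows immediately from \propref{ss c contin}. In other words, the paper treats the $C^r$-continuity of $W^{ss}(p)$ over the compact block $\Lambda^+_{L,\epsilon}$ as already uniform enough that properness of $W^{ss}_\Omega(p)$ passes to the limit --- exactly the ``collapse'' you anticipate in your closing parenthetical. Your longer route, via the regular-box transition properties (\propref{consist ver hor}, \propref{pres ver hor curv}) and the quantitative mild-dissipativity estimates of \secref{sec:mild diss} (notably \propref{enc box}) together with Hausdorff precompactness of the arcs $\Gamma_k$, makes explicit the uniform geometric control that the paper leaves implicit in the phrase ``depends $C^r$-continuously''. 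What you buy is transparency about where the uniformity comes from; what the paper buys is economy, at the cost of asking the reader to accept that the continuity asserted in \propref{ss c contin} carries properness along. Both are sound, and you correctly identified the shortcut.
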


\begin{proof}
Note that since $(L, \epsilon)$-regularity is a closed condition, the sets $\Lambda^+_{L, \epsilon}$ and $\Lambda^P_{L, \epsilon}$ are compact. The result now follows immediately from \propref{ss c contin}.
\end{proof}

For $p \in \Lambda^P_{L, \epsilon}$, let
$\Phi_p : \cU_p \to U_p$
be a regular chart at $p$. Recall that a regular box at $p$ is given by
$$
\cB_p := \Phi_p^{-1}(\bbB(1/L_1))
\matsp{for some}
L_1 := \bL
$$
(see \subsecref{subsec:reg box}). For $n \geq 0$, let $\cB_p^n$ be the connected component of $\cB_p \cap F^n(\Omega)$ containing $p$. Denote
$B_p^n := \Phi_p(\cB_p^n).$

\begin{prop}[Iterated image of $\partial \Omega$ as top-bottom boundary]\label{enc box}
There exists a uniform constant $M\in \bbN$ with
$$
M > -\logl \overline{L_1}
$$
such that the following holds. Let $p_0 \in \hLambda^P_{(L, \epsilon)}$. Then for $n \geq M$, the set $\partial B_{p_0}^n \cap B_{p_0}$ is the union of two $C^r$ Jordan arcs $\gamma_+^n$ and $\gamma_-^n$. Moreover, $\gamma_\pm$ is the horizontal graph of a $C^r$-function $g^n_\pm : (-1/L_1, 1/L_1) \to \bbR$ such that
$$
\lambda^{(1+\bepsilon)n}< |g^n_\pm(x)| < \lambda^{(1-\bepsilon)n}
\matsp{for}
-1/L_1< x <1/L_1
$$
and
$$
\|(g^n_\pm)'\|, \|(g^n_\pm)''\| < \lambda^{(1-\bepsilon) n}.
$$
\end{prop}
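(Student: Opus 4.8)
The plan is to work in the regular chart $\Phi_{p_0}\colon\cB_{p_0}\to B_{p_0}=\bbB(1/L_1)$ and, for $n$ large, to realize $\cB^n_{p_0}$ as a thin, nearly horizontal strip across the box whose top and bottom are the curves $\gamma^n_\pm$. First a few reductions. Since $F(\Omega)\Subset\Omega$, we may replace $\Omega$ by a $C^r$-smooth Jordan domain squeezed between $F(\Omega)$ and $\Omega$ (it still satisfies $F(\Omega')\subset\Omega'$ and has the same limit set and regularity structure), so without loss of generality $\partial\Omega$ is $C^r$. Because $p_{-n}\in\Lambda\Subset\Omega$, we have $p_0=F^n(p_{-n})\in F^n(\Omega)$, hence $p_0\in\cB^n_{p_0}$; and since $\overline\Omega\Subset F^{-n}(\Omega)$ while $W^{ss}(p_{-n})=F^{-n}(W^{ss}(p_0))$, the arc $W^{ss}_\Omega(p_{-n})$ is again proper and $p_{-n}\in\hLambda^+_{L^2\lambda^{-2\epsilon n},\epsilon}$ by \propref{decay reg}; the same holds for the nearby points of $\hLambda^+_{L,\epsilon}$.

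The key quantitative input is the position and vertical width of $F^n(\Omega)$ near $p_0$. For $q\in\hLambda^+_{L,\epsilon}\cap\cB_{p_0}$ put $q_{-n}:=F^{-n}(q)$ and $\sigma^n_q:=F^n(W^{ss}_\Omega(q_{-n}))$. Since $F^n(\Omega)\Subset\Omega$ and $W^{ss}_\Omega(q_{-n})$ is proper in $\Omega$, $\sigma^n_q$ is a connected sub-arc of $W^{ss}_\Omega(q)$ through $q$ that is proper in $F^n(\Omega)$; by \propref{ss c geo} and \propref{consist ver hor} it is a $C^r$-curve, sufficiently vertical in $\cU_{p_0}$, with $\|\sigma^n_q\|_{C^r}<\bL$. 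The tangent field of $W^{ss}_\Omega(q_{-n})$ lies along the strong-stable leaf of $q_{-n}$; the point $q_{-n}$ lies, measured along $W^{ss}_\Omega(q_{-n})$, at distance between $\dist(\Lambda,\partial\Omega)$ and $O(1)$ from each of its two endpoints on $\partial\Omega$; and $q_{-n}$ is forward $(L^2\lambda^{-2\epsilon n},\epsilon)$-regular. Integrating the contraction then shows that each of the two sub-arcs of $\sigma^n_q$ cut by $q$ has length in $(\lambda^{(1+\bepsilon)n},\lambda^{(1-\bepsilon)n})$; as $\sigma^n_q$ is nearly vertical and proper in $F^n(\Omega)$, this means $F^n(\Omega)$ has vertical width in $(\lambda^{(1+\bepsilon)n},\lambda^{(1-\bepsilon)n})$ near $p_0$, in particular bounded away from $\hpartial\cB_{p_0}$ once $\lambda^n<\overline{L_1}^{-1}$, that is, once $n\geq M$.

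With the strip located, the remaining work is structural. Using mild dissipativity (the arcs $\sigma^n_q$ are pairwise disjoint and separate $F^n(\Omega)$ — the theme of this section), the compactness of $\hLambda^+_{L,\epsilon}$, and the recurrence of the orbit of $p_0$ — which, after shrinking the box, forces $F^n(\Omega)$ to span the full horizontal width of $\cB_{p_0}$ near $p_0$ — one deduces that no component of $F^n(\partial\Omega)$ crosses $\cB_{p_0}$ vertically, so $B^n_{p_0}=\{(x,y)\in B_{p_0}:g^n_-(x)<y<g^n_+(x)\}$ with $g^n_\pm(x)=\pm\sup\{\pm y:(x,y)\in B^n_{p_0}\}$ finite, and $\partial B^n_{p_0}\cap B_{p_0}=\gamma^n_+\cup\gamma^n_-$, $\gamma^n_\pm=\{y=g^n_\pm(x)\}$. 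For the $C^r$-smoothness and flatness of $\gamma^n_\pm$, one argues by induction along the backward orbit of $p_0$ via the linearization of \thmref{reg chart}: pushing the level-$(n-1)$ description of $\partial(F^{n-1}\Omega)$ over $p_{-1}$ forward one step of $F$ flattens the two graphs by a factor $\lambda^{1-\bepsilon}$ (\propref{hor curv conv}, equivalently \propref{pres ver hor curv}(i)), giving the level-$n$ statement over $p_0$; the base step uses the previous paragraph for the position together with the fact — a consequence of mild dissipativity and of dissipation — that after boundedly many iterates the image of $\partial\Omega$ is uniformly transverse to the strong-stable foliation, so the relevant arc of $\partial(F^n\Omega)$ is already sufficiently horizontal near $p_0$ and the flattening estimate runs. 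Combined with the width bounds this yields $\lambda^{(1+\bepsilon)n}<|g^n_\pm|<\lambda^{(1-\bepsilon)n}$ and $\|(g^n_\pm)'\|,\|(g^n_\pm)''\|<\lambda^{(1-\bepsilon)n}$.

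The main obstacle is that for large $n$ the backward orbit of a point of $\cB_{p_0}$ — in particular of a point of $F^n(\partial\Omega)\cap\cB_{p_0}$ — leaves the shrinking regular neighborhoods $\cU_{p_{-j}}$ long before it reaches $p_{-n}$, so the linearization of \thmref{reg chart} is unavailable over the whole orbit. The substitute, for the non-regular part, is precisely the control supplied by mild dissipativity: the relevant pieces of $\partial(F^j\Omega)$ shadow proper strong-stable arcs and the attractor, whose $C^r$-geometry is uniformly bounded. Turning ``a uniform piece of local stable manifold separates the forward iterates of $\Omega$'' into quantitative estimates — which is exactly what this section builds — is what makes the width estimate and, especially, the base case of the induction go through, and it carries the bulk of the proof.
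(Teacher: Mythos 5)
Your plan — realize $\cB^n_{p_0}$ in the chart as a thin nearly horizontal strip, estimate its vertical width via strong‑stable arcs and mild dissipativity, and then get the $C^r$ flatness by pushing the boundary forward and flattening step by step — starts out on the right track, and you correctly name the central difficulty: for large $n$ the relevant preimages leave the regular neighborhoods $\cU_{p_{-j}}$ long before reaching $p_{-n}$, since $\operatorname{radius}(\cU_{p_{-j}})\asymp \bL^{-1}\lambda^{\bepsilon j}$ shrinks exponentially. But the substitute you propose (``the relevant pieces of $\partial(F^j\Omega)$ shadow proper strong‑stable arcs and the attractor, whose $C^r$-geometry is uniformly bounded'') is not a quantitative argument, and your inductive scheme does not have a sound base case: to prove the level‑$n$ statement over $p_0$ you must start the induction at a uniformly bounded level $N_0$ over $p_{-(n-N_0)}$, and the regularity factor of $p_{-(n-N_0)}$ in $p_0$'s linearization degrades like $L^2\lambda^{-2\epsilon(n-N_0)}$ (\propref{decay reg}), so the regular box there has radius going to zero with $n$ and cannot contain the two arcs of $\partial(F^{N_0}\Omega)$ you would like to propagate. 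You therefore never actually get the induction off the ground; the ``main obstacle'' you flag is not resolved, it is deferred.

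The device you are missing is exactly what the lower bound $M>-\logl\overline{L_1}$ in the statement is for, and it is the heart of the paper's proof: do not linearize along $p_0$'s backward orbit at all. Instead, pick a slightly smaller marginal exponent $\kappa\in(\bepsilon,1)$, let $\{-m_i\}$ be the negative Pliss moments of $p_0$ at which $p_{-m_i}\in\hLambda^P_{1,\kappa}$, and set $M:=m_i$ for $i$ large enough that $M>-\logl\bL$. Since $q_0:=p_{-M}$ is Pesin $(1,\kappa)$-regular, the regular neighborhoods $\cU_{q_m}$ along the full orbit of $q_0$ have $\radius(U_{q_m})\asymp\lambda^{\bkappa|m|}$ with constant $1$, not $L$ — in particular $\cU_{q_0}$ has uniform size. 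Compactness of $\hLambda^P_{1,\kappa}$ (\lemref{h compact}, \propref{ss c contin}) then gives a \emph{uniform} $N_0$ after which $F^n(W^{ss}_\Omega(q_0))\subset W^{ss}_{\loc}(q_n)$, so (after replacing $\Omega$ by a smooth domain between $F(\Omega)$ and $\Omega$, which you also do) $\partial F^{N_0}(\Omega)\cap\cU^0_{q_{N_0}}(a_0)$ is already two $C^r$ arcs of bounded geometry. One then pushes these two arcs forward by $F^{M-N_0}$ inside the \emph{single} linearization of the $q$-orbit, using the truncated neighborhoods $\cU^n_{q_{N_0}}(a_0)$ to keep the arcs inside the charts, obtaining horizontal $C^r$ graphs with $\|g^n_\pm\|_{C^r}<K\lambda^{(1-\bkappa)n}$ and height $>K^{-1}\lambda^{(1+\bkappa)n}$, and finally conjugates from the $q$-chart at $q_M=p_0$ to the $p$-chart via $\Phi_{p_0}\circ\Phi^{-1}_{q_M}$, whose $C^r$ norm is $O(\bL)$ precisely because $\lambda^{\bkappa M}\asymp 1/\bL$. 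This relocation to a Pliss moment is what converts your informal ``mild dissipativity supplies the control'' into an actual estimate, and it is not an induction along $p_0$'s backward orbit but a single pushforward inside a uniformly sized tube.
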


\begin{proof}
Let $\kappa \in (\bepsilon, 1)$ be a uniformly small constant such that $\bkappa < 1$. Let $\{-m_i\}_{i=1}^\infty$ be the decreasing sequence of all negative Pliss moments in the full orbit of $p_0$ such that $p_{-m_i} \in \hLambda^P_{1, \kappa}$. Choose $M = m_i$ with $i$ sufficiently large such that
$M > -\logl \bL,$
and consider
$
q_0 := p_{-M} \in \hLambda^P_{1, \kappa}.
$

Let
$
\{\Phi_{q_m} : \cU_{q_m}\to U_{q_m}\}_{m \in \bbZ}
$
be a linearization of the full orbit of $q_0$. Then
$$
\radius(U_{q_m}) = l_{q_m} \asymp \lambda^{\bkappa |m|}.
$$
For $a >0$ and $n \geq 0$, denote
$$
\cU_{q_m}^n(a) := \Phi_{q_m}^{-1}(U_{q_m}^n(a))
\matsp{where}
U_{q_m}^n(a) := \bbB(al_{q_m}\lambda^{\bkappa n}, l_{q_m}).
$$

By \lemref{h compact}, there exists a uniform constant $N_0 < M$ such that for $n \geq N_0$, the curve
$$
\cC_n := F^n(W^{ss}_\Omega(q_0))
$$
is contained in $W^{ss}_{\loc}(q_n)$. By choosing a new domain $\tiOmega$ such that $\Omega \supset \tiOmega \supset F(\Omega)$ if necessary, we may assume that for some small uniform constant $a_0 \in (0, 1)$, the set $\partial F^{N_0}(\Omega) \cap \cU^0_{q_{N_0}}(a_0)$ is the union of two $C^r$-Jordan arcs $\Gamma_+^0$ and $\Gamma_-^0$. Moreover, by \thmref{reg chart} and \propref{loc linear}, there exists a uniform constant $K \geq 1$ such that the following holds. For $n \geq 0$, the curve
$$
\gamma_\pm^n := \Phi_{q_{N_0+n}}(\Gamma_\pm^n)
\matsp{where}
\Gamma_\pm^n := F^n(\Gamma_\pm^0 \cap \cU_{q_{N_0}}^n(a_0))
$$
is the horizontal graph of a $C^r$-map $g_\pm^n : \bbI(a_0 l_0\lambda^{\bkappa n}) \to \bbR$ satisfying
$$
\|g_\pm\|_{C^r} < K\lambda^{(1-\bkappa)n}
$$
and
$$
g_\pm^n(x) > K^{-1}\lambda^{(1+\bkappa) n}
\matsp{for}
x \in \bbI(a_0l_0 \lambda^{\bkappa n}).
$$

Observe that
$$
\lambda^{\bkappa M}, \lambda^{(1\pm \bkappa) M} < 1/\bL,
\matsp{and}
\|\Phi_{q_M}^{-1}\|_{C^r} < \bL.
$$
Let
$
\{\Phi_{p_m} : \cU_{p_m} \to U_{p_m}\}_{m \in \bbZ}
$
be a linearization of the full orbit of $p_0$. Then
$
\|\Phi_{p_0}\|_{C^r} < \bL.
$
Since
$$
\gamma_\pm = \Phi_{p_0}\circ \Phi_{q_M}^{-1}(\gamma_\pm^{M-N_0}),
$$
the result follows.
\end{proof}

By \propref{enc box}, we can form dynamically meaningful top-bottom boundaries of a regular box by choosing them as subarcs of $F^M(\partial \Omega)$ for some sufficiently large $M \in \bbN$. The following result states that if $F$ is homogeneous, then there is a uniform bound on how large $M$ has to be (so that there is a uniform lower bound on the resulting height of the regular box).

\begin{prop}\label{hom enc box}
If $F$ is $\eta$-homogeneous for some $\eta \in (0, \uepsilon)$, then we can choose $M$ in \propref{enc box} to be
$$
M = -\logl \overline{L_1}.
$$
\end{prop}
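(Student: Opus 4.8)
The plan is to re-run the construction in the proof of \propref{enc box}, exploiting the fact that its only source of non-uniformity is the choice of the negative Pliss moment $-M$ at which $q_0 := p_{-M}$ becomes Pesin regular; under $\eta$-homogeneity this moment can be taken to be of size $\asymp -\logl L_1$.

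First I would isolate what the argument of \propref{enc box} actually needs. Once a negative Pliss moment $-M$ is fixed with $q_0 := p_{-M}$ Pesin $(1,\kappa)$-regular for a uniformly small $\kappa \in (\bepsilon, 1)$, the rest of that proof is uniform: the existence of a uniform constant $N_0 < M$ with $F^{N_0}(W^{ss}_\Omega(q_0)) \subset W^{ss}_{\loc}(q_{N_0})$ comes from \lemref{h compact} and mild dissipativity; the flattening of $F^n(\partial F^{N_0}(\Omega))$ into horizontal graphs with the asserted bounds comes from \thmref{reg chart} and \propref{loc linear}; and the transfer back to $p_0$ is by $\Phi_{p_0}\circ\Phi_{q_M}^{-1}$, which has $C^r$-norm $O(\bL)$ as soon as $\lambda^{\bkappa M} < 1/\bL$. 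Hence it suffices to exhibit a negative Pliss moment $-M$ in the backward orbit of $p_0$ with $q_0 = p_{-M} \in \hLambda^P_{1,\kappa}$ and $-\logl\bL < M < -\logl\overline{L_1}$ for a suitable uniform power $\overline{L_1}$ of $L_1$.

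Next I would produce such $M$. Since $p_0 \in \hLambda^P_{L,\epsilon}$ with $L \asymp L_1$, the point $p_{-1}$ is infinitely backward $(\bL,\epsilon)$-regular along $E^h_{p_{-1}}$ by \propref{decay reg}, so \propref{freq pliss back} (applied with regularity exponent $\kappa$ in place of $\delta$ and $\bL$ in place of $L$) gives negative Pliss moments $-m_1 > -m_2 > \cdots$ — at which $p_{-m_i}$ is $(\infty, m_i)$-times $(1,\kappa)_h$-regular along $E^h_{p_{-m_i}}$ — satisfying
\[
m_1 < \frac{1 - \kappa^{-1}\logl\bL}{1-\bepsilon/\kappa}
\qquad\text{and}\qquad
m_i > \left(1-\bepsilon/\kappa\right)m_{i+1} + \kappa^{-1}\logl\bL .
\]
The second inequality shows the $m_i$ grow at most geometrically from $m_1 \asymp -\logl L_1$; by \lemref{conditional pliss full}, moreover, $p_{-m_i}$ is Pesin $(1,\kappa)$-regular once $m_i > -\kappa^{-1}\logl\bL$. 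Taking $M := m_{i^*}$ with $i^*$ the least index for which $m_{i^*} > -\kappa^{-1}\logl\bL$, minimality of $i^*$ together with the geometric bound forces $-\logl\bL < M < -\logl\overline{L_1}$ for a uniform power $\overline{L_1}$ of $L_1$. With $q_0 = p_{-M} \in \hLambda^P_{1,\kappa}$ in hand, the construction of \propref{enc box} delivers the top--bottom boundary of $\cB_{p_0}$ with $M = -\logl\overline{L_1}$, as claimed.

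The main obstacle is the quantitative step: the bare Pliss-frequency statement only yields positive lower density of Pliss moments, whereas here I need the first Pesin-regular Pliss moment past the threshold $-\kappa^{-1}\logl\bL$ to remain of size $O(-\logl L_1)$. This is precisely what the recursive inequalities for consecutive Pliss moments in \propref{freq pliss back} provide, but one has to track carefully that chaining them with the upgrade of \lemref{conditional pliss full} costs no more than a uniform power of $L_1$, and that the homogeneity exponent $\eta$ (assumed $< \uepsilon$) is small enough for \propref{freq pliss back}, \lemref{conditional pliss full}, and the estimates reused from the proof of \propref{enc box} all to apply with the single exponent $\kappa$.
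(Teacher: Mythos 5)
Your proposal is correct and follows essentially the same route as the paper: use \propref{freq pliss back} (with \lemref{conditional pliss full} to upgrade to Pesin $(1,\kappa)$-regularity) to obtain the recursive bound $m_i > (1-\bepsilon/\kappa)m_{i+1} + \kappa^{-1}\logl\bL$ on the negative Pliss moments from the proof of \propref{enc box}, together with $m_1 < -\kappa^{-1}\logl\bL$, and conclude that the first Pesin-regular Pliss moment past the threshold is of size $-\logl\overline{L_1}$. Your write-up simply spells out in more detail the reduction to the Pliss-moment estimate, which the paper's proof leaves implicit by referring back to the construction in \propref{enc box}.
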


\begin{proof}
Consider the sequence of Pliss moments $\{-m_i\}_{i=1}^\infty$ considered in the proof of \propref{enc box}. By \propref{conditional pliss full} and \propref{freq pliss back}, we have
$$
m_1 < -\kappa^{-1}\logl \bL = -\logl \bL,
$$
and if
$$
m_i > -\logl \bL,
$$
then
$$
m_i > (1-\bepsilon)m_{i+1} + \logl \bL.
$$
Hence, we may choose
$$
M = -\logl \overline{L_1}.
$$
\end{proof}

For $p \in \hLambda^P_{L, \delta}$ and $a \in [0, 1]$, define
$$
\bcB_p(a) := \Phi_p^{-1}\left(B_p^M \cap U_p(a)\right),
$$
where $M$ is given in \propref{enc box} (or \propref{hom enc box} if $F$ is homogeneous). Define an {\it enclosed regular box at $p$} as
\begin{equation}\label{eq:pesin rect}
\bcB_p := \bcB_p(1/2),
\end{equation}
and the {\it outer boundary of $\bcB_p$} as
\begin{equation}\label{eq:outer boundary}
\bpartial \bcB_p := \partial \cB^M_p(1) \cap \partial F^M(\Omega).
\end{equation}
A Jordan curve $\Gamma : (0, 1) \to \bcB_p$ is said to be {\it vertically proper in $\bcB_p$} if $\Gamma$ extends continuously to $[0, 1]$, and $\Gamma(0)$ and $\Gamma(1)$ are in distinct components of $\bpartial \bcB_p$. More generally, a Jordan curve $\Gamma$ in $\Omega$ is said to be vertically proper in $\bcB_p$ if $\Gamma \cap \bcB_p(1)$ is.

\begin{prop}\label{proper in box}
Let $p \in \hLambda^P_{L, \epsilon}$. If $q \in \bcB_p$ is $N$-times forward $(L, \epsilon)$-regular for some
$$
N > -\logl \bL,
$$
then $W^v(q)$ is proper in $\bcB_p$.
\end{prop}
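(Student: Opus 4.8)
The plan is to view $W^v(q)$ through the ambient regular chart $\Phi_p$, verify it is nearly vertical there, and then check that it is long enough to stretch from the bottom to the top boundary of the (thin) enclosed box $\bcB_p$ without escaping through its vertical sides.

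First, since $q\in\bcB_p\subset\cB_p$ and $q$ is $N$-times forward $(L,\epsilon)_v$-regular along $E^v_q$ with $N>-\logl\bL$, \propref{consist ver hor}~i), applied to the $(0,N)$-linearization of $q$, shows that $W^v(q)$ (and $E^v_q$) is sufficiently vertical in $\cU_p$. Setting $z_q=(x_q,y_q):=\Phi_p(q)$, this means that in $\Phi_p$-coordinates $W^v(q)$ is a graph $x=h(y)$ with $|h'|<1/\bLp$, $h(y_q)=x_q$, and $|x_q|<1/(2L_1)$ (because $z_q\in U_p(1/2)$); moreover, by \thmref{reg chart}, $W^v(q)$ is the $\Phi_q^{-1}$-image of $\{0\}\times\bbI(l_q)$ with $l_q\asymp\bL^{-1}$ and $\|\Phi_q^{\pm1}\|_{C^r},\|\Phi_p^{\pm1}\|_{C^r}=O(\bL)$. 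Hence the arc of $W^v(q)$ above $q$, and likewise the one below, has, as long as it stays in $\cU_p$, monotone $\Phi_p$-coordinate $y$ whose range has length $\gtrsim\bL^{-1}$.

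On the boundary side, \propref{enc box} (or \propref{hom enc box}, giving $M=-\logl\overline{L_1}$, when $F$ is homogeneous; a larger $M$ only helps) tells us $\bpartial\bcB_p$ is the pair of graphs $y=g^M_\pm(x)$ over $|x|<1/L_1$ with $|g^M_\pm|<\lambda^{(1-\bepsilon)M}$, and $z_q$ lies strictly between them. Now fix the defining constants $L_1=\bL$ and $L'=\bLp$ as sufficiently large uniform polynomials in $L$, so that the box half-height $\lambda^{(1-\bepsilon)M}$ is much smaller than the length bound from the previous step, and the horizontal drift of $W^v(q)$ over that vertical range, at most $(1/\bLp)$ times an arclength comparable to $\lambda^{(1-\bepsilon)M}$, is less than $1/(2L_1)$. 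Following $W^v(q)$ upward from $q$, the coordinate $y$ then increases past $\lambda^{(1-\bepsilon)M}\ge g^M_+(\cdot)$ while $x$ stays in $|x|<1/L_1$, so $W^v(q)$ crosses $\gamma^M_+$ before it can leave $\bcB_p(1)$; symmetrically it crosses $\gamma^M_-$ going down. The sub-arc of $W^v(q)$ between the two crossings lies in $\overline{\bcB_p(1)}$ with endpoints on the distinct components of $\bpartial\bcB_p$, so $W^v(q)\cap\bcB_p(1)$ is vertically proper and $W^v(q)$ is proper in $\bcB_p$.

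The main obstacle is precisely this matching of scales: one must pin down a consistent hierarchy among the regular radius $l_q\asymp\bL^{-1}$ of $q$ (controlled by its regularity factor $L$), the width $\asymp1/L_1$ and height $\asymp\lambda^{(1-\bepsilon)M}$ of the enclosed box (controlled by the chosen $L_1=\bL$ and $M=-\logl\overline{L_1}$), and the verticality slope $1/\bLp$, so that a vertical manifold of a fixed scale genuinely spans a box that is much thinner in the $y$-direction yet cannot drift out through its sides. Once these constants are fixed, the remaining verifications are routine and rest on \propref{consist ver hor}, \thmref{reg chart} and \propref{enc box}.
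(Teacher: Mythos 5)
Your proof is correct and follows the same route as the paper's one-line argument: first invoke \propref{consist ver hor} to obtain that $W^v(q)$ is sufficiently vertical in $\cU_p$, then compare scales to see that $W^v(q)$ spans $\bcB_p$ vertically. The paper merely asserts $\diam(W^v(q)) > \overline{\diam(\bcB_p)}$ and concludes; you have unpacked exactly that scale comparison, tracking the regular radius $\asymp \bL^{-1}$, the box height $\asymp \lambda^{(1-\bepsilon)M}$ from \propref{enc box}, and the slope bound $1/\bLp$, which is a reasonable expansion of the intended argument.
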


\begin{proof}
By \propref{consist ver hor}, $W^v(q)$ is sufficiently vertical in $\cU_p$. Moreover,
$$
\diam(W^v(q)) > \overline{\diam(\bcB_p)}.
$$
The result follows.
\end{proof}

Define
$$
\Lambda^\pm_\epsilon := \bigcup_{L \geq 1} \Lambda^\pm_{L, \epsilon}
\matsp{and}
\Lambda^P_\epsilon := \bigcup_{L \geq 1} \Lambda^P_{L, \epsilon}.
$$
Let $\hLambda^+_\epsilon \subset \Lambda^+_\epsilon$ be the set of all mildly forward regular points in $\Lambda^+_\epsilon$.

\begin{prop}\label{proper for reg}
Suppose that $F|_\Lambda$ is uniquely ergodic. Then we have
$$
\hLambda^+_\epsilon = \Lambda^+_\epsilon
\matsp{and}
\hLambda^P_\epsilon = \Lambda^P_\epsilon.
$$
\end{prop}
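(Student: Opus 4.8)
The plan is to prove the inclusion $\Lambda^+_\epsilon\subseteq\hLambda^+_\epsilon$ (the reverse being immediate from the definitions); the equality $\hLambda^P_\epsilon=\Lambda^P_\epsilon$ then follows at once, since properness of $W^{ss}(p)$ does not involve the backward data of $p$. Throughout I may assume $F$ is $\eta$-homogeneous (replacing $F$ by a large iterate, which by \propref{get homog} preserves unique ergodicity and changes neither the stable manifolds nor, after adjusting constants, the regular blocks), and I fix an auxiliary exponent $\delta$ with $\bepsilon<\delta<1$ and $\bdelta<1$. The key reduction is: it suffices to show that for $p\in\Lambda^+_{L,\epsilon}$ \emph{some} forward iterate $p_k$, $k\ge 0$, is properly forward regular. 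Indeed, pulling the Jordan arc $\overline{W^{ss}_\Omega(p_k)}$ back by $F^{-k}$, whose image sends $\partial\Omega$ strictly outside $\overline\Omega$ since $F(\Omega)\Subset\Omega$, one gets a curve in $W^{ss}(p)$ through $p$ crossing $\Omega$ from $\partial\Omega$ to $\partial\Omega$; the component through $p$ is then $W^{ss}_\Omega(p)$, whose closure is a Jordan arc with endpoints in $\partial\Omega$.

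First I would build a reservoir of properly Pesin-regular points. Since $F|_\Lambda$ is uniquely ergodic, $\mu$ is ergodic and, carrying a zero Lyapunov exponent, is not supported on a hyperbolic sink; so by mild dissipativity $\mu$-almost every point of $\Lambda$ has a proper strong-stable manifold. Combining this with Oseledets' theorem, \propref{lya reg}, \propref{classic pesin} and \lemref{h compact}, the sets $\hLambda^P_{L,\delta}$ are compact and exhaust a set of full $\mu$-measure, so there is a uniform $L_1\ge 1$ with $\mu(\hLambda^P_{L_1,\delta})>\bepsilon/\delta$ (note $\bepsilon/\delta<1$). For every $p^*\in\hLambda^P_{L_1,\delta}$ the enclosed regular box $\bcB_{p^*}$ of \eqref{eq:pesin rect} has its outer boundary $\bpartial\bcB_{p^*}$ contained in $\partial F^M(\Omega)$ for one uniform $M=M(L_1)$ (by \propref{hom enc box}), and these boxes have uniformly bounded-below inradius; fix $\rho>0$ with $\bbD_{p^*}(\rho)\subseteq\bcB_{p^*}$ for all such $p^*$, and let $\cN_\rho$ be the ($\mu$-positive, open) $\rho$-neighborhood of $\hLambda^P_{L_1,\delta}$.

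Now fix $p\in\Lambda^+_{L,\epsilon}$. By \propref{freq pliss for} its forward orbit has an infinite sequence of Pliss moments $n_i\to\infty$ of positive lower density $\ge 1-\bepsilon/\delta$, at which $p_{n_i}$ is infinitely forward $(1,\delta)_v$-regular — so $W^{ss}_{\loc}(p_{n_i})$ is defined and $p_{n_i}$ is $N$-times forward $(L_1,\delta)$-regular for every $N\le n_i$. By unique ergodicity the Birkhoff averages $\tfrac1n\sum_{k<n}\delta_{p_k}$ converge weakly to $\mu$, so the forward orbit of $p$ visits the open set $\cN_\rho$ with lower density at least $\mu(\cN_\rho)>\bepsilon/\delta$. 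Since the two lower densities sum to more than $1$, infinitely many Pliss moments $n_i$ satisfy $p_{n_i}\in\cN_\rho$; pick one with $n_i>\max(M,-\logl\bL_1)$, so that $p_{n_i}\in\bbD_{p^*}(\rho)\subseteq\bcB_{p^*}$ for some $p^*\in\hLambda^P_{L_1,\delta}$. Applying \propref{proper in box} to $p^*$ with $q=p_{n_i}$ gives that $W^{ss}_{\loc}(p_{n_i})$ (which is the vertical manifold $W^v(p_{n_i})$ appearing there, as $p_{n_i}$ is infinitely forward regular) contains a sub-arc $\beta$ vertically proper in $\bcB_{p^*}$, i.e. $\beta\subseteq\overline{F^M(\Omega)}$ with its two endpoints in the two components of $\bpartial\bcB_{p^*}\subseteq\partial F^M(\Omega)$. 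Then $F^{-M}(\beta)\subseteq W^{ss}(p_{n_i-M})$ is an arc in $\overline\Omega$ joining $\partial\Omega$ to $\partial\Omega$ through $p_{n_i-M}\in\Int\Omega$, so $p_{n_i-M}$ is properly forward regular, and by the reduction above so is $p$; thus $p\in\hLambda^+_{L,\epsilon}$, as desired.

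The step I expect to be the main obstacle is the one the detour through enclosed boxes is designed to handle: properness concerns the \emph{global} manifold $W^{ss}(p)$ inside $\Omega$, and since the regularity constant degrades along orbits (\propref{decay reg}) one cannot conclude properness of $p$ merely by approximating it, in $\Lambda$, by $\mu$-generic proper points. The mechanism is instead to convert the global properness of a single good point $p^*$ into the \emph{local} datum $\bpartial\bcB_{p^*}\subset\partial F^M(\Omega)$, which \propref{proper in box} lets every sufficiently-long finite-time-regular point inside $\bcB_{p^*}$ inherit, and then to use unique ergodicity to force every orbit to meet such a box at a Pliss moment. The technical care lies in (i) the bookkeeping of the exponents $\epsilon,\delta,\bepsilon$ so that the two densities provably sum past $1$ and the hypotheses of \propref{freq pliss for} and \propref{proper in box} are met with matching constants, and (ii) the elementary but delicate pull-back statements relating properness of $W^{ss}(p_k)$ for different $k$ and under $F^{-M}$, which rest on $\overline{F^M(\Omega)}\Subset\Omega$ and $\partial\Omega\subset\Int F^{-k}(\Omega)$.
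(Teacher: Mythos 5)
Your proof is correct and follows essentially the same route as the paper: both produce a positive-measure set of properly Pesin-regular points covered by enclosed regular boxes, use unique ergodicity and \propref{freq pliss for} to force the forward orbit of an arbitrary $p\in\Lambda^+_\epsilon$ to enter such a box at a Pliss moment, and then invoke \propref{proper in box}. The only minor differences are that you invoke the qualitative \propref{classic pesin} (together with \lemref{h compact} and mild dissipativity) in place of the quantitative \propref{uni erg pos meas} used by the paper to source the measure lower bound, and that you make explicit the pull-back reduction --- that proper forward regularity of some forward iterate $p_k$ passes down to $p$ via $F^{-k}$ and $F(\Omega)\Subset\Omega$ --- which the paper leaves implicit in its final appeal to \propref{proper in box}.
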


\begin{proof}
By replacing $F$ by $F^N$ for some sufficiently large $N\in \bbN$, we may assume that $F$ is $\eta$-homogeneous for some sufficiently small constant $\eta \in (0, \uepsilon)$ (see \propref{get homog}).

Consider the following open set
$$
\bfU := \bigcup_{p \in \hLambda^P_{1, \bepsilon}} \bcB_p.
$$
By \propref{uni erg pos meas},
$$
\alpha := \mu(\bfU \cap \Lambda) > \frac{\epsilon - \eta}{\epsilon+\eta} \simeq 1.
$$

Let $p_0 \in \Lambda^+_{L, \epsilon}$ for some $L \geq 1$. Let $\{n_i\}_{i=1}^\infty$ and $\{m_i\}_{i=1}^\infty$ be the increasing sequences of all moments in the forward orbit of $p_0$ such that $p_{n_i} \in \bfU$ and $p_{m_i} \in \Lambda^+_{1, \bepsilon}$. By unique ergodicity, we have
$$
\lim_{i \to \infty} \frac{i}{n_i} = \alpha.
$$
Thus, by \propref{freq pliss for}, there exist $i, j \geq 1$ such that $n_i = m_j$. The result now follows from \propref{proper in box}.
\end{proof}


\section{Infinite Renormalizability}\label{sec:inf ren}

Let $F$ be the mildly dissipative diffeomorphism considered in \secref{sec:mild diss}. Suppose additionally that $F$ is infinitely renormalizable. Then there exists a nested sequence $\Omega =: \cD^0 \Supset \cD^1 \Supset \ldots$ of Jordan domains and a sequence of natural numbers $1 =: R_0 < R_1 < \ldots$ such that $\cD^n$ is $R_n$-periodic and
$$
r_{n-1} := R_n/R_{n-1} \geq 2
\matsp{for}
n \in \bbN.
$$
Recall that the renormalization limit set is given by
$$
\Lambda := \bigcap_{n=1}^\infty \bigcup_{i=0}^{r_n-1} F^i(D^n).
$$

Consider the standard odometer $S: \bbO \to \bbO$ defined by
$$
\bbO := \prod_{n \in \bbN} (\bbZ / R_n \bbZ),
$$
and
$$
S(a_1, \ldots, a_{n-1}, a_n, a_{n+1}, \ldots) := (0, \ldots, 0, a_n+1, a_{n+1}, \ldots),
$$
where $n \geq 1$ is the smallest integer such that $a_n \neq R_n-1$. Denote
$$
\bbO_0^n:= \{(0, \ldots, 0, a_{n+1}, a_{n+2}, \ldots) \; | \; a_i \in \bbZ/R_i\bbZ \hspace{3mm}\text{for}\hspace{3mm}i \geq n+1\}.
$$
Lastly, let $\mu_0$ be the unique invariant probability measure for $T|_\bbO$. Note that
$$
\mu_0(\bbO_0^n) = \prod_{i=1}^n \frac{1}{R_i}.
$$

\begin{prop}
There exists a unique continuous semi-conjugacy $\pi : \Lambda \to \bbO$ such that
$$
\pi \circ F|_\Lambda = S\circ \pi
$$
and
$$
\pi(D^n) = \bbO_0^n
\matsp{for}
n \in \bbN.
$$
Moreover, for every $x \in \bbO$, the fiber $\pi^{-1}(x)$ is a connected component of $\Lambda$.
\end{prop}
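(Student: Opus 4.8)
The plan is to construct $\pi$ by hand from the combinatorics of the tower $\{F^i(\cD^n)\}$, read off continuity, the conjugacy relation and the normalization by bookkeeping, and finally analyse the fibres by realising each of them as a nested intersection of topological disks. I use throughout that the odometer $(\bbO,S)$ is the profinite group $\varprojlim_n\bbZ/R_n\bbZ$ with $S$ equal to ``$+1$'', that $\bbO_0^n=R_n\bbO$ is the clopen subgroup reducing to $0$ modulo $R_n$, and that the $R_n$ cosets $S^j(\bbO_0^n)=\{x\equiv j\ (\mathrm{mod}\ R_n)\}$, $0\le j<R_n$, partition $\bbO$; as $n$ varies these form a clopen basis of $\bbO$. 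For the construction: for each $n$ the translates $F^i(\cD^n)$, $0\le i<R_n$, are pairwise disjoint (for $0\le i<j<R_n$, $F^i(\cD^n)\cap F^j(\cD^n)=F^i\!\big(\cD^n\cap F^{j-i}(\cD^n)\big)=\varnothing$ by $R_n$-periodicity) and open (each $\cD^n$ is a Jordan domain and $F$ is a homeomorphism), and they cover $\Lambda$; so each $p\in\Lambda$ lies in a unique piece $F^{i_n(p)}(\cD^n)$, $i_n(p)\in\{0,\dots,R_n-1\}$. Since $F^{R_n}(\cD^n)\subset\cD^n$ we get $F^{i_{n+1}(p)}(\cD^{n+1})\subset F^{\,i_{n+1}(p)\bmod R_n}(\cD^n)$, hence $i_{n+1}(p)\equiv i_n(p)\pmod{R_n}$, so $(i_n(p))_n$ determines a point $\pi(p)\in\bbO$. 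By construction $\pi^{-1}\big(S^j(\bbO_0^n)\big)=\Lambda\cap F^j(\cD^n)$, and in particular $\pi^{-1}(\bbO_0^n)=\Lambda\cap\cD^n$, whence $\pi(\Lambda\cap\cD^n)=\bbO_0^n$ once $\pi$ is onto.

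Next the soft properties. The cosets $S^j(\bbO_0^n)$ form a clopen basis of $\bbO$ and their $\pi$-preimages $\Lambda\cap F^j(\cD^n)$ are clopen in $\Lambda$, being blocks of a finite partition of $\Lambda$ into disjoint open sets, so $\pi$ is continuous. If $p\in F^{i_n(p)}(\cD^n)$ then $F(p)\in F^{i_n(p)+1}(\cD^n)$, so $F$ cyclically permutes the $R_n$ pieces $F^j(\cD^n)$ exactly as $S$ cyclically permutes the $R_n$ cosets $S^j(\bbO_0^n)$; reading this off at all levels gives $\pi\circ F=S\circ\pi$. For surjectivity, fix $x\in\bbO$, let $i_n\in\{0,\dots,R_n-1\}$ be its reduction mod $R_n$, and note that $F^{i_n}(\overline{\cD^n})$ is nonempty and compact and, using $\cD^{n+1}\Subset\cD^n$ together with $F^{kR_n}(\overline{\cD^n})\subset\cD^n$ for $k\ge1$ (both from the definition of infinite renormalizability), these sets are nested in $n$; their intersection is a nonempty subset of $\Lambda$ mapped by $\pi$ to $x$. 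For uniqueness, let $\pi'$ be any continuous semi-conjugacy with $\pi'(\Lambda\cap\cD^n)=\bbO_0^n$; given $p\in\Lambda$ with $x=\pi(p)$, we have $F^{-i_n}(p)\in\cD^n$, so $S^{-i_n}\pi'(p)=\pi'\!\big(F^{-i_n}(p)\big)\in\bbO_0^n$, i.e. $\pi'(p)\in S^{i_n}(\bbO_0^n)$ for all $n$; since $\bigcap_n S^{i_n}(\bbO_0^n)=\{x\}$ this forces $\pi'(p)=x=\pi(p)$.

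It remains to identify the fibres with the connected components of $\Lambda$. Since $\bbO$ is totally disconnected and $\pi$ is continuous, $\pi$ is constant on each connected component of $\Lambda$, so every fibre is a union of components; it therefore suffices to show each fibre is connected. Fix $x\in\bbO$, let $i_n$ be as above, and set $D_n:=F^{i_n}(\overline{\cD^n})$, a compact connected set (a homeomorphic image of the closed Jordan domain $\overline{\cD^n}$). Writing $i_{n+1}=i_n+k_nR_n$ and using $\overline{\cD^{n+1}}\subset\overline{\cD^n}$ together with $F^{k_nR_n}(\overline{\cD^n})\subset\cD^n$ when $k_n\ge1$, one checks $D_{n+1}\subset D_n$; hence $\bigcap_n D_n$ is a nested intersection of nonempty compacta, so it is compact and connected. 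Moreover $\bigcap_n D_n=\pi^{-1}(x)$: the inclusion $\supseteq$ is immediate, and if $p\in\bigcap_n D_n$ then $p\in F^{i_{n+1}}(\overline{\cD^{n+1}})\subset F^{i_{n+1}}(\cD^n)\subseteq F^{i_n}(\cD^n)$ for every $n$ (here the compact nesting $\overline{\cD^{n+1}}\subset\cD^n$ is used), so $p\in\Lambda$ and $i_n(p)=i_n$ for all $n$, i.e. $\pi(p)=x$. Thus $\pi^{-1}(x)$ is connected; it lies in a unique component $C$ of $\Lambda$, and since $\pi(C)$ is a single point that contains $x$ we have $C\subseteq\pi^{-1}(x)$, hence $C=\pi^{-1}(x)$.

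I expect the only genuinely delicate point to be the description of the fibre as $\bigcap_n F^{i_n}(\overline{\cD^n})$ — both the nesting $D_{n+1}\subset D_n$ and the identification $\bigcap_n D_n=\pi^{-1}(x)$ lean on the compact nesting $\overline{\cD^{n+1}}\subset\cD^n$ and the compact return $F^{R_n}(\cD^n)\Subset\cD^n$ supplied by the definition of infinite renormalizability, and it is there, rather than in any odometer combinatorics, that the argument needs the most attention; the continuity, conjugacy and uniqueness steps are routine bookkeeping with the tower.
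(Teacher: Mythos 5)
The paper does not supply a proof of this proposition---it is stated without one, with the proof environment that follows attached to the next proposition (unique ergodicity). So there is no argument to compare yours against; I can only assess your proof on its own terms, and it is correct and complete. You rightly read the paper's $\bbO=\prod_n\bbZ/R_n\bbZ$ as the profinite limit $\varprojlim_n\bbZ/R_n\bbZ$ (this is what makes $\bigcap_n S^{i_n}(\bbO_0^n)$ a singleton, which both the surjectivity and uniqueness steps require), and you correctly interpret $\pi(D^n)$ as $\pi(\Lambda\cap\cD^n)$. The construction of $\pi$ via the combinatorial addresses $i_n(p)$, the continuity and conjugacy bookkeeping, and the uniqueness argument are all standard and done carefully. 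The one place that needs real content is, as you say, the fibre analysis; there you correctly exploit the compact nesting $\overline{\cD^{n+1}}\subset\cD^n$ and $F^{R_n}(\overline{\cD^n})\subset\cD^n$ to identify $\pi^{-1}(x)=\bigcap_n F^{i_n}(\overline{\cD^n})$ as a nested intersection of nonempty compact connecta, and then the standard fact that such an intersection is connected, together with total disconnectedness of $\bbO$ forcing $\pi$ constant on components, finishes the job. (One small remark: in the uniqueness argument you invoke $F^{-i_n}(p)\in\Lambda$, which uses total invariance of $\Lambda$; the paper does assert this in Section~3, but if one wanted to avoid it, the same conclusion follows by applying the conjugacy forward to $F^{R_n-i_n}(p)\in\Lambda\cap\cD^n$ instead.)
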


\begin{prop}\label{uni erg}
For $\mu_0$-almost every $x \in \bbO$, the fiber $\pi^{-1}(x) \subset \Lambda$ is trivial. Consequently, $F|_\Lambda$ has a unique invariant probability measure $\mu := \pi^*\mu_0$.
\end{prop}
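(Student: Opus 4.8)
The plan is to decompose the proof into three parts: (1) unique ergodicity of the odometer $(\bbO,S)$; (2) a reduction of unique ergodicity of $F|_\Lambda$ to triviality of $\mu_0$-almost every fibre of $\pi$; and (3) the proof of that triviality, which is where mild dissipativity is used.

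For (1)--(2): $S$ is uniquely ergodic because its rank-$n$ cylinders --- the $R_1\cdots R_n$ translates of $\bbO^n_0$ --- are cyclically permuted by $S$, so any invariant measure gives them equal mass $(R_1\cdots R_n)^{-1}$, and these cylinders generate the Borel $\sigma$-algebra; hence $\mu_0$ is the unique invariant probability measure. Since $\pi$ is a topological factor map, every $F|_\Lambda$-invariant probability measure $\nu$ satisfies $\pi_*\nu=\mu_0$. Consequently, if $\pi^{-1}(x)$ is a single point for $\mu_0$-a.e.\ $x$, then $\nu$ must be the image of $\mu_0$ under the a.e.-defined section $x\mapsto\pi^{-1}(x)$, i.e.\ $\nu=\mu:=\pi^*\mu_0$; unique ergodicity of $F|_\Lambda$ follows. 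Everything thus reduces to showing that $\mu_0$-a.e.\ connected component of $\Lambda$ is a singleton.

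I would then record one structural fact that isolates the role of the odometer: \emph{each fibre of $\pi$ is saturated by strong-stable manifolds}. The natural metric on the compact group $\bbO$ is translation-invariant, so $S$ is an isometry; if $q\in W^{ss}(p)\cap\Lambda$, then $d_\bbO(S^n\pi(p),S^n\pi(q))$ is constant in $n$, while uniform continuity of $\pi$ together with $\dist(F^np,F^nq)\to0$ forces it to $0$, so $\pi(p)=\pi(q)$; hence $W^{ss}(p)\cap\Lambda\subset\pi^{-1}(\pi(p))$ for every $p\in\Lambda$. It is also convenient to note that $\mu_0$-a.e.\ fibre has zero area: the sets $F^i(\cD^n)$, $0\le i<R_n$, are pairwise disjoint in $\Omega$, so $\sum_i\operatorname{area}(F^i(\cD^n))\le\operatorname{area}(\Omega)$, and since $\pi(F^i(\cD^n))$ has $\mu_0$-mass $\prod_{j=1}^n R_j^{-1}$, monotone convergence gives $\int_\bbO\operatorname{area}(\pi^{-1}(x))\,d\mu_0(x)\le\lim_n\big(\prod_{j=1}^n R_j^{-1}\big)\operatorname{area}(\Omega)=0$.

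Now mild dissipativity enters. Fix an ergodic $F|_\Lambda$-invariant measure $\nu$; since $S$ has no periodic points, $\Lambda$ contains no periodic orbit, so $\nu$ is not supported on a sink, and mild dissipativity gives that $\nu$-a.e.\ $p$ --- hence, as $\pi_*\nu=\mu_0$, a point $p\in\pi^{-1}(x)$ for $\mu_0$-a.e.\ $x$, with the same true along the whole forward orbit of $p$ --- has a \emph{proper} strong-stable manifold $W^{ss}_\Omega(p)$ separating $\Omega$. Suppose such a fibre $C=\pi^{-1}(x)$ were nondegenerate. The program is to use the quantitative Pesin structure near $p$ to produce a regular box $\cB_p$ (Subsection~\ref{subsec:reg box}) in which $\Lambda$ is a union of pairwise disjoint sufficiently vertical strong-stable slices over a totally disconnected transversal, to conclude that the connected set $C\cap\cB_p$ lies in the single slice $W^{ss}_{\loc}(p)$, and then to transport this along the orbit --- $F^n(W^{ss}_\Omega(p))\subset W^{ss}_{\loc}(F^np)$ has length shrinking to $0$, whereas $F^nC=\pi^{-1}(S^nx)$ must recur near $C$ by minimality of $S$ --- forcing $\operatorname{diam}(C)=0$, a contradiction. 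The hard part is precisely this last step: marrying the local product structure from Pesin theory to the global separation property from mild dissipativity, propagating the resulting local conclusion along the orbit via minimality of the odometer and the impossibility of non-shrinking fibres, and in particular excluding the competing scenario in which $C$ is instead a nondegenerate arc transverse to the strong-stable lamination (a case handled later via \thmref{component arc}). This is the classical content we recall from \cite{CPT}.
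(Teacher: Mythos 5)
Your reduction to fibre triviality is correct, and the observation that $\pi$-fibres are saturated by strong-stable leaves (via the isometric nature of $S$ on $\bbO$) is a genuinely useful ingredient that the paper's argument also uses implicitly in its final paragraph. But the proof as written has a genuine gap: the core step --- passing from ``$W^{ss}_\Omega(p)$ is proper and the local picture near $p$ is a Pesin box'' to ``the component of $\Lambda$ through $p$ is a singleton'' --- is explicitly left as a ``program,'' and the route you indicate for completing it would be circular. You propose to exclude a nondegenerate arc transverse to the stable lamination via \thmref{component arc}; but that theorem is proved in Section~11 under the standing assumption of regular unicriticality, and regular unicriticality is itself set up (see \subsecref{subsec:uni erg} and \defnref{def unicrit}) in terms of the unique invariant measure $\mu$ on $\Lambda_F$, whose existence is precisely what \propref{uni erg} supplies. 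The zero-area observation, while true, does no work either: a nondegenerate arc also has zero area, so it leaves untouched exactly the case you need to rule out.

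What the paper actually does is more direct and needs neither the arc classification nor any recurrence or minimality argument. For $p$ in the compact, positive-measure set $\hLambda^P_{L,\epsilon}$ of \emph{proper} Pesin-regular points, one encloses the connected component $Z_p$ of $\Lambda$ through $p$ in quadrilaterals bounded laterally by the proper strong-stable curves $W^{ss}_\Omega(q^n_r)$, $W^{ss}_\Omega(q^n_l)$ of nearby points $q^n_r,q^n_l\in\hLambda^P_{L,\epsilon}\setminus Z_p$, and vertically by subarcs of $\partial F^n(\Omega)$. Since $Z_p$ cannot cross the proper stable curves (by the saturation fact) nor $\partial F^n(\Omega)$ (as $\Lambda\subset F^n(\Omega)$), and since \propref{enc box} bounds the diameter of these quadrilaterals by $O(\lambda^{(1-\epsilon)n})$, one gets $Z_p=\{p\}$. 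The points $p$ for which no two-sided approach by $\hLambda^P_{L,\epsilon}\setminus Z_p$ is available yield only countably many exceptional components, which are $\mu$-negligible because $\mu_0$ is non-atomic. This boxing argument, keyed to \propref{enc box} together with mild dissipativity, is the concrete content your proposal defers; it is what you would need to supply in place of the appeal to \thmref{component arc}.
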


\begin{proof}
Let $\mu$ be an ergodic probability measure on $\Lambda$. Note that $\mu$ cannot have two negative Lyapunov exponents since it is not supported on a sink. By \propref{classic pesin}, for any $\kappa>0$, there exists $L \geq 1$ such that $\mu(\hLambda^P_{L, \epsilon}) > 1-\kappa$ (recall that if $p \in \hLambda^P_{L,\epsilon}$, then $p$ is Pesin $(L, \epsilon)$-regular, and $W^{ss}_\Omega(p)$ is proper in $\Omega$).

Let $p \in \hLambda^P_{L, \epsilon}$. For $n \geq 0$, let $\cB_p^n$ be the connected component of $\cB_p \cap F^n(\Omega)$ containing $p$, where $\cB_p$ is a regular box at $p$. Suppose $q^n_r, q^n_l \in \hLambda^P_{L, \epsilon} \cap \cB_p^n$ are points located to the right and left of $W^{ss}_\Omega(p_0) \cap \bcB_p^n$ respectively. Then we can form a rectangle $\cB_p^n(q^n_r, q^n_l) \subset \cB^n_p$ enclosed by $W^{ss}_\Omega(q^n_r)$ and $W^{ss}_\Omega(q^n_l)$. If $q^n_r$ and $q^n_l$ are sufficiently close to $p$, then by \propref{enc box}, we have
$$
\diam(\cB_p^n(q^n_r, q^n_l)) = O(\lambda^{(1-\epsilon)n}).
$$
Let $Z_p$ be the connected component of $\Lambda$ containing $p$. If $q^n_r, q^n_l \notin Z_p$ for all $n$, then $Z_p \subset \cB_p^n(q^n_r, q^n_l)$. This implies that $Z_p = \{p\}$.

Let $q \in \hLambda_{L, \epsilon}$, and let $Z_q$ be the connected component of $\Lambda$ that contains $q$. We claim that if $q$ cannot be approached by points in $\hLambda_{L, \epsilon} \setminus Z_q$ from both sides of $W^{ss}_\Omega(q)$, then there exists a rectangle $R_q \subset \cB_q$ containing $q$ such that $R_q \cap \hLambda_{L, \epsilon} \subset Z_q$. The set of all such components $Z_q$ is countable, and the result follows.

If $q$ is isolated in $\hLambda^P_{L, \epsilon}$, or can only be approached from within $\hLambda^P_{L, \epsilon}$ by points contained in $Z_q$, then clearly the claim holds. If $q$ can only be approached by points in $\hLambda^P_{L, \epsilon} \setminus Z_q$ from one side of $W^{ss}_\Omega(q)$, then we can form a rectangle $R_q$ so that one of its sides is contained in $W^{ss}_\Omega(q)$, and $R_q \cap\hLambda_{L, \epsilon} \subset W^{ss}_\Omega(q)$. Since $W^{ss}_\Omega(q) \cap \hLambda_{L, \epsilon} \subset Z_q$, the claim again holds.
\end{proof}

\begin{prop}\label{non hyp}
The unique invariant measure $\mu$ for $F|_\Lambda$ is not hyperbolic. That is, the Lyapunov exponents of $F|_{\Lambda}$ must be $0$ and $\log \lambda$ for some $\lambda \in (0,1)$.
\end{prop}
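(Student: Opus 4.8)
The plan is to determine the signs of the two Lyapunov exponents of $\mu$ and then rule out a positive exponent by showing that it would force a dominated splitting on $\Lambda$, contradicting the fact that an odometer must contain a critical point of $F$. By \propref{uni erg}, $\mu$ is the unique $F$-invariant probability measure on $\Lambda$, hence ergodic; write its Lyapunov exponents as $\chi_1\ge\chi_2$. Since $F$ is dissipative, $\|\Jac F\|\le\lambda_0<1$ for some constant $\lambda_0$, so $\chi_1+\chi_2=\int_\Lambda\log\Jac F\,d\mu\le\log\lambda_0<0$; in particular $\chi_2<0$. Next, the semiconjugacy $\pi\colon\Lambda\to\bbO$ onto the aperiodic minimal odometer $(\bbO,S)$ shows that $\Lambda$ contains no periodic orbit (a periodic point of $F$ would have to map to a periodic point of $S$), so $\mu$ is non-atomic; in particular it is not supported on a periodic sink, and therefore it cannot have two negative Lyapunov exponents (the same fact already invoked in the proof of \propref{uni erg}). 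Hence $\chi_1\ge 0$, and the whole problem is reduced to proving $\chi_1=0$.

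Suppose, for a contradiction, that $\chi_1>0$, so that $\mu$ is hyperbolic with $\chi_1>0>\chi_2$. The idea is to upgrade the $\mu$-a.e.\ exponential estimates of Oseledets to \emph{uniform} estimates on all of $\Lambda$, which is exactly what unique ergodicity provides. Since $F|_\Lambda$ is uniquely ergodic and $n\mapsto\log\|DF^n\|$ is a continuous subadditive cocycle, the uniform subadditive ergodic theorem for uniquely ergodic systems gives
\[
\tfrac1n\log\|D_xF^n\|\longrightarrow\chi_1\qquad\text{uniformly in }x\in\Lambda,
\]
and, applying the same statement to $F^{-1}$ (also uniquely ergodic on $\Lambda$, with the same $\mu$, and top exponent $-\chi_2$),
\[
\tfrac1n\log m(D_xF^n)=-\tfrac1n\log\|D_{F^nx}F^{-n}\|\longrightarrow\chi_2\qquad\text{uniformly in }x\in\Lambda,
\]
where $m(\cdot)$ is the minimal norm. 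Consequently $\tfrac1n\log\bigl(\sigma_2(D_xF^n)/\sigma_1(D_xF^n)\bigr)\to\chi_2-\chi_1<0$ uniformly, so there is $N_0$ with $\sigma_2(D_xF^n)/\sigma_1(D_xF^n)\le e^{n(\chi_2-\chi_1)/2}$ for all $n\ge N_0$ and all $x\in\Lambda$. By the characterisation of dominated splittings via uniform exponential separation of singular values (Bochi--Gourmelon), $\Lambda$ admits a dominated splitting $T\Lambda=E\oplus E'$ with $\dim E=\dim E'=1$; along one bundle $DF^n$ expands uniformly (comparably to $e^{n\chi_1/2}$) and along the other it contracts uniformly (comparably to $e^{n\chi_2/2}$), so $\Lambda$ is in fact uniformly hyperbolic, and in particular partially hyperbolic.

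This is the contradiction sought: by the Remark following \thmref{dom split obs}, an odometer cannot be partially hyperbolic — equivalently, $\Lambda$ contains a critical point of $F$, hence by \thmref{dom split obs} it admits \emph{no} dominated splitting. Therefore $\chi_1=0$, and the Lyapunov exponents of $F|_\Lambda$ are $0$ and $\log\lambda:=\chi_2$ with $\lambda\in(0,1)$. The step I expect to be the main obstacle is the passage to uniformity in the middle paragraph: one must justify that the Birkhoff averages of the merely \emph{subadditive} derivative cocycles converge uniformly on $\Lambda$ (the point where unique ergodicity is essential, rather than just ergodicity) and then correctly extract a dominated/hyperbolic splitting from the singular-value gap; the exclusion of two negative exponents in the first paragraph, by contrast, is routine once the odometer structure is used to see that $\mu$ is non-atomic.
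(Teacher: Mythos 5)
Your first paragraph is correct: dissipativity gives $\chi_1+\chi_2<0$, and the aperiodicity of the odometer rules out an attracting periodic orbit (hence two negative exponents), so $\chi_2<0\le\chi_1$.

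The middle paragraph has a genuine gap, and it is exactly where you feared trouble. For a uniquely ergodic system, the uniform subadditive ergodic theorem (Furman, Schreiber, Sturman--Stark) gives only the one-sided bound
\[
\limsup_{n\to\infty}\ \sup_{x\in\Lambda}\ \frac{1}{n}\log\|D_xF^n\|\ \le\ \chi_1,
\]
and it is well known that the pointwise $\liminf$ can be strictly smaller than $\chi_1$ on a nonempty (measure-zero) set — the classical examples being non-uniformly hyperbolic $\mathrm{SL}(2,\bbR)$-cocycles over irrational rotations. Applying this theorem to $F$ and to $F^{-1}$ thus gives only $\sigma_1(D_xF^n)\le e^{n(\chi_1+\epsilon)}$ and $\sigma_2(D_xF^n)\ge e^{n(\chi_2-\epsilon)}$ uniformly, which yield a \emph{lower} bound $\sigma_2/\sigma_1\ge e^{n(\chi_2-\chi_1-2\epsilon)}$ — the wrong direction for the Bochi--Gourmelon criterion. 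The \emph{upper} bound $\sigma_2/\sigma_1\le e^{n(\chi_2-\chi_1)/2}$, which in dimension two is equivalent (via $\sigma_1\sigma_2=|\Jac_x F^n|$ and the uniform Birkhoff estimate for the Jacobian) to a uniform lower bound $\|D_xF^n\|\ge e^{n(\chi_1-\epsilon)}$, simply does not follow from unique ergodicity. So the passage from $\chi_1>0$ to a dominated splitting on $\Lambda$ is unjustified as written, and I do not see how to repair it by this route without an essentially different input.

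By contrast, the paper's argument is shorter and purely topological, bypassing Lyapunov-exponent estimates entirely. If $\mu$ were hyperbolic, Pesin--Katok theory of hyperbolic measures produces, for $\mu$-a.e.\ pair $x,y\in\Lambda$, a homoclinic point $z\in\Lambda$ with $\dist(F^nx,F^nz)\to 0$ and $\dist(F^{-n}y,F^{-n}z)\to 0$ as $n\to\infty$. Pushing $z$ through the semiconjugacy $\pi\colon\Lambda\to\bbO$ onto the odometer (which acts by isometry) forces $\pi(x)=\pi(z)=\pi(y)$, contradicting the fact that, by \propref{uni erg}, one can choose $x$ and $y$ in distinct trivial fibers of $\pi$. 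This exploits the odometer structure directly rather than trying to upgrade a.e.\ exponential rates to uniform ones, which is exactly the step your argument cannot close.
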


\begin{proof}
Suppose $\mu$ is hyperbolic. Then it has a period $p \geq 1$. Up to take an iterate, one can assume that $p=1$. Then for almost every points $x, y \in \Lambda$, there exists a homoclinic point $z$ such that
$$
\dist(f^n(x),f^n(z)) \to 0
\matsp{and}
\dist(f^{-n}(y),f^{-n}(z))\to 0
\matsp{as}
n \to \infty.
$$
This is a contradiction, since one can choose $x,y$ to belong to different connected components of $\Lambda$.
\end{proof}


\section{Regular Unicriticality}\label{sec:unicrit}

Let $F$ be the mildly dissipative, infinitely renormalizable diffeomorphism considered in \secref{sec:inf ren}. Let $\eta, \epsilon, \delta \in (0,1)$ be small constants such that
$\baeta < \epsilon$;
$\bepsilon < \delta$ and
$\bdelta<1.$
Suppose that $F$ is $(\delta, \bepsilon)$-regularly unicritical. Let $\{c_m\}_{m\in\bbZ} \subset \Lambda$ be a regular critical orbit with
$
c_1 \in \cD^n
$ for all
$n \in \bbN.
$
Denote
$
\cD^n_i := F^{i-1}(\cD^n)
$ for $
i \in \bbN.
$

By Propositions \ref{uni erg}, \ref{get homog} and \ref{get crit reg}; and \remref{ren unicrit is unicrit}, we can replace $F$ by $F^{R_N}|_{\cD^N}$ for some sufficiently large $N \in \bbN$ such that the following properties hold:
\begin{enumerate}[i)]
\item $F$ is $\eta$-homogeneous on $\Lambda \cap \cD^N$;
\item $c_1 \in \cD^n_1$ is an $\epsilon$-regular critical value for $F$; and
\item $F$ is $(\delta, \bepsilon)$-regular unicritical on $\Lambda \cap \cD^n_1$.
\end{enumerate}
Let
$$
\{\Phi_{c_m} : (\cU_{c_m}, c_m) \to (U_{c_m}, 0)\}_{m \in \bbZ}
$$
be a uniformization of the orbit of the $c_1$ given in \thmref{unif reg crit}. Let $t_0 \in (0,1)$ be a sufficiently small uniform constant so that
$$
\bbD_{c_{-i}}(t_0 \lambda^{\bepsilon i}) \subset \cU_{c_{-i}}
\matsp{for}
i \geq 0.
$$
Let $L_0 := L(t_0) \geq 1$ be the regularity factor given in \defnref{def unicrit} ii).

By \propref{hom transverse back reg}, $c_0$ is infinitely backward $(O(1), \bepsilon)_v$-regular along
$$
E^v_{c_0} := D_0\Phi_{c_0}^{-1}(E^{gv}_0).
$$

\begin{prop}\label{crit vert reg}
Let $t \in (0, t_0)$. After enlarging $L(t) \geq 1$ by a uniform amount if necessary, the following condition holds. If
$$
c_{-n} \notin \bigcup_{i = 0}^{n-1} \bbD_{c_{-i}}({t}\lambda^{\bepsilon i})
$$
for some $n \in \bbN$, then $c_{-n}$ is $n$-times forward $(L(t), \delta)_v$-regular along $E^v_{c_{-n}}$.
\end{prop}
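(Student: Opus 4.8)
The plan is to derive the statement from the regular-unicriticality hypothesis \defnref{def unicrit} ii) (which $F$ satisfies with the pair $(\delta,\bepsilon)$), and then to replace the direction it produces by the specific vertical direction $E^v_{c_{-n}}$ of the backward uniformization of $c_0$ from \thmref{unif reg crit}, using the alignment propositions of \secref{sec:quant pesin} together with $\eta$-homogeneity.

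I would first reduce to \defnref{def unicrit} ii). Since $F^{-(i+1)}(c_1)=c_{-i}$ and $t\lambda^{\bepsilon(i+1)}<t\lambda^{\bepsilon i}$, the hypothesis $c_{-n}\notin\bigcup_{i=0}^{n-1}\bbD_{c_{-i}}(t\lambda^{\bepsilon i})$ already gives $c_{-n}\notin\bigcup_{k=1}^{n}\bbD_{F^{-k}(c_1)}(t\lambda^{\bepsilon k})$, so \defnref{def unicrit} ii) applied with depth $N=n+1$ will yield that $c_{-n}$ is $n$-times forward $(L(t),\delta)$-regular along \emph{some} $E_{c_{-n}}\in\bbP^2_{c_{-n}}$ provided one also has $c_{-n}\notin\bbD_{c_1}(t)$. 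If $\dist(c_{-n},c_1)\ge t$ this is immediate. If $\dist(c_{-n},c_1)<t$, then (the differential of $F^{-1}$ near $c_1$ having $O(1)$ norm by $\eta$-homogeneity) $c_{-(n+1)}\in\bbD_{c_0}(\bar Ct)\subset\cU_{c_0}$ for a uniform $\bar C$; for $t$ small this forces $c_{-(n+1)}$ into the critical tunnel $\cT_{c_0}$, while \propref{return crit disc} keeps $\dist(c_{-(n+1)},c_0)\ge\underline{\lambda^{\bepsilon(n+1)}}$. In that case I would bypass unicriticality altogether and obtain the conclusion from the tunnel-escape estimates: \propref{tunnel escape}(i) together with \propref{for reg rec} recover $(O(1),\delta)_v$-forward regularity after the tunnel-escape moment, and transporting this estimate the single step $c_{-(n+1)}\to c_{-n}$ inside $\cU_{c_0}$ is handled by \propref{loc linear} and homogeneity, already along $E^v_{c_{-n}}$.

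It remains, in the main case $\dist(c_{-n},c_1)\ge t$, to pass from $E_{c_{-n}}$ to $E^v_{c_{-n}}$. Put $N_0:=\lceil -\logl L(t)/(1-\delta+\eta)\rceil$, a threshold depending only on $t$ and the uniform data. For $n\le N_0$ the conclusion is unconditional: vertical directions being $F$-invariant in $\{\Phi_{c_{-m}}\}_{m\ge0}$ and $\|D_pF|_{E_p}\|<\lambda^{-\eta}$ holding for every direction, one gets $\|DF^m|_{E^v_{c_{-n}}}\|<\lambda^{-\eta m}\le L(t)\lambda^{(1-\delta)m}$ for $1\le m\le n$. For $n>N_0$ one has $\|DF^n|_{E_{c_{-n}}}\|\le L(t)\lambda^{(1-\delta)n}<\bL^{-1}\lambda^{\bepsilon n}$ (using $1-\delta>\bepsilon$), so \propref{vert angle shrink} applied along $c_{-n}\to c_0$ gives $\measuredangle(E_{c_{-n}},E^v_{c_{-n}})<\bL L(t)\lambda^{(1-\delta-\bepsilon)n}+\bL\lambda^{(1-\bepsilon)n}$; feeding $E_{c_{-n}}$ and this angle through the ``$_v\leftrightarrow{}_h$'' regularity reformulations \propref{hom transverse for reg}, \propref{hom for back reg} (using that on the critical orbit $E^h_{c_{-k}}=E^c_{c_{-k}}$, whose forward growth is pinned by the backward $(O(1),\bepsilon)_h$-regularity of $c_0$) yields the $n$-times forward regularity of $c_{-n}$ along $E^v_{c_{-n}}$, the endpoint $m=n$ being cleanly given by $\|DF^n|_{E^v_{c_{-n}}}\|=\|DF^{-n}|_{E^v_{c_0}}\|^{-1}\le O(1)\lambda^{(1-\bepsilon)n}$. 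Absorbing the resulting constants into $L(t)$ finishes the proof.

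The main obstacle is this last alignment when $n$ is only comparable to $N_0$: the angle $\measuredangle(E_{c_{-n}},E^v_{c_{-n}})\approx\lambda^{(1-\delta-\bepsilon)n}$ is not small relative to the target $\lambda^{(1-\delta)m}$ for small $m$, so one cannot transfer the contraction estimate from $E_{c_{-n}}$ to $E^v_{c_{-n}}$ by naive perturbation — one must go through the forward/backward regularity reformulations, which only cost uniform constants. A secondary point is ensuring that \defnref{def unicrit} ii) is never actually used at the excluded disk $\bbD_{c_1}(t)$, which is why the case $\dist(c_{-n},c_1)<t$ is handled by the slow-recurrence and tunnel estimates instead.
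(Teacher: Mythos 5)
Your proposal spots a real subtlety that the paper's own proof glosses over: the balls in \defnref{def unicrit}~ii) are centered at $F^{-k}(c_1)=c_{1-k}$ for $k\ge 0$, so invoking the definition directly at $p=c_{-n}$ also requires $c_{-n}\notin\bbD_{c_1}(t)$, which is not part of the proposition's hypothesis. That is a worthwhile observation. However, the repair you offer for the case $\dist(c_{-n},c_1)<t$ is circular: you push $c_{-(n+1)}$ into the critical tunnel $\cT_{c_0}$ ``for $t$ small'', but the fact that points of $\Lambda$ in a small disk about $c_0$ lie inside the pinched tunnel is precisely \thmref{pinching}, which is proved \emph{later} in \secref{sec:unicrit} and whose proof explicitly invokes \propref{crit vert reg}. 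Without pinching, $c_{-(n+1)}\in\bbD_{c_0}(\bar C t)$ gives no membership in $\cT_{c_0}$, so \propref{tunnel escape} and \propref{for reg rec} are not applicable in that branch.

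In the main case your route also departs from the paper's and does not close. The paper does not feed the angle bound from \propref{vert angle shrink} directly into \propref{hom transverse for reg}/\propref{hom for back reg}. It first locates, via \propref{freq pliss back}, the last backward Pliss moment $-j$ with $j>(1-\bepsilon)n$; there $c_{-j}$ is $(\infty,j)$-times regular with a constant \emph{independent of $n$}, the angle $\measuredangle(E^v_{c_{-i}},\tiE^v_{c_{-i}})$ is kept below a fixed $k$ only over the short stretch $j\le i\le n$ (of length $<\bepsilon n$), and the two regularities are spliced at $c_{-j}$. If instead one applies \propref{hom for back reg} at $c_{-n}$ itself, as you propose, the backward regularity factor along $E^h_{c_{-n}}$ and the transversality $\measuredangle(\tiE^v_{c_{-n}},E^h_{c_{-n}})$ both degrade like $\lambda^{\mp\bepsilon n}$ (by \propref{decay reg} and the $O(\lambda^{-\bepsilon|m|})$ distortion of $\Phi_{c_{-n}}$ in \thmref{reg chart}), so the resulting constant $\cL<\bL/\theta^2$ grows with $n$ and cannot be absorbed into $\overline{L(t)}$. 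Note also that the endpoint $m=n$ that you single out as clean is in fact the trivial case; the $n$-dependent loss shows up at small $m$, and the Pliss moment is exactly what furnishes a uniform anchor to eliminate it.
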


\begin{proof}
By the regular unicriticality condition, $c_{-n}$ is $n$-times forward $(L(t), \delta)_v$-regular along some direction $\tiE^v_{c_{-n}} \in \bbP^2_{c_{-n}}$. Let $0 \leq j \leq n$ be the largest integer such that $-j$ is a Pliss moment in the backward orbit of $c_0$. This means that $c_{-j}$ is $(\infty, j)$-times $(K, \bepsilon)_v$-regular along $E_{c_{-j}}^v$ for some uniform constant $K \geq 1$. By \lemref{freq pliss back}, we have
$
j > (1-\bepsilon) n.
$

It follows from \propref{decay reg} that $c_{-n}$ is $n$-times forward $(K\lambda^{-\bepsilon n}, \bepsilon)_v$-regular along $E_{c_{-k}}^v$. Using \propref{vert angle shrink}, we see that
$$
\measuredangle(E^v_{c_{-n}}, \tiE_{c_{-n}}^v) < \overline{L(t)} \lambda^{-\bepsilon n} \lambda^{(1-\delta)n}.
$$
If
$$
\measuredangle(E^v_{c_{-n}}, \tiE_{c_{-n}}^v) < k \lambda^{(1+\eta)\bepsilon n}
$$
for some uniformly small constant $k >0$, then
$$
\measuredangle(E^v_{c_{-i}}, \tiE_{c_{-i}}^v) < k
\matsp{for all}
j \leq i \leq n.
$$
Thus, if
$
n > C\logl L(t)$
for some uniform constant $C \geq 1$, then we see that $c_{-n}$ is $n$-times forward $(\overline{L(t)}, \delta)_v$-regular along $E^v_{c_{-n}}$.
\end{proof}


\subsection{Critical discs}\label{subsec:crit disc}

Let $\epsilon_i$ with $i \in \{0, 1, 2\}$ be small constants with
$
\epsilon_0 := \epsilon
$ and $
\bepsilon_i < \epsilon_{i+1} < \underline{\delta}
$ for $
0 \leq i < 2
$
such that
$$
\diam(\cU_{c_m}) > {t_0}\lambda^{\epsilon_1 |m|}.
$$
For ${t} \in (0,{t_0})$ and $m \in \bbZ$, the {\it $({t}, \epsilon_2)$-disc at $c_m$} is defined as
$$
\bbD^{t}_{c_m} := \bbD_{c_m}({t}\lambda^{\epsilon_2 |m|}) \setminus \{c_m\} \subset \cU_{c_m} \setminus \{c_m\}.
$$

\begin{lem}\label{shrink disc inv}
For $i \geq 1$, we have $F(\bbD^{t}_{c_{-i}}) \subset \bbD^{t}_{c_{-i+1}}$.
\end{lem}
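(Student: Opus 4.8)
The plan is to work entirely in the uniformized coordinates $\Phi_{c_{-i}}$ and use the near-linearity of $F$ inside regular neighborhoods (\propref{loc linear}) together with \thmref{reg chart} ii). First I would fix $i \geq 1$ and a point $q \in \bbD^{t}_{c_{-i}}$, so that $q \in \cU_{c_{-i}}\setminus\{c_{-i}\}$ and $\dist(q, c_{-i}) < {t}\lambda^{\epsilon_2 i}$. Writing $z := \Phi_{c_{-i}}(q) \in U_{c_{-i}}$, the bound $\|\Phi_{c_{-i}}\|_{C^r} = O(\lambda^{-\bepsilon i})$ from \thmref{reg chart} gives $\|z\| < {t}\lambda^{(\epsilon_2-\bepsilon)i}\lesssim {t}\lambda^{\ubar{\epsilon_2}\, i}$ (since $\bepsilon_1 < \epsilon_2$, hence $\bepsilon < \epsilon_2$). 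The key point is that since $q\in\cU_{c_{-i}}$, the image $F(q)$ lies in $\cU_{c_{-i+1}}$ by the construction of the linearization along the orbit of $c_0$, so $F_{c_{-i}} = \Phi_{c_{-i+1}}\circ F\circ \Phi_{c_{-i}}^{-1}$ is defined at $z$ and maps it to $\Phi_{c_{-i+1}}(F(q))$.

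Next I would estimate $\|F_{c_{-i}}(z)\|$. By \thmref{reg chart} ii), $F_{c_{-i}}$ is $C^1$-close to the diagonal linear map $A_{-i}=\operatorname{diag}(a_{-i}, b_{-i})$ with $\lambda^\bepsilon < a_{-i} < \lambda^{-\bepsilon}$ and $|b_{-i}| < \lambda^{1-\bepsilon} < 1$; since $F_{c_{-i}}(0)=0$, this yields $\|F_{c_{-i}}(z)\| \leq (1+k)\|A_{-i}\|\,\|z\| \leq \lambda^{-\bar\bepsilon}\,\|z\|$ for a uniformly small $k$. Combining with the coordinate-change bound in the target chart, $\|\Phi_{c_{-i+1}}^{-1}\|_{C^r}=O(\lambda^{-\bepsilon(i-1)})$, I get
$$
\dist(F(q), c_{-i+1}) \leq \|\Phi_{c_{-i+1}}^{-1}\|_{C^r}\,\|F_{c_{-i}}(z)\| \lesssim \lambda^{-\bepsilon i}\,\lambda^{-\bar\bepsilon}\,\lambda^{(\epsilon_2-\bepsilon)i}\,\lambda^{\epsilon_2} = \lambda^{\epsilon_2}\,\lambda^{(\epsilon_2-\bar\bepsilon)i}\cdot(\text{bounded}).
$$
The gain here is the factor $\lambda^{\epsilon_2}>1$ coming from the shift $i\mapsto i-1$ in the exponent $\epsilon_2|m|$, against which we only have to pay the bounded distortion of the two charts, which is $\lambda^{-\bar\bepsilon}$ — and since $\bepsilon < \epsilon_2$ strictly, $\lambda^{\epsilon_2}\,\lambda^{-\bar\bepsilon} < 1$. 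Thus $\dist(F(q),c_{-i+1}) < {t}\lambda^{\epsilon_2(i-1)}$, so $F(q)\in\bbD_{c_{-i+1}}({t}\lambda^{\epsilon_2(i-1)})$; and $F(q)\neq c_{-i+1}$ because $q\neq c_{-i}$ and $F$ is injective.

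The main obstacle is keeping the bookkeeping of the distortion constants coherent: the charts $\Phi_{c_m}$ have $C^r$-norm that grows like $\lambda^{-\bepsilon|m|}$, so applying $F$ once (one application of $F_{c_{-i}}$, sandwiched between $\Phi_{c_{-i}}$ and $\Phi_{c_{-i+1}}^{-1}$) costs a multiplicative $\lambda^{-\bar\bepsilon}$; the argument only closes because the disc radius exponent $\epsilon_2$ is chosen \emph{strictly} larger than $\epsilon$ (indeed $\bepsilon < \epsilon_1 < \epsilon_2$), so this loss is absorbed with room to spare. One should also double-check that the disc $\bbD^{t}_{c_{-i}}$ genuinely sits inside $\cU_{c_{-i}}$ (guaranteed by the choice $t < t_0$ together with $\diam(\cU_{c_m}) > t_0\lambda^{\epsilon_1|m|} > t\lambda^{\epsilon_2|m|}$) so that \propref{loc linear} and \thmref{reg chart} ii) apply, and that the image stays inside $\cU_{c_{-i+1}}$ — but this is exactly the content of \lemref{trunc neigh fit}-type estimates already available, or can be read off directly from the displayed bound, which shows $F(q)$ lands well within the (larger) neighborhood $\cU_{c_{-i+1}}$.
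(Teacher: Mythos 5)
Your plan does not close, and the issue is precisely the one you flagged in your last paragraph but then waved away. The chart bound in \thmref{reg chart} gives $\|\Phi_{c_{-i}}\|_{C^r} = O(\lambda^{-\bepsilon i})$ \emph{and} $\|\Phi_{c_{-i+1}}^{-1}\|_{C^r}=O(\lambda^{-\bepsilon (i-1)})$; your chain
$$
\dist(F(q),c_{-i+1}) \le \|\Phi_{c_{-i+1}}^{-1}\|_{C^1}\,\|F_{c_{-i}}(z)\|
\le \|\Phi_{c_{-i+1}}^{-1}\|_{C^1}\cdot \lambda^{-\bepsilon}\cdot\|\Phi_{c_{-i}}\|_{C^1}\cdot\dist(q,c_{-i})
$$
therefore carries a multiplicative cost on the order of $\lambda^{-\bepsilon(i-1)}\cdot\lambda^{-\bepsilon i}\approx\lambda^{-2\bepsilon i}$, i.e.\ two chart penalties, each depth-dependent, not the single bounded factor $\lambda^{-\bar\bepsilon}$ you recorded. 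Carrying the exponents through gives $\dist(F(q),c_{-i+1})\lesssim t\,\lambda^{(\epsilon_2-2\bepsilon)i+\epsilon_2-\bar\bepsilon}$, which is not $\le t\,\lambda^{\epsilon_2(i-1)}$ for large $i$ since $\lambda^{-2\bepsilon i}\to\infty$. (Your displayed simplification to $\lambda^{\epsilon_2}\lambda^{(\epsilon_2-\bar\bepsilon)i}$ also has an algebra slip — the $i$-coefficient should be $\epsilon_2-2\bepsilon$, not $\epsilon_2-\bar\bepsilon$.) The underlying point is that $\Phi_{c_{-i+1}}^{-1}\circ F_{c_{-i}}\circ\Phi_{c_{-i}}$ is just $F$, whose $C^1$-norm is $O(1)$, but chaining the three operator-norm bounds destroys this cancellation; you cannot extract the clean one-step gain $\lambda^{\epsilon_2}$ by passing through the Pesin charts in this way.

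The paper's route is much more elementary and avoids the charts entirely: since $F$ is $\eta$-homogeneous (and this extends by continuity of $DF$ to the small $t_0$-neighborhood of $\Lambda$ containing the discs $\bbD^t_{c_m}$), one has $\|D_pF\|<\lambda^{-\eta}$ uniformly there, so directly
$$
\dist(F(q),c_{-i+1}) \le \lambda^{-\eta}\,\dist(q,c_{-i}) < \lambda^{-\eta}\,t\,\lambda^{\epsilon_2 i} \le t\,\lambda^{\epsilon_2(i-1)},
$$
using only $\eta<\epsilon_2$ (which holds since $\baeta<\epsilon=\epsilon_0$ and $\bepsilon_0<\epsilon_1<\epsilon_2$). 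No chart distortion enters because the derivative of $F$ itself is bounded pointwise by homogeneity. So the gap in your proposal is genuine, but it is fixable by abandoning the chart framing and applying homogeneity directly, which is exactly what the hierarchy $\eta\ll\epsilon_2$ was built to allow.
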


\begin{proof}
This follows immediately from the $\eta$-homogeneity of $F$.
\end{proof}

Let $p \in \Omega$. If $p \in \bbD^{t}_{c_{-n}}$ for some $n \geq 0$, let $e_n(p) >0$ be the smallest integer such that
$
F^{-e_n(p)}(p) \notin \bbD^{t}_{c_{-n - e_n(p)}},
$
and let $0 \leq \de(p) \leq \infty$ be the largest integer such that
$
p \in \bbD^{t}_{c_{-\de(p)}}.$
If, on the other hand, $p \notin \bbD^{t}_{c_{-n}}$ for all $n \geq 0$, then denote $\de(p) = \varnothing$.

\begin{lem}\label{max escape time}
Let $p \in \bbD^{t}_{c_{-n}}$. Then
$$
e_n(p) < \bepsilon_2^{-1}\logl\left(\dist(p, c_{-n})\right).
$$
\end{lem}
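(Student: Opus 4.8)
The plan is to run the backward orbit of $p$ through the uniformizing coordinates of \thmref{unif reg crit}, and exploit the competition between two rates: the disc $\bbD^t_{c_{-n-i}}$ has radius $t\lambda^{\epsilon_2(n+i)}$, which decays at the definite rate $\lambda^{\epsilon_2}$ per step, whereas in those coordinates the displacement of $F^{-i}(p)$ from $c_{-n-i}$ can shrink at best at the neutral rate $\lambda^{\bepsilon}$ per step (and at worst grows, at the strong–unstable rate). Since $\epsilon_2>\bepsilon$, the disc eventually becomes smaller than the displacement, which forces the orbit out of the nest after $\approx \bepsilon_2^{-1}\logl(\dist(p,c_{-n}))$ steps. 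Note that the backward orbit of $p\in\bbD^t_{c_{-n}}$ only visits $c_0$ and its backward iterates, so the H\'enon transition at $c_0$ never enters the picture and the dynamics along the relevant arc is linear-like (by \thmref{reg chart} and the construction in \thmref{unif reg crit}).

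First I would set $e:=e_n(p)$. By definition $F^{-i}(p)\in\bbD^t_{c_{-n-i}}\Subset\cU_{c_{-n-i}}$ for $0\le i<e$, so $z_{-i}:=\Phi_{c_{-n-i}}(F^{-i}(p))=(x_{-i},y_{-i})\in U_{c_{-n-i}}$ is well defined; write $\rho:=\dist(p,c_{-n})$, so $|z_0|\asymp\rho$ (bounded distortion of $\Phi_{c_{-n}}$) and $\rho<t\lambda^{\epsilon_2 n}$. By \thmref{reg chart}(ii)--(iii) and \propref{loc linear}, each step $z_{-i}\mapsto z_{-i-1}$ is, up to a small controlled relative error, the diagonal map $\operatorname{diag}(a_{-n-i}^{-1},b_{-n-i}^{-1})$ with $b_m^{-1}\ge\lambda^{-(1-\bepsilon)}$, $a_m^{-1}\in(\lambda^{\bepsilon},\lambda^{-\bepsilon})$, and it preserves the horizontal axis, so the coordinates evolve essentially independently: $|y_{-i}|\gtrsim\lambda^{-(1-\bepsilon)i}|y_0|$ while $|x_{-i}|$ changes by a factor in $(\lambda^{\bepsilon},\lambda^{-\bepsilon})$ each step. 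Using in addition the $(1,\epsilon)$-regularity of the orbit $\{c_m\}$ together with \propref{decay reg} to control the neutral cocycle along $c_{-n},\dots,c_{-n-i}$, this yields the uniform lower bound $|z_{-i}|\gtrsim\lambda^{\bepsilon i}\rho$ for $0\le i<e$.

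On the other hand, from $F^{-i}(p)\in\bbD_{c_{-n-i}}(t\lambda^{\epsilon_2(n+i)})$ and the bounded distortion of $\Phi_{c_{-n-i}}$ we get $|z_{-i}|<\bL\, t\lambda^{\epsilon_2(n+i)}$ for $0\le i<e$ (and if $F^{-e}(p)\notin\cU_{c_{-n-e}}$ then escape is automatic, since $\radius\cU_{c_{-n-e}}\asymp\lambda^{\bepsilon(n+e)}$ already dominates $t\lambda^{\epsilon_2(n+e)}$). Comparing the two estimates at $i=e-1$ gives $\lambda^{\bepsilon(e-1)}\rho\lesssim\bL\, t\lambda^{\epsilon_2 n}\lambda^{\epsilon_2(e-1)}$; applying $\logl$ (which is decreasing) and using $\epsilon_2>\bepsilon$, $\logl t\ge 0$, $\epsilon_2 n\ge 0$ and $\rho<t\lambda^{\epsilon_2 n}$, one obtains $(\epsilon_2-\bepsilon)(e-1)<\logl\rho-\logl t-\epsilon_2 n+O(\logl\bL)$, hence $e-1=O\!\big(\logl\rho/\epsilon_2\big)$. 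Choosing the ladder of marginal exponents $\epsilon\ll\epsilon_1\ll\epsilon_2$ with enough room, the bounded constant $\bL$ and the gap $\epsilon_2-\bepsilon$ are absorbed into the $\bepsilon_2$-notation, giving $e<\bepsilon_2^{-1}\logl\rho$.

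The hard part will be the lower bound $|z_{-i}|\gtrsim\lambda^{\bepsilon i}\rho$ in the regime where $p$ lies (almost) on the center manifold $W^c(c_{-n})$, so that $|y_0|$ is negligible and the strong–unstable expansion is not yet available; there one must rule out faster-than-$\lambda^{\bepsilon}$ contraction of the neutral component over the whole arc $c_{-n},\dots,c_{-n-e}$, which is precisely where the quantitative Pesin estimates (\thmref{reg chart}(ii), \propref{decay reg}) and the $(1,\epsilon)$-regularity of the critical orbit are indispensable. A secondary nuisance is the arithmetic of exponents needed to land the final bound exactly at the exponent $\bepsilon_2$ rather than something like $\epsilon_2-\bepsilon$; this is exactly the bookkeeping the $\bepsilon/\ukappa$ conventions are designed to streamline.
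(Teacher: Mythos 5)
Your route is correct in spirit but is a substantial overkill for a statement whose proof in the paper is essentially one line. The paper never enters the linearizing charts at all: $\eta$-homogeneity (\secref{sec:homog}) gives a \emph{pointwise} bound $\|D_q F|_E\| < \lambda^{-\eta}$ for \emph{every} direction $E$, hence $\dist(F^{-1}(q), F^{-1}(q')) > \lambda^\eta \dist(q,q')$ along the whole segment, and iterating yields $\dist(F^{-i}(p), c_{-n-i}) > \lambda^{\eta i}\dist(p, c_{-n})$. Comparing this with $\radius(\bbD^{t}_{c_{-n-i}}) = t\lambda^{\epsilon_2 (n+i)}$ and taking $\logl$ immediately gives $e_n(p) < (\epsilon_2 - \eta)^{-1}\logl\dist(p,c_{-n}) = \bepsilon_2^{-1}\logl\dist(p,c_{-n})$ since $\eta \ll \epsilon_2$. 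There is no need for the regular charts, for \propref{loc linear}, for \propref{decay reg}, or for the $(1,\epsilon)$-regularity of the critical orbit.

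Two concrete remarks on your argument as written. First, the ``hard part'' you anticipate does not exist: in the uniformizing coordinates of \thmref{reg chart}(ii), the neutral eigenvalue of the linearized step satisfies $a_m \in (\lambda^{\bepsilon}, \lambda^{-\bepsilon})$ directly, so the $x$-component of $z_{-i}$ can shrink by at most $\lambda^{\bepsilon}$ per backward step regardless of whether $p$ lies on the center manifold; no additional Pesin-theoretic control of ``the neutral cocycle along $c_{-n},\dots,c_{-n-e}$'' is required. Second, the chart distortion is \emph{not} uniformly bounded: $\|\Phi_{c_{-n-i}}^{\pm 1}\|_{C^r} = O(\bL\lambda^{-\bepsilon(n+i)})$, so your conversions $|z_0|\asymp\rho$ and $|z_{-i}|<\bL t\lambda^{\epsilon_2(n+i)}$ each carry hidden factors $\lambda^{\mp\bepsilon(n+i)}$. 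These factors do cancel in the final $\logl$ bookkeeping (they change $\epsilon_2$ to $\epsilon_2 - 2\bepsilon \asymp \epsilon_2$), so your conclusion still stands, but as written the estimate is imprecise, and the homogeneity argument avoids the issue entirely by never passing through the charts.
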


\begin{proof}
By $\eta$-homogeneity, we have
$$
\dist(F^{-i}(p), c_{-n-i}) > \lambda^{\eta i}\dist(p, c_{-n}).
$$
Since
$$
\radius(\bbD^{t}_{c_{-n-i}}) := {t}\lambda^{\epsilon_2 (n+i)},
$$
the claim follows.
\end{proof}

\begin{lem}\label{return crit dist lem}
If $c_{-n} \in \bbD^{t}_{c_0}$ for some $n > 0$, then
$$
\dist(c_{-n}, c_0) > \left(\radius(\bbD^{t}_{c_{-n}})\right)^{\bepsilon_1}.
$$
\end{lem}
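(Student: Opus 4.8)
The plan is to reduce the statement to the slow-recurrence estimate of \propref{return crit disc}, which already encapsulates the essential dynamical fact that a backward iterate $c_{-m}$ of the critical point cannot come close to $c_0$ unless $m$ is correspondingly large. First I would note that $c_{-n}\neq c_0$: since $F$ is infinitely renormalizable, $c_1\in\cD^k$ for all $k$ while $F^i(\cD^k)\cap\cD^k=\varnothing$ for $1\le i<R_k$, so the critical orbit cannot be periodic.

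Next, the hypothesis $c_{-n}\in\bbD^{t}_{c_0}$ with $t<t_0$ gives $\dist(c_{-n},c_0)<t_0$, and after shrinking the uniform constant $t_0$ so that $t_0\le r$ (the constant produced by \propref{return crit disc}) the point $c_{-n}$ lies in $\bbD_{c_0}(r)$ and, being its own centre, in $\bbD_{c_{-n}}(r\lambda^{\delta n})$. Hence \propref{return crit disc} applies with $m=-n$ and gives $n>\bepsilon^{-1}\logl\bigl(\dist(c_{-n},c_0)\bigr)$; since $\logl=\log_\lambda$ is decreasing, this is precisely $\dist(c_{-n},c_0)>\lambda^{\bepsilon n}$. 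It then remains to bound $\bigl(\radius(\bbD^{t}_{c_{-n}})\bigr)^{\bepsilon_1}=\bigl(t\lambda^{\epsilon_2 n}\bigr)^{\bepsilon_1}=t^{\bepsilon_1}\lambda^{\bepsilon_1\epsilon_2 n}$ by $\lambda^{\bepsilon n}$. Using $t<1$ (so $t^{\bepsilon_1}\le 1$) together with the exponent comparison $\bepsilon\le\bepsilon_1\epsilon_2$ coming from the choice of $\epsilon_1,\epsilon_2$ in \subsecref{subsec:crit disc}, one gets $t^{\bepsilon_1}\lambda^{\bepsilon_1\epsilon_2 n}\le\lambda^{\bepsilon_1\epsilon_2 n}\le\lambda^{\bepsilon n}<\dist(c_{-n},c_0)$, which is the assertion.

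The only delicate point is the inequality $\bepsilon\le\bepsilon_1\epsilon_2$. The lower bound $\dist(c_{-n},c_0)\gtrsim\lambda^{\bepsilon n}$ furnished by \propref{return crit disc} is essentially sharp --- it is comparable to $\radius(\cU_{c_{-n}})$, and by \propref{no crit in crit nbh} this neighbourhood contains no point of the critical orbit other than $c_{-n}$ --- so the lemma genuinely needs the marginal exponent $\epsilon$ to be small relative to $\epsilon_1\epsilon_2$, not just smaller than it. I would therefore record at the start of \subsecref{subsec:crit disc} that, beyond the ordering $\bepsilon_i<\epsilon_{i+1}<\underline{\delta}$, the constants are chosen so that $\bepsilon\le\bepsilon_1\epsilon_2$; this is consistent with the other constraints because $\epsilon$ is taken polynomially smaller than $\delta$ (for instance one may take $\epsilon_1$ of order $\epsilon^{1/2}$ and $\epsilon_2$ between $\bepsilon_1$ and $\underline{\delta}$). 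With that bookkeeping in place, the two displayed inequalities chain together and the proof is a couple of lines.
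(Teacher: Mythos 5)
Your proposal follows exactly the paper's route: reduce to \propref{return crit disc} and compare exponents, and your care with the constants is well placed. As written, the paper's displayed inequality $t^{\bepsilon_1}\lambda^{\epsilon_2\bepsilon_1 n} < \lambda^{\bepsilon_1 n}$ cannot hold with both instances of $\bepsilon_1$ denoting the same number (it would force $\epsilon_2 \geq 1$, contradicting $\epsilon_2 < \underline{\delta}$); the intended comparison is against $\lambda^{\bepsilon n}$, which is what \propref{return crit disc} actually furnishes and exactly what you write, and this does rest on the exponent constraint $\bepsilon \leq \bepsilon_1\epsilon_2$ that you identify. That constraint is compatible with, but not implied by, the stated ordering $\bepsilon_i < \epsilon_{i+1} < \underline{\delta}$, so recording it explicitly when $\epsilon_1, \epsilon_2$ are introduced in \subsecref{subsec:crit disc}, as you suggest, is the right bookkeeping.
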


\begin{proof}
By \propref{return crit disc}, we have
$$
\lambda^{\bepsilon_1 n} < \dist(c_{-n}, c_0).
$$
The claim now follows from the fact that
$$
\left(\radius(\bbD^{t}_{c_{-n}})\right)^{\bepsilon_1} = {t}^{\bepsilon_1}\lambda^{\epsilon_2 \bepsilon_1 n} < \lambda^{\bepsilon_1 n}.
$$
\end{proof}

\begin{lem}\label{return dist lem}
Let $p_0 \in \bbD^{t}_{c_0}$. If $p_{-n} \in \bbD^{t}_{c_0}$ for some $0< n < e_0(p_0)$, then
$$
\dist(p_{-n}, c_0) > \left(\dist(p_0, c_0)\right)^{\bepsilon_1}.
$$
\end{lem}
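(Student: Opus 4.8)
The plan is a two-sided distance estimate: bound $\dist(p_{-n},c_0)$ from below using the slow recurrence of the critical orbit (\propref{return crit disc}), and bound $\dist(p_0,c_0)$ from above using the escape-time estimate \lemref{max escape time}. \emph{Step 1 (locating $p_{-n}$ near $c_{-n}$ near $c_0$).} Since $0<n<e_0(p_0)$, the definition of the escape time forces $p_{-n}\in\bbD^t_{c_{-n}}$, so $\dist(p_{-n},c_{-n})<t\lambda^{\epsilon_2 n}=\radius(\bbD^t_{c_{-n}})$. Together with the hypothesis $p_{-n}\in\bbD^t_{c_0}$, i.e. $\dist(p_{-n},c_0)<t$, the triangle inequality gives $\dist(c_{-n},c_0)<t(1+\lambda^{\epsilon_2 n})<2t$. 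Choosing the uniform constant $t_0$ small enough that $2t_0<r$, where $r$ is the constant of \propref{return crit disc}, we get $c_{-n}\in\bbD_{c_0}(r)$, so $\bbD_{c_{-n}}(r\lambda^{\delta n})\cap\bbD_{c_0}(r)\ni c_{-n}$ is non-empty, and \propref{return crit disc} yields $n>\bepsilon^{-1}\logl\big(\dist(c_{-n},c_0)\big)$, i.e. $\dist(c_{-n},c_0)>\lambda^{\bepsilon n}$.

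\emph{Step 2 (the two bounds).} Because $\epsilon_2>\bepsilon$, the displacement $\dist(p_{-n},c_{-n})<t\lambda^{\epsilon_2 n}$ is negligible compared with $\lambda^{\bepsilon n}$, so the reverse triangle inequality gives
$$
\dist(p_{-n},c_0)\ \ge\ \dist(c_{-n},c_0)-\dist(p_{-n},c_{-n})\ >\ \lambda^{\bepsilon n}-t\lambda^{\epsilon_2 n}\ >\ \tfrac12\lambda^{\bepsilon n}.
$$
On the other hand, \lemref{max escape time} applied to $p_0\in\bbD^t_{c_0}$ together with $n<e_0(p_0)$ gives $n<\bepsilon_2^{-1}\logl\big(\dist(p_0,c_0)\big)$, that is $\dist(p_0,c_0)<\lambda^{\bepsilon_2 n}$, hence $\big(\dist(p_0,c_0)\big)^{\bepsilon_1}<\lambda^{\bepsilon_1\bepsilon_2 n}$.

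\emph{Step 3 (comparison, and the delicate point).} It therefore suffices to verify $\tfrac12\lambda^{\bepsilon n}\ge\lambda^{\bepsilon_1\bepsilon_2 n}$. Here $n$ is automatically bounded below: $\lambda^{\bepsilon n}\le\dist(c_{-n},c_0)<2t\le 2t_0$ forces $n>\bepsilon^{-1}\logl(2t_0)$, which is as large as we wish after shrinking $t_0$. Thus the inequality reduces to the strict comparison of marginal exponents $\bepsilon<\bepsilon_1\bepsilon_2$, which is built into the choice $\epsilon=\epsilon_0<\epsilon_1<\epsilon_2<\udelta$ fixed at the start of \subsecref{subsec:crit disc} (the same arithmetic that underlies \lemref{return crit dist lem}). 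This exponent bookkeeping — checking that the loss $\bepsilon$ from \propref{return crit disc} and the loss $\bepsilon_1\bepsilon_2$ coming from \lemref{max escape time} leave a strict positive surplus uniformly over the admissible range of $n$ — is the only genuinely delicate part; everything else is two applications of the triangle inequality and a suitable smallness choice for $t_0$.
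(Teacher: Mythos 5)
Your route is genuinely different from the paper's: you bound $\dist(c_{-n},c_0)$ from below by going directly to Proposition~\ref{return crit disc}, which gives the exponent $\bepsilon$ (i.e. $\bepsilon_0$), and you bound $\dist(p_0,c_0)$ from above via Lemma~\ref{max escape time}, which gives the exponent $\bepsilon_2$. The paper instead applies Lemma~\ref{return crit dist lem} for the lower bound, which deliberately \emph{weakens} the $\lambda^{\bepsilon n}$ estimate to the form $\bigl(\radius(\bbD^t_{c_{-n}})\bigr)^{\bepsilon_1}\asymp\lambda^{\epsilon_2\bepsilon_1 n}$, and uses $\eta$-homogeneity directly for the upper bound, getting $\dist(p_0,c_0)<\lambda^{(\epsilon_2-\eta)n}$. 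The point of this weakening is that both sides now carry the same factor $\epsilon_2$ in the exponent: the comparison reduces to $\bepsilon_1\epsilon_2<\bepsilon_1'(\epsilon_2-\eta)$, which is only an $O(\eta/\epsilon_2)$ perturbation and so can be absorbed by a benign enlargement of $\bepsilon_1$. No genuinely new constraint on the $\epsilon_i$ is needed.

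The gap in your argument is in Step~3. The inequality $\bepsilon<\bepsilon_1\bepsilon_2$ that your comparison requires is \emph{not} guaranteed by the constraints $\bepsilon_i<\epsilon_{i+1}<\udelta$ fixed in Subsection~\ref{subsec:crit disc}; in fact it generically fails. Since $\bepsilon_1,\bepsilon_2\in(0,1)$, their product $\bepsilon_1\bepsilon_2$ is smaller than either factor; if the $\epsilon_i$ are taken to be comparable multiples of one another (which is consistent with $\bepsilon_i<\epsilon_{i+1}$), then $\bepsilon_1\bepsilon_2$ is of order $\bepsilon_0^2\ll\bepsilon_0$, and the inequality goes the wrong way. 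So while your lower bound $\lambda^{\bepsilon n}$ is actually \emph{larger} than the paper's $\lambda^{\epsilon_2\bepsilon_1 n}$, it lives on the scale $\bepsilon_0$, whereas your upper bound lives on the scale $\bepsilon_1\bepsilon_2$; these two scales need not be commensurable. The fix is precisely what Lemma~\ref{return crit dist lem} accomplishes: discard the sharper $\lambda^{\bepsilon n}$ bound in favour of the tailored bound $(\radius)^{\bepsilon_1}$, so that the $\epsilon_2$ appearing in $\radius(\bbD^t_{c_{-n}})=t\lambda^{\epsilon_2 n}$ appears identically on both sides of the final comparison.
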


\begin{proof}
Note that by \lemref{return crit dist lem}
\begin{align*}
\dist(p_{-n}, c_0) &> \dist(c_{-n}, c_0) - \dist(p_{-n}, c_{-n}) \\
&> \left(\radius(\bbD^{t}_{c_{-n}})\right)^{\bepsilon_1} - \radius(\bbD^{t}_{c_{-n}})\\
&\asymp \lambda^{\epsilon_2\bepsilon_1 n}.
\end{align*}
By $\eta$-homogeneity, we have
$$
\dist(p_0, c_0) < \lambda^{-\eta n}\dist(p_{-n}, c_{-n}) < \lambda^{-\eta n}\radius(\bbD^{t}_{c_{-n}}) \asymp \lambda^{(\epsilon_2 - \eta) n}.
$$
The result follows.
\end{proof}

\begin{prop}\label{full escape}
Let $p_0 \in \bbD^{t}_{c_0}$. If $\de(p_0) < \infty$, then there exists a smallest integer $0 \leq \ee(p) < \infty$ such that $d\left(F^{-\ee(p)}(p)\right) = \varnothing$. Moreover, $\ee(p_0) < (1+\bepsilon_1)e_0(p_{\de(p_0)})$.
\end{prop}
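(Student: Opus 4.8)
The plan is to reduce everything to the ``base point'' $q_0 := F^{\de(p_0)}(p_0) = p_{\de(p_0)}$. Write $d := \de(p_0)$. Iterating \lemref{shrink disc inv} $d$ times gives $q_0 \in \bbD^t_{c_0}$, so $E := e_0(q_0) = e_0(p_{\de(p_0)})$ is defined and, by \lemref{max escape time}, finite. Two elementary observations then set up the argument. First, $E \geq d+1$: otherwise $q_{-E} = F^{\,d-E}(p_0)$ with $0 \leq d-E < d$, and pushing $p_0 \in \bbD^t_{c_{-d}}$ forward $d-E \leq d$ steps via \lemref{shrink disc inv} would place $q_{-E}$ in $\bbD^t_{c_{-E}}$, contradicting the definition of $E$. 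Second, consequently $e_d(p_0) = E - d$: for $i \geq 1$ one has $p_{-i} = q_{-(d+i)}$ and $\bbD^t_{c_{-d-i}} = \bbD^t_{c_{-(d+i)}}$, so the first time the backward orbit of $p_0$ leaves $\bbD^t_{c_{-d-i}}$ is exactly when $d+i = E$. Thus after $E - d$ backward steps from $p_0$ we reach $q_{-E} = F^{-(E-d)}(p_0)$, which lies outside $\bbD^t_{c_{-E}}$.

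If $\de(q_{-E}) = \varnothing$ we are finished, with $\ee(p_0) = E - d < E$. Otherwise $q_{-E} \in \bbD^t_{c_{-n_1}}$ where $n_1 := \de(q_{-E}) \neq E$, and the task is to show that escaping this re-entry disc, and any further re-entries met along the way, costs only a controlled, geometrically decaying number of extra backward steps. The mechanism: push $q_{-E}$ forward $n_1$ steps (legitimate if $n_1 \leq E$, via \lemref{shrink disc inv}) to land at a point $q^{(1)}_0 \in \bbD^t_{c_0}$; when $n_1 < E$ this point equals $q_{-(E-n_1)}$, a point of the backward orbit of $q_0$ with $0 < E - n_1 < E = e_0(q_0)$, so \lemref{return dist lem} forces $\dist(q^{(1)}_0, c_0) > \dist(q_0, c_0)^{\bepsilon_1}$, and hence by \lemref{max escape time} the next escape time $E_1 := e_0(q^{(1)}_0)$ is, on the logarithmic scale, only a $\bepsilon_1$-fraction of $E$. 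The borderline case $n_1 > E$ must be ruled out (or shown harmless): a deep re-entry would place $q_0 = p_d$ itself into $\bbD^t_{c_{-(n_1-E)}}$ with $n_1 - E \geq 1$, and \lemref{return crit dist lem} constrains how close $c_{-(n_1-E)}$ can be to $c_0$, which together with the distance comparison closes the gap. Iterating ``escape $\to$ push forward $\to$ re-escape'' produces base points $q_0, q^{(1)}_0, q^{(2)}_0, \dots$ with $\dist(q^{(j+1)}_0, c_0) > \dist(q^{(j)}_0, c_0)^{\bepsilon_1}$, hence escape times $E = E_0 > E_1 > E_2 > \cdots$ decaying geometrically; the process therefore terminates after finitely many rounds at a point with $\de = \varnothing$, which gives the existence of $\ee(p_0)$. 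Summing the backward displacements over the rounds — $E - d$ for the first diagonal escape and at most $E_j$ (net of the forward shift $n_{j+1}$) at round $j$ — against $\sum_{j \geq 0} \bepsilon_1^{\,j} = (1-\bepsilon_1)^{-1}$ and absorbing the bounded factor into the $\bepsilon_1$-notation yields $\ee(p_0) < (1+\bepsilon_1)\, E = (1+\bepsilon_1)\, e_0(p_{\de(p_0)})$.

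The main obstacle is the second paragraph: controlling how the backward orbit re-enters critical discs after the first diagonal escape, and in particular neutralizing re-entries into discs of larger index. This is exactly where the slow-recurrence estimates \lemref{return crit dist lem} and \lemref{return dist lem} (``between successive visits to $\bbD^t_{c_0}$ the distance to $c_0$ grows at least to its $\bepsilon_1$-th power'') must be combined with the escape-time bound \lemref{max escape time} and the nesting \lemref{shrink disc inv}; the bookkeeping is delicate because the disc index and the backward-orbit time do not advance in lockstep, so both have to be tracked at once. Once the geometric decay of the escape times is established, both the finiteness of $\ee(p_0)$ and the quantitative bound follow by summation as indicated.
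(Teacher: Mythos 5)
Your strategy mirrors the paper's: reduce to the base point $q_0 := p_{\de(p_0)}$, track successive escapes from $\bbD^t_{c_0}$, and use the slow-recurrence estimates to make the later rounds contribute a small fraction of the first escape time. The observations that $E > d$ and $e_d(p_0) = E - d$ are correct and proved the right way. But the core of the argument — the iteration — has two genuine gaps, and they are exactly the places where the paper's construction is more delicate than your re-basing.

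\emph{First}, your iteration needs each re-entry to land at a strictly backward moment of the previous base point, i.e.\ $0 < n_{j+1} < E_j$, so that \lemref{return dist lem} can be applied with $p_0 = q^{(j)}_0$. You recognize that $n_1 > E$ must be ruled out, but the mechanism you propose (\lemref{return crit dist lem} on $c_{-(n_1-E)}$) does not give a contradiction — it only bounds $n_1 - E$ from below. What works is that $p_0 = F^{E-d}(q_{-E})$: if $n_1 \ge E - d$, then \lemref{shrink disc inv} applied $E-d$ times gives $p_0 \in \bbD^t_{c_{-(n_1-(E-d))}}$, hence $\de(p_0) \ge n_1 - E + d$, and maximality of $d$ forces $n_1 \le E$. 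Crucially, this argument does \emph{not} survive your re-basing: $q^{(1)}_0 = q_{-M_1}$ already lies in $\bbD^t_{c_{-M_1}}$ with $M_1 > 0$, so it is not ``depth-maximal,'' and the same reasoning only yields $n_2 \le M_1 + E_1$, not $n_2 < E_1$. If $n_2 \ge E_1$ the push-forward lands \emph{forward} of $q^{(1)}_0$, \lemref{return dist lem} cannot be iterated, and the accounting of backward displacements loses track. The paper never re-bases: it chooses, among all backward returns $n_i^l$ of $q_0$ to $\bbD^t_{c_0}$ within the current escape window, the one maximizing $n_i^l + e_0(q_{-n_i^l})$. That maximization is what makes the sequence $n_i$ strictly increasing and the quantities $n_i + e_0(q_{-n_i})$ monotone, which is precisely what is needed to rule out ``forward jumps'' and close the induction. \emph{Second}, the quantitative bound. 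You assert that $E_1$ is ``only a $\bepsilon_1$-fraction of $E$'' and sum a geometric series, but \lemref{return dist lem} and \lemref{max escape time} only give $E_{j+1} < \bepsilon_2^{-1}\bepsilon_1\logl(\dist(q^{(j)}_0,c_0))$ and $E_j < \bepsilon_2^{-1}\logl(\dist(q^{(j)}_0,c_0))$, both upper bounds — there is no inequality of the form $E_{j+1} < \bepsilon_1 E_j$. To compare the tail $\sum_{j\ge 1} E_j$ (or, in the paper's bookkeeping, $\sum_{i\ge 2} m_i$) to $e_0(q_0)$, one also needs a lower bound on the first return time $n_1$ in terms of $\logl(\dist(q_0,c_0))$; the paper extracts this from \propref{return crit disc}, because $q_{-n_1}$ being simultaneously close to $c_0$ and to $c_{-n_1}$ forces a close recurrence of the critical orbit. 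Without that input your argument establishes the finiteness of $\ee(p_0)$ but not the factor $(1+\bepsilon_1)$.
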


\begin{proof}
Denote
$
q_0 := p_{\de(p_0)} \in \bbD^{t}_{c_0}.
$
Note that $\de(q_0)$ may be greater than $0$.

Consider a sequence $\{n_i\}_{i=0}^j$ defined as follows. Let $n_0 := 0$. For $i \geq 0$, if
$
\ee(q_{-n_i}) > e_0(q_{-n_i}),
$
then let
$$
n_{i+1}' := n_i + e_0(q_{-n_i}),
$$
and let
$
n_i = n_i^0 < n_i^1 < \ldots < n_i^k \leq n_i'
$
be a sequence of moments such that
$
q_{-n_i^l} \in \bbD^{t}_{c_0}
$ for $
0 \leq l \leq k.
$
Define $n_i = n_i^l$, where $0\leq l \leq k$ is the integer such that $n_i^l + e_0(q_{-n_i^l})$ is maximized.

We claim that $n_{i+1} > n_i$. Otherwise, we have
$
q_{-n} \not\in \bbD^{t}_{c_0}
$ for $
n_i < n \leq n_{i+1}',
$
and hence
$
\de(q_{-n_{i+1}'}) > \ee_0(q_{-n_i}).
$
Denote
$$
b_i := n_{i+1}' - \de(q_{-n_{i+1}'}).
$$
Then by \lemref{shrink disc inv}, we have
$
q_{-b_i} \in\bbD^{t}_{c_0}$
and
$$
b_i + \ee_0(q_{-b_i}) > n_{i+1}' = n_i + \ee_0(q_{-n_i}).
$$
If $b_i > n_{k-1} \geq 0$ for some $k \leq i$, then this contradicts the maximality of $n_k + \ee_0(-n_k)$. If $b_i < 0$, then
$$
\de(q_{-n_{i+1}'}) = n_{i+1}' - b_i > n_{i+1}'.
$$
Hence,
$$
\de(p_0) = \de(q_{-\de(p_0)}) \geq \de(q_{-n_{i+1}'}) - n_{i+1}' + \de(p_0) > \de(p_0),
$$
which is a contradiction. Thus, the claim holds.

Let
$
l_i := \dist(c_{-n_i}, c_0).
$
By \lemref{return dist lem}, we have
$$
l_{i+1} >l_i^{\bepsilon_1} > l_0^{\bepsilon_1^{i+1}}.
$$
This immediately implies that $j < \infty$.

Denote
$
m_{i+1} := n_{i+1}-n_i.
$
By $\eta$-homogeneity, we have
$$
{t}\lambda^{\epsilon_2 m_i}=\diam(\bbD^{t}_{c_{-m_i}}) > \dist(c_{-n_i}, c_{-m_i}) >\lambda^{\eta m_i}l_{i-1}.
$$
So
$$
m_{i+1} < \bepsilon_2^{-1} \logl(l_i) < \bepsilon_2^{-1}\bepsilon_1^i\logl(l_0).
$$

Let
$
m_{j+1} := \ee_0(q_{-n_j}).
$
Then by \lemref{max escape time},
$$
m_{j+1} < \bepsilon_2^{-1} l_j < \bepsilon_2^{-1}\bepsilon_1^j\logl(l_0).
$$
Finally, observe that by \propref{return crit disc}, we have
$$
\ee_0(q_0) > n_1 > \bepsilon_1^{-1}\logl(l_0).
$$
Consequently,
$$
\ee(p_0) \leq \ee(q_0) = \sum_{i=0}^{j+1} m_i < (1+\bepsilon_1) \ee_0(q_0)
$$
as claimed.
\end{proof}


\subsection{Pinching at the critical point}

\begin{thm}\label{pinching}
We have
$$
\Lambda \cap \bbD_{c_0}(1/\overline{L_0}) \subset \cT_{c_0}^{1/\bdelta},
$$
where $\cT_{c_0}^{1/\bdelta} \subset \cU_{c_0}$ is a $1/\bdelta$-pinched regular neighborhood at $c_0$.
\end{thm}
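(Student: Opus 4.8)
The plan is to read off the statement from the pinching lemma \lemref{pinching neigh}, applied to the linearization of the critical orbit built in \thmref{unif reg crit}. Since $\bepsilon<\delta$ and $\bdelta<1$ we have $1/\bdelta\in(1,\bepsilon^{-1}]$, so \lemref{pinching neigh} is available with exponent $\omega=1/\bdelta$. Choose $\overline{L_0}$ large enough that $\bbD_{c_0}(1/\overline{L_0})\subset\cU_{c_0}$ and $1/\overline{L_0}<l_{c_0}$; then for any $q_0\in\Lambda\cap\bbD_{c_0}(1/\overline{L_0})$, writing $z_0=(x_0,y_0):=\Phi_{c_0}(q_0)$, the size requirement ``$|x_0|<l_{c_0}$'' in the definition of $\cT^{1/\bdelta}_{c_0}$ holds automatically, and we may assume $q_0\neq c_0$. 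By \lemref{pinching neigh}, to conclude $q_0\in\cT^{1/\bdelta}_{c_0}$ it suffices to produce one integer
$$
n>(1+\bepsilon)\,\bdelta^{-1}\logl|x_0|
$$
with $q_{-n}\in\cU_{c_{-n}}$. Thus the whole theorem reduces to the following quantitative shadowing statement: \emph{the backward orbit of a point $q_0\in\Lambda$ lying $|x_0|$-close (horizontally) to $c_0$ stays in the chain of charts $\cU_{c_{-n}}$ of \thmref{unif reg crit} for at least $(1+\bepsilon)\bdelta^{-1}\logl|x_0|$ steps.}

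If this shadowing is infinite --- which in particular covers $q_0$ in the critical dust $\Delta^c_{c_0}$ of \subsecref{subsec:crit tunnel} --- \lemref{pinching neigh} applies outright. Otherwise let $n_1\geq 0$ be maximal with $q_{-k}\in\cU_{c_{-k}}$ for $0\leq k\leq n_1$. Inside the tube $\bigcup_{0\leq k\leq n_1}\cU_{c_{-k}}$ the dynamics is almost linear: no $c_{-k}$ with $k\geq 1$ is the critical point (\propref{unique crit value}), so \thmref{reg chart}(ii)--(iii) and \propref{loc linear} give, in the charts $z_{-k}:=\Phi_{c_{-k}}(q_{-k})$, the estimates $y_{-k}\asymp(b_{-1}\cdots b_{-k})^{-1}y_0$ and $x_{-k}\asymp(a_{-1}\cdots a_{-k})^{-1}x_0$ with $b_{-j}\asymp\lambda$ and $a_{-j}\in(\lambda^{\bepsilon},\lambda^{-\bepsilon})$. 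At step $n_1$ one coordinate of $z_{-n_1}$ reaches size comparable to $\radius(\cU_{c_{-n_1}})\asymp\lambda^{\bepsilon n_1}$. If it is the \emph{horizontal} coordinate, then $|x_0|\asymp\lambda^{(\bepsilon+O(\bepsilon))n_1}$, hence $n_1\gtrsim\bepsilon^{-1}\logl|x_0|$, and since $\bdelta\gg\bepsilon$ this exceeds $(1+\bepsilon)\bdelta^{-1}\logl|x_0|$ and \lemref{pinching neigh} finishes the argument.

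The main obstacle is the remaining case, \emph{vertical} exit: $|y_{-n_1}|\asymp\lambda^{\bepsilon n_1}$ while $|x_{-n_1}|\ll\lambda^{\bepsilon n_1}$. Then the linear estimate only gives $|y_0|\asymp\lambda^{(1+O(\bepsilon))n_1}$, and $|x_0|$ is left essentially unconstrained from below, so a naive distance bound cannot yield $n_1\gtrsim\logl|x_0|$; one must rule out that $q_0$ sits ``high above'' the cusp, using that $q_0\in\Lambda$ and the regular unicriticality hypothesis. Concretely, I would argue by contradiction: if $q_0\in\Lambda\cap\bbD_{c_0}(1/\overline{L_0})$ with $|y_0|\geq|x_0|^{1/\bdelta}$, then, following the forward orbit through the valuable-crescent structure at $c_1$ and using the bounded-escape machinery of \subsecref{subsec:crit disc} (\propref{full escape}), the point leaves the bad discs of \defnref{def unicrit} after a controlled number of steps and is therefore forward $(O(1),\delta)$-regular; pulling this regularity back through the H\'enon transition $F_{c_0}(x,y)=(x^2-\lambda y,x)$ (which sends the vertical direction at $q_1$ to a direction at $q_0$ making angle $O(|x_0|)$ with the horizontal), one controls how the direction realizing this regularity propagates backward along the shadowed orbit; comparing this with the obstruction propositions \propref{for reg no rec} and \propref{back reg no rec}, the slow-recurrence estimates \propref{return crit disc} and \lemref{no pliss near crit}, and the recovery propositions \propref{for reg rec}, \propref{crit vert reg}, forces $n_1>(1+\bepsilon)\bdelta^{-1}\logl|x_0|$. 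In all cases \lemref{pinching neigh} yields $q_0\in\cT^{1/\bdelta}_{c_0}$; as $q_0$ was an arbitrary point of $\Lambda\cap\bbD_{c_0}(1/\overline{L_0})$, the theorem follows. Equivalently, the vertical-exit case can be phrased directly: a point of $\Lambda$ above the cusp would have its backward escape time land in the ``forbidden window'' of \propref{for reg no rec}, contradicting the forward regularity that \defnref{def unicrit} grants it after escape.
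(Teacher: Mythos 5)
Your plan is to reduce Theorem~\ref{pinching} to Lemma~\ref{pinching neigh} by showing that the backward orbit of $q_0$ shadows the backward critical orbit for at least $(1+\bepsilon)\bdelta^{-1}\logl|x_0|$ steps. This reduction cannot work, and in fact the paper's proof never invokes Lemma~\ref{pinching neigh}. The obstruction is quantitative: Lemma~\ref{pinching neigh} is a one-directional implication (long shadowing $\Rightarrow$ inside the cusp), and for a point assumed to lie \emph{outside} the cusp, the shadowing is necessarily too short. Concretely, if $|y_0|\geq|x_0|^{1/\bdelta}$ then the vertical coordinate grows backward like $|y_{-i}|\gtrsim\lambda^{-(1-\bepsilon)i}|x_0|^{1/\bdelta}$, and comparing with the shrinking chart radius $\asymp\lambda^{\bepsilon i}$ forces exit at some $n_1<\bdelta^{-1}\logl|x_0|$, which is strictly \emph{smaller} than the threshold $(1+\bepsilon)\bdelta^{-1}\logl|x_0|$ demanded by Lemma~\ref{pinching neigh}. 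So under the contradiction hypothesis the hypothesis of that lemma can never be verified; your intermediate claim that the regularity argument ``forces $n_1>(1+\bepsilon)\bdelta^{-1}\logl|x_0|$'' is therefore not merely unproved but false.

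What the paper actually does is the direct argument you relegate to your final ``Equivalently'' sentence, and it is worth stating the mechanism precisely since your sketch captures only half of it. Assuming $|y_0|\geq|x_0|^{1/\bdelta}$, one uses Proposition~\ref{full escape} to bound the full escape time $N=\ee(p_0)<\bdelta^{-1}\logl|x_0|$ (the upper bound on $|y_0|$ also gives $N>-\logl\overline{L_0}$); regular unicriticality then makes $p_{-N}$ forward $(L_0,\delta)_v$-regular along some $\tiE^v_{p_{-N}}$. Proposition~\ref{vert angle shrink} pins the angle of $\tiE^v_{p_1}$ to the genuine stable direction, and the H\'enon transition of Theorem~\ref{unif reg crit} transfers this to $\theta_0:=\measuredangle(\tiE^v_{p_0},E^v_{p_0})$. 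The contradiction then bifurcates: when $|y_0|<|x_0|$ one gets $\theta_0=O(|x_0|)$, and the forbidden window of Proposition~\ref{for reg no rec} kills the assumed forward regularity; when $|y_0|>|x_0|$ one instead tracks $\theta_{-i}$ backward explicitly and shows $\|DF^{-N}|_{\tiE^v_{p_0}}\|$ is too small. Your sketch mentions only the first of these sub-cases. Finally, you omit entirely the case $\de(p_0)>0$ (the point of $\Lambda\cap\bbD_{c_0}(1/\overline{L_0})$ lies close to some $c_{-n}$ with $n>0$ rather than to $c_0$ itself), which the paper handles separately via the slow recurrence estimate of Proposition~\ref{return crit disc}: in that case $p_0$ is placed inside $\cT^{1/\bdelta}_{c_0}$ directly, with no contradiction argument at all.
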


\begin{proof}
Let $p_0 \in \Lambda \cap \bbD_{c_0}(1/\overline{L_0})$. Write
$
z_0 = (x_0, y_0) := \Phi_{c_0}(p_0).
$
Suppose towards a contradiction that $p_0 \notin \cT_{c_0}^{1/\bdelta}$. For $m \in \bbZ$, consider the $({t_0}, \bepsilon)$-disc $\bbD_{c_m}^{t_0}$ at $c_m$. By \thmref{reg chart} and \propref{loc linear}, we have
$$
|y_{-i}| > k\lambda^{-(1-\bepsilon)i}|y_0| > k\lambda^{-(1-\bepsilon)i}|x_0|^{1/\bdelta}
\matsp{for}
0 \leq i < \ee_0(p_0)
$$
for some uniform constant $k > 0$. If
$$
k\lambda^{-(1-\bepsilon)i}|x_0|^{1/\bdelta} < {t_0}\lambda^{\bepsilon i}
$$
then it follows by straightforward computation that
\begin{equation}\label{eq:pinching}
i < \bdelta^{-1}\logl|x_0|.
\end{equation}

First, suppose that $\de(p_0) = 0$. By \propref{full escape} and \eqref{eq:pinching}, we have
\begin{equation}\label{eq:pinching 2}
N:= \ee(p_0) < (1+\bepsilon)\ee_0(p_0) < \bdelta^{-1}\logl|x_0| < \infty.
\end{equation}
We also conclude from
$$
\lambda^{-(1-\bepsilon)i} \overline{L_0}^{-1}< k\lambda^{-(1-\bepsilon)i}|y_0| < {t_0}\lambda^{\bepsilon i}
$$
that
\begin{equation}\label{eq:pinching 0}
N > -\logl \overline{L_0}.
\end{equation}
If $p_0 = c_{-n}$ for some $n \in \bbN$, then for $l \in \bbZ$, denote
$
\tiE_{p_l}^v := E^v_{c_{-n+l}}.
$
Otherwise, denote
$
\tiE_{p_l}^v := E^s_{p_l}.
$
In the former case, $p_{-N} = c_{-(n+N)}$ is $(n+N)$-times forward $(L_0, \delta)_v$-regular along $\tiE_{p_{-N}}^v$ by \propref{crit vert reg}. Consequently, by \propref{decay reg}, we see that $p_1 = c_{-n+1}$ is $(n-1)$-times forward $(L_0\lambda^{-\bdelta N}, \delta)_v$-regular along $\tiE_{p_{-n+1}}^v$. Similarly, in the latter case, $p_{-N}$ is infinitely forward $(L_0, \delta)_v$-regular along $\tiE_{p_{-N}}^v$, and thus, $p_1$ is infinitely forward $(L_0\lambda^{-\bdelta N}, \delta)_v$-regular along $\tiE_{p_{-n+1}}^v$.

Let $M \in \bbN$ be the largest integer such that
$
p_{1+i} \in \cU_{c_{1+i}}
$ for $
0 \leq i < M.
$
Then by \thmref{reg chart} and \propref{loc linear}, we have
$
\lambda^{\bepsilon M} < |x_M| < \lambda^{-\bepsilon M}|x_1|.
$
We also have
\begin{equation}\label{eq:pinching 3}
|y_0| < \lambda^{(1-\bepsilon)N}.
\end{equation}

If $|y_0| > |x_0|$, then $|x_1| \asymp |y_0|$. Hence,
$$
\lambda^{\bepsilon M} < \lambda^{-\bepsilon M}|y_0| < \lambda^{(1-\bepsilon)N}.
$$
It follows that $N < \bepsilon M$. If $|y_0| < |x_0|$, then $|x_1| = O(|x_0|)$. So
$$
\lambda^{\bepsilon M} < \lambda^{-\bepsilon M}|x_0| < \lambda^{-\bepsilon M}|y_0|^{\bdelta} < \lambda^{\bdelta N}.
$$
Hence, we again have $N < \bepsilon M$.

Let
$$
E^v_{p_1} := D_{z_1}\Phi_{c_1}^{-1}(E^{gv}_{z_1}).
\matsp{and}
\theta_1 := \left|\measuredangle(\tiE_{p_1}^v, E^v_{p_1})\right|.
$$
By \propref{vert angle shrink}, we have
$$
\theta_1 < L_0\lambda^{-\bdelta N}\lambda^{(1-\bdelta)M} < L_0\lambda^{\bepsilon^{-1}N} < |y_0|^{1/\bepsilon}.
$$
Moreover, by \thmref{unif reg crit}, we have
$
\theta_0 < O\left(|x_0|+\theta_1\right).
$
If $|y_0| < |x_0|$, then
$
\theta_0 < O\left(|x_0|\right).
$
This, \eqref{eq:pinching 2}, \eqref{eq:pinching 0} and \propref{for reg no rec} contradict the $N$-times forward $(L_0, \delta)_v$-regularity of $p_{-N}$ along $\tiE^v_{p_{-N}}$.

If $|y_0| > |x_0|$, then 
$
\theta_0 < O\left(|y_0|\right).
$
By \thmref{reg chart} and \propref{loc linear}, we have
\begin{equation}\label{eq:pinching 4}
\theta_{-i} < K\lambda^{-(1+\bepsilon)i}\theta_0 < \bK\lambda^{-(1+\bepsilon)i}|y_0|.
\end{equation}
Let $j \geq 0$ be the smallest integer less than $\ee_0(p_0)$ such that 
$
\theta_{-j} > \lambda^{\bepsilon j}\nu.
$
for some small $\nu >0$. By \eqref{eq:pinching 4}, we have
$$
j > (1-\bepsilon)(\logl |y_0| + \logl(\bK)).
$$
Thus, by \eqref{eq:pinching 3}, we see that
$
j > kN
$
for some uniform constant $k > 0$. Finally, by \propref{loc linear} and \eqref{eq:pinching 0}, we have
$$
\|DF^{-N}|_{\tiE^v_{p_0}}\| < \lambda^{-\bepsilon j}\lambda^{-(1+\eta) (N-j)} < \lambda^{-(1-k+\bepsilon)N} = \lambda^{(k-\bdelta)N}\lambda^{-(1-\delta)N} < L_0^{-1}\lambda^{-(1-\delta)N}.
$$
This contradicts the $N$-times forward $(L_0, \delta)_v$-regularity of $p_{-N}$ along $\tiE^v_{p_{-N}}$.

Finally, consider the case when $p_0 \in \bbD_{c_0}(1/\overline{L_0})$, and $\de(p_0) = n > 0$. This means that $c_{-n} \in \bbD^{t_0}_{c_0}$. Write
$
(x_0, y_0) := \Phi_{c_0}(c_{-n}).
$
It follows by \propref{return crit disc} that
$$
\radius(D^{t_0}_{c_{-n}}) < |x_0|^{1/\bepsilon}.
$$
Thus, by slightly increasing $\bdelta$ a uniform amount if necessary, we obtain
$
p_0 \in D^{t_0}_{c_{-n}} \subset \cT_{c_0}^{1/\bdelta}.
$
\end{proof}


\subsection{Refinement of regular unicriticality}\label{sec:refine uni}

Let
$
{t_1} := 1/\overline{L_0} < \underline{t_0}
$
be a sufficiently small uniform constant such that so that
$$
\bbD^{t_1}_{c_m} \subset \cU_{c_m} \setminus \{c_m\}
\matsp{for}
m \in \bbZ
\matsp{and}
\Lambda \cap \bbD^{t_1}_{c_0} \subset \cT_{c_0}^{1/\udelta}.
$$
Let ${L_1} := L({t_1}) \geq 1$ be the regularity factor given in \defnref{def unicrit} ii). By increasing ${L_1}$ if necessary, we may assume that  ${L_1}^{-\epsilon} < {t_1}.$

Denote
$$
\delta > \delta' := (1- \bdelta) \delta > \udelta.
$$
For ${t} \in (0, {t_0}]$ and $n \geq 0$, let $\cT_{c_{-n}}({t})$ and $\cT_{c_{1+n}}({t})$ be the $1/\udelta$-pinched, $\delta'$-truncated critical tunnel and valuable crescent at $c_{-n}$ and $c_{1+n}$ respectively of length ${t}$ (see \subsecref{subsec:crit tunnel}). 

Denote
$$
\hE^v_{c_{-n}} := D_0\Phi_{c_{-n}}^{-1}(E^{gv}_0) = DF^{-n}(E^v_{c_0})
$$
and
$$
\hE^h_{c_{1+n}} := D_0\Phi_{c_{1+n}}^{-1}(E^{gh}_0) = DF^{1+n}(E^v_{c_0}).
$$
Let $p \in \Lambda$. For $L \geq 1$, we say that $p$ is {\it forward $(L, \delta)$-regular} (without specifying time or direction) if either
\begin{itemize}
\item $p = c_{-n}$ for some $n \geq 0$, and $p$ is $n$-times forward $(L, \delta)_v$-regular along $\hE^v_{c_{-n}}$; or
\item $p \neq c_{-n}$ for all $n \geq 0$, and $p$ is infinitely forward $(L, \delta)_v$-regular along $\hE^v_p := E^s_p$.
\end{itemize}
Similarly, we say that $p$ is {\it backward $(L, \delta)$-regular} if either
\begin{itemize}
\item $p = c_n$ for some $n \geq 1$, and $p$ is $n$-times backward $(L, \delta)_h$-regular along $\hE^h_{c_n}$; or
\item $p \neq c_n$ for all $n \geq 1$, and $p$ is infinitely backward $(L, \delta)_h$-regular along $\hE^h_p := E^c_p$.
\end{itemize}

\begin{thm}[Geometric criterion for regularity]\label{ref uni}
For ${t} \in (0, {L_1}^{-1}]$, let $L := 1/\ut$. Then for all $p \in \Lambda$, the following statements hold.
\begin{enumerate}[a)]
\item If
$\displaystyle
p \notin \bigcup_{i = 0}^\infty \cT_{c_{-i}}({t}),
$
then $p$ is forward $(L, \delta)$-regular. If $p$ is also contained in $\cT_{c_1}({t})$, then $p$ is forward $(1, \delta)$-regular.
\item If
$\displaystyle
p \notin \bigcup_{i = 0}^\infty \cT_{c_{1+i}}({t}),
$
then $p$ is backward $(L, \delta)$-regular. If $p$ is also contained in $\cT_{c_0}({t})$, then $p$ is backward $(1, \delta)$-regular.
\item If
$\displaystyle
p \notin \bigcup_{i = -\infty}^\infty \left(\cT_{c_i}({t})\cup \{c_i\}\right)
$
then $p$ is Pesin $(L, \delta)$-regular.
\end{enumerate}
\end{thm}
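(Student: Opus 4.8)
The plan is to extract regularity from the hypothesis ``$p$ avoids all critical tunnels'' by tracking the backward orbit of $p$ and its closest approaches to the critical orbit, and then invoking the recovery/obstruction machinery of \secref{sec:reg near crit orb} together with the Pliss-moment frequency estimates of \secref{sec:homog}. I would first prove part a), then obtain part b) by the time-reversal symmetry encoded in the critical tunnel/valuable crescent duality, and finally deduce part c) by combining a) and b) via \propref{hom for back reg} (Pesin regularity $=$ vertical forward regularity $+$ horizontal backward regularity $+$ transversality), checking that the transversality angle $\measuredangle(E^{ss}_p, E^c_p)$ is bounded below by $1/\bL$ when $p$ avoids the $c_i$'s and their tunnels.

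For part a), fix $p \in \Lambda$ with $p \notin \bigcup_{i\ge 0}\cT_{c_{-i}}({t})$. The first step is to reduce to the case $p\in\bbD_{c_0}(1/\overline{L_0})$ after pulling back: by \thmref{pinching}, $\Lambda\cap\bbD_{c_0}(1/\overline{L_0})\subset \cT_{c_0}^{1/\bdelta}$, and since $p$ avoids the tunnel $\cT_{c_0}({t})$ (the $\delta'$-truncated version of this pinched neighborhood), $p$ sits in the ``thin'' part of $\bbD_{c_0}$; more precisely, I would follow the branching structure of the tunnels to argue that, writing $-j$ for the tunnel escape moment if $p\in\cT_{c_0}$, we control $j$ by $\bdelta^{-1}\logl|x_0|$ as in \eqref{eq:pinching}. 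The main case is when $p$ (after suitable pullback along the backward orbit) lands in a $({t},\bepsilon)$-disc $\bbD^{t}_{c_{-n}}$ and then escapes: by \lemref{max escape time} and \propref{full escape}, the total escape time $\ee(p)$ is finite and comparable to $e_0(q_0)$ where $q_0 = p_{\de(p_0)}$, and this quantity is in turn bounded below by $\bepsilon_1^{-1}\logl\dist(q_0,c_0)$ via \propref{return crit disc}. The point $p_{-\ee(p)}$ is then not in any critical disc, hence the critical orbit point nearest to the relevant stretch of orbit is far enough that \propref{crit vert reg} (if $p$ is on the critical orbit) or the regular unicriticality hypothesis \defnref{def unicrit}~ii) (if $p$ is not) applies to give that $p_{-\ee(p)}$ is $\ee(p)$-times forward $(L_1,\delta)_v$-regular along the appropriate direction; then I would propagate this regularity forward through the escape stretch using \propref{for reg rec} (recovery of forward regularity) at the Pliss moments supplied by \propref{freq pliss back}, checking that the angle condition $\measuredangle(\tiE^v_{p_0},E^v_{p_0})>k|x_0|$ needed there follows from $p$ lying in the thin pinched region. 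Iterating this escape-and-recover argument finitely many times (the number of iterations is finite by the $l_{i+1}>l_i^{\bepsilon_1}$ estimate in \propref{full escape}) yields forward $(1/\ut,\delta)$-regularity at $p$. The sharpened conclusion ``$p\in\cT_{c_1}({t})\Rightarrow$ forward $(1,\delta)$-regular'' comes from \propref{stable center align} and \propref{tunnel escape}: once $p\in\cT_{c_1}$, its image orbit before the next close return to $c_0$ is governed by the genuinely regular dynamics at $c_1$, so no loss in the regularity factor occurs.

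Part b) is proved by the analogous argument run backward: $\cT_{c_{1+i}}({t})$ is the valuable crescent, $F^{-1}$ maps vertical curves in $\bbD_1$ to quadratic horizontal curves in $\bbD_0$, and the roles of \propref{for reg rec}/\propref{for reg no rec} are played by \propref{back reg rec}/\propref{back reg no rec}; the escape-time bookkeeping is the mirror image, using crescent escape moments and \propref{crescent escape}. For part c), given $p\notin\bigcup_i(\cT_{c_i}({t})\cup\{c_i\})$, parts a) and b) give that $p$ is infinitely forward $(L,\delta)_v$-regular along $E^{ss}_p$ and infinitely backward $(L,\delta)_h$-regular along $E^c_p$; it remains to bound $\theta:=\measuredangle(E^{ss}_p,E^c_p)$ below. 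Here I would argue that if $\theta$ were too small, then by \propref{vert angle shrink} and \propref{hor angle shrink} applied along the backward, resp. forward, orbit, $p$ would be forced into a pinched region around some $c_m$ — i.e. into a critical tunnel or valuable crescent — contradicting the hypothesis; quantitatively, $\theta > \ut$ for $t = 1/L$, so \propref{hom for back reg} gives Pesin $(1/\ut,\delta)$-regularity, i.e. $(L,\delta)$-regularity. The main obstacle I anticipate is the bookkeeping in part a): carefully showing that the finitely many escape-and-recover steps compose without degrading the regularity factor beyond $1/\ut$, and in particular verifying at each step that the angle hypotheses of \propref{for reg rec} and the ``not $n$-times regular'' exclusions of \propref{for reg no rec} are simultaneously compatible — this is exactly the delicate interplay between the thinness $|y_0|<|x_0|^{1/\bdelta}$ of the pinched tunnel and the exponential angle-spreading rate $\lambda^{-(1\pm\bepsilon)}$ per step, which is the crux of the whole section.
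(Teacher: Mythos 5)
Your plan for part~a) is in the right spirit and identifies the right machinery (escape times via \lemref{max escape time} and \propref{full escape}, recovery of forward regularity via \propref{for reg rec} at Pliss moments, the $l_{i+1} > l_i^{\bepsilon_1}$ separation of successive close returns to $c_0$). But you leave exactly the crux open: the paper's proof pins down the regularity factor as $1/\ut$ by exhibiting a maximal sequence of returns $m_0 < m_1 < \ldots < m_I$ to $\bfT_{c_0} \setminus \cT_{c_0}(t)$, extracting at each return a \emph{direction aligning moment} $-l_i$ with $l_i < \bepsilon_1 l_{i-1}$ (\lemref{direction align lem}), and then summing the geometric series $\sum_{i=0}^I l_i < K\logl t$ so that the accumulated loss $\lambda^{-(1-\delta-\eta)\sum l_i}$ stays below $1/\ut$. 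Without this recursion the escape-and-recover iteration degrades the constant unboundedly; you flag this as ``the main obstacle'' but do not resolve it.

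Part~b) is not ``the analogous argument run backward,'' and treating it that way would fail. The geometry is not time-symmetric: tunnels near $c_0$ are pinched regions $|y| < |x|^{1/\delta''}$, while crescents near $c_1$ are their quadratic images, and backward iteration from $c_1$ passes \emph{into} the tunnel near $c_0$, not into a crescent. In the paper, part~b) is in fact built on top of part~a): the key step is to convert forward $(\cdot,\delta)_v$-regularity along $\hE^v_p$ (from a) and \lemref{consistent for to back}) into backward $(\cdot,\delta)_v$-regularity, and then into backward $h$-regularity along a transverse direction using \lemref{back reg ver to hor} and \lemref{trans back to tunnel}. Moreover, the proof splits into three cases according to whether $\cd_t(p)$ is empty, finite, or infinite, and the finite and infinite cases require tracking the forward orbit through the tunnel structure with the slow/fast escape-moment dichotomy $-l < -j < 0$ — none of which is a mirror image of a).

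Part~c) is where your route departs most from the paper, and where the gap is genuine. You propose to combine a) and b) via \propref{hom for back reg} and argue separately that the angle $\measuredangle(E^{ss}_p, E^c_p)$ is bounded below because ``if $\theta$ were too small, then by \propref{vert angle shrink} and \propref{hor angle shrink} $p$ would be forced into a pinched region.'' That inference does not follow: those propositions bound the angle between a contracting (resp.\ neutral) direction and the \emph{model} vertical (resp.\ horizontal) direction of a regular chart, given regularity hypotheses; they say nothing about the spatial location of $p$ relative to the critical tunnels, nor do they let you run the contrapositive you want. The paper avoids the angle estimate altogether by a different mechanism: it proves that the single direction $\hE^v_p = E^{ss}_p$ is simultaneously forward and backward $(\cdot,\delta)_v$-regular — i.e.\ it establishes Pesin $(\cdot,\delta)_v$-regularity \emph{along} $\hE^v_p$ in the sense of \eqref{eq:for reg} and \eqref{eq:back reg}, which already encodes the transversality via \propref{hom transverse back reg} and \propref{hom for back reg}. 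Concretely, the paper shows (using the tunnel/crescent escape moments of $p_0$, the bound $\|DF^{-m_1}|_{\hE^v_{p_{m_1}}}\| > \bL\lambda^{-(1-\delta)m_1}$ at the first Pliss moment $m_1 \asymp Kn$, and induction along the backward orbit) that $\hE^v_{\bfp}$ expands backwards at the correct rate. Your alternative could perhaps be made rigorous, but it would require a separate quantitative pinching-of-directions argument that you have not supplied, and it is strictly heavier than the paper's route.
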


Let $\epsilon_i$ with $i \in \{0, 1, 2\}$ be constants satisfying
$
\epsilon_0 := \epsilon$ and $
\bepsilon_i < \epsilon_{i+1} < \udelta
$ for $
0 \leq i < 2,
$
so that we have
$$
\diam(\cU_{c_m}) > {t_0}\lambda^{\epsilon_1 |m|}
\matsp{and}
\diam(\bbD^{t_1}_{c_m}) := {t_1} \lambda^{\epsilon_2 |m|}
\matsp{for all}
m \in \bbZ.
$$

Let $p_0 \in \Omega$. If $p_0 \in \bbD^{{t_1}}_{c_{-n}}$ for some $n \geq 0$, let $e_n(p_0) >0$ be the smallest integer such that
$
p_{-e_n(p_0)} \notin \bbD^{{t_1}}_{c_{-n - e_n(p_0)}},
$
and let $0 \leq \de(p_0) \leq \infty$ be the largest integer such that
$
p_0 \in \bbD^{{t_1}}_{c_{-\de(p)}}.
$
In this case, $0 \leq \ee(p_0) < \infty$ is the integer given by \propref{full escape} (taking ${t} = {t_1}$). If, on the other hand, $p_0 \notin \bbD^{t_1}_{c_{-n}}$ for all $n \geq 0$, then denote $\de(p_0) = \varnothing$. 

If $p_0 \in \bbD^{2{t_1}}_{c_{-n}}$ for some $n \geq 0$, let $\hd(p_0), \he_n(p_0) \geq 0$ be the smallest integers so that
$
p_0 \in \bbD^{2{t_1}}_{c_{-\hd(p_0)}}
$ and $
p_{-\he_n(p_0)} \notin \bbD^{2{t_1}}_{c_{-n-\he_n(p_0)}}.
$
Otherwise, denote $\hd(p_0) = \varnothing$.

For ${t} \in (0, {t_1}]$, define the ${t}$-critical depth $\cd_{t}(p_0)$ and the ${t}$-valuable depth $\vd_{t}(p_0)$ of $p_0$ as in \subsecref{subsec:crit tunnel}. Denote
$$
\cT_{c_m} := \cT_{c_m}({t_1})
\matsp{for}
m \in \bbZ;
$$
and
$$
\cd(p_0) := \cd_{{t_1}}(p_0)
\matsp{and}
\vd(p_0) := \vd_{{t_1}}(p_0).
$$

\begin{lem}\label{disc tunnel inv}
Let $p_{-n} \in \bbD^{2{t_1}}_{c_{-n}}$ for some $n \geq 0$. Then one of the following statements hold.
\begin{enumerate}[i)]
\item We have $\cd(p_{-n}) \in \{\varnothing, \infty\}$, and
$
\cd(p_1) = \cd(p_{-n}).
$
\item We have $\cd(p_{-n}) = -n$, and
$
\cd(p_1) = \varnothing.
$
\item We have $\bepsilon_1^{-1}n < \cd(p_{-n}) < \infty$, and
$
\cd(p_1) = \cd(p_{-n}) -n-1.
$
\end{enumerate}
\end{lem}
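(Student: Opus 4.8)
The plan is to follow the critical depth along the orbit segment from $p_{-n}$ to $p_1=F^{n+1}(p_{-n})$, using the near-invariance of the critical tunnels (see \subsecref{subsec:crit tunnel}) together with the slow-recurrence estimates for the critical orbit. The first observation is that, since $p_{-n}\in\bbD^{2{t_1}}_{c_{-n}}$ (so in particular $p_{-n}\neq c_{-n}$), iterating \lemref{shrink disc inv} with length $2{t_1}$ gives $p_{-n+j}\in\bbD^{2{t_1}}_{c_{-n+j}}$ for all $0\le j\le n$; in particular $p_0\in\bbD^{2{t_1}}_{c_0}$. Thus the whole segment lies inside the uniformising neighbourhoods $\cU_{c_{-n+j}}$, so that \thmref{reg chart}, \propref{loc linear} and \thmref{unif reg crit} are available throughout, and the critical depth $\cd(p_{-n})$ is governed by which of the tunnels $\cT_{c_{-k}}\subset\cU_{c_{-k}}$ contain $p_{-n}$. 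Write $d:=\cd(p_{-n})$.

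Suppose first $d\in\{\varnothing,\infty\}$; I claim $\cd(p_1)=d$, which is case i). If $d=\infty$, then $p_{-n}\in\cT_{c_{-k}}$ for arbitrarily large $k$, and since $F(\cT_{c_{-k-1}})\subseteq\cT_{c_{-k}}$ each application of $F$ carries this forward with the index dropping by one, so $p_{-n+j}$, hence $p_1$, still lies in infinitely many critical tunnels and $\cd(p_1)=\infty$. For the reverse implication, and for the case $d=\varnothing$, suppose towards a contradiction that $p_1\in\cT_{c_{-k}}$; then $p_0\in F^{-1}(\cT_{c_{-k}})$, and as $p_0\in\bbD^{2{t_1}}_{c_0}$ this pulled-back tunnel meets the disc, hence the tunnel $\cT_{c_0}$, so \propref{tunnel inv} i) rules out the Pliss-moment boundary truncation and $F^{-1}(\cT_{c_{-k}})=\cT_{c_{-k-1}}$; iterating this $n+1$ times — the hypothesis of \propref{tunnel inv} i) being verified at each step against $p_{-j}\in\bbD^{2{t_1}}_{c_{-j}}$ — yields $p_{-n}\in\cT_{c_{-(k+n+1)}}$, which contradicts $d=\varnothing$ and forces $\cd(p_{-n})=\infty$ when $\cd(p_1)=\infty$.

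Now suppose $d$ is a finite non-negative integer. Using $F(\cT_{c_{-k-1}})\subseteq\cT_{c_{-k}}$ together with \propref{tunnel inv} i) to exclude the boundary truncations (as above, the relevant pullback lands in a disc $\bbD^{2{t_1}}_{c_{\cdot}}$ around an orbit point), one obtains $\cd(p_{-n+j})=d-j$ for $0\le j\le\min(d,n)$, so that $p_{-n+\min(d,n)}\in\cT_{c_0}\cap\bbD^{2{t_1}}_{c_{-(n-\min(d,n))}}$. If $d\le n$ this says $p_{-n+d}\in\cT_{c_0}$ has critical depth $0$; applying $F$ once more, $F(\cT_{c_0})=\cT_{c_1}$, which in the uniformising coordinates of \thmref{unif reg crit} is the image of $\cT_{c_0}$ under the H\'enon map $(x,y)\mapsto(x^2-\lambda y,x)$, bends towards $c_1$ and is disjoint from every critical tunnel $\cT_{c_{-k}}$ (any accidental intersection being excluded by \propref{tunnel inv} ii) together with \propref{return crit disc}), so the orbit does not re-enter a critical tunnel between index $-n+d+1$ and $1$, giving $\cd(p_1)=\varnothing$: this is case ii), the hypothesis $\cd(p_{-n})=-n$ (i.e. the deepest critical tunnel through $p_{-n}$ being $\cT_{c_{-n}}$) being pinned down by \propref{no crit in crit nbh}, \propref{return crit disc} and \lemref{no pliss near crit} once one knows $d\le n$. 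If instead $d>n$, then from $\dist(c_{-d},c_{-n})\le\diam\cT_{c_{-d}}+2{t_1}\lambda^{\epsilon_2 n}$ (using \propref{tunnel size} and the disc containments) and the slow-recurrence bounds of \propref{return crit disc}, \propref{no crit in crit nbh} and \lemref{no pliss near crit} one gets $d-n>\bepsilon^{-1}n$, whence $d>\bepsilon_1^{-1}n$; and after the full $n+1$ forward steps only $n+1$ of the $d$ nested levels have been consumed, so $p_1\in\cT_{c_{-(d-n-1)}}$ with no deeper critical tunnel, i.e. $\cd(p_1)=d-n-1=\cd(p_{-n})-n-1$: this is case iii).

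The step I expect to be the main obstacle is this finite-depth case: making the step-by-step tracking $\cd(p_{-n+j})=d-j$ genuinely rigorous past the Pliss-moment boundary truncations of the tunnels — which is precisely what \propref{tunnel inv} is for, but whose hypotheses must be checked at each step against the disc containments of the first paragraph — together with the ``no re-entry'' assertion in case ii), namely that once the orbit has escaped the critical tunnels through the H\'enon bend it does not fall back into some $\cT_{c_{-k}}$ before reaching $p_1$, and the accompanying task of determining exactly which finite values of $\cd(p_{-n})$ are compatible with $p_{-n}\in\bbD^{2{t_1}}_{c_{-n}}$. All of these reduce to the slow recurrence of the critical orbit as quantified by \propref{return crit disc}, \propref{no crit in crit nbh} and \lemref{no pliss near crit}.
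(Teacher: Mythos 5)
Your overall strategy and toolkit (near-invariance of tunnels, \propref{return crit disc}, \lemref{no pliss near crit}) match the paper's, but your case analysis introduces two genuine gaps which the paper sidesteps by proving a single implication. The paper assumes $p_1\in\cT_{c_{-k}}$ for some $k\ge 0$, applies \propref{return crit disc} to get $k>\bepsilon_1^{-1}n$, bounds $\dist(c_{-k+1},c_0)<\lambda^{\bepsilon_2 n}$ directly by a triangle inequality using only $\dist(c_{-k},p_1)<\diam\cT_{c_{-k}}$ and $\dist(p_0,c_0)<\lambda^{-\eta n}\dist(p_{-n},c_{-n})$, and then invokes \lemref{no pliss near crit} to conclude $F^{-n-1}(\cT_{c_{-k}})=\cT_{c_{-k-n-1}}$, hence $p_{-n}\in\cT_{c_{-k-n-1}}$. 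Combined with the trivial forward inclusion $F(\cT_{c_{-m-1}})\subseteq\cT_{c_{-m}}$, all three cases of the trichotomy fall out of this one implication.

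The first gap is the appeal to \propref{tunnel inv}~i). Its hypothesis requires $F^{-1}(\cT_{c_{-k}})\cap\cT_{c_m}\neq\varnothing$ for some tunnel $\cT_{c_m}$; you try $m=0$ and justify it by ``this pulled-back tunnel meets the disc, hence the tunnel $\cT_{c_0}$.'' But $\bbD^{2t_1}_{c_0}\not\subset\cT_{c_0}$: the tunnel is a pinched region, the disc is a full punctured ball, and since the lemma is stated for arbitrary points in the disc rather than points of $\Lambda$, \thmref{pinching} is not available to bridge the two. Having $p_0\in F^{-1}(\cT_{c_{-k}})\cap\bbD^{2t_1}_{c_0}$ gives no information about $F^{-1}(\cT_{c_{-k}})\cap\cT_{c_0}$, and the same unjustified step recurs at each of the $n+1$ iteration steps. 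The repair is precisely the paper's move: skip \propref{tunnel inv} and feed the distance estimate (which the disc membership \emph{does} give) straight into \lemref{no pliss near crit}.

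The second gap is in your case~ii). The claim that $\cT_{c_1}$ ``is disjoint from every critical tunnel $\cT_{c_{-k}}$'' is false --- the valuable crescent $\cT_{c_1}$ does contain critical tunnels (those of generation~$0$ inside it, used throughout Section~\ref{sec:tot order}), and \propref{tunnel inv}~ii) constrains which intersections may occur, it does not give disjointness. Even granting it, it would only constrain the orbit at the single moment $-n+d+1$ and say nothing about re-entry at the later moments $-n+d+2,\dots,1$. The correct route to $\cd(p_1)=\varnothing$ when $\cd(p_{-n})\le n$ (finite) is by contradiction through the same single implication: if $p_1\in\cT_{c_{-k}}$, then $\cd(p_{-n})\ge k+n+1>n$, contradicting $\cd(p_{-n})\le n$.
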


\begin{proof}
Suppose that $p_1 \in \cT_{c_{-k}}$ for some $k \geq 0$. Then by \propref{return crit disc}, we have $k > \bepsilon_1^{-1}n$. Observe that, for some uniform constant $K \geq 1$, we have
\begin{align*}
\dist(c_{-k+1}, c_0) &< K\dist(c_{-k}, p_1) + \dist(p_0, c_0)\\
&< K\dist(c_{-k}, p_1) + \lambda^{-\eta n}\dist(p_{-n}, c_{-n})\\
&< \lambda^{\bdelta k}+ 2{t_1}\lambda^{(\epsilon_2-\eta) n}\\
&< \lambda^{\bepsilon_2 n}.
\end{align*}
Thus, by \lemref{no pliss near crit}, we have
$$
p_{-n} \in F^{-n-1}(\cT_{-k}) = \cT_{c_{-k-n-1}}.
$$
\end{proof}

\begin{lem}\label{shallow tunnel free}
Let $p \in \Omega$. If $\hd(p) \neq \varnothing$, then $\de(c_{-\hd(p)}) = \varnothing$.
\end{lem}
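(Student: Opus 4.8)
The plan is to argue by contradiction, transporting everything back to the critical point $c_0$ and invoking only two facts: the transport property $F(\bbD^{t}_{c_{-i}})\subset\bbD^{t}_{c_{-i+1}}$ for $i\ge 1$ (\lemref{shrink disc inv}), which lets us push a disc containment forward one index at a time until we reach index $0$; and the slow-recurrence estimate \propref{return crit disc}, which after unwinding the logarithm says $\dist(c_m,c_0)>\lambda^{\bepsilon|m|}$ as soon as $c_m$ lies in the ball $\bbD_{c_0}(r)$. Write $N:=\hd(p)$, so by definition $p\in\bbD^{2t_1}_{c_{-N}}$ while $p\notin\bbD^{2t_1}_{c_{-j}}$ for every $0\le j<N$. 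I would suppose, towards a contradiction, that $\de(c_{-N})\neq\varnothing$, i.e.\ $c_{-N}\in\bbD^{t_1}_{c_{-k}}$ for some $k\ge 0$; since $\bbD^{t_1}_{c_{-k}}$ omits its centre $c_{-k}$, necessarily $k\neq N$, and I would split into the cases $k>N$ and $k<N$. Recall $t_1=1/\overline{L_0}$ is uniformly small, so we may assume $2t_1<r$ and $2t_1<1$.

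For $k>N$: pushing $c_{-N}\in\bbD^{t_1}_{c_{-k}}$ forward $N$ steps via \lemref{shrink disc inv} gives $c_0=F^N(c_{-N})\in\bbD^{t_1}_{c_{-(k-N)}}$, hence $\dist(c_0,c_{-(k-N)})<t_1\lambda^{\epsilon_2(k-N)}$; by \propref{return crit disc} (applicable since $t_1<r$ and $c_{-(k-N)}$ lies in both $\bbD_{c_0}(r)$ and $\bbD_{c_{-(k-N)}}(r\lambda^{\delta(k-N)})$), also $\dist(c_0,c_{-(k-N)})>\lambda^{\bepsilon(k-N)}$. Comparing, $\lambda^{\bepsilon(k-N)}<t_1\lambda^{\epsilon_2(k-N)}<\lambda^{\epsilon_2(k-N)}$, so $\bepsilon(k-N)>\epsilon_2(k-N)$, contradicting $\bepsilon<\epsilon_1<\epsilon_2$. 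Note this case uses nothing about $p$: it simply records that $\de(c_{-N})$ cannot be an index past $N$. (The application of \lemref{shrink disc inv} is legitimate here because all $N$ intermediate indices stay $\le -1$, as $k>N$.)

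For $k<N$: pushing $c_{-N}\in\bbD^{t_1}_{c_{-k}}$ forward $k$ steps gives $c_{-(N-k)}\in\bbD^{t_1}_{c_0}=\bbD_{c_0}(t_1)$, so $\dist(c_{-(N-k)},c_0)<t_1$, and \propref{return crit disc} yields $\lambda^{\bepsilon(N-k)}<t_1$, i.e.\ $N-k>\bepsilon^{-1}\logl t_1$; since $t_1$ is uniformly small this exceeds any prescribed uniform constant, so in particular $2\lambda^{\epsilon_2(N-k)}<1$. Now I would use the minimality of $\hd(p)=N$. If $p\neq c_{-k}$, the triangle inequality gives
$$\dist(p,c_{-k})\le\dist(p,c_{-N})+\dist(c_{-N},c_{-k})<2t_1\lambda^{\epsilon_2 N}+t_1\lambda^{\epsilon_2 k}=t_1\lambda^{\epsilon_2 k}\bigl(2\lambda^{\epsilon_2(N-k)}+1\bigr)<2t_1\lambda^{\epsilon_2 k},$$
so $p\in\bbD^{2t_1}_{c_{-k}}$ with $k<N$, contradicting minimality. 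If instead $p=c_{-k}$, then $c_{-k}\in\bbD^{2t_1}_{c_{-N}}$, and pushing forward $k$ steps via \lemref{shrink disc inv} (again all intermediate indices are $\le -1$) gives $c_0=F^k(c_{-k})\in\bbD^{2t_1}_{c_{-(N-k)}}$, i.e.\ $\dist(c_0,c_{-(N-k)})<2t_1\lambda^{\epsilon_2(N-k)}$; combined with $\dist(c_0,c_{-(N-k)})>\lambda^{\bepsilon(N-k)}$ from \propref{return crit disc} (using $2t_1<r$) this forces $(\bepsilon-\epsilon_2)(N-k)>\logl(2t_1)>0$, impossible since $\bepsilon<\epsilon_2$ and $N-k>0$. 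In every case we reach a contradiction, hence $\de(c_{-N})=\varnothing$.

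The disc arithmetic above is routine. The hypothesis on $p$ only enters in the case $k<N$, and the one genuinely delicate point is the degenerate subcase $p=c_{-k}$, where the naive ``contradict minimality'' argument fails; I expect this to be the main (though minor) obstacle, and it is resolved by the same slow-recurrence clash used in the case $k>N$. The other thing to be careful about is verifying at each step that \lemref{shrink disc inv} applies, i.e.\ that one never tries to transport a disc across index $0$ — which is exactly why the case split is organised by the sign of $k-N$.
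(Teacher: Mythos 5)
Your proof is correct and follows essentially the same strategy as the paper's: transport via \lemref{shrink disc inv} plus slow recurrence (\propref{return crit disc}) shows the competing index $k$ is well separated from $\hd(p)$, and the triangle inequality then contradicts minimality of $\hd(p)$. In fact you are slightly more careful than the paper in two places: the paper cites \lemref{disc tunnel inv} rather than unwinding to \propref{return crit disc} directly (leaving the case $k>\hd(p)$ implicit), and it silently passes over the degenerate subcase $p=c_{-k}$, which you correctly observe is not dispatched by the minimality argument alone and handle by the same slow-recurrence clash.
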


\begin{proof}
Denote $\hd := \hd(p)$. Suppose $c_{-\hd} \in \bbD^{t_1}_{c_{-n}}$ for some $n \geq 0$. By \lemref{disc tunnel inv}, we see that $\hd > \bepsilon_1^{-1} n$. Thus,
\begin{align*}
\dist(p, c_{-n}) &< \dist(p, c_{-\hd}) + \dist(c_{-\hd}, c_{-n}) \\
&< 2{t_1}\lambda^{\epsilon_2 \hd} + {t_1}\lambda^{\epsilon_2 n}\\
&< 2{t_1}\lambda^{\epsilon_2 n}.
\end{align*}
So $p \in \bbD^{2{t_1}}_{c_{-n}}$, which is a contradiction.
\end{proof}

Let
$\tiN_1 := -\delta^{-1}\logl \overline{L_1},$
and let $N_1$ be the first Pliss moment larger than $\tiN_1$ in the forward orbit of $c_1$. By \propref{freq pliss for}, we have
$
\tiN_1 \leq N_1 < (1+\bepsilon_1)\tiN_1.
$
Denote
$$
{t_2} := \radius(U_{c_1}^{\bepsilon_1 N_1}) \asymp \lambda^{\bepsilon_1 N_1} = {L_1}^{-\bepsilon_1} < {t_1},
$$
and let
$$
\bfT_{c_n} := F^{n-1}(\cT_{c_1}({t_2}))
\matsp{for}
0\leq n \leq N_1.
$$

\begin{lem}\label{edge fast escape}
Let $p_1 \in \cT_{c_1} \setminus \bfT_{c_1}$.
If $\de(p_1) = \varnothing$, then
$$
\he(p_1) < -\bepsilon_1 \logl {L_1}.
$$
\end{lem}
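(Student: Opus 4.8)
The plan is to follow the backward orbit $p_1, p_0 := F^{-1}(p_1), p_{-1}, \dots$ and to show that it leaves every critical-orbit disc $\bbD^{2{t_1}}_{c_{-n}}$ within the allotted number of steps. First dispose of the trivial case $\hd(p_1) = \varnothing$: then $\he(p_1) = 0 < -\bepsilon_1\logl{L_1}$ (the right side is positive since ${L_1} > 1$), so from now on assume $\hd(p_1)$ is finite. Next I translate the two hypotheses into the uniformizing coordinates of \thmref{unif reg crit}. Writing $z_0 = (x_0, y_0) := \Phi_{c_0}(p_0)$ and using $\cT_{c_1}({t}) = F(\cT_{c_0}({t}))$ together with the description of the $1/\udelta$-pinched neighbourhood, the hypothesis $p_1 \in \cT_{c_1}({t_1})$ gives $|x_0| < {t_1}$ and $|y_0| < |x_0|^{1/\udelta}$; and since $p_1 \notin \bfT_{c_1} = \cT_{c_1}({t_2}) = F(\cT_{c_0}({t_2}))$ while the pinching $|y_0| < |x_0|^{1/\udelta}$ already holds, we must have $|x_0| \geq {t_2}$. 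In particular $\dist(p_0, c_0) \asymp |x_0| \in [{t_2}, {t_1})$, and by \thmref{reg chart} ii), \propref{loc linear} and $\eta$-homogeneity the neutral coordinate of $p_{-k}$ in $\cU_{c_{-k}}$ stays comparable to $|x_0|$, so $\dist(p_{-k}, c_{-k})$ remains $\gtrsim {t_2}\lambda^{\bepsilon k}$ while $p_{-k} \in \cU_{c_{-k}}$.

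The decisive consequence of $\de(p_1) = \varnothing$ is an \emph{outer-half} property. By \lemref{shrink disc inv}, $F(\bbD^{{t_1}}_{c_{-i}}) \subset \bbD^{{t_1}}_{c_{-i+1}}$ for every $i \geq 1$; hence $p_0 \in \bbD^{{t_1}}_{c_{-i}}$ with $i \geq 1$ would give $p_1 \in \bbD^{{t_1}}_{c_{-i+1}}$ with $-i+1 \leq 0$, contradicting $\de(p_1) = \varnothing$, and iterating the same remark rules out $p_{-k} \in \bbD^{{t_1}}_{c_{-n}}$ whenever $n \geq k+1$. Together with $p_1 \notin \bbD^{{t_1}}_{c_{-n}}$ for all $n \geq 0$, this shows that whenever a backward iterate of $p_1$ lies in a critical-orbit disc $\bbD^{2{t_1}}_{c_{-n}}$ it lies in the outer half, $\dist(\,\cdot\,, c_{-n}) \geq {t_1}\lambda^{\epsilon_2 n} = \tfrac12\radius(\bbD^{2{t_1}}_{c_{-n}})$. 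Now I exploit the two decay rates in $\eta$-homogeneity as in the proof of \lemref{max escape time}: along the backward orbit the distance to the critical orbit shrinks by at most the factor $\lambda^{\baeta}$ per step, whereas $\radius(\bbD^{2{t_1}}_{c_{-n}})$ shrinks at the strictly faster rate $\lambda^{\epsilon_2}$ (recall $\baeta < \epsilon_2$). Hence for the generation-$0$ chain, starting from $\dist(p_0, c_0) \gtrsim {t_2}$ the orbit escapes $\bbD^{2{t_1}}_{c_{-k}}$ within $(\epsilon_2 - \baeta)^{-1}\logl({t_2}/{t_1}) + O(1)$ steps, which (using ${t_2} \asymp {L_1}^{-\bepsilon_1}$ and ${t_1} = 1/\overline{L_0}$, so $\logl{t_1} = O(1)$) is absorbed into $-\bepsilon_1\logl{L_1}$; and for any deeper chain met along the way the outer-half property gives that the orbit is already past half the radius, so the ratio $\dist/\radius$ exceeds $1$ after only $O(1)$ further steps.

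Finally I assemble these pieces in the manner of \propref{full escape}. After the generation-$0$ escape, the backward orbit can re-enter the critical-orbit disc nest only through strictly deeper chains; by \lemref{disc tunnel inv} and the slow recurrence of the critical orbit (\propref{return crit disc}) such re-entries force increasingly deep near-returns of $\{c_{-n}\}$, so the number of chains involved is finite and the successive (each $O(1)$) escape costs form a rapidly decaying sequence whose total is again absorbed into the generation-$0$ bound. Summing, the first $k$ with $\hd(p_{1-k}) = \varnothing$ satisfies $\he(p_1) = k < -\bepsilon_1\logl{L_1}$. I expect the main obstacle to be exactly this last bookkeeping: one must keep the cumulative step count strictly below $-\bepsilon_1\logl{L_1}$, not merely $O(\logl{L_1})$, and this works only because the two hypotheses pull in tandem --- $p_1 \notin \bfT_{c_1}$ pinning the dominant (generation-$0$) term at the scale $\logl{t_2} \asymp -\bepsilon_1\logl{L_1}$, and $\de(p_1) = \varnothing$ forcing every re-entry cost to be uniformly bounded.
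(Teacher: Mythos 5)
The paper disposes of this lemma in two lines: $p_1 \notin \bfT_{c_1}$ pins $\dist(p_0,c_0) \gtrsim {t_2}$, so \lemref{max escape time} (with $t = 2{t_1}$) gives $\he_0(p_0) < \bepsilon_2^{-1}\logl{t_2} = -\bepsilon_1\logl{L_1}$, and \propref{full escape} upgrades this single-chain escape time to the full escape time $\he(p_1)$. Your proof follows the same two-stage outline but re-derives both ingredients, and the re-entry stage has a genuine gap.

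The gap is the \emph{outer-half property}. You propagate $\de(p_1) = \varnothing$ backward via \lemref{shrink disc inv}; this correctly shows $p_{-k} \notin \bbD^{{t_1}}_{c_{-n}}$ for $n \geq k+1$. But you then assert the outer-half conclusion for \emph{every} backward iterate in \emph{every} disc $\bbD^{2{t_1}}_{c_{-n}}$, which is exactly what you would need for re-entries, where $n \leq k$. For $n \leq k$ the iteration of \lemref{shrink disc inv} only reaches $p_{-(k-n)} \in \bbD^{{t_1}}_{c_0}$ and then stops (the lemma requires $i \geq 1$); nothing in $\de(p_1) = \varnothing$ forbids this — indeed $p_0 \in \cT_{c_0}({t_1})$ may itself already lie inside $\bbD^{{t_1}}_{c_0}$. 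So the claim that a re-entering iterate is ``already past half the radius'' is unsupported, and with it the claim that each re-entry costs only $O(1)$ further steps. (Your final sentence, that the ``each $O(1)$'' costs ``form a rapidly decaying sequence,'' is also internally inconsistent.) The correct mechanism, which the paper gets for free from \propref{full escape}, is \lemref{return dist lem}: each re-entry distance satisfies $\dist(p_{-n},c_0) > \dist(p_0,c_0)^{\bepsilon_1}$, so the re-entry escape times are not uniformly $O(1)$ but decay \emph{geometrically}, and it is this geometric decay — not a per-step $O(1)$ bound — that makes the total absorb into $(1+\bepsilon_1)\he_0(p_0)$. To repair the argument you should replace the outer-half step with a citation of \lemref{return dist lem} (or \propref{full escape} directly, as the paper does).
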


\begin{proof}
Letting $t = 2{t_1}$ and applying \lemref{max escape time}, we have
$$
\he_0(p_0) < \bepsilon_2^{-1}\logl {t_2} = -\bepsilon_1 \logl {L_1}.
$$
The result now follows from \propref{full escape}.
\end{proof}

\begin{lem}\label{1 tunnel recover}
Let $p_{N_1} \in \bfT_{c_{N_1}}$ be $n$-times forward $({\overline{L_1}}, \delta)_v$-regular along $\tiE^v_{p_{N_1}} \in \bbP^2_{p_{N_1}}$ with
$
n > -\logl {\overline{L_1}}.
$
Then $p_1$ is $(n+N_1)$-times forward $(1, \delta)_v$-regular along $\tiE^v_{p_1}$. Moreover, $\tiE^v_{p_1}$ is sufficiently stable-aligned in $\cU_{c_1}$.
\end{lem}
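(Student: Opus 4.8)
The plan is to transport the forward regularity of $p_{N_1}$ backward to $p_1$ across the \emph{critical segment} $c_1,c_2,\ldots,c_{N_1}$, along which $F$ is $C^1$-close, in the charts $\Phi_{c_i}$ of \thmref{unif reg crit}, to the diagonal maps $A_i$ with $\lambda^{1+\bepsilon}<b_i<\lambda^{1-\bepsilon}$ and $\lambda^{\bepsilon}<a_i<\lambda^{-\bepsilon}$. Write $p_0:=F^{-1}(p_1)$; then $p_{N_1}\in\bfT_{c_{N_1}}=F^{N_1}(\cT_{c_0}({t_2}))$ forces $p_0\in\cT_{c_0}({t_2})$ and $p_1\in\cT_{c_1}({t_2})$, and we set $z_0=(x_0,y_0):=\Phi_{c_0}(p_0)$. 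First I would check orbit containment: since ${t_2}=\radius(U_{c_1}^{\bepsilon_1 N_1})\asymp\lambda^{\bepsilon_1 N_1}$ with $\bepsilon<\bepsilon_1$, the pinched neighborhood $\cT_{c_0}({t_2})$ (and its $F$-image $\cT_{c_1}({t_2})$, whose $x$-extent is $\lesssim\lambda^{\bepsilon_1 N_1}$ as well) sits inside $\cU_{c_0}^{\bepsilon N_1}$, resp. $\cU_{c_1}^{\bepsilon N_1}$, so \lemref{trunc neigh fit} gives $p_i\in\cU_{c_i}$ for $0\le i\le N_1$; likewise $\bfT_{c_{N_1}}$ lies in the regular box $\cB_{c_{N_1}}$ of the Pliss point $c_{N_1}$.

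Next I would show $\tiE^v_{p_1}$ is sufficiently vertical in $\cU_{c_1}$. Since $N_1$ is a Pliss moment in the forward orbit of $c_1$, the point $c_{N_1}$ is $(N_1,\infty)$-times $(1,\delta)_v$-regular, hence has a regular box; because $p_{N_1}\in\cB_{c_{N_1}}$ is $n$-times forward $(\overline{L_1},\delta)_v$-regular along $\tiE^v_{p_{N_1}}$ with $n>-\logl\overline{L_1}$ (comfortably above the $O(1)$ threshold required there, the box constant of $c_{N_1}$ being $1$), \propref{consist ver hor}(i) shows $\tiE^v_{p_{N_1}}$ and its vertical manifold are sufficiently vertical in $\cU_{c_{N_1}}$, and the $n$-step regularity pins its angle from the chart-vertical to $\lesssim\lambda^{n}$. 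Propagating backward through $\cU_{c_{N_1}},\ldots,\cU_{c_1}$ (vertical directions are $F$-invariant in these charts, and \propref{pres ver hor curv}(ii) keeps sufficiently vertical directions sufficiently vertical, each step contracting the slope by $b_i/a_i\asymp\lambda$), I get that $\tiE^v_{p_i}$ is sufficiently vertical in $\cU_{c_i}$ for all $1\le i\le N_1$, with the slope of $\tiE^v_{p_1}$ in $\Phi_{c_1}$ of size $\lesssim\lambda^{N_1+n}$.

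Then I would establish that $p_1$ is $(n+N_1)$-times forward $(1,\delta)_v$-regular along $\tiE^v_{p_1}$. For $1\le l\le N_1-1$, \propref{loc linear} on the critical segment together with $\prod b_i<\lambda^{(1-\bepsilon)l}$, $\prod a_i<\lambda^{-\bepsilon l}$ and the tiny slope gives
$$
\|DF^{l}|_{\tiE^v_{p_1}}\|\le(1+k)\bigl(\lambda^{\delta-\bepsilon}+\lambda^{n}\bigr)\lambda^{(1-\delta)l}<\lambda^{(1-\delta)l},
$$
the strict inequality using $\bepsilon<\delta$ and that the near-linearity error $k$ can be made arbitrarily small by shrinking the regular radii; moreover, the verticality of $\tiE^v_{p_{N_1}}$ lets me upgrade its own forward regularity to marginal exponent $\bepsilon$ and an $O(1)$ factor (a vertical-like direction contracts at rate $\le\lambda^{1-\bepsilon}$ while its orbit remains in charts), so that for $N_1-1<l\le n+N_1$ the concatenation $\|DF^{l}|_{\tiE^v_{p_1}}\|=\|DF^{N_1-1}|_{\tiE^v_{p_1}}\|\cdot\|DF^{l-N_1+1}|_{\tiE^v_{p_{N_1}}}\|$ loses only a bounded factor, which the exponential surplus $\lambda^{(\delta-\bepsilon)(N_1-1)}$ carried by the first factor absorbs — this is exactly why $N_1$ is taken no smaller than the first Pliss moment past $\tiN_1=-\delta^{-1}\logl\overline{L_1}$ and why $\epsilon,\delta$ are chosen with a definite gap; the last step $l=n+N_1$ is handled by $\eta$-homogeneity. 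This constant bookkeeping — driving the regularity factor all the way down to $1$ rather than $O(1)$ — is the hard part of the proof.

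Finally, \propref{stable center align} upgrades verticality of $\tiE^v_{p_1}$ to $k|x_1|^{1/2}$-alignment (that is, to being sufficiently stable-aligned) once $p_1$ is forward $(1,\delta)_v$-regular for more than $K\logl|x_0|$ steps. When $|x_0|\asymp{t_2}$ this threshold is $\lesssim K\bepsilon_1 N_1<n+N_1$ for $\epsilon$ small; when $p_0$ lies deeper in the cusp $\cT_{c_0}({t_2})$, the forward orbit of $p_1$ shadows that of $c_1$ for $\gtrsim\logl|x_0|$ steps and the forward regularity of the vertical-aligned $\tiE^v_{p_1}$ persists correspondingly longer, so the hypothesis of \propref{stable center align} holds in all cases, completing the proof.
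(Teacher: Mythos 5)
Your proof follows the paper's argument essentially step for step: orbit containment in the critical charts $\cU_{c_i}$ for $1\leq i\leq N_1$ via the truncation of $\cT_{c_0}(t_2)$, verticality of $\tiE^v_{p_{N_1}}$ obtained from its $n$-step forward regularity in the Pliss box at $c_{N_1}$, backward propagation of verticality to $p_1$, regularity of $p_1$ for $N_1$ steps with factor $1$ and marginal exponent $\bepsilon_1$ by near-linearity in the critical charts, and finally concatenation in which the exponential surplus $\lambda^{(\delta-\bepsilon_1)N_1}$ (guaranteed by the choice $N_1\geq\tiN_1=-\delta^{-1}\logl\overline{L_1}$) absorbs the factor $\overline{L_1}$. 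The only surface difference is that you invoke \propref{consist ver hor} at the regular box of $c_{N_1}$ where the paper applies \propref{vert angle shrink} directly, but these amount to the same angle estimate; the "upgrade" of the regularity at $p_{N_1}$ that you mention is not actually needed, since the concatenation already works with the given $(\overline{L_1},\delta)$-factor. Correct, same approach.
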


\begin{proof}
Since $p_1 \in \cU_{c_1}^{\bepsilon_1 N_1}$, we see that $p_i \in \cU_{c_i}$ for $1 \leq i \leq M$ with
$$
M > \epsilon_1^{-1}(\bepsilon_1 N_1) > \bepsilon_1^{-1} N_1.
$$
By \propref{vert angle shrink}, we have for some uniform constant $K \geq 1$:
$$
\measuredangle(\tiE^v_{p_{N_1}}, E^v_{p_{N_1}}) < K\lambda^{-\epsilon_1 N_1}{\overline{L_1}} \lambda^{(1-\delta)n} < K{L_1}^{\bepsilon_1}{\overline{L_1}} \lambda^{(1-\delta)n} < {\overline{L_1}}^{-1} < \lambda^{\bepsilon_1 N_1}.
$$
Thus, $\tiE^v_{p_{N_1}}$ is sufficiently vertical in $\cU_{c_{N_1}}$.

By \propref{loc linear}, we conclude that $p_1$ is $N_1$-times forward $(1, \bepsilon_1)_v$-regular along $\tiE^v_{p_1}$. Since
$$
\lambda^{(1-\bepsilon_1) N_1}{\overline{L_1}}\lambda^{(1-\delta)i} ={\overline{L_1}}\lambda^{(\delta-\bepsilon_1) N_1} \lambda^{(1-\delta) (N_1+i)} < \lambda^{(1-\delta) (N_1+i)}
\matsp{for}
i \geq 0,
$$
the result follows.
\end{proof}

\begin{lem}\label{1 tunnel inv}
Let $p_1 \in \bfT_{c_1}$. Then one of the following statements hold.
\begin{enumerate}[i)]
\item We have $\cd(p_1) \in \{\varnothing, \infty\}$, and
$
\cd(p_{N_1}) = \cd(p_1).
$
\item We have $\bepsilon_1^{-1} N_1< \cd(p_1) < \infty$, and
$
\cd(p_{N_1}) = \cd(p_1) - N_1+1.
$
\end{enumerate}
\end{lem}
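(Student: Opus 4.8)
The plan is to treat this as the forward–orbit counterpart of \lemref{disc tunnel inv}: whereas that lemma propagates the critical–depth bookkeeping across the critical moment, here I would propagate it along the regular orbit segment $c_1,c_2,\dots ,c_{N_1}$, which contains no critical point, so that the relevant critical tunnels can only \emph{translate rigidly} along it. The starting observation is that $p_{N_1}=F^{N_1-1}(p_1)\in F^{N_1-1}(\cT_{c_1}(t_2))=\bfT_{c_{N_1}}$ automatically; moreover, since $t_2=\radius(U_{c_1}^{\bepsilon_1 N_1})$ and $i_{N_1}<N_1$, one has $\cT_{c_1}(t_2)\subset \cT_{c_1}(t_1)\cap \cU_{c_1}^{\delta' i_{N_1}}$ (a matter of comparing exponents with the chosen constants), hence also $\bfT_{c_{N_1}}\subset \cT_{c_{N_1}}(t_1)$. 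Finally note $\bfT_{c_1}=\cT_{c_1}(t_2)=F(\cT_{c_0}(t_2))$, so $\bfT_{c_{N_1}}=F^{N_1}(\cT_{c_0}(t_2))$.

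First I would prove that the critical depth can only be $\varnothing$, $\infty$, or large. Suppose $p_1\in \cT_{c_{-d}}(t_1)$ for some finite $d\ge 0$. Then the critical tunnel $\cT_{c_{-d}}(t_1)\subset\cU_{c_{-d}}$, which sits near $c_{-d}$, meets the valuable crescent $\bfT_{c_1}=\cT_{c_1}(t_2)$; using \thmref{unif reg crit}, \propref{loc linear} and $\eta$-homogeneity to bound the size of $\cT_{c_1}(t_2)$ by a constant times $t_2=L_1^{-\bepsilon_1}$, a triangle inequality together with the bounded derivative of $F^{-1}$ on $\cU_{c_1}$ yields that $\dist(c_{-d-1},c_0)$ is exponentially small in $N_1$. \propref{return crit disc} then forces $d>\bepsilon_1^{-1}N_1$. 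The same argument run at time $N_1$ — via $\bfT_{c_{N_1}}=F^{N_1}(\cT_{c_0}(t_2))$ and the controlled (Pliss) distortion of $F^{N_1-1}$ on $\cT_{c_1}(t_2)$ — shows that any critical tunnel meeting $\bfT_{c_{N_1}}$ likewise has index $>\bepsilon_1^{-1}N_1$. Hence $\cd(p_1),\cd(p_{N_1})\in\{\varnothing,\infty\}$ or are finite integers $>\bepsilon_1^{-1}N_1$.

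Next I would establish the rigid translation of deep tunnels. Fix a deep index $d>\bepsilon_1^{-1}N_1$ with $\cT_{c_{-d}}(t_1)$ meeting $\bfT_{c_1}$ (or $\bfT_{c_{N_1}}$). The only obstruction to the identity $F(\cT_{c_{-j}}(t_1))=\cT_{c_{-j+1}}(t_1)$ — see the remark following the definition of critical tunnels in \subsecref{subsec:crit tunnel} — is the presence of a negative Pliss moment $-j'$ with $j'>\logl t_1/\delta'$ in the relevant range. But the distance estimate above places some critical–orbit point within $r\lambda^{\delta n}$ of $c_0$ with $n\gg \bepsilon N_1$, so \lemref{no pliss near crit} rules out \emph{any} Pliss moment in a window of length $\gg N_1$ about $-d$; consequently $F^{\pm j}(\cT_{c_{-d}}(t_1))=\cT_{c_{-d\pm j}}(t_1)$ for all $|j|\le N_1$, and these identities are symmetric in time. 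By \propref{tunnel inv} the critical tunnels are pairwise nested or disjoint, so those meeting $\bfT_{c_1}$ form a family $\{\cT_{c_{-d}}(t_1):d\in D\}$, $D\subset(\bepsilon_1^{-1}N_1,\infty)\cap\bbN$, linearly ordered by index; $F^{N_1-1}$ carries $\cT_{c_{-d}}(t_1)\cap\bfT_{c_1}$ bijectively onto $\cT_{c_{-(d-N_1+1)}}(t_1)\cap\bfT_{c_{N_1}}$, and reading the identities of the previous step backwards shows every critical tunnel meeting $\bfT_{c_{N_1}}$ arises this way. Thus the set of $d$ with $p_1\in\cT_{c_{-d}}(t_1)$ is carried exactly, by the shift $d\mapsto d-N_1+1$, onto the set of $k$ with $p_{N_1}\in\cT_{c_{-k}}(t_1)$; taking suprema gives $\cd(p_{N_1})=\cd(p_1)-N_1+1$ in the finite case (with $\cd(p_1)>\bepsilon_1^{-1}N_1$, case ii), $\cd(p_{N_1})=\infty$ when $\cd(p_1)=\infty$ (also immediate from the total invariance of the critical dust $\Delta^c_{c_0}=\{\,\cd=\infty\,\}$, \propref{inv dust}), and, by exclusion and the time–symmetry of the identities, $\cd(p_{N_1})=\varnothing$ when $\cd(p_1)=\varnothing$ — the remaining case (i).

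The hard part will be the distance bookkeeping that feeds \lemref{no pliss near crit}: one must check that the no–Pliss window it provides genuinely covers the index interval of length $N_1$ needed for the rigid translation, which requires a careful comparison of the two length scales $t_1$ (governing the tunnels whose depth we track) and $t_2\asymp L_1^{-\bepsilon_1}$ (governing the crescent $\bfT_{c_1}$), together with the defining property of $N_1$ as the first Pliss moment past $\tiN_1=-\delta^{-1}\logl\overline{L_1}$. These are exactly the estimates already performed in the proof of \lemref{disc tunnel inv}, now adapted from the critical–moment crossing to the regular segment $c_1\to c_{N_1}$; a secondary, purely bookkeeping subtlety is tracking the truncation parameters $j_d$ versus $j_{d-N_1+1}$ in the definition of $\cT_{c_{-d}}(t_1)$, which is precisely what ``no Pliss moments in the window'' controls.
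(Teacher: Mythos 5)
Your proposal takes essentially the same route as the paper's proof: both hinge on the distance bound placing $c_{-n-N_1}$ within $\lambda^{\bepsilon_1 N_1}$ of $c_0$ and on the Pliss-free window from \lemref{no pliss near crit}, which forces rigid translation of the deep tunnels under $F^{N_1-1}$, with the three depth cases ($\varnothing$, $\infty$, and finite $>\bepsilon_1^{-1}N_1$) disposed of exactly as you describe. One small quibble: the claimed containment $\cT_{c_1}(t_2)\subset\cU_{c_1}^{\delta' i_{N_1}}$ (and hence $\bfT_{c_{N_1}}\subset\cT_{c_{N_1}}(t_1)$) would require $2\bepsilon_1 N_1\gtrsim\delta' i_{N_1}$, which fails because $i_{N_1}>(1-\bepsilon/\delta)N_1$ by \propref{freq pliss for} while $\bepsilon_1\ll\delta'$; this is a side remark your core argument never uses, so it does not affect the correctness of the proof.
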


\begin{proof}
Since $N_1$ is a Pliss moment, and
$$
\dist(c_0, c_{m-N_1}) < \lambda^{\bepsilon_1 N_1},
$$
it follows from \lemref{no pliss near crit} that either
$
\cd(p_1) = \varnothing
$ or $
 \cd(p_1) > \bepsilon_1^{-1} N_1.
$

Suppose $p_{N_1} \in \cT_{c_{-n}}$ for some $n > \bepsilon_1^{-1} N_1$. Then by \propref{tunnel size}, we have
$$
\dist(c_{-n-N_1}, c_0) < \diam(\cT_{c_{-n-N_1}})+ \diam(\bfT_{c_1}) < \lambda^{\delta (n+N_1)} + \lambda^{\bepsilon_1 N_1}< \lambda^{\bepsilon_1 N_1}.
$$
Thus, by \lemref{no pliss near crit}, the result follows.
\end{proof}

\begin{lem}\label{cutoff for reg}
Let $p_1 \in \Lambda \cap \cT_{c_1}$ be a point such that $\cd(p_1) = \varnothing$,
and let $\tiE^v_{p_1} \in \bbP^2_{p_1}$ be sufficiently stable-aligned in $\cU_{c_1}$. Then for all $n \geq 1$ such that $d_{\cT}(p_{-n}) = \varnothing$, the point $p_{-n}$ is $n$-times forward $({L_1}^{1+\bepsilon}, \delta)_v$-regular along $\tiE^v_{p_{-n}}$.
\end{lem}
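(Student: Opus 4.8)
The plan is to combine \propref{tunnel escape} with the regular unicriticality hypothesis, splitting the relevant orbit segment of $p_{-n}$ into a ``tunnel descent'' part and a ``free'' part. Since $\cT_{c_1} = F(\cT_{c_0})$ (see \subsecref{subsec:crit tunnel}), the point $p_0 = F^{-1}(p_1)$ lies in $\cT_{c_0}$; write $z_0 = (x_0,y_0) := \Phi_{c_0}(p_0)$ and let $-j<0$ be the tunnel escape moment of $p_0$. Because $\tiE^v_{p_1}$ is sufficiently stable-aligned in $\cU_{c_1}$, \propref{tunnel escape} i) applies and gives that $p_{-j}$ is $j$-times forward $(O(1),\delta)_v$-regular along $\tiE^v_{p_{-j}}$, while its proof (via \lemref{freq pliss back}) yields $j > (1+\bdelta)\delta^{-1}\logl|x_0|$; in particular $j$ is large. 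Since $p_{-i}\in\cT_{c_{-i}}$ for $0\le i<j$, any $n\ge 1$ with $\cd(p_{-n})=\varnothing$ must satisfy $n\ge j$, and for $n=j$ the conclusion is then immediate since $O(1) < {L_1}^{1+\bepsilon}$ once $L_1$ is taken large.

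For $n>j$ I would handle the first $n-j$ steps of the forward orbit of $p_{-n}$ (reaching $p_{-j}$) via regular unicriticality. The first task is to check that $\cd(p_{-n})=\varnothing$ forces $p_{-n}$ out of every critical disc $\bbD_{c_{1-k}}(t_1\lambda^{\bepsilon k})$ with $0\le k<n-j$: if $p_{-n}\in\Lambda$ lay in such a disc around some $c_{-m}$, then by \thmref{pinching} transported along the critical orbit $p_{-n}$ would sit inside the pinched region at $c_{-m}$, and a $\Lambda$-point sufficiently deep in that pinch belongs to some critical tunnel $\cT_{c_{-m'}}(t_1)$ with $m'\ge m$, contradicting $\cd(p_{-n})=\varnothing$; here one matches the radius $t_1\lambda^{\bepsilon k}$ against $\diam\cT_{c_{-m}}<\lambda^{\delta|m|}$ from \propref{tunnel size} and uses the slow recurrence of the critical orbit (\propref{return crit disc}). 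Applying \defnref{def unicrit} ii) with $N=n-j$ and $t=t_1$, we obtain that $p_{-n}$ is $(n-j)$-times forward $(L_1,\delta)$-regular along some direction $E'_{p_{-n}}$, and \thmref{reg chart} furnishes a linearization of the $(0,n-j)$-orbit of $p_{-n}$ with vertical direction $E'_{p_{-n}}$.

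It remains to pass from $E'_{p_{-n}}$ to the transported direction $\tiE^v_{p_{-n}}$ and concatenate. At $p_{-j}$ the direction $\tiE^v_{p_{-j}}$ is forced to be nearly vertical (relative to the chart coming from the $(0,n-j)$-orbit of $p_{-n}$) by \propref{vert angle shrink}, the discrepancy being exponentially small in $j$; pulling this discrepancy back through that linearization via \propref{loc linear} — where the angle with the chart-vertical shrinks at rate comparable to $\lambda^{1-\bepsilon}$ per backward step — makes $\measuredangle(\tiE^v_{p_{-n}},E'_{p_{-n}})$ so small that $p_{-n}$ is also $(n-j)$-times forward $(\overline{L_1},\delta)_v$-regular along $\tiE^v_{p_{-n}}$ (the relevant exponent inequality holding with room to spare because $\bepsilon\ll\delta$). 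Finally, for $l=(n-j)+l'$ with $0\le l'\le j$, multiplicativity of the norms along the transported line gives $\|DF^l|_{\tiE^v_{p_{-n}}}\| = \|DF^{l'}|_{\tiE^v_{p_{-j}}}\|\cdot\|DF^{n-j}|_{\tiE^v_{p_{-n}}}\| \le O(1)\cdot\overline{L_1}\cdot\lambda^{(1-\delta)l} \le {L_1}^{1+\bepsilon}\lambda^{(1-\delta)l}$, the last step because $O(1)\cdot\overline{L_1} < {L_1}^{1+\bepsilon}$ for $L_1$ large; for $1\le l\le n-j$ the estimate is already the regularity along $\tiE^v_{p_{-n}}$ just obtained. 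This proves the lemma.

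The main obstacle is the alignment step of the third paragraph: one must carry out the angle bookkeeping across the three chart systems involved (the uniformizing charts near $c_0$, the regular charts along the free segment, and the linearization from \thmref{reg chart}) precisely enough that the accumulated regularity factor stays below ${L_1}^{1+\bepsilon}$, and one must verify rigorously that $\cd(p_{-n})=\varnothing$ excludes $p_{-n}$ from all critical discs occurring in \defnref{def unicrit}. Both are reductions to \thmref{pinching} and to the near-uniqueness of strongly forward-contracting directions already established, but the exponent inequalities require care.
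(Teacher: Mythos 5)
Your second paragraph contains a genuine gap. You claim that $\cd(p_{-n})=\varnothing$ forces $p_{-n}$ out of every critical disc $\bbD_{c_{1-k}}(t_1\lambda^{\bepsilon k})$ appearing in \defnref{def unicrit}~ii), so that unicriticality can be invoked directly. This implication fails because of a scale mismatch: the critical tunnel $\cT_{c_{-m}}$ has diameter at most $\lambda^{\delta m}$ (\propref{tunnel size}), whereas the unicriticality disc around $c_{-m}$ has radius of order $\lambda^{\bepsilon m}$, and $\delta>\bepsilon$, so the tunnel is drastically thinner than the disc. Moreover the tunnel $\cT_{c_{-m}}=F^{-m}(\cT_{c_0}^{1/\delta'',\delta' j_m})$ is truncated by the Pliss moment $j_m$, which shrinks it further. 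So there are plenty of $\Lambda$-points inside the disc around $c_{-m}$ which lie outside every critical tunnel; $\cd(p_{-n})=\varnothing$ therefore does not allow you to apply \defnref{def unicrit}~ii) to $p_{-n}$ for $N=n-j$ forward steps. (Theorem~\ref{pinching} gives containment in a $1/\bdelta$-pinched region only, and only for a disc centered at $c_0$; it does not promote proximity to $c_{-m}$ into membership in $\cT_{c_{-m}}$.) This is precisely the gap that the auxiliary apparatus of \subsecref{subsec:crit disc} and \subsecref{sec:refine uni} ($\hd$, $\he$, $\de$, \lemref{disc tunnel inv}, \lemref{shallow tunnel free}, \lemref{edge fast escape}, \lemref{1 tunnel recover}, \lemref{1 tunnel inv}) was built to bridge, and you cannot shortcut it by a single appeal to the unicriticality hypothesis.

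The paper's proof takes a different and necessarily more intricate route: an induction over successive returns of $p_{-i}$ to $\cT_{c_0}$ after the initial tunnel escape, tracking disc depth $\hd$ alongside tunnel depth. On the free segments one invokes \propref{tunnel escape} and \lemref{shallow tunnel free} to get $(O(L_1),\delta)_v$-regularity. At each return $p_{-m_1}\in\cT_{c_0}$, the argument splits: if $p_{-m_1}\notin\bfT_{c_0}$ (a shallow return), \lemref{edge fast escape} bounds the escape time by $O(\logl L_1)$, so the derivative can only degrade by a further factor of $L_1^{\bepsilon_1}$, keeping the total at $L_1^{1+\bepsilon_1}$; if $p_{-m_1}\in\bfT_{c_0}$ (a deep return), \lemref{1 tunnel recover} resets the regularity constant to $1$ and restores stable alignment in $\cU_{c_1}$, so there is no accumulation at all. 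The mechanism preventing the regularity factor from blowing up over arbitrarily many returns is this dichotomy, not the unicriticality hypothesis. Your angle-propagation heuristic in the third paragraph points at the right kind of estimate (\propref{vert angle shrink} together with \propref{loc linear}), but it operates across three unrelated chart systems and cannot be made quantitative without the disc/escape-time bookkeeping that your argument skips.
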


\begin{proof}
Let $-j_0 < 0$ be the tunnel escape moment of $p_0$, and $\he_0 := \he_0(p_0)$. By \propref{tunnel escape} and \lemref{shallow tunnel free}, we see that for $j_0 \leq i \leq \he_0$ such that $\hd(p_{-i}) = i$, the point $p_{-i}$ is $i$-times $(O({L_1}), \delta)_v$-regular along $\tiE^v_{p_{-i}}$.

Let $m_1 > j_0$ be the smallest integer such that $p_{-m_1} \in \cT_{c_0}$. We claim that $\hd(p_{-m_1+1}) = m_1+1$. Otherwise, there exists $0 < m_1' < j_0$ such that
$$
\hd(p_{-m_1'-i}) = -i
\matsp{for}
0\leq i \leq m_1.
$$
Then by \lemref{disc tunnel inv}, we must have
$$
j_0 > \he_0(p_{-m_1'}) > m_1,
$$
which is a contradiction.

If $p_{-m_1} \notin \bfT_{c_0}$, then by \lemref{edge fast escape}, we see that for
$$
0 \leq i \leq l \leq \he_1 := \he(p_{-m_1}),
$$
we have
$$
\|DF^i|_{\tiE^v_{p_{-m_1-l}}}\| < \lambda^{-\eta i}= \lambda^{-(1-\delta+\eta)i}\lambda^{(1-\delta)i} \leq {L_1}^{\bepsilon_1} \lambda^{(1-\delta)i}.
$$
Moreover, by \propref{tunnel escape} and \lemref{shallow tunnel free}, the point $p_{-m_1+1}$ is $(m_1-1)$-times $(O({L_1}), \delta)_v$-regular along $\tiE^v_{p_{-m_1+1}}$. Hence, $p_{-m_1-l}$ is $(m_1+l)$-times $({L_1}^{1+\bepsilon_1}, \delta)_v$-regular along $\tiE^v_{p_{-m_1-l}}$.

Suppose instead that $p_{-m_1} \in \bfT_{c_0}$, so that $p_{-m_1 + N_1} \in \bfT_{c_{N_1}}$. By \propref{tunnel escape}, \lemref{shallow tunnel free} and \lemref{1 tunnel inv}, we see that $p_{-m_1+N_1}$ is $(m_1-N_1)$-times $(O({L_1}), \delta)_v$-regular along $\tiE^v_{p_{-m_1+N_1}}$. By \lemref{1 tunnel recover}, the point $p_{-m_1+1}$ is $(m_1-1)$-times $(1, \delta)_v$-regular along $\tiE^v_{p_{-m_1+1}}$, and in particular, is sufficiently stable-aligned in $\cU_{c_1}$.

Proceeding by induction, the result follows.
\end{proof}

\begin{lem}\label{full tunnel for reg}
If
$$
p \in \Lambda \setminus \bigcup_{i \geq 0}^\infty \cT_{c_{-i}}({L_1}^{-\bepsilon}),
$$
then $p$ is forward $({L_1}^{1+\bepsilon}, \delta)$-regular. If $p$ is also contained in $\bfT_{c_1} = \cT_{c_1}({L_1}^{-\bepsilon})$, then $p$ is forward $(1, \delta)$-regular.
\end{lem}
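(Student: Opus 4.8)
The plan is to obtain \lemref{full tunnel for reg} by assembling the recovery estimates already in place --- \propref{crit vert reg} for points on the critical orbit, and \lemref{cutoff for reg}, \lemref{1 tunnel recover}, \lemref{1 tunnel inv} and \lemref{edge fast escape} for the passages of a general orbit near the critical point --- into a statement valid for every $p\in\Lambda$ avoiding all $t_2$-tunnels. I would first reduce to producing a single direction: by uniqueness of the infinitely forward contracting direction (\propref{vert angle shrink}), it suffices to exhibit one $\tiE^v_p\in\bbP^2_p$ along which $p$ is infinitely forward $(L_1^{1+\bepsilon},\delta)_v$-regular when $p\ne c_{-m}$ for all $m\ge 0$ (this direction then coincides with $E^s_p=\hE^v_p$), and to show that $c_{-m}$ is $m$-times forward $(L_1^{1+\bepsilon},\delta)_v$-regular along $\hE^v_{c_{-m}}$ otherwise.

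Next I would reformulate the hypothesis. Since $\cT_{c_0}(t_2)=F^{-1}(\cT_{c_1}(t_2))=F^{-1}(\bfT_{c_1})$ and, by \propref{tunnel inv}, $F^{-1}(\cT_{c_{-k+1}}(t_2))=\cT_{c_{-k}}(t_2)$ for all $k$ for which the truncation is inactive, the assumption $p\notin\bigcup_{i\ge 0}\cT_{c_{-i}}(t_2)$ says essentially that the forward orbit of $p$ never lands in the inner crescent $\bfT_{c_1}$, save possibly at time $0$ in the situation of the second assertion. \lemref{diff length same tunnel} moreover identifies $t_1$- and $t_2$-tunnels once the index exceeds $\asymp -\logl L_1$, so the only discrepancy between ``$p$ avoids all $t_2$-tunnels'' and ``$p$ avoids all $t_1$-tunnels'' concerns finitely many bounded-index, bounded-escape-time passages, which \lemref{edge fast escape} and \lemref{shallow tunnel free} control. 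For $p=c_{-m}$ this reformulation, via \thmref{pinching} applied along the critical orbit, gives $c_{-m}\notin\bigcup_{i<m}\bbD_{c_{-i}}(t\lambda^{\epsilon i})$ for a uniform $t$; \propref{crit vert reg} then yields the desired $m$-times forward $(L_1^{1+\bepsilon},\delta)_v$-regularity of $c_{-m}$ along $\hE^v_{c_{-m}}$ (the factor is absorbed into $L_1^{1+\bepsilon}$ using $L_1=L(t_1)$ and $t_2=L_1^{-\bepsilon_1}$).

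For the general case $p\ne c_{-m}$ I would run along the forward orbit, alternating between stretches on which $p_n\notin\bbD_{c_0}(1/\overline{L_0})$ --- where $\eta$-homogeneity and \propref{loc linear} keep the derivative along the tracked direction within the required $(O(1),\delta)_v$-bound --- and critical passages, on which $p_n\in\bbD_{c_0}(1/\overline{L_0})\subset\cT_{c_0}^{1/\bdelta}$ by \thmref{pinching}, so that the orbit threads $\cT_{c_0}(t_1)$, crosses the critical moment into $\cT_{c_1}(t_1)$, and exits. On such a passage \propref{stable center align} and \propref{tunnel escape} render the tracked direction sufficiently stable-aligned at $c_1$, and then \lemref{cutoff for reg} --- or, when the passage touches the inner crescent, \lemref{1 tunnel recover} together with \lemref{1 tunnel inv} --- propagates $(L_1^{1+\bepsilon},\delta)_v$-regularity with the \emph{fixed} exponent $1+\bepsilon$ from $c_1$ back to the exit moment, while \lemref{edge fast escape} and \lemref{shallow tunnel free} bound the part of the passage $p$ itself occupies (this is exactly where the avoidance of $t_2$-tunnels enters). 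Splicing the free stretches onto the regularity recovered at each passage --- multiplying derivative estimates by \propref{loc linear} and merging directions into a single limit $\tiE^v_p$ by \propref{vert angle shrink} --- produces infinite forward $(L_1^{1+\bepsilon},\delta)_v$-regularity of $p$ along $\tiE^v_p$. In the situation $p\in\bfT_{c_1}=\cT_{c_1}(t_2)$ the final stretch from $p_1$ down to $p$ runs through the Pliss moment $N_1$ built into the length $t_2=L_1^{-\bepsilon_1}$, so \lemref{1 tunnel recover} applies there with factor $1$ in place of $L_1^{1+\bepsilon}$, giving the second assertion.

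\textbf{Main obstacle.} Two points need care. First, the regularity constant must not compound across the (a priori infinitely many) critical passages; the remedy --- which is why the recovery lemmas carry the rigid exponent $1+\bepsilon$ and are stated via Pliss moments (Propositions \ref{freq pliss for}, \ref{freq pliss back}) rather than raw derivative products --- is to re-initialize at a Pliss moment at the start of each passage, so that the accumulated factor is bounded by one power of $L_1$. Second, the translation between $t_2$-tunnel language (in which the hypothesis is phrased) and $t_1$-tunnel language (in which \lemref{cutoff for reg} and the pinching estimate are phrased) must be made airtight: one has to check that every forward visit of $p$ to $\bbD_{c_0}(1/\overline{L_0})$ occupies only the shallow portion of whatever larger tunnel contains it, so that \lemref{edge fast escape} and \lemref{cutoff for reg} apply with the stated constant. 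Establishing these two facts uniformly is the technical heart of the argument.
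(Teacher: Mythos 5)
Your proposal correctly identifies the relevant toolbox — \propref{crit vert reg} for the critical orbit, and \lemref{cutoff for reg}, \lemref{1 tunnel recover}, \lemref{1 tunnel inv}, \lemref{edge fast escape}, \lemref{shallow tunnel free} for the passages — and the closing limit argument via \propref{vert angle shrink} matches the paper's. You also explicitly treat the case $p = c_{-m}$ (which the paper leaves implicit), and you correctly identify the two main obstacles. However, the mechanism you attribute to the free stretches is wrong and would not close the argument as written. You claim that on a stretch where $p_n \notin \bbD_{c_0}(1/\overline{L_0})$, $\eta$-homogeneity and \propref{loc linear} keep the derivative within the required $(O(1),\delta)_v$-bound. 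Homogeneity only gives $\|DF^i|_E\| < \lambda^{-\eta i}$ along \emph{any} direction, which exceeds $L\lambda^{(1-\delta)i}$ as soon as $i \gg -\logl L$; it cannot supply exponential contraction on a stretch of unbounded length. In the paper, $\eta$-homogeneity is used on exactly the \emph{opposite} piece of the orbit — the short portion of a critical passage, of length $\he < -\bepsilon_1\logl L_1$ thanks to \lemref{edge fast escape}, which is precisely why $\lambda^{-\eta i} \leq L_1^{\bepsilon_1}\lambda^{(1-\delta)i}$ there. The contraction on the long inter-critical stretches comes instead from \propref{tunnel escape}/\propref{for reg rec}, i.e., from the Pliss-moment regularity of the critical orbit transferred to nearby orbits by \propref{loc linear}, and this is exactly what \lemref{cutoff for reg} packages. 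If you strike the homogeneity claim and let \lemref{cutoff for reg} carry the entire stretch from one stable-alignment at $c_1$ to the next, the argument works; as written, there is a genuine gap.

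The paper's organization is also somewhat different and worth noting. Rather than iterating forward and alternating stretch/passage, it fixes an entry point $p_0 \in \Lambda \cap \cT_{c_0}$ and builds a finite depth sequence $0 =: d_0 < d_1 < \ldots < d_M$ with $d_{i+1} := \de(p_{d_i})$, using \lemref{return dist lem} to show the successive distances $\dist(p_{d_i}, c_0)$ shrink super-exponentially, so that the induction terminates in finitely many steps with either $\de(p_{d_M}) = \varnothing$ or $p_{d_M} \in \bfT_{c_0}$; the second case is then reduced via \lemref{disc tunnel inv}, \lemref{1 tunnel inv} and handled by \lemref{1 tunnel recover} and \lemref{cutoff for reg}. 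The escape to an infinite sequence $M_1 < M_2 < \ldots$ of returns to $\bfT_{c_{N_1}}$ is handled by letting the directions $E^i_{p_0}$ converge to $E^s_{p_0}$ using \propref{vert angle shrink}. This termination structure (the depth sequence) is the clean mechanism that prevents the constant from compounding; your phrase ``re-initialize at a Pliss moment at the start of each passage'' gestures at the same idea but the depth sequence makes the bookkeeping precise.
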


\begin{proof}
Let $p_0 \in \Lambda \cap \cT_{c_0}$. If $p_0 \notin \bfT_{c_0}$, then by \lemref{edge fast escape}, we see that for
$$
0 \leq i \leq l \leq \he(p_0),
$$
we have
$$
\|DF^i|_{\tiE^v_{p_{-l}}}\| < \lambda^{-\eta i}= \lambda^{-(1-\delta+\eta)i}\lambda^{(1-\delta)i} \leq {L_1}^{\bepsilon_1} \lambda^{(1-\delta)i}.
$$
If $\de(p_0) = \varnothing$, then this implies that $p_{-l}$ is forward $({L_1}^{1+\bepsilon_1}, \delta)$-regular. Otherwise, $d_1 := \de(p_0) > 0$, and by \lemref{return dist lem}, we have
$$
\dist(p_{d_1}, c_0) < \dist(p_0, c_0)^{1/\bepsilon_1}.
$$
Proceeding by induction, we obtain a sequence $0 =: d_0 < d_1 < \ldots < d_M$ with $M \geq 0$ such that $d_{i+1} := \de(p_{d_i})$, and either $\de(p_{d_M}) = \varnothing$, or $p_{d_M} \in \bfT_{c_0}$. In the former case, we can argue as before to conclude that $p_{d_M-l}$ is forward $({L_1}^{1+\bepsilon_1}, \delta)$-regular for all $0 \leq l \leq \he(p_{d_M})$.

It remains to prove the result for $p_0 \in \bfT_{c_0}$ satisfying $\cd(p_0) < \infty$. By \lemref{disc tunnel inv} ii) and \lemref{1 tunnel inv}, we have
$$
\cd(p_{d_{\cT}(p_0)+N_1}) = \varnothing.
$$
Thus, by replacing $p_{N_1}$ with $p_{\cd(p_{N_1})+N_1}$, we may assume without loss of generality that $\cd(p_{N_1}) = \varnothing$.

If there exists $M \geq 0$ such that $p_M \in \bfT_{c_{N_1}}$ and $\de(p_M) = \varnothing$, then the result follows by \lemref{1 tunnel recover} and \lemref{cutoff for reg}. Suppose this is not the case. Then there exists an infinite sequence $0\leq M_1 < \ldots$ such that
$$
p_{M_i} \in \bfT_{c_{N_1}},
\hspace{5mm}
\cd(p_{M_i}) = \varnothing
\matsp{and}
\de(p_{M_i}) = M_{i+1}-M_i-N_i.
$$

Let $E^i_{p_{M_i}} := E^v_{p_{M_i}}$, so that
$$
D\Phi_{c_{N_1}}\left(E^i_{p_{M_i}}\right) = E^{gv}_{\Phi_{c_{N_1}}(p_{M_i})}.
$$
By \lemref{1 tunnel recover} and \lemref{cutoff for reg}, $p_0$ is $M_i$-times forward $({L_1}^{1+\bepsilon}, \delta)_v$-regular along $E^i_{p_0}$. Hence, by \propref{vert angle shrink}, $E^i_{p_0}$ converge exponentially fast as $i \to \infty$ to $E^s_{p_0}$, along which $p_0$ is forward $({L_1}^{1+\bepsilon}, \delta)_v$-regular.
\end{proof}

Let $p_0 \in \cT_{c_0}$. Write
$
(x_0, y_0) := \Phi_{c_0}(p_0).
$
Denote
$$
\til := (1+\bepsilon_1)\logl |x_0|.
$$
By \lemref{freq pliss for}, there exists a Pliss moment $-l < 0$ in the backward orbit of $c_0$ such that
$
\til \leq l < (1+\bepsilon_1)\til.
$
Such a moment is referred to as a {\it direction aligning moment for $p_0$}.

\begin{lem}\label{direction align lem}
Let $p_0 \in \cT_{c_0}$, and let $-l < 0$ be a direction aligning moment for $p_0$. If $\tiE^v_{p_1} \in \bbP^2_{p_1}$ is sufficiently stable-aligned in $\cU_{c_1}$, then $\tiE^v_{p_{-n}}$ is sufficiently vertical in $\cU_{c_{-n}}$ for $l \leq n < \he_0(p_0)$. If, additionally, we have
$$
p_{-n}\in \bfT_{c_1}
\matsp{and}
\hd(p_{-n}) = n,
$$
then $\tiE^v_{p_{-n}}$ is sufficiently stable-aligned in $\cU_{c_1}$.
\end{lem}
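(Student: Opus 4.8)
The plan is to get the first assertion straight from \propref{tunnel escape}, and the second by producing, out of the sufficient verticality coming from the first assertion, enough forward regularity at $p_{-n}$ to feed into \propref{stable center align}.

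For the first assertion I would argue as follows. Since $p_0\in\cT_{c_0}$ and $\tiE^v_{p_1}$ is sufficiently stable‑aligned in $\cU_{c_1}$ by hypothesis, \propref{tunnel escape} applies. The direction aligning moment $-l$ satisfies $l\ge\til=(1+\bepsilon_1)\logl|x_0|>(1+\bepsilon)\logl|x_0|$ because $\bepsilon<\bepsilon_1$. Moreover, by the definition of $\he_0(p_0)$ (and the $\bbD^{2{t_1}}$‑analogue of \lemref{shrink disc inv}), for $l\le n<\he_0(p_0)$ the backward orbit stays in $\bbD^{2{t_1}}_{c_{-i}}\subset\cU_{c_{-i}}$ for $0\le i\le n$. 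Hence \propref{tunnel escape}~ii) gives that $\tiE^v_{p_{-n}}$ is sufficiently vertical in $\cU_{c_{-n}}$ for every such $n$.

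For the second assertion, assume in addition $p_{-n}\in\bfT_{c_1}$ and $\hd(p_{-n})=n$. Since $\bfT_{c_1}=\cT_{c_1}({t_2})=F(\cT_{c_0}({t_2}))$, the point $q_0:=p_{-n-1}=F^{-1}(p_{-n})$ lies in $\cT_{c_0}({t_2})$; write $(x_0',y_0'):=\Phi_{c_0}(q_0)$, so $|x_0'|<{t_2}$ and $|y_0'|<|x_0'|^{1/\udelta}$. The first step is the recurrence bound $n>(1+\bdelta)\delta^{-1}\logl|x_0'|$. From $\hd(p_{-n})=n$ we get $\dist(p_{-n},c_{-n})<2{t_1}\lambda^{\epsilon_2 n}$, and from $p_{-n}\in\bfT_{c_1}$ that $\dist(p_{-n},c_1)=O({t_2})$; pulling back by $F^{-1}$ through the H\'enon transition of \thmref{unif reg crit}, using that $\Phi_{c_0}$ is bi‑Lipschitz near $c_0$ and the pinching $|y_0'|<|x_0'|$, one obtains $\dist(c_{-n-1},c_0)\asymp|x_0'|$ up to an additive error $O({t_1}\lambda^{\epsilon_2 n})$. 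Then \propref{return crit disc} applied to $c_{-n-1}$ yields $n+1>\bepsilon^{-1}\logl(\dist(c_{-n-1},c_0))$. If $|x_0'|\le{t_1}\lambda^{\epsilon_2 n}$, this would force $\bepsilon^{-1}(\epsilon_2 n+\logl{t_1})<n+O(1)$, impossible once $L_0$ (hence $\logl{t_1}$) is large, since $\epsilon_2/\bepsilon>1$; so $|x_0'|>{t_1}\lambda^{\epsilon_2 n}$, $\dist(c_{-n-1},c_0)\asymp|x_0'|$, and $n>\bepsilon^{-1}\logl|x_0'|-O(1)>(1+\bdelta)\delta^{-1}\logl|x_0'|$ (using $\bepsilon$ small relative to $\delta$, and that $\logl|x_0'|\ge\logl{t_2}$ is large after enlarging $L_1$).

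With this bound in hand I would conclude as follows. The backward orbit $p_{-n-1},p_{-n-2},\dots$ stays in $\cT_{c_0}$ near $q_0$, the forward orbit $p_{-n},\dots,p_0$ stays in $\cU_{c_{-n}},\dots,\cU_{c_0}$ (again by \lemref{shrink disc inv}), and $\tiE^v_{p_{-n}}$ is sufficiently vertical in $\cU_{c_{-n}}$ by the first assertion. Applying \propref{for reg rec} (and \propref{tunnel escape}~i)) to $q_0$ — the recurrence bound $n>(1+\bdelta)\delta^{-1}\logl|x_0'|$ being exactly what is needed to absorb the ``horizontal phase'' through which $\tiE^v$ passes on its way back past $c_0$ — gives that $p_{-n}$ is $n$‑times forward $(O(1),\delta)_v$‑regular along $\tiE^v_{p_{-n}}$, which after the usual enlargement of the regularity factor (as with the ${L_1}^{1+\bepsilon}$ in \lemref{cutoff for reg}), or a uniform shrinking of the regular radii, can be recorded in the form required by \propref{stable center align}. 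Finally, \propref{stable center align}, applied with $q_0=p_{-n-1}\in\cT_{c_0}$ in the role of ``$p_0$'' and $p_{-n}$ in the role of ``$p_1$'' — legitimate because $n>K\logl|x_0'|$ for the uniform constant $K$ there — shows that $\tiE^v_{p_{-n}}$ is sufficiently stable‑aligned in $\cU_{c_1}$, as claimed.

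The main obstacle is the second assertion, and within it the need to reconcile the two uniformizations $\Phi_{c_{-n}}$ (built from the backward orbit of $c_0$) and $\Phi_{c_1}$ (built from the forward orbit of $c_1$) at the recurrent point $p_{-n}$: the critical transition interchanges the horizontal and vertical roles, so ``sufficiently vertical in $\cU_{c_{-n}}$'' does not translate directly into ``sufficiently stable‑aligned in $\cU_{c_1}$'', and one is forced to route the argument through forward regularity of $p_{-n}$ and \propref{stable center align}. This in turn makes the slow‑recurrence lower bound $n\gtrsim\delta^{-1}\logl|x_0'|$ indispensable — the weaker $n\ge l\ge\til$ available from the first assertion is not enough — and the delicate point is to extract this bound, together with the matching of the pinching exponent $1/\udelta$ against the exponent $1/2$ in the definition of stable alignment, from the hypotheses $p_{-n}\in\bfT_{c_1}$ and $\hd(p_{-n})=n$.
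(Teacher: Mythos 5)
Your first assertion matches the paper: \propref{tunnel escape}~ii) gives it directly. For the second assertion there is a genuine gap. You claim that applying \propref{for reg rec} (and \propref{tunnel escape}~i)) to $q_0 = p_{-n-1}$ yields $n$-times forward $(O(1),\delta)_v$-regularity of $p_{-n}$ along $\tiE^v_{p_{-n}}$. But both of those propositions conclude forward regularity of a \emph{backward} iterate of their base point: \propref{for reg rec} with base $q_0$ gives regularity at $q_{-m}=p_{-n-1-m}$ for a Pliss moment $-m$, and \propref{tunnel escape}~i) gives it at $q_{-j}$ for the tunnel escape moment $-j$. Neither says anything about $q_1=p_{-n}$. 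Nor is the desired forward regularity of $p_{-n}$ with a uniformly bounded factor automatic from the first assertion's sufficient verticality plus the slow-recurrence bound: a priori the forward regularity factor of the orbit point $c_{-n}$ decays like $\lambda^{-\bepsilon n}$ (\propref{decay reg}), and one needs a specific recovery mechanism to replace this by $O(1)$ before \propref{stable center align} can be invoked.

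The paper supplies exactly this mechanism by arguing on the critical orbit rather than at $p$. It uses \lemref{return crit dist lem} to obtain $|x_1-\tix_1|^{\bepsilon_1}<|\tix_1|$ (with $(x_1,y_1)$, $(\tix_1,\tiy_1)$ the $\Phi_{c_1}$-coordinates of $p_{-n+1}$ and $c_{-n+1}$), so that $c_{-n+1}\in\cU_{c_1}^{\bepsilon_1 N_1}$; it then combines \lemref{shallow tunnel free} and \lemref{1 tunnel inv} with the argument of \lemref{1 tunnel recover} to conclude that $c_{-n+1}$ is forward $(1,\delta)$-regular, so $\hE^v_{c_{-n+1}}$ is sufficiently stable-aligned. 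Finally, the first assertion together with the closeness estimate transfers the stable alignment from $\hE^v_{c_{-n+1}}$ to $\tiE^v_{p_{-n+1}}$. You correctly flag the need to match the $1/\udelta$ pinching against the $|x_1|^{1/2}$ threshold, but the regularity input that this requires is asserted via inapplicable propositions rather than derived; to repair the argument, establish the forward $(1,\delta)$-regularity of the critical orbit point $c_{-n+1}$ via the 1-tunnel-recovery machinery, as the paper does, instead of attempting to prove forward regularity of $p_{-n}$ directly.
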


\begin{proof}
The first claim follows immediately from \propref{tunnel escape} ii).

Write
$
(x_1, y_1) := \Phi_{c_1}(p_{-n+1})
$ and $
(\tix_1, \tiy_1) := \Phi_{c_1}(c_{-n+1}).
$
By \lemref{return crit dist lem}, we have
\begin{equation}\label{eq:direction align lem}
|x_1-\tix_1|^{\bepsilon_1} < |\tix_1|.
\end{equation}
Consequently, $c_{-n+1} \in \cU_{c_1}^{\bepsilon_1 N_1}$. Moreover, by \lemref{shallow tunnel free} and \lemref{1 tunnel inv}, we have $\de(c_{-m+N_1}) = \varnothing$. Arguing as in the proof of \lemref{1 tunnel recover}, we see that $c_{-n+1}$ is forward $(1, \delta)$-regular, and hence, $\hE^v_{c_{-n+1}}$ is sufficiently stable-aligned in $\cU_{c_1}$. Since $\tiE^v_{p_{-n+1}}$ is sufficiently vertical in $\cU_{c_{-n+1}}$, \eqref{eq:direction align lem} implies that $\tiE^v_{p_{-n+1}}$ is also sufficiently stable-aligned in $\cU_{c_1}$.
\end{proof}

\begin{lem}\label{no direction align}
Let  $p_0 \in \cT_{c_0}$, and let $-l <0$ be a direction aligning moment for $p_0$. If $p_{-n} \in \cT_{c_0}$ for some $1 \leq n \leq \he_0(p_0)$, then either $n > l$ or $l > \he_0(p_{-n})$.
\end{lem}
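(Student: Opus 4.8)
The plan is to prove the contrapositive: assuming $n \le l$, I will deduce $l > \he_0(p_{-n})$. Morally, $\he_0(p_{-n})$ — the time for the backward orbit of $p_{-n}$ to leave the shrinking critical discs — is comparable to $\logl\dist(p_{-n},c_0)$, which by the return–distance estimate cannot be much smaller than a fixed power of $\dist(p_0,c_0)$ with exponent $\bepsilon_1$; on the other hand $l$ is, by construction, essentially $\logl\dist(p_0,c_0)$ itself. Since $\bepsilon_1<\epsilon_2$, the first quantity is a strictly smaller multiple of $\logl\dist(p_0,c_0)$ than the second, which forces $\he_0(p_{-n})<l$.

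Concretely, write $(x_0,y_0):=\Phi_{c_0}(p_0)$. The direction aligning moment satisfies $\til=(1+\bepsilon_1)\logl|x_0|\le l$, so $|x_0|\ge\lambda^{l/(1+\bepsilon_1)}$; and since $p_0$ lies in the pinched region $\cT_{c_0}$ we have $|y_0|<|x_0|^{1/\udelta}\ll|x_0|$, whence $\dist(p_0,c_0)\asymp\|\Phi_{c_0}(p_0)\|\asymp|x_0|\gtrsim\lambda^{l/(1+\bepsilon_1)}$ (using $\|\Phi_{c_0}^{\pm1}\|_{C^r}=O(1)$ from \thmref{unif reg crit}). Next, $p_0$ and $p_{-n}$ both lie in $\cT_{c_0}\subset\bbD^{2t_1}_{c_0}$ with $0<n\le\he_0(p_0)$, so \lemref{return dist lem}, applied with $t=2t_1$ (so that its escape time $e_0$ is $\he_0$ in the present notation), gives $\dist(p_{-n},c_0)>\dist(p_0,c_0)^{\bepsilon_1}\gtrsim\lambda^{\bepsilon_1 l/(1+\bepsilon_1)}$. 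Finally, \lemref{max escape time} (same $t$, disc index $0$) yields
$$
\he_0(p_{-n}) < \bepsilon_2^{-1}\logl\!\big(\dist(p_{-n},c_0)\big) < \frac{\bepsilon_1}{(1+\bepsilon_1)\bepsilon_2}\,l + O(1).
$$
Because $\bepsilon_1<\epsilon_2\le\bepsilon_2$, the coefficient $\tfrac{\bepsilon_1}{(1+\bepsilon_1)\bepsilon_2}$ is bounded away from $1$ by a definite amount; since $l\ge\til=(1+\bepsilon_1)\logl|x_0|$ exceeds any prescribed uniform constant once $t_1$ (hence $|x_0|<t_1$) is taken sufficiently small, the additive $O(1)$ is absorbed and $\he_0(p_{-n})<l$, as desired.

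The step I expect to be the main obstacle is invoking \lemref{return dist lem} at the boundary value $n=\he_0(p_0)$, where $p_{-n}\notin\bbD^{2t_1}_{c_{-n}}$ and the lemma does not apply verbatim. There I would re-run its short proof: from $p_{-n+1}\in\bbD^{2t_1}_{c_{-n+1}}$ and $\eta$-homogeneity one gets $\dist(p_{-n},c_{-n})\asymp\lambda^{\epsilon_2 n}$, while \lemref{return crit dist lem} gives $\dist(c_{-n},c_0)\gtrsim\lambda^{\bepsilon_1\epsilon_2 n}\gg\lambda^{\epsilon_2 n}$, so $\dist(p_{-n},c_0)\ge\dist(c_{-n},c_0)-\dist(p_{-n},c_{-n})\gtrsim\lambda^{\bepsilon_1\epsilon_2 n}$, which is more than enough to run the same comparison. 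The remaining work — tracking the uniform multiplicative and additive constants (from the distortion of $\Phi_{c_0}$, from \lemref{max escape time}, and from $\dist(p_0,c_0)^{\bepsilon_1}$ versus $|x_0|^{\bepsilon_1}$) against the definite gap $1-\tfrac{\bepsilon_1}{(1+\bepsilon_1)\bepsilon_2}$ — is routine, and is precisely what the freedom to shrink the uniform constant $t_1$ is for.
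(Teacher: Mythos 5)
Your proof is correct, and the core inputs — the return–distance estimate $\dist(p_{-n},c_0)>\dist(p_0,c_0)^{\bepsilon_1}$ from \lemref{return dist lem}/\lemref{return crit dist lem} (ultimately \propref{return crit disc}) combined with the escape-time bound of \lemref{max escape time} — are exactly the ingredients the paper has in mind when it refers the reader to the proof of \lemref{disc tunnel inv}, which rests on the same triangle-inequality/return-distance mechanism. A few small remarks. First, the ``$\asymp$'' in $\dist(p_{-n},c_{-n})\asymp\lambda^{\epsilon_2 n}$ should be ``$\lesssim$''; only the upper bound is available (and only it is used), so nothing breaks. Second, the assertion $\cT_{c_0}\subset\bbD^{2t_1}_{c_0}$ is true only up to a uniform multiplicative constant coming from $\|\Phi_{c_0}^{\pm1}\|_{C^r}$; you should apply \lemref{return dist lem} and \lemref{max escape time} with $t=\bar t_1$ rather than literally $2t_1$, which also fixes the mismatch between the associated escape function and $\he_0$ (since a larger $t$ only increases escape times, the hypothesis $n\le\he_0(p_0)$ still lands you inside the lemma's range, apart from the boundary case you already treat). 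Third, as you implicitly observe, once the estimate runs, the hypothesis $n\le l$ is never actually invoked in the non-boundary case: the chain $\he_0(p_{-n})<\bepsilon_2^{-1}\bepsilon_1\logl\dist(p_0,c_0)+O(1)<\tfrac{\bepsilon_1}{(1+\bepsilon_1)\bepsilon_2}l+O(1)<l$ gives the second disjunct outright, so the ``or $n>l$'' in the statement is there as a safety net rather than a genuine case split; your contrapositive framing is therefore slightly more cautious than necessary, but in no way incorrect.
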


\begin{proof}
The result follows from a similar argument as the one used in the proof of \lemref{disc tunnel inv}.
\end{proof}

\begin{proof}[Proof of \thmref{ref uni} a)]
Let ${t} \in (0, {L_1}^{-1}]$. Consider $\bfp \in \Lambda$ such that
$$
M := \cd(\bfp) \geq 0
\matsp{and}
\cd_{t}(\bfp) = \varnothing.
$$
Then we have
$$
p_1 := F^{M+1}(\bfp) \in \bfT_{c_1} \setminus \cT_{c_1}({t}),
$$
and by \lemref{disc tunnel inv} and \lemref{1 tunnel inv},
$$
\cd(p_1) = \cd(p_{N_1}) = \varnothing =  \cd_{t}(p_1).
$$
Thus, it follows from \lemref{1 tunnel recover} that $p_1$ is forward $(1, \delta)_v$-regular along $\hE^v_{p_1}$, and $\hE^v_{p_1}$ is sufficiently stable-aligned in $\cU_{c_1}$.

Let $-l_0 < 0$ be a direction aligning moment for $p_0$. By \lemref{direction align lem}, for
$
l_0 \leq n \leq \he_0(p_0),
$
the direction $\hE^v_{p_{-n}}$ is sufficiently vertical in $\cU_{c_{-n}}$. Denote the vertical direction at $p_{-n}$ in $\cU_{c_{-n}}$ by $\tiE^0_{p_{-n}}$, so that
$$
D\Phi_{c_{-n}}(\tiE^0_{p_{-n}}) = E^{gv}_{\Phi_{c_{-n}}(p_{-n})}.
$$

Let $m_0 := 0$, and for $i \geq 1$, let
$$
m_{i-1} + l_{i-1} < m_i< m_{i-1}+\he_0(p_{-m_{i-1}})
$$
be an integer such that
$$
p_{-m_i} \in \bfT_{c_0} \setminus \cT_{c_0}({t})
\matsp{and}
\hd(p_{-m_i}) = -m_i+m_{i-1}.
$$
Write
$
z^i = (x^i, y^i) := \Phi_{c_0}(p_{-m_i}).
$
Then we have by \lemref{return dist lem},
\begin{equation}\label{eq:direction align further}
|x^i| > |x^{i-1}|^{\bepsilon_1}.
\end{equation}
Let
$$
-l_i = -(1+\bepsilon_1)\logl |x^i|,
$$
be a direction aligning moment for $p_{-m_i}$. By \lemref{direction align lem}, $\hE^v_{p_{-m_i+1}}$ is sufficiently stable-aligned in $\cU_{c_1}$, and for 
$
l_i \leq n \leq \he_0(p_{-m_i}),
$
the direction $\hE^v_{p_{-m_i-n}}$ is sufficiently vertical in $\cU_{c_{-n}}$.

We claim that the initially given point $p_{-M} = \bfp$ is forward $(L, \delta)$ regular along $\hE^v_{p_{-M}}$ with $L = 1/\underline{{t}}$. Let
$$
0 :=m_0 < m_1 < \ldots < m_I \leq M
$$
with $I \geq 0$ be a maximal sequence of returns to $\bfT_{c_0} \setminus \cT_{c_0}({t})$ as specified above.

For $0 \leq i < I$, denote
$
n_i := m_{i+1}-m_i.
$
Since for $1 \leq n \leq n_i - l_i$, the direction $\hE^v_{p_{-m_{i+1}+n}}$ is sufficiently vertical in $\cU_{c_{-n_i+n}}$, and $c_{-n_i}$ is forward $(1, \delta)$-regular, we conclude that $p_{-m_{i+1}}$ is $(n_i - l_i)$-times forward $(1, \delta)$-regular. It follows that $p_{-m_{i+1}}$ is $n_i$-times forward $(L_i, \delta)$-regular, where
$$
L_i < \lambda^{-(1-\delta - \eta) l_i}.
$$

Denote
$
n_I := M - m_I.
$
If $n_I > l_I$, then for $0\leq n \leq n_I -l_I$, the direction $\hE^v_{p_{-m_{i+1}+n}}$ is sufficiently vertical in $\cU_{c_{-n_i+n}}$. Moreover, since
$
\hd(p_{-M}) = n_i,
$
the point $c_{-n_i}$ is forward $({L_1}, \delta)$-regular. Thus, $p_{-M}$ is $(n_I-l_i)$-times forward $({L_1}, \delta)$-regular. It follows that (whether $n_I > l_I$ or not) $p_{-M}$ is $n_I$-times forward $({L_1} L_I, \delta)$-regular, where
$
L_I < \lambda^{-(1-\delta - \eta) l_I}.
$

Lastly, observe that by \eqref{eq:direction align further}, we have
$
l_i < \bepsilon_1 l_{i-1}
$ for $
1 \leq i \leq I,
$
and hence
\begin{equation}\label{eq:total nonalign time}
\sum_{i=0}^I l_i < Kl_0 < -K\logl |x^0| < K \logl {t}.
\end{equation}

Concatenating the above regularities, we conclude that $p_{-M}$ is forward $({L_1} L', \delta)_v$-regular along $\tiE^v_{p_{-M}}$, where
$$
{L_1} L' < {t}^{-1} \lambda^{-(1-\delta-\eta)\sum_{i=0}^I l_i} < 1/\underline{{t}}.
$$
\end{proof}

Let $N \geq 0$ and $K \geq 1$. We say that a point $p_0 \in \Omega$ is {\it $N$-times consistently forward $(K, \delta)_v$-regular along $\tiE^v_{p_0} \in \bbP^2_{p_0}$} if, for all $0\leq i \leq N$, the point $p_i$ is $(N-i)$-times forward $(K, \delta)_v$-regular along $\tiE^v_{p_i}$.

\begin{lem}\label{consistent for to back}
If for some $N \geq 0$ and $K\geq 1$, a point $p_0 \in \Omega$ is $N$-times consistently forward $(K, \delta)_v$-regular along $\tiE^v_{p_0} \in \bbP^2_{p_0}$, then $p_i$ is $(i, N-i)$-times $(K, \delta)_v$-regular along $\tiE^v_{p_i}$.
\end{lem}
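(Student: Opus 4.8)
The plan is to split the claimed $(i,N-i)$-regularity of $p_i$ along $\tiE^v_{p_i}$ into its two halves. The forward half---that $p_i$ is $(N-i)$-times forward $(K,\delta)_v$-regular along $\tiE^v_{p_i}$---is literally the hypothesis of $N$-times consistent forward regularity applied at the index $j=i$, so nothing is required there. Thus the entire content is the backward half: I must show that $p_i$ is $i$-times backward $(K,\delta)_v$-regular along $\tiE^v_{p_i}$, i.e. that for every $1\le n\le i$ and every $s\in\{-r,r-1\}$ one has
$$
K^{-1}\lambda^{-(1-\delta)n}\ \le\ \frac{(\Jac_{p_i}F^{-n})^{s}}{\|DF^{-n}|_{\tiE^v_{p_i}}\|^{\,s-1}}\ \le\ K\lambda^{-(1+\delta)n}.
$$

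First I would record the elementary duality coming from the chain rule on the one-dimensional line carried by the dynamics: since $\tiE^v_{p_i}=DF^{\,n}(\tiE^v_{p_{i-n}})$, we have $\|DF^{-n}|_{\tiE^v_{p_i}}\|=\|DF^{\,n}|_{\tiE^v_{p_{i-n}}}\|^{-1}$, and of course $\Jac_{p_i}F^{-n}=(\Jac_{p_{i-n}}F^{\,n})^{-1}$. Substituting these two identities shows that
$$
\frac{(\Jac_{p_i}F^{-n})^{s}}{\|DF^{-n}|_{\tiE^v_{p_i}}\|^{\,s-1}}
=\left(\frac{(\Jac_{p_{i-n}}F^{\,n})^{s}}{\|DF^{\,n}|_{\tiE^v_{p_{i-n}}}\|^{\,s-1}}\right)^{-1},
$$
with the same parameter $s$ on both sides. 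Now I would invoke the consistency hypothesis at the index $j=i-n$, which is legitimate because $0\le i-n\le N$: it says $p_{i-n}$ is $(N-(i-n))$-times forward $(K,\delta)_v$-regular along $\tiE^v_{p_{i-n}}$, and since $i\le N$ we have $N-(i-n)=N-i+n\ge n$, so in particular $p_{i-n}$ is (at least) $n$-times forward $(K,\delta)_v$-regular. This gives, for each $s\in\{-r,r-1\}$,
$$
K^{-1}\lambda^{(1+\delta)n}\ \le\ \frac{(\Jac_{p_{i-n}}F^{\,n})^{s}}{\|DF^{\,n}|_{\tiE^v_{p_{i-n}}}\|^{\,s-1}}\ \le\ K\lambda^{(1-\delta)n},
$$
and taking reciprocals of this chain---which reverses the inequalities and negates the exponents---produces exactly the backward bound displayed above. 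Running this over all $1\le n\le i$ and both values of $s$ finishes the backward half; together with the forward half it shows $p_i$ is $(i,N-i)$-times $(K,\delta)_v$-regular along the single direction $\tiE^v_{p_i}=E^+_{p_i}=E^-_{p_i}$.

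I do not expect a genuine obstacle here; the argument is a bookkeeping exercise built entirely on the reciprocal identities for $\Jac F^{-n}$ and $\|DF^{-n}|\cdot\|$ along an invariant line. The one point deserving a line of care is the index range: one must verify that for every $n$ in the backward window $1\le n\le i$ the forward estimate at the right order is actually available at $p_{i-n}$, which is precisely where $i\le N$ is used. (If one prefers to work with the simplified homogeneous versions of the regularity conditions from \secref{sec:homog}, the Jacobian factors disappear and the proof reduces to the single identity $\|DF^{-n}|_{\tiE^v_{p_i}}\|=\|DF^{\,n}|_{\tiE^v_{p_{i-n}}}\|^{-1}$ together with the same index check.)
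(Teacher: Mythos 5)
Your proof is correct, and it is the natural argument: the forward half is literally the consistency hypothesis at index $i$, and the backward half follows by applying the forward estimate at $p_{i-n}$ and inverting, using the one-dimensional reciprocal identities $\|DF^{-n}|_{\tiE^v_{p_i}}\|=\|DF^{n}|_{\tiE^v_{p_{i-n}}}\|^{-1}$ and $\Jac_{p_i}F^{-n}=(\Jac_{p_{i-n}}F^{n})^{-1}$, together with the index check $n\le N-(i-n)$. The paper omits the proof of this lemma as routine, and your argument is precisely the routine verification one would supply; in particular you correctly note that the simplified $\eta$-homogeneous form from \secref{sec:homog} collapses the Jacobian bookkeeping to the single norm identity.
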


\begin{lem}\label{back reg ver to hor}
Let $p_0 \in \Omega$ be $M$-times backward $(K, \delta)_v$-regular along $\tiE^v_{p_0} \in \bbP^2_{p_0}$ for some $M \geq 0$ and $K \geq 1$. If $\tiE^h_{p_{-M}} \in \bbP^2_{p_{-M}}$ is a direction satisfying
$$
\measuredangle(\tiE^h_{p_{-M}}, \tiE^v_{p_{-M}}) > k
$$
for some uniform constant $k >0$, then $p_0$ is $M$-times backward $(\bK, \delta)_h$-regular along $\tiE^h_{p_0}$. Moreover, we have
$$
\measuredangle(\tiE^h_{p_0}, \tiE^v_{p_0}) > \bK^{-1}.
$$
\end{lem}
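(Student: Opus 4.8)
The plan is to deduce the statement from the converse half of \propref{hom transverse back reg} together with a comparison of directions at $p_0$. Throughout, $\tiE^h_{p_0}$ and $\tiE^v_{p_{-M}}$ are read via the running convention, $\tiE^h_{p_0} = DF^M(\tiE^h_{p_{-M}})$ and $\tiE^v_{p_{-M}} = DF^{-M}(\tiE^v_{p_0})$, so the hypothesis $\measuredangle(\tiE^h_{p_{-M}},\tiE^v_{p_{-M}}) > k$ says that $\tiE^h_{p_{-M}}$ is uniformly transverse to the vertical direction of the $(M,0)$-linearization of $p_0$ along $\tiE^v_{p_0}$.

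First I would apply the converse part of \propref{hom transverse back reg} to obtain a direction $E^h_{p_0} \in \bbP^2_{p_0}$ — namely the genuine-horizontal direction $D\Phi_{p_0}^{-1}(E^{gh}_0)$ of the $(M,0)$-linearization of $p_0$ with vertical direction $\tiE^v_{p_0}$ — along which $p_0$ is $M$-times backward $(\bK,\delta)_h$-regular, and for which $\measuredangle(E^h_{p_0},\tiE^v_{p_0}) > \bK^{-1}$ (by \propref{hom for back reg}, or directly from the chart). It then suffices to show that $\tiE^h_{p_0}$ is exponentially close to $E^h_{p_0}$, since backward regularity with factor $\bK$ and transversality $\bK^{-1}$ are both stable under such a perturbation. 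To see this I would pass to the charts $\Phi_{p_{-i}}$ of the linearization, in which vertical directions are $DF$-invariant, the linearized maps are $C^1$-close to diagonal matrices $A_{-i} = \operatorname{diag}(a_{-i},b_{-i})$ with $|a_{-i}/b_{-i}| \ge \lambda^{2\bdelta-1} > 1$, and $\|\Phi_{p_{-i}}^{\pm1}\|_{C^r} = O(\bK\lambda^{-\bdelta i})$. At the (origin-fixed) base points, tangent directions are transported forward by the products $A_{-i-1}\cdots A_{-M}$; since $\tiE^h_{p_{-M}}$ is non-vertical, its chart representative has slope off the genuine horizontal at most $\bK\lambda^{-\bdelta M}$ (the distortion of $\Phi_{p_{-M}}$ against the intrinsic transversality $>k$), while $E^h_{p_{-M}}$ has slope $0$; applying $A_{-1}\cdots A_{-M}$ contracts slopes by $\le \lambda^{(1-2\bdelta)M}$, so $\measuredangle(\tiE^h_{p_0},E^h_{p_0}) \le \bK\lambda^{(1-\bdelta)M}$ after pulling back through $\Phi_{p_0}$.

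Given this, the ``moreover'' clause follows at once, and for $1\le n\le M$, using $\eta$-homogeneity (so $\|D_{p_0}F^{-n}\| \le \lambda^{-(1+\eta)n}$),
$$
\|DF^{-n}|_{\tiE^h_{p_0}}\| \;\le\; \|DF^{-n}|_{E^h_{p_0}}\| + \|D_{p_0}F^{-n}\|\cdot\measuredangle(\tiE^h_{p_0},E^h_{p_0}) \;\le\; \bK\lambda^{-\delta n} + \bK\lambda^{-(1+\eta)n}\lambda^{(1-\bdelta)M},
$$
and since $n\le M$ the exponent of the last term is at least $-\bdelta n$, so the right-hand side is of the claimed form. (If $M \le -C\logl K$ for a suitable uniform $C$, the conclusion is immediate from $\|DF^{-n}\|\le\lambda^{-(1+\eta)n}\le\bK$, so one may assume $M$ large whenever convenient.)

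Two points will require attention. One is organizational: one must either read off from the proof of \propref{hom transverse back reg} that the horizontal direction it produces is the forward image $DF^M(E^h_{p_{-M}})$ of the chart-horizontal direction at $p_{-M}$, or simply re-run the $(M,0)$-linearization of $p_0$ directly and work with its charts from the start. The point I expect to be the real obstacle is the marginal-exponent bookkeeping: the estimates above generate losses of the form $\bdelta$ (from chart distortions and from the domination ratio $|a_{-i}/b_{-i}|$), and absorbing these into the exponent $\delta$ of the conclusion relies on the standing choices of this section ($\bepsilon < \delta$, $\bdelta < 1$) giving $\delta$ enough margin — equivalently, one runs the linearization with a marginal exponent strictly between $\bepsilon$ and $\delta$ and only then passes to $\delta$.
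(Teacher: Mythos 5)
The paper states Lemma~\ref{back reg ver to hor} without a proof, so there is no argument to compare against; I am assessing your proposal on its own merits. The overall strategy---linearize along the $(M,0)$-orbit, observe that the chart slope of $\tiE^h$ is contracted by $\prod|b_{-i}/a_{-i}| \le \lambda^{(1-2\bdelta)M}$ so that $\tiE^h_{p_0}$ lands exponentially close to the chart-horizontal $E^h_{p_0}$, and then transfer backward $h$-regularity from $E^h_{p_0}$ to $\tiE^h_{p_0}$---is a natural one, and the ``moreover'' clause and the reduction to $M$ large via homogeneity are fine.

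There is, however, a genuine gap at exactly the point you flag, and the fix you propose does not close it. First, the bound $\|DF^{-n}|_{E^h_{p_0}}\| \le \bK\lambda^{-\delta n}$ in your final display is not what the converse of Proposition~\ref{hom transverse back reg} delivers: that proposition degrades the marginal exponent from $\delta$ to $\bdelta$, so you only get $\|DF^{-n}|_{E^h_{p_0}}\| \le \bK\lambda^{-\bdelta n}$. (The same $\bdelta$ already appears when you read it off the chart directly, since $a_m^{-1}\in(\lambda^{\bdelta},\lambda^{-\bdelta})$ and $\|D\Phi_{p_{-n}}^{\pm 1}\|=O(\bK\lambda^{-\bdelta n})$; and the chart-free route via the Jacobian identity $\|DF^{-n}|_{\tiE^h}\|\,\|DF^{-n}|_{\tiE^v}\|\sin\theta_{-n}=|\Jac F^{-n}|\sin\theta_0$ produces $\lambda^{-(\delta+\eta)n}$ and additionally needs a lower bound on $\sin\theta_{-n}$ that is not automatic.) Second, your proposed remedy---``run the linearization with a marginal exponent strictly between $\bepsilon$ and $\delta$''---is not available: the marginal exponent of the chart is inherited from the regularity of the hypothesis, and $(K,\delta')_v$-regularity for $\delta'<\delta$ is a \emph{stronger} condition than $(K,\delta)_v$-regularity, since it requires the larger lower bound $\|DF^{-n}|_{\tiE^v_{p_0}}\|\ge K^{-1}\lambda^{-(1-\delta')n}$. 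You therefore cannot shrink the chart exponent below $\delta$. As written, your argument establishes $M$-times backward $(\bK,\bdelta)_h$-regularity, not $(\bK,\delta)_h$; whether this is what the authors actually need downstream is plausible given the paper's $\bar\kappa$-conventions, but the lemma as stated asks for $\delta$, and you should either justify the sharper exponent (e.g.\ by exploiting the full non-homogeneous definition in \eqref{eq:hor back reg}, which already controls the Jacobian and may avoid the $\eta$-loss) or note explicitly that the conclusion holds with $\bdelta$ in place of $\delta$.
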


Let $\delta_1 = \udelta$, so that $\cT_{c_0}$ is a $1/\underline{\delta_1}$-pinched neighborhood. Denote
$$
\delta_1' := (1-\bdelta_1)\delta_1.
$$
For ${t} \in (0, {L_1}^{-1}]$ and $n \geq 0$, let $\hcT_{c_{-n}}({t})$ be the $n$-times $1/\delta_1$-pinched, $\delta_1'$-truncated tunnel of length ${t}$. Numbers $-l < -j < 0$ are the {\it slow and fast tunnel escape moment for $p_0 \in \cT_{c_0}({t})$} respectively if $l$ and $j$ are the smallest integers such that
$$
p_{-l} \notin \hcT_{c_{-l}}({t})
\matsp{and}
p_{-j} \notin \cT_{c_{-j}}({t}).
$$
If $p \in \hcT_{c_{-n}}({t})$ for some $n \geq 0$, then denote by
$$
d:= d_{\hcT_-({t})}(p) \in \bbN \cup \{0, \infty\}
$$
the largest integer such that $p \in \hcT_{c_{-d}}({t})$. Otherwise, denote
$$
d_{\hcT_-({t})}(p) = \varnothing.
$$

\begin{lem}\label{trans back to tunnel}
Let ${t} \in (0, {L_1}^{-1}]$ and $L := 1/\underline{{t}}$. For $p_0 \in \Lambda \cap \cT_{c_0}({t})$, let $-l<-j < 0$ be the slow and fast tunnel escape moment for $p_0$. Suppose $\tiE^v_{p_0}, \tiE^h_{p_0}\in \bbP^2_{p_0}$ are directions such that $\tiE^v_{p_1}$ is sufficiently stable-aligned in $\cU_{c_1}$; the point $p_{-l}$ is $M$-times backward $(\bL, \delta)_h$-regular along $\tiE^h_{p_{-l}}$ for some $M \geq 0$; and
$$
\measuredangle(\tiE^v_{p_{-l}}, \tiE^h_{p_{-l}}) > \bL^{-1}.
$$
Then for $n \in \{0, j\}$, the point $p_{-n}$ is $(M+l-n)$-times backward $(1, \delta)_h$-regular along $\tiE^h_{p_0}$. Moreover,
$$
\measuredangle(\tiE^h_{p_{-j}}, \tiE^v_{p_{-j}}) > k
$$
for some uniform constant $k > 0$, and $\tiE^h_{p_0}$ is sufficiently center-aligned in $\cU_{c_0}$.
\end{lem}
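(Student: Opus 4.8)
The plan is to transport the backward $h$-regularity and the transversality hypothesis \emph{forward} from $p_{-l}$ through the truncated pinched tunnel to $p_{-j}$ and $p_0$, exploiting that inside the regular charts $\cU_{c_{-i}}$ the map $F$ is $C^1$-close to the linear contraction $A(x,y)=(x,\lambda y)$ (\thmref{reg chart} ii), \propref{loc linear}), so that the strongly contracted coordinate is the vertical one. Write $(x_0,y_0):=\Phi_{c_0}(p_0)$. First I would record the geometry of the orbit segment $p_0,p_{-1},\dots,p_{-l}$: by definition of the fast and slow tunnel escape moments, $p_{-i}\in\cT_{c_{-i}}({t})$ for $i<j$ and $p_{-i}\in\hcT_{c_{-i}}({t})$ for $i<l$, and by \propref{tunnel fit} together with the analogous statement for $\hcT$ (which follows from \lemref{pinch neigh fit} once the constants are fixed as in \subsecref{sec:refine uni}), the whole segment lies in the regular neighborhoods $\cU_{c_{-i}}$. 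Moreover $j\asymp\logl|x_0|$ and $l\asymp\logl|x_0|$, with $l\ge(\delta_1')^{-1}\logl|x_0|$ and $l-j\ge c\logl|x_0|$ for a definite $c>0$ coming from the gap $\delta_1'<\delta'$; in particular $j,l>(1+\bepsilon)\logl|x_0|$. Since $\tiE^v_{p_1}$ is sufficiently stable-aligned in $\cU_{c_1}$, \propref{tunnel escape} ii) then gives that $\tiE^v_{p_{-n}}$ is sufficiently vertical in $\cU_{c_{-n}}$ for every $j\le n\le l$.

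Next I would track the direction $\tiE^h$. Because $\tiE^v_{p_{-l}}$ is sufficiently vertical and $\measuredangle(\tiE^v_{p_{-l}},\tiE^h_{p_{-l}})>\bL^{-1}$, the direction $\tiE^h_{p_{-l}}$ is bounded away from genuine vertical, i.e.\ it is $\bL$-horizontal in $\cU_{c_{-l}}$. Applying $F$ along the segment and using \propref{loc linear} and \thmref{reg chart} ii), each step multiplies the vertical-over-horizontal slope of $\tiE^h$ by a factor $b_m/a_m$ with $|b_m/a_m|<\lambda^{1-2\bepsilon}$, so after $l-j$ (resp.\ $l$) steps the slope of $\tiE^h_{p_{-j}}$ (resp.\ $\tiE^h_{p_0}$) is at most $\bL\lambda^{(1-2\bepsilon)(l-j)}$ (resp.\ $\bL\lambda^{(1-2\bepsilon)l}$). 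With $l-j\asymp\logl|x_0|$ this makes $\tiE^h_{p_{-j}}$ so close to genuine horizontal that, combined with the sufficient verticality of $\tiE^v_{p_{-j}}$, one obtains $\measuredangle(\tiE^h_{p_{-j}},\tiE^v_{p_{-j}})>k$ for a uniform $k>0$; and with $l\ge(\delta_1')^{-1}\logl|x_0|$ and $\delta_1'$ taken small, one obtains that $\tiE^h_{p_0}$ is $k|x_0|^2$-horizontal in $\cU_{c_0}$, i.e.\ sufficiently center-aligned. This already delivers the two ``moreover'' assertions.

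It remains to establish the backward $(1,\delta)_h$-regularity, which I would do by splitting $\|DF^{-n}|_{\tiE^h_{p_0}}\|$ at $n=l$. For $1\le n\le l$ the backward orbit stays in the tunnel, and since $\tiE^h_{p_0}$ has the tiny vertical slope just computed, \propref{loc linear} bounds $\|DF^{-n}|_{\tiE^h_{p_0}}\|$ by $\max\bigl(\lambda^{-\bepsilon n},\ \bL\lambda^{(1-2\bepsilon)l-(1+\bepsilon)n}\bigr)$, which is $\le\lambda^{-\delta n}$ once $l$ is large enough (the worst case being $n=l$). For $l<n\le l+M$ one factors $DF^{-n}|_{\tiE^h_{p_0}}=DF^{-(n-l)}|_{\tiE^h_{p_{-l}}}\circ DF^{-l}|_{\tiE^h_{p_0}}$, bounds the first factor by $\bL\lambda^{-\delta(n-l)}$ via the hypothesis and the second by $\bL\lambda^{-3\bepsilon l}$ by the tunnel estimate, and checks $\bL^2\lambda^{(\delta-3\bepsilon)l}\le1$; this gives that $p_0$ is $(M+l)$-times backward $(1,\delta)_h$-regular along $\tiE^h_{p_0}$. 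The identical argument shifted by $j$ (with $\tiE^h_{p_{-j}}$ in place of $\tiE^h_{p_0}$) handles $p_{-j}$ over $M+l-j$ steps.

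The main obstacle is precisely this last bookkeeping. The ``recovery length'' $l$ and the regularity defect $|\logl\bL|$ are both of order $\logl(1/{t})$ — since $|x_0|<{t}$ may be comparable to ${t}$ and $\bL=\overline{1/\underline{{t}}}$ is polynomial in $1/{t}$ — so the improvement of the regularity constant from $\bL$ down to $1$ is not automatic: it forces the slow-tunnel truncation exponent $\delta_1'$ (hence the width of $\hcT$) to be chosen small enough in terms of $\delta-3\bepsilon$ and the polynomial degree, and it requires keeping careful control, throughout the three steps, that the orbit segment genuinely stays inside the regular charts $\cU_{c_{-i}}$ so that \propref{loc linear} and \propref{tunnel escape} remain applicable all the way out to $p_{-l}$.
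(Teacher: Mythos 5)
Your proposal is structurally sound and reaches the same conclusion, but by a slightly different route than the paper, so let me compare the two.

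The paper's proof tracks the two angles $\theta^v_{-i}=\measuredangle(\tiE^h_{z_{-i}},E^{gv})$ and $\theta^h_{-i}=\measuredangle(\tiE^h_{z_{-i}},E^{gh})$ as $i$ ranges over $[0,l]$ and uses them to locate a \emph{specific} moment $N<l$ — the largest integer with $\theta^h_{-N}<\lambda^{\bepsilon(l-N)}$ — at which $\tiE^h$ ceases to be ``sufficiently horizontal''. It then proves $(1,\bepsilon)_h$-regularity over the first $N$ steps (where $\tiE^h$ is sufficiently horizontal in $\cU_{c_{-i}}$), bounds the remaining $l-N$ steps crudely by $\lambda^{-(1+\eta)(l-N)}$, and shows that the accumulated defect is at most $t^{1/\bdelta_1}<\bL^{-1}$, which absorbs the constant $\bL$ coming from the hypothesis at $p_{-l}$. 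You instead work with the slope $s$ of $\tiE^h$ directly, note $s_{p_{-l}}\lesssim\bL$ from the transversality hypothesis, push it forward ($s_{p_0}\lesssim\bL\lambda^{(1-2\bepsilon)l}$), and bound $\|DF^{-n}|_{\tiE^h_{p_0}}\|$ by $\max(\lambda^{-\bepsilon n},\bL\lambda^{(1-2\bepsilon)l-(1+\bepsilon)n})$ over the full range $1\le n\le l$ — so your split point is $n=l$ rather than $n=N$. The two decompositions are essentially equivalent once one checks the arithmetic at the worst moment: your $n=l$ gives the requirement $\bL^2\le\lambda^{-(\delta-3\bepsilon)l}$, the paper's $i=l-N$ gives $t^{1/\bdelta_1}<\bL^{-1}$, and both reduce to ``$\delta_1$ (hence $\delta_1'$) must be chosen small enough compared to $\delta$, $\bepsilon$, and the polynomial degree relating $\bL$ to $1/t$''. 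You flag exactly this as the delicate point, which is right — and it is where the paper's choice of $N$ is slightly more efficient, since its ``bad'' range $[N,l]$ has length $\lesssim\bdelta_1 l$ whereas your crossover in the $\max$ happens around $(1-2\bepsilon)l$, costing you a $3\bepsilon$ margin instead of roughly $\bdelta_1$. This is not a gap, merely a looser constant.

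Two smaller points. First, for the two ``moreover'' assertions the paper simply invokes \lemref{back reg ver to hor} and \propref{stable center align} once the backward $(1,\delta)_h$-regularity is in hand; your direct slope computation gives the same conclusions, but you should be a little careful with the center-alignment claim near $y=0$: the slope does not decay past $|y_{-i}|$ because of the $e_{p_m}(\cdot,0)\equiv 0$ structure in \thmref{reg chart} iii), so the honest bound is $s_{p_0}\lesssim\max(|y_0|,\bL\lambda^{(1-2\bepsilon)l})$, and one must note $|y_0|<|x_0|^{1/\udelta}\ll|x_0|^2$ to finish. Second, both your proof and the paper's implicitly use that the backward orbit remains in the regular neighborhoods $\cU_{c_{-i}}$ up to time $l$; you dismiss this with a pointer to \lemref{pinch neigh fit}, but as stated that lemma's hypothesis $\omega>(1+\bepsilon)/\delta$ is \emph{not} satisfied by $\hcT$ (it has $\omega=1/\delta_1$ and truncation exponent $\delta_1'\approx\delta_1$, giving $\omega\delta_1'\approx 1$, not $>1+\bepsilon$). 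The containment instead has to come from the truncation built into the definition of $\hcT_{c_{-n}}$ together with the enlarged radius of $\hcU_{c_{-i}}$ at Pliss moments; this deserves more than the one clause you give it.
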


\begin{proof}
Write
$
z_0 = (x_0, y_0) := \Phi_{c_0}(p_0)$; $E^v_{p_0} := D_{z_0}\Phi_{c_0}^{-1}(E^{gv}_{z_0})$ and $
\tiE^h_{z_0} := D\Phi_{c_0}(\tiE^h_{p_0}).
$
Then
$$
l = (1+\bdelta)\delta_1^{-1}\logl |x_0|
\matsp{and}
j < \udelta^{-1}\logl |x_0|.
$$
By \propref{tunnel escape} and \propref{vert angle shrink}, we have
\begin{equation}\label{eq:trans direct}
\measuredangle(\tiE^v_{p_{-l}}, E^v_{p_{-l}}) < \lambda^{(1-\bdelta) l} < |x_0|^{(1-\bdelta)/\delta_1} < \bL^{-(1-\bdelta)/\delta_1} \ll \bL^{-1}
\end{equation}
Hence,
$$
\measuredangle(\tiE^h_{p_{-l}}, E^v_{p_{-l}}) > \bL^{-1}.
$$

Denote
$$
\theta^{v/h}_0 := \measuredangle(\tiE^h_{z_0}, E^{gv/gh}_{z_0}).
$$
For $0\leq i \leq l$, if $\theta^v_{-i} < k$ for some uniform constant $k > 0$, then by \thmref{reg chart} and \propref{loc linear}, we have
\begin{equation}\label{eq:trans back 1}
\theta^v_{-i+1} > \lambda^{-(1-\bepsilon)}\theta^v_{-i}.
\end{equation}
Similarly, if $\theta^h_{-i} < k$, then
\begin{equation}\label{eq:trans back 2}
\theta^h_{-i+1} > \lambda^{1-\bepsilon}\theta^h_{-i}.
\end{equation}
Let $I_1, I_2$ be the smallest integers such that
$
\theta^v_{-l+I_1} > k
$ and $
\theta^h_{-l+I_2} < k.
$
Inequalities \eqref{eq:trans back 1} and \eqref{eq:trans back 2} imply
$$
I_1 < -\logl\bL < K\logl |x_0|
\matsp{and}
I_2 < I_1 + K
$$
for some uniform constant $K \geq 1$.

Let $N > 0$ be the largest integer such that
$
\theta_{-N}^h < \lambda^{\bepsilon (l-N)}.
$
By a straightforward computation, we obtain
$$
N > l - I_2 - \bepsilon \logl |x_0| > \delta_1^{-1}\logl |x_0|.
$$
Consequently,
$$
l-N < \bdelta_1 N
\matsp{and}
N - j > (1-\udelta^{-1}\delta_1)N=(1-\bdelta_1)N.
$$
Note that for $0 \leq i \leq N$, the direction $\tiE^h_{p_{-i}}$ is sufficiently horizontal in $\cU_{c_{-i}}$.

By \propref{loc linear}, $p_{-n}$ with $n \in \{0, j\}$ is $(N-n)$-times backward $(1, \bepsilon)_h$-regular along $\tiE^h_{p_{-n}}$. For $0 \leq i < l - N$, we have
\begin{align*}
\|D_{p_{-j}}F^{-(N+i-j)}\| &< \lambda^{-\bepsilon (N-j)} \lambda^{-(1+\eta) (l-N)}\\
&< \lambda^{-\bdelta_1 N}\\
&< \lambda^{-\bdelta_1 N} \lambda^{\delta(N+i-j)}\lambda^{-\delta (N+i-j)}\\
&<  \lambda^{\delta(1-\bdelta_1) N}\lambda^{-\delta (N+i-j)}\\
&< {t}^{1/\bdelta_1}\lambda^{-\delta (N+i-j)}\\
&< \bL^{-1}\lambda^{-\delta(N+i-j)}.
\end{align*}
Concatenating with the $M$-times backward $(\bL, \delta)_h$-regularity of $p_{-l}$ along $\tiE^h_{p_{-l}}$, we see that the point $p_{-n}$ with $n \in \{0, j\}$  is $(M+l-n)$-times backward $(1, \delta)_h$-regular along $\tiE^h_{p_{-n}}$.

The other claims follow immediately from \lemref{back reg ver to hor} and \propref{stable center align}.
\end{proof}

\begin{proof}[Proof of \thmref{ref uni} b)]
For ${t} \in (0, {L_1}^{-1}]$, let $L := 1/\underline{{t}}$ be the forward regularity factor given in part a). Let $p_0 \in \Lambda$ be a point satisfying
$
\vd_{t}(p_0) = \varnothing.
$
We prove that $p_0$ is backward $(\bL, \delta)$-regular.

First, suppose
$
\cd_{t}(p_0) = \varnothing,
$
so that $p_0$ is forward $(L, \delta)_v$-regular along $\hE^v_{p_0}$. Let $N \geq 0$ be an integer such that 
$
\cd_{t}(p_{-N}) = \varnothing;
$
and let $\tiE^N_{p_{-N}} \in \bbP^2_{p_{-N}}$ be a direction satisfying
$$
\measuredangle(\tiE^N_{p_{-N}}, \hE^v_{p_{-N}}) > k
$$
for some uniform constant $k > 0$. We claim that $p_0$ is $N$-times backward $(\bL, \delta)_h$-regular along $\tiE^N_{p_0}$.

Let $0\leq m_1 < \ldots < m_I < N$ for some $I \geq 0$ be the sequence of all integers satisfying
$$
p_{-m_i} \in \cT_{c_1}({t})
\matsp{and}
\cd_{t}(p_{-m_i}) = \varnothing.
$$
Let $-l_i < -j_i < 0$ be the slow and fast tunnel escape moment for $p_{-m_i-1}$, and let $n_i > 1$ be the crescent escape moment for $p_{-m_i}$. By \propref{tunnel inv}, we have
$$
\cd_{t}(p_{-m_i-l_i}) = \cd_{t}(p_{-m_i-j_i})  = \cd_{t}(p_{-m_i + n_i})= \varnothing,
$$
and
$$
m_i + l_i < m_{i+1}-n_{i+1}.
$$
Hence,
$$
\cd_{t}(p_{-m}) = \varnothing
\matsp{for}
m_i + j_i \leq k \leq m_{i+1}-n_{i+1}.
$$
By \lemref{consistent for to back}, $p_{-m_I-l_I}$ is $(N-m_I - l_I)$-times backward $(L, \delta)_v$-regular along $\hE^v_{-m_I-l_I}$. Applying \lemref{back reg ver to hor}, \lemref{trans back to tunnel} and \propref{crescent escape}, we see that $p_{-m_I+n_I}$ is $(N - m_I +n_I)$-times backward $(O(1), \delta)_h$-regular along $\tiE^N_{p_{-m_I+n_I}}$, and
$$
\measuredangle(\tiE^N_{p_{-m_I+n_I}}, \hE^v_{p_{-m_I+n_I}}) > k.
$$

Proceeding by induction, we conclude that $p_{-m_1+n_1}$ is $(N - m_1 +n_1)$-times backward $(O(1), \delta)_h$-regular along $\tiE^N_{p_{-m_1+n_1}}$, and
$$
\measuredangle(\tiE^N_{p_{-m_1+n_1}}, \hE^v_{p_{-m_1+n_1}}) > k.
$$
By \lemref{consistent for to back} and  \lemref{back reg ver to hor}, $p_0$ is $(m_1-n_1)$-times backward $(\bL, \delta)_h$-regular along $\tiE^N_{p_0}$. Concatenating the regularities, it follows that $p_0$ is $N$-times backward $(\bL, \delta)_h$-regular along $\tiE^N_{p_0}$.

Now, suppose
$$
0 \leq M := \cd_{t}(p_0) < \infty.
$$
Denote $m_0 := M$. Let $-l_0 < -j_0 <0$ be the slow and fast tunnel escape time for $p_{m_0} \in \cT_{c_0}({t})$. Then
$$
\cd_{t}(p_{m_0+1}) = \cd_{t}(p_{m_0-j_0}) =  \cd_{t}(p_{m_0-l_0}) = \varnothing.
$$
Observe that $j_0 > m_0$. Let $m_1$ be the largest integer less than $m_0$ such that $p_{m_1} \in \cT_{c_0}({t})$, and if $-l_1 < -j_1 <0$ are the slow and fast tunnel escape time for $p_{m_1}$, then $j_1 > m_1$. Clearly, we have
$$
p_{m_1} \in \cT_{c_{m_1-m_0}}({t})
\matsp{and}
\cd_{t}(c_{m_1-m_0}) = \varnothing.
$$
Moreover, by \propref{tunnel inv}, we have
$$
0 > m_1 - l_1 > m_0 - j_0.
$$

Proceeding by induction, we obtain the maximal sequence $M =: m_0 > m_1 > \ldots > m_I >0$ with $I \geq 0$, such that for $0 < i \leq I$, we have
$$
p_{m_i} \in \cT_{c_{m_i-m_{i-1}}}({t})
\matsp{and}
\cd_{t}(c_{m_i- m_{i-1}+1}) = \varnothing;
$$
and if $-l_i < -j_i < 0$ are the slow and fast tunnel escape time for $p_{m_i}$, then
$$
j_i > m_i
\matsp{and}
0 > m_i - l_i > m_{i-1} - j_{i-1}.
$$

Let $\hE^i_{p_{m_i}}$ be the vertical direction at $p_{m_i}$ in $\cU_{c_0}$, so that
$$
D\Phi_{c_0}(\hE^i_{p_{m_i}}) = E^{gv}_{\Phi_{c_0}(p_{m_i})}.
$$
Then by the assumption on the critical depth of $c_{m_i-m_{i-1}+1}$, the direction $\hE^{i-1}_{p_{m_i+1}}$ is sufficiently stable-aligned in $\cU_{c_1}$. Hence, by \lemref{tunnel escape}, $p_{m_i - l_i}$ is $l_i$-times forward $(1, \delta)_v$-regular along $\hE^{i-1}_{p_{m_i-l_i}}$.

Let $N \geq l_0 - m_0$ be an integer such that 
$
\cd_{t}(p_{-N}) = \varnothing;
$
and let $\tiE^N_{p_{-N}} \in \bbP^2_{p_{-N}}$ be a direction satisfying
$$
\measuredangle(\tiE^N_{p_{-N}}, \hE^v_{p_{-N}}) > k
$$
for some uniform constant $k > 0$. We claim that $p_0$ is $N$-times backward $(\bL, \delta)_h$-regular along $\tiE^N_{p_0}$.

Denote
$$
\hE^{-1}_{p_0} := \hE^v_{p_0}
\matsp{and}
k_0 := N - (l_0 - m_0);
$$
and 
$$
k_i := (j_{i-1} - m_{i-1}) -(l_i  - m_i)> 0
\matsp{for}
1\leq i \leq I.
$$
Suppose that for $0 \leq i \leq I$, the point $p_{m_i-l_i}$ is $k_i$-times backward $(\bL, \delta)_h$-regular along $\tiE^N_{p_{m_i-l_i}}$, and 
$$
\measuredangle(\tiE^N_{p_{m_i-l_i}}, \hE^{i-1}_{p_{m_i-l_i}}) > \bL^{-1}.
$$
For $i = 0$, this was proved in the previous case.

Observe that
$$
p_{m_{i-1} -n} \in \cT_{c_{-n}}({t})
\matsp{for}
n < j_{i-1};
$$
and
$$
\cd_{t}(c_{-m_{i-1}+m_i-n}) = \varnothing
\matsp{for}
n \in \{-1, j_i, l_i\}.
$$
Hence, $\tiE^{i-1}_{p_{m_i+1}}$ is sufficiently stable-aligned in $\cU_{c_1}$, and $p_{m_i-l_i}$ is $l_i$-times forward $(1, \delta)_v$-regular along $\tiE^{i-1}_{p_{m_i+1}}$. By \lemref{trans back to tunnel}, $p_{m_i-j_i}$ is $(k_i +l_i-j_i)$-times backward $(1, \delta)_h$-regular along $\tiE^N_{p_{m_i-j_i}}$, and 
$$
\measuredangle(\tiE^N_{p_{m_i-j_i}}, \hE^i_{p_{m_i-j_i}}) > k.
$$
Since
$$
\cd_{t}(c_{-m_i+m_{i+1}-l_{i+1}}) = \varnothing,
$$
it follows from the previous case that $p_{m_{i+1}-l_{i+1}}$ is $k_{i+1}$-times backward $(\bL, \delta)_h$-regular along $\tiE^N_{p_{m_{i+1}-l_{i+1}}}$, and 
$$
\measuredangle(\tiE^N_{p_{m_{i+1}-l_{i+1}}}, \hE^i_{p_{m_{i+1}-l_{i+1}}}) > \bL^{-1}.
$$

By induction, we see that $p_{m_I-l_j}$ is $(N-(j_I-m_I))$-times backward $(1, \delta)_h$-regular along $\tiE^N_{p_{m_I-j_I}}$, and 
$$
\measuredangle(\tiE^N_{p_{m_I-j_I}}, \hE^I_{p_{m_I-j_I}}) > k.
$$
Since
$
\cd_{t}(c_{-m_I}) = \varnothing,
$
by applying the previous case again, we conclude that $p_0$ is $N$-times backward $(\bL, \delta)_h$-regular along $\tiE^N_{p_{m_I-j_I}}$.

Lastly, suppose that
$
\cd_{t}(p_0) = \infty.
$
Let $D \geq 0$ be an integer such that $p_0 \in \cT_{c_{-D}}({t})$. Note that
$
d_{\hcT_-({t})}(c_{-D}) \neq \infty.
$
Moreover, since
$
\vd_{t}(p_0) =\varnothing,
$
we may assume by \propref{loc linear} that
$
\vd_{t}(c_{-D}) = \varnothing.
$

Let $N > 0$ be an integer such that $-(D+N) <0$ is the slow tunnel escape moment for $p_D \in \cT_{c_0}$. Observe that
$$
\dist(p_D, c_0) < \lambda^{-\eta D} \dist(p_0, c_{-D}) < \lambda^{(1+\bdelta)\delta D}
$$
for $D$ sufficiently large by \lemref{tunnel size}. Consequently,
$$
N > (1-\bdelta)\delta_1^{-1}\delta D > \bdelta_1^{-1} D.
$$

Let $\hE^v_{c_0} \in \bbP^2_{c_0}$ be the vertical direction at $c_0$ in $\cU_{c_0}$, so that
$$
D\Phi_{c_0}(\hE^v_{c_0}) = E^{gv}_0.
$$
Let $\tiE^N_{c_{-(D+N)}} \in \bbP^2_{c_{-(D+N)}}$ be a direction satisfying
$$
\measuredangle(\tiE^N_{c_{-(D+N)}}, \hE^v_{c_{-(D+N)}}) > k.
$$
By the previous two cases, we see that $c_{-D}$ is $N$-times backward $(\bL, \delta)_h$-regular along $\tiE^N_{c_{-D}}$. By \propref{loc linear}, there exists $\tiE^N_{p_0} \in \bbP^2_{p_0}$ such that $p_0$ is $N$-times backward $(\bL, \delta)_h$-regular along $\tiE^N_{p_0}$.

We proved that for $N$ arbitrarily large, there exists $\tiE^N_{p_0} \in \bbP^2_{p_0}$ such that $p_0$ is $N$-times backward $(\bL, \delta)_h$-regular along $\tiE^N_{p_0}$. By \propref{hor angle shrink}, we conclude that as $N \to \infty$, the direction $\tiE^N_{p_0}$ converges exponentially fast to $E^c_{p_0}$, along which $p_0$ is infinitely backward $(\bL, \delta)_h$-regular.
\end{proof}

\begin{proof}[Proof of \thmref{ref uni} c)]
For ${t} \in (0, {L_1}^{-1}]$, let $L := 1/\underline{{t}}$. Let $\bfp \in \Lambda$ be a point satisfying
$$
\vd_{t}(\bfp) = \cd_{t}(\bfp) = \varnothing.
$$
By \thmref{ref uni} a), $\bfp$ is forward $(L, \delta)_v$-regular along $\hE^v_{\bfp}$. We show that $\bfp$ is also backward $(\bL, \delta)_v$-regular along $\hE^v_{\bfp}$.

Let $-m <0$ be the first moment in the backward orbit of $\bfp$ such that
$$
p_0 := F^{-m}(\bfp) \in \cT_{c_0}({t}).
$$
Write
$
z_0 = (x_0, y_0) := \Phi_{c_0}(p_0).
$
Let $-l<-j < 0$ be the slow and fast tunnel escape moment of $p_0$ respectively, and let $n >0$ be the crescent escape moment of $p_1$. Recall that
$$
l > \udelta^{-1}\logl |x_0| > \udelta^{-1}\logl L
\matsp{and}
j, n > \delta^{-1}\logl |x_0| > -k\delta^{-1}\logl L
$$
for some uniform constant $k >0$. Observe that
$
\cd_{t}(p_i) = \varnothing
$ for $
1\leq i \leq m.
$
Moreover,
$$
\cd_{t}(p_i) = \vd_{t}(p_i) = \varnothing
\matsp{for}
i \in \{-j, -l\}.
$$

By \thmref{ref uni} a) and \lemref{consistent for to back}, $p_m$ is $m$-times backward $(L, \delta)_v$-regular along $\hE^v_{p_m}$. Arguing as in the proof of \thmref{ref uni} b), we see that $p_0$ is $l$-times backward $(\bL, \delta)_v$-regular along $\hE^v_{p_0}$.

Let $m_1 > 1$ be a Pliss moment in the forward orbit of $c_1$ so that
$$
Kn \leq m_1 < (1+\bepsilon) Kn,
$$
where $K \geq 1$ is a sufficiently large uniform constant. If $n < m < m_1$, then arguing as in the proof of \lemref{1 tunnel recover}, we see that $\hE^v_{p_m}$ is sufficiently vertical in $\cU_{c_m}$. Since $c_m$ is $m$-times backward $(\lambda^{-\bepsilon m}, \epsilon)_v$-regular along $E^v_{c_m}$, we have
$$
\|DF^{-m}|_{\hE^v_{p_m}}\| > \lambda^{\bepsilon m}\lambda^{-(1-\epsilon) m} > \lambda^{-(1-\bepsilon) m}.
$$
If $m > m_1$, then again, $\hE^v_{p_{m_1}}$ is sufficiently vertical in $\cU_{c_{m_1}}$, and we have
$$
\|DF^{-m_1}|_{\hE^v_{p_{m_1}}}\| > \lambda^{-(1-\bepsilon) m_1} > \lambda^{-(\delta - \bepsilon)Kn}\lambda^{-(1-\delta)m_1} > \bL\lambda^{-(1-\delta)m_1}.
$$
By \propref{stable center align}, $\hE^v_{p_1}$ is sufficiently stable-aligned in $\cU_{c_1}$, and hence, $\hE^v_{p_{-j}}$ is sufficiently vertical in $\cU_{c_{-j}}$. Thus,
$$
\|DF^{-(l-j)}|_{\hE^v_{p_{-j}}}\| > \lambda^{-(1-\bepsilon) (l-j)} > \lambda^{-(\delta - \bepsilon)(l-j)}\lambda^{-(1-\delta)(l-j)} > L^{\udelta^{-1}}\lambda^{-(1-\delta)(l-j)}.
$$

Concatenating the above estimates, we conclude that $\bfp = p_m$ is $(m+l)$-times backward $(\bL, \delta)_v$-regular along $\hE^v_{\bfp}$. Moreover, since
$$
\|D_{\bfp}F^{-(m+l)}|_{\hE^v_{\bfp}}\| > \lambda^{-(1-\delta) (m+l)},
$$
we see that if $p_{-(m+l)}$ is $M$-times backward $(\bL, \delta)_v$-regular along $\hE^v_{p_{-(m+l)}}$, then $\bfp$ is $(m+l+M)$-times backward $(\bL, \delta)_v$-regular along $\hE^v_{\bfp}$. The result follows by induction.
\end{proof}


\subsection{Irregular points}

\begin{prop}\label{trap irreg}
Let $\Delta_{c_0}^{c/fc}$ and $\Delta_{c_1}^v$ be the critical/full critical dust of $c_0$ and the valuable dust of $c_1$ respectively. For $K \geq 1$, let $\Lambda_K^{+/-/P}$ be the set of all infinitely forward/backward/Pesin $(K, \delta)$-regular points in $\Lambda$. Then we have
$$
\Lambda \setminus \left(\bigcup_{K \geq 1} \Lambda_K^+\right) \subset \Delta_{c_0}^c
\matsp{and}
\Lambda \setminus \left(\bigcup_{K \geq 1} \Lambda_K^-\right) \subset \Delta^v_{c_1}.
$$
Moreover,
$$
\Lambda \setminus \left(\bigcup_{K \geq 1} \Lambda_K^P\right) \subset \Delta_{c_0}^{fc} \cup \{c_m\}_{m \in \bbZ}.
$$
\end{prop}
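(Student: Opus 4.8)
The plan is to read off all three inclusions from the geometric criterion \thmref{ref uni}, together with (a) the total invariance of the regular blocks $\Lambda^{\pm}_\delta:=\bigcup_{L\ge 1}\Lambda^{\pm}_{L,\delta}$ (same argument as the one given for $\Lambda^\pm_\epsilon$); (b) the total invariance of the dusts, \propref{inv dust}; and (c) the length‑independence of the dusts, \propref{crit dust infty depth}, so that membership in $\Delta^c_{c_0}$ or $\Delta^v_{c_1}$ can be tested with the single length ${t}:={L_1}^{-1}$ for which \thmref{ref uni} applies, with factor $L=1/\ut$. For the third inclusion I will additionally invoke the uniqueness of the critical orbit of a regularly unicritical map (\propref{unique crit}).

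\emph{First two inclusions.} I argue contrapositively: a point not in $\Delta^c_{c_0}$ lies in $\Lambda^+_\delta$, and a point not in $\Delta^v_{c_1}$ lies in $\Lambda^-_\delta$. If $p=c_m$ lies on the critical orbit, then $p=F^{m-1}(c_1)$, so the infinite‑time forward and backward $(1,\epsilon)$‑regularity of the regular critical value $c_1$ (and $\epsilon<\delta$) propagates to give $c_m\in\Lambda^+_\delta\cap\Lambda^-_\delta$. If instead $p\notin\{c_m\}_{m\in\bbZ}$ and $p\notin\Delta^c_{c_0}$, then $\cd_{{t}}(p)<\infty$; setting $M:=\cd_{{t}}(p)$ (or $M:=-1$ if $p$ is in no critical tunnel of length ${t}$) and iterating the depth‑dropping dichotomies \lemref{disc tunnel inv} and \lemref{1 tunnel inv}, I obtain $\cd_{{t}}(F^{M+1}(p))=\varnothing$; since the orbit of $p$ avoids $\{c_m\}$, \thmref{ref uni} a) gives $F^{M+1}(p)\in\Lambda^+_{L,\delta}$, hence $p\in\Lambda^+_\delta$ by total invariance. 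The second inclusion is the mirror image: replace forward by backward, critical tunnels by valuable crescents, $\Delta^c_{c_0}$ by $\Delta^v_{c_1}$, and use \thmref{ref uni} b) with the $F^{-1}$‑analogues of \lemref{disc tunnel inv} and \lemref{1 tunnel inv}.

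\emph{Third inclusion.} Let $p\in\Lambda\setminus(\Delta^{fc}_{c_0}\cup\{c_m\}_{m\in\bbZ})$, so $p\notin\Delta^c_{c_0}$, $p\notin\Delta^v_{c_1}$ and $p\notin\{c_m\}$. By the first two inclusions, $p\in\Lambda^+_{K_1,\delta}\cap\Lambda^-_{K_2,\delta}$ for some finite $K_1,K_2$; in particular $p$ is infinite‑time forward $(K_1,\delta)_v$‑regular along $E^{ss}_p$ and infinite‑time backward $(K_2,\delta)_h$‑regular along $E^c_p$. It suffices to show $\theta:=\measuredangle(E^{ss}_p,E^c_p)>0$, for then $p\in\Lambda^P_{K,\delta}$ with $K:=\max\{K_1,K_2,\lceil 1/\theta\rceil\}$. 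Suppose not, i.e.\ $E^{ss}_p=E^c_p=:E^*_p$. Using $\eta$‑homogeneity ($\Jac F^{\pm n}$ comparable to $\lambda^{(1\pm\eta)n}$), the forward bound $\|DF^n|_{E^*_p}\|\le K_1\lambda^{(1-\delta)n}$ and the backward bound $\|DF^{-n}|_{E^*_p}\|\le K_2\lambda^{-\delta n}$ give
$$
\partial_\bbP F^{n}(E^*_p)\geq K_1^{-2}\lambda^{(2\delta+\eta-1)n},\qquad \partial_\bbP F^{-n}(E^*_p)\geq K_2^{-2}\lambda^{(2\delta+\eta-1)n},
$$
both of which tend to $+\infty$ since $\delta,\eta$ are small so $2\delta+\eta<1$. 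Writing $p_j:=F^j(p)$, $b_j:=\log\partial_\bbP F(E^*_{p_j})$ and $T_m:=\sum_{j=0}^{m-1}b_j$ for $m\in\bbZ$, this says $T_m\to+\infty$ as $|m|\to\infty$, so $T$ attains its minimum at some $k\in\bbZ$; at $c':=p_k$ we then have $\partial_\bbP F^{n}(E^*_{c'})=e^{\,T_{k+n}-T_k}\ge 1$ and $\partial_\bbP F^{-n}(E^*_{c'})=e^{\,T_{k-n}-T_k}\ge 1$ for all $n\ge 1$, so $c'$ is a general critical point (\defnref{def general crit point}). By \propref{unique crit}, $c'\in\{c_m\}_{m\in\bbZ}$, whence $p=F^{-k}(c')\in\{c_m\}_{m\in\bbZ}$, a contradiction. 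Hence $\theta>0$, which finishes the proof.

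\emph{Main obstacle.} The two delicate points are: (i) the bookkeeping that a suitable forward (resp.\ backward) iterate of $p$ escapes \emph{all} critical tunnels (resp.\ valuable crescents), which forces one to iterate \lemref{disc tunnel inv}/\lemref{1 tunnel inv} (and their backward analogues) while carefully tracking which of the three cases governs the evolution of $\cd$; and (ii) the transversality step in the third inclusion, where $\partial_\bbP F^{\pm n}(E^*_p)\ge 1$ is only known for all \emph{large} $n$, so one must first pass to the Pliss‑type moment $k$ realizing $\min_{m}T_m$ before \defnref{def general crit point} and the uniqueness of the critical orbit can be applied.
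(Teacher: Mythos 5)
Your treatment of the first two inclusions is essentially the same argument the paper has in mind (the paper only proves the third and says the others are ``similar and simpler''), and your bookkeeping via the escape moments is sound, though the correct tool is \propref{tunnel inv} rather than \lemref{disc tunnel inv}/\lemref{1 tunnel inv}.

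The third inclusion, however, has a circularity problem. Your transversality step — showing $E^{ss}_p \neq E^c_p$ by producing a general critical point $c' = p_k$ at the moment minimizing $T_m$ and then invoking \propref{unique crit} — cannot be used here, because the proof of \propref{unique crit} in the paper begins precisely by applying the third inclusion of \propref{trap irreg}: ``Since $q_1$ cannot be Pesin regular, $q_1 \in \Delta_{c_0}^{fc}$ by \propref{trap irreg}.'' So \propref{unique crit} is logically downstream of the very statement you are proving. The only unconditional uniqueness statement available at this point in the paper is \propref{unique crit value}, which says $c_1$ is the unique general critical point \emph{in its orbit}; it does not rule out a general critical point off the orbit $\{c_m\}$, which is exactly what $c'$ could a priori be.

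The fix — and the paper's actual route — is not to reconstitute Pesin regularity from separate forward and backward regularity plus a transversality argument, but to arrange (by the same valuable‑crescent/critical‑tunnel escape you already perform) an iterate $q_0$ of $p$ with $\cd_t(q_0) = \vd_t(q_0) = \varnothing$ and then apply \thmref{ref uni}~c) \emph{directly}. That theorem is proved earlier (without \propref{unique crit}) by establishing backward $(\cdot,\delta)_v$‑regularity along the same vertical direction $\hE^v_{q_0}$ that carries the forward regularity, and it already packages the angle/transversality condition via \propref{hom transverse back reg} and \propref{hom for back reg}. Pesin regularity of $q_0$ then transfers to $p$ by \propref{decay reg}. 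Your projective‑derivative/Pliss‑moment argument is elegant, but as written it rests on a proposition that has not yet been established.
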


\begin{proof}
We prove the last containment, as the other two can be proved by a similar and simpler argument.

If
$$
p_0 \notin \Delta_{c_0}^{fc} := \Delta_{c_0}^c \cup \Delta^v_{c_1},
$$
then
$
\cd(p_0) \neq \infty
$ and $
\vd(p_0) \neq \infty.
$
By replacing $p_0$ by $p_{-\vd(p_0)-1}$ if necessary, we can assume that
$
\vd(p_0) = \varnothing.
$
If
$
d := \cd(p_0) \neq \varnothing,
$
let $-j<0$ be the tunnel escape moment for $p_d$. Then by \propref{tunnel inv}, we see that for
$
q_0 := p_{d-j},
$
we have
$
\cd(q_0) = \vd(q_0) = \varnothing.
$
Thus, $p_0$ is Pesin regular by \thmref{ref uni} c).
\end{proof}

\begin{prop}\label{unique crit}
If $F$ is regularly unicritical on $\Lambda$, then there exists a unique regular critical orbit $\{c_m\}_{m\in\bbZ} \subset \Lambda$.
\end{prop}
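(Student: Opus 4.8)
The plan is to combine \propref{trap irreg}, which confines the points of $\Lambda$ that fail to be Pesin regular to $\Delta_{c_0}^{fc}\cup\{c_m\}_{m\in\bbZ}$, with the obstruction-to-regularity estimates of \secref{sec:reg near crit orb}. I work throughout in the standing setting of \secref{sec:unicrit}: after replacing $F$ by a suitable $F^{R_N}|_{\cD^N}$ we may assume $F$ is $\eta$-homogeneous, $c_1$ is an $\epsilon$-regular critical value, and the tunnel/crescent machinery is available; since Lyapunov directions are unchanged under passing to a power, a regular critical orbit of the original map restricts to --- and is recovered from --- one of the renormalized map (using \propref{get crit reg} and \remref{ren unicrit is unicrit}), so it suffices to prove uniqueness after this reduction. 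So let $\{c'_m\}_{m\in\bbZ}\subset\Lambda$ be a regular critical orbit and set $c':=c'_1$. Directly from the definition of a regular critical orbit point, $c'$ is infinitely forward $(L,\epsilon)$-regular along $E^{ss}_{c'}$ and infinitely backward $(L,\epsilon)$-regular along $E^c_{c'}$ for some finite $L$; but $E^{ss}_{c'}=E^c_{c'}$, so $\measuredangle(E^{ss}_{c'},E^c_{c'})=0$ and $c'$ is not Pesin $(K,\delta)$-regular for any $K$. By \propref{trap irreg}, $c'\in\Delta_{c_0}^c\cup\Delta_{c_1}^v\cup\{c_m\}_{m\in\bbZ}$. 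If $c'=c_k$ for some $k$ then the $F$-orbit of $c'$ is exactly $\{c_m\}_{m\in\bbZ}$ and we are done; otherwise, by total invariance of $\Delta_{c_0}^c$ and $\Delta_{c_1}^v$ (\propref{inv dust}) the orbits $\{c'_m\}$ and $\{c_m\}$ are disjoint, and it remains to rule out $c'\in\Delta_{c_0}^c\cup\Delta_{c_1}^v$.

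By the forward/backward symmetry of the construction --- exchanging forward with backward regularity, vertical/stable with horizontal/center directions, and critical tunnels with valuable crescents --- it is enough to exclude $c'\in\Delta_{c_0}^c$, the case $c'\in\Delta_{c_1}^v$ being identical with \propref{back reg no rec} in place of \propref{for reg no rec}. Since $\cd(c')=\infty$, for arbitrarily large $n$ we have $c'\in\cT_{c_{-n}}(t_1)=F^{-n}\bigl(\cT_{c_0}^{1/\udelta,\delta'j_n}(t_1)\bigr)$, where the Pliss-truncation level $j_n$ is of order $n$ by the positive density of Pliss moments (\propref{freq pliss back}); hence $F^{n}(c')\in\cT_{c_0}^{1/\udelta,\delta'j_n}(t_1)$ has both of its coordinates in the chart $\Phi_{c_0}$ of size $\lesssim\lambda^{\delta'j_n}$, so that $F^{n}(c')\to c_0$ with $F^n(c')\neq c_0$, and $F^{n+1}(c')\to c_1$. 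I would then run the argument from the proof of \propref{measure in crit disk}: because $F^{n+1}(c')$ lies within $\lambda^{\delta'j_n}$ of $c_1$, its forward orbit shadows the critical orbit for $\gtrsim\bepsilon^{-1}n$ consecutive steps inside the regular neighborhoods $\cU_{c_m}$, so by \propref{vert angle shrink} the stable direction $E^{ss}_{F^{n+1}(c')}$ is exponentially close to the vertical direction of $\cU_{c_1}$; \propref{decay reg} bounds the forward $\udelta$-regularity factor of $F^{n+1}(c')$ by $\lesssim L^2\lambda^{-2\udelta n}$; and \propref{for reg no rec}, applied to the backward arc of orbit running from $F^{n+1}(c')$ through $F^{n}(c')$ and beyond (which stays in the neighborhoods $\cU_{c_{-i}}$ by \lemref{pinch neigh fit}), then produces a length $n'$ for which $F^{n+1-n'}(c')$ is \emph{not} $n'$-times forward $(L,\delta)_v$-regular along $DF^{n+1-n'}(E^{ss}_{c'})$. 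This contradicts the fact that $c'$, being infinitely forward $(L,\epsilon)$-regular and hence infinitely forward $(L,\udelta)$-regular (as $\udelta>\epsilon$), is --- by \propref{decay reg} again --- $n'$-times forward $(\lesssim L^2\lambda^{-2\udelta n},\delta)_v$-regular along precisely that direction.

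The step I expect to be the main obstacle is the exponent bookkeeping of the last paragraph, since \propref{measure in crit disk} cannot be quoted verbatim: $\Delta_{c_0}^c$ only places $F^n(c')$ within $\lambda^{\delta'j_n}$ of $c_{-n}$, which is a larger neighborhood than the $\lambda^{\delta n}$-ball appearing in \propref{measure in crit disk}, so one must follow the orbit to its closest approach to $c_1$ and keep track of constants. Concretely, one must choose the auxiliary small constants ($\udelta$, $\bepsilon$, $\bdelta$, $\eta$) so that: the $\lesssim\lambda^{\delta'j_n}$ proximity of $F^{n+1}(c')$ to $c_1$ places it inside the disc $\bbD_{c_1}(1/L')$ with $L'$ of order $\lambda^{-2\udelta(2+\bdelta)n}$ required by \propref{for reg no rec} ($L'$ being the $(2+\bdelta)$-th power of the forward $\udelta$-regularity factor of $F^{n+1}(c')$); the length $n'$ can be taken in the admissible window of \propref{for reg no rec} (whose endpoints are comparable to $\logl L'$, hence of order $\udelta n$) while remaining below $j_n$, so that \lemref{pinch neigh fit} keeps the pulled-back orbit in the regular neighborhoods $\cU_{c_{-i}}$; and the $\udelta$-regularity factor $\lesssim L^2\lambda^{-2\udelta n}$ of $F^{n+1}(c')$ given by \propref{decay reg} does not exceed the $L$-parameter of \propref{for reg no rec}. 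All of these amount to a handful of inequalities among $\delta$, $\udelta$, $\delta'=(1-\bdelta)\delta$, $\bepsilon$, $\eta$ --- it suffices, for instance, that $\udelta\le\delta/4$ and that $\bepsilon,\bdelta,\eta$ be small relative to $\delta$ --- and these are compatible with the constraints already imposed in \secref{sec:unicrit}; the rest is routine.
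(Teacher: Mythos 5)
Your proposal is correct, but the mechanism you use for the contradiction is genuinely different from the paper's. The paper takes a shorter route through \propref{no crit in reg nbh}: since the candidate critical value $q_1$ lies in $\Delta_{c_0}^{fc}$, it sits inside a tunnel $\cT_{c_{-N}}$ for arbitrarily large $N$, and the slow tunnel escape moment $-l$ for $q_{1+N}$ then guarantees that the orbit of $q_1$ stays inside the regular neighborhoods of the $(l-N,N)$-time linearization of the critical orbit around $c_{-N}$; \propref{no crit in reg nbh} immediately excludes any (generalized) critical point from such a neighborhood. You instead adapt the mechanism from the proof of \propref{measure in crit disk}: $c'\in\Delta^c_{c_0}$ forces $F^{n+1}(c')$ close to $c_1$ with $E^{ss}_{F^{n+1}(c')}$ nearly aligned with the critical direction, and \propref{for reg no rec} then destroys $n'$-step forward regularity along the pulled-back direction, contradicting the $\propref{decay reg}$-controlled regularity of $c'$'s forward orbit. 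Both work, and your version has the minor advantage of only using the condition $E^{ss}_{c'}=E^c_{c'}$ (which is literally what \emph{regular critical orbit} means), whereas the paper's appeals to the projective-derivative criterion $\partial_\bbP F^{\pm n}(E^*)\geq1$ --- i.e., the paper in effect proves uniqueness of generalized critical points, and then silently uses \propref{unique crit value} to pass to regular critical values. On the other hand the paper's argument uses both the forward \emph{and} backward shadowing (via the slow escape moment $l>\bdelta_1^{-1}N$), which makes the constants cleaner than your one-sided forward-regularity contradiction. Two small improvements to your bookkeeping: (1) you do not need to pass from $(L,\epsilon)$-regularity to $(L,\udelta)$-regularity before applying \propref{decay reg}; the decay bound holds directly with marginal exponent $\epsilon$, giving the sharper factor $L_{c'}^2\lambda^{-2\epsilon(n+1-n')}$, and the containment in the window of \propref{for reg no rec} then only needs $\epsilon\ll\delta$ (which is already standing) rather than the extra hypothesis $\udelta\le\delta/4$; (2) the disjointness-of-orbits remark via \propref{inv dust} is not needed for the contradiction --- once $c'\neq c_m$ for all $m$, you immediately have $c'\in\Delta^c_{c_0}\cup\Delta^v_{c_1}$ from \propref{trap irreg}, which is all you use.
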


\begin{proof}
Let $q_1 \in \Lambda \setminus \{c_1\}$ be a generalized critical value. Since $q_1$ cannot be Pesin regular, $q_1 \in \Delta_{c_0}^{fc}$ by \propref{trap irreg}. For concreteness, assume $q_1 \in \Delta_{c_0}^c$. Then $q_1 \in \cT_{c_{-N}}$ for $N$ arbitrarily large. Let $-l<0$ be the slow tunnel escape moment for $q_{1+N}$. Then we have
$
l > \bdelta_1^{-1}N.
$ 
Since $c_{-N}$ is $(l-N, N)$-times $(\lambda^{-\bepsilon N}, \epsilon)_v$-regular along $\hE^v_{c_{-N}}$, we contradict \propref{no crit in reg nbh}.
\end{proof}


\section{Total Ordering Inside a Unicritical Cover}\label{sec:tot order}

Let $F$ be the mildly dissipative, infinitely renormalizable and regularity unicritical diffeomorphism considered in \secref{sec:unicrit}.

In \thmref{ref uni}, choose
$$
t = \bft := {L_1}^{-1}
\matsp{and}
L := 1/\underline{\bft}.
$$
Lastly, denote
$$
\cT_{c_m} := \cT_{c_m}(\bft)
\matsp{for}
m \in \bbZ,
$$
and
$$
\cd := \cd_\bft
\matsp{and}
\vd := \vd_\bft,
$$
where the $t$-critical depth $\cd_t$ and the $t$-valuable depth $\vd_t$ are defined as in \subsecref{subsec:crit tunnel}.


\subsection{Outer boundaries of tunnels and crescents}
Let
$$
\delta' := (1-\bdelta)\delta.
$$
For $n \geq 0$ and $i \in \{0, 1\}$, denote
$$
U_{c_i}^{\delta'n}(a) := \bbB(a\lambda^{\delta' n} l_{c_i}, l_{c_i}) := (-a\lambda^{\delta' n}l_{c_i}, a\lambda^{\delta' n}l_{c_i}) \times (-l_{c_i}, l_{c_i}) \subset \bbR^2,
$$
where $l_{c_i}>0$ is the regular radius at $c_i$. For $a > 0$, let
$$
T_{c_0}(a) := \{(x,y) \in U_{c_0} \; | \; |x| < a\bft \; \text{and} \; |y| < |x|^{1/\udelta}\}
$$
and
$$
\bpartial T_{c_0}(a) := \{(x,y) \in U_{c_0} \; | \; |x| < a\bft \; \text{and} \; |y| = |x|^{1/\udelta}\}.
$$
Let $j_n \geq 0$ be the largest integer such that $j_n \leq n$ and $-j_n$ is a Pliss moment in the backward orbit of $c_0$. Define
\begin{equation}\label{eq:tunnel a}
\cT_{c_{-n}}(a) := F^{-n}\circ \Phi_{c_0}^{-1}(T_{c_0}(a) \cap U_{c_0}^{\delta' j_n}(a))
\end{equation}
and
$$
\bpartial\cT_{c_{-n}}(a) := F^{-n}\circ \Phi_{c_0}^{-1}(\bpartial T_{c_0}(a) \cap U_{c_0}^{\delta' j_n}(a)).
$$
Note that $\cT_{c_{-n}} = \cT_{c_{-n}}(1)$. The {\it outer boundary of $\cT_{c_{-n}}$} is defined as
$$
\bpartial \cT_{c_{-n}} := \bpartial \cT_{c_{-n}}(2).
$$

A Jordan arc $\Gamma : (0, 1) \to \cT_{c_{-n}}(2)$ is said to be {\it vertically proper in $\cT_{c_{-n}}$} if $\Gamma$ extends continuously to $[0, 1]$, and $\Gamma(0)$ and $\Gamma(1)$ are in distinct components of $\bpartial \cT_{c_{-n}} \setminus \{c_{-n}\}$. More generally, a Jordan arc $\Gamma$ in $\Omega$ is said to be vertically proper in $\cT_{c_{-n}}$ if $\Gamma \cap \cT_{c_{-n}}(2)$ is.

For $a > 0$, let
$$
T_{c_1}(a) := F_{c_0}(T_{c_0}(a))
\matsp{and}
\bpartial T_{c_1}(a) := F_{c_0}(\bpartial T_{c_0}(a)),
$$
where
$
F_{c_0} := \Phi_{c_1}\circ F \circ \Phi_{c_0}^{-1}.
$
For $n \geq 1$, let $i_n \geq 1$ be the largest integer such that $i_n \leq n$ and $i_n$ is a Pliss moment in the forward orbit of $c_1$. Define
\begin{equation}\label{eq:crescent a}
\cT_{c_n}(a) := F^{n-1}\circ \Phi_{c_1}^{-1}(T_{c_1}(a) \cap U_{c_1}^{\delta' i_n}(a))
\end{equation}
and
$$
\bpartial\cT_{c_n}(a) := F^{n-1}\circ \Phi_{c_1}^{-1}(\bpartial T_{c_1}(a) \cap U_{c_1}^{\delta' i_n}(a)).
$$
Note that $\cT_{c_n} = \cT_{c_n}(1)$. The {\it outer boundary of $\cT_{c_n}$} is defined as
$$
\bpartial \cT_{c_n} := \bpartial \cT_{c_n}(2).
$$
Let
$$
\bbL^{h, +}_0 := \{(x, 0) \; | \; 0 < x < l_{c_1}\}
$$
and
$$
W^{h, +}_{c_n} := F^{n-1} \circ \Phi_{c_1}^{-1}(\bbL^{h, +}_0 \cap U_{c_1}^{\delta' i_n}(2)).
$$
Denote
$$
\tilde \cU_{c_1}^{i_n}(2) := \cU_{c_1}^{\delta' i_n}(2) \cap \cU_{c_1}^0(2\bft^2).
$$
The {\it crescent gap $\cT^g_{c_n}$ at $c_n$} is defined to be the connected component of $F^{n-1}(\tilde \cU_{c_1}^{\delta' i_n}(2)) \setminus \cT_{c_n}(2)$ that contain $W^{h, +}_{c_n}$. The {\it filled valuable crescent at $c_n$} is defined as
$$
\cT^f_{c_n} := \cT_{c_n}^2 \cup \cT^g_{c_n}.
$$

A Jordan arc $\Gamma : (0, 1) \to \cT^f_{c_n}$ is said to be {\it vertically proper in $\cT_{c_n}$} if $\Gamma$ is a concatenation of three curves $\Gamma_1 : (0, a_1) \to \cT_{c_n}(2)$; $\Gamma_2 : [a_1, a_2] \to \cT^g_{c_n}$ and $\Gamma_3 : (a_2, 1) \to \cT_{c_n}(2)$ such that $\Gamma_1$ and $\Gamma_3$ extends continuously to $[0, a_1]$ and $[a_2, 1]$ respectively; and for $i \in \{1, 2, 3\}$, the endpoints of $\Gamma_i$ are in distinct components of $\bpartial \cT_{c_n} \setminus \{c_n\}$. More generally, a Jordan arc $\Gamma$ in $\Omega$ is said to be vertically proper in $\cT_{c_n}$ if $\Gamma \cap \cT_{c_n}(2)$ is.

\begin{lem}\label{proper at value}
If a $C^1$-curve $\Gamma$ is sufficiently stable-aligned in $\cU_{c_1}$, and
$$
\Gamma \cap \cT_{c_1}^f \neq \varnothing,
$$
then $\Gamma$ is vertically proper in $\cT_{c_1}$.
\end{lem}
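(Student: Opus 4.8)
The plan is to transport the picture to the uniformizing coordinate $\Phi_{c_1}$ near $c_1$ --- where, by \thmref{unif reg crit}, $F_{c_0}:=\Phi_{c_1}\circ F\circ\Phi_{c_0}^{-1}$ is $(x,y)\mapsto(x^2-\lambda y,x)$ and $\|\Phi_{c_1}^{\pm1}\|_{C^r}=O(1)$ --- and to exploit that a sufficiently stable-aligned curve is strictly flatter than the critical parabola. Writing points of $U_{c_1}$ as $(u,w)$, the crescent $T_{c_1}(a)=F_{c_0}(T_{c_0}(a))$ is the image of the pinched set $\{|x|<a\bft,\ |y|<|x|^{1/\udelta}\}$, hence a thin band around the critical parabola $\cP:=\{u=w^2\}=\Phi_{c_1}(W^c(c_1))$ of half-width $\asymp\lambda|w|^{1/\udelta}$, bounded by the branches $\ell:u=w^2-\lambda|w|^{1/\udelta}$ (inner side) and $\rho:u=w^2+\lambda|w|^{1/\udelta}$ (mouth side), both tangent to $W^{ss}_{\loc}(c_1)=\{u=0\}$ at $0$; the filled crescent adds the gap $\cT^g_{c_1}$, which fills the mouth $\{u>w^2\}$ about the axis $W^{h,+}_{c_1}=\{(u,0):u>0\}$ out to the box $|u|<2\bft^2l_{c_1}$. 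Thus $T_{c_1}(2)$ and $T^f_{c_1}$ lie in $\{u>0\}$ off the pinch point, and $\bpartial\cT_{c_1}\setminus\{c_1\}$ has as components the four arcs $\ell^{\pm},\rho^{\pm}$ (signs of $w$). Since $\|\Phi_{c_1}^{\pm1}\|_{C^r}=O(1)$, the hypothesis says that $\gamma:=\Phi_{c_1}(\Gamma\cap\cU_{c_1})$, wherever it has positive first coordinate, is locally a graph $u=\phi(w)$ with $|\phi'(w)|\le Kk\,\phi(w)^{1/2}$ for a uniform $K$ and the small constant $k$ from the definition of stable alignment.

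From $|\phi'|\le Kk\,\phi^{1/2}$ one gets $|(\phi^{1/2})'|\le Kk/2$, so $\phi^{1/2}$ is $\tfrac{Kk}{2}$-Lipschitz in $w$ where $\phi>0$; since $Kk<1$, $\gamma$ is flatter than $\cP$. I would then run the following comparison: if $\phi(w_0)=w_0^2$ with $w_0\ne0$, then $\psi:=w^2-\phi$ vanishes at $w_0$ with $\psi'(w)=2w-\phi'(w)$ of sign $w_0$ and magnitude $\ge(2-Kk)|w_0|\ge|w_0|$ nearby, so $\psi$ leaves the band $|\psi|\le2\lambda|w|^{1/\udelta}$ after $w$ moves a distance $O(\lambda|w_0|^{1/\udelta-1})\ll|w_0|$ --- decreasing (so $\phi$ rises above $w^2$, i.e.\ $\gamma$ crosses $\rho$ into the mouth) as $w\to0$, and increasing (so $\gamma$ crosses $\ell$ out of $T^f_{c_1}$) as $|w|$ grows. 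Moreover the Lipschitz bound gives $\phi(w)=w_0^2(1+O(Kk))>w^2+2\lambda|w|^{1/\udelta}$ for $|w|$ bounded away from $|w_0|$, so there $\gamma$ stays strictly inside $\cP$; comparing the $\delta'$-truncation of $\cT_{c_1}(2)$ with the box $|u|<2\bft^2l_{c_1}$ one checks that $w_0^2$ stays below that box whenever $(w_0^2,w_0)\in T_{c_1}(2)$, so that the whole excursion lies in $\cT^g_{c_1}$.

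To conclude, we may assume $\Gamma$ is proper in $\cU_{c_1}$ (as it is for every curve to which the lemma is applied). Given $(\bar u,\bar w)=\Phi_{c_1}(\bar p)\in\gamma\cap T^f_{c_1}$, one has $\bar u>0$, and by properness $\phi$ is defined on an interval whose closure contains $\pm\sqrt{\bar u}$; by the comparison, $\phi$ meets $w^2$ at exactly two points $w_{\pm}=\pm\sqrt{\bar u}\,(1+o(1))$, near which $\gamma$ runs from $\rho^+$ to $\ell^+$ and from $\ell^-$ to $\rho^-$ respectively, while for $w\in(w_-,w_+)$ it sits in $\cT^g_{c_1}$. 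This displays $\gamma\cap T^f_{c_1}$ as the required crescent--gap--crescent concatenation with successive endpoints in distinct components of $\bpartial\cT_{c_1}\setminus\{c_1\}$, i.e.\ $\Gamma$ is vertically proper in $\cT_{c_1}$. I expect the main obstacle to be not this comparison, which is elementary, but the scale bookkeeping of the middle step --- verifying that the excursion between $w_-$ and $w_+$ really stays inside the \emph{truncated} sets $\cT_{c_1}(2)$ and $\cT^g_{c_1}$, and that the band-crossings fall on the genuine outer boundary $\bpartial\cT_{c_1}$ and not on a truncation edge --- which is where the relations among $\bft$, $l_{c_1}$, $\delta'$, $\udelta$ and the Pliss moment $i_1$ come in.
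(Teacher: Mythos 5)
The paper states this lemma without proof, treating it as an immediate geometric consequence of the uniformization in \thmref{unif reg crit}, so there is no ``paper proof'' to match against. Your sketch is the natural argument the authors implicitly rely on, and I believe it is essentially correct: in the $\Phi_{c_1}$-chart the crescent $T_{c_1}(a)$ is precisely the band $\{|u-w^2|<\lambda|w|^{1/\udelta},\ |w|<a\bft\}$ (since $F_{c_0}(x,y)=(x^2-\lambda y,x)$), the gap fills the mouth of the parabola around $\{(u,0):u>0\}$, and the $k|u|^{1/2}$-verticality of a stable-aligned curve translates (after absorbing the uniformly bounded angle distortion of $D\Phi_{c_1}^{\pm1}$) into $|\phi'|\le Kk\,\phi^{1/2}$, hence $\phi^{1/2}$ is $\tfrac{Kk}{2}$-Lipschitz with $Kk<1$. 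The comparison with $\psi(w)=|w|$, which has Lipschitz constant $1$, then forces exactly two transversal band-crossings with an excursion into the mouth in between, which is the crescent--gap--crescent decomposition required by the definition of vertical properness.

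Two small points deserve attention, and you correctly flag the first. The scale bookkeeping is a genuine loose end: you must check that the two crossings of the band fall on $\bpartial\cT_{c_1}=\bpartial\cT_{c_1}(2)$ (the $|y|=|x|^{1/\udelta}$ part of $\partial T_{c_0}(2)$, pushed forward) rather than on the $|x|=2\bft$ cap or the $U^{\delta'i_1}_{c_1}$-truncation, and that the excursion between $w_-$ and $w_+$ stays inside $\tilde\cU_{c_1}^{i_1}(2)$. Since the excursion has $u\approx\bar u<2\bft^2 l_{c_1}$ while the $u$-truncations are at $\asymp\lambda^{\delta'}l_{c_1}$ and $2\bft^2 l_{c_1}$, and the band crossings occur at $|w|\approx\sqrt{\bar u}<2\bft\ll l_{c_1}$, this should work out, but it is a real computation. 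The second point is that the lemma as stated does not explicitly say $\Gamma$ is proper in $\cU_{c_1}$; you correctly note that this is how the lemma is applied (to $\hW^v(p)$ and $W^v_{c_{-n}}(q)$, both proper in their regular neighborhoods), and without it the conclusion as written can fail for very short arcs. It is worth spelling out that properness in $\cU_{c_1}$, together with near-verticality, guarantees the $w$-range of $\gamma$ contains $[-\sqrt{\bar u},\sqrt{\bar u}]$, which is what your excursion argument needs.
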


Let
$$
\hdelta := (1+\bdelta)\delta.
$$
For $a > 0$, let
$$
\hT_{c_0}(a) := \{(x,y) \in U_{c_0} \; | \; |x| < a\bft \; \text{and} \; |y| < |x|^{1/\hdelta}\} \supset T_{c_0}(a).
$$
For $n \geq 0$, define $\hcT_{c_{-n}}(a)$ by replacing $T_{c_0}(a)$ in \eqref{eq:tunnel a} with $\hT_{c_0}(a)$. Let $\hcU_{c_{-n}}$ be a regular neighborhood at $c_{-n}$ associated with a linearization of its $(\infty, n)$-time orbit with vertical direction $\hE^v_{c_{-n}}$. This means, in particular, that whenever $-n$ is a Pliss moment in the backward orbit of $c_0$, we have
$
\radius(\hcU_{c_{-n}}) \asymp 1.
$
The set
\begin{equation}\label{eq:thick tunnel}
\hcT_{c_{-n}} := \hcT_{c_{-n}}(1) \cap \hcU_{c_{-n}} \supset \cT_{c_{-n}}
\end{equation}
is referred to as a {\it thickened critical tunnel at $c_{-n}$}. A {\it thickened valuable crescent $\hcT_{c_{1+n}}$ at $c_{1+n}$} is defined by replacing $T_{c_1}(a)$ in \eqref{eq:crescent a} with
$$
\hT_{c_1}(a) := \Phi_{c_1}\circ F\circ \Phi_{c_0}^{-1}(\hT_{c_0}(a)),
$$
and letting
\begin{equation}\label{eq:thick crescent}
\hcT_{c_{1+n}} := \hcT_{c_{1+n}}(1) \supset \cT_{c_{1+n}}.
\end{equation}

\begin{lem}\label{fit in thick tunnel}
Let $0 \leq m < n$. If $c_{-n} \in \cT_{c_{-m}}$, then
$
\hcT_{c_{-n}} \subset \hcT_{c_{-m}}.
$
If $c_n \in \cT_{c_{-m}}$, then
$
\cT_{c_n} \subset \hcT_{c_{-m}}.
$
\end{lem}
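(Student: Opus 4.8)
The plan is to push both inclusions forward by $F^m$, reducing them to inclusions among (thin and thick) truncated pinched regions based at the critical point $c_0$, and then to verify these in the linearizing chart $\Phi_{c_0}$, using the near-linear behaviour of $F$ along the backward critical orbit together with the slow recurrence of $\{c_m\}$.

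First I would locate the relevant orbit point near $c_0$. Write $\hR_k:=\Phi_{c_0}^{-1}\bigl(\hT_{c_0}(1)\cap U_{c_0}^{\delta' j_k}(1)\bigr)$, so that $\hcT_{c_{-k}}=F^{-k}(\hR_k)\cap\hcU_{c_{-k}}$ by \eqref{eq:thick tunnel}. If $c_{-n}\in\cT_{c_{-m}}$, then applying $F^m$ and using $\cT_{c_{-m}}=F^{-m}\circ\Phi_{c_0}^{-1}\bigl(T_{c_0}(1)\cap U_{c_0}^{\delta' j_m}(1)\bigr)$ together with \propref{tunnel fit}, the point $c_{m-n}$ lies in $\Phi_{c_0}^{-1}\bigl(T_{c_0}(1)\cap U_{c_0}^{\delta' j_m}(1)\bigr)$; writing $(\xi,\eta):=\Phi_{c_0}(c_{m-n})$ this yields $|\xi|<\min\{\bft,\lambda^{\delta' j_m}l_{c_0}\}$ and $|\eta|<|\xi|^{1/\udelta}$. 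On the other hand, \propref{return crit disc} (with \lemref{no pliss near crit}) applied at index $m-n$ gives the matching lower bound $|\xi|\gtrsim\lambda^{\bepsilon(n-m)}$, which is what keeps the pulled-back tunnel strictly inside the truncation window of $\hR_m$. For the second statement one runs the same argument with $c_{n+m}=F^m(c_n)$ in place of $c_{m-n}$.

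For the first statement, note $\hcT_{c_{-n}}=F^{-m}\bigl(F^{-(n-m)}(\hR_n)\bigr)\cap\hcU_{c_{-n}}$, so $\hcT_{c_{-n}}\subset\hcT_{c_{-m}}$ is equivalent to the pair of inclusions $F^{-(n-m)}(\hR_n)\cap F^m(\hcU_{c_{-n}})\subset\hR_m$ and $\hcT_{c_{-n}}\subset\hcU_{c_{-m}}$. The second follows at once from \propref{tunnel size} (bounding $\dist(c_{-n},c_{-m})$ by $\lambda^{\delta m}$) and the construction of the thickened regular neighborhoods, whose radii dominate $\lambda^{\delta m}$ and $\diam(\hcT_{c_{-n}})$. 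For the first, I would transport everything into $\Phi_{c_0}$: by \thmref{reg chart}, \thmref{unif reg crit} and \propref{loc linear}, on $\hR_n$ the map $\Phi_{c_0}\circ F^{-(n-m)}\circ\Phi_{c_0}^{-1}$ is $C^1$-close to the composition of $\operatorname{diag}(1,\lambda^{-(n-m)})$ with the bounded-distortion chart change to coordinates centred at $(\xi,\eta)$; hence its image has horizontal extent $\lesssim\lambda^{\delta' j_n}l_{c_0}$ about $\xi$ and vertical extent $\lesssim\lambda^{-(n-m)}\bigl(\lambda^{\delta' j_n}l_{c_0}\bigr)^{1/\hdelta}$ about $\eta$. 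Using $j_n\geq j_m$, the two bounds on $|\xi|$ above, and the ordering $\udelta<\delta'<\delta<\hdelta<1$ with $\eta,\epsilon$ small enough, one checks that both extents remain inside $\hT_{c_0}(1)\cap U_{c_0}^{\delta' j_m}(1)$: vertically, because the gap $1/\udelta-1/\hdelta>0$ between the thin and thick pinching exponents absorbs the backward expansion $\lambda^{-(n-m)}$; horizontally, because $|\xi|$ lies inside the truncation level $\lambda^{\delta' j_m}l_{c_0}$ by a margin dominating the diameter $\asymp\lambda^{\delta n}$. The second statement is entirely analogous, with the valuable crescent $\cT_{c_n}$ (thin, $\diam<\lambda^{\delta n}$ by \propref{tunnel size}, and of shape controlled by the H\'enon transition $F_{c_0}$) in place of $F^{-(n-m)}(\hR_n)$ and $F^{n+m}$ the relevant return map; here the thinness of the crescent against the thickness of $\hR_m$ leaves ample room.

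The step I expect to be the main obstacle is the chart-change bookkeeping: since $\Phi_{c_{m-n}}$ and $\Phi_{c_0}$ are charts based at distinct points, estimating $F^{-(n-m)}(\hR_n)$ directly in $\Phi_{c_0}$-coordinates forces one to combine the near-linearity of $F^{-(n-m)}$ along the backward critical orbit with the bounded-distortion comparison between the two charts, and to check that the accumulated error factors (the $\lambda^{\pm\bepsilon}$ powers and the $O(\bL)$ distortions) are swallowed by the gaps between $\udelta,\delta',\delta,\hdelta$. A secondary delicate point is to confirm $j_n\geq j_m$, so that the truncation of $\hR_n$ is at least as severe as that of $\hR_m$ and no part of the pulled-back tunnel escapes the horizontal window of $\hR_m$.
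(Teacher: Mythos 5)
Your reduction (push forward by $F^m$, compare regions based at $c_0$) and your lower bound on $|\xi|=|x_0|$ from the slow recurrence of the critical orbit are both consistent with the paper's argument, which obtains the same facts from \propref{tunnel inv} (giving $n>\bepsilon^{-1}m$) and \propref{no crit in crit nbh}. The gap is in your vertical-extent estimate. You bound the vertical extent of $F^{-(n-m)}(\hR_n)$ by $\lambda^{-(n-m)}\bigl(\lambda^{\delta'j_n}l_{c_0}\bigr)^{1/\hdelta}$ and claim the gap $1/\udelta-1/\hdelta>0$ absorbs the expansion $\lambda^{-(n-m)}$. It does not: since $\delta'/\hdelta<1$ and $j_n\le n$ while $m\ll n$, the exponent $\delta'j_n/\hdelta-(n-m)$ is negative, so this quantity is $\gg 1$ and cannot fit inside a pinching window of size $\lesssim|x_0|^{1/\hdelta}\ll 1$. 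The defect is that you estimate the full pullback $F^{-(n-m)}(\hR_n)$, which is a tall strip, rather than the tunnel itself, i.e.\ its intersection with $F^m(\hcU_{c_{-n}})$, whose diameter is exponentially small.

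The paper feeds this in through \propref{tunnel size}: $\diam(\hcT_{c_{-n+m}})<\lambda^{\delta n}$. Combined with $|x_0|>\lambda^{\bepsilon(n-m)}$ (equivalently $n>\bepsilon^{-1}\logl|x_0|$) this gives $\diam(\hcT_{c_{-n+m}})<|x_0|^{1/\bepsilon}\ll|x_0|^{1/\hdelta}$, so the whole tunnel lies in the ball $\bbD_{c_{-n+m}}(k|x_0|^{1/\hdelta})\subset\hcT_{c_0}$, and the lemma follows by pulling back $m$ steps. This single isotropic comparison against the thick-pinching allowance $|x_0|^{1/\hdelta}$ replaces your separate horizontal/vertical estimates and removes the chart-change bookkeeping you flagged as the main obstacle. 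You already invoke the tunnel diameter $\asymp\lambda^{\delta n}$ in your horizontal argument; that same bound, applied isotropically, is what the proof actually needs.
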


\begin{proof}
By \propref{tunnel inv}, we have
$
n > \bepsilon^{-1}m.
$
Write
$
(x_0, y_0) := \Phi_{c_0}(c_{-n+m}).
$
Since
$$
\radius(\cU_{c_{-n+m}}) > \lambda^{\bepsilon(n-m)},
$$
we have
$$
n > \bepsilon^{-1}\logl |x_0|
$$
by \propref{no crit in crit nbh}. Hence, by \propref{tunnel size}
$$
\diam(\hcT_{c_{-n+m}}) < \lambda^{\delta n} < |x_0|^{1/\bepsilon}.
$$
Since
$$
\bbD_{c_{-n+m}}(k|x_0|^{1/\hdelta}) \subset \hcT_{c_0},
$$
for some uniform constant $k > 0$, the first claim follows.

The proof of the second claim is similar, and will be omitted.
\end{proof}


\subsection{Disjoint vertical manifolds}

Let $p \in \Lambda \setminus \{c_{-n}\}_{n = 0}^\infty$. If $p \notin \Delta^c_{c_0}$ (where $\Delta^c_{c_0}$ is the critical dust of $c_0$), then by \propref{trap irreg}, $p$ has a well-defined local strong-stable manifold $W^{ss}_{\loc}(p) \subset \cU_p$. If $\cd(p) = \varnothing$, define
$$
\hW^v(p) := W^{ss}_{\loc}(p).
$$
If $\cd(p) = d \in [0, \infty)$, define
$$
\hW^v(p) := W^{ss}_{\loc}(p) \cap \hcT_{c_{-d}}.
$$

For $n \geq 0$, we have $c_{-n} \notin \Delta^c_{c_0}$ by \propref{tunnel inv}. Let $\hcU_{c_{-n}}$ be a regular neighborhood at $c_{-n}$ associated with a linearization of its $(\infty, n)$-time orbit with vertical direction $\hE^v_{c_{-n}}$. For $q \in \cT_{c_{-n}}$, define $W^v_{c_{-n}}(q)$ to be the local vertical manifold at $q$ in $\hcU_{p_0}$. Moreover, let
$$
W^v_{c_{-n}}(\cT_{c_{-n}}) := \bigcup_{q \in \cT_{c_{-n}}} W^v_{c_{-n}}(q)
$$
(which is not a curve but an open set foliated by vertical curves). If $\cd(c_{-n}) = \varnothing$, define
$$
\hW^v(q) := W^v_{c_{-n}}(q)
\matsp{for}
q \in \cT_{c_{-n}}
$$
and
$$
\hW^v(\cT_{c_{-n}}) := W^v_{c_{-n}}(\cT_{c_{-n}}).
$$
If $\cd(c_{-n}) = d \in [0, \infty)$, define
$$
\hW^v(q) := W^v_{c_{-n}}(q) \cap \hcT_{c_{-d}}
\matsp{for}
q \in \cT_{c_{-n}}
$$
and
$$
\hW^v(\cT_{c_{-n}}) := W^v_{c_{-n}}(\cT_{c_{-n}}) \cap \hcT_{c_{-d}}.
$$

\begin{prop}\label{vertical disjoint}
Let $p, q \in \Lambda \setminus \Delta^c_{c_0}$. If $p$ and $q$ are not contained in the same connected component of $\Lambda$, then
$
\hW^v(p) \cap \hW^v(q) = \varnothing.
$
\end{prop}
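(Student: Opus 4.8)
The plan is to argue by contradiction. Suppose $z\in\hW^v(p)\cap\hW^v(q)$ with $p$ and $q$ in distinct connected components of $\Lambda$. Recall from \secref{sec:inf ren} that there is a continuous semi-conjugacy $\pi:\Lambda\to\bbO$ whose fibers are exactly the connected components of $\Lambda$ and which conjugates $F|_\Lambda$ to the odometer $S$. Since $S$ is an isometry for the standard (ultrametric) metric $d_\bbO$ on $\bbO$, we have $d_\bbO(\pi(p),\pi(q))=d_\bbO(\pi(F^np),\pi(F^nq))$ for every $n\ge 0$; hence if $\dist(F^{n_k}p,F^{n_k}q)\to 0$ along some sequence $n_k\to\infty$, then continuity of $\pi$ forces $d_\bbO(\pi(p),\pi(q))=0$, so $p$ and $q$ lie in a common fiber — a contradiction. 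Thus it suffices to exhibit arbitrarily late times at which the orbits of $p$ and $q$ are arbitrarily close, or (equivalently, after iterating) to reduce to a situation already treated.

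\textbf{The uniformly regular case.} Assume first that neither $p$ nor $q$ lies on the backward critical orbit $\{c_{-n}\}_{n\ge 0}$, and that $\cd(p)=\cd(q)=\varnothing$. Then $\hW^v(p)=W^{ss}_{\loc}(p)$ and $\hW^v(q)=W^{ss}_{\loc}(q)$ are local strong-stable manifolds of uniformly forward $(K,\delta)$-regular points (\propref{trap irreg}). By \propref{vert angle shrink} the strong-stable direction is the unique infinitely forward regular direction, and combined with the linearization of \thmref{reg chart} this shows that two such local manifolds sharing the point $z$ coincide near $z$; hence $W^{ss}(p)=W^{ss}(z)=W^{ss}(q)$. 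Since the vertical direction contracts along a strong-stable manifold (\thmref{reg chart}(ii)), $\dist(F^np,F^nq)\to 0$ exponentially, and the strategy above yields the contradiction.

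\textbf{Reduction of the general case.} When $\cd(p)=d_p\in[0,\infty)$ the curve $\hW^v(p)$ is confined to the thickened critical tunnel $\hcT_{c_{-d_p}}$ (by construction and \lemref{fit in thick tunnel}), while $\hW^v(p)\subset W^{ss}_{\loc}(p)$ unless $p=c_{-n}$, in which case $\hW^v(p)\subset W^v_{c_{-n}}(c_{-n})$. If $z\in\hW^v(p)\cap\hW^v(q)$, then $z$ lies in both thickened tunnels, and by \propref{tunnel inv} together with \lemref{fit in thick tunnel} the thickened tunnels through $z$ are nested; say $d_p\ge d_q$. If $d_p>0$, choose a (slow or fast) tunnel escape moment $-j<0$ for the relevant orbit (\propref{tunnel escape}, \lemref{edge fast escape}); applying $F^{j}$ sends $z$ to a point of $\hW^v(F^{j}p)\cap\hW^v(F^{j}q)$ and, by \propref{tunnel inv}, strictly decreases the critical depths of $F^j p$ and $F^j q$ (or makes them $\varnothing$). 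Since $p,q\notin\Delta^c_{c_0}$ these depths are finite, and $F^j p$, $F^j q$ lie in distinct components iff $p,q$ do, so after finitely many such passages both points have critical depth $\varnothing$; unless one of them has meanwhile been carried onto the backward critical orbit, we are reduced to the uniformly regular case above.

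\textbf{Points on the backward critical orbit, and the main difficulty.} It remains to treat $p=c_{-n}$ with $\cT_{c_{-n}}$ the deepest tunnel through $p$ (the valuable-crescent side is symmetric). Here $F^n$ maps $\hW^v(c_{-n})$ into $W^v(c_0)$, a sufficiently vertical $C^r$-curve through $c_0$ of uniformly bounded geometry in $\cU_{c_0}$ (\propref{consist ver hor}, \propref{ss c geo}). If $q$ is not on the backward critical orbit, then $F^n z\in W^v(c_0)\cap W^{ss}_{\loc}(q_n)$ with $q_n:=F^n q$; both are sufficiently vertical controlled-geometry curves through distinct points of $\Lambda$ in $\cU_{c_0}$, so they must coincide near $F^n z$, and then $\dist(F^kz,c_k)\to 0$ for $0\le k\le n$ together with the uniformly regular case applied at the critical value $c_1$ produces the contradiction. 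If instead $q=c_{-m}$ with $m\ne n$, put $w:=F^m z$; then $w$ and $F^{n-m}w$ both lie on $W^v(c_0)\subset\cU_{c_0}$, so $\Phi_{c_0}(w)$ has vanishing $x$-coordinate and, pushing it through the H\'enon transition of \thmref{unif reg crit}, its forward orbit becomes center-aligned in $\cU_{c_1},\cU_{c_2},\dots$; it can then return to the vertical segment $W^v(c_0)$ after $n-m-1$ further steps only if $c_{n-m}$ returns to $c_0$ at a scale violating the slow-recurrence bounds of \propref{return crit disc} and \lemref{no pliss near crit} — a contradiction. The routine ingredients are the lamination property of strong-stable manifolds and the odometer isometry; the delicate point, where essentially all the work lies, is this last step: the proxy manifolds $W^v(c_{-m})$ are not genuine strong-stable manifolds (orbits along them do not contract past the critical moment), so their mutual disjointness — and their disjointness from honest stable manifolds — has to be read off from the pinched/thickened tunnel geometry and the slow recurrence of the critical orbit rather than from contraction.
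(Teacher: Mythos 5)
Your overall strategy is genuinely different from the paper's: you build a general contradiction mechanism out of the odometer semi-conjugacy (if $\dist(F^{n_k}p,F^{n_k}q)\to 0$ then $\pi(p)=\pi(q)$, forcing $p,q$ into a common fiber), handle the uniformly regular case by the stable-lamination property, then try to reduce everything else to it. The paper instead treats the case where neither $p$ nor $q$ is on the backward critical orbit as immediate, and for $p=c_{-n}$ gives a short, non-recurrence-based argument: pushing $z$ forward by $F^n$ puts it simultaneously on $\hW^v(c_0)$ (the image of $\hW^v(c_{-n})$) and inside $\hcT_{c_0}$ (since $F^n(\hW^v(q))\subset\hW^v(F^n(q))\subset\hcT_{c_0}$), and these two sets are disjoint by construction because the tunnel is pinched at $c_0$. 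That single disjointness $\hcT_{c_0}\cap\hW^v(c_0)=\varnothing$ replaces all of your recurrence/coincidence considerations.

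The genuine gap in your proof is precisely in the case $p=c_{-n}$, which you yourself flag as where the work lies. You assert that $W^v(c_0)$ and $W^{ss}_{\loc}(q_n)$ ``must coincide near $F^n z$'' because both are sufficiently vertical controlled-geometry curves through distinct points of $\Lambda$ in $\cU_{c_0}$. This does not follow: $W^v(c_0)$ is the local vertical manifold attached to the critical point, not the strong-stable leaf of a Pesin-regular point, so the uniqueness/lamination property of $W^{ss}$ does not apply to it, and two curves of bounded geometry through distinct points can intersect transversally without coinciding anywhere. In fact what you want is exactly the opposite conclusion — that they \emph{cannot} intersect — and the mechanism that delivers it is pinching, not coincidence: by \thmref{pinching} the point $q_n=F^n(q)\in\Lambda\cap\cU_{c_0}$ lies in $\hcT_{c_0}$, hence so does $\hW^v(q_n)$ (and hence $F^n(z)$), while $\hW^v(c_0)$ misses $\hcT_{c_0}$ entirely. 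Your sub-case $q=c_{-m}$ has the same issue: you need no slow-recurrence estimates, because $F^m(\hW^v(c_{-n}))\subset\hW^v(c_{-n+m})\subset\hcT_{c_0}$ and again $F^m(z)\in\hW^v(c_0)$, giving the same immediate contradiction. Separately, your ``reduction of the general case'' has a sign inconsistency: tunnel escape moments $-j<0$ correspond to backward iteration $F^{-j}$, not the forward iterate $F^{j}$ you apply; worse, backward iteration expands along the vertical direction, so it does not obviously preserve the intersection of the truncated curves $\hW^v$, whose definitions are tied to the thickened tunnels of specific depth. That step would need to be rewritten, but once you have the pinching argument for $p=c_{-n}$ the reduction becomes unnecessary anyway, because the case where neither point is on the backward critical orbit really is the easy one (both $\hW^v$'s are honest local strong-stable manifolds, which form a lamination).
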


\begin{proof}
The result is obvious unless either $p$ or $q$ is equal to $c_{-n}$ for some $n \geq 0$. For concreteness, suppose it is the former. Let $\hcU_{c_{-n}}$ be a regular neighborhood at $c_{-n}$ associated with a linearization of its $(\infty, n)$-time orbit with vertical direction $\hE^v_{c_{-n}}$. Then
$
F^n(\hcU_{c_{-n}}^{\bdelta n}) \subset \cU_{c_0}.
$

If
$$
\hW^v(q) \cap \hW^v(c_{-n}) \neq \varnothing,
$$
then we have
$$
F^n(\hW^v(q)) \subset \hW^v(F^n(q)) \subset \hcT_{c_0}.
$$
This contradicts the fact that
$
\hcT_{c_0} \cap \hW^v(c_0) = \varnothing.
$
\end{proof}


\subsection{Unicritical cover}\label{subsec:unicrit cover}

For $p \in \Lambda^P_{L, \delta}$, let $\bcB_p$ be an enclosed regular box at $p$ with outer boundary $\bpartial \bcB_p$ (see \secref{sec:mild diss}). The collection
$$
\cC := \{\bcB_p\}_{p \in \Lambda^P_{L, \delta}} \cup \{\cT_{c_m}\}_{m \in \bbZ}
$$
is a cover of $\Lambda$ by (pinched) neighborhoods. We refer to $\cC$ as a {\it unicritical cover of $\Lambda$}.

\begin{prop}\label{nice cover}
By slightly adjusting the sizes of the elements in $\cC$ if necessary, the following properties hold for all $Q \in \cC$.
\begin{enumerate}[i)]
\item If for some $p \in \Lambda \setminus \Delta^c_{c_0}$, we have $\hW^v(p) \cap Q \neq \varnothing$, then $p \in Q$.
\item If for some $n \geq 0$, we have $\hW^v(\cT_{c_{-n}}) \cap Q \neq \varnothing$, then $\cT_{c_{-n}} \subset Q$.
\end{enumerate}
\end{prop}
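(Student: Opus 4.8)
The plan is to prove \propref{nice cover} by a compactness argument combined with the disjointness result \propref{vertical disjoint}. The statement asserts that after a uniformly bounded shrinking of the covering elements, every vertical manifold that enters a covering element has its base point (or its whole defining tunnel) inside that element. I would first fix the unicritical cover $\cC$ and pass to a finite subcover $\cC_0$ using the compactness of $\Lambda$; since the conclusion only needs to be checked for finitely many $Q$, this reduces the problem to a finite family of local statements.

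For part i), the key input is that the vertical manifolds $\hW^v(p)$ and $\hW^v(q)$ are pairwise disjoint when $p,q$ lie in distinct connected components of $\Lambda$ (\propref{vertical disjoint}), together with the $C^r$-continuity of the strong-stable lamination (\propref{ss c contin}) and the bounded geometry of $W^{ss}_{\loc}(p)$ and of the tunnels (\propref{ss c geo}, \propref{tunnel size}, \lemref{fit in thick tunnel}). Concretely, for each $Q \in \cC_0$ I would argue: if $\hW^v(p)$ meets $Q$ but $p \notin Q$, then $\hW^v(p)$ must exit through the boundary of $Q$ without its base point being captured; but since $\hW^v(p)$ is a curve of controlled length and controlled $C^r$-geometry that is either a genuine $W^{ss}_{\loc}(p)$ (which is proper and vertical in a regular neighborhood of $p$) or a subarc trapped in a thickened tunnel $\hcT_{c_{-d}}$, a slight inward adjustment of the boundary of $Q$ — by a uniform amount dictated by these geometry bounds — forces the base point $p$ into $Q$ as well. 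The freedom to adjust comes from the fact that $Q$ is either an enclosed regular box $\bcB_p$ (whose size is determined by $1/L'$, adjustable within the $\bkappa$-convention) or a tunnel $\cT_{c_m}$ (whose length parameter $\bft$ is likewise adjustable). One must check that the finitely many adjustments are mutually consistent, which is automatic since each is a uniform shrinking.

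For part ii), I would reduce to part i) by using \propref{tunnel inv} (invariance of tunnels under the dynamics) and \lemref{fit in thick tunnel}: if $\hW^v(\cT_{c_{-n}})$ — the open set foliated by vertical curves over $\cT_{c_{-n}}$ — meets $Q$, then in particular some vertical leaf $\hW^v(q)$ with $q \in \cT_{c_{-n}}$ meets $Q$, so by part i) (applied to an appropriate base point, using that $c_{-n}$ itself has a vertical manifold $\hW^v(c_{-n})$ that is proper in $\hcU_{c_{-n}}$) the tunnel $\cT_{c_{-n}}$ is captured. The containment $\cT_{c_{-n}} \subset Q$ then follows because $\cT_{c_{-n}}$ has diameter $< \lambda^{\delta n}$ (\propref{tunnel size}) and sits near $c_{-n}$, while $Q$, once it contains the vertical manifold through $c_{-n}$, contains a full neighborhood of that manifold of uniform horizontal width — again using the adjustability of sizes.

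The main obstacle I anticipate is the bookkeeping at the pinched covering element $\cT_{c_0}$ (and its iterated preimages $\cT_{c_{-n}}$): because these are pinched rather than genuine quadrilaterals, the notion of ``a vertical manifold exits through the boundary'' is subtler, and one must carefully invoke the vertically-proper structure developed for tunnels (the outer boundaries $\bpartial\cT_{c_{-n}}$ and the distinction between $\cT_{c_{-n}}(1)$ and $\cT_{c_{-n}}(2)$) rather than a naive boundary argument. Handling the interaction between a tunnel $\cT_{c_{-n}}$ of positive generation sitting inside a larger tunnel or regular box — so that $\hW^v$ has been truncated by a \emph{thickened} tunnel $\hcT_{c_{-d}}$ — is where the geometry bounds from \lemref{fit in thick tunnel} and \propref{tunnel size} must be combined most carefully; this is the step I would expect to require the most care, though no single estimate should be hard.
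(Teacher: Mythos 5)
Your general strategy — use the bounded geometry of vertical leaves and the adjustability of the covering elements to arrange that a leaf meeting $Q$ has its base inside $Q$ — is the right one, but several steps would not survive scrutiny, and the one place you flag as needing the most care (the pinched tunnels) is precisely where the paper does something you have not reproduced.

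First, the reduction to a finite subcover $\cC_0$ is illegitimate: the proposition quantifies over \emph{all} $Q \in \cC$, and $\cC$ contains every tunnel $\cT_{c_{-n}}$ for $n \geq 0$ and every regular box $\bcB_p$ for $p \in \Lambda^P_{L,\delta}$, so one cannot first discard all but finitely many of them. Second, your appeal to a ``uniform amount'' of shrinking is not borne out: the paper's proof adjusts each critical tunnel at a Pliss moment $-j_i$ by a factor that \emph{depends on the depth}, namely it replaces $\cT_{c_{-j_i}}$ by $\chcT_{c_{-j_i}}$ containing $\cT_{c_{-j_i}}(1-\lambda^{(1-\bdelta)j_i})$ (and pulls back to intermediate indices by the dynamics so that the inductive structure of tunnels, and the results proved about them, persist). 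This is not cosmetic: a uniform shrink of all tunnels by some fixed factor is exactly what the $\bft$-convention already allows, yet it would not resolve the issue because the obstruction changes scale with $j_i$. Third, \propref{vertical disjoint}, which you list as a key input, plays no role here; the disjointness of leaves from distinct components is a separate fact used downstream for the total order, not for capturing base points.

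The actual engine of the paper's proof, which your proposal leaves as an unproved plausibility, is a quantitative angle-vs-width comparison at the tunnel's outer boundary. Suppose $\tiW^v(q_{-j_i})$ enters $\cT_{c_{-j_i}}$ while $q_{-j_i} \notin \cT_{c_{-j_i}}$. By \propref{tunnel escape}, $q_{-j_i}$ is $j_i$-times forward $(1,\delta)_v$-regular, so by \propref{vert angle shrink} the leaf $\tiW^v(q_{-j_i})$ is $\lambda^{(1-\bdelta)j_i}$-vertical in $\cU_{c_{-j_i}}$. On the other hand, the entry coordinate satisfies $|x_{-j_i}| > \lambda^{\bepsilon j_i}|x_0| > \lambda^{(1-\bdelta)\delta j_i}$, so the tunnel is comparatively thick there and its boundary has a much shallower slope than the leaf. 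The conclusion is $\tiW^v(q_{-j_i}) \cap \cT_{c_{-j_i}}(1-\lambda^{(1-\bdelta)j_i}) = \varnothing$: the leaf can only graze the very outer fringe, and discarding a fringe of relative width $\lambda^{(1-\bdelta)j_i}$ (tending to $0$ with depth) removes the intersection entirely. Without this estimate, ``shrink slightly'' is a hope rather than an argument, because a priori a nearly-vertical leaf could penetrate arbitrarily deep into a pinched region. If you want to recover the proposition, I would suggest abandoning the soft boundedness argument and proving exactly this inequality; once it is in hand, the choice of adjusted tunnels $\chcT_{c_{-j_i}}$ (and the observation that all the prior structural results about tunnels are preserved under this adjustment, since it commutes with the dynamical definition $\chcT_{c_{-j_i - l}} = F^{-l}(\chcT_{c_{-j_i}})$) is routine, as is the regular-box case where no pinching occurs.
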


\begin{proof}
Let $\{-j_i\}_{i=0}^\infty$ be the decreasing sequence of Pliss moments in the backward orbit of $c_0$. For $i \geq 0$, let $a_i \in (\lambda^{\udelta}, 1]$. Consider a sequence $\{\chcT_{c_{-n}}\}_{n=0}^\infty$ of slightly smaller critical tunnels that satisfy
$$
\cT_{c_{-j_i}}(a_i) \subset \chcT_{c_{-j_i}} \subset \cT_{c_{-j_i}}
$$
and
$$
\chcT_{c_{-j_i -l}} = F^{-l}(\chcT_{c_{-j_i}})
\matsp{for}
0\leq l < j_{i+1} - j_i.
$$
Observe that all previous results about critical tunnels still hold if we replace $\{\cT_{c_{-n}}\}_{n = 0}^\infty$ by $\{\chcT_{c_{-n}}\}_{n = 0}^\infty$.

Let $p_1 \in \Lambda \cap \cT_{c_1}$ with $\cd(p_1) = \varnothing$. Write
$
(x_0, y_0) := \Phi_{c_0}(p_0).
$
By \thmref{ref uni} and \propref{stable center align}, $\hW^v(p_1)$ is sufficiently stable-aligned in $\cU_{c_1}$. If $p_0 = c_{-m}$ for some $m > 0$, then by \lemref{return dist lem}, we have
$
\diam(\cT_{c_{-m}}) < |x_0|^{1/\bepsilon}.
$

If $p_0 = c_{-m}$, let $q_0 \in \cT_{c_{-m}}$ and
$
\tiW^v(q_0) := W^v_{c_{-m}}(q_0).
$
Otherwise, let $q_0 = p_0$. Let $\tiE^v_{q_0} \in \bbP^2_{q_0}$ be the direction tangent to $\tiW^v(q_0)$. Then $\tiE^v_{q_1}$ and $\tiW^v(q_1)$ are sufficiently stable-aligned in $\cU_{c_1}$. Let $E^{v/h}_{q_0}$ be the vertical/horizontal direction at $q_0$ in $\cU_{c_0}$. Then
$$
\measuredangle(\tiE^v_{q_0}, E^h_{q_0}) > k|x_0|
$$
for some uniform constant $k > 0$. 

Suppose
$
q_{-j_i} \notin \cT_{c_{-j_i}}
$ and $
\tiW^v(q_{-j_i}) \cap \cT_{c_{-j_i}} \neq \varnothing.
$
By \propref{tunnel escape}, $q_{-j_i}$ is $j_i$-times forward $(1, \delta)_v$-regular along $\tiE^v_{q_{-j_i}}$. By \propref{vert angle shrink}, we see that $\tiW^v(q_{-j_i})$ is $\lambda^{(1-\bdelta)j_i}$-vertical in $\cU_{c_{-j_i}}$. On the other hand,
$$
|x_{-j_i}| > \lambda^{\bepsilon j_i}|x_0| > \lambda^{(1-\bdelta)\delta j_i}.
$$
It follows that
$$
\tiW^v_{q_{-j_i}} \cap \cT_{c_{-j_i}}(1-\lambda^{(1-\bdelta)j_i}) =\varnothing.
$$
Thus, we may choose $\chcT_{c_{-j_i}}$ such that
$$
\cT_{c_{-j_i}} \supset \chcT_{c_{-j_i}} \supset \cT_{c_{-j_i}}(1-\lambda^{(1-\bdelta)j_i}) \supset \cT_{c_{-j_i}}(\lambda^{\udelta}),
$$
and
$
\tiW^v_{q_{-j_i}} \cap \chcT_{c_{-j_i}} =\varnothing.
$

\end{proof}

For $Q \in \cC$, define its {\it critical depth} $\cd(Q) \in \bbN \cup \{\varnothing, 0\}$ as follows.
\begin{enumerate}[i)]
\item If $Q = \cT_{c_{-m}}$ for some $m \geq 0$, then $\cd(Q) := m$.
\item If $Q = \cT_{c_n}$ for some $n \geq 1$, then $\cd(Q) := \cd(c_n)$.
\item Otherwise, $\cd(Q) := \varnothing$.
\end{enumerate}
Let $X_Q$ be the set of all points $p \in \Lambda \cap Q$ such that $\cd(p) = \cd(Q)$.

\begin{prop}\label{vertical proper}
Let $Q \in \cC$. Then for all $p \in X_Q$, the curve $\hW^v(p)$ is vertically proper in $Q$. If $c_{-n} \in X_Q$ for some $n \geq 0$, then for all $q \in \cT_{c_{-n}} \subset Q$, the curve $W^v_{c_{-n}}(q)$ is vertically proper in $Q$.
\end{prop}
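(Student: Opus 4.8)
\emph{Strategy.} The plan is to use the dynamics of $F$ to transport the assertion for a general $Q\in\cC$ to a short list of base configurations, and there to quote the relevant properness and alignment results. Fix $Q\in\cC$. If $Q=\bcB_{p'}$ is an enclosed regular box, then $\cd(Q)=\varnothing$ and, for $p\in X_Q$, the condition $\cd(p)=\varnothing$ means $p$ lies in no critical tunnel, so by \thmref{ref uni}\,a) it is infinitely forward $(L,\delta)$-regular; thus $\hW^v(p)=W^{ss}_{\loc}(p)=W^v(p)$ and \propref{proper in box} gives that $W^v(p)$ is vertically proper in $\bcB_{p'}$. If $Q=\cT_{c_{-m}}$ with $m\geq 1$, then $\cd(Q)=m$ and $\hW^v(p)=W^{ss}_{\loc}(p)\cap\hcT_{c_{-m}}$ for $p\in X_Q$; since $F^m$ carries $\cT_{c_{-m}}(2)$ and its outer boundary $\bpartial\cT_{c_{-m}}=F^{-m}(\bpartial\cT_{c_0}(2)\cap U_{c_0}^{\delta' j_m}(2))$, and $\hcT_{c_{-m}}$, onto the corresponding objects at $c_0$, and $W^{ss}_{\loc}(p)$ onto $W^{ss}_{\loc}(F^m(p))$ with $\cd(F^m(p))=0$, vertical properness in $\cT_{c_{-m}}$ is equivalent to vertical properness of $\hW^v(F^m(p))$ in $\cT_{c_0}$. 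If $Q=\cT_{c_n}$ with $n\geq 2$, then $F^{n-1}$ carries $\cT_{c_1}(2)\cap U_{c_1}^{\delta' i_n}(2)$, the filled crescent $\cT_{c_1}^f$ and the crescent gap $\cT_{c_1}^g$ onto the objects defining $\cT_{c_n}$; using \thmref{reg chart}, \propref{loc linear} and \propref{pres ver hor curv} to control the iterates of $W^{ss}_{\loc}(F^{-(n-1)}(p))$ along the orbit $c_1,\dots,c_n$ (which stays in the $\cU_{c_i}$ by the truncation and \lemref{trunc neigh fit}), and \lemref{fit in thick tunnel} to see that $\cT_{c_n}(2)\subset\hcT_{c_{-d}}$ when $d:=\cd(c_n)<\infty$ so that $\hW^v(p)\cap\cT_{c_n}(2)=W^{ss}_{\loc}(p)\cap\cT_{c_n}(2)$, the statement reduces to $Q=\cT_{c_1}$. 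Hence it suffices to treat $Q=\cT_{c_0}$ and $Q=\cT_{c_1}$.

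\emph{The two base cases.} For $Q=\cT_{c_1}$ and $p_1\in X_{\cT_{c_1}}$, I would first replace $p_1$ by $F^{\cd(p_1)+1}(p_1)\in\cT_{c_1}$, which has critical depth $\varnothing$, and transport the conclusion back exactly as in the reduction above; so one may assume $\cd(p_1)=\varnothing$. Then $p_1\in\cT_{c_1}(\bft)$ lies in no critical tunnel, so by \thmref{ref uni}\,a) it is forward $(1,\delta)$-regular, and by \propref{stable center align} its strong stable direction $E^{ss}_{p_1}$ — hence, by \propref{ss c geo} and \propref{vert angle shrink}, the whole curve $W^{ss}_{\loc}(p_1)$ — is sufficiently stable-aligned in $\cU_{c_1}$. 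Since $p_1\in\cT_{c_1}\subset\cT_{c_1}^f$, \lemref{proper at value} gives that $\hW^v(p_1)=W^{ss}_{\loc}(p_1)$ is vertically proper in $\cT_{c_1}$. For $Q=\cT_{c_0}$ and $p_0\in X_{\cT_{c_0}}$, one has $\cd(p_0)=0$, $\hW^v(p_0)=W^{ss}_{\loc}(p_0)\cap\hcT_{c_0}$, and $p_1:=F(p_0)\in\cT_{c_1}$ has $\cd(p_1)=\varnothing$, so by the previous paragraph $W^{ss}_{\loc}(p_1)$ is vertically proper in $\cT_{c_1}$ and sufficiently stable-aligned in $\cU_{c_1}$; moreover $W^{ss}_{\loc}(p_0)$ agrees inside $\cU_{c_0}$ with $F^{-1}$ of the component of $W^{ss}_{\loc}(p_1)\cap\cT_{c_1}^f$ through $p_1$. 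Reading $F^{-1}$ in the coordinates of \thmref{unif reg crit} as the inverse of the H\'enon map $(x,y)\mapsto(x^2-\lambda y,x)$, the three-curve concatenation defining vertical properness in $\cT_{c_1}$ (two arcs in $\cT_{c_1}(2)$ joined through an arc in $\cT_{c_1}^g$) pulls back to an arc whose two ends reach the two components of $\bpartial\cT_{c_0}\setminus\{c_0\}$, because $F^{-1}(\bpartial T_{c_1}(2))=\bpartial T_{c_0}(2)$ and $F^{-1}(\cT_{c_1}^g)$ sweeps out a neighborhood of $\{0\}\times\bbI(l_{c_0})$ in $\cU_{c_0}$; its portion inside $\cT_{c_0}(2)$ is exactly $\hW^v(p_0)\cap\cT_{c_0}(2)$ since $\cT_{c_0}(2)\subset\hcT_{c_0}$, and it does not escape $\cT_{c_0}$ by the $1/\udelta$-pinching and \lemref{pinch neigh fit}. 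This completes the base cases.

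\emph{The vertical leaves $W^v_{c_{-n}}(q)$.} Here $c_{-n}\in X_Q$ forces $Q\neq\cT_{c_{-n}}$ (as $c_{-n}\notin\cT_{c_{-n}}$), and \propref{nice cover}\,ii) gives $\cT_{c_{-n}}\subset Q$ for every $q\in\cT_{c_{-n}}$. The curves $W^v_{c_{-n}}(q)$ are by construction the genuine vertical segments of the linearizing chart at $c_{-n}$, hence sufficiently vertical in $\cU_{c_{-n}}$, and by \propref{consist ver hor} sufficiently vertical in the ambient chart of $Q$; combined with the lower bound on their length coming from \propref{proper in box} (for $Q$ a box) or from the analysis of the two base cases transported to the generation $\cd(Q)$ of $Q$ (for $Q$ a tunnel or crescent), they are vertically proper in $Q$, and the part of $W^v_{c_{-n}}(q)$ inside $\cT_{c_{-n}}$ is the $\hW^v$-leaf there.

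\emph{Main obstacle.} The delicate point is the analysis of $Q=\cT_{c_0}$: verifying that the topological notion of ``vertically proper,'' which is encoded through the auxiliary outer boundaries $\bpartial\cT_{c_m}(2)$, the crescent gaps $\cT_{c_m}^g$ and the three-curve concatenations, is genuinely preserved under the H\'enon transition $F|_{\cU_{c_0}}$ — and, more generally, that a nearly-vertical stable curve neither fails to traverse the full pinched width of a critical tunnel nor strays out of it along the pinched, truncated orbit. Everything else is bookkeeping plus the cited alignment and properness results; the heart of the argument is that the filled crescent and crescent gap were constructed precisely to make this transition clean.
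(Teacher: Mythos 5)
Your proof follows essentially the same route as the paper's: reduce the tunnel and crescent cases to $\cT_{c_1}$ via the dynamics, settle $\cT_{c_1}$ by combining \thmref{ref uni}, \propref{stable center align} and \lemref{proper at value}, and transfer to the other covering elements with the help of \lemref{fit in thick tunnel}. The paper compresses all of this into three sentences and in particular does not spell out the $\cT_{c_0}$ pull-back under the H\'enon transition $F|_{\cU_{c_0}}$; your explicit check that $F^{-1}(\bpartial T_{c_1}(2))=\bpartial T_{c_0}(2)$, that the crescent gap accounts for the fold, and that the pinching (\lemref{pinch neigh fit}) keeps the pulled-back arc inside the tunnel, is exactly the bookkeeping the paper implicitly leaves to the reader.
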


\begin{proof}
If $Q$ is a regular box, then the result is clearly true. If $Q = \cT_{c_1}$, then the result follows immediately from \thmref{ref uni}, \propref{stable center align} and \lemref{proper at value}. If $Q = \cT_{c_{-m}}$ or some $m \geq 0$ or $Q = \cT_{c_k}$ for some $k > 1$, then the result follows immediately from the previous case and \lemref{fit in thick tunnel}.
\end{proof}


\subsection{Total ordering with blowups}\label{subsec:blow up}

Let $(X, \leq_X)$ be a totally ordered set; $U$ a subset of $X$; and $\{(Y_u, \leq_{Y_u})\}_{u \in U}$ a collection of totally ordered sets. Define $\cY : U \to \{Y_u\}_{u \in U}$ by
$
\cY(u) := Y_u.
$
The {\it blow up of $X$ by $\cY$} is defined as the set 
$$
X[\cY] := (X \setminus U) \sqcup \bigsqcup_{u \in U} Y_u
$$
with the total order $\leq_{X[\cY]}$ given by the following properties. Let $x, y \in X[\cY]$. Then $x \leq_{X[\cY]} y$ if one of the following holds:
\begin{enumerate}[i)]
\item $x,y \in X \setminus U$ and $x \leq_X y$;
\item $x \in X \setminus U$ and $y \in Y_u$ with $x \leq_X u$;
\item $x \in Y_u$ and $y \in X \setminus U$ with $u \leq_X y$;
\item $x \in Y_u$ and $y \in Y_v$ with $u \leq_X v$; or
\item $x,y \in Y_u$ and $x \leq_{Y_u} y$.
\end{enumerate}

For $n \geq 0$, consider sets $Y^n$ and $U^n$, and a set-valued function $\cY^n$ obtained inductively as follows. First, let $U^0$ be a subset of $Y^0 := X$. Suppose that $Y^{n-1}$ and $U^{n-1}$ are given. Let $\{(Y^n_u, \leq_{Y^n_u})\}_{u \in U^{n-1}}$ be a collection of totally ordered sets, and define
$$
Y^n := \bigsqcup_{u \in U^{n-1}}Y^n_u.
$$
Next, let $\cY^n: U^{n-1} \to \{Y^n_u\}_{u \in U^{n-1}}$ be given by
$
\cY^n(u) := Y^n_u.
$
Lastly, let $U^n$ be a subset of $Y^n$.

Let $X^{\wedge 0} := X$, and for $n \geq 0$, inductively define an {\it $n$th blow-up of $X$} by
$$
X^{\wedge n} := X^{\wedge (n-1)}[\cY^n]
$$
with the total order $\leq_{X^{\wedge n}}$ defined as above. Next, we define an {\it infinite blow-up of $X$} by
$$
X^{\wedge \infty} := \{x \in X^{\wedge n} \setminus U^n \; \text{for some} \; n\geq 0\}
$$
with the total order $\leq_{X^{\wedge \infty}}$ given by the following property. Let $x,y \in X^{\wedge \infty}$. Then $x \leq_{X^{\wedge \infty}} y$ if $x \leq_{X^\wedge n} y$, where $n$ is any integer such that $x, y \in X^n \setminus U^n$.

A sequence
$
u_\infty := \{u_n\}_{n =0}^\infty
$
such that $u_0 \in U^0 \cap X$ and $u_n \in U^n\cap Y^n_{u_{n-1}}$ for $n > 0$ is called a {\it limit point} of $X^{\wedge \infty}$. Define the {\it completion $X^{\wedge \omega}$ of $X^{\wedge \infty}$} as the disjoint union of $X^{\wedge \infty}$ and the set of all its limits points, together with the total order $\leq_{X^{\wedge \omega}}$ given by the following property. Let $x, y \in X^{\wedge \omega}$. Then $x \leq_{X^{\wedge \omega}} y$ if
\begin{enumerate}[i)]
\item $x, y \in X^{\wedge \infty}$ and $x \leq_{X^{\wedge \infty}} y$;
\item $x \in X^{\wedge n} \setminus U^n$ and $y = \{u_i\}_{i =0}^\infty$ with $x \leq_{X^{\wedge n}} u_n$;
\item $x = \{u_i\}_{i =0}^\infty$ and $y \in X^{\wedge n} \setminus U^n$ with $u_n \leq_{X^{\wedge n}} y$; or
\item $x = \{u_i\}_{i =0}^\infty$ and $y = \{v_i\}_{i =0}^\infty$, and $u_n \leq_{X^{\wedge n}} v_n$, where $n$ is the smallest integer such that $u_n \neq v_n$.
\end{enumerate}


\subsection{Total order on $\Lambda_Q$}

Let $Q \in \cC$, and let $\cG$ be a collection of disjoint Jordan curves that are vertically proper in $Q$. Then we define a total order on $\cG$ as follows. If $Q = \cT_{c_n}$ for some $n \geq 1$, let $\hQ := \cT_{c_n}^f$. Otherwise, let $\hQ := Q$. Note that for any $\Gamma_0 \in \cG$, the complement $\hQ \setminus \Gamma_0$ is the disjoint union of exactly two connected components, say $\hQ^+_{\Gamma_0}$ and $\hQ^-_{\Gamma_0}$. Thus, there is a unique (strict) total order $<_\cG$ on $\cG$ such that for any $\Gamma \in \cG$ distinct from $\Gamma_0$, we have $\Gamma <_\cG \Gamma_0$ if $\Gamma\cap \hQ \subset \hQ^-_{\Gamma_0}$ and $\Gamma_0 <_\cG \Gamma$ if $\Gamma\cap \hQ \subset \hQ^+_{\Gamma_0}$.

For $Q \in \cC$, let $Y_Q$ be the set consisting of all elements $x$ that satisfy one of the following properties.
\begin{enumerate}[i)]
\item $x = p \in X_Q$ such that $p \neq c_{-n}$ for all $n \geq 0$. In this case, define $\Gamma^v_x := W^v_p(p)$.
\item $x = \cT_{c_{-n}}$ for some $n \geq 0$ such that $c_{-n} \in X_Q$. In this case, define $\Gamma^v_x := W^v_{c_{-n}}(c_{-n})$.
\end{enumerate}
Let $U_Q$ be the set of all elements in $Y_Q$ of type $ii)$. By Propositions \ref{vertical disjoint} and \ref{vertical proper}, the collection
$
\Gamma^v_Q := \{\Gamma^v_x\}_{x \in Y_Q}
$
consists of pairwise disjoint curves that are vertically proper in $Q$. As discussed above, this implies that $\Gamma^v_Q$ has a total order $\leq_{\Gamma^v_Q}$ (non-strict, since we may have $\Gamma^v_x = \Gamma^v_y$ with $x \neq y$), which induces a total order $\leq_{Y_Q}$ on $Y_Q$.

Let $Y^0 := Y_Q$ and $U^0 := U_Q$. Proceeding inductively, suppose that $Y^{n-1}$ and $U^{n-1}$ are given. For $x \in U^{n-1}$, define $Y^n_x := Y_x$; $U^n_x := U_x$; and
$$
Y^n:= \bigsqcup_{x \in U^{n-1}} Y^n_x
\matsp{and}
U^n := \bigsqcup_{x \in U^{n-1}} U^n_x.
$$
By the construction given in \subsecref{subsec:blow up}, we obtain the completion $Y_Q^\omega$ of an infinite blow-up of $Y_Q$ with a total ordering $<_{Y_Q^{\wedge \omega}}$.

\begin{thm}\label{thm:total order}
For $Q \in \cC$, let $\Lambda_Q :=\Lambda \cap Q$. Then
$$
\Lambda_Q = Y_Q^{\wedge \omega},
$$
and the set of limit points in $Y_Q^{\wedge \omega}$ is given by $\Lambda_Q \cap \Delta^c_{c_0}$. Thus, $\Lambda_Q$ has a total order $\leq_Q$ such that for $x,y\in \Lambda_Q$, we have $x \leq_Q y$ and $y \leq_Q x$ if and only if $y \in W^s(x)$.
\end{thm}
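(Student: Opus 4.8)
The plan is to peel off the critical-depth stratification of $\Lambda_Q$ one level at a time and match it against the inductive blow-up construction of \subsecref{subsec:blow up}.

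\textbf{Level-$0$ decomposition.} First I would fix $Q\in\cC$, set $d:=\cd(Q)$, and prove the set identity
$$
\Lambda_Q \;=\; X_Q \;\sqcup\; \bigsqcup_{\substack{n\ge 0,\ c_{-n}\in X_Q}} \bigl(\Lambda\cap\cT_{c_{-n}}\bigr),
$$
with every tunnel on the right contained in $Q$. The containment $\cT_{c_{-n}}\subset Q$ comes from \propref{nice cover} ii) once one observes that $\cT_{c_{-n}}\subset\hW^v(\cT_{c_{-n}})$, so $\hW^v(\cT_{c_{-n}})\cap Q\neq\varnothing$. For the decomposition itself one uses that critical tunnels are pairwise nested or disjoint and that critical depth is consistent along nests (\propref{tunnel inv}, \lemref{fit in thick tunnel}): any $q\in\Lambda_Q$ with $\cd(q)\neq d$ lies in a unique smallest tunnel $\cT_{c_{-n}}$ of the nest through $q$ that is still contained in $Q$, and minimality forces $\cd(c_{-n})=d$, i.e. $c_{-n}\in X_Q$, while distinct $n$ give disjoint tunnels. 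The critical orbit points $c_{-n}\in X_Q$ appear neither in $X_Q\setminus\{c_m\}_m$ nor in any $\cT_{c_{-n}}$; they are reattached at the next stage as the distinguished points of the axis leaves $W^v_{c_{-n}}(c_{-n})$. This decomposition, read off against the definitions of $Y_Q$, $U_Q$ and $\cY^1$, realizes the first stage of the blow-up geometrically.

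\textbf{The order at level $0$ and the induction.} By Propositions \ref{vertical disjoint} and \ref{vertical proper} the family $\Gamma^v_Q=\{\Gamma^v_x\}_{x\in Y_Q}$ consists of pairwise disjoint Jordan arcs that are vertically proper in $Q$ (for a crescent $Q=\cT_{c_n}$ one works inside the filled crescent $\cT^f_{c_n}$ and uses \lemref{proper at value}), and such a family in a (filled) quadrilateral carries the canonical total order $\leq_{Y_Q}$ of \subsecref{subsec:blow up}. The key point to verify is that this ordering is \emph{compatible with nesting}: for $x=\cT_{c_{-n}}\in U_Q$ the strip $\hW^v(\cT_{c_{-n}})$ is disjoint from every other member of $\Gamma^v_Q$ and, applying \propref{nice cover} and \propref{vertical proper} with $Q$ replaced by $\cT_{c_{-n}}$, the whole sub-family $\Gamma^v_{\cT_{c_{-n}}}$ is trapped inside that strip. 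Hence substituting $Y_{\cT_{c_{-n}}}$, ordered by its own such family, into the slot of $\cT_{c_{-n}}$ respects $\leq_{Y_Q}$ — precisely the blow-up operation. Iterating, and using that a point outside $\Delta^c_{c_0}$ has finite critical-depth chain (\propref{trap irreg}), identifies $\Lambda_Q\setminus\Delta^c_{c_0}$ with $Y^{\wedge\infty}_Q$ as an ordered set.

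\textbf{Completion and the $W^s$-characterization.} A point $p\in\Lambda_Q\cap\Delta^c_{c_0}$ lies in an infinite strictly nested chain $\cT_{c_{-d_0}}\supsetneq\cT_{c_{-d_1}}\supsetneq\cdots$, all inside $Q$ by the inductive form of the decomposition, i.e. a limit sequence of the blow-up; conversely $d_n\to\infty$ (\propref{tunnel inv}) and $\diam(\cT_{c_{-d_n}})\to 0$ (\propref{tunnel size}), so $\bigcap_n\cT_{c_{-d_n}}$ is a single point of $\Lambda_Q\cap\Delta^c_{c_0}$, giving the inverse bijection. This yields $\Lambda_Q=Y^{\wedge\omega}_Q$ with limit points exactly $\Lambda_Q\cap\Delta^c_{c_0}$, and $\leq_Q$ is the transported order. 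For the last assertion, $x\leq_Q y$ and $y\leq_Q x$ hold iff, at the deepest level at which neither point is blown up, $x$ and $y$ lie on the same vertical leaf; for finite-depth points this leaf is $W^{ss}_{\loc}(x)$ (from the local stable-manifold description behind \thmref{ref uni} and the definition of $\hW^v$), so $y\in W^{ss}(x)$, and for critical-dust points the class is $\{x\}$ by the previous sentence. Since $F$ carries no expanding direction on $\Lambda$, any $y\in W^s(x)\setminus\{x\}$ must approach $x$ along the strong-stable direction, hence $y\in W^{ss}(x)$; together with $W^{ss}(x)\cap\Lambda_Q\subset W^s(x)$ this gives the equivalence. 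The main obstacle is the nesting-compatibility in the second step — turning the purely combinatorial blow-up into a genuine geometric one, so that passing to a deeper tunnel never lets the corresponding piece of the strong-stable lamination leak out of the thin vertical strip allotted to that tunnel; this is exactly where the quantitative forward-regularity control near the critical orbit (\thmref{ref uni}, \propref{stable center align}, \propref{nice cover}, \propref{vertical disjoint}, \propref{vertical proper}) is used in full.
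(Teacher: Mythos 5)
Your proposal unfolds, in careful detail, exactly the argument that the paper itself collapses into the single line ``The result follows immediately from \propref{tunnel size}.'' The paper's strategy is to bury essentially all of the content in the definitions of $Y_Q$, $U_Q$, the blow-up construction, and the preceding propositions (\ref{nice cover}, \ref{vertical disjoint}, \ref{vertical proper}, \ref{trap irreg}), leaving the shrinking-diameter estimate of \propref{tunnel size} as the only thing that needs to be said explicitly: it is what identifies the limit sequences of the abstract blow-up with the points of $\Delta^c_{c_0}\cap Q$. Your three steps --- the tunnel stratification of $\Lambda_Q$, the nesting-compatibility of the vertical-leaf orders, and the completion plus $W^s$-characterization --- are precisely the content of the intended identification, so the approach is the same, just made explicit.

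One remark, which concerns the paper's formulation as much as your write-up: as literally defined, $Y_Q^{\wedge\omega}$ does not recover the critical backward-orbit points $c_{-n}$. Type-ii elements $\cT_{c_{-n}}$ all lie in $U_Q$ and hence are always replaced by $Y_{\cT_{c_{-n}}}$, which never contains $c_{-n}$ (tunnels omit their centers by construction), and $c_{-n}$ is also not a limit sequence since $\cd(c_{-n})<\infty$. Your sentence about $c_{-n}$ being ``reattached at the next stage as the distinguished points of the axis leaves'' gestures at the right repair --- one should either enlarge $Y_{\cT_{c_{-n}}}$ by the axis element, or read the set identity modulo the $\leq_Q$-equivalence (which places $c_{-n}$ on the same leaf as its axis) --- but no such reattachment is part of the blow-up as defined in \subsecref{subsec:blow up}, so this is a spot where both you and the paper are informal. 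Being explicit about it would strengthen the argument; apart from that, the proposal is correct.
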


\begin{proof}
The result follows immediately from \propref{tunnel size}.
\end{proof}

Let $Q \in \cC$. A point $x \in X \subset \Lambda_Q$ is called an {\it extremal point of $X$} if either
\begin{itemize}
\item $x \leq_{Q} y$ for all $y \in X$; or
\item $x \geq_{Q} y$ for all $y \in X$.
\end{itemize}

The following three propositions record observations that follow immediately from the construction of the total order on $\Lambda_Q$.

\begin{prop}\label{order trans}
Let $Q, Q' \in \cC$, and let $X := \Lambda_Q \cap \Lambda_{Q'}$. Then $\leq_{Q}$ and $\leq_{{Q'}}$ induces the same total order on $X$ (up to orientation). That is, either
\begin{itemize}
\item $x \leq_{{Q'}} y$ for all $x, y \in X$ with $x \leq_{Q} y$; or
\item $x \geq_{{Q'}} y$ for all $x, y \in X$ with $x \leq_{Q} y$.
\end{itemize}
\end{prop}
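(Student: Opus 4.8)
The plan is to reduce the statement to a single topological fact about how a family of pairwise disjoint, properly embedded arcs sits inside two overlapping (pinched) windows, and to exploit the fact that the vertical curves underlying $\leq_Q$ are defined intrinsically, not in terms of $Q$.

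First recall how $\leq_Q$ is built: by \thmref{thm:total order}, $\Lambda_Q = Y_Q^{\wedge \omega}$ and $\leq_Q$ is transported from the iterated blow-up order $<_{Y_Q^{\wedge\omega}}$, which at every level is the separation order on a collection of vertical curves ($\Gamma <_\cG \Gamma_0$ iff $\Gamma$ lies in the component $\hQ^-_{\Gamma_0}$ of $\hQ \setminus \Gamma_0$). The point to emphasize is that the curves involved --- $\hW^v(p)$, $W^v_{c_{-n}}(q)$, the deeper vertical curves inside the critical tunnels, and hence also the limit-point/critical-dust structure of the blow-up --- are defined purely from the dynamics of $F$ and the critical orbit $\{c_m\}$ (through $\cd(p)$ and the thickened tunnels $\hcT_{c_{-d}}$); they do not depend on the ambient element of $\cC$. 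Consequently, for $p \in X := \Lambda_Q \cap \Lambda_{Q'}$ the vertical curve assigned to $p$ in the $Q$-construction is literally the same curve as in the $Q'$-construction, and by \propref{vertical proper} (together with \lemref{proper at value}) it is vertically proper in both $\hQ$ and $\hQ'$; disjointness across distinct $W^s$-classes is \propref{vertical disjoint}.

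Next, fix $x_0, y_0 \in X$ in distinct $W^s$-classes (if none exist the claim is trivial modulo the $W^s$-collapsing, which both orders perform identically), with $x_0 <_Q y_0$, and orient $\leq_{Q'}$ so that $x_0 <_{Q'} y_0$ --- legitimate since on $\Lambda_{Q'}$ two points are $\leq_{Q'}$-incomparable only within a common $W^s$ (\thmref{thm:total order}). Since a total order is determined by its betweenness relation plus a choice of orientation, it suffices to show that for every $z \in X$ the curve $\Gamma^v_z$ separates $\Gamma^v_{x_0}$ from $\Gamma^v_{y_0}$ inside $\hQ$ if and only if it does inside $\hQ'$. This is the topological core: three pairwise disjoint arcs, each proper in the two overlapping (pinched) windows $\hQ,\hQ'$ and each meeting the overlap $\hQ\cap\hQ'$ (they pass through $x_0,y_0,z$), have the same ``middle'' arc relative to $\hQ$ and to $\hQ'$. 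One argues that $\Gamma^v_z$ fails to separate in $\hQ$ iff an arc joining $\Gamma^v_{x_0}$ to $\Gamma^v_{y_0}$ avoiding $\Gamma^v_z$ exists; such an arc can be produced inside $\hQ\cap\hQ'$ using that each window is nested in the relevant regular neighborhoods $\cU_p$ and that $\Gamma^v_z$ crosses the overlap ``vertically,'' and conversely such an arc in $\hQ' \setminus \Gamma^v_z$ can be pushed into $\hQ$ along the vertical lamination (using \propref{nice cover}). I expect this compatibility of the separation orders of two overlapping windows --- uniform over regular boxes, filled valuable crescents and thickened critical tunnels, and over all levels of the blow-up --- to be the only real work; everything else is bookkeeping on top of the already-established properness, disjointness, and the window-independence of the vertical curves.
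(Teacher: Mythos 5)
Your overall approach matches the paper's intent: the paper records this proposition (together with \propref{order pres} and \propref{order value}) with the single remark that it ``follows immediately from the construction of the total order on $\Lambda_Q$,'' and your elaboration identifies the right reason --- the vertical curves $\Gamma^v_x$ (and the tunnels driving the recursive blow-up) are defined intrinsically from the dynamics, not from the ambient element of $\cC$, so only the compatibility of the separation orders in the two windows needs checking.

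However, there is a logical slip in the reduction step. You write that, after orienting $\leq_{Q'}$ to agree on an anchor pair $(x_0,y_0)$, ``it suffices to show that for every $z \in X$ the curve $\Gamma^v_z$ separates $\Gamma^v_{x_0}$ from $\Gamma^v_{y_0}$ inside $\hQ$ iff it does inside $\hQ'$.'' Betweenness relative to a single fixed pair does not determine the total order: if $z,w$ both lie on the same side of $[x_0,y_0]_{\leq_Q}$, this criterion says nothing about which of $z,w$ is farther from the anchor, so the two orders could in principle disagree there. What you actually need (and what the topological fact you invoke does deliver when stated correctly) is that, for \emph{every} triple $x,y,z \in X$ in distinct $W^s$-classes, $\Gamma^v_z$ separates $\Gamma^v_x$ from $\Gamma^v_y$ in $\hQ$ iff it does in $\hQ'$; that full betweenness agreement, together with the anchor orientation, yields the conclusion. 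The rest of the sketch --- properness in both windows via \propref{vertical proper} and \lemref{proper at value}, disjointness via \propref{vertical disjoint}, intrinsic-ness of the curves and of the blow-up data, and the pushing of separating arcs between the two overlapping windows --- is the right content, though the topological step about two overlapping (possibly pinched) windows is asserted rather than proven and would need care in the degenerate configurations (e.g.\ a critical tunnel whose overlap with a regular box is not a single rectangle-like region, which is what \propref{nice cover} is tailored to prevent).
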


\begin{prop}\label{order pres}
Let $Q, Q' \in \cC$ with $Q \neq \cT_{c_0}$, and let $X \subset \Lambda_Q$ be a set satisfying $F(X) \subset \Lambda_{Q'}$. Then $F$ preserves the total order on $X$ (up to orientation). That is, either
\begin{itemize}
\item $F(x) \leq_{{Q'}} F(y)$ for all $x, y \in X$ such that $x \leq_{Q} y$; or
\item $F(x) \geq_{{Q'}} F(y)$ for all $x, y \in X$ such that $x \leq_{Q} y$.
\end{itemize}
\end{prop}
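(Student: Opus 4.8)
The plan is to show that $\le_Q$ is a \emph{geometric} order --- read off from a family of pairwise disjoint, vertically proper Jordan curves lying on strong‑stable leaves, together with its nested‑tunnel refinement --- and that the orientation‑preserving homeomorphism $F$ transports this entire structure to the one defining $\le_{Q'}$. Consistency of the transport then forces $F$ to preserve $\le_Q|_X$ up to a single global reversal, which is exactly ``preserves the order up to orientation''.

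First I would fix notation for the curves. By \thmref{thm:total order} we have $\Lambda_Q=Y_Q^{\wedge\omega}$, and the construction of \subsecref{subsec:unicrit cover} assigns to each non‑limit point $x\in\Lambda_Q$ a curve $\Gamma^v(x)\subset W^s(x)$ (namely $\hW^v(x)$, or $W^v_{c_{-n}}(x)$ when $x$ lies in a tunnel $\cT_{c_{-n}}\subset Q$ of the relevant generation); by \propref{vertical disjoint} and \propref{vertical proper} these are pairwise disjoint and vertically proper in the appropriate (sub)neighborhood of $Q$, and $x\le_Q y$ and $y\le_Q x$ holds iff $\Gamma^v(x)$ and $\Gamma^v(y)$ lie on the same $W^s$‑leaf. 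Since $W^s$ is totally invariant and $F$ a diffeomorphism, $F$ descends to a bijection $\bar F$ between $X$ and $F(X)$ modulo the equivalence ``same $W^s$‑leaf'', and it suffices to prove that $\bar F$ is an order isomorphism or anti‑isomorphism for $\le_Q$ and $\le_{Q'}$.

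\textbf{Step 1 (transport of the separating curves).} Fix $x\in X$. Using $F(W^s(x))=W^s(F(x))$, the backward regularity available along the orbits of points of $\Lambda$ that meet $Q$ (\thmref{ref uni}, and, when $Q'$ is a valuable crescent, \propref{stable center align} together with \lemref{proper at value}), and the fact that outside $\cT_{c_0}$ the map $F$ carries $t$‑vertical curves to $\bar t$‑vertical curves rather than folding them (\propref{pres ver hor curv}, \propref{consist ver hor}), one checks that $W^s(F(x))\cap Q'$ contains a subarc that is vertically proper in $Q'$ and that coincides with $\Gamma^v(F(x))$ up to the $\hcT$‑truncations entering the definitions --- truncations which do not change which side of a curve other curves lie on. This is where $Q\ne\cT_{c_0}$ is used: the critical tunnel at $c_0$ is precisely the locus of the H\'enon fold, where $F$ exchanges the roles of the vertically and horizontally proper families and where the order is, by construction, the one compatible with passing to $\cT_{c_1}$; that case genuinely behaves differently and is excluded here. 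When $x$ lies in a deeper tunnel $\cT_{c_{-n}}\subset Q$ one invokes in addition \propref{tunnel inv} and \lemref{fit in thick tunnel}: $F$ sends $\cT_{c_{-n}}$ to $\cT_{c_{-n+1}}$ with the generation dropping by one, so $F$ carries the nested‑tunnel structure of $\Lambda_Q$ --- hence the entire blow‑up structure, including the limit points $\Lambda_Q\cap\Delta^c_{c_0}$ --- to that of $\Lambda_{Q'}$, respecting the ordering of generations.

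\textbf{Step 2 (separation is preserved) and conclusion.} Let $a<_Q b<_Q c$ be elements of $X$ on three distinct $W^s$‑leaves (if no such triple exists, $X$ has at most two leaves and the assertion is vacuous); then $\Gamma^v(b)$ separates $\Gamma^v(a)$ from $\Gamma^v(c)$ inside the relevant (sub)neighborhood of $Q$. As $F\colon Q\to F(Q)$ is an orientation‑preserving homeomorphism onto its image, $F(\Gamma^v(b))$ separates $F(\Gamma^v(a))$ from $F(\Gamma^v(c))$ inside $F(Q)$; combining this with Step 1 and with \propref{order trans} (the order $\le_{Q'}$ and the ``side in $F(Q)$'' relation agree on the overlap) forces $F(b)$ to lie between $F(a)$ and $F(c)$ for $\le_{Q'}$. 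Running the same argument on nested blocks shows that $\bar F$ carries the ternary separation (betweenness) relation of $\le_Q|_X$ to that of $\le_{Q'}|_{F(X)}$. Since a total order is determined up to reversal by its betweenness relation, $\bar F$ is order‑preserving or order‑reversing, which is the claim. I expect \textbf{Step 1} to be the main obstacle: the curves $\Gamma^v(x)$ are only pieces of $W^s(x)$ of controlled but $x$‑dependent size, defined relative to different charts ($\cU_x$, or $\hcU_{c_{-n}}$ inside a tunnel), and one must verify that applying $F$ reproduces exactly the piece --- and truncation --- used in the $Q'$‑construction, the verification being heaviest when $x$ lies deep in a critical tunnel so that $\Gamma^v(x)$ is a strongly curved arc. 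Here one leans on the quantitative control of regularity and of tunnel geometry from \secref{sec:reg near crit orb} and \secref{sec:unicrit}, and on the consequence of $Q\ne\cT_{c_0}$ that the vertically proper structure of $Q$ is never folded into a horizontally proper one under a single application of $F$.
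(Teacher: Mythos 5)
Your proposal is essentially correct, and it tracks what the paper's construction is actually doing, even though the paper simply declares this proposition (along with Propositions~\ref{order trans} and~\ref{order value}) to ``follow immediately from the construction'' and offers no written argument. The substance of your argument --- the order $\le_Q$ is a betweenness order read off from a pairwise disjoint family of vertically proper separating curves (the strong‑stable arcs $\hW^v(x)$ together with the vertical manifolds $W^v_{c_{-n}}$ at tunnel points), this family is carried by $F$ to the analogous family for $Q'$ modulo truncations that do not alter sidedness, and a homeomorphism that respects betweenness respects the order up to one global reversal --- is exactly the content that makes the construction in \subsecref{subsec:unicrit cover} and \subsecref{subsec:blow up} $F$‑equivariant outside the fold at $\cT_{c_0}$. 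Your identification of $Q\ne\cT_{c_0}$ as the hypothesis that prevents $F$ from exchanging the vertically proper family with the horizontally proper one (which is what Proposition~\ref{order value} handles separately) is the right reading.

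Two small imprecisions worth tightening. First, you describe all the curves $\Gamma^v_x$ as ``lying on strong‑stable leaves'': the type~(ii) curves $W^v_{c_{-n}}(c_{-n})$ are \emph{not} strong‑stable leaves (points on the critical backward orbit have no $W^{ss}$), only $F$‑invariant stand‑ins for them; the argument still goes through because these vertical manifolds are $F$‑equivariant by construction, but the phrasing should be corrected. Second, in Step~2 you invoke Proposition~\ref{order trans} to pass from separation in $F(Q)$ to $\le_{Q'}$, but $F(Q)$ is not an element of $\cC$, so that proposition does not literally apply; what you actually need (and what Step~1 supplies) is that on $F(X)\subset F(Q)\cap\Lambda_{Q'}$ the curve $F(\Gamma^v_x)$ and the curve $\Gamma^v_{F(x)}$ lie on the same $W^s$‑leaf (resp.\ the same vertical‑manifold fiber) and hence induce the same sidedness. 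With those two adjustments the proof is sound and is, in effect, the expansion of the argument the paper leaves implicit.
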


\begin{prop}\label{order value}
Let $X \subset \Lambda_{\cT_{c_0}}$. Then the following holds.
\begin{enumerate}[i)]
\item If $c_0 \leq_{{\cT_{c_0}}} x_0$ for all $x \in X$, then for all $y, z \in X$ such that $y \leq_{{\cT_{c_0}}} z$, we have $F(y) \leq_{{\cT_{c_1}}} F(z)$.
\item If $x \leq_{{\cT_{c_0}}} c_0$ for all $x \in X$, then for all $y, z \in X$ such that $y \leq_{{\cT_{c_0}}} z$, we have $F(z) \leq_{{\cT_{c_1}}} F(y)$.
\end{enumerate}
\end{prop}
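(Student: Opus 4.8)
The proof rests on the uniformization of \thmref{unif reg crit}: in the charts $\Phi_{c_0},\Phi_{c_1}$ the map $F$ restricted to $\cU_{c_0}\supset\cT_{c_0}$ becomes the H\'enon map $H(x,y)=F_{c_0}(x,y)=(x^2-\lambda y,\,x)$. The plan is to track how $H$ transports the strong-stable lamination governing $\leq_{\cT_{c_0}}$ onto the one governing $\leq_{\cT_{c_1}}$. In $\Phi_{c_1}$-coordinates $(u,w)$, the curve $W^{ss}_{\loc}(c_1)$ is the line $\{u=0\}$, and for $q\in\Lambda\cap\cT_{c_1}$ with $\cd(q)=\varnothing$ the curve $\hW^v(q)$ is the vertical line $\{u=u(q)\}$; hence, by the construction of \subsecref{subsec:unicrit cover}, after fixing the orientation of $\cT_{c_1}$ to be that of the coordinate $u$, the order $\leq_{\cT_{c_1}}$ is the order induced by the value of $u$, and this is a \emph{global} monotonicity on $\cT_{c_1}^f$ since the straight vertical lines separate the whole filled crescent. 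Pulling back by $H^{-1}(\{u=c\})=\{\,y=(x^2-c)/\lambda\,\}$, the lamination $\{\hW^v(p)\}$ on critical-depth-$0$ points of $\cT_{c_0}$ consists of the level curves of the function $g(x,y):=x^2-\lambda y$, which is exactly the first coordinate of $H$. The vertical manifold $W^v(c_0)=\Phi_{c_0}^{-1}(\{x=0\})$ separates $\cT_{c_0}$ into the lobes $\{x>0\}$ and $\{x<0\}$; the hypotheses $c_0\leq_{\cT_{c_0}}x$ for all $x\in X$ (case i) and $x\leq_{\cT_{c_0}}c_0$ for all $x\in X$ (case ii) say precisely that $X$ lies in one of these lobes — say $\{x>0\}$ and $\{x<0\}$ respectively, once the orientation of $\cT_{c_0}$ is fixed to be that of the coordinate $x$.

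The next step is to verify how $\leq_{\cT_{c_0}}$ runs along each lobe in terms of $g$. The level curves $\{g=c\}$ are vertical translates of one another (the curve $\{g=c\}$ is $\{g=0\}$ shifted down by $c/\lambda$), so they are pairwise disjoint and nested; a curve $\{g=c\}$ meets the lobe $\{x>0\}$ entering through the lower component of $\bpartial\cT_{c_0}$ at $|x|$ comparable to $(c/\lambda)^{\udelta}$, which increases with $c$, so on $\{x>0\}$ the order $\leq_{\cT_{c_0}}$ is monotone increasing in $c$; on $\{x<0\}$, since $g$ is even in $x$ and the reflection $x\mapsto -x$ reverses the left--right orientation of the tunnel, $\leq_{\cT_{c_0}}$ is monotone decreasing in $c$. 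The blow-up stalks attached to points of higher critical depth and the limit points in $\Delta^c_{c_0}$ inherit this $g$-monotonicity because, by \propref{tunnel inv}, $F^{-1}$ merely shifts the nesting of critical tunnels by one index, and the blow-up construction of \subsecref{subsec:blow up} is performed compatibly with inclusion of tunnels: matching the inductive definitions of $\leq_{\cT_{c_0}}$ and $\leq_{\cT_{c_1}}$ level by level with the action of $H$ on the nested tunnels is the step I expect to require the most care. Granting this, the conclusion is immediate: since $H(p_0)=(g(p_0),\,x_0)$, the $u$-coordinate of $F(p_0)$ equals $g(p_0)$, so on $\{x>0\}$ we get $y\leq_{\cT_{c_0}}z\Rightarrow g(y)\le g(z)\Rightarrow u(F(y))\le u(F(z))\Rightarrow F(y)\leq_{\cT_{c_1}}F(z)$, which is i); on $\{x<0\}$ we get $y\leq_{\cT_{c_0}}z\Rightarrow g(y)\ge g(z)\Rightarrow F(z)\leq_{\cT_{c_1}}F(y)$, which is ii). Note the two images land in $\{w>0\}$ and $\{w<0\}$ respectively, but this is harmless since $\leq_{\cT_{c_1}}$ is globally $u$-monotone.

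The main obstacle, as indicated, is the order-theoretic bookkeeping rather than the H\'enon computation: one must check that the inductively built total orders $\leq_{\cT_{c_0}}$ and $\leq_{\cT_{c_1}}$, including their blow-up stalks and limit points, are genuinely the $g$- and $u$-induced orders at every level, and that a single consistent choice of orientations (that of $x$ on $\cT_{c_0}$, that of $u$ on $\cT_{c_1}$) makes the displayed inequalities — not their reverses — hold simultaneously in i) and ii). The conceptual content is simply that the fold of $H$ at $c_0$ makes $F$ orientation-preserving on the $\ge c_0$ lobe and orientation-reversing on the $\le c_0$ lobe; this also explains why the one-sidedness hypothesis on $X$ is indispensable, since $H$ collapses the two lobes of $\cT_{c_0}$ onto overlapping $u$-ranges, so the image order is related to $\leq_{\cT_{c_1}}$ in a single monotone way only when $X$ avoids the fold locus $W^v(c_0)$.
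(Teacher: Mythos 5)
Your argument captures the right mechanism, and since the paper merely asserts that Proposition \ref{order value} ``follows immediately from the construction'' without giving a proof, your proposal is the natural way to make that assertion explicit. The conceptual content is exactly as you say: in the uniformizing charts of Theorem \ref{unif reg crit}, $F|_{\cT_{c_0}}$ is the H\'enon fold $H(x,y)=(x^2-\lambda y, x)$, and the $u$-coordinate of $F(p)$ equals $g(p)=x^2-\lambda y$, which folds the two lobes $\{x>0\}$ and $\{x<0\}$ onto overlapping $u$-ranges; hence $F$ is monotone on each lobe with opposite monotonicities, which is why the one-sidedness hypothesis $c_0\leq_{\cT_{c_0}}x$ (or the reverse) is needed.

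Two minor inaccuracies. First, for $q\neq c_1$ the leaf $\hW^v(q)$ is $W^{ss}_{\loc}(q)$, which is not a literal vertical line in the $\Phi_{c_1}$ chart — only $W^{ss}_{\loc}(c_1)$ straightens to $\{u=0\}$. What the construction actually provides (Propositions \ref{stable center align}, \ref{vertical disjoint}, \ref{vertical proper}) is that the $\hW^v(q)$ are disjoint, sufficiently stable-aligned, and vertically proper in $\cT^f_{c_1}$. This is all that the separation-theoretic definition of $\leq_{\cT_{c_1}}$ requires, so the ordering is still determined exactly as if these leaves were $\{u=\text{const}\}$, and your argument goes through unchanged. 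Correspondingly, the leaves $\hW^v(p)$ with $\cd(p)=0$ in $\cT_{c_0}$ are close to, not equal to, the level curves $\{g=c\}$. Second, the intersection of $\{g=c\}$ with the lobe boundary of $\cT_{c_0}$ occurs at $|x|\sim c^{1/2}$, not $(c/\lambda)^{\udelta}$: inside the pinched tunnel $|y|\lesssim |x|^{1/\udelta}$ is negligible compared to $x^2$ (since $1/\udelta$ is large), so $c=x^2+\lambda x^{1/\udelta}\approx x^2$ along the level curve. The monotonicity of $x$ in $c$, which is all you actually use, is of course still correct.

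On the inductive bookkeeping you flagged: the concern is legitimate but the structure is compatible. Since $\cd(c_1)=\varnothing$ (by Proposition \ref{no crit in crit nbh} applied at $c_0$), the top layer $X_{\cT_{c_1}}$ of the blow-up consists of points of critical depth $\varnothing$, which is where $F$ sends $X_{\cT_{c_0}}$ (critical depth $0$); and $F$ carries the generation-$0$ subtunnels $\cT_{c_{-n}}\subset\cT_{c_0}$ onto the corresponding subtunnels $\cT_{c_{-n+1}}\subset\cT_{c_1}$ (Proposition \ref{tunnel inv}). So the blow-up levels match under $F$ and the induction you sketch does close.
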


Let $Q \in \cC$. We define a larger covering element $\hQ \supset Q$ as follows. If $Q = \cT_{c_m}$ for some $m \in \bbZ$, then let $\hQ := Q$. Otherwise, $Q = \bcB_p$ for some $p \in \Lambda^P_{L, \delta}$. Recall that $\bcB_p$ is a quadrilateral bound by four nearly linear arcs for which the opposite sides are nearly parallel. Moreover, the horizontal width and the vertical height of $\bcB_p$ are approximately $1/L_1$ and $1/L_2$ respectively, where
$$
\bL < L_1
\matsp{and}
\overline{L_1} < L_2.
$$
Let $\hQ$ be a strictly larger encased regular box at $p$ whose horizontal width and the vertical height are approximately $1/L_1'$ and $1/L_2'$ respectively, where
$$
\bL < L_1' < \underline{L_1}
\matsp{and}
\overline{L_1'} < L_2' < \underline{L_2}.
$$

\begin{prop}\label{trap middle}
For $0 \leq i \leq n$, let $Q_i \in \cC \setminus \{\cT_{c_0}\}$. Suppose there exist $x_0, y_0, z_0 \in \Lambda_{Q_0}$ such that
$$
x_0 \leq_{{Q_0}} y_0 \leq_{{Q_0}} z_0,
$$
and $x_i, z_i \in \Lambda_{Q_i}$ for all $1 \leq i \leq n$. Then $y_n \in \Lambda_{\hQ_n}$ and
$$
x_n \leq_{{\hQ_n}} y_n \leq_{{\hQ_n}} z_n.
$$
\end{prop}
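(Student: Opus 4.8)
\emph{Proof proposal.} The plan is to argue by induction on $i$, with inductive hypothesis at step $i$ the statement that $x_i,y_i,z_i\in\Lambda_{\hQ_i}$ and, for a suitable choice of orientation of $\leq_{\hQ_i}$, $x_i\leq_{\hQ_i}y_i\leq_{\hQ_i}z_i$. Note that $x_i,z_i\in\Lambda_{\hQ_i}$ is automatic from the hypothesis $x_i,z_i\in\Lambda_{Q_i}$ together with $Q_i\subset\hQ_i$. Since the hypotheses are symmetric in $x$ and $z$ we may assume $x_0\leq_{Q_0}z_0$, and we orient each $\leq_{\hQ_j}$ so that $x_j\leq_{\hQ_j}z_j$; by \propref{order trans} this is consistent with the inclusion $Q_j\subset\hQ_j$, and by \propref{order pres} it is consistent along the orbit. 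The base case $i=0$ then holds: $y_0\in\Lambda_{Q_0}\subset\Lambda_{\hQ_0}$, and $x_0\leq_{Q_0}y_0\leq_{Q_0}z_0$ transfers to $\leq_{\hQ_0}$ by \propref{order trans}. Throughout we use that $\hQ_j$ is either a tunnel or crescent equal to $Q_j\neq\cT_{c_0}$ or an enclosed regular box, so the total order $\leq_{\hQ_j}$ of \thmref{thm:total order} and all cited facts about it apply verbatim to $\hQ_j$.

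For the inductive step $i\to i+1$ with $i+1\leq n$, there are two things to verify: the containment $y_{i+1}\in\hQ_{i+1}$, and the order $x_{i+1}\leq_{\hQ_{i+1}}y_{i+1}\leq_{\hQ_{i+1}}z_{i+1}$. Granting the containment, the order is immediate: apply \propref{order pres} to the set $X=\{x_i,y_i,z_i\}\subset\Lambda_{\hQ_i}$ with $F(X)\subset\Lambda_{\hQ_{i+1}}$ and $\hQ_i\neq\cT_{c_0}$; then $F$ preserves $\leq$ on $X$ up to orientation, and our choice of orientation gives the claimed inequalities (since $\Lambda$ is totally invariant, $y_{i+1}=F(y_i)\in\Lambda$, so once $y_{i+1}\in\hQ_{i+1}$ we indeed have $y_{i+1}\in\Lambda_{\hQ_{i+1}}$). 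So the entire weight of the argument falls on the containment.

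To prove $y_{i+1}\in\hQ_{i+1}$, let $\bar R_i\subset\hQ_i$ be the closed region cut out by the two vertical curves $\hW^v(x_i)$ and $\hW^v(z_i)$ together with the two arcs of $\hpartial\hQ_i$ joining their endpoints; by the inductive hypothesis and the construction of $\leq_{\hQ_i}$, the points $x_i,y_i,z_i$ all lie in $\bar R_i$, and by \propref{vertical disjoint} its two side curves are disjoint. Because $\hW^v(x_i),\hW^v(z_i)$ are pieces of the strong-stable/vertical laminations, forward invariance together with forward contraction of these manifolds (Propositions \ref{ss c geo} and \ref{pres ver hor curv} and the non-escape estimate \eqref{eq:no stray}), combined with \lemref{fit in thick tunnel} to match the thickened-tunnel truncations, give $F(\hW^v(x_i))\subset\hW^v(x_{i+1})$ and $F(\hW^v(z_i))\subset\hW^v(z_{i+1})$. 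These two curves are vertically proper in $\hQ_{i+1}$ by \propref{vertical proper}, and since $x_{i+1},z_{i+1}\in Q_{i+1}\Subset\hQ_{i+1}$ they cut $\hQ_{i+1}$ into subquadrilaterals with $x_{i+1},z_{i+1}$ away from the sides. Because $\hQ_i\neq\cT_{c_0}$, the map $F$ does not fold $\bar R_i$: it is an orientation-preserving embedding onto its image, so $F(\bar R_i)$ meets $\hQ_{i+1}$ only on the correct side of each of $\hW^v(x_{i+1})$ and $\hW^v(z_{i+1})$. The remaining part of $\partial F(\bar R_i)$ consists of $F$-images of arcs of $\hpartial\hQ_i$, and showing these stay inside $\hQ_{i+1}$ is precisely where the enlargement from $Q_{i+1}$ to $\hQ_{i+1}$ and mild dissipativity are used: the top--bottom boundary of a regular box is a deep iterate of $\partial\Omega$ (\propref{enc box}, \secref{sec:mild diss}), and the $\leq_{Q_{i+1}}$-extremality of $x_{i+1},z_{i+1}$ in $\Lambda_{Q_{i+1}}$ prevents any excursion beyond the slightly enlarged box. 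Hence $F(\bar R_i)\subset\hQ_{i+1}$, so $y_{i+1}=F(y_i)\in\hQ_{i+1}$, which closes the induction; taking $i=n$ yields the proposition.

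I expect the containment step --- and within it, ruling out that $F(\bar R_i)$ spills out through the top--bottom boundary of $\hQ_{i+1}$ --- to be the main obstacle; the order-theoretic input (Propositions \ref{order trans} and \ref{order pres}) is then bookkeeping. A secondary technical point will be to make the manipulations with $\hW^v$ uniform across the three regimes (regular box, critical tunnel, valuable crescent) and across the blow-up structure of $\Lambda_{\hQ_i}$, for which \propref{nice cover} and \lemref{fit in thick tunnel} are the relevant tools; the degenerate case in which $x_i$ and $z_i$ share a strong-stable leaf is similar and simpler, since then $\bar R_i$ collapses to the single proper arc $\hW^v(x_i)$.
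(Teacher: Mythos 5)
Your proposal takes a genuinely different route from the paper's. The paper's proof treats the tunnel/crescent case as immediate, reduces to the case where all $Q_i$ are encased regular boxes, and then works \emph{globally}: it picks arbitrary sufficiently vertical arcs $\Gamma(x_n),\Gamma(z_n)$ through $x_n,z_n$ in $\hQ_n$, pulls them back to time $0$ using \propref{pres ver hor curv}~ii) and the non-escape estimate~\eqref{eq:no stray} (which are applicable because the hypothesis $x_i,z_i\in Q_i$ guarantees the arcs stay in boxes), forms the region $Q_0(x_0,z_0)\subset Q_0$, and then pushes it forward. The point of this pull-back-then-push-forward structure is that it yields a \emph{single uniform} $\bL$ factor bounding the height of $F^i(Q_0(x_0,z_0))$, so that the one-time enlargement from $Q_i$ to $\hQ_i$ absorbs it. Your forward induction with the invariant vertical manifolds $\hW^v$ is a legitimate alternative idea, and it has a conceptual advantage ($\hW^v$ is genuinely $F$-invariant, so one avoids the need to pull arcs back at all), but it trades that for several delicate verifications that your write-up does not supply.

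Concretely, there are three gaps. First, you invoke \propref{vertical proper} to say $\hW^v(x_{i+1})$, $\hW^v(z_{i+1})$ are vertically proper in $\hQ_{i+1}$, but that proposition is stated for $Q$, not for the enlargement $\hQ$; since $\hQ$ is taller, properness in $\hQ$ is a strictly stronger statement and requires a separate argument (this also affects the well-definedness of $\bar R_i$, since without properness in $\hQ_i$ the region ``cut out by the two arcs and $\hpartial\hQ_i$'' is not what you intend). Second, the claimed invariance $F(\hW^v(x_i))\subset\hW^v(x_{i+1})$ is not routine when $x_i$ lies in a critical tunnel: $\hW^v$ is then defined as a piece of the auxiliary vertical foliation truncated by $\hcT_{c_{-d}}$ where $d=\cd(c_{-n})$, and the compatibility of these truncations under $F$ (which changes both the ambient tunnel and the depth) needs to be checked, not just cited via ``\lemref{fit in thick tunnel} to match the thickened-tunnel truncations.'' Third, and most importantly, the step ``the extremality of $x_{i+1},z_{i+1}$ prevents any excursion beyond the slightly enlarged box'' is not an argument: the actual mechanism that controls the top--bottom spillover is the quantitative height estimate from \propref{pres ver hor curv}, and in your step-by-step scheme the inductive hypothesis only places $y_i\in\hQ_i$ (not $Q_i$), so it gives no a priori bound on the height of $\bar R_i$ that would prevent compounding. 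The paper's reduction to time $0$ is precisely what sidesteps this: the region starts in $Q_0$ (with height $\asymp 1/L_2$), and the uniform $\bL$ bound from \propref{pres ver hor curv} then keeps all forward images inside the $\hQ_i$ (with height $\asymp 1/L_2'$) without any compounding.

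Your handling of the order part via \propref{order trans} and \propref{order pres}, once containment is granted, matches the paper and is correct.
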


\begin{proof}
The result clearly holds if $Q_i = \cT_{c_m}$ for some $m \in \bbZ$. Suppose the $Q_i$'s are encased regular boxes. Let $\Gamma(x_n)$ and $\Gamma(z_n)$ be two proper sufficiently vertical arcs in $\hQ_n$ containing $x_n$ and $z_n$ respectively. By \propref{pres ver hor curv}, the preimages $\Gamma(x_0)$ and $\Gamma(z_0)$ of $\Gamma(x_n)$ and $\Gamma(z_n)$ under $F^{-n}$ are proper and sufficiently vertical in $Q_0$. Let $Q_0(x_0, z_0)$ be the component of $Q_0 \setminus (\Gamma(x_0) \cup \Gamma(z_0))$ containing $y_0$. Applying \propref{pres ver hor curv} again, we see that the image of $Q_0(x_0, z_0)$ under $F^i$ is still nearly a parallelogram whose height increased by no more than a factor of $\bL$. It follows that $F^i(Q_0(x_0, z_0)) \subset \hQ_i$. The claim about the order of $x_n, y_n$ and $z_n$ follows immediately from \propref{order pres}.
\end{proof}


\section{Connected Components of the Limit Set}

Let $F$ be the mildly dissipative, infinitely renormalizable and regularity unicritical diffeomorphism considered in \secref{sec:tot order}.

The {\it forward/backward/Pesin $\delta$-regularity of $p \in \Lambda$} is the smallest value $\cL^{+/-/P}_\delta(p) \in [1, \infty]$ such that $p$ is forward/backward/Pesin $(\cL^{+/-/P}_\delta(p), \delta)$-regular. We say that $p$ is {\it forward/backward/Pesin $\delta$-regular} if $\cL^{+/-/P}_\delta(p) < \infty$ (i.e. $p \in \Lambda^{+/-/P}_\delta$).

Let $Z$ be the connected component of $\Lambda$. Observe that either
$
Z \cap \{c_m\}_{m \in \bbZ} = \varnothing,
$
or there exists a unique integer $m \in \bbZ$ such that $c_m \in Z$. In the latter case, we say that $Z$ is a {\it critical component} if $m \leq 0$, or a {\it valuable component} if $m \geq 1$.

We say that $z \in Z$ is an {\it extremal point of $Z$} if for any unicritical covering element $Q \in \cC$ containing $z$, the point $z$ is an extremal point for $(Z_Q, \leq_{Q})$, where $Z_Q$ is a connected component of $Z \cap Q$ containing $z$. Denote
$$
\mathring{Z} := Z \setminus \{\text{extremal points of } Z\}.
$$

\begin{lem}\label{component regular}
Let $Z$ be a connected component of $\Lambda$. Then $\cL^{+/-/P}_\delta(p)$ varies lower semi-continuously on $p \in Z$. Moreover, $\cL^{+/-}_\delta(p) < \infty$ for $p \in \mathring Z$. Lastly, if $\cL^P_\delta(p) = \infty$ for some $p \in \mathring Z$, then $p = c_m$ for some $m \in \bbZ$.
\end{lem}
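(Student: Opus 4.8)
\emph{Overview of the plan.} I would prove the three assertions separately, in the order stated, reducing the second and third to the single fact that $\mathring{Z}$ is disjoint from the critical and valuable dust.

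\emph{Lower semi-continuity.} As observed in the proof of \lemref{h compact}, being infinitely forward (resp.\ backward, resp.\ Pesin) $(L,\delta)$-regular is a closed condition, so the blocks $\Lambda^{+}_{L,\delta},\Lambda^{-}_{L,\delta},\Lambda^{P}_{L,\delta}$ are closed and nondecreasing in $L$; away from the critical orbit this already gives $\{p:\cL^{+/-/P}_\delta(p)\le C\}=\Lambda^{+/-/P}_{C,\delta}$, a closed set. Along the critical orbit, $\cL^{+}_\delta(c_{-n})$ and $\cL^{-}_\delta(c_n)$ are controlled by a \emph{finite}-time condition along the canonical directions $\hE^v_{c_{-n}}$, $\hE^h_{c_n}$, which by \propref{loc linear} and \thmref{unif reg crit} is again closed in the base point (and in particular always finite). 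Since a connected component $Z$ meets $\{c_m\}_{m\in\bbZ}$ in at most one point, $\{p\in Z:\cL^{+/-/P}_\delta(p)\le C\}$ differs from the closed set $Z\cap\Lambda^{+/-/P}_{C,\delta}$ by at most that one extra point, hence is closed; this is exactly lower semi-continuity on $Z$.

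\emph{Finiteness of $\cL^{+}_\delta$ on $\mathring{Z}$.} By \propref{trap irreg}, and since $\cL^{+}_\delta$ is automatically finite on $\{c_m\}_{m\in\bbZ}$, it is enough to show $\mathring{Z}\cap\Delta^c_{c_0}=\varnothing$. Suppose $p\in\mathring{Z}\cap\Delta^c_{c_0}$; then $Z$ is non-trivial and there exist $Q_0\in\cC$ with $p\in Q_0$ and $q_-,q_+\in Z_{Q_0}$ with $q_-<_{Q_0}p<_{Q_0}q_+$, where $Z_{Q_0}$ is the component of $Z\cap Q_0$ through $p$. Since $\cd(p)=\infty$, the tunnels containing $p$ form, by the tunnel nesting dichotomy, a decreasing nest $\cT_{c_{-m_1}}\supseteq\cT_{c_{-m_2}}\supseteq\cdots\ni p$ with $m_k\to\infty$, hence $\diam\cT_{c_{-m_k}}\to 0$ by \propref{tunnel size}, and with $\cT_{c_{-m_k}}\subset Q_0$ for all $k$ by \propref{nice cover}(ii). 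For $k$ large, $q_\pm\notin\cT_{c_{-m_k}}$, so by \thmref{thm:total order} the point $p$ is the limit point of $Y^{\wedge\omega}_{Q_0}$ attached to this nest, and $q_\pm$ lie on opposite sides of the ``slot'' occupied by $\cT_{c_{-m_k}}$. The plan is then to use that $Z_{Q_0}$ is connected and that the separating curves of the strong-stable lamination realize $\le_{Q_0}$ in $\hQ_0$ (\thmref{thm:total order}) to conclude $\Lambda\cap\cT_{c_{-m_k}}\subset Z_{Q_0}\subset Z$. As the tips satisfy $c_{-m_{k'}}\to p$ and, after an arbitrarily small adjustment of tunnel lengths keeping them off the outer boundaries, $c_{-m_{k'}}\in\Lambda\cap\cT_{c_{-m_k}}$ for all $k'\ge k$, this puts two distinct critical orbit points into the single component $Z$; but $c_{-m}$ and $c_{-m'}$ with $m\ne m'$ have distinct images under the semiconjugacy $\pi:\Lambda\to\bbO$ (the odometer being aperiodic), hence lie in distinct fibers, i.e.\ distinct connected components. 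This contradiction gives $\mathring{Z}\cap\Delta^c_{c_0}=\varnothing$, hence $\cL^{+}_\delta<\infty$ on $\mathring{Z}$.

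\emph{Finiteness of $\cL^{-}_\delta$ and the last assertion.} I would obtain $\mathring{Z}\cap\Delta^v_{c_1}=\varnothing$ by the symmetric argument: transport $p$ and the straddling points $q_\pm$ backwards across the critical fold $\cT_{c_1}=F(\cT_{c_0})$, using the order behaviour of $F$ there (\propref{order pres}, \propref{order value}), to force a deep valuable crescent $\cT_{c_{1+n_k}}$ — hence all of $\Lambda\cap\cT_{c_{1+n_k}}$, hence two distinct critical orbit points — into $Z$, the same contradiction. Then $\cL^{-}_\delta<\infty$ on $\mathring{Z}$. Finally, if $p\in\mathring{Z}$ with $\cL^P_\delta(p)=\infty$, then $p\in\Delta^{fc}_{c_0}\cup\{c_m\}_{m\in\bbZ}=\Delta^c_{c_0}\cup\Delta^v_{c_1}\cup\{c_m\}_{m\in\bbZ}$ by \propref{trap irreg}; the first two sets miss $\mathring{Z}$ by the above, and if $p\notin\{c_m\}_{m\in\bbZ}$ then $E^{ss}_p\ne E^c_p$ are transverse (the unique critical orbit being $\{c_m\}$ by \propref{unique crit}), so $\cL^{+}_\delta(p),\cL^{-}_\delta(p)<\infty$ would give $\cL^P_\delta(p)<\infty$ via \propref{hom for back reg}; hence $p=c_m$ for some $m$.

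\emph{Main obstacle.} The delicate point is the middle step: converting the heuristic ``a dust point is squeezed into slots of arbitrarily small diameter, so $Z$ must swallow an entire deep tunnel'' into a rigorous deduction — namely, proving that connectedness of $Z_{Q_0}$ forces order-convexity in $(\Lambda_{Q_0},\le_{Q_0})$ via the separation property of the strong-stable lamination in $\hQ_0$, and pinning down that a deep enough tunnel really does contribute a critical orbit point to $Z$. The valuable-dust half additionally needs careful bookkeeping of the fold at $c_0$ (where the stable order is only half order-preserving).
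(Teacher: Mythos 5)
Your proposal is a reasonable plan but incomplete at the key step, as you flag yourself. Two comments on the routes chosen.

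For lower semi-continuity, the paper's proof is geometric and short: the quantity $t_\pm(p)$, the largest tunnel/crescent length that $p$ avoids, is a continuous function of $p$, and \thmref{ref uni} bounds $\cL^{+/-/P}_\delta(p)$ by $1/\underline{t_\pm(p)}$. Your alternative route via closedness of the blocks $\Lambda^{+/-/P}_{L,\delta}$ is plausible, but the patch for the critical orbit is shakier than you suggest: since $\cL^{+/-}_\delta(c_{\mp n})$ is a \emph{finite}-time condition along $\hE^{v/h}_{c_{\mp n}}$ rather than membership in a block, you still need that $p_k\to c_{-n}$ with $\cL^+_\delta(p_k)\le C$ forces $\cL^+_\delta(c_{-n})\le C$ exactly, not just $O(C)$; adding one point to a closed set is not the issue, the transfer of the sharp constant is.

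For the disjointness of $\mathring Z$ from the dusts, the paper's argument is shorter and, crucially, avoids the obstacle you identify. The paper shows that whenever $Z\cap\cT_{c_{-n}}\neq\varnothing$ and $c_{-n}\notin Z$, the tunnel $\cT_{c_{-n}}$ must contain an extremal point of $Z$: otherwise $Z\cap\Lambda_{\cT_{c_{-n}}}$ would span the full order range of $\Lambda_{\cT_{c_{-n}}}$, whose two order-extremes sit on opposite sides of the pinch, forcing the closed connected set $Z$ to contain $c_{-n}$. A dust point $p\in Z$ then lies in a shrinking nest of tunnels each contributing an extremal point of $Z$; those extremal points converge to $p$, so $p$ is extremal. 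Your plan instead aims to show $\Lambda\cap\cT_{c_{-m_k}}\subset Z$ and derive a contradiction from two distinct critical orbit points landing in one $\pi$-fiber. This requires order-convexity of $Z_{Q_0}$ in $(\Lambda_{Q_0},\le_{Q_0})$, which you rightly flag as the ``main obstacle'', and it is a genuine gap: \thmref{thm:total order} gives a total order realized by vertically proper arcs, but these arcs are defined only off the critical dust, so to conclude order-convexity you would have to show that any $r$ with $q_-<_{Q_0}r<_{Q_0}q_+$ and $r\notin Z$ admits a separating barrier in $\hQ_0$ disjoint from $Z$ --- this is not a formal consequence of the lamination structure and needs a new argument (the paper's claim sidesteps it by reducing to the pinch geometry and closedness of $Z$). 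Your handling of the final assertion is fine, though unnecessarily long: once the dusts miss $\mathring Z$, \propref{trap irreg} already places any $p\in\mathring Z$ with $\cL^P_\delta(p)=\infty$ in $\{c_m\}_{m\in\bbZ}$, with no need for the detour through \propref{unique crit} and \propref{hom for back reg}.
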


\begin{proof}
For $p \in Z$, let $t_\pm(p) \in [0, \bft]$ with $\bft := L_1^{-1}$ be the largest value such that
$$
p \notin \bigcup_{i=1}^\infty \cT_{\pm i}(t_{\pm}(p)).
$$
Clearly, $t_\pm(p)$ depends continuously on $p$. The first claim now follows from \thmref{ref uni}.

Let $n \geq 0$ such that $c_{-n} \notin Z$ and
$
Z \cap \cT_{-n} \neq \varnothing.
$
We claim that $\cT_{c_{-n}}$ must contain an extremal point of $Z$. Otherwise, the two extremal points of $Z \cap \Lambda_{\cT_{c_{-n}}}$ coincide with the two extremal points of $\cT_{c_{-n}}$. However, this is not possible unless $c_{-n} \in Z$. Consequently, if there exists
$
p \in Z \cap \Delta^c_{c_0},
$
then $p$ must be an extremal point in $Z$. By a similar argument, we conclude that if there exists
$
p \in Z \cap \Delta^v_{c_1},
$
then $p$ must be an extremal point in $Z$. The second and third claims now follow from \thmref{ref uni}.
\end{proof}

\begin{thm}[Non-trivial components are center curves]\label{component arc}
Every non-trivial component $Z_0$ of $\Lambda$ is a $C^r$-curve. Moreover, $Z_0'(t) \in E^c_{Z_0(t)}$ for all $t \in (0, |Z_0|)$.
\end{thm}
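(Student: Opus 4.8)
The strategy is to localize near a well-chosen point of $Z_0$, prove that $Z_0$ is there a $C^r$-graph over the neutral direction, and then globalize. Since $Z_0$ is non-trivial it is an uncountable continuum, and since it is covered by finitely many elements of a unicritical cover $\cC$ it has only finitely many extremal points; hence $\mathring Z_0\neq\varnothing$. By \lemref{component regular} every $p\in\mathring Z_0$ is forward and backward $\delta$-regular, and because $\{c_m\}_{m\in\bbZ}$ is countable (and is the only critical orbit, by \propref{unique crit}) we may fix $p\in\mathring Z_0$ with $p\neq c_m$ for all $m$, so that $p\in\Lambda^P_{L,\delta}$ for some $L$. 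I would then work inside the enclosed regular box $\bcB_p$ of \secref{sec:mild diss}, with its linearizing chart $\Phi_p$ from \thmref{reg chart} and the total order $\leq_{\bcB_p}$ on $\Lambda_{\bcB_p}:=\Lambda\cap\bcB_p$ furnished by \thmref{thm:total order} (recall $x\leq_{\bcB_p}y$ and $y\leq_{\bcB_p}x$ iff $y\in W^{ss}(x)$). Let $Z$ denote the connected component of $p$ in $Z_0\cap\bcB_p$.

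The first substantive step is to show that $Z$ is \emph{not} collapsed by $\leq_{\bcB_p}$, i.e.\ $Z$ is not contained in a single strong-stable class. If it were, then $Z$ would be a non-degenerate subarc $\sigma$ of the $C^r$-curve $W^{ss}_{\loc}(p)$, so in particular $\sigma\subset\Lambda\Subset\Omega$. Choosing $q\in\sigma$ with $q\neq c_m$ and $q$ non-extremal, so that $q\in\Lambda^P_{L',\delta}$, the tangent line of $\sigma$ along itself equals $E^{ss}$, and $\|D_qF^{-n}|_{E^{ss}_q}\|=\|D_{q_{-n}}F^{n}|_{E^{ss}_{q_{-n}}}\|^{-1}$. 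Using \thmref{ref uni} to bound the forward contraction of $E^{ss}_{q_{-n}}$ in terms of how $q_{-n}$ sits relative to the critical tunnels $\cT_{c_{-i}}$, one gets $\|D_qF^{-n}|_{E^{ss}_q}\|\gtrsim\lambda^{-(1-\bdelta)n}\to\infty$, whence $\diam F^{-n}(\sigma)\to\infty$, contradicting $F^{-n}(\sigma)\subset\Lambda\Subset\Omega$. Thus $Z$ is $\leq_{\bcB_p}$-non-trivial.

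Next I would show that $\Phi_p(Z)$ is a $C^r$-graph over a horizontal interval, tangent to $E^c$. Mapping $q\in Z$ to its $\leq_{\bcB_p}$-class gives a continuous surjection onto a non-degenerate interval of the ordered base; using that the vertical manifolds $\hW^v(\cdot)$ are pairwise disjoint and vertically proper in $\bcB_p$ (Propositions \ref{vertical disjoint} and \ref{vertical proper}), together with \propref{trap middle} and \propref{order pres}, one sees that $Z$ meets each such vertical manifold in exactly one point, so $\Phi_p(Z)$ is the graph of a continuous $g:J\to\bbI$ over a horizontal interval $J\ni 0$ with $g(0)=0$. To upgrade $g$ to $C^r$: every $q\in Z\cap\mathring Z_0$ is backward $\delta$-regular, so (arguing as in the proofs of \propref{hor curv conv} and \propref{pres ver hor curv}) any secant/tangent direction of $Z$ at such $q$ is sufficiently horizontal in $\cU_q$, and the backward-growth bound $\|DF^{-n}|_{TZ}\|<\lambda^{-(1-\bdelta)n}$ on scale $\lambda^{\delta n}$ forces $\|Z\|_{C^r}<\bL$. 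Finally \propref{center jet} identifies the degree-$r$ jet of $Z$ at each such $q$ with that of $W^c(q)$, giving $Z'(t)\in E^c_{Z(t)}$. Globalizing, every point of $Z_0$ has a neighborhood in which $Z_0$ is such a $C^r$-arc tangent to $E^c$; in the chart at $p$ one checks that $Z_0$ stays inside the single center-manifold-type graph through $p$, so $Z_0$ is a compact connected $C^r$ $1$-manifold which is not a circle, hence a $C^r$-curve, with $Z_0'(t)\in E^c_{Z_0(t)}$ everywhere (uniform bounds from \propref{ss c geo} and \propref{ss c contin}).

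\textbf{Main obstacle.} I expect the hard part to be the second and third steps: ruling out that $Z_0$ propagates along the strong-stable direction requires a definite backward expansion of strong-stable arcs, which in this non-uniformly hyperbolic, unicritical setting is not automatic and must be extracted from \thmref{ref uni} and the tunnel/crescent machinery of \secref{sec:reg near crit orb}; and converting "$Z_0$ is a continuous horizontal graph" into "$Z_0$ is $C^r$ with bounded geometry and tangent to $E^c$" uses the full strength of the linearization estimates (\thmref{reg chart}, \propref{loc linear}, \propref{hor curv conv}) propagated along the orbit, since \propref{center jet} presupposes a $C^r$-curve.
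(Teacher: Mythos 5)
Your overall strategy (localize at a Pesin-regular $p\in\mathring Z_0$, show $Z_0$ is a horizontal graph near $p$, upgrade to $C^r$ tangent to $E^c$, globalize) is the same as the paper's, and your identification of the "main obstacle" is accurate. However, there is a genuine gap between steps 1 and 2.

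Your step 1 rules out only the collapse $Z\subset W^{ss}(p)$, i.e.\ that $Z$ lies in a \emph{single} $\leq_{\bcB_p}$-class. What you actually need for step 2 — "one sees that $Z$ meets each such vertical manifold in exactly one point" — is that for each Pesin-regular $q\in Z_0$, the leaf $W^{ss}(q)$ meets $Z_0$ only at $q$ locally. This is strictly stronger, and it is exactly what the paper's Lemmas \ref{component arc 1}--\ref{component arc 3} establish: if $y_0\in(W^{ss}(p_0)\cap Z_0)\setminus\{p_0\}$ then $\dist(y_0,p_0)>\kappa_{Z_0}$ for a constant depending only on $Z_0$. Your backward-expansion argument is essentially the paper's \lemref{component arc 1} in disguise, but \lemref{component arc 1} only yields a Pliss moment $-N$ (depending on $y_0$) at which $\dist(y_{-N},p_{-N})>\kappa$; converting this into a uniform lower bound on $\dist(y_0,p_0)$ is the nontrivial content of Lemmas \ref{component arc 2} and \ref{component arc 3}, which uses a global topological argument: since the vertical height of each covering element in a finite subcover $\cC_0$ can be made smaller than $\kappa$, only finitely many connected components of $\Lambda$ can straddle multiple covering elements, forcing a uniform bound on the relevant $N$. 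Nothing in your proposal supplies this global/compactness step, and without it $W^{ss}(p_0)\cap Z_0$ could a priori accumulate at $p_0$, destroying the graph representation.

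A second, smaller issue: you correctly flag that \propref{center jet} presupposes a $C^r$-curve, so invoking it is circular. The paper sidesteps this by proving directly that the graph function $g_0$ satisfies cusp-type bounds ($|g_0(x)|<\bK\lambda^{(1-\bdelta)n_i}$ for $|x|<\bK^{-1}\lambda^{\bdelta n_i}$) at Pliss moments $-n_i$, using the pullback construction $\Gamma_{-n_i}^i$ inside a secondary linearization; this yields $g_0'(0)=0$ and the tangency to $E^c$ without assuming smoothness a priori. Your sketch "any secant/tangent direction ... is sufficiently horizontal ... forces $\|Z\|_{C^r}<\bL$" needs to be built along these lines rather than by quoting \propref{center jet}.
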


The proof of \thmref{component arc} is based on the following three lemmas.
 
 \begin{lem}\label{component arc 1}
 There exists a uniform constant $\kappa >0$ such that the following holds. Let $Z_0$ be a non-valuable connected component of $\Lambda$, and let $p_0 \in Z_0$ be a Pesin $\delta$-regular point. If there exists
$
y_0 \in (Z_0 \cap W^{ss}(p_0)) \setminus \{p_0\},
$
then there exists a Pliss moment $-N < 0$ in the backward orbit of $p_0$ such that 
 $$
 \dist(F^{-N}(y_0), F^{-N}(p_0)) > \kappa.
 $$
 \end{lem}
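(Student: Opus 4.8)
The plan is to track, under backward iteration, the sub-arc $\gamma \subset W^{ss}(p_0)$ joining $p_0$ to $y_0$, and to show that its length — hence the separation of its endpoints — grows without bound along the sequence of backward Pliss moments of $p_0$. Consequently at the first such moment where the length exceeds a uniform threshold $\kappa$, the two points are already separated by $\kappa$. The mechanism is simply the backward expansion of the strong-stable direction, which is quantified precisely at Pliss moments, combined with the fact that a short sub-arc of a local strong-stable manifold at a ``good'' point, having bounded geometry, cannot have length exceeding the size of the enclosing regular box.

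\textbf{Setup.} After the standard reductions at the opening of \secref{sec:unicrit} I may assume $F$ is homogeneous; let $p_0$ be Pesin $(K_0,\delta)$-regular. By Propositions \ref{freq pliss back} and \ref{conditional pliss full} there are infinitely many backward Pliss moments $0\ge -m_0>-m_1>\cdots$ at which $p_{-m_i}$ is Pesin $(1,\delta)$-regular; at each such moment the enclosed regular box $\bcB_{p_{-m_i}}$ has uniform size (Propositions \ref{hom enc box}, \ref{proper in box}), $W^{ss}_{\loc}(p_{-m_i})$ is a proper, uniformly nearly-vertical, $C^r$-bounded curve in it (\propref{consist ver hor}, \propref{ss c geo}), and $\cd(p_{-m_i})=\varnothing$, so that the leaf $\hW^v(p_{-m_i})$ of the unicritical cover is precisely $W^{ss}_{\loc}(p_{-m_i})$. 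Fix a uniform $\kappa>0$ smaller than the inradius of every such box, and use \propref{loc linear} to shrink the regular radii so that its bounded-distortion constant $1+k$ satisfies $1+k<\lambda^{-(1-\delta)}$. The non-valuability of $Z_0$ is used here to ensure $F^{-n}(Z_0)$ is non-valuable for all $n\ge 0$, so that by \lemref{component regular} and \thmref{ref uni} the backward orbit of $p_0$ and the stable-manifold pieces along it stay in the regular regime (in particular $W^s=W^{ss}$ there).

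\textbf{Main argument.} Suppose for contradiction that $\dist(F^{-m_i}(y_0),p_{-m_i})\le\kappa$ for every $i\ge 1$. Then $F^{-m_i}(y_0)\in\Lambda\cap\bcB_{p_{-m_i}}$ lies on $W^{ss}(p_{-m_i})=W^s(p_{-m_i})$, so by the total-order theorem \thmref{thm:total order} it is order-equal to $p_{-m_i}$ and therefore lies on $\hW^v(p_{-m_i})=W^{ss}_{\loc}(p_{-m_i})$. Let $\eta_i\subset W^{ss}_{\loc}(p_{-m_i})$ be the sub-arc from $p_{-m_i}$ to $F^{-m_i}(y_0)$, and $\ell_i:=|\eta_i|$; then $0<\ell_i<\bL\kappa$ since $W^{ss}_{\loc}(p_{-m_i})$ has bounded $C^r$-geometry. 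By forward invariance of local strong-stable manifolds (built into \thmref{reg chart}), $F^{g_i}(\eta_{i+1})=\eta_i$ with $g_i:=m_{i+1}-m_i$; since $p_{-m_{i+1}}$ is forward $(1,\delta)$-regular, \propref{loc linear} shows $F^{g_i}$ contracts arclength along $W^{ss}_{\loc}(p_{-m_{i+1}})$ by a factor $<(1+k)\lambda^{(1-\delta)g_i}$, hence $\ell_{i+1}>(1+k)^{-1}\lambda^{-(1-\delta)g_i}\ell_i$. Iterating, $\ell_i>(1+k)^{-(i-1)}\lambda^{-(1-\delta)(m_i-m_1)}\ell_1$; since $i\le m_i$ and $1+k<\lambda^{-(1-\delta)}$, applying $\logl$ shows this bound tends to $+\infty$, so $\ell_i\to\infty$, contradicting $\ell_i<\bL\kappa$. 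Thus some $i\ge 1$ satisfies $\dist(F^{-m_i}(y_0),p_{-m_i})>\kappa$; taking $-N:=-m_i$ for the least such $i$ gives $-N<0$ and the claimed inequality.

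\textbf{Main obstacle.} The delicate step is the identification of $F^{-m_i}(y_0)$ with a point of the \emph{local} strong-stable leaf of $p_{-m_i}$: a priori the global manifold $W^{ss}(p_{-m_i})$ could return close to $p_{-m_i}$ along another strand, so one genuinely needs the total-order machinery of \secref{sec:tot order} (resting on mild dissipativity and the unicritical cover) to exclude this, together with the verification that $\cd(p_{-m_i})=\varnothing$ at Pliss moments so that the $\hW^v$-leaf is the honest, untruncated local strong-stable curve. The remaining bookkeeping — arranging the distortion constant below $\lambda^{-(1-\delta)}$, and the edge case where $0$ itself is a Pliss moment — is routine.
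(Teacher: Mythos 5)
Your proof has a genuine gap at the step where you identify $F^{-m_i}(y_0)$ with a point of $W^{ss}_{\loc}(p_{-m_i})$. You invoke \thmref{thm:total order} to say that, since $F^{-m_i}(y_0)\in W^s(p_{-m_i})$ and both points lie in $\bcB_{p_{-m_i}}$, they are order-equal and so $F^{-m_i}(y_0)\in\hW^v(p_{-m_i})=W^{ss}_{\loc}(p_{-m_i})$. But \thmref{thm:total order} only asserts that order-equality is \emph{equivalent} to $y\in W^s(x)$ for the \emph{global} manifold, and the order $\leq_Q$ is built by the blow-up construction of \subsecref{subsec:blow up}: a point $p$ with $\cd(p)=\varnothing$, a tunnel $\cT_{c_{-n}}$ threaded by $W^{ss}_{\loc}(p)$, and every point inside that tunnel occupy the same slot. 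Order-equality of $p_{-m_i}$ and $F^{-m_i}(y_0)$ is therefore compatible with $F^{-m_i}(y_0)$ sitting in a nested critical tunnel (represented by a blown-up element of $Y_Q^{\wedge\omega}$), in which case it need not lie on $W^{ss}_{\loc}(p_{-m_i})$ at all. To close this you would need $\cd(F^{-m_i}(y_0))=\varnothing$, which you never verify — and verifying it is precisely the crux of the lemma. Checking $\cd(p_{-m_i})=\varnothing$ at Pliss moments, as you do, is not enough: it controls where $p$ is, not where $y$ is.

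The paper's proof handles this directly and does not need \thmref{thm:total order}. It iterates backward step by step; while $p_{-i}$ stays Pesin-regular the separation $\dist(y_{-i},p_{-i})$ grows by $\lambda^{-(1-\bdelta)}$ per step, and when $\cd(p_{-m})=0$ (the orbit enters $\cT_{c_0}$) it observes that $y_{-m+1}$ must lie in the \emph{same} connected component of $W^{ss}_{\loc}(p_{-m+1})\cap\cT_{c_1}$ as $p_{-m+1}$: otherwise the connected set $Z_{-m+1}\supset\{p_{-m+1},y_{-m+1}\}$ would have to traverse the cusp $c_1$ of the crescent, making $Z_{-m+1}$ (hence $Z_0$) valuable, contradicting the hypothesis. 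This keeps $y_{-m-i}\in W^{ss}_{\loc}(p_{-m-i})$ through the tunnel escape, after which the expansion resumes. Your sketch uses non-valuability only for the soft statement that the backward orbit stays ``in the regular regime,'' which does not touch the strand/branching issue; this local exclusion argument is the content you are missing, and it is where the non-valuability of $Z_0$ is actually used.

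The remainder of your argument — that at Pliss moments the regular box has uniform size, that the local stable leaf has bounded geometry, and that arclength expands backward at a rate $\lambda^{-(1-\delta)}$ up to bounded distortion — is sound and parallels the paper's quantitative estimate, so once the strand-tracking gap is filled the conclusion would follow as you intend.
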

 
\begin{proof}
Since $\cd(p_0), \vd(p_0) < \infty$, by replacing $p_0 \in Z_0$ with $p_{-i} \in Z_{-i}$ for some $i \geq 0$, we may assume that
$
\cd(p_0) = \vd(p_0)=\varnothing.
$

If
$
p_{-i} \in \Lambda^P_{L, \delta}
$ for $
0 \leq i \leq I,
$
then we have
 $$
 \dist(y_{-I}, p_{-I}) > \bL^{-1}\lambda^{-(1-\bdelta)I}\dist(y_0, p_0).
 $$

Let $-m < 0$ be a moment such that
$
\cd(p_{-m}) = 0,
$
and let $-j < 0$ be the tunnel escape moment for $p_{-m}$. By \propref{tunnel escape}, \propref{tunnel escape} and \thmref{ref uni}, $p_{-m+1}$ and $p_{-m-j}$ are forward $(1, \delta)$-regular. If $y_{-m+1} \in W^{ss}_{\loc}(p_{-m+1})$, then
 $$
 \dist(y_{-m+1}, p_{-m+1}) > k\lambda^{-(1-\bdelta)m}\dist(y_0, p_0)
 $$
for some uniform constant $k > 0$. Observe that in this case, $y_{-m+1}$ is contained in the same connected component of $W^{ss}_{\loc}(p_{-m+1}) \cap \cT_{c_1}$ as $p_{-m+1}$. Otherwise, $Z_{-m+1}$ must contain $c_1$, which contradicts the assumption that $Z_0$ is not valuable. It follows that
$
y_{-m - i} \in W^{ss}_{\loc}(p_{-m-i})
$ for $
0 \leq i \leq j,
$
and 
$$
\dist(y_{-m-j}, p_{-m-j}) > k\lambda^{-(1-\bdelta)(m+j)}\dist(y_0, p_0).
$$

Proceeding inductively, we obtain the desired result.
\end{proof}

\begin{lem}\label{component arc 2}
Let $Z_0$ be a connected component of $\Lambda$, and let $p_0 \in Z_0$ be a Pesin $\delta$-regular point. Then there does not exist a sequence
$$
\{y^i_0\}_{i=1}^\infty \subset (W^{ss}(p_0) \cap Z_0)\setminus \{p_0\}
$$
such that $\dist(y^i_0, p_0)\searrow 0$ as $i \to \infty$.
\end{lem}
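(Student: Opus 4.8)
The plan is to argue by contradiction: use \lemref{component arc 1} to ``blow up'' a putative accumulating sequence to a uniform scale, extract a limiting configuration, and show it is incompatible with the structure of $\Lambda$.

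\textbf{Step 1 (reduction to a non-valuable component).} Suppose such a sequence $\{y^i_0\}$ exists. First reduce to the case where $Z_0$ is non-valuable. If $c_m\in Z_0$ for some $m\geq1$, replace the triple $(p_0,\{y^i_0\},Z_0)$ by $(p_{-m},\{y^i_{-m}\},Z_{-m})$: the new component is critical, hence non-valuable; the new base point is still Pesin $\delta$-regular by \propref{decay reg}; we still have $y^i_{-m}\to p_{-m}$ since a single iterate of $F^{-1}$ is Lipschitz near $\Lambda$; and $y^i_{-m}\in(W^{ss}(p_{-m})\cap Z_{-m})\setminus\{p_{-m}\}$ because strong-stable manifolds and connected components are $F$-invariant. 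So assume $Z_0$ is non-valuable.

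\textbf{Step 2 (blow-up via \lemref{component arc 1}).} For each $i$, \lemref{component arc 1} gives a Pliss moment $-N_i<0$ in the backward orbit of $p_0$ with $\dist(y^i_{-N_i},p_{-N_i})>\kappa$. Inspecting the proof of \lemref{component arc 1}, the separation grows essentially monotonically backward --- one has $\dist(y^i_{-n},p_{-n})>k\lambda^{-(1-\bdelta)n}\dist(y^i_0,p_0)$ with the lower bound increasing in $n$, the estimate being propagated through every critical tunnel by induction. Consequently, once the separation exceeds $\kappa$ at time $-N_i$ it remains $\geq\kappa_0$ at all earlier times, for a uniform $\kappa_0>0$; in particular we may take $N_i$ as large as we please, so $N_i\to\infty$ (this also follows from the crude upper bound $\dist(y^i_{-N_i},p_{-N_i})\lesssim\lambda^{-(1+\bepsilon)N_i}\dist(y^i_0,p_0)$ and $\dist(y^i_0,p_0)\searrow0$). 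Since $-N_i$ is a Pliss moment, $p_{-N_i}$ is Pesin $(1,\delta)$-regular and, by \propref{proper for reg}, properly forward regular, so $p_{-N_i}\in\hLambda^P_{1,\delta}$, which is compact by \lemref{h compact}.

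\textbf{Step 3 (extracting the limit).} Passing to a subsequence, $p_{-N_i}\to q\in\hLambda^P_{1,\delta}$ and $y^i_{-N_i}\to y\in\Lambda$ with $\dist(y,q)\geq\kappa_0$. Strong-stable manifolds vary $C^r$-continuously on the compact regularity block (\propref{ss c contin}) and $y^i_{-N_i}\in W^{ss}_\Omega(p_{-N_i})$, so $y\in W^{ss}_\Omega(q)$; in particular $y\in W^{ss}(q)$ and $y\neq q$. Since $y^i_{-N_i}$ and $p_{-N_i}$ lie in the same connected component $Z_{-N_i}=\pi^{-1}(\pi(p_{-N_i}))$ of $\Lambda$ and $\pi$ is continuous, $\pi(y)=\pi(q)$, i.e. $y\in Z_q$. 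Finally, continuity of $F^{-M}$ together with Step 2 gives $\dist(q_{-M},F^{-M}(y))\geq\kappa_0$ for all $M\geq0$. Thus $q$ is Pesin $\delta$-regular, $y\neq q$, $y$ lies in the same $\pi$-fiber as $q$, the forward orbits of $q$ and $y$ converge (as $y\in W^{ss}(q)$), but their backward orbits stay $\kappa_0$-separated forever.

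\textbf{Step 4 (the contradiction).} This is the delicate point. Since $q$ is Pesin regular it is $\mu$-generic for the uniquely ergodic system $F|_\Lambda$, so the backward orbit $\{q_{-M}\}_{M\geq0}$ --- together with the companions $F^{-M}(y)\in(W^{ss}(q_{-M})\cap Z_{q_{-M}})\setminus\{q_{-M}\}$ at distance $\geq\kappa_0$ --- equidistributes with respect to $\mu$ and is dense in the support of $\mu$, which is all of $\Lambda$. Repeating the limiting argument of Step 3 along this orbit, restricted to the positive density of Pliss times so that \propref{ss c contin} and \propref{proper for reg} apply, one finds that the set of points of $\Lambda$ admitting a $\kappa_0$-separated companion in their own strong-stable manifold and own component is closed and of full $\mu$-measure; all such points lie in non-trivial fibers of $\pi$, contradicting \propref{uni erg}, which says $\mu$-almost every fiber of $\pi$ is a single point. (Alternatively, one can aim to derive the contradiction directly from mild dissipativity: $q$ and $y$ lie on the single proper Jordan arc $W^{ss}_\Omega(q)$, whose backward iterates pinch near the critical orbit, which is incompatible with the two backward orbits staying $\kappa_0$-apart while belonging to a common, necessarily non-trivial, component.) I expect Step 4 to be the main obstacle; Steps~1--3 are routine once \lemref{component arc 1} is in hand.
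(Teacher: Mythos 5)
Your Steps~1--3 are sound (and Step~1 correctly flags that \lemref{component arc 1} is stated only for non-valuable components, a mismatch the paper's proof leaves implicit), but Step~4 is a genuine gap, and you say so yourself. The measure-theoretic route you sketch --- showing that a positive-measure set of points admits $\kappa_0$-separated companions in their own $\pi$-fibers, contradicting \propref{uni erg} --- would require establishing closedness of the relevant set on a compact regularity block, dealing with the fact that Pliss moments have density strictly less than $1$, and showing that ``the backward orbit equidistributes and has companions along a subsequence'' actually propagates to a positive-measure set. None of this is carried out, and it is substantially more work than the problem requires.

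The idea you are missing is a counting argument with the unicritical cover, and it lets you skip Steps~3 and~4 entirely. The components $Z_{-N_k}$ (images of $Z_0$ under $F^{-N_k}$) are pairwise distinct, since the $N_k$ are distinct and the semiconjugacy $\pi\colon\Lambda\to\bbO$ to the aperiodic odometer separates them. Each $Z_{-N_k}$ contains the pair $p_{-N_k}$, $y^{i_k}_{-N_k}$ lying on a common local strong-stable manifold and at distance $>\kappa$. Now take a finite unicritical subcover $\cC_0\subset\cC$ whose elements all have vertical height less than $\kappa/2$: no single cover element can contain such a pair, so each $Z_{-N_k}$ must intersect at least two elements of $\cC_0$. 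But any component $Z$ intersecting $Q\in\cC_0$ without being contained in $Q$ must contain an extremal point of $(\Lambda\cap Q,\leq_Q)$ (\thmref{thm:total order}), and the total number of extremal points over the finite cover $\cC_0$ is finite. So only finitely many components can intersect more than one cover element --- contradicting the existence of infinitely many distinct $Z_{-N_k}$. This keeps the whole argument at the level of topology and the total order structure, with no appeal to equidistribution or measure.
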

 
\begin{proof}
Suppose towards a contradiction that such a sequence exists. By \lemref{component arc 1}, there exists a subsequence $\{y^{i_k}_0\}_{k=1}^\infty$ and a decreasing sequence $\{-N_k\}_{k=1}^\infty$ of Pliss moments in the backward orbit of $p_0$ such that
$$
 \dist(y^i_{-N_k}, p_{-N_k}) > \kappa.
$$

Consider the unicritical cover $\cC$ of $\Lambda$ defined in \subsecref{subsec:unicrit cover}. We may assume that the vertical height of each element $Q \in \cC$ are bounded by $\kappa/2$. This means that if $p \in \Lambda_Q$ and $q \in W^{ss}_{\loc}(p)$, and
$
\dist(p, q) > \kappa/2,
$
then $q$ cannot be contained in $\Lambda_Q$.

By compactness of $\Lambda$, we may choose a finite subcover $\cC_0 \subset \cC$. Observe that if a connected component $Z$ of $\Lambda$ intersects multiple elements from $\cC_0$, then $Z$ must contain least one of the two extremal points for each of these elements. Hence, we see that there are at most finitely many components that can intersect multiple covering elements. However, by the previous observations, the component $Z_{-N_k}$ for all $k \geq 1$ cannot be contained in a single covering element. This is a contradiction.
\end{proof}
 
\begin{lem}\label{component arc 3}
Let $Z_0$ be a non-valuable connected component of $\Lambda$. Then there exists a constant $\kappa_{Z_0}> 0$ such that if $p_0 \in Z_0$ is Pesin $\delta$-regular, and
$
y_0 \in (W^{ss}(p_0) \cap Z_0)\setminus \{p_0\},
$
then
$
\dist(y_0, p_0) > \kappa_{Z_0}.
$
\end{lem}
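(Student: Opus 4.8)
The plan is to argue by contradiction and reduce the statement to the single‑point non‑accumulation result of \lemref{component arc 2}, by using \lemref{component arc 1} to spread a hypothetical small pair $(p_0,y_0)$ apart along the backward orbit, and then exploiting the fact that only finitely many components of $\Lambda$ can meet several elements of a fixed finite unicritical subcover. Concretely, I would suppose that no constant $\kappa_{Z_0}$ works, producing Pesin $\delta$-regular points $p_0^k \in Z_0$ and points $y_0^k \in (W^{ss}(p_0^k) \cap Z_0) \setminus \{p_0^k\}$ with $\dist(y_0^k, p_0^k) \to 0$. If some point of $Z_0$ occurs infinitely often among the $p_0^k$, then \lemref{component arc 2} applies verbatim and gives a contradiction; otherwise, after passing to a subsequence, the $p_0^k$ are pairwise distinct, converge to some $p_\infty \in Z_0$, and $y_0^k \to p_\infty$ as well.

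Next, for each $k$ I would apply \lemref{component arc 1} to the pair $(p_0^k, y_0^k)$ to obtain a backward Pliss moment $-N_k < 0$ with $\dist(F^{-N_k}(y_0^k), F^{-N_k}(p_0^k)) > \kappa$, where $\kappa$ is the uniform constant of that lemma. Two features of the construction in the proof of \lemref{component arc 1} are what I would need to record: the iterates $F^{-i}(y_0^k)$ stay on the \emph{local} strong-stable manifold of $F^{-i}(p_0^k)$ for $0 \le i \le N_k$, and at the Pliss moment the point $q^k := F^{-N_k}(p_0^k)$ is Pesin $(O(1),\delta)$-regular, so $W^{ss}_{\loc}(q^k)$ has uniformly bounded $C^r$-geometry and a uniformly definite size by \propref{ss c geo} and \propref{ss c contin}. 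A short Lipschitz estimate shows $N_k \to \infty$ (otherwise a subsequence has $N_k$ constant and $\dist(F^{-N_k}(y_0^k), F^{-N_k}(p_0^k)) \to 0$, contradicting the bound $\kappa$), so after passing to a subsequence the $N_k$ are pairwise distinct.

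Finally I would set $Z^k := F^{-N_k}(Z_0)$, a connected component of $\Lambda$ containing both $q^k$ and $F^{-N_k}(y_0^k) \in W^{ss}_{\loc}(q^k)$. The $Z^k$ are pairwise distinct, because $F^{-N_k}(Z_0) = F^{-N_{k'}}(Z_0)$ with $N_k \ne N_{k'}$ would make $Z_0$ periodic, forcing $\pi(Z_0)$ to be a periodic point of the odometer $S$, which has none. Fixing a finite subcover $\cC_0$ of the unicritical cover $\cC$ of \subsecref{subsec:unicrit cover}, all of whose elements may be taken of diameter less than $\kappa$: since $q^k$ and $F^{-N_k}(y_0^k)$ lie on one local strong-stable manifold at distance $>\kappa$, no single element of $\cC_0$ contains both, so each $Z^k$ meets at least two elements of $\cC_0$; but, exactly as in the proof of \lemref{component arc 2}, a component meeting two elements of $\cC_0$ must contain an extremal point of one of them, and $\cC_0$ has only finitely many extremal points — contradicting the existence of infinitely many distinct such $Z^k$. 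This proves the lemma, and the bound produced this way is in fact independent of $Z_0$.

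The step I expect to be the main obstacle is ensuring the two spread-apart points genuinely cannot share a covering element: this requires extracting from \lemref{component arc 1} not merely the distance estimate but also that $F^{-N_k}(y_0^k)$ lands on the \emph{local} strong-stable leaf through $q^k$ — rather than on some far-off sheet of $W^{ss}(q^k)$ that re-enters the same covering element — together with the uniform control of the geometry and size of that leaf at the Pliss time $-N_k$. Once this is in place, the counting argument against the finite subcover closes the proof with no further work.
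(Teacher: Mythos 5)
Your proposal is essentially correct and uses the same ingredients as the paper's argument: Lemma~\ref{component arc 1} to push a pair apart along the backward orbit, the finite unicritical subcover $\cC_0$ with vertical height $<\kappa/2$ built in the proof of Lemma~\ref{component arc 2}, and the observation (also from that proof, via the extremal-point count) that only finitely many components of $\Lambda$ can meet more than one element of $\cC_0$. The difference is packaging: the paper argues directly, defining $N(Z_0)$ as the smallest depth past which $Z_{-n}$ is confined to a single covering element, concluding that the Pliss moment from Lemma~\ref{component arc 1} satisfies $m<N(Z_0)$, and reading off the explicit lower bound $\dist(y_0,p_0)>K\lambda^{(1+\eta)N(Z_0)}\kappa$; you instead run a contradiction with subsequences plus the odometer non-periodicity to make the $Z^k=F^{-N_k}(Z_0)$ pairwise distinct. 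Both hinge on the subtlety you flag yourself — that the separation $>\kappa$ from Lemma~\ref{component arc 1} occurs on the \emph{local} strong-stable leaf through $F^{-N}(p_0)$ — and the proof of that lemma does in fact keep $y_{-n}\in W^{ss}_{\loc}(p_{-n})$ up to the Pliss moment, so this use is legitimate; the same fact is needed in Lemma~\ref{component arc 2}. Your initial case split (repeated versus distinct $p_0^k$) is unnecessary: distinctness of the $Z^k$ uses only that $Z_0$ is fixed and aperiodic, regardless of whether the base points repeat.

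One concrete misstep: the closing claim that ``the bound produced this way is in fact independent of $Z_0$'' does not follow from your argument. The odometer step requires $Z_0$ to be fixed; if the $p_0^k$ are allowed to lie in varying components $Z_0^k$, the components $Z^k=F^{-N_k}(Z_0^k)$ need not be pairwise distinct (for instance, $Z_0^k=F^{N_k}(W)$ for a fixed bad component $W$ gives $Z^k=W$ for every $k$), and the contradiction with the finiteness of bad components disappears. The paper's bound is likewise $Z_0$-dependent, through $N(Z_0)$, which is exactly why the lemma is stated with a $\kappa_{Z_0}$ rather than a uniform constant.
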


 \begin{proof}
 Consider the finite cover $\cC_0$ of $\Lambda$ given in the proof of \lemref{component arc 2}. Let $N = N(Z_0) \in \bbN$ be the smallest integer such that for all $n \geq N$, the component $Z_{-n}$ does not intersect multiple covering elements in $\cC_0$. Then it follows from \lemref{component arc 1} that if $p_0 \in Z_0$ is Pesin $\delta$-regular, then
$$
W^{ss}_{\loc}(p_{-n}) \cap Z_{-n} = \{p_{-n}\}
\matsp{for}
n \geq N.
$$

Suppose there exists
$$
y_0 \in (W^{ss}_{\loc}(p_0) \cap Z_0)\setminus \{p_0\}.
$$
Then by \lemref{component arc 1}, there exists $0 \leq m < N$ such that
$
\dist(y_{-m}, p_{-m}) > \kappa.
$
Consequently,
$$
\dist(y_0, p_0) > K\lambda^{(1+\eta) N}\kappa,
$$
for some uniform constant $K \geq 1$.
 \end{proof}
 
\begin{proof}[Proof of \thmref{component arc}]
It suffices to prove the result for a non-valuable connected component $Z$.

Let $p_0 \in Z_0$ be Pesin $(K, \delta)$-regular, and let
$
\{\Phi_{p_m} : \cU_{p_m} \to U_{p_m}\}_{m\in\bbZ}
$
be a linearization of the full orbit of $p_0$. By \propref{component regular} and \lemref{component arc 3}, if $W_0 \subset \cU^0_{p_0}$ is a sufficiently small neighborhood of $p_0$, and $\Gamma_0$ is the the connected component of $Z_0 \cap W_0$ containing $p_0$, then 
$
\gamma_0 := \Phi_{p_0}(\Gamma_0)
$
is the horizontal graph of a $C^0$-map $g_0 : \bbI(l) \to \bbR$ for some sufficiently small $l > 0$.

We claim that $g_0$ is differentiable at $0$, and that $g_0'(0) = 0$. Let $\{-n_i\}_{i=1}^\infty$ be the decreasing sequence of Pliss moments in the backward orbit of $p_0$. As in the proof of \lemref{component arc 3}, there exists $N = N(Z_0) \geq 0$ such that for all $q_0 \in \Gamma_0$, we have
$$
W^{ss}_{\loc}(q_{-n}) \cap Z_{-n} = \{q_{-n}\}
\matsp{for}
n \geq N.
$$
For $n_i > N$, let
$$
\{\Phi_{p_{-n_i+m}}^i : \cU_{p_{-n_i+m}}^i \to U_{p_{-n_i+m}}^i\}_{m\in\bbZ}
$$
be a linearization of the full orbit of $p_{-n_i}$, and let $\Gamma_{-n_i}^i$ be the connected component of $\Gamma_{-n_i} \cap \cU_{p_{-n_i}}^{\bdelta n_i}$ containing $p_{-n_i}$. Observe that
$$
\gamma^i_0 := \Phi_{p_0} \circ F^{n_i}(\Gamma_{-n_i}^i)
$$
is a neighborhood of $0$ in $\gamma_0$. Thus, by \thmref{reg chart} and \propref{loc linear}, we see that
$$
|g_0(x)| < \bK \lambda^{(1-\bdelta)n_i}
\matsp{for}
|x| < \bK^{-1}\lambda^{\bdelta n_i}.
$$
By \propref{conditional pliss full} and \propref{freq pliss back}, we conclude that
$$
\gamma_0 \subset \{(x,y)\in U_{p_0} \; | \; |x| < l \; \text{and} \; |y| < |x|^{1/\bdelta}\}.
$$
The result follows.
\end{proof}


\section{Recurrence of the Critical Orbit}

Let $F$ be the mildly dissipative, infinitely renormalizable and regularity unicritical diffeomorphism considered in \secref{sec:tot order}.

For any $C^r$-curve $\gamma \subset \Omega$, let $\phi_\gamma : (0, |\gamma|) \to \gamma$ be its parameterization by arclength. Recall that $\|\gamma\|_{C^r} := \|\phi_\gamma\|_{C^r}.$
Let
$
F_\gamma := \phi_{f(\gamma)}^{-1} \circ F|_\gamma \circ \phi_\gamma.
$
The {\it distortion of $F$ along $\gamma$} is defined as
$$
\Dis(F, \gamma) := \sup_{t, s \in (0, |\gamma|)}\frac{|F_\gamma'(t)|}{|F_\gamma'(s)|}.
$$

The following statement is a straightforward generalization of 1D Denjoy Lemma.

\begin{prop}\label{2d denjoy}
Let $\gamma_0 \subset \Omega$ be a $C^2$-curve. Suppose for all $0 \leq i < n$, the curve
$
\gamma_i := F^i(\gamma_0).
$
is well-defined and $\|\gamma_i\|_{C^2} = O(1)$. Then
$$
\log(\cD(F^n|_\gamma)) = O\left(\sum_{i=1}^n |F^i(\gamma)|\right).
$$
\end{prop}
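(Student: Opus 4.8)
The statement is a curvilinear version of the classical Denjoy distortion lemma, and the plan is to reduce $F^n$ restricted to $\gamma_0$ to a composition of one-step interval maps with uniformly bounded nonlinearity, then run the standard telescoping estimate. Write $\phi_i:=\phi_{\gamma_i}$ for the arclength parametrization of $\gamma_i$ (with $\gamma_i:=F^i(\gamma_0)$), and for $1\le i\le n$ set $g_i:=\phi_{i-1}^{-1}\circ F^{-1}|_{\gamma_i}\circ\phi_i:(0,|\gamma_i|)\to(0,|\gamma_{i-1}|)$, the one-step map read in arclength coordinates. Note that $\gamma_n:=F(\gamma_{n-1})$ is well defined and lies in $F(\Omega)\Subset\Omega$, and $\|\gamma_n\|_{C^2}=O(1)$ follows from $\|\gamma_{n-1}\|_{C^2}=O(1)$ together with the $C^2$-boundedness of $F$ on $\overline{F(\Omega)}$.

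Two elementary facts drive the argument. First, distortion is invariant under inversion: for a $C^1$-diffeomorphism $h$ of intervals, $(h^{-1})'\circ h=1/h'$, hence $\Dis(h)=\Dis(h^{-1})$; applied to $h=\phi_n^{-1}\circ F^n\circ\phi_0$ this gives $\Dis(F^n,\gamma_0)=\Dis(F^{-n},\gamma_n)$. Second, the reparametrizations telescope: $g_1\circ g_2\circ\cdots\circ g_n=\phi_0^{-1}\circ F^{-n}\circ\phi_n$, which is precisely the arclength-coordinate map whose distortion is $\Dis(F^{-n},\gamma_n)$. Combining these with the chain rule,
\[
\log\Dis(F^n,\gamma_0)=\log\Dis(F^{-n},\gamma_n)=\sup_{t,s}\ \sum_{i=1}^{n}\bigl[\log g_i'(\tau_i(t))-\log g_i'(\tau_i(s))\bigr],
\]
where $\tau_i(t)\in(0,|\gamma_i|)$ is the arclength coordinate on $\gamma_i$ of $F^{-(n-i)}(\phi_n(t))$. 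By the mean value theorem the $i$-th summand is at most $\|(\log g_i')'\|_\infty\,|\tau_i(t)-\tau_i(s)|\le\|(\log g_i')'\|_\infty\,|\gamma_i|$, since $\tau_i(t),\tau_i(s)\in(0,|\gamma_i|)$.

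The one remaining ingredient is a \emph{uniform} bound on the nonlinearities: $\|(\log g_i')'\|_\infty=O(1)$ for all $1\le i\le n$. Since $g_i'(t)=\|DF^{-1}_{\phi_i(t)}(\phi_i'(t))\|$, differentiating $\log g_i'$ via $\tfrac{d}{dt}\log\|v(t)\|=\langle v'(t),v(t)\rangle/\|v(t)\|^2$ with $v(t)=DF^{-1}_{\phi_i(t)}(\phi_i'(t))$ and $v'(t)=D^2F^{-1}_{\phi_i(t)}(\phi_i'(t),\phi_i'(t))+DF^{-1}_{\phi_i(t)}(\phi_i''(t))$ produces a quantity controlled above by $\|DF^{-1}\|_{C^1}$ on $\overline{F(\Omega)}$ and by $\|\phi_i''\|_\infty\le\|\gamma_i\|_{C^2}$, and below by $\|DF^{-1}_{\phi_i(t)}(\phi_i'(t))\|\ge\|DF\|_\infty^{-1}>0$. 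The first and last are $O(1)$ because $F$ is a $C^r$-diffeomorphism ($r\ge2$) onto the relatively compact set $F(\Omega)$, and the middle one is $O(1)$ by hypothesis; moreover all of these bounds are independent of $i$ and $n$. Summing over $i$ then gives $\log\Dis(F^n,\gamma_0)\le C\sum_{i=1}^n|\gamma_i|=O\bigl(\sum_{i=1}^n|F^i(\gamma_0)|\bigr)$, as claimed.

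There is no deep obstacle here; the step requiring genuine care is the last one — the bounded-nonlinearity estimate — which is where the $C^2$-control of $F$ and of \emph{every} iterate $\gamma_i$ is used, and where one must verify that the constant $C$ depends only on this data and not on $n$. The only bookkeeping subtlety is to pass through the inverse $F^{-n}$ on $\gamma_n$ rather than $F^n$ on $\gamma_0$: this is exactly what produces the sum $\sum_{i=1}^n|\gamma_i|$ appearing in the statement, whereas running the computation in the forward direction would instead yield $\sum_{i=0}^{n-1}|\gamma_i|$, which is equally valid by the inversion-symmetry of distortion.
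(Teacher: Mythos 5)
Your proof is correct and carries out precisely the classical Denjoy telescoping argument that the paper invokes without proof (it says only that the proposition is ``a straightforward generalization of the 1D Denjoy Lemma''). The reduction to one-step arclength maps $g_i$, the inversion symmetry of distortion to produce the sum $\sum_{i=1}^n|\gamma_i|$ rather than $\sum_{i=0}^{n-1}|\gamma_i|$, and the uniform nonlinearity bound $\|(\log g_i')'\|_\infty = O(1)$ coming from the $C^2$-control of $F$ and of each $\gamma_i$ are exactly the expected ingredients, and you execute each step correctly.

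One small remark: the uniform bounds on $\|DF^{-1}\|_{C^1}$ and on the lower bound of $\|DF^{-1}(\phi_i')\|$ are stated on $\overline{F(\Omega)}$, which covers $\gamma_1,\dots,\gamma_n$; for the first step involving $\gamma_0$ itself (needed when you bound $\|\gamma_n\|_{C^2}$ from $\|\gamma_{n-1}\|_{C^2}$ when $n=1$, and in the nonlinearity bound for $g_1$) you tacitly use that $\gamma_0\Subset\Omega$ so that $F$ is $C^2$-bounded near it. This is harmless in the paper's applications (where $\gamma_0$ lies deep inside $F(\Omega)$), but is worth noting as a hypothesis.
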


\begin{thm}\label{crit recur}
The critical value $c_1$ is recurrent.
\end{thm}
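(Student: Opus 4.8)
The plan is to reduce recurrence of $c_1$ to a statement about the renormalization nest, then to the triviality of the connected component of $\Lambda$ containing $c_1$, and to rule out the non-trivial case by a ``no wandering center arc'' argument built on the Denjoy-type estimate \propref{2d denjoy}.

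First I would reduce to the nest. Since $c_1\in\cD^n$ for every $n$ and $\cD^n$ is $R_n$-periodic with $F^{R_n}(\cD^n)\Subset\cD^n$, the point $F^{kR_n}(c_1)$ lies in $\Lambda\cap\cD^n$ for all $k\ge0$ and all $n$; as $R_n\to\infty$, it therefore suffices to show $\bigcap_n(\Lambda\cap\overline{\cD^n})=\{c_1\}$, for then $\diam(\Lambda\cap\overline{\cD^n})\to0$, whence $F^{R_n}(c_1)\to c_1$. Using the semiconjugacy $\pi\colon\Lambda\to\bbO$ with $\pi(\cD^n)=\bbO^n_0$ together with the clopen partition $\Lambda=\bigsqcup_{i=1}^{R_n}\bigl(\Lambda\cap F^{i-1}(\cD^n)\bigr)$, one gets $\pi^{-1}(\bbO^n_0)=\Lambda\cap\cD^n$; since $\bigcap_n\bbO^n_0$ is the single point $\bfo:=(0,0,\dots)$ and $\pi(c_1)=\bfo$, this identifies $\bigcap_n(\Lambda\cap\overline{\cD^n})$ with the connected component $Z:=\pi^{-1}(\bfo)$ of $\Lambda$ through $c_1$. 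Thus the theorem is equivalent to $Z=\{c_1\}$; by \propref{uni erg} this holds automatically when $Z$ is a $\mu$-generic fibre, so only a non-trivial $Z$ must be excluded.

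Suppose then $Z\neq\{c_1\}$. By \thmref{component arc} and \lemref{component regular}, $Z$ is a non-trivial $C^r$-arc tangent everywhere to the center line field, its non-extremal points are backward regular and Pesin $\delta$-regular away from $c_1$, and it contains no other point of the critical orbit (the fibres $Z_m:=F^m(Z)=\pi^{-1}(S^m\bfo)$ are pairwise disjoint, the $S$-orbit of $\bfo$ being infinite). Because $c_1$ is a quadratic critical value, the canonical $C^r$-jet of $Z$ at $c_1$ is that of $W^c(c_1)$, so $Z$ has quadratic contact with $W^{ss}(c_1)$ at $c_1$. Now fix a small closed sub-arc $\gamma_0\subset Z$; its iterates $\gamma_m:=F^m(\gamma_0)\subset Z_m$ form a family of pairwise disjoint $C^r$ center arcs --- a ``wandering center arc'' --- and I would rule this out exactly as one rules out wandering intervals in one dimension. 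The core estimate is that $\|\gamma_m\|_{C^2}=O(1)$ uniformly and $\sum_{m\ge0}|\gamma_m|<\infty$, obtained from the finite unicritical cover of \secref{sec:tot order}: inside each regular box $\bcB_p$ the center arcs of $\Lambda$-points are disjoint, $C^2$-controlled, near-horizontal leaves, while every passage of $\gamma_m$ through the slow neighborhoods of the critical orbit is confined to the exponentially thin crescents $\cT_{c_{1+i}}$ of \propref{tunnel size} and, via the quadratic contact at $c_1$ and the H\'enon-like fold $(x,y)\mapsto(x^2-\lambda y,x)$ at the critical moment (\thmref{unif reg crit}, \thmref{pinching}), contributes an exponentially small arc at each close approach of the critical orbit to the domains $\cD^n$. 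Granting this, \propref{2d denjoy} yields a uniform bound $\log\Dis(F^m|_{\gamma_0})=O(1)$, which I would then combine with mild dissipativity and the stable-lamination/total-order structure of \secref{sec:tot order} to contradict the disjointness of the $\gamma_m$'s (equivalently, to contradict that $\gamma_0$ is a genuine wandering arc). Hence $Z=\{c_1\}$, and $c_1$ is recurrent.

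The main obstacle is precisely the summability $\sum_m|\gamma_m|<\infty$ together with the $C^2$-control: a center arc is dynamically neutral, so its length does not decay on its own, and the entire gain must be extracted from the quadratic tangency at the critical value forcing exponentially thin sub-arcs at the critical orbit's near-returns, from the thinness of the crescents and tunnels, and from careful bookkeeping of the transitions between regular boxes and pinched neighborhoods of $\{c_m\}$ --- this two-dimensional version of ``no wandering intervals'' is where the bulk of the argument lies.
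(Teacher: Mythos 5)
Your high-level strategy matches the paper: reduce recurrence to triviality of the component $Z$ through $c_1$, assume $Z$ is non-trivial, and deploy a Denjoy-type argument via \propref{2d denjoy} for the "no wandering center arc." However there are two genuine gaps. First, the paper's proof is by contradiction from the explicit non-recurrence hypothesis $\bbD_{c_1}(R)\cap\{c_m\}_{m\in\bbZ}=\{c_1\}$, and this hypothesis is used \emph{throughout} the wandering-arc argument, not just to produce a non-trivial $Z$: choosing the tunnel length $t\le R$ yields pairwise disjointness $\cT_{c_m}\cap\cT_{c_l}=\varnothing$ and $\Lambda\cap\cT_{c_m}\subset\mathring Z_{c_m}$, which then gives the "surround" lemma, the fact that for $i$ large each $Z_{c_i}$ is trapped in a single regular box, and the crucial summability estimate. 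You drop this hypothesis by moving to "suppose $Z\neq\{c_1\}$" and then attempt to derive a contradiction unconditionally, so you would have to re-derive the tunnel disjointness some other way; as written it is unavailable. Second, you correctly identify the summability $\sum_m|\gamma_m|<\infty$ as the crux but leave it as an acknowledged obstacle, and the mechanism you sketch (exponentially thin sub-arcs forced by the quadratic tangency at near-returns) is not how the paper obtains it. In \lemref{curv under crit comp} the bound $\sum_{i=I}^{I_n}|\Gamma^n_i|=O(1)$ comes from a \emph{tiling} argument: the vertical strips $\hcD^n_i$ cut out by the stable manifolds of the extremal points of $\Lambda^n_i$ partition the (finitely many) regular boxes, each cell is visited at most once, and the total horizontal length of the boxes is uniformly bounded. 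The final contradiction is also more specific than you indicate: one decomposes $\Gamma^n_i=A^n_i\cup\hGamma_i\cup B^n_i$ and observes that $|A^n_I|,|B^n_I|\to 0$ as $n\to\infty$ while $|\hGamma_{I_n}|\to 0$ but at least one of $|A^n_{I_n}|,|B^n_{I_n}|$ is bounded below, which is incompatible with the bounded distortion from \propref{2d denjoy}. In short, the skeleton is right, but the two load-bearing steps (summability and the final clash) are missing, and the non-recurrence hypothesis you discarded is essential for the first.
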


\begin{proof}
The proof is by contradiction. Suppose there exists $R > 0$ such that
\begin{equation}\label{crit no recur}
\bbD_{c_1}(R) \cap \{c_m\}_{m\in \bbZ} = \{c_1\}.
\end{equation}

\begin{lem}
Suppose \eqref{crit no recur} holds. Then the connected component $Z_{c_1}$ of $\Lambda$ containing the critical value $c_1$ is a non-trivial arc.
\end{lem}

\begin{proof}
Suppose not. Then $\cD^n_1 \subset \bbD_{c_1}(R)$ for $n$ sufficiently large. However, $c_{1+lR_n} \in \cD^n_1$ for all $l \in \bbZ$.
\end{proof}

By \thmref{component arc}, $Z_{c_1}$ is a $C^1$ Jordan arc. Consider the parameterization of $Z_{c_1}$ by its arclength. Denote
$
\mathring Z_{c_1} := Z_{c_1}(0, |Z_{c_1}|).
$
Choose ${t} \leq R$ in \thmref{ref uni}, and let $L := L(t) \geq 1$ be the regularity factor. Note that we have
$
\Lambda \cap \cT_{c_m} \subset \mathring Z_{c_m}.
$
Consequently, the points
$
z_m^0 := Z_{c_m}(0)
$ and $
z_m^1 := Z_{c_m}(|Z_{c_m}|)
$
are Pesin $(L, \delta)$-regular. Moreover, if $m \neq l$, then 
$
\cT_{c_m} \cap \cT_{c_l} = \varnothing.
$

\begin{lem}\label{tunnel surround}
Suppose \eqref{crit no recur} holds. If $\cT_{c_{-n}} \subset Q:= \bcB_p$ for some $n \geq 0$ and $p \in \Lambda^P_{L, \delta}$, then either
\begin{itemize}
\item $z_{-n}^0 <_{Q} p <_{Q} z_{-n}^1$ for all $p \in \cT_{c_{-n}}$; or
\item $z_{-n}^1 <_{Q} p <_{Q} z_{-n}^0$ for all $p \in \cT_{c_{-n}}$.
\end{itemize}
\end{lem}

For $p \in \Lambda^P_{L, \delta}$, recall the definition of $\bcB_p(a)$ with $a >0$ (see \secref{sec:mild diss}). Rather than setting $\bcB_p:= \bcB_p(1/2)$, we modify the construction of an encased regular box in the following way. By \lemref{tunnel surround}, $p$ is accumulated by other Pesin $(L, \delta)$-regular points. If
$$
q <_{{\bcB_p(1/2)}} p
\matsp{or}
p <_{{\bcB_p(1/2)}} q
\matsp{for all}
q \in \Lambda^P_{L, \delta} \cap \bcB_p(1/2),
$$
then define $\bcB_p$ as the quadrilateral bounded by $\bpartial \bcB_p$, $W^{ss}_{\loc}(p)$ and $W^{ss}_{\loc}(q)$. Otherwise, choose $q_-, q_+ \in \Lambda^P_{L, \delta} \cap \bcB_p(1/2)$ such that
$$
q_- <_{{\bcB_p(1/2)}} p  <_{{\bcB_p(1/2)}} q_+,
$$
and define $\bcB_p$ as the quadrilateral bounded by $\bpartial \bcB_p$, $W^{ss}_{\loc}(q_-)$ and $W^{ss}_{\loc}(q_+)$. Then
$$
\bcB_p \subset \bcB_p(1/2) \subset \bcB_p(1).
$$

Choose a finite unicritical cover of $\Lambda$
$$
\cC_0 = \{\bcB_p\}_{p \in \cP} \cup \{\cT_{c_m}\}_{|m| \leq M}
$$
where $\cP \subset \Lambda^P_{L, \delta}$ is a finite set, and $0 < M < \infty$.

\begin{lem}
Suppose \eqref{crit no recur} holds. There exists $I \geq 1$ such that the following holds. For all $i \geq I$, there exists $p_i \in \cP$ and $N_i \geq 1$ such that for all $n \geq N_i$, we have
$
Z_{c_i} \subset \cD^n_i \subset \bcB_{p_i}.
$
Moreover, $\cD^n_i$ does not contain an extremal point of $\bcB_{p_i}$.
\end{lem}

\begin{proof}
Clearly, $Z_{c_i}$ cannot intersect $\cT_{c_m}$ for $|m| \leq M$ if $i > M$. If $Z_{c_i}$ intersects multiple covering elements in $\cC_0$, then it must contain at least one of the extremal points in each element. Since this can happen only finitely many times, we obtain the desired integer $I \geq 1$. Since $\cD^n_i$ converges in Hausdorff topology to $Z_{c_i}$, the result follows.
\end{proof}

For $n \geq N_I$, let $I_n \geq I$ be the largest integer such that for $I \leq i\leq I_n$, we have
$$
\Lambda^n_i := \cD^n_i \cap \Lambda \subset \bcB_{p_i}(1).
$$
Denote the extremal points in $\Lambda^n_i$ with respect to $<_{{\bcB_{p_i}(1)}}$ by $a^n_i$ and $b^n_i$. It follows from \lemref{tunnel surround} that $a^n_i$ and $b^n_i$ are Pesin $(L, \delta)$-regular. Let $\hcD^n_i$ be the quadrilateral bounded by $\bpartial \bcB_{p_i}$, $W^{ss}_{\loc}(a^n_i)$ and $W^{ss}_{\loc}(b^n_i)$.

\begin{lem}\label{curv under crit comp}
Suppose \eqref{crit no recur} holds. Let $\Gamma_I$ be a sufficiently horizontal $C^r$ Jordan arc in $\bcB_{p_I}$ with uniformly bounded curvature. For $n \geq N_I$ and $I \leq i \leq I_n$, define
$$
\Gamma_I^n := \Gamma_I \cap \hcD^n_I
\matsp{and}
\Gamma_i^n := F^{i-I}(\Gamma_I^n).
$$
Then $\Gamma_i^n$ is sufficiently horizontal in $\bcB_{p_I}(1)$, and has uniformly bounded curvature. Moreover,
$\displaystyle
\sum_{i = I}^{I_n} |\Gamma_i^n| = O(1).
$
\end{lem}

\begin{proof}
The first claim is an immediate consequence of \propref{pres ver hor curv}.

Observe that $\{W^{ss}_{\loc}(a^n_i), W^{ss}_{\loc}(b^n_i)\}_{i=I}^{I_n}$ partitions $\{\bcB_p(1)\}_{p \in \cP}$ by nearly linear vertical arcs. Since $\hcD^n_i$ occupies each cell in this partition at most once, and $\{\bcB_p(1)\}_{p \in \cP}$ clearly has uniformly bounded total horizontal length, the second claim follows.
\end{proof}

For $n \geq N_I$ and $I \leq i \leq I_n$, let $\hZ_i$ be the quadrilateral bounded by $\bpartial \bcB_{p_i}(1)$, $W^{ss}_{\loc}(z_i^0)$ and $W^{ss}_{\loc}(z_i^1)$. Write
$$
\Gamma_i^n := A_i^n \cup \hGamma_i \cup B_i^n,
$$
where
$
\hGamma_i := F^{i-I}(\Gamma_I \cap \hZ_I),
$
and $A_i^n$ and $B_i^n$ are the two components of $\Gamma_i^n \setminus \hGamma_i$.

Since $\hcD^n_I$ converges to $\hZ_I$ as $n \to \infty$, we see that
$
|A_I^n|, |B_I^n| \to 0
$ as
$
n \to \infty.
$
On the other hand, either $|A_{I_n}^n|$ or $|B_{I_n}^n|$ must be uniformly bounded below, while $|\hGamma_{I_n}| \to 0$ as $n \to \infty$. Applying \lemref{curv under crit comp} and \propref{2d denjoy}, we obtain a contradiction. Hence, \eqref{crit no recur} cannot hold, and \thmref{crit recur} follows.
\end{proof}


\section{Total Order on Renormalization Pieces}\label{sec:order on piece}

Let $F$ be the mildly dissipative, infinitely renormalizable and regularity unicritical diffeomorphism considered in \secref{sec:tot order}.

Recall that the renormalization limit set of $F$ is given by
$$
\Lambda := \bigcap_{n=0}^\infty \bigcup_{i = 0}^{R_n-1} F^i(D^n_1),
$$
where
$$
r_n := R_{n+1}/R_n \geq 2,
$$
and $D^n_1$ is an $R_n$-periodic Jordan domain containing the critical value $c_1$. Denote
$$
D^n_i := F^{i-1}(D^n_1)
\matsp{and}
\Lambda^n_i := \Lambda \cap D^n_i
\matsp{for}
1\leq i \leq R_n.
$$
We refer to $\Lambda^n_i$ as a {\it renormalization piece of depth $n$}. By \thmref{crit recur}, there exists $N_1 \geq 0$ such that
$$
\cD^n_1 \subset \cU_{c_1}
\matsp{and}
\Lambda^n_1 \subset \cT_{c_1}
\matsp{for}
n \geq N_1.
$$

\begin{prop}\label{ren piece at crit}
Let $n \geq N_1$. If $\Lambda^n_i \subset \Lambda^{N_1}_0$ and $\Lambda^n_i \neq \Lambda^n_0$, then either
\begin{itemize}
\item $x <_{{\cT_{c_0}}} c_0$ for all $x \in \Lambda^n_i$; or
\item $c_0 <_{{\cT_{c_0}}} x$ for all $x \in \Lambda^n_i$.
\end{itemize}
\end{prop}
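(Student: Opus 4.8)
Translating the statement into the geometry of the uniformizing charts of \thmref{unif reg crit} is the natural first move. In the chart $\Phi_{c_0}$ the pinched neighborhood $\cT_{c_0}=\cT_{c_0}(\bft)$ is the union of two ``lobes'' — the part with $x>0$ and the part with $x<0$ — which are disjoint except for the pinch point $c_0=\Phi_{c_0}^{-1}(0)$, and by the construction of $\leq_{\cT_{c_0}}$ in \secref{sec:tot order} every point of $\Lambda_{\cT_{c_0}}$ lying in one lobe is $<_{\cT_{c_0}}c_0$ while every point in the other lobe is $>_{\cT_{c_0}}c_0$. So the conclusion is exactly that $\Lambda^n_i$ lies inside a single lobe of $\cT_{c_0}$. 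Two reductions are then immediate: since renormalization pieces of a fixed depth are pairwise disjoint and $c_0\in\Lambda^n_0$, the hypothesis $\Lambda^n_i\neq\Lambda^n_0$ forces $c_0\notin\Lambda^n_i$; and since $\Lambda^{N_1}_1\subset\cT_{c_1}$ by the choice of $N_1$ (\thmref{crit recur}) while $F^{-1}(\cT_{c_1})\subseteq\cT_{c_0}\cup\{c_0\}$, we get $\Lambda^n_i\subset\Lambda^{N_1}_0\subset\cT_{c_0}\cup\{c_0\}$, hence $\Lambda^n_i\subset\cT_{c_0}$. It remains to exclude the case that $\Lambda^n_i$ meets both lobes.

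That last point is the crux, and I would attack it by proving the cleaner statement that each renormalization piece $\Lambda^n_j$ meets every covering element $Q\in\cC$ in an order-interval of $(\Lambda_Q,\leq_Q)$. Granting this for $Q=\cT_{c_0}$, the set $\Lambda^n_i$ is an order-interval of $\leq_{\cT_{c_0}}$ that omits $c_0$, whereas $\Lambda^n_0$ is the order-interval that contains $c_0$; being disjoint from it, $\Lambda^n_i$ lies strictly to one side of $c_0$, which is the claim. To prove the interval property, take $x_0,z_0\in\Lambda^n_i$ and $y_0\in\Lambda_{\cT_{c_0}}$ with $x_0\leq_{\cT_{c_0}}y_0\leq_{\cT_{c_0}}z_0$, and track the three orbits forward for the $R_n-i$ steps until the piece $\Lambda^n_{i+k}$ returns to $\Lambda^n_0$. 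At steps where $\Lambda^n_{i+k}\subset Q_k\in\cC\setminus\{\cT_{c_0}\}$ the separation is propagated by \propref{order pres} and \propref{trap middle}: $x_k,z_k\in\Lambda_{Q_k}$ drag $y_k$ into the thickening of $Q_k$ and keep it between $x_k$ and $z_k$. At a critical moment, where $\Lambda^n_{i+k}$ sits in $\cT_{c_0}$, one replaces \propref{order pres} by \propref{order value}: since the piece lies on one side of $c_0$, the fold $F_{c_0}(x,y)=(x^2-\lambda y,x)$ of \thmref{unif reg crit} acts monotonically, so an order-interval on one side of $c_0$ maps to an order-interval in $\cT_{c_1}$. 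Closing the cycle forces $y_0$ into the renormalization piece of $x_0$ and $z_0$, i.e. $y_0\in\Lambda^n_i$.

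The main obstacle I anticipate is the bookkeeping at the critical and valuable moments. \propref{trap middle} explicitly excludes $Q_0=\cT_{c_0}$, so the passage through $c_0$ has to be done by hand from the Hénon normal form of \thmref{unif reg crit} together with \propref{order value}; and the passage through the valuable crescent $\cT_{c_1}$ must use that $c_1$ is an extremal point of $\leq_{\cT_{c_1}}$ (so a piece not containing $c_1$ is automatically one-sided there), while tracking the orientation of $\leq_{\cT_{c_1}}$ relative to $\leq_{\cT_{c_0}}$ under the fold. Concretely one must verify, inductively in $k=0,1,\dots,R_n-i$, which element of $\cC$ contains $\Lambda^n_{i+k}$ and match up orientations so that the property ``$x_k$ and $z_k$ lie in opposite halves of $Q_k$'' survives each transition; the combinatorial fact that makes the induction terminate is simply that there are only $R_n-i<R_n$ steps before the renormalization cycle closes at $\Lambda^n_0$. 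An alternative, and perhaps technically lighter, route would bypass the full interval property: assume $\Lambda^n_i$ meets both lobes, push one step to see (via the fold) that $\Lambda^n_{i+1}$ meets both arms of $\cT_{c_1}$ while staying disjoint from $\cD^n_1\ni c_1$, and derive a contradiction from the connectedness of the Jordan domains $\cD^n_{i+1}$ and $\cD^n_1$ together with the crescent structure of $\cD^{N_1}_1$ near its cusp $c_1$; I would try both and keep whichever makes the critical-moment estimates cleanest.
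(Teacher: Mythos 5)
Your preliminary reductions are correct and match the setup the paper uses: $\Lambda^n_i$ lands in $\cT_{c_0}$ via $\Lambda^{N_1}_1\subset\cT_{c_1}$ and \thmref{pinching}, disjointness from $\Lambda^n_0$ removes $c_0$, and the statement reduces to one-sidedness in the pinched chart. But your primary route has a genuine gap: it is circular. At the step where the forward tracking passes through the fold at $c_0$ you propose to invoke \propref{order value}, whose hypothesis is that the set $X\subset\Lambda_{\cT_{c_0}}$ already lies entirely on one side of $c_0$ --- and that hypothesis is exactly the conclusion of \propref{ren piece at crit}. Your own parenthetical ``since the piece lies on one side of $c_0$'' is the fact at stake, not something you are entitled to assume. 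There is a second layer to the circularity: the ``order-interval property'' you aim to establish is, in substance, \thmref{combin connect}, which is proved two pages \emph{after} \propref{ren piece at crit}, and whose proof invokes \propref{ren piece at crit} (together with \propref{order value}) in its inductive step. So the first route cannot be made to work as described.

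The paper's actual argument is essentially the one you sketch in your last sentences as the ``alternative, technically lighter'' route, only carried out directly at $c_0$ rather than pushed to $c_1$, and it is far shorter than even your sketch anticipates. One applies \thmref{pinching} a single time: for a fixed $n\geq N_1$ there is $M_n>n$ so large that every renormalization domain $\cD^m_j$ with $m\geq M_n$ that lies in $\cD^{N_1}_0$ but not in $\cD^n_0$ is contained in $\cT_{c_0}\setminus\cD^n_0$. Since $\Lambda^n_i$ is covered by such deep pieces and the connected Jordan domain $\cD^n_i$ is disjoint from $\cD^n_0\ni c_0$, the pieces constituting $\Lambda^n_i$ all sit in the same lobe of the pinched neighborhood. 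No orbit tracking, and no use of \propref{order pres}, \propref{trap middle}, or \propref{order value}, is required. If you pursue this proposal, discard the first route and develop the second; the work you anticipated near the cusp $c_1$ is unnecessary because the pinching estimate already lives at $c_0$.
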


\begin{proof}
For $m \geq n \geq N_1$, let $J(m, n) \subset \{0, \ldots, R_m-1\}$ be the set of all indexes such that
$
\cD^m_j \subset \cD^n_0
$ for $
j \in J(m, n).
$
By \thmref{pinching}, for $n \geq N_1$, there exists $M_n >n$ such that for any $m \geq M_n$ and $j \in J(m, N_1) \setminus J(m, n)$, we have
$
\cD^m_j \subset \cT_{c_0} \setminus \cD^n_0.
$
Since $\Lambda^n_i$ is a union of $\cD^m_j$'s, the result follows.
\end{proof}

Consider a finite unicritical cover of $\Lambda$:
$$
\hat\cC = \{\bcB_p\}_{p\in \cP} \cup \{\cT_m\}_{|m| < \hM},
$$
where $\cP \subset \Lambda^P_{L, \delta}$ is a finite set, and $0< \hM < \infty$.

\begin{prop}\label{cover long comp}
There exist at most a finite set $\cZ$ of non-trivial connected components of $\Lambda$ that cannot be contained in a single covering element in $\cC_0$. Moreover, let $Z \in \cZ$. Then there exist $Q_1, \ldots, Q_l \in \cC_0$ and
$$
0 =:t_0 < t_1 < \ldots <  t_l := |Z|
$$
such that
$$
Z([t_{i-1}, t_i]) \subset Q_i
\matsp{for}
1\leq i \leq l.
$$
In this case, $Q_i$'s are all regular boxes, except possibly exactly one of $Q_0$ and $Q_k$, which may be a critical tunnel or a valuable crescent. Moreover,
$$
\cd(Z(t)) = \vd(Z(t)) = \varnothing
\matsp{for}
t \in [t_1, t_{l-1}].
$$
\end{prop}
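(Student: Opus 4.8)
The plan is to build on four facts established above: that every non-trivial component of $\Lambda$ is a $C^r$ center curve (\thmref{component arc}); the regularity dichotomy for components (\lemref{component regular} together with \propref{trap irreg}); the geometric criterion \thmref{ref uni}, which I apply with $t=\bft$ and $L=1/\underline{\bft}$; and the geometry of critical tunnels and valuable crescents near the critical orbit coming from \thmref{unif reg crit}, \thmref{pinching} and \propref{tunnel size}.

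\emph{Finiteness of $\cZ$.} Here I would reuse the argument from the proofs of \lemref{component arc 2} and \thmref{crit recur}: if a non-trivial component $Z$ meets two distinct elements of $\cC_0$ and is contained in neither, then for each $Q\in\cC_0$ that $Z$ meets, $Z$ must contain an extremal point of $(\Lambda_Q,\leq_Q)$ — otherwise $Z\cap Q\subset\Lambda_Q$ would be squeezed strictly between the two extremal vertical leaves bounding $Q$, forcing $Z\subset Q$. For the (modified) enclosed regular boxes and the tunnels and crescents of $\cC_0$ these extremal points form a finite set $E\subset\Lambda$; each point of $E$ lies in a single component; and since the orbit semi-conjugacy $\pi$ together with \propref{non hyp} rules out periodic components (so distinct components carry distinct critical-orbit points), only finitely many components meet $E$. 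Those are the members of $\cZ$, and by \thmref{component arc} each is a $C^r$ Jordan arc, which I parameterize by arclength as $Z:[0,|Z|]\to\Omega$.

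\emph{Locating the irregular part.} Put $B:=\{t\in[0,|Z|]:\cd(Z(t))\neq\varnothing\text{ or }\vd(Z(t))\neq\varnothing\}$. If $Z(t_*)\in\cT_{c_{-k}}$, then $\Lambda\cap\cT_{c_{-k}}$ lies in a single component (the structural input behind \lemref{component regular}; cf.\ the discussion in the proof of \thmref{crit recur}), so $Z=Z_{c_{-k}}$ and $c_{-k}\in Z$; since $Z$ carries at most one critical-orbit point this determines $k$, and the same applies to valuable crescents. Hence $B=\varnothing$ whenever $Z$ contains no $c_m$, and otherwise $Z$ contains exactly one $c_m$ and $B=Z^{-1}(\cT_{c_m})$; in particular $Z$ is never simultaneously a critical and a valuable component. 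By \propref{tunnel size} the pinched set $\cT_{c_m}$ is small, so $B$ is contained in short sub-arcs of $Z$ clustered near the single point $c_m$. The step I expect to be the main obstacle is the precise geometric claim that $B$ is in fact confined to (an arbitrarily small neighborhood of) \emph{one} of the two endpoints of $[0,|Z|]$: for a valuable component this uses that, in the uniformizing coordinates of \thmref{unif reg crit}, $\cT_{c_m}=F^{m-1}(\cT_{c_1})$ is a thin neighborhood of the parabolic arc produced by the H\'enon-like transition at the critical moment, pinched onto $c_m$ at its vertex so that $\mathring Z_{c_m}$, and with it $B$, lies on a single ``end'' of the curve; for a critical component the analogous statement would follow from \thmref{pinching} and the pull-back construction of the critical tunnels. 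Since $\cZ$ is finite and each of its members carries at most one $c_m$, only finitely many tunnels and crescents $\cT_{c_m}$ arise this way, so I enlarge $\cC_0$ to contain all of them (interpreting the elements of $\cC_0$ as closed neighborhoods, so that $c_m$ itself is covered by the corresponding closed tunnel or crescent).

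\emph{Building the subdivision.} Covering the compact arc $Z$ by the finitely many elements of $\cC_0$ that it meets and using one-dimensionality, I obtain $0=t_0<\dots<t_l=|Z|$ with $Z([t_{i-1},t_i])\subset Q_i\in\cC_0$, and I take $l$ minimal. If $Z$ carries some $c_m$, I re-index so that the short sub-arc containing $B$ is the first piece $Z([t_0,t_1])$ (or the last, $Z([t_{l-1},t_l])$), take the corresponding $Q_1$ (resp.\ $Q_l$) to be $\cT_{c_m}$, and choose the remaining division points so that $B\cap[t_1,t_{l-1}]=\varnothing$. Now for every $t\notin B$ the point $Z(t)$ satisfies $\cd(Z(t))=\vd(Z(t))=\varnothing$ and, being different from every $c_m$, is Pesin $(L,\delta)$-regular by \thmref{ref uni} c); in particular it lies in no critical tunnel and no valuable crescent, so on $[0,|Z|]\setminus B$ the curve $Z$ can only be covered by regular boxes. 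Therefore every $Q_i$ with $2\leq i\leq l-1$ is a regular box, at most one of $Q_1,Q_l$ is a critical tunnel or a valuable crescent (namely $\cT_{c_m}$ for the unique $c_m\in Z$), and $\cd(Z(t))=\vd(Z(t))=\varnothing$ for $t\in[t_1,t_{l-1}]$, which is the assertion.
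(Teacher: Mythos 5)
The paper states this proposition without proof, so I'm evaluating your argument on its own merits. Your strategy (finiteness via the extremal-point argument from \lemref{component arc 2}, the arc structure from \thmref{component arc}, then locating the "bad" set $B$ of nonempty-depth times) is reasonable and close in spirit to the surrounding material. The finiteness step and the final covering step are fine as sketched.

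The gap is in the middle step. You claim that $Z(t_*)\in\cT_{c_{-k}}$ forces $\Lambda\cap\cT_{c_{-k}}$ to lie in a single component, hence $c_{-k}\in Z$, so that $B=\varnothing$ whenever $Z$ carries no critical-orbit point. This does not follow from \lemref{component regular}. What that lemma actually establishes (look at its proof) is a \emph{dichotomy}: if $c_{-n}\notin Z$ and $Z\cap\cT_{c_{-n}}\neq\varnothing$, then $\cT_{c_{-n}}$ contains an \emph{extremal point} of $Z$. It does not say $c_{-n}\in Z$. The sentence $\Lambda\cap\cT_{c_m}\subset\mathring Z_{c_m}$ that you cite from the proof of \thmref{crit recur} is derived there under the contradiction hypothesis \eqref{crit no recur} together with a choice of $t$ tied to $R$; it is not a general structural fact you can import. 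So there is a second, unhandled branch: $c_{-k}\notin Z$ but an endpoint $Z(0)$ or $Z(|Z|)$ lands in $\cT_{c_{-k}}$ (and similarly for crescents), in which case $B\neq\varnothing$ even though $Z$ carries no $c_m$.

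This missing branch is not peripheral; it is exactly what makes the statement "$Q_i$'s are all regular boxes except possibly exactly one of $Q_1,Q_l$" nontrivial. Your argument only treats the case $c_m\in Z$ and then localizes $B$ to a single end using the H\'enon-like transition geometry near that $c_m$. But you still need to rule out the configuration where one endpoint of $Z$ lies in some tunnel $\cT_{c_{-k_1}}$ and the other in some tunnel/crescent $\cT_{c_{\pm k_2}}$ with neither $c_{\pm k_i}\in Z$ — a priori both $Q_1$ and $Q_l$ could then be forced to be tunnels or crescents, contradicting the conclusion. Any complete proof must either show that this configuration is impossible (e.g.\ by examining how the total order $\leq_Q$ and \propref{tunnel inv} constrain the two extremal points of a single component simultaneously) or show that the subdivision can be rearranged so that one of the two ends is actually absorbed into a regular box. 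As written, the proposal collapses the dichotomy and therefore never confronts the hard case.
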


\begin{prop}\label{deep depth}
There exists $\hN \geq N_1$ such that for all $n \geq \hN$ and $0\leq i < R_n$, one of the following cases hold.
\begin{enumerate}[i)]
\item There exists $Q \in \cC_0$ such that $\Lambda^n_i \subset Q$.
\item The piece $\Lambda^n_i$ contains exactly one component $Z$ in $\cZ$. In this case, let $Q_1, \ldots, Q_l \in \cC_0$ be the covering elements given in \propref{cover long comp}. Then
$$
\Lambda^n_i \setminus Z \subset Q_1 \cup Q_l.
$$
\end{enumerate}
\end{prop}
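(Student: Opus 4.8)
The plan is to show that the renormalization pieces \emph{refine onto the connected components} of $\Lambda$ in the following uniform way: for every $\epsilon>0$ there is $\hN$ such that for all $n\ge \hN$ and all $0\le i<R_n$, the piece $\Lambda^n_i$ is contained in the $\epsilon$-neighbourhood of a single connected component $Z^n_i$ of $\Lambda$ with $Z^n_i\subseteq\Lambda^n_i$. This is the technical heart and follows from the odometer semiconjugacy $\pi:\Lambda\to\bbO$: the components of $\Lambda$ are exactly the fibres $\pi^{-1}(x)$, the pieces $\Lambda^n_i$ are exactly the preimages of the level-$n$ cylinders of $\bbO$, and the fibre map is upper semicontinuous (compactness of $\Lambda$, continuity of $\pi$). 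Thus one covers $\bbO$ by cylinders $V_x$ with $\pi^{-1}(V_x)$ inside the $\epsilon$-neighbourhood of $\pi^{-1}(x)$, extracts a finite subcover (which one may take to include $V_{\pi(Z)}$ for each of the finitely many $Z\in\cZ$), and uses that level-$n$ cylinders refine a fixed cylinder once $n$ is large. In particular, for $n$ large and $Z\in\cZ$: if $Z\subseteq\Lambda^n_i$ then $\Lambda^n_i\subseteq$ the $\epsilon$-neighbourhood of $Z$, and $Z^n_i=Z$.

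Granting this, fix $\epsilon$ smaller than $\tfrac13\min\{\dist(Z,Z'):Z\neq Z'\in\cZ\}$ and smaller than the safety margins built into the cover $\cC_0$ (we are free to slightly enlarge its elements, as in \propref{nice cover}). If $Z^n_i\notin\cZ$, then $Z^n_i$ is contained in a single $Q\in\cC_0$, and since $\Lambda^n_i$ lies in the $\epsilon$-neighbourhood of $Z^n_i$, also $\Lambda^n_i\subseteq Q$: this is case i). Moreover $\Lambda^n_i$ cannot contain two distinct members of $\cZ$, since both would lie within $\epsilon$ of $Z^n_i$, hence within $2\epsilon$ of each other, contradicting the choice of $\epsilon$. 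It remains to treat the case where $\Lambda^n_i$ contains a unique $Z\in\cZ$; then $Z^n_i=Z$ and $\Lambda^n_i$ lies in the $\epsilon$-neighbourhood of $Z$.

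For this last case, write $Z=Z([0,|Z|])$ with chain $Q_1,\dots,Q_l$ and interior part $Z_{\mathrm{mid}}:=Z([t_1,t_{l-1}])$ supplied by \propref{cover long comp}; recall every point of $Z_{\mathrm{mid}}$ is Pesin $(L,\delta)$-regular, lies in no critical tunnel or valuable crescent, and is an interior point of the arc $Z$. The key claim is that $\dist(Z_{\mathrm{mid}},\Lambda\setminus Z)>0$. Granting it and taking $\epsilon$ below this distance, we get $\Lambda^n_i\setminus Z$ disjoint from a neighbourhood of $Z_{\mathrm{mid}}$, hence contained in a small neighbourhood of the two end arcs $Z([0,t_1])\cup Z([t_{l-1},|Z|])$; since these arcs lie in the interiors (relative to $\Lambda$) of $Q_1$ and $Q_l$ respectively, shrinking $\epsilon$ further gives $\Lambda^n_i\setminus Z\subseteq Q_1\cup Q_l$, which is case ii). Finally, $\hN$ is the maximum over the finitely many $Z\in\cZ$ of the level thresholds produced above.

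The main obstacle is the key claim $\dist(Z_{\mathrm{mid}},\Lambda\setminus Z)>0$, which I would prove by a compactness argument over $Z_{\mathrm{mid}}$. For $p\in Z_{\mathrm{mid}}$, take a small enclosed regular box $\bcB_p$. By \thmref{component arc} the component $Z$ is a $C^r$-curve through the interior point $p$ tangent to $E^c_p$, hence (for $\bcB_p$ small) horizontally proper in $\bcB_p$; by \thmref{ref uni} and \propref{vertical proper}, any $q\in\Lambda\cap\bcB_p$ sufficiently close to $p$ is Pesin regular with $W^{ss}_{\loc}(q)$ vertically proper in $\bcB_p$. Therefore $W^{ss}_{\loc}(q)$ meets $Z$, so $q\in W^{ss}(q')$ for some $q'\in Z$; since then $F^n q$ and $F^n q'$ converge together while $\Lambda$ avoids the boundaries $\partial\cD^m_j$ for every $m$, the points $q$ and $q'$ have the same forward itinerary relative to the renormalization partitions, so $\pi(q)=\pi(q')$ and hence $q\in Z$. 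Thus a neighbourhood of $p$ meets $\Lambda$ only inside $Z$; covering the compact set $Z_{\mathrm{mid}}$ by finitely many such boxes yields the desired uniform lower bound.
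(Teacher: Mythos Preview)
Your approach via the odometer semiconjugacy and the ``key claim'' $\dist(Z_{\mathrm{mid}},\Lambda\setminus Z)>0$ is genuinely different from the paper's element-by-element compactness argument (which works inside each $Q\in\cC_0$ separately, using \thmref{crit recur} for tunnels and crescents). Your route is more conceptual and avoids the case split on the type of covering element; the paper's is more hands-on and stays closer to the structure of the cover.

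There is, however, a real gap in your proof of the key claim. You assert that any $q\in\Lambda\cap\bcB_p$ sufficiently close to $p\in Z_{\mathrm{mid}}$ is Pesin regular with $W^{ss}_{\loc}(q)$ vertically proper, citing \thmref{ref uni} and \propref{vertical proper}. This is false: arbitrarily close to $p$ there can be points in deep critical tunnels $\cT_{c_{-n}}$ (with $n$ large) whose forward regularity is arbitrarily bad; such $q$ need not have a local strong-stable manifold of definite size, and if $q=c_{-n}$ its vertical manifold $W^v(c_{-n})$ is not a genuine stable leaf, so intersecting $Z$ does not force $q$ into the same fiber. The fix is to reverse the roles: use the stable lamination \emph{over $Z$} rather than the stable leaf of $q$. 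Since every $q'\in Z_{\mathrm{mid}}$ has $\cd(q')=\vd(q')=\varnothing$ (\propref{cover long comp}), \thmref{ref uni}~c) gives a \emph{uniform} Pesin bound on $Z_{\mathrm{mid}}$; the leaves $W^{ss}_{\loc}(q')$ vary continuously (\propref{ss c contin}) and are transverse to the horizontal arc $Z$ (\thmref{component arc}), so they foliate a full neighborhood of $p$. Any $q\in\Lambda$ in that neighborhood lies on some $W^{ss}_{\loc}(q')$ with $q'\in Z$, hence $\pi(q)=\pi(q')$ and $q\in Z$. With this correction your strategy goes through.

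A smaller point: your uniform-shrinking statement (each $\Lambda^n_i$ lies in the $\epsilon$-neighbourhood of a component $Z^n_i$ that it \emph{contains}) is correct, but the finite-subcover argument you give does not quite yield the containment $Z^n_i\subseteq\Lambda^n_i$, since the base point $x_j$ of the covering cylinder $V_{x_j}$ need not belong to $C^n_i$. A cleaner route is by contradiction: from a sequence of offending pieces extract one whose cylinders are nested (possible since cylinders at different levels are nested or disjoint and $\bbO$ is compact); their intersection is a single $x\in\bbO$, the component $\pi^{-1}(x)$ lies in every piece of the subsequence, and the pieces shrink to it.
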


\begin{proof}
Let $\bcB \in \cC_0$ be a regular box. If the two extremal components of $\bcB$ are not contained $\cZ$, then $\Lambda_{\bcB}$ is a compact subset of $\bcB$. Hence, there exists $K_1 \geq 0$ such that any renormalization domain $\cD^n_i$ of depth $n \geq K_1$ that intersects $\Lambda_{\bcB}$ must be contained in $\bcB$.

Let $\cT_{c_m} \in \cC_0$ be a critical tunnel or crescent. By \thmref{crit recur}, there exists $K_2 \geq 0$ such that
$
c_m \in \Lambda^{K_2}_m \subset \Lambda_{\cT_{c_m}}.
$
Denote
$$
\tim := m \md{R_{K_2}}.
$$
If the two extremal components of $\cT_{c_m}$ are not contained $\cZ$, then $\Lambda_{\cT_{c_m}} \setminus \Lambda^{K_2}_{\tim}$ is a compact subset of $\cT_{c_m} \setminus \cD^{K_2}_{\tim}$. Hence, there exists $K_3 \geq K_2$ such that any renormalization domain $\cD^n_i$ of depth $n \geq K_3$ that intersects $\Lambda_{\cT_{c_m}} \setminus \Lambda^{K_2}_{\tim}$ must be contained in $\cT_{c_m}$.

The result follows by the finiteness of $\cC_0$.
\end{proof}

\begin{thm}\label{order on piece}
For $n \geq \hN$ and $1\leq i \leq R_n$, there exists a total order $<_{\Lambda^n_i}$ on $\Lambda^n_i$ such that following holds. For $Q \in \cC_0$, either
\begin{itemize}
\item $x <_{\Lambda^n_i} y$ for all $x,y \in \Lambda^n_i \cap Q$ with $x <_{Q} y$; or
\item $y <_{\Lambda^n_i} x$ for all $x,y \in \Lambda^n_i \cap Q$ with $x <_{Q} y$.
\end{itemize}
Moreover, if $1 \leq i < R_n$, then $F(x) <_{\Lambda^n_{i+1}} F(y)$ for all $x,y \in \Lambda^n_i$ with $x <_{\Lambda^n_i} y$.
\end{thm}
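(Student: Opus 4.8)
The plan is to build the orders $<_{\Lambda^n_i}$ directly from the unicritical cover $\cC_0$, using the local orders $\leq_Q$ of \thmref{thm:total order}, and then to fix their orientations so that the dynamics becomes order-preserving. The construction proceeds in two stages: (1) for each fixed $n\ge\hN$ and each $1\le i\le R_n$ produce a total order on $\Lambda^n_i$ that is consistent, up to reversal, with $\leq_Q$ on $\Lambda^n_i\cap Q$ for every $Q\in\cC_0$ (this order being canonical only up to orientation); (2) show these orders are equivariant under $F$ up to orientation, and then pin the orientations down inductively. For stage (1), \propref{deep depth} gives two cases. If $\Lambda^n_i\subset Q$ for a single $Q\in\cC_0$, I set $<_{\Lambda^n_i}:=\leq_Q|_{\Lambda^n_i}$; for any other $Q'\in\cC_0$ one has $\Lambda^n_i\cap Q'\subset\Lambda_Q\cap\Lambda_{Q'}$, so consistency up to orientation is immediate from \propref{order trans}.

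In the remaining case $\Lambda^n_i$ meets exactly one long component $Z\in\cZ$, and by \propref{cover long comp} there is a chain $Q_1,\dots,Q_l\in\cC_0$ and parameters $0=t_0<\dots<t_l=|Z|$ with $Z([t_{k-1},t_k])\subset Q_k$, while $\Lambda^n_i\setminus Z\subset Q_1\cup Q_l$. I would apply \propref{order trans} to the successive overlaps $Z\cap Q_k\cap Q_{k+1}$ (each with nonempty interior in $Z$) to orient the $\leq_{Q_k}$ so that they all increase consistently along $Z$, and then concatenate: a point of $\Lambda^n_i$ is placed by $\leq_{Q_1}$ if it lies in $Q_1$, by $\leq_{Q_l}$ if it lies in $Q_l$, and by the appropriate $\leq_{Q_k}$ (a sub-arc of $Z$) otherwise. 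The delicate point is that each overlap $\Lambda^n_i\cap Q_k\cap Q_{k+1}$ must be order-convex (a final/initial segment) in the two adjacent pieces, so that the concatenation is a genuine total order; this I would extract from the fact that $Z$ is a $C^r$-arc (by \thmref{component arc}) whose strong-stable lamination separates each thickened covering box, from the localization of $\Lambda^n_i\setminus Z$ near the two extremal points $Z(0),Z(|Z|)$, and from the last assertion of \propref{cover long comp} ($\cd(Z(t))=\vd(Z(t))=\varnothing$ in the interior, so the middle of $Z$ avoids all tunnels and crescents).

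For stage (2) I would show that for $1\le i<R_n$ and $x,y\in\Lambda^n_i$ with $x<_{\Lambda^n_i}y$, the images $F(x),F(y)\in\Lambda^n_{i+1}$ are ordered consistently, i.e.\ either always $F(x)<_{\Lambda^n_{i+1}}F(y)$ or always the reverse. When $\Lambda^n_i$ sits in a single $Q\ne\cT_{c_0}$ this is exactly \propref{order pres} applied to $X=\Lambda^n_i$; when $\Lambda^n_i$ carries a long component one argues link-by-link along the chain, propagating via \propref{order pres} and \propref{pres ver hor curv} and using that $\cZ$ is carried to $\cZ$ under $F$ (up to the finitely many exceptional components). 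The one genuinely dynamical subtlety is a piece $\Lambda^n_i$ with $i<R_n$ that lies inside $\cT_{c_0}$ without containing $c_0$: here I would invoke \propref{ren piece at crit} to see that $\Lambda^n_i$ lies entirely on one side of $c_0$ in $\leq_{\cT_{c_0}}$, so that \propref{order value} shows $F$ acts on it with a single, consistent orientation through the critical fold. Finally, with equivariance up to orientation established, I fix the orientation of $<_{\Lambda^n_1}$ arbitrarily and then, for $i=1,\dots,R_n-1$ in turn, define $<_{\Lambda^n_{i+1}}$ to be the one orientation of the already-constructed order for which $F\colon(\Lambda^n_i,<_{\Lambda^n_i})\to(\Lambda^n_{i+1},<_{\Lambda^n_{i+1}})$ is order-preserving; a global reversal does not affect consistency with $\cC_0$, and since no condition is imposed at $i=R_n$ there is no cyclic obstruction.

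The step I expect to be the main obstacle is the gluing in the long-component case: verifying that the local orders $\leq_{Q_k}$ assemble into a bona fide total order on $\Lambda^n_i$, which amounts to controlling precisely how $\Lambda^n_i\setminus Z$ is distributed among $Q_1$ and $Q_l$ relative to the endpoints of $Z$ and ruling out interleaving — and, in tandem, bookkeeping the orientation changes forced by the critical fold through \propref{order value} and \propref{ren piece at crit}. By contrast, the purely "regular-box" part of the argument is routine given \propref{order trans}, \propref{order pres} and \propref{pres ver hor curv}.
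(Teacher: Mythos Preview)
Your proposal is correct and follows essentially the same route as the paper: define the order from $\leq_Q$ in the single-box case, and in the long-component case concatenate the orders along the chain $Q_1,\dots,Q_l$ of \propref{cover long comp} using the arclength of $Z$, then deduce consistency and $F$-equivariance from \propref{order trans}, \propref{order pres}, and \propref{order value} together with \propref{ren piece at crit} at the critical fold. The paper's proof is in fact terser than yours and leaves implicit several of the points you flagged (the gluing along overlaps, the appeal to \propref{ren piece at crit} to place $\Lambda^n_i$ on one side of $c_0$ before invoking \propref{order value}); your worry about interleaving near the endpoints of $Z$ is handled exactly by the localization $\Lambda^n_i\setminus Z\subset Q_1\cup Q_l$ in \propref{deep depth}.
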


\begin{proof}
Suppose $\Lambda^n_i$ contains a component $Z \in \cZ$ (parameterized by its arclength $|Z|$). Let $Q_1, \ldots, Q_l \in \cC_0$ be the covering elements given in \propref{cover long comp}. Define a total order on $\Lambda^n_i$ by the following property. For $j \in \{1, l\}$, let $p_j, q_j \in Q_j \setminus Z$. Then
\begin{enumerate}[i)]
\item $p_j <_{\Lambda^n_i} q_j$ if $p_j <_{{Q_j}} q_j$;
\item $p_0, q_0 <_{\Lambda^n_i} Z(0)$;
\item $Z(t) <_{\Lambda^n_i} Z(s)$ if $0\leq t < s \leq |Z|$; and
\item $Z(|Z|) <_{\Lambda^n_i} p_l, q_l$.
\end{enumerate}
The result now follows by \propref{order trans}, \ref{order pres} and \ref{order value}.
\end{proof}

Let $\hN \leq m < n$ and $i \geq 0$. Observe that
$$
\Lambda^m_i = \bigsqcup_{k =0}^{R_n/R_m-1} \Lambda^n_{i+kR_m}.
$$
For $a, b \in \Lambda^m_i$ with $a \leq_{\Lambda^m_i} b$, denote
$$
[a, b]_{\Lambda^m_i} := \{p \in \Lambda^m_i \; | \; a\leq_{\Lambda^m_i}p \leq_{\Lambda^m_i} b\}.
$$
This notation generalizes to $(\cdot, \cdot)_{\Lambda^m_i}$ in the obvious way.

A {\it combinatorial component $\cK$ of $\Lambda^n_{i+kR_m}$ in $\Lambda^m_i$} is a maximal subset of $\Lambda^n_{i+kR_m}$ of the form
$
\cK = [a, b]_{\Lambda^m_i}
$
for some $a, b \in \Lambda^m_i$. We say that $\Lambda^n_{i+kR_m}$ is {\it combinatorially connected in $\Lambda^m_i$} if $\Lambda^n_{i+kR_m}$ has a unique combinatorial component $\cK$ in $\Lambda^m_i$ such that
$
\Lambda^n_{i+kR_m} = \cK.
$

\begin{thm}\label{combin connect}
Let $\hN \leq m < n$; $i \geq 0$ and $0 \leq k < R_n/R_m$. Then $\Lambda^n_{i+kR_m}$ is combinatorially connected in $\Lambda^m_i$.
\end{thm}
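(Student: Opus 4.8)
The plan is to reduce the statement, by induction on $n-m$, to the case of consecutive depths, and then to prove that case by transporting a hypothetical ``linking'' forward under the dynamics until it reaches the critical renormalization piece, where the one-sidedness provided by \propref{ren piece at crit} forces a contradiction.

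\emph{Reduction to consecutive depths and $i=1$.} By $R_m$-periodicity we may assume $1\le i\le R_m$. If $m<n-1$, let $j$ be the index with $1\le j\le R_{n-1}$ and $j\equiv i+kR_m\md{R_{n-1}}$; since $R_m\mid R_{n-1}$ this gives $j\equiv i\md{R_m}$, so $\Lambda^{n-1}_{j}$ is one of the depth-$(n-1)$ pieces partitioning $\Lambda^m_i$ and $\Lambda^n_{i+kR_m}\subset\Lambda^{n-1}_{j}$. The inductive hypothesis applied to the pair $(m,n-1)$ says $\Lambda^{n-1}_{j}$ is an order-interval of $\Lambda^m_i$, and by \propref{order trans} together with the construction in \thmref{order on piece} the order $<_{\Lambda^m_i}$ restricted to $\Lambda^{n-1}_{j}$ agrees, up to orientation, with $<_{\Lambda^{n-1}_{j}}$; hence it is enough to show $\Lambda^n_{i+kR_m}$ is an order-interval of $\Lambda^{n-1}_{j}$, i.e. the consecutive-depth statement. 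In that case $F^{i-1}\colon\Lambda^{n-1}_1\to\Lambda^{n-1}_i$ is an order isomorphism (last assertion of \thmref{order on piece}) carrying $\Lambda^n_{1+k'R_{n-1}}$ onto $\Lambda^n_{i+k'R_{n-1}}$, so we may take $i=1$. Writing $p:=n-1\ (\ge\hN)$ and $q:=R_n/R_p\ (\ge 2)$, it remains to prove: every piece $P_k:=\Lambda^{p+1}_{1+kR_p}$, $0\le k<q$, is an order-interval of $(\Lambda^p_1,<_{\Lambda^p_1})$.

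\emph{Return dynamics.} The first return map $G:=F^{R_p}\colon\Lambda^p_1\to\Lambda^p_1$ is a homeomorphism permuting the pieces cyclically, $G(P_k)=P_{k+1}$ with indices mod $q$. Tracking indices, the $F$-orbit realizing $G|_{P_k}$ passes through the critical depth-$(p+1)$ piece $\Lambda^{p+1}_{R_{p+1}}\ni c_0$ only when $k=q-1$, and then only at its penultimate step. Hence, by \thmref{order on piece}, $G|_{P_k}$ is an order isomorphism onto $P_{k+1}$ for $k\ne q-1$, whereas $G|_{P_{q-1}}$ factors as an order isomorphism $P_{q-1}\to\Lambda^{p+1}_{R_{p+1}}$ followed by the critical transition $\Lambda^{p+1}_{R_{p+1}}\to\Lambda^{p+1}_1$, which by \propref{order value} (and \propref{order trans}) is ``unimodal'': monotone on each side of the turning point $\zeta:=c_{1-R_p}\in P_{q-1}$, with $G(\zeta)=c_1$, and $c_1$ an extremal point of $\Lambda\cap\cT_{c_1}$. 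This one-sidedness of $\Lambda\cap\cT_{c_1}$ about $c_1$ is the two-dimensional counterpart of a familiar $1$D fact and follows from the quadratic fold structure of $F$ at $c_0$ (\thmref{unif reg crit}) via \propref{order value}.

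\emph{No linking.} Suppose some $P_k$ is not an order-interval: there are $a<_{\Lambda^p_1}b<_{\Lambda^p_1}c$ with $a,c\in P_k$ and $b\in P_{k'}$, $k'\ne k$. Let $N\ge 0$ be minimal with $F^N(P_k)=\Lambda^{p+1}_{R_{p+1}}$. We push the triple forward one step at a time. As long as the covering element of $\cC_0$ containing $F^j(P_k)$ (given by \propref{deep depth}; a long component $Z\in\cZ$ inside $F^j(P_k)$ is handled by \propref{cover long comp}) is not $\cT_{c_0}$, \propref{trap middle} keeps $F^j(b)$ strictly between $F^j(a)$ and $F^j(c)$ in that element and keeps $F^j(b)\in F^j(P_{k'})$; when an intermediate piece $F^j(P_k)\subset\cT_{c_0}$ distinct from the critical one is met, \propref{ren piece at crit} shows it lies on a single side of $c_0$, so $F$ acts monotonically on it by \propref{order value} and the trapping persists. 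At step $N$ we obtain $F^N(a)<_{\cT_{c_0}}F^N(b)<_{\cT_{c_0}}F^N(c)$ with $F^N(a),F^N(c)\in\Lambda^{p+1}_{R_{p+1}}$ and $F^N(b)$ in a distinct depth-$(p+1)$ piece of $\cT_{c_0}$, which by \propref{ren piece at crit} lies on one side of $c_0$. Applying $F$ once more and using \propref{order value} to fold both sides of $c_0$ onto the $c_1$-tipped set $\Lambda^{p+1}_1$, we get $F^{N+1}(a),F^{N+1}(c)\in\Lambda^{p+1}_1=P_0$ with $F^{N+1}(b)$ in a distinct piece and still order-trapped between them in $\cT_{c_1}$ — contradicting the claim that $P_0$ is an order-interval of $\Lambda^p_1$. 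It therefore remains to prove that claim, which holds because $c_1$ is an extremal point of $\Lambda\cap\cT_{c_1}$ and the nested Jordan domains $D^{p+1}_1\subset D^p_1$ both meet $\Lambda$ in an initial segment from $c_1$; this last fact is the image, under the fold $F\colon\Lambda^p_{R_p}\to\Lambda^p_1$, of the statement that $\Lambda^{p+1}_{R_{p+1}}$ is the order-interval of $\Lambda^p_{R_p}$ about $c_0$, a consequence of \propref{ren piece at crit}.

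\emph{Main obstacle.} The delicate point is the bookkeeping at the fold: one must ensure that when the two sides of $c_0$ inside $\Lambda^p_{R_p}$ are folded onto $\Lambda^p_1$, the image $\Lambda^{p+1}_1$ of the central piece occupies exactly an initial segment at $c_1$ and the images of the remaining depth-$(p+1)$ pieces do not interleave with it — equivalently, that the positions of the renormalization domains about $c_0$ are compatible with the quadratic folding of \thmref{unif reg crit}. A secondary difficulty is extending \propref{trap middle} to cover intermediate visits to $\cT_{c_0}$ by one-sided pieces, and carrying the long-component case of \propref{cover long comp} through the transport argument.
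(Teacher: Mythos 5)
Your approach is genuinely different from the paper's and, as written, has a gap at exactly the place you flag as the ``main obstacle.''

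The paper proves the result by a combination of two ingredients that your argument does not use. The first is \lemref{finite comb comp}: each piece $\Lambda^{n+1}_j$ has only finitely many combinatorial components in $\Lambda^n_1$, they are permuted cyclically by $F^{R_n}$, and every one of them must eventually coincide with the distinguished component containing a critical orbit point (because $\{c_{-l}\}$ accumulates on every connected component of $\Lambda$). The second is a four-point bookkeeping $a^i_1 <_{\Lambda^n_1} \cX_{1+iR_n} <_{\Lambda^n_1} \cB_{1+iR_n} <_{\Lambda^n_1} y^i_1$ that is propagated through every iteration of $F^{R_n}$, not just up to the first critical passage; the case analysis on $i \bmod r_n$ (via \propref{order value} and \propref{ren piece at crit}) is precisely what guarantees that the interleaving survives the fold, with one of the bracketing points possibly being replaced by $c_1$. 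The conclusion is that $\cB_{1+iR_n}$ \emph{never} contains $c_1$, contradicting the eventual-recurrence in \lemref{finite comb comp}. In other words, the paper obtains the contradiction from the infinite orbit of the combinatorial component, not from a single passage through the fold.

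Your single-pass transport does not close because the linking need not survive the fold. After $N$ steps you have $F^N(a) <_{\cT_{c_0}} F^N(b) <_{\cT_{c_0}} F^N(c)$ with $F^N(a), F^N(c)$ in the critical piece and $F^N(b)$ in another piece lying on one side of $c_0$. When $F^N(a)$ and $F^N(c)$ straddle $c_0$, applying \propref{order value} preserves the order only within each side; across $c_0$ the two halves are folded onto $[c_1, \cdot)$ and their relative positions to one another (and to $F^{N+1}(b)$) are not determined by what you have established. Asserting that $F^{N+1}(b)$ is ``still order-trapped'' between $F^{N+1}(a)$ and $F^{N+1}(c)$ begs the question, and you acknowledge as much in the ``main obstacle'' paragraph. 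Similarly, the closing claim that $P_0 = \Lambda^{p+1}_1$ is an order-interval ``because $c_1$ is extremal and the pieces nest'' is not a consequence of \propref{ren piece at crit} alone: that proposition only says non-critical depth-$(p+1)$ pieces each lie on a single side of $c_0$; it does not a priori forbid such a piece from being nested between two points of the critical piece, which is exactly what you need to rule out. You would need \lemref{finite comb comp} (or an equivalent recurrence-to-the-critical-component statement) to make the argument complete — at which point you would have recovered the paper's strategy.
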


For the proof of \thmref{combin connect}, we need the following lemma.

\begin{lem}\label{finite comb comp}
The renormalization piece $\Lambda^n_{i+kR_m}$ has at most finitely many combinatorial components in $\Lambda^m_i$ which permute cyclically under $F^{R_n}$.
\end{lem}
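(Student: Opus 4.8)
The plan is to use the $F^{R_n}$-equivariance of the total order $\leq_{\Lambda^m_i}$ built in \thmref{order on piece}, together with the (essentially trivial) recurrence coming from the odometer structure, to pin down both the finiteness and the cyclicity of the permutation. First I would record that $\Lambda^n_{i+kR_m}$ is totally invariant under $F^{R_n}$: since $D^m$ is $R_m$-periodic with $R_m\mid R_n$ and $D^n$ is $R_n$-periodic, $F^{R_n}(D^n_{i+kR_m})\subset D^n_{i+kR_m}$, and combined with the total invariance of $\Lambda$ and the pairwise disjointness of the $D^n_j$ this gives $F^{R_n}(\Lambda^n_{i+kR_m})=\Lambda^n_{i+kR_m}$ as a bijection. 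Writing $F^{R_n}=(F^{R_m})^{R_n/R_m}$ and applying \thmref{order on piece} to the steps $\Lambda^m_j\to\Lambda^m_{j+1}$ ($j<R_m$), together with \propref{order trans} to match $\leq_{\Lambda^m_1}$ with the order pushed forward from $\Lambda^m_{R_m}$ at the wrap-around index, $F^{R_n}|_{\Lambda^m_i}$ is an order isomorphism or anti-isomorphism of $(\Lambda^m_i,\leq_{\Lambda^m_i})$ onto itself. In particular it sends maximal order-intervals of $\Lambda^n_{i+kR_m}$ to maximal order-intervals, i.e. it permutes the combinatorial components, preserving or reversing the linear order they inherit from $\leq_{\Lambda^m_i}$.

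Next I would observe that the combinatorial components are relatively clopen in $\Lambda^n_{i+kR_m}$: between two consecutive combinatorial components $\cK<\cK'$ there must lie an honest point of $\Lambda^m_i$ in some other piece $\Lambda^n_{i+k'R_m}$, $k'\neq k$ (otherwise $\cK\cup\cK'$ would be a single order-interval, contradicting maximality), and $\bigsqcup_{k'\neq k}\Lambda^n_{i+k'R_m}$ is relatively open in $\Lambda^m_i$ since each $D^n_j$ is open and $\Lambda$ meets no $\partial D^n_j$. So each combinatorial component is a relatively clopen subset of $\Lambda^n_{i+kR_m}$ (hence of $\Lambda^m_i$).

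Now the finiteness. Suppose $\Lambda^n_{i+kR_m}$ had at least two combinatorial components. If $F^{R_n}$ is order-preserving and moves some component $\cK$, then $\{F^{jR_n}(\cK)\}_{j\geq0}$ is strictly monotone in the linear order, so for $p\in\cK$ the points $F^{jR_n}(p)$ with $j\geq1$ lie in components disjoint from $\cK$ and hence never return to the relatively clopen neighborhood $\cK$ of $p$ — impossible, since every point of $\Lambda^n_{i+kR_m}$ is recurrent under $F^{R_n}$ (the restriction of $F^{R_n}$ to $\Lambda^n_{i+kR_m}$ is conjugate to an odometer, cf. the semiconjugacy preceding \propref{uni erg}, which is minimal). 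Therefore $F^{R_n}$ fixes each combinatorial component; but then any single one of the $\geq2$ components is a proper, nonempty, $F^{R_n}$-invariant, relatively clopen subset, contradicting minimality. If instead $F^{R_n}$ is order-reversing, the same monotonicity/recurrence argument applied to $F^{2R_n}$ forces $F^{2R_n}$ to fix every component, so $F^{R_n}$ acts as an order-reversing involution on them; and if there were $\geq3$ components, then for any non-fixed one $\cK$ the set $\cK\cup F^{R_n}(\cK)$ would again be a proper $F^{R_n}$-invariant relatively clopen subset, contradicting minimality. Hence there are at most two combinatorial components. Finally, the (one or two) combinatorial components are permuted by $F^{R_n}$ in a single cycle: if the induced permutation decomposed into two cycles, the union of the components in one cycle would be a proper, nonempty, $F^{R_n}$-invariant, relatively clopen subset of $\Lambda^n_{i+kR_m}$, contradicting minimality once more. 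This proves the lemma.

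The step I expect to be the main obstacle is turning ``$F^{R_n}$ moves a combinatorial component'' into a genuine contradiction with recurrence — this hinges on the combinatorial components being relatively clopen and on order-intervals behaving topologically as expected, i.e. that the order topology on $\Lambda^m_i$ refines its metric topology (equivalently, that inside each covering element the regions ``strictly between two vertical curves'' are open). That is the point where I would spend the care, using \propref{deep depth} and \propref{cover long comp} to reduce to the finitely many covering elements in which the order is manifestly geometric if a direct argument proves awkward.
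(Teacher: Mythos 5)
Your approach differs from the paper's, and it has a genuine gap at its core.

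The linchpin of your argument is the claim that $F^{R_n}|_{\Lambda^m_i}$ is an order isomorphism or anti-isomorphism of $(\Lambda^m_i,\leq_{\Lambda^m_i})$. This is false. Theorem~\ref{order on piece} gives order-preservation only for the steps $\Lambda^n_j\to\Lambda^n_{j+1}$ with $1\leq j<R_n$; it says nothing about the wrap-around through the critical piece. The wrap-around is governed not by Proposition~\ref{order trans} (which merely reconciles orders across overlapping covering elements) but by Proposition~\ref{order value}, which says that $F$ applied to a set straddling $c_0$ \emph{folds} the order: it is order-preserving on one side of $c_0$ and order-reversing on the other, with both halves mapping onto the same side of $c_1$. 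Over the $R_n$ iterates of $F$ starting from $\Lambda^n_{i+kR_m}$, there is exactly one moment $L=R_n-(i+kR_m)$ at which the iterated piece is $\Lambda^n_{R_n}\ni c_0$, so $F^{R_n}|_{\Lambda^n_{i+kR_m}}$ is a genuine folding map with fold point $c_{-L}=c_{i+kR_m-R_n}$, not a monotone map. The monotonicity argument (that a moved component drifts monotonically and never returns) therefore does not apply, and the conclusion ``at most two combinatorial components, fixed up to a possible involution'' is both unjustified and, in general, too strong.

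A secondary gap: you invoke minimality of $F^{R_n}|_{\Lambda^n_{i+kR_m}}$ via the semiconjugacy to the odometer, but the map $\pi:\Lambda\to\bbO$ constructed before Proposition~\ref{uni erg} is only a \emph{semi}conjugacy whose fibers are the connected components of $\Lambda$, which may be nontrivial arcs. Minimality of the factor does not give minimality (or even pointwise recurrence) of $F|_\Lambda$ without additional input about how points move within fibers, and nothing to this effect is available at this stage of the paper.

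The paper's own proof avoids both issues and is far lighter: finiteness follows from a compactness/interleaving argument --- if there were infinitely many combinatorial components, one could interleave points of $\Lambda^n_{i+kR_m}$ with points of some fixed other piece $\Lambda^n_{i+jR_m}$ along a monotone chain, and a common accumulation point would lie in two disjoint compact pieces at once. Cyclicity of the permutation then comes from the density of the backward critical orbit $\{c_{-l}\}$: every combinatorial component contains some $c_{-l}$, and $F^{R_n}$ pushes these toward $c_{i+kR_m-R_n}$, so the finite permutation of components chases into a single cycle. This exploits exactly the fold point you are implicitly trying to argue away, and it does not require any order-monotonicity of $F^{R_n}$ --- only that $F^{R_n}$ carries order-intervals to order-intervals (which survives a single fold, by Propositions~\ref{order value} and~\ref{ren piece at crit}). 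If you want to salvage an order-theoretic proof, you would need to work with a map that folds once rather than a monotone one, at which point the paper's critical-orbit chasing is the natural route.
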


\begin{proof}
Suppose $\Lambda^n_{i+kR_m}$ has infinitely many combinatorial components. Then one can find two sequences
$$
\{a_l\}_{l=1}^\infty \subset \Lambda^n_{i+kR_m}
\matsp{and}
\{b_l\}_{l=1}^\infty \subset \Lambda^n_{i+jR_m} \neq \Lambda^n_{i+kR_m}
$$
such that
$$
a_i <_{\Lambda^m_i} b_i <_{\Lambda^m_i} a_{i+1}
\matsp{for}
i \geq 1.
$$
By compactness, the unique limit $a_\infty = b_\infty$ of these sequences must belong to both $\Lambda^n_{i+kR_m}$ and $ \Lambda^n_{i+jR_m}$, which is impossible.

Since $\{c_{-l}\}_{l=0}^\infty$ accumulates to every connected component of $\Lambda$, we see that every combinatorial component of $\Lambda^n_{i+kR_m}$ must contain $c_{-l}$ for some $l \geq 0$. Consequently, every combinatorial component of $\Lambda^n_{i+kR_m}$ must eventually map to the unique one containing $c_{-R_n + i+kR_m}$. The result follows.
\end{proof}

\begin{proof}[Proof of \thmref{combin connect}]
It suffices to prove the result for $\Lambda^{n+1}_1$ in $\Lambda^n_1$. Suppose there exist combinatorial components
$
\cB_1 \subset \Lambda^{n+1}_1
$ and $
\cX_1 \subset \Lambda^{n+1}_j \neq \Lambda^{n+1}_1;
$
and points
$
a^0_1\in \Lambda^{n+1}_1
$ and $
y^0_1 \in \Lambda^{n+1}_j
$
such that
$$
a^0_1 <_{\Lambda^n_1} \cX_1 <_{\Lambda^n_1} \cB_1 <_{\Lambda^n_1} y^0_1.
$$

Proceeding by induction, suppose for $i \geq 0$, there exist points
$
a^i_1 \in \Lambda^{n+1}_1
$ and $
y^i_1 \in \Lambda^{n+1}_j
$
such that
$$
a^i_1 <_{\Lambda^n_1} \cX_{1+iR_n} <_{\Lambda^n_1} \cB_{1+iR_n} <_{\Lambda^n_1} y^i_1.
$$
By \thmref{order on piece}, \propref{order value} and \propref{ren piece at crit}, we see that after applying $F^{R_n}$, the following holds.
\begin{enumerate}[i)]
\item If $i = -1 \md{r_n}$, then either
$$
c_1 =: a^{i+1}_1 <_{\Lambda^n_1} \cX_{1+(i+1)R_n} <_{\Lambda^n_1} \cB_{1+(i+1)R_n} <_{\Lambda^n_1} y^{i+1}_1 := y^i_{1+R_n}
$$
or
$$
a^i_{1+R_n} =:a^{i+1}_1 <_{\Lambda^n_1} \cX_{1+(i+1)R_n}  <_{\Lambda^n_1} \cB_{1+(i+1)R_n} <_{\Lambda^n_1} y^{i+1}_1 := y^i_{1+R_n} <_{\Lambda^n_1} c_1.
$$
\item If $i = -k-1 \md{r_n}$, then either
$$
c_1 <_{\cT_{c_0}} a^i_{1+R_n} =: a^{i+1}_1 <_{\Lambda^n_1} \cX_{1+R_{n+1}} <_{\Lambda^n_1} \cB_{1+R_{n+1}} <_{\Lambda^n_1} y^{i+1}_1 := y^i_{1+R_n}
$$
or
$$
a^i_{1+R_n} =:a^{i+1}_1 <_{\Lambda^n_1} \cX_{1+R_{n+1}}  <_{\Lambda^n_1} \cB_{1+R_{n+1}} <_{\Lambda^n_1} y^{i+1}_1 := c_1.
$$
\item Otherwise, either
$$
c_1 <_{\cT_{c_0}} a^i_{1+R_n} =: a^{i+1}_1 <_{\Lambda^n_1} \cX_{1+R_{n+1}} <_{\Lambda^n_1} \cB_{1+R_{n+1}} <_{\Lambda^n_1} y^{i+1}_1 := y^i_{1+R_n}
$$
or
$$
a^i_{1+R_n} =:a^{i+1}_1 <_{\Lambda^n_1} \cX_{1+(i+1)R_n}  <_{\Lambda^n_1} \cB_{1+(i+1)R_n} <_{\Lambda^n_1} y^{i+1}_1 := y^i_{1+R_n} <_{\Lambda^n_1} c_1.
$$
\end{enumerate}
Thus, we see that $\cB_{1+iR_n}$ does not contain $c_1$ for all $i \geq 0$. This contradicts \lemref{finite comb comp}.
\end{proof}

\begin{prop}\label{end points}
Let $n \geq \hN$. Then the two extremal points of $\Lambda^n_1$ with respect to $<_{\Lambda^n_1}$ are $c_1$ and $c_{1+R_n}$. That is, for all $p \in \Lambda^n_1 \setminus \{c_1, c_{1+R_n}\}$, we have
$$
c_1 <_{\Lambda^n_1} p <_{\Lambda^n_1} c_{1+R_n}.
$$
\end{prop}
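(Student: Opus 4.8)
The plan is to locate the two endpoints of $(\Lambda^n_1,<_{\Lambda^n_1})$ separately: the endpoint at $c_1$ is essentially immediate, while the endpoint at $c_{1+R_n}$ is the substantive part and will be reduced to an iteration/contradiction argument in the spirit of \thmref{combin connect}.

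First I would show $c_1$ is an extremal point. For $n\ge\hN$ one has $\Lambda^n_1\setminus\{c_1\}\subset\cT_{c_1}=F(\cT_{c_0})$ (see \subsecref{subsec:crit tunnel}), and $\cT_{c_1}$ is a covering element of $\hat\cC$. In the charts of \thmref{unif reg crit}, $\Phi_{c_1}\circ F\circ\Phi_{c_0}^{-1}(x,y)=(x^2-\lambda y,x)$, and every point of $\Phi_{c_0}(\Lambda^n_{R_n}\setminus\{c_0\})$ has $x\neq0$ with $|y|$ much smaller than $x^2$ (it lies in a region $\{|y|<|x|^{\omega}\}$ with $\omega>2$), so its $F$-image has strictly positive first coordinate, whereas $\Phi_{c_1}(c_1)=0$. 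Hence, in the chart $\Phi_{c_1}$, the manifold $W^{ss}_{\loc}(c_1)$ lies strictly to the left of $W^{ss}_{\loc}(q)$ for every $q\in\Lambda^n_1\setminus\{c_1\}$, so $c_1$ is the leftmost point of $\Lambda^n_1$ for $<_{\cT_{c_1}}$; by the consistency clause of \thmref{order on piece} it is then extremal for $<_{\Lambda^n_1}$, and after orienting $<_{\Lambda^n_1}$ I may take $c_1=\min\Lambda^n_1$. The same computation shows $c_1$ and $c_{1+R_n}$ have strong-stable manifolds at distinct horizontal positions, so $c_1<_{\Lambda^n_1}c_{1+R_n}$ strictly.

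For the other endpoint I would argue by reduction. Since $F$ preserves the order from $\Lambda^n_j$ to $\Lambda^n_{j+1}$ for $1\le j<R_n$ (\thmref{order on piece}), the map $F^{R_n-1}$ is an order isomorphism of $\Lambda^n_1$ onto $\Lambda^n_{R_n}=F^{R_n-1}(\Lambda^n_1)\subset\cT_{c_0}\cup\{c_0\}$, carrying $c_1,c_{1+R_n}$ to $c_{R_n},c_{2R_n}$; so by the previous paragraph $c_{R_n}$ is the leftmost point of $\Lambda^n_{R_n}$ in the chart $\Phi_{c_0}$, and $c_0\in\Lambda^n_{R_n}$ is interior. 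Writing $\Lambda^n_{R_n}=\Lambda^-\sqcup\{c_0\}\sqcup\Lambda^+$ for the points left and right of $c_0$, \propref{ren piece at crit} and \propref{order value} show that $F$ reverses the order on $\Lambda^-\cup\{c_0\}$ and preserves it on $\{c_0\}\cup\Lambda^+$ when mapped into $(\Lambda^n_1,<_{\cT_{c_1}})$, sending $c_0$ to the minimum $c_1$ in both cases; moreover by \thmref{unif reg crit} the $<_{\cT_{c_1}}$-position of $F(x,y)$ is governed, up to a negligible term, by $x^2$. Hence $\max_{<_{\cT_{c_1}}}\Lambda^n_1$ is $F$ of whichever of the two extreme points $c_{R_n}\in\Lambda^-$ and $\max\Lambda^+$ lies at greater $\Phi_{c_0}$-horizontal distance from $c_0$. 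Pulling back by $F^{R_n-1}$, it therefore suffices to show that no $q^0\in\Lambda^n_1$ satisfies $c_{1+R_n}<_{\Lambda^n_1}q^0$.

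Finally, to rule out such a $q^0$ I would iterate $F^{R_n}$ and derive a contradiction, following the template of the proof of \thmref{combin connect}. By \thmref{combin connect}, $\Lambda^n_1$ is the disjoint union of the $r_n$ combinatorial intervals $\Lambda^{n+1}_{1+kR_n}$ ($0\le k<r_n$), which $F^{R_n}$ permutes cyclically, and by Step~1 applied at depth $n+1$ the interval $\Lambda^{n+1}_1\ni c_1=\min\Lambda^n_1$ is the leftmost one. Iterating $F^{R_n}$ on the triple $c_1<_{\Lambda^n_1}c_{1+R_n}<_{\Lambda^n_1}q^0$, each application crosses the critical index exactly once — where \propref{ren piece at crit} and \propref{order value} describe the fold about $c_0\mapsto c_1$ — while \thmref{order on piece} governs the remaining steps; distinguishing cases by the residue modulo $r_n$ of the iterate count (exactly as in the three cases of the proof of \thmref{combin connect}) one produces $q^j\in\Lambda^n_1$ with $c_{1+jR_n}<_{\Lambda^n_1}q^j$ for every $j\ge0$, so no $c_{1+jR_n}$ is ever the $<_{\Lambda^n_1}$-maximum. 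Since $c_1$ is recurrent (\thmref{crit recur}), infinitely many $c_{1+jR_n}$ enter every neighbourhood of $c_1=\min\Lambda^n_1$, and together with \lemref{finite comb comp} (finitely many combinatorial components, cyclically permuted) this contradicts the persistence of the overshoot; hence $q^0$ cannot exist and $c_{1+R_n}=\max\Lambda^n_1$. I expect this last step to be the main obstacle: the heart of the matter is showing that the overshoot is not \emph{destroyed} when its $F^{R_n}$-orbit passes through the fold at $c_0$, which depends precisely on $F$ being monotone on each side of $c_0$ with $c_0$ sent to the tip $c_1$ (\propref{order value}), on \propref{ren piece at crit}, and on careful bookkeeping of the orientations of the $r_n$ intervals $\Lambda^{n+1}_{1+kR_n}$ inside $\Lambda^n_1$ (one of which appears reversed).
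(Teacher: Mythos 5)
Your plan diverges substantially from the paper's proof, and the paper's route is considerably shorter and cleaner. Your Steps 1--2 (identifying $c_1$ as an extremal point via \propref{order value} and reducing via the order-isomorphism $F^{R_n-1}:\Lambda^n_1\to\Lambda^n_{R_n}$) are fine in spirit, though note that the assertion that $c_0$ is \emph{interior} to $\Lambda^n_{R_n}$ is not a priori known; if $c_0$ were extremal then $F(\Lambda^n_{R_n})$ would already equal $[c_1,c_{1+R_n}]_{\Lambda^n_1}$ and you would be done, so this is harmless, but it should not be stated as a fact.

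The real divergence is in Step 3. The paper does not iterate $F^{R_n}$ through the fold at all. It uses two short arguments. First, to rule out $c_{1+R_n}<_{\Lambda^n_1}c_{1-R_n}$: this inequality would make $[c_1,c_{1+R_n}]_{\Lambda^n_1}=F^{R_n}([c_1,c_{1-R_n}]_{\Lambda^n_1})$ a \emph{proper} sub-interval of $[c_1,c_{1-R_n}]_{\Lambda^n_1}$, so the forward $F^{R_n}$-orbit of $c_1$ (i.e.\ $\{c_{1+iR_n}\}_{i\ge1}$) is trapped inside $[c_{1+2R_n},c_{1+R_n}]_{\Lambda^n_1}$, an interval omitting $c_1$ --- contradicting \thmref{crit recur}. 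Second, given a second extremal point $z_1$ with $c_{1+R_n}<_{\Lambda^n_1}z_1$: both branches $[c_1,c_{1-R_n}]_{\Lambda^n_1}$ and $[c_{1-R_n},z_1]_{\Lambda^n_1}$ fold to intervals emanating from $c_1$, so to cover $z_1$ the second image must be all of $[c_1,z_1]_{\Lambda^n_1}$, forcing $F^{R_n}(z_1)=z_1$, which is impossible since $\Lambda$ is an odometer and has no periodic points. That is the whole proof.

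Your iteration scheme has two gaps. (a) The persistence claim $c_{1+jR_n}<_{\Lambda^n_1}q^j\Rightarrow c_{1+(j+1)R_n}<_{\Lambda^n_1}q^{j+1}$ is not established: when the $F^{R_n-1}$-images of $c_{1+jR_n}$ and $q^j$ land on \emph{opposite} sides of $c_0$ in $\Lambda^n_{R_n}$, the fold of \propref{order value} orders the images by distance to $c_0$, which can flip the inequality; you flag this yourself, but a case analysis modelled on \thmref{combin connect} is not obviously sufficient, because here the two points do not live in disjoint combinatorial components. (b) Even granting persistence, the final step is not a proof: the statement ``no $c_{1+jR_n}$ is the $<_{\Lambda^n_1}$-maximum, while $c_{1+jR_n}\to c_1$ along a subsequence by recurrence'' does not by itself contradict anything, since nothing constrains where the $q^j$ accumulate. \lemref{finite comb comp} is about combinatorial components of $\Lambda^{n+1}_1$ inside $\Lambda^n_1$, and it is not clear how it interacts with the overshoot points $q^j$. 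The paper's fixed-point argument replaces all of this bookkeeping with a single observation, so I would recommend adopting it in place of your Step 3.
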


\begin{proof}
Suppose
$
c_{1+R_n} <_{\Lambda^n_1} c_{1-R_n}.
$
Then
$$
F^{R_n}([c_1, c_{1-R_n}]_{\Lambda^n_1}) = [c_1, c_{1+R_n}]_{\Lambda^n_1} \subset [c_1, c_{1-R_n}]_{\Lambda^n_1}.
$$
Consequently,
$$
\{c_{1+iR_n}\}_{i=1}^\infty \subset [c_{1+2R_n}, c_{1+R_n}]_{\Lambda^n_1}.
$$
Since
$$
c_1 \notin[c_{1+2R_n}, c_{1+R_n}]_{\Lambda^n_1},
$$
this contradicts \thmref{crit recur}.

Let $z_1 \neq c_1$ be an extremal point of $\Lambda^n_1$. Suppose
$
c_{1+R_n} <_{\Lambda^n_1} z_1.
$
Then
$$
F^{R_n}([c_1, c_{1-R_n}]_{\Lambda^n_1}) = [c_1, c_{1+R_n}]_{\Lambda^n_1} \neq [c_1, z_1]_{\Lambda^n_1}.
$$
This implies that
$$
F^{R_n}([c_{1-R_n}, z_1]_{\Lambda^n_1}) = [c_1, z_1]_{\Lambda^n_1}.
$$
Since $z_1$ cannot be a fixed point, this is a contradiction.
\end{proof}


\section{Regular H\'enon-like Returns}\label{sec:henon return}

Let $F$ be the mildly dissipative, infinitely renormalizable and regularity unicritical diffeomorphism considered in \secref{sec:tot order}.

Let $\cC$ be the unicritical cover of $\Lambda$ defined in \subsecref{subsec:unicrit cover}. By compactness of $\Lambda$, we may choose a finite subcover
$$
\cC_0 = \{\bcB_p\}_{p \in \cP} \cup \{\cT_{c_{-m}}\}_{m = 0}^{\hM_-} \cup \{\cT_{c_m}\}_{m = 1}^{\hM_+} \subset \cC,
$$
where $\cP \subset \Lambda^P_{L, \delta}$ is a finite set of Pesin $(L, \delta)$-regular points, and $\hM_\pm \in \bbN$. By increasing $\hM_\pm$ if necessary, we may assume that $\hM_\pm$ is a Pliss moment in the forward/backward orbit of $c_1$/$c_0$, and that we have $\cT_{c_{\hM_\pm}} \subset \bcB_{p^\pm}$ for some Pesin regular point $p^\pm \in \cP$. For $n \in \bbN$, recall that the $n$th renormalization domain containing $c_1$ is denoted $\cD^n_1$. The {\it valuable} and the {\it critical pieces of depth $n$} are defined as
$$
\Lambda^n_1 = \Lambda \cap \cD^n_1 \ni c_1
\matsp{and}
\Lambda^n_0 := \Lambda^n_{R_n} = \Lambda \cap \cD^n_{R_n} \ni c_0
$$
respectively.

Let
$$
\{\Phi_{c_m} : (\cU_{c_m}, c_m) \to (U_{c_m}, 0)\}_{m\in\bbZ}
$$
be a uniformization of the orbit of $c_0$ given in \thmref{unif reg crit}. For $p_0 \in \cU_{c_0}$, let $E^v_{p_0} \in \bbP^2_{p_0}$ and $E^h_{p_1} \in \bbP^2_{p_1}$ be the directions given by
$$
D\Phi_{c_0}(E^v_{p_0}) = E^{gv}_{\Phi_{c_0}(p_0)}
\matsp{and}
D\Phi_{c_1}(E^h_{p_1}) = E^{gh}_{\Phi_{c_1}(p_1)} = DF(E^v_{p_0}).
$$

Let $\hN \in \bbN$ be the integer given in \propref{deep depth}. In the rest of this section, we assume that $n \geq \hN$. The Main Theorem stated in \secref{sec:intro} is an immediate consequence of the following result.

\begin{thm}[Valuable chart]\label{val chart}
There exist $\tau_n > 0$ and a $C^r$-diffeomorphism $\Psi^n : (\Phi_{c_1}(\cV^n_1), 0) \to (V^n_1, 0)$, where
$$
I^n := (-\tau_n, \tau_n)
\matsp{and}
V^n_1 := I^n \times (-l_{c_1}, l_{c_1}),
$$
such that the following properties hold. 
\begin{enumerate}[i)]
\item We have
$
F^{R_n}(\cV^n_1) \Subset \cV^n_1 \subset \cU_{c_1},
$
and consequently
$
F^{R_n-1}(\cV^n_1) \subset \cU_{c_0}
$ and $
\Lambda^n_1 \subset \cV^n_1.
$
\item We have
$$
\|\Psi^n - \Id\|_{C^r} = O(\lambda^{(1-\bdelta)R_n})
\matsp{and}
\Psi^n|_{I^n \times \{0\}} \equiv \Id.
$$
\item For $p_1 \in \cV^n_1$, let
$$
E^{v,n}_{p_1}:= DF^{-R_n+1}(E^v_{p_{R_n}}) \in \bbP^2_{p_1}.
$$
Then
$$
D\bPsi^n(E^h_{p_1}) = E^{gh}_{\bPsi^n(p_1)}
\matsp{and}
D\bPsi^n(E^{v,n}_{p_1}) = E^{gv}_{\bPsi^n(p_1)},
$$
where $\bPsi^n :=\Psi^n \circ \Phi_{c_1}$.
\item Let $p_1 \in \cV^n_1$. Then $p_1$ is $(R_n-1)$-times forward $(1, \delta)_v$-regular along $E^{v,n}_{p_1}$, and $p_{R_n}$ is $(R_n-1)$-times backward $(1, \delta)_v$-regular along $E^v_{p_{R_n}}$.
\item Define
$$
H_n := \Phi_{c_0} \circ F^{R_n-1}\circ (\bPsi^n)^{-1}.
$$
Then we have
$$
H_n(x,y) = (h_n(x), e_n(x,y))
\matsp{for}
(x,y) \in V^n_1,
$$
where $h_n : I^n \to h_n(I^n)$ is a $C^r$-diffeomorphism, and $e_n : V^n_1 \to \bbR$ is a $C^r$-map such that
$$
e_n(\cdot, 0) \equiv 0
\matsp{and}
\|e_n\|_{C^r} = O(\lambda^{(1-\bdelta)R_n}).
$$
\end{enumerate}
\end{thm}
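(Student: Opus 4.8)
The strategy follows the outline of \subsecref{sec:summary}: build a thin quadrilateral $\cV^n_1\ni c_1$ around $W^{ss}_{\loc}(c_1)$ inside $\bbD_{c_1}(\bft)$, follow its forward orbit $\cV^n_i:=F^{i-1}(\cV^n_1)$ through the elements of a finite unicritical subcover $\cC_0$ for $R_n-1$ steps until $\cV^n_{R_n}$ lands inside $\cU_{c_0}$ near $c_0$, and then apply the single remaining iterate of $F$, which by \thmref{unif reg crit} is the exact H\'enon transition $(x,y)\mapsto(x^2-\lambda y,x)$; this bends the vertically compressed, horizontally aligned rectangle $\cV^n_{R_n}$ into the image of a H\'enon-like map. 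Concretely I would fix $n\ge\hN$, set $\cV^n_{R_n-1}:=\Phi_{c_0}^{-1}\bigl(J_n\times(-l_{c_0},l_{c_0})\bigr)$ for a short interval $J_n$ centered at the first coordinate of $\Phi_{c_0}(c_{R_n})$, pull back by $F^{-(R_n-2)}$ and intersect with $\cU_{c_1}$ to get $\cV^n_1$, and choose the length of $J_n$ (hence $\tau_n$ and $V^n_1$) small enough that the confinement below holds. The chart $\Psi^n$ itself is constructed only at the end, once the two relevant foliations of $\cV^n_1$ are under control.

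The hard step, which I would carry out first, is the confinement $\cV^n_i\subset\hQ_i$ (a slight thickening of the element $Q_i\in\cC_0$ containing $\Lambda^n_{1+i}$) for all $1\le i\le R_n-1$, together with $F^{R_n-1}(\cV^n_1)\subset\cU_{c_0}$, $F^{R_n}(\cV^n_1)\Subset\cV^n_1$ and $\Lambda^n_1\subset\cV^n_1$. The inputs are: by \propref{end points} and \thmref{order on piece}, the extremal points of $\Lambda^n_{1+i}$ in the total order on $\Lambda\cap Q_i$ are $c_{1+i}$ and $c_{1+R_n+i}$, both Pesin $(L,\delta)$-regular; by \thmref{pinching} and \thmref{crit recur}, for $n$ large one has $\Lambda^n_{R_n}\subset\cT_{c_0}$, $\Lambda^n_1\subset\cT_{c_1}$, $\cD^n_1\subset\cU_{c_1}$, $\cD^n_{R_n}\subset\cU_{c_0}$, and $c_{R_n}\to c_0$, $c_{1+R_n}\to c_1$; and the two ``escape'' facts $\Lambda^{n+1}_1\cap\cT_{c_{1-R_n}}=\varnothing$, $\Lambda^{n+1}_{R_n}\cap\cT_{c_{R_n}}=\varnothing$, which I would deduce from \propref{return crit disc}, \lemref{no pliss near crit} and \propref{tunnel inv} together with the regularity of the critical orbit. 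Using these, one traps $\cV^n_i$ between $W^{ss}_{\loc}(c_{1+i})$ and $W^{ss}_{\loc}(c_{1+R_n+i})$ (or their substitutes $W^v$ inside tunnels) by \propref{trap middle}, checking inductively that the orbit never re-enters a tunnel or crescent that would split the renormalization piece; here \propref{ren piece at crit}, \thmref{combin connect} and \propref{nice cover} are what let the tube pass cleanly through the pinched region $\cT_{c_0}$ and the role-switching region $\cT_{c_1}$. The inclusion $F^{R_n}(\cV^n_1)\Subset\cV^n_1$ follows because $\Lambda^{n+1}_1\subset\cV^n_1$ lies strictly between the same bounding manifolds, and $\Lambda^n_1\subset\cV^n_1$ is immediate.

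Next I would prove (iv). By \thmref{ref uni} a)--b), since $\Lambda^n_1\subset\cT_{c_1}(\bft)$ and $\Lambda^n_{R_n}\subset\cT_{c_0}(\bft)$, each point of $\Lambda^n_1$ is forward $(1,\delta)$-regular and each point of $\Lambda^n_{R_n}$ is backward $(1,\delta)$-regular; applying \thmref{reg chart} and \propref{loc linear} over the (truncated) regular neighborhoods covering the $\cV^n_i$ propagates this to all of $\cV^n_1$, so every $p_1\in\cV^n_1$ is $(R_n-1)$-times forward $(1,\delta)_v$-regular along $E^{v,n}_{p_1}=DF^{-R_n+1}(E^v_{p_{R_n}})$, and by \lemref{consistent for to back} $p_{R_n}$ is $(R_n-1)$-times backward $(1,\delta)_v$-regular along $E^v_{p_{R_n}}$. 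Then \propref{vert fol conv}, run step by step along the orbit, shows the foliation $\{E^{v,n}_{p_1}\}$ is $C^r$-$O(\lambda^{(1-\bdelta)R_n})$-close to the genuine vertical foliation of $\cV^n_1$ in the $\Phi_{c_1}$-chart, while \propref{hor curv conv} shows the genuine horizontal curves in $\cV^n_1$ are carried forward to curves $C^r$-$O(\lambda^{(1-\bdelta)R_n})$-close to the genuine horizontal foliation near $c_{R_n}$; since the horizontal foliation of $\cV^n_1$ is already genuine-horizontal in $\Phi_{c_1}$, one only needs $\Psi^n$ to straighten the $E^{v,n}$-foliation. Taking $\Psi^n(u,w):=(u-g_n(u,w),w)$, with $g_n$ the $w=0$ intercept of the $E^{v,n}$-leaf through $(u,w)$, yields $\|\Psi^n-\Id\|_{C^r}=O(\lambda^{(1-\bdelta)R_n})$ and $\Psi^n|_{I^n\times\{0\}}\equiv\Id$ (part (ii)), and $\bPsi^n:=\Psi^n\circ\Phi_{c_1}$ sends $E^{v,n}_{p_1}$ to $E^{gv}$ and $E^h_{p_1}$ to $E^{gh}$ (part (iii)).

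Finally, (v): the map $\Phi_{c_0}\circ F^{R_n-1}\circ\Phi_{c_1}^{-1}$ is a composition of the linearized maps $F_{c_m}$ along $c_1\to\cdots\to c_{R_n}$, each of skew form $(x,y)\mapsto(f_{c_m}(x),e_{c_m}(x,y))$ with $e_{c_m}(\cdot,0)\equiv0$ by \thmref{reg chart}~iii); this structure is closed under composition, and $(\Psi^n)^{-1}$ preserves horizontal lines, so $H_n=\Phi_{c_0}\circ F^{R_n-1}\circ(\bPsi^n)^{-1}$ has the form $(x,y)\mapsto(h_n(x),e_n(x,y))$ with $e_n(\cdot,0)\equiv0$ — where one must also verify that the endpoint chart $\Phi_{c_{R_n}}$ of the forward linearization of $c_1$ agrees with $\Phi_{c_0}$ on the relevant piece up to a change of coordinates that, by the escape facts and \propref{loc linear}, is itself skew with the stated error, so that the zero section $\Phi_{c_1}^{-1}(I^n\times\{0\})$ maps exactly into $W^c(c_0)$. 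The bound $\|e_n\|_{C^r}=O(\lambda^{(1-\bdelta)R_n})$ is then precisely the contraction $\|DF^{R_n-1}|_{E^{v,n}_{p_1}}\|\le\lambda^{(1-\delta)(R_n-1)}$ from (iv) transported through the $O(1)$-bounded charts, absorbed into $O(\lambda^{(1-\bdelta)R_n})$ since $\delta<\bdelta$. I expect the confinement of the thin tube through all $R_n$ iterates — in a setting where the covering element $\cT_{c_0}$ is pinched and horizontal and vertical directions are not globally preserved — to be the main obstacle, requiring the full total-order apparatus of \secref{sec:tot order} together with the escape facts.
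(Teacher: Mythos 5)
Your proposal takes essentially the same route as the paper: $\cV^n_1$ is the thin vertical strip bounded by the pullbacks $W^{v,n}(\tic^n_1)$ and $W^{v,n}(\tic^n_{1+R_n})$ of genuine vertical segments in the $\Phi_{c_0}$-chart; confinement through the $R_n$ iterates comes from the total order on renormalization pieces plus the escape facts of \propref{tunnel escape at entry} (which the paper packages into \propref{v fit} and \propref{entry reg}); the regularity of part (iv) propagates from $\Lambda^n_1$ and $\Lambda^n_{R_n}$ to the whole tube; and $\Psi^n$ straightens the $E^{v,n}$-foliation, i.e.\ is obtained by gluing the regular charts along the $(R_n-1)$-orbit exactly as the paper states. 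Two small slips worth flagging: your formula $\Psi^n(u,w) := (u - g_n(u,w), w)$ should read $(g_n(u,w), w)$ given your definition of $g_n$ as the $w=0$ intercept (so that $\Psi^n|_{I^n\times\{0\}}\equiv\Id$), and the strip near the critical point should be indexed $\cV^n_{R_n}$ (containing $c_{R_n}\in\cU_{c_0}$), so the pullback to $\cV^n_1$ is by $F^{-(R_n-1)}$.
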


\begin{lem}\label{trap off piece}
For $p \in \Lambda^n_1$, write
$
\Phi_{c_1}(p) = (x_p, y_p).
$
Denote
$$
\chi_{n+1} := \inf_{p \in \Lambda^n_1 \setminus \Lambda^{n+1}_1}|x_p|.
$$
Then we have
$$
\sup_{p \in \Lambda^{n+1}_1}|x_p| < \chi_{n+1}+O(\chi_{n+1}^{1/\bepsilon}).
$$
\end{lem}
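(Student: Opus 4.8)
The plan is to first locate $\Lambda^{n+1}_1$ inside the totally ordered set $(\Lambda^n_1,<_{\Lambda^n_1})$ and then read the estimate off in the uniformizing chart at $c_1$. By \thmref{combin connect} (applied with $m:=n$, the two depths $n<n+1$, and $k=0$) the piece $\Lambda^{n+1}_1$ is combinatorially connected in $\Lambda^n_1$, hence an interval $[a,b]_{\Lambda^n_1}$; since $c_1\in\Lambda^{n+1}_1$ and, by \propref{end points}, $c_1$ is one of the two extremal points of $\Lambda^n_1$, orienting $<_{\Lambda^n_1}$ so that $c_1$ is minimal forces $a=c_1$. Thus $\Lambda^{n+1}_1=[c_1,b]_{\Lambda^n_1}$ and $\Lambda^n_1\setminus\Lambda^{n+1}_1=(b,c_{1+R_n}]_{\Lambda^n_1}$, so $c_1\le_{\Lambda^n_1}q\le_{\Lambda^n_1}b<_{\Lambda^n_1}p$ for every $q\in\Lambda^{n+1}_1$ and $p\in\Lambda^n_1\setminus\Lambda^{n+1}_1$. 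Recall also that $\Lambda^n_1\subset\cT_{c_1}$ (for $n$ large, in particular for $n\ge\hN$) and that, since $\cT_{c_1}\in\cC_0$, \propref{order trans} (via \thmref{order on piece}) identifies $<_{\Lambda^n_1}$ on $\Lambda^n_1$, up to orientation, with the strong-stable-lamination order $<_{\cT_{c_1}}$ of \thmref{thm:total order}.

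\textbf{Translation to the chart and the key estimate.} It now suffices to prove: for $q,p\in\Lambda\cap\cT_{c_1}$ with $q<_{\cT_{c_1}}p$ one has $|x_q|<|x_p|+O(|x_p|^{1/\bepsilon})$. Granting this, fix $p\in\Lambda^n_1\setminus\Lambda^{n+1}_1$ realizing the infimum $\chi_{n+1}$ (attained, by compactness of $\Lambda^n_1$), let $q$ range over $\Lambda^{n+1}_1$, and the claim follows. To prove the displayed inequality I would work in the chart $\Phi_{c_1}$ of \thmref{unif reg crit}, in which $F^{-1}$ is the inverse H\'enon fold: $\Phi_{c_1}(\cT_{c_1})$ is a thin neighbourhood of the parabola $\{x=y^2\}$, $W^{ss}_{\loc}(c_1)=\{x=0\}$, and for $q\in\cT_{c_1}$ with $(x^0,y^0):=\Phi_{c_0}(F^{-1}(q))\in\Phi_{c_0}(\cT_{c_0})$ (so $|y^0|<|x^0|^{1/\udelta}$) one has $x_q=(x^0)^2-\lambda y^0\ge 0$ with $x_q\asymp(x^0)^2$. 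Moreover the strong-stable leaf of a forward-regular point of $\cT_{c_1}$ is the genuine vertical fibre $\{x=\mathrm{const}\}$ in this chart (since $E^{ss}=E^v$ there), while a point of $\cT_{c_1}$ that lies in a critical tunnel $\cT_{c_{-m}}\subset\cT_{c_1}$ is separated, for the purposes of $<_{\cT_{c_1}}$, by the thin nearly-vertical strip $S_{c_{-m}}$ around it. Hence $q<_{\cT_{c_1}}p$ confines $q$ to $\{x\le x_p+w_p\}$, where $w_p=0$ if $p$ is forward-regular and $w_p$ is the $x$-width of $S_{c_{-m}}$ otherwise; bounding $\diam\cT_{c_{-m}}$ by \propref{tunnel size} and invoking the slow-recurrence estimates \propref{return crit disc} and \lemref{return dist lem} — which, after the squaring $x\asymp(x^0)^2$ at the fold, is what converts their exponent into $1/\bepsilon$ — gives $w_p=O(|x_p|^{1/\bepsilon})$.

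\textbf{Expected main obstacle.} The combinatorial reduction of the first step and the chart identifications are routine given Sections \ref{sec:order on piece}--\ref{sec:henon return} and \thmref{unif reg crit}; the substance is the width bound $w_p=O(|x_p|^{1/\bepsilon})$. Its difficulty is that for a point $p$ sitting deep in a nest of critical tunnels the curve ordering $\cT_{c_1}$ at $p$ is not the genuine vertical fibre but a thickened strip, so pinning down the exponent $1/\bepsilon$ requires simultaneously using the pinched geometry of $\cT_{c_0}$ (\thmref{unif reg crit}), the quadratic fold at the critical moment, the tunnel-diameter bound \propref{tunnel size}, and the slow-recurrence estimates of \secref{sec:crit orb}. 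I expect this to be the only real obstacle; everything else is bookkeeping.
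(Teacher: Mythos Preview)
Your overall strategy matches the paper's: use \thmref{combin connect} (and implicitly \propref{end points}) to trap $\Lambda^{n+1}_1$ between $c_1$ and the minimizer $q_1\in\Lambda^n_1\setminus\Lambda^{n+1}_1$, then read off the $x$-coordinate bound in the chart $\Phi_{c_1}$ via a nearly-vertical separating curve. The tunnel case is also handled the same way: the paper replaces $W^{ss}_{\loc}(q_1)$ by $\hW^v(c_{-m})$ and bounds $\dist(q_1,c_{-m})<|x_{q_1}|^{1/\bepsilon}$ via \lemref{return dist lem}.

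However, there is a concrete error in your regular case. You assert that for a forward-regular $p\in\cT_{c_1}$ the stable leaf is the \emph{genuine} vertical fibre $\{x=\mathrm{const}\}$ in the chart $\Phi_{c_1}$, hence $w_p=0$. This is false: $\Phi_{c_1}$ is the linearization along the orbit of $c_1$, so it straightens only $W^{ss}_{\loc}(c_1)$. For $p\neq c_1$ the leaf $W^{ss}_{\loc}(p)$ is merely approximately vertical, and quantifying this approximation is exactly where the $O(\chi_{n+1}^{1/\bepsilon})$ term originates. The paper's argument here is: since $\cd(q_1)=\varnothing$ and $q_1\in\cT_{c_1}$, \thmref{ref uni} gives that $q_1$ is forward $(1,\delta)$-regular; then $q_i\in\cU_{c_i}$ for $1\le i\le N$ with $N\asymp\bepsilon^{-1}\logl|x_{q_1}|$, and \propref{vert angle shrink} yields that $W^{ss}_{\loc}(q_1)$ is $O(\lambda^{(1-\bdelta)N})=O(|x_{q_1}|^{1/\bepsilon})$-vertical in $\cU_{c_1}$. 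So the ``main obstacle'' you flag is not exclusive to the tunnel case; the same verticality estimate (for $W^{ss}_{\loc}(q_1)$ or $\hW^v(c_{-m})$) is the heart of both cases. Your appeal to the quadratic fold and the squaring $x\asymp(x^0)^2$ is not needed and does not enter the paper's proof, which stays entirely in the $\Phi_{c_1}$ chart.
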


\begin{proof}
Let $q_1 \in \Lambda^n_1 \setminus \Lambda^{n+1}_1$ be a point satisfying $|x_{q_1}| = \chi_{n+1}$. If $\cd(q_1) = \varnothing$, then by \thmref{ref uni}, $q_1$ is forward $(1, \delta)$-regular. Let $N$ be an integer satisfying
$$
N \asymp \bepsilon^{-1}\logl |x_{q_1}|.
$$
Then by \lemref{reg nbh size}, $q_i \in \cU_{c_i}$ for $1\leq i \leq N$. Note that $\lambda^{(1-\bdelta)N} < |x_{q_1}|^{1/\bepsilon}$. By \propref{vert angle shrink}, we see that $W^{ss}_{\loc}(q_1)$ is a $O(|x_{q_1}|^{1/\bepsilon})$-vertical curve in $\cU_{c_1}$. By \thmref{combin connect}, $\Lambda^{n+1}_1$ is contained in the vertical strip bounded between $W^{ss}_{\loc}(q_0)$ and $W^{ss}_{\loc}(c_1)$. Thus, the result holds in this case.

If $\cd(q_1) \neq \varnothing$, let $q_1 \in \cT_{c_{-m}}$ for some $m > 0$ with $\cd(c_{-m})= \varnothing$. By \lemref{return dist lem}, we see that
$$
\dist(q_1, c_{-m}) < |x_{q_1}|^{1/\bepsilon}.
$$
Moreover, arguing as above, we see that the vertical manifold $\hW^v(c_{-m})$ at $c_{-m}$ is a $O(|x_{q_1}|^{1/\bepsilon})$-vertical curve in $\cU_{c_1}$. The set $(c_1, c_{-m})_{\cT_{c_1}}$ is contained in the vertical strip bounded between $\hW^v(c_{-m})$ and $W^{ss}_{\loc}(c_1)$. Moreover,
$
\Lambda^{n+1}_1 \subset (c_1, c_{-m})_{\cT_{c_1}} \cup \cT_{c_{-m}}.
$
The result follows.
\end{proof}

\begin{prop}\label{tunnel escape at entry}
We have
$
\cT_{c_{-R_n+1}} \cap \Lambda^{n+1}_1 = \varnothing
$ and $
\cT_{c_{R_n}} \cap \Lambda^{n+1}_0 = \varnothing.
$
\end{prop}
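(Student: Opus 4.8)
We establish the first identity, $\cT_{c_{-R_n+1}}\cap\Lambda^{n+1}_1=\varnothing$; the second, $\cT_{c_{R_n}}\cap\Lambda^{n+1}_0=\varnothing$, follows by the mirror argument (working in $\Phi_{c_0}$ rather than $\Phi_{c_1}$, transporting \lemref{trap off piece} to the critical piece, replacing forward dynamics and critical tunnels by backward dynamics and valuable crescents, and using the fold of the order at $c_0$ from \propref{order value} in place of the unimodal return map). The plan is to argue by contradiction, placing everything inside the linearizing chart $\Phi_{c_1}$ at the critical value and comparing horizontal coordinates: write $\Phi_{c_1}(p)=(x_p,y_p)$, so that by \thmref{thm:total order} the order $<_{\Lambda^n_1}$ on $\Lambda^n_1\subset\cT_{c_1}\subset\cU_{c_1}$ is governed by $|x_p|$ (points closer in the order to $c_1=W^{ss}_{\loc}(c_1)$ have smaller $|x_p|$, once the strong-stable lamination is straightened).

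The first input is the combinatorial placement of the tunnel base $c_{-R_n+1}=c_{1-R_n}$. Since $\cD^{n+1}_1$ is $R_{n+1}$-periodic with $R_{n+1}=r_nR_n$ and $r_n\ge2$, one has $1-R_n\equiv 1+(r_n-1)R_n\not\equiv1\pmod{R_{n+1}}$, so $c_{1-R_n}\notin\Lambda^{n+1}_1$; in fact $c_{1-R_n}=F^{-R_n}(c_1)$ is the turning point of the first-return map $F^{R_n}|_{\Lambda^n_1}$ (it is the $F^{-(R_n-1)}$-preimage of the fold point $c_0\in\Lambda^n_{R_n}$). By \propref{end points} applied at depths $n$ and $n+1$, together with \thmref{combin connect} and \propref{order trans}, $\Lambda^{n+1}_1$ is, up to orientation, the order interval $[c_1,c_{1+R_{n+1}}]_{\Lambda^n_1}$ with endpoint $c_1$; and since the remaining $r_n-1$ depth-$(n+1)$ pieces are disjoint from it and are permuted cyclically by $F^{R_n}$, the $<_{\Lambda^n_1}$-nearest point to $c_1$ among $\Lambda^n_1\setminus\Lambda^{n+1}_1$ is an endpoint of the adjacent piece $\Lambda^{n+1}_{1+R_n}$, hence lies strictly between $c_1$ and the turning point $c_{1-R_n}$.

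The second input is a size comparison. Put $\chi_{n+1}:=\inf\{|x_p| : p\in\Lambda^n_1\setminus\Lambda^{n+1}_1\}$. By the previous paragraph $\chi_{n+1}$ is realized at a point of combinatorial depth $n+1$, which by \lemref{return dist lem} (iterating over successive returns to $\cT_{c_0}$, via \propref{return crit disc}) is power-law smaller in $|x|$ than the turning point: $\chi_{n+1}\le|x_{c_{1-R_n}}|^{1+\bepsilon}\ll|x_{c_{1-R_n}}|$. Meanwhile $\cT_{c_{-R_n+1}}$ is a neighborhood of $c_{1-R_n}$ of diameter $<\lambda^{\delta(R_n-1)}$ by \propref{tunnel size}, whereas $|x_{c_{1-R_n}}|\gg\lambda^{\delta(R_n-1)}$ by \propref{return crit disc}. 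Hence every $q\in\cT_{c_{-R_n+1}}$ satisfies
$$|x_q|\ \ge\ |x_{c_{1-R_n}}|-O(\lambda^{\delta(R_n-1)})\ >\ \chi_{n+1}+O(\chi_{n+1}^{1/\bepsilon})\ >\ \sup_{p\in\Lambda^{n+1}_1}|x_p|,$$
where the last inequality is \lemref{trap off piece}. Therefore $q\notin\Lambda^{n+1}_1$, proving $\cT_{c_{-R_n+1}}\cap\Lambda^{n+1}_1=\varnothing$.

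The main obstacle is the interface between the two inputs: establishing, with a quantitative margin, that the horizontal closest approach of $\Lambda^n_1\setminus\Lambda^{n+1}_1$ to $W^{ss}_{\loc}(c_1)$ occurs strictly ``inside'' the tunnel base $c_{1-R_n}$ — i.e. at a point of combinatorial depth $n+1$ rather than the depth-$n$ turning point — and that the resulting gap dominates the tunnel width $\lambda^{\delta(R_n-1)}$. This requires marrying the combinatorics of the $F^{R_n}$-cycle of depth-$(n+1)$ pieces (the turning point sits in the last piece of that cycle) to the distortion/quasisymmetry control underlying \lemref{trap off piece} and the slow-recurrence estimate \propref{return crit disc}; for the critical-value identity one must in addition track the fold of the order at $c_0$ through \propref{order value}.
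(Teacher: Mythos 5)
Your strategy is inverted relative to the paper's, and the key step you flag yourself as ``the main obstacle'' is in fact a genuine gap. Let me explain the issue and contrast with the paper's argument.

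You try to prove disjointness directly by showing two things: (a) every $q\in\cT_{c_{-R_n+1}}$ has $|x_q|$ within $O(\lambda^{\delta(R_n-1)})$ of $|x_{c_{1-R_n}}|$, and (b) $\sup_{p\in\Lambda^{n+1}_1}|x_p|$ is strictly smaller than $|x_{c_{1-R_n}}|$ with a power-law margin. Step (a) is fine via \propref{tunnel size} and \propref{return crit disc}. But step (b) rests on the claim $\chi_{n+1}\le|x_{c_{1-R_n}}|^{1+\bepsilon}$, which you attribute to \lemref{return dist lem}; that lemma controls successive returns of a single orbit to $\cT_{c_0}$ and says nothing about the infimum defining $\chi_{n+1}$. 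Since $c_{1-R_n}\in\Lambda^n_1\setminus\Lambda^{n+1}_1$, the trivial bound is $\chi_{n+1}\le|x_{c_{1-R_n}}|$; showing the infimum is attained strictly inside this value, with quantitative separation, is essentially equivalent to the proposition you are trying to prove (the proposition says $\Lambda^{n+1}_1$ misses the tunnel around $c_{1-R_n}$, which is exactly what would force $\chi_{n+1}$ to be realized at a smaller $|x|$). Without an independent estimate on $\chi_{n+1}$, \lemref{trap off piece} only yields $\sup|x_p|<\chi_{n+1}+O(\chi_{n+1}^{1/\bepsilon})\le|x_{c_{1-R_n}}|+O(|x_{c_{1-R_n}}|^{1/\bepsilon})$, which does not exclude points of the tunnel with $|x_q|$ slightly below $|x_{c_{1-R_n}}|$.

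The combinatorial claim that ``the $<_{\Lambda^n_1}$-nearest point to $c_1$ among $\Lambda^n_1\setminus\Lambda^{n+1}_1$ is an endpoint of the adjacent piece $\Lambda^{n+1}_{1+R_n}$, hence lies strictly between $c_1$ and $c_{1-R_n}$'' is also unjustified and, as stated, incorrect: $c_{1+R_n}\in\Lambda^{n+1}_{1+R_n}$ is the far extremal point of $\Lambda^n_1$ by \propref{end points}, and in the period-doubling case $c_{1-R_n}=c_{1+R_n}$ actually is that extremal point, so the piece you call ``adjacent'' is the far one, and the ordering of the intermediate pieces for $r_n\ge3$ depends on the renormalization combinatorics in a way that your argument does not pin down.

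The paper's proof runs the logic in the opposite direction and avoids all of this. Suppose, for contradiction, that some $p_1\in\cT_{c_{-R_n+1}}\cap\Lambda^{n+1}_1$. Then $q_1:=F^{R_n}(p_1)\in\Lambda^{n+1}_{1+R_n}\subset\Lambda^n_1\setminus\Lambda^{n+1}_1$, and since $F^{R_n-1}$ sends the small tunnel $\cT_{c_{-R_n+1}}$ into a $K\lambda^{(1-\bdelta)\delta R_n}$-neighborhood of $c_0$, one gets $\dist(q_1,c_1)\asymp\dist(q_0,c_0)<K\lambda^{(1-\bdelta)\delta R_n}$. This forces $\chi_{n+1}<K\lambda^{(1-\bdelta)\delta R_n}$ \emph{as a consequence of the assumption}, not as an input. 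Feeding this into \lemref{trap off piece} bounds $|x_{p_1}|$, and since $p_1$ lies in the small tunnel around $c_{-R_n+1}$, this yields $\dist(c_{-R_n+1},c_1)<K\lambda^{(1-\bdelta)\delta R_n}\ll\diam(\cU_{c_{-R_n+1}})$, which contradicts \propref{no crit in crit nbh}. That proposition — the slow-recurrence lower bound on $\dist(c_m,c_0)$ in terms of the regular radius — is the source of the contradiction, and it does not appear at all in your argument. The missing idea is precisely this: do not try to show the gap $\chi_{n+1}$ is large; assume the conclusion fails, deduce that $\chi_{n+1}$ is small, and turn that into a violation of the no-return estimate at $c_{-R_n+1}$.
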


\begin{proof}
Suppose there exists $p_1 \in \cT_{c_{-R_n+1}} \cap \Lambda^{n+1}_1$. Let
$$
q_0 := p_{R_n} \in F^{R_n-1}(\cT_{c_{-R_n+1}}) \cap \Lambda^{n+1}_{R_n} \subset \cT_{c_0}.
$$
Observe that
$$
\dist(q_0, c_0) < \lambda^{-\eta R_n} \diam(\cT_{c_{-R_n+1}}) < K\lambda^{(1-\bdelta)\delta R_n}
$$
for some uniform constant $K \geq 1$. Since
$
\dist(q_1, c_1) \asymp \dist(q_0, c_0),
$
we conclude by \lemref{trap off piece} that
$
\chi_{n+1} < K\lambda^{(1-\bdelta)\delta R_n}.
$

Write
$
(x, y) := \Phi_{c_1}(c_{-R_n+1}).
$
We have
$$
\dist(c_{-R_n+1}, c_1) \asymp |x| < \diam(\cT_{c_{-R_n+1}}) + \chi_{n+1} < K\lambda^{(1-\bdelta)\delta R_n} \ll \diam(\cU_{c_{-R_n+1}}).
$$
This contradicts \propref{no crit in crit nbh}.

The second claim can be proved using the same proof {\it mutatis mutandis}.
\end{proof}

Let $p \in \Omega$. For $m \geq 0$, define
$$
\cd^m(p) := \left\{
\begin{array}{cl}
\cd(p) & : \text{ if } 0\leq \cd(p) < m\\
\varnothing & : \text{ otherwise}
\end{array}
\right..
$$
Similarly, for $k \geq 1$, define
$$
\vd^k(p) := \left\{
\begin{array}{cl}
\vd(p) & : \text{ if } 1\leq \vd(p) < k\\
\varnothing & : \text{ otherwise}
\end{array}
\right..
$$

\begin{lem}\label{entry for reg lem}
For $p_0 \in \Lambda^n_0$, let
$
z_0 := \Phi_{c_0}(p_0)
$ and $
E^v_{p_0} := D_{z_0}\Phi_{c_0}^{-1}(E^{gv}_{z_0}).
$
Then for $0 \leq i < R_n$ such that $\cd^i(p_{-i}) = \varnothing$, the point $p_{-i}$ is $i$-times forward $(O(L), \delta)_v$-regular along
$
E^{v,n}_{p_{-i}} = DF^{-i}(E^v_{p_0}).
$
If, additionally, we have $p_{-i} \in \cT_{c_1}$, then $p_{-i}$ is $i$-times forward $(1, \delta)_v$-regular along $E^{v,n}_{p_{-i}}$.
\end{lem}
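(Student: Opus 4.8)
\emph{Plan of proof.} I would follow the structure of the proof of \thmref{ref uni}~a), carrying along the pulled-back direction $E^{v,n}_{p_{-i}} = DF^{-i}(E^v_{p_0})$ in place of the critical-orbit direction $\hE^v$. The case $i = 0$ is vacuous, so fix $1 \leq i < R_n$. Since $n \geq \hN \geq N_1$, \thmref{pinching} gives $p_0 \in \Lambda^n_0 \subset \cT_{c_0}$, so $E^v_{p_0}$ is the genuine vertical direction in the chart $\Phi_{c_0}$ of \thmref{unif reg crit}; by $R_n$-periodicity of $\cD^n$ the point $p_{-i}$ lies in the same renormalization domain $\cD^n_{R_n - i}$ as $c_{-i}$, hence is close to it. When $p_0 = c_0$, vertical directions being $F$-invariant in the backward-orbit linearization of $c_0$ gives $E^{v,n}_{c_{-i}} = \hE^v_{c_{-i}}$, and the assertion is then exactly \thmref{ref uni}~a) (note that for $c_{-i}$ the condition $\cd^i(c_{-i}) = \varnothing$ is equivalent to $\cd(c_{-i}) = \varnothing$, since $c_0$ lies in no critical tunnel by \propref{tunnel fit} and \propref{no crit in crit nbh}). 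For general $p_0 \in \Lambda^n_0$ I would transfer this to $p_{-i}$ using \propref{loc linear} (the dynamics is nearly linear on the common regular neighborhood of $c_{-i}$ and $p_{-i}$) and \propref{vert angle shrink} (to see that the perturbed direction $E^{v,n}_{p_{-i}}$ remains sufficiently aligned).

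The mechanism is the decomposition of $p_{-i}, \dots, p_0$ into maximal segments according to visits to critical tunnels. On a segment spent inside a tunnel chain $\cT_{c_{-m}}, \cT_{c_{-m-1}}, \dots$, the direction $E^{v,n}$ is the genuine vertical direction of the ambient regular chart --- vertical directions are $F$-invariant, $E^v_{p_0}$ is genuine vertical in $\cU_{c_0}$, and the forward orbit from any point of the chain back to $p_0$ stays inside the nested $\cU_{c_\bullet}$'s by \lemref{pinch neigh fit} and \propref{tunnel fit} --- so \thmref{reg chart}~ii) and \propref{loc linear} show it contracts forward at rate $\lambda^{1 \pm \bepsilon} \leq \lambda^{1-\delta}$, contributing $(O(1),\delta)_v$-regularity over the segment. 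At a tunnel-escape moment of the current base point, \propref{tunnel escape}~ii)--iii) together with \propref{vert angle shrink} show $E^{v,n}$ has realigned to be sufficiently vertical on exit, hence sufficiently stable-aligned after the subsequent valuable crescent by \propref{stable center align}; at that stage $E^{v,n}$ is a direction to which \thmref{ref uni}~a), \lemref{cutoff for reg} and \lemref{full tunnel for reg} apply on the off-tunnel portion. The hypothesis $\cd^i(p_{-i}) = \varnothing$ is what makes this consistent: either $p_{-i}$ lies in no critical tunnel (off-tunnel case), or $\cd(p_{-i}) \geq i$, in which case, writing $k := \cd(p_{-i})$, the inclusion $F^i(\cT_{c_{-k}}) \subseteq \cT_{c_{-k+i}}$ forces the whole orbit $p_{-i}, \dots, p_0$ to remain in a single tunnel chain and the first case applies throughout.

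Concatenating the per-segment estimates as in the proof of \thmref{ref uni}~a), the regularity factor is inflated only by the ``direction-aligning'' sub-segments, whose total length is $O(\logl \bft) = O(\logl L)$ by the geometric decay of successive closest-return distances (this is the estimate \eqref{eq:total nonalign time}), yielding the overall factor $O(L)$. For the refined statement: if $p_{-i} \in \cT_{c_1}$ then no aligning segment remains beyond $p_{-i}$ and \lemref{1 tunnel recover} and \propref{for reg rec} supply the sharp constant $1$, exactly as for the ``$p \in \cT_{c_1}(t)$'' clause of \thmref{ref uni}~a). The step I expect to be the main obstacle is the bookkeeping across escapes: because $E^{v,n}_{p_{-i}}$ is \emph{defined} by anchoring the genuine vertical direction at $p_0$, not as a strong-stable direction, one must check with some care --- via \propref{tunnel escape} and \propref{vert angle shrink} --- that it has become sufficiently vertical (and hence stable-aligned past a crescent) before the off-tunnel regularity lemmas can be invoked, and that the angular defects accumulated over all escapes do not degrade the regularity factor beyond $O(L)$.
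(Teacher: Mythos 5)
Your proposal follows essentially the same strategy as the paper's proof: decompose the backward orbit of $p_0$ into segments according to critical-tunnel membership, track the anchored direction $E^{v,n}$ via the chart verticality, the tunnel-escape/realignment lemmas (\propref{tunnel escape}, \propref{stable center align}, \propref{vert angle shrink}), and the $\hE^v$-regularity of \thmref{ref uni}~a), then concatenate with the bounded total-realignment estimate to get the factor $O(L)$; the paper organizes the same material as a backward induction over the return moments $m, m-l, m+j, m_1, \ldots$ (with the deeper case $\cd(p_0) \neq 0$ reduced to the shallow case by approximating $p_{-i}$ with a shifted critical-orbit point $q_{-i} := c_{-n_{k_i}-i}$ and invoking \propref{loc linear}), which is just a more explicit packaging of your segment decomposition. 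The obstacle you flag --- that after the first escape $E^{v,n}$ is no longer literally the ambient vertical direction and must be controlled through the angle comparison to $\hE^v$ --- is exactly the step the paper handles with the bound $\measuredangle(E^{v,n}_{p_{-m-j}}, \hE^v_{p_{-m-j}}) = O(\lambda^{(1-\bdelta)(m+j)})$, so your identification of the crux is correct.
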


\begin{proof}
First, suppose that
$
\cd(p_0) = 0.
$
Then for all $0 \leq i < R_n$, we have
$
\cd^i(p_{-i}) = \cd(p_{-i}).
$

Let $0< m<R_n$ be an integer such that
$
p_{-m} \in \cT_{c_0}
$ and $
\cd^m(p_{-m}) = 0.
$
Let $-j <0$ and $l > 1$ be the tunnel and crescent escape moments for $p_{-m}$ and $p_{-m+1}$ respectively. Then by \propref{tunnel inv}, we have
$$
\cd^{m-l}(p_{-m+l}) = \cd^{m+j}(p_{-m-j}) = \varnothing.
$$

Suppose the statement holds for $p_{-m+l}$, so that $p_{-m+l}$ is $(m-l)$-times forward $(O(L), \delta)_v$-regular along $E^{v,n}_{p_{-m+l}}$. Then by \lemref{1 tunnel recover}, \propref{stable center align} and \propref{tunnel escape}, we see that $p_{-m-j}$ is $(m+j)$-times forward $(O(1), \delta)_v$-regular along $E^{v,n}_{p_{-m-j}}$. Moreover, by \thmref{ref uni} and \propref{tunnel escape}, $p_{-m-j}$ is forward $(O(1), \delta)_v$-regular along $\hE^v_{p_{-m-j}}$. Thus, by \propref{vert angle shrink}, we see that
$$
\measuredangle(E^{v,n}_{p_{-m-j}}, \hE^v_{p_{-m-j}}) < O(\lambda^{(1-\bdelta) (m+j)}) \ll 1.
$$

Let $m_1 < R_n$ be the smallest integer such that $m_1 > m+j$ and $p_{-m_1} \in \cT_{c_0}$. This means that for $m+j < i < m_1$, we have $\cd^i(p_{-i}) = \varnothing$. Consequently, by \thmref{ref uni}, $p_{-i}$ is $i$-times forward $(L, \delta)_v$-regular along $\hE^v_{p_{-i}}$. Thus, by \propref{loc linear}, $p_{-i}$ is $i$-times forward $(O(L), \delta)_v$-regular along $E^{v,n}_{p_{-i}}$. Proceeding by induction, the claim follows.

If
$
\cd(p_0) \neq 0,
$
then let
$
0 < n_1 < \ldots < n_K \leq \infty
$
be the increasing sequence of all integers such that $p_0 \in \cT_{c_{-n_k}}$ for $k \geq 1$. Furthermore, let $-(l_k+n_k) < 0$ be the tunnel escape moment of $c_{-n_{k+1}+n_k}$. For $0 \leq i < R_n$, let $q_{-i} := c_{-n_{k_i}-i}$, where $k_i$ is the smallest integer such that $l_{k_i} > i$. This means that
$$
p_{-i} \in \cT_{c_{-n_{k_i}-i}}
\matsp{and}
\cd^i(c_{-n_{k_i}-i}) = \cd(c_{-n_{k_i}-i}).
$$
If $K < \infty$, let $q_{-i} := p_{-i}$ for $i \geq l_K$.

Observe that for $1 \leq k \leq K$, we have
$$
\cd(c_{-n_k-l_k}) = \cd(c_{-n_{k+1}-l_k}) =\varnothing.
$$
Hence, $c_{-n_k-l_k}$ is forward $(O(1), \bepsilon)_v$-regular along $\hE^v_{c_{-n_k-l_k}}$, and by \propref{tunnel escape}, $c_{-n_{k+1}-l_k}$ is forward $(O(1), \delta)_v$-regular along $\hE^v_{c_{-n_{k+1}-l_k}}$. Moreover, if $c_{-n_k-l_{k-1}}$ is $l_{k-1}$-times forward $(O(1), \delta)_v$-regular along $E^{v,n}_{c_{-n_k-l_{k-1}}}$, then by \propref{vert angle shrink} and \ref{loc linear}, it follows that $c_{-n_{k+1}-l_k}$ is $l_k$-times forward $(O(1), \delta)_v$-regular along $E^{v,n}_{c_{-n_{k+1}-l_k}}$.

The result now follows by replacing $p_{-i}$ in the argument for the previous case (where $\cd(p_0) = \varnothing$) with $q_{-i}$.
\end{proof}

\begin{lem}\label{entry back reg lem}
For $p_1 \in \Lambda^n_1$, let
$
z_1 := \Phi_{c_1}(p_1)
$ and $
E^h_{p_1} := D\Phi_{c_1}^{-1}(E^{gh}_{z_1}).
$
For $1 \leq i \leq R_n$ such that $\vd^i(p_i) = \varnothing$, the point $p_i$ is $i$-times backward $(O(L), \delta)_h$-regular along
$$
E^{h,n}_{p_i} := DF^{i-1}(E^h_{p_1}).
$$
If, additionally, we have $p_i \in \cT_{c_0}$, then $p_i$ is $i$-times backward $(1, \delta)_h$-regular along $E^{h,n}_{p_i}$.
\end{lem}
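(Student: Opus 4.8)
The plan is to establish \lemref{entry back reg lem} as the exact backward/horizontal/valuable-crescent dual of \lemref{entry for reg lem}, carrying over that proof verbatim under the dictionary: forward $\leftrightarrow$ backward, vertical direction (and $E^v$, $E^{v,n}$, $\hE^v$, strong-stable manifold) $\leftrightarrow$ horizontal direction (and $E^h$, $E^{h,n}$, $\hE^h$, center manifold), critical tunnel $\cT_{c_{-n}}$ $\leftrightarrow$ valuable crescent $\cT_{c_{1+n}}$, critical point $c_0$ $\leftrightarrow$ critical value $c_1$, critical depth $\cd$ $\leftrightarrow$ valuable depth $\vd$, critical piece $\Lambda^n_0$ $\leftrightarrow$ valuable piece $\Lambda^n_1$, tunnel escape moment $\leftrightarrow$ crescent escape moment. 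In particular the step in the proof of \lemref{entry for reg lem} that applies $F^{-1}$ near the critical value becomes, here, the step that applies $F$ near the critical point (the genuinely H\'enon-like transition), which is controlled by \propref{crescent escape} in place of \propref{tunnel escape}; likewise \propref{vert angle shrink} is replaced by \propref{hor angle shrink}, \thmref{ref uni} a) by \thmref{ref uni} b), and \lemref{1 tunnel recover} by its valuable-crescent counterpart, while \propref{stable center align} and \propref{loc linear} are used in the same (essentially self-dual) way.

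More precisely, I would first treat the case $\vd(p_1)=1$, so that $\vd^i(p_i)=\vd(p_i)$ for all $1\le i\le R_n$, and induct on the (finitely many) returns of the forward orbit of $p_1$ to $\cT_{c_1}$, moving forward in the index $i$. Let $0<m<R_n$ be an index with $p_m\in\cT_{c_1}$, let $l>1$ be the crescent escape moment of $p_m$ and $-j<0$ the tunnel escape moment of $p_{m-1}$; by \propref{tunnel inv} one then has $\vd^{m+l-1}(p_{m+l-1})=\vd^{m-1-j}(p_{m-1-j})=\varnothing$. Assuming inductively that $p_{m-1-j}$ is backward $(O(L),\delta)_h$-regular (for the relevant number of steps) along the transported direction $E^{h,n}$, the second half of \thmref{ref uni} b) together with \propref{stable center align} shows that along the tunnel stretch near $c_0$ the direction $E^{h,n}$ is sufficiently center-aligned in $\cU_{c_0}$; then \propref{crescent escape} (together with the valuable-crescent analog of \lemref{1 tunnel recover}) propagates backward regularity through the H\'enon-like transition $p_{m-1}\rightsquigarrow p_m$ and up to $p_{m+l-1}$, the discrepancy between $E^{h,n}$ and $\hE^h$ being controlled by \propref{hor angle shrink} and \propref{loc linear}. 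Between two consecutive returns to $\cT_{c_1}$ one has $\vd^i(p_i)=\varnothing$, hence $p_i$ is backward $(L,\delta)_h$-regular along $\hE^h_{p_i}$ by \thmref{ref uni} b), and \propref{loc linear} transfers this to $E^{h,n}_{p_i}$ at the cost of a uniform factor (so the constant stays $O(L)$; on $\cT_{c_0}$ one keeps the factor $1$ by the second half of \thmref{ref uni} b)). Concatenating these estimates over the returns up to index $i$ gives the claim, including the improved constant when $p_i\in\cT_{c_0}$.

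For the case $\vd(p_1)\ne1$ I would, exactly as in the proof of \lemref{entry for reg lem}, introduce the increasing sequence $1<n_1<n_2<\cdots\le\infty$ of indices with $p_1\in\cT_{c_{1+n_k}}$, together with the crescent escape moments of the critical-orbit points $c_{1+n_k}$, and reduce to the previous case by running the argument along the orbit $\{c_{1+n_k}\}$: each $c_{1+n_k}$ is backward $(O(1),\delta)_h$-regular along $\hE^h$ once it leaves its crescent nest (\thmref{ref uni} b)), \propref{crescent escape} carries this regularity across the successive crescent escapes, and \propref{hor angle shrink} with \propref{loc linear} propagates the corresponding $E^{h,n}$-regularity with uniformly bounded constants; replacing $p_i$ by the nearby critical-orbit point $q_i$ then yields the statement as in the case $\vd(p_1)=1$.

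The main obstacle is the same bookkeeping that makes the proof of \lemref{entry for reg lem} delicate, now on the ``valuable'' side: one must verify that the transported direction $E^{h,n}_{p_i}=DF^{i-1}(E^h_{p_1})$ is genuinely center-aligned in $\cU_{c_0}$ every time the forward orbit re-enters a neighborhood of the critical point (so that \propref{crescent escape} applies to the ensuing H\'enon-like transition), and that the regularity constants accumulated over the returns to $\cT_{c_1}$ stay $O(L)$ and collapse to $1$ precisely on $\cT_{c_0}$. This forces the alignment to be recovered with a definite gain at each return, so the second halves of \thmref{ref uni} b) and \propref{stable center align} and the exponential angle contraction of \propref{hor angle shrink} must be invoked in the correct order; a secondary point is simply to record the valuable-crescent analog of \lemref{1 tunnel recover}, which is not stated above but follows by the same argument with \thmref{ref uni} b) in place of \thmref{ref uni} a).
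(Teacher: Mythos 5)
The paper gives no proof of \lemref{entry back reg lem}; it is presented as the time-reversed counterpart of \lemref{entry for reg lem}, and your dictionary (forward$\leftrightarrow$backward, $E^{v,n}\leftrightarrow E^{h,n}$, $\cd\leftrightarrow\vd$, critical tunnel$\leftrightarrow$valuable crescent, \thmref{ref uni} a)$\leftrightarrow$b), \propref{tunnel escape}$\leftrightarrow$\propref{crescent escape}, \propref{vert angle shrink}$\leftrightarrow$\propref{hor angle shrink}) is exactly the intended reduction. Your two-case decomposition by $\vd(p_1)$ mirrors the decomposition by $\cd(p_0)$ in the paper's proof of \lemref{entry for reg lem}, and you correctly flag the one genuine asymmetry --- that on the backward side regularity and center-alignment are only recovered after full traversal of the tunnel, exactly as in the contrast between parts a) and b) of \thmref{ref uni} --- which your invocation of \propref{crescent escape} together with the valuable-crescent analogue of \lemref{1 tunnel recover} (whose proof transposes with \propref{hor angle shrink} in place of \propref{vert angle shrink}) correctly absorbs.
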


Let $p \in \Lambda^P_{L, \delta}$. Recall that an encased regular box $\bcB_p$ at $p$ is nearly a parallelogram, whose horizontal width and vertical height are commensurate with $1/L_1$ and $1/L_2$ respectively, where
$
\bL < L_1 < \underline{L_2}.
$
Let $\cB_p$ be a (non-encased) regular box at at $p$ whose horizontal width and vertical height are commensurate with $1/L_1'$ and $1/L_2'$ respectively, where
$
\bL < L_1' < \underline{L_1}
$ and $
\overline{L_1'} < L_2' < \underline{L_2}.
$
If $-j < -\hM_-$ is a Pliss moment in the backward orbit of $c_0$, let $\cB_{c_{- j}}$ be a regular box at $c_{-j}$ whose horizontal width and vertical height are uniformly bounded below.

Let $Q \in \cC_0$. We define a larger covering element $\hQ \supset Q$ as follows. If $Q = \bcB_p$ for some $p \in \cP$, let $\hQ := \cB_p$. Otherwise, we have $Q = \cT_{c_m}$ for some $-\hM_- \leq m \leq \hM_+$. In this case, let $\hQ := \hcT_{c_m}$, where $\hcT_{c_m}$ is a thickened tunnel/crescent at $c_m$ defined in \eqref{eq:thick tunnel}/\eqref{eq:thick crescent}.

For $p_0 \in \Lambda^n_0$, let
$
(x_0, y_0) := \Phi_{c_0}(p_0)
$ and
$$
W^v(p_0) := \Phi_{c_0}^{-1}(\bbL^v_{x_0} \cap U_{c_0}).
$$
For $0 \leq i < R_n$, define
$$
W^{v,n}(p_{-i}) := F^{-i}(W^v(p_0)).
$$

\begin{lem}\label{trans crit curv}
Let $p_0 \in \cT_{c_0}$. Let $-j <0$ and $i > 1$ be the tunnel and crescent escape time for $p_0$ and $p_1$ respectively. Let
$$
\{\hPhi_{c_{-j+m}} : \hcU_{c_{-j+m}} \to \hU_{c_{-j+m}}\}_{m = -\infty}^j
$$
be a linearization of the $(-\infty, j)$-orbit of $c_{-j}$ with vertical direction $\hE^v_{c_{-j}}$. Similarly, let
$$
\{\chPhi_{c_{i+m}} : \chcU_{c_{i+m}} \to \chU_{c_{i+m}}\}_{m = -i+1}^\infty
$$
be a linearization of the $(-i+1, \infty)$-orbit of $c_i$ with vertical direction $\hE^v_{c_i}$. Denote the connected components of $\hcU_{c_{-j}}^{\bepsilon j} \setminus \hcT_{c_{-j}}$ that are disjoint from $\hW^v(c_{-j})$ by $\hcU^-$ and $\hcU^+$. Then the following statements hold.
\begin{enumerate}[i)]
\item Let $\Gamma_0^v$ be $C^1$-curve so that $\Gamma_i^v$ is sufficiently vertical in $\chcU_{c_i}$ and the distances between $p_i$ and the endpoints of $\Gamma_i^v$ are at least $1/\overline{L_2}$. Then $\Gamma_{-j}^v$ is sufficiently vertical and proper in $\cB_{c_{-j}}$.
\item We have
$$
F^j(\hcU^- \cup \hcT_{c_{-j}}\cup\hcU^+) \subset \hcT_{c_0}.
$$
\item Let $\Gamma^h_{-j} \ni p_{-j}$ be a sufficiently horizontal curve in $\hcU_{c_{-j}}$. Then
$$
\Gamma_i^\pm := F^{j+i}(\Gamma_{-j}^h \cap \hcU^\pm)
$$
is a sufficiently horizontal curve in $\chcU_{c_i}$.
\end{enumerate}
\end{lem}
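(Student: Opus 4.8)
\emph{The plan.} All three statements track a curve (or a region) as its orbit runs through the block $c_{-j}\to\dots\to c_0\to c_1\to\dots\to c_i$, which splits into a tunnel regime $c_{-j}\to c_0$, the critical transition $c_0\to c_1$, and a crescent regime $c_1\to c_i$. Since $-j$ and $i$ are tunnel/crescent escape moments, they are built from Pliss moments, so \propref{tunnel escape}, \propref{crescent escape}, \propref{crit vert reg} and \thmref{ref uni} give that $c_{-j}$ is $j$-times forward $(O(1),\delta)_v$-regular along $\hE^v_{c_{-j}}$ while $c_i$ is $i$-times backward $(O(1),\delta)_h$-regular along its designated direction, with $\radius(\hcU_{c_{-j}})\asymp\radius(\chcU_{c_i})\asymp 1$; moreover \lemref{trunc neigh fit} (applied to the $\bepsilon j$-truncated $\hcU_{c_{-j}}^{\bepsilon j}$) and \lemref{pinch neigh fit} ensure that the relevant truncated/pinched sets stay inside regular neighbourhoods throughout the $j$ (resp.\ $i$) iterates. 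The one step that is not linearizable is the critical transition, handled by the normal form of \thmref{unif reg crit}: $F_{c_0}(x,y)=(x^2-\lambda y,x)$ carries a genuine horizontal segment to a quadratic vertical curve and $F_{c_0}^{-1}$ carries a genuine vertical segment to a quadratic horizontal curve, in both cases with $O(1)$ curvature.

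\emph{Part ii).} I would prove this first, since iii) uses it. The inclusion $F^j(\hcT_{c_{-j}})\subset\hcT_{c_0}$ is immediate from \eqref{eq:thick tunnel} and \eqref{eq:tunnel a}: $\hcT_{c_{-j}}\subset F^{-j}(\hT_{c_0}(1))$, so $F^j$ of it lies in $\hT_{c_0}(1)\subset\hcT_{c_0}$. For $\hcU^{\pm}$ I would pass to the linearizing chart $\hPhi_{c_{-j}}$: by \propref{loc linear} the return $F^j|_{\hcU_{c_{-j}}^{\bepsilon j}}$ is $C^1$-close to a diagonal linear map $A^j$, in these coordinates $\hcT_{c_{-j}}$ is a $C^1$-perturbation of $A^{-j}(\hT_{c_0})$, and $\hcU^{\pm}$ are exactly the two components of $\hcU_{c_{-j}}^{\bepsilon j}$ that $\hcT_{c_{-j}}$ separates from the pinched component carrying $\hW^v(c_{-j})$. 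One then checks $A^j(\hcU^{\pm})\subset\hT_{c_0}$ by a coordinate estimate; this uses precisely the pinching/truncation exponent inequality behind \lemref{pinch neigh fit} (with the pinching exponent coming from $\hdelta=(1+\bdelta)\delta$), and the $C^1$-error is absorbed by passing from $\hT_{c_0}$ to the larger $\hcT_{c_0}$.

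\emph{Parts i) and iii).} These are dual. For i): $\Gamma_i^v$ is sufficiently vertical in $\chcU_{c_i}$ with endpoints a definite distance from $p_i$; running it backward along the crescent boxes via \propref{pres ver hor curv} ii) (using the backward $h$-regularity of $c_i$) keeps it sufficiently vertical in $\cU_{c_1}$ and of length $\gtrsim\bL^{-1}$, and combining with \thmref{ref uni} a), \propref{stable center align} and \propref{vert angle shrink} upgrades its direction at $p_1$ to sufficiently stable-aligned in $\cU_{c_1}$. Then $F^{-1}$ turns $\Gamma_1^v$ into a quadratic horizontal curve $\Gamma_0^v\subset\cT_{c_0}$ near $c_0$, and pulling $\Gamma_0^v$ back $j$ steps along the backward critical orbit — where $\hE^v$ is the backward-expanded direction by the forward $v$-regularity of $c_{-j}$, with expansion at least $\lambda^{-(1-\delta)j}$ against horizontal growth at most $\lambda^{-\bepsilon j}$, and $j>(1+\bdelta)\delta^{-1}\logl|x_0|$ by \propref{tunnel escape} — straightens even the (originally horizontal) vertex arc to slope $\gg 1$, giving a sufficiently vertical curve; properness in $\cB_{c_{-j}}$ then follows since its length exceeds $\diam(\cB_{c_{-j}})$ (cf.\ \propref{proper in box}). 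Part iii) runs the same loop forward: $\Gamma_{-j}^h\cap\hcU^{\pm}$ is sufficiently horizontal at $c_{-j}$, stays so with bounded curvature up to $c_0$ by \propref{pres ver hor curv} i) and lands in $\hcT_{c_0}$ by ii); $F$ turns it into a quadratic vertical curve in $\cT_{c_1}$; and pushing forward along the crescent, where the vertical direction contracts, \propref{hor curv conv} (with $i$ large by the crescent-escape estimate of \propref{crescent escape}) makes $\Gamma_i^{\pm}$ sufficiently horizontal with bounded curvature in $\chcU_{c_i}$.

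\emph{Main obstacle.} The crux is the critical transition: applying $F_{c_0}^{\pm1}$ converts a sharply vertical/horizontal curve into a parabola with an $O(1)$-curvature vertex, so on the far side one only regains the sharp thresholds ``sufficiently vertical/horizontal'' after the escape time $j$ (resp.\ $i$) has provided enough expansion to overwhelm the vertex arc; quantifying this, and checking that the enlarged escape times still keep the whole curve inside the regular and pinched neighbourhoods of the intermediate points, is the delicate part. A secondary nuisance is in ii): since $\hcU^{\pm}$ are themselves carved out by removing a pinched set, the inclusion $F^j(\hcU^{\pm})\subset\hcT_{c_0}$ must be verified against two nested pinching scales simultaneously under $A^j$.
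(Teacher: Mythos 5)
Your proposal takes essentially the same route as the paper's (terse) proof: for i), pull $\Gamma_i^v$ back to $c_1$ to obtain $|x_1|^{(1+\bdelta)/\delta}$-verticality and hence sufficient stable-alignment, cross the tunnel via \propref{tunnel escape}, and deduce properness in $\cB_{c_{-j}}$ from the vertical expansion estimate $|v_{-j}|>\lambda^{-(1-\bepsilon)j}|x_0|^{1/\hdelta}>1$; part ii) is exactly the linearizing-chart computation (\thmref{reg chart}, \propref{loc linear}) you sketch; part iii) is \propref{stable center align} followed by \propref{crescent escape}. One caution on iii): \propref{hor curv conv} is not the right instrument across the crescent, since the curve $\Gamma_1^\pm\subset\cT_{c_1}$ produced by the H\'enon transition is a steep, vertical-ish arc (slope $\asymp|x_0|$), so the relevant $t$ in that proposition is $\gg 1$ and its $(1+t^{r+1})$ factor is not obviously dominated by $\lambda^{(1-\bepsilon)(i-1)}$; the paper instead uses \propref{crescent escape}~ii) (which you also invoke), converting center-alignment at $c_0$ directly into sufficient horizontality at all moments $n>(1+\bepsilon)\logl|x_1|$, hence at the crescent escape moment $n=i$.
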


\begin{proof}
Write
$
(x_1, y_1) := \Phi_{c_1}(p_1).
$
Note that
$$
i > (1+\bdelta)\delta^{-1}\logl |x_1|.
$$
By \thmref{reg chart} and \propref{loc linear}, we see that $\Gamma_1^v$ is $|x_1|^{(1+\bdelta)/\delta}$-vertical in $\cU_{c_1}$. Furthermore,
$$
\diam(\Gamma_1^v) > |x_1|^{-(1+\bdelta)/\delta}/\overline{L_2} > 1
\matsp{and}
\dist(\Gamma_1^v) \asymp |x_1|.
$$
This implies that $\Gamma_1^v$ is sufficiently stable-aligned in $\cU_{c_1}$, and vertically proper in $\hcT_{c_1}$.

Let $q_0 \in \Gamma_0^v \cap \bpartial \hcT_{c_0}$, and write
$
(x_0, y_0) := \Phi_{c_0}(p_0)
$ and $
(u_0, v_0) := \Phi_{c_0}(q_0).
$
Note that
$$
j > (1+\bdelta)\delta^{-1}\logl |x_0|.
$$
By \thmref{reg chart} and \propref{loc linear}, we have
$$
|v_{-j}| > \lambda^{-(1-\bepsilon)j}|v_0| > \lambda^{-(1-\bepsilon)j}|x_0|^{1/\hdelta} > 1.
$$
Additionally, \propref{tunnel escape} implies that $\Gamma_{-j}$ is sufficiently vertical in $\hcU_{c_{-j}}$. The first claim follows.

The second claim follows from \thmref{reg chart} and \propref{loc linear}.

The third claim follows from \propref{stable center align} and \propref{crescent escape}.
\end{proof}

\begin{prop}\label{proper ver curv}
Let $p_0 \in \Lambda^n_0$. For $\hM_-\leq i < R_n$, if
$
p_{-i} \in Q \in \cC_0
$ and $
\cd^i(p_{-i}) = \varnothing,
$
then $W^{v,n}(p_{-i})$ is sufficiently vertical and vertically proper in $\hQ$. If, additionally, $Q = \cT_{c_1}$, then $W^{v,n}(p_{-i})$ is sufficiently stable-aligned in $\cU_{c_1}$.
\end{prop}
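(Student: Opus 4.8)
The plan is to propagate the genuine vertical segment $W^v(p_0)=\Phi_{c_0}^{-1}(\bbL^v_{x_0}\cap U_{c_0})$ backwards along $p_0,p_{-1},\dots,p_{-i}$ and to check that it reaches $p_{-i}$ as a sufficiently vertical, vertically proper curve in $\hQ$. First note that $W^v(p_0)$ is a straight vertical segment whose length is commensurate with the uniform constant $l_{c_0}$ and whose $C^r$-norm is $O(1)$, and that $W^{v,n}(p_{-i})=F^{-i}(W^v(p_0))$ is tangent at $p_{-i}$ to $E^{v,n}_{p_{-i}}=DF^{-i}(E^v_{p_0})$. Since $\cd^i(p_{-i})=\varnothing$, \lemref{entry for reg lem} shows that $p_{-i}$ is $i$-times forward $(O(L),\delta)_v$-regular along $E^{v,n}_{p_{-i}}$, and $(1,\delta)_v$-regular along it when $p_{-i}\in\cT_{c_1}$. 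Because $i\geq\hM_-$ may be taken as large as any prescribed uniform constant requires, \propref{vert angle shrink} converts this into $\measuredangle(E^{v,n}_{p_{-i}},E^v_{p_{-i}})<1/\bLp$, i.e. $E^{v,n}_{p_{-i}}$ is sufficiently vertical; in the case $Q=\cT_{c_1}$, \propref{stable center align} upgrades this to sufficient stable-alignment of $E^{v,n}_{p_{-i}}$ in $\cU_{c_1}$. This settles the infinitesimal part of the statement.

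For the curve-level claim I would argue exactly as in the proof of \lemref{entry for reg lem}, but tracking the curve $W^{v,n}(p_{-k})$ in place of the direction $E^{v,n}_{p_{-k}}$ for $k=0,1,\dots,i$. On stretches of the backward orbit that meet only regular boxes of $\cC_0$, iterating \propref{pres ver hor curv} (ii) propagates sufficient verticality, the uniform $C^r$-bound $\|\cdot\|_{C^r}<\bL$, and (through \eqref{eq:no stray}) a length lower bound; after restriction to the slightly enlarged box $\hQ$ this keeps the transported arc vertically proper. At each return of the backward orbit to $\cT_{c_0}$ the passage is governed by \lemref{trans crit curv}: from the sufficiently stable-aligned curve at the preceding crescent point in $\cT_{c_1}$, part (i) produces a sufficiently vertical curve proper in the box $\cB_{c_{-j}}$ at the tunnel escape moment $-j$, while parts (ii)--(iii) keep the side pieces $\hcU^\pm$ and the reopened horizontal curve through the critical point inside $\hcT_{c_0}$, so that no portion of the transported curve escapes $\hcT_{c_0}$ during the tunnel passage. \lemref{proper at value} supplies properness whenever the relevant endpoint covering element is $\cT_{c_1}$, and hence, by pull-back, any crescent $\cT_{c_m}$ with $m\geq 1$. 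Concatenating these box- and tunnel-transitions and invoking the endpoint alignment from the previous paragraph yields the claim whenever $\cd(p_{-i})=\varnothing$.

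The remaining sub-case, in which $\cd^i(p_{-i})=\varnothing$ only because $p_{-i}$ lies in a critical tunnel $\cT_{c_{-d}}$ with $d=\cd(p_{-i})\geq i$, is handled as in the second half of the proof of \lemref{entry for reg lem}: one replaces $p_{-i}$ by the critical-orbit point $c_{-m}$ of the relevant innermost tunnel through $p_{-i}$, chosen so that $\cd(c_{-m})=\varnothing$; notes that $\dist(p_{-i},c_{-m})$ is controlled by a large power of a uniformly small quantity, by \lemref{return dist lem} and \propref{tunnel size}; observes that $W^v_{c_{-m}}(p_{-i})$ is vertically proper in $\hQ$ by \propref{vertical proper}; and transfers both the verticality and the properness to the nearby curve $W^{v,n}(p_{-i})$ via \propref{loc linear}.

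I expect the main obstacle to be the properness bookkeeping across the non-box covering elements: one must verify that the $\delta'$-truncation built into the definitions of the tunnels and crescents is compatible with the notion of ``vertically proper'' relative to the outer boundaries $\bpartial\cT$, and that the length amplification produced by the strongly expanding backward dynamics is fully absorbed by the thickening $\hQ\supset Q$ without the curve straying out of $\hcU^\pm$ before the critical transition. This is precisely what \lemref{trans crit curv} (i)--(ii) is built to control, but it must be invoked at each return with the linearizations of the $(-\infty,j)$-orbit of $c_{-j}$ along $\hE^v_{c_{-j}}$ and of the $(-i+1,\infty)$-orbit of $c_i$ along $\hE^v_{c_i}$, and keeping all constants uniform through the induction is the delicate point.
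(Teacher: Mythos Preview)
Your proposal is essentially correct and follows the same inductive scheme as the paper: decompose the backward orbit at the moments $m_k$ where $p_{-m_k}\in\cT_{c_0}$ with $\cd^{m_k}(p_{-m_k})=0$, use \lemref{trans crit curv} to pass through each critical tunnel, and use \propref{pres ver hor curv}~(ii) on the regular-box stretches in between. The paper's proof is considerably terser: it invokes only part~(i) of \lemref{trans crit curv} (which takes a sufficiently vertical curve of definite length at the crescent escape moment directly to a sufficiently vertical, proper curve at the tunnel escape moment) together with \propref{consist ver hor}~(i), and then closes with \propref{pres ver hor curv}~(ii).

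Your invocation of parts~(ii)--(iii) of \lemref{trans crit curv} is misplaced here: those clauses concern the containment $F^j(\hcU^\pm\cup\hcT_{c_{-j}})\subset\hcT_{c_0}$ and the fate of \emph{horizontal} curves, and they are needed for \propref{v fit} (which tracks an open neighborhood $\ticV^n$), not for transporting a single vertical curve. For the present proposition, part~(i) alone suffices: it jumps straight from $\Gamma^v_{-l_k}$ to $\Gamma^v_{-j_k}$ without needing to control what the curve does at intermediate moments inside the tunnel. Your separate treatment of the infinitesimal (direction) part via \lemref{entry for reg lem} and \propref{vert angle shrink}, and of the ``remaining sub-case'' $\cd(p_{-i})\geq i$, is not wrong but is not needed either once the curve-level induction is set up as in the paper.
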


\begin{proof}
Denote
$
\Gamma_{-i}^v := W^{v,n}(p_{-i}).
$
Let
$
0 =: m_0 < m_1 < \ldots < m_K < R_n
$
be an increasing sequence of integers such that
$
p_{-m_k} \in \cT_{c_0}
$ and $
\cd^{m_k}(p_{-m_k}) = 0.
$
Let $(m_k-1-l_k) > 1$ and $-(j_k-m_k) < 0$ be the crescent and tunnel escape time for $p_{-m_k+1}$ and $p_{-m)l}$ respectively. By \propref{tunnel inv}, we see that
$$
\cd^l(p_{-l_k}) = \cd^j(p_{-j_k}) = \varnothing
\matsp{and}
j_{k-1} < l_k < m_k < j_k < l_{k-1}.
$$

If
$$
\diam(\Gamma_{-l_k}^v) > 1/\overline{L_2},
$$
then by \lemref{trans crit curv} i) and \propref{consist ver hor} i), $\Gamma_{-j_k}^v$ is sufficiently vertical and proper in any regular box $\hQ$ for which we have $p_{-j_k} \in Q$.

The result now follows from \propref{pres ver hor curv} ii).
\end{proof}

Let $p_1^-, p_1^+ \in \Lambda^n_1$. By \propref{proper ver curv}, $W^{v,n}(p_1^\pm)$ is vertical proper in $\cT_{c_1}$ and sufficiently stable-aligned in $\cU_{c_1}$. Let $\cV^n_1(p_1^-, p_1^+)$ be the connected component of $\cU_{c_1} \setminus (W^{v,n}(p_1^-) \cup W^{v,n}(p_1^+))$ that contain $p_1^-$ and $p_1^+$ in its boundary. For $1\leq i\leq R_n$, denote
$$
\cV^n_i(p_i^-, p_i^+) := F^{i-1}(\cV^n_1(p_1^-, p_1^+)).
$$

\begin{prop}\label{v fit}
There exist a uniform constant $\hK \in \bbN$, and a set
$
\{q^0_1, \ldots, q^{\hK}_1\} \subset \Lambda^n_1
$
such that
$$
c_1 =: q^0_1 <_{\cT_{c_1}} \ldots <_{\cT_{c_1}} q^{\hK}_1 := c_{1+R_n},
$$
and for all $0 \leq k < \hK$ and $\hM_+ \leq i \leq R_n$, we have
$
\cV^n_i(q^k_i, q^{k+1}_i) \subset \hQ
$
for some $Q \in\cC_0$.
\end{prop}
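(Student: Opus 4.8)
The plan is to first build the partition $\{q^k_1\}$ of $\Lambda^n_1$ out of the finite unicritical subcover $\cC_0$ and the finite family $\cZ$ of non-trivial long components, and then to push the strips $\cV^n_i(q^k_i,q^{k+1}_i)$ forward in $i$ and verify inductively that each remains trapped in a thickening $\hQ$ of a covering element. Throughout I will use the total orders $<_{\Lambda^n_i}$ of \thmref{order on piece} together with the fact there stated that $F$ carries $<_{\Lambda^n_i}$ to $<_{\Lambda^n_{i+1}}$ for $1\le i<R_n$; the identification (\propref{end points}) of the extremal points of $\Lambda^n_1$ as $c_1$ and $c_{1+R_n}$, so that the extremal points of $\Lambda^n_i$ are $c_i$ and $c_{R_n+i}$; and combinatorial connectedness (\thmref{combin connect}).

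\textbf{Step 1 (the partition).} For each $i$ with $\hM_+\le i\le R_n$, apply \propref{deep depth}: either $\Lambda^n_i\subset Q$ for a single $Q\in\cC_0$, or $\Lambda^n_i$ contains a unique long component $Z\in\cZ$ and, by \propref{cover long comp}, is traversed by a chain $Q^{(i)}_1,\dots,Q^{(i)}_{l_i}\in\cC_0$ with consecutive overlaps and $l_i\le|\cC_0|$. Since each $Z\in\cZ$ is connected it lies in a single renormalization domain, so the second case occurs for at most $|\cZ|$ values of $i$. For each such $i$, pick one point of $\Lambda^n_i$ in every nonempty overlap $Q^{(i)}_j\cap Q^{(i)}_{j+1}$; additionally keep the $F^{R_n-1}$-preimage $c_{-R_n+1}\in\Lambda^n_1$ of $c_0\in\Lambda^n_{R_n}$ (which is disjoint from $\Lambda^{n+1}_1$ by \propref{tunnel escape at entry}). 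Pull all chosen points back to $\Lambda^n_1$ by the order-preserving maps $F^{-(i-1)}$ and re-enumerate them, with $c_1$ and $c_{1+R_n}$, in the order $<_{\cT_{c_1}}$ to obtain $c_1=q^0_1<_{\cT_{c_1}}\cdots<_{\cT_{c_1}}q^{\hK}_1=c_{1+R_n}$; the number of chosen points is at most $|\cZ|\cdot|\cC_0|+1$, so $\hK$ is uniform. The $q^k_1$ with $0<k<\hK$ may be taken Pesin $(L,\delta)$-regular. By construction, for every $i$ with $\hM_+\le i\le R_n$ and every $k$ the interval $(q^k_i,q^{k+1}_i)$ in $<_{\Lambda^n_i}$ lies in a single $Q(k,i)\in\cC_0$; and at $i=R_n$ it lies strictly on one side of $c_0$ in $(\cT_{c_0},<_{\cT_{c_0}})$, since $c_{-R_n+1}$ is among the division points and \propref{ren piece at crit} rules out straddling when $Q(k,R_n)$ surrounds $c_0$.

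\textbf{Step 2 (trapping the strips).} Fix $k$ and prove by induction on $i=1,\dots,R_n$ that $\cV^n_i(q^k_i,q^{k+1}_i)\subset\hQ$ with $Q=Q(k,i)$; the stated range $\hM_+\le i\le R_n$ then follows a fortiori. The base case $i=1$: by \propref{proper ver curv} the curves $W^{v,n}(q^k_1)$, $W^{v,n}(q^{k+1}_1)$ are proper and sufficiently stable-aligned in $\cT_{c_1}$, so $\cV^n_1(q^k_1,q^{k+1}_1)$ is a genuine vertical strip in $\cT_{c_1}$. For the inductive step there are two regimes. Along stretches where $Q(k,i)\ne\cT_{c_0}$ and no crescent excursion occurs, I invoke \propref{pres ver hor curv} (the bounding curves stay sufficiently vertical, of controlled length) together with the order-trapping mechanism in the proof of \propref{trap middle}: the region between the two proper vertical curves, bounded by $q^k_i$ and $q^{k+1}_i$ in the order, is carried into $\hQ$. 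When the orbit is forced through the critical tunnel at $c_0$ and the valuable crescent at $c_1$, I instead use \lemref{trans crit curv} and \propref{crescent escape}: the bounding curves, sufficiently stable-aligned on entering $\cT_{c_1}$, remain proper across $\hcT_{c_0}$ and $\hcT_{c_1}$ and regain vertical/horizontal alignment on exiting into a regular box, the key being \propref{tunnel escape at entry} ($\Lambda^{n+1}_{R_n}\cap\cT_{c_{R_n}}=\varnothing$ and its dual), which forces the crescent excursion to finish before the next close approach to $c_0$. At $i=R_n$, Step 1 forbids straddling of $c_0$, so the final strip lies in $\hcT_{c_0}$ or in a regular box. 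The side conditions $\cd^i(\cdot)=\vd^i(\cdot)=\varnothing$ needed at each application of \propref{proper ver curv}, \propref{crescent escape} and \lemref{trans crit curv} are propagated along the induction by \lemref{entry for reg lem} and \lemref{entry back reg lem}.

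\textbf{Expected main obstacle.} The delicate point is the second regime: keeping the strip inside the thickened tunnel $\hcT_{c_0}$ and crescent $\hcT_{c_1}$ during the unavoidable close approaches to the critical point and value, where $F$ folds and $<$ reverses on one side, so that verticality of the two boundary curves is first lost and then recovered along the orbit. Turning this into a clean induction requires carefully combining \lemref{trans crit curv}, \propref{crescent escape}, the one-sidedness facts (\propref{ren piece at crit}, \propref{order value}) and the non-intersection statements of \propref{tunnel escape at entry}. The remaining, lesser subtlety is ensuring $\hK$ is independent of $n$, which rests only on the finiteness of $\cC_0$ and $\cZ$ and on each long component occupying a single renormalization domain.
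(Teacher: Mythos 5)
Your proposal takes essentially the same approach as the paper's proof: build a uniformly bounded partition of $\Lambda^n_1$ from the covering-chain transition points along the long components in $\cZ$, then trap the strips $\cV^n_i(q^k_i,q^{k+1}_i)$ inductively, using \propref{pres ver hor curv} and \eqref{eq:no stray} on the regular-box stretches and \lemref{trans crit curv} across the critical tunnel/crescent excursions. One small misattribution worth flagging: the role you assign to \propref{tunnel escape at entry} (forcing a crescent excursion to finish before the next tunnel entry) is actually played in the paper by \propref{tunnel inv}, while \propref{tunnel escape at entry} is used to ensure $\cd(q^k_1)=\vd(q^k_{R_n})=\varnothing$ for the partition points, and the paper does not add $c_{-R_n+1}$ as an extra partition point.
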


\begin{proof}
Let $Z$ be a non-trivial component in the finite set $\cZ$ given in \propref{cover long comp}, so that
$$
Z([t_{i-1}, t_i]) \subset Q_i \in \cC_0
\matsp{for}
1\leq i \leq l.
$$
Observe that if $\Gamma_1 \ni Z(t_1)$ is a sufficiently vertical proper Jordan arc in $Q_1$, and $\tiQ_1$ is the component of $Q_1 \setminus \Gamma_1$ containing $Z([t_0, t_1))$, then
$$
\Lambda \cap \tiQ_1 = \{p \in \Lambda_{Q_1} \; | \; p \leq_{Q_1} Z(t_1)\}.
$$
Moreover, the analogous property holds at $Z(t_{l-1})$ in $\Lambda_{Q_l}$.

Let
$$
\{z^1, \ldots, z^{\hK-1}\} \subset \bigcup_{Z \in \cZ} Z
$$
be the finite set of points such that each point $z^j$ with $1\leq j < \hK$ is a point analogous to $Z(t_i)$ with $1\leq i < l$ discussed above for some component $\tiZ \in \cZ$. By \thmref{order on piece}, we can define
$
\{q^1_1, \ldots, q^{\hK-1}_1\} \subset \Lambda^n_1
$
as the set of points ordered with respect to $<_{\cT_{c_1}}$ such that for $1\leq j < \hK$, there exist unique $1 \leq k < \hK$ and $1\leq m \leq R_n$ such that
$
z^j = q^k_m.
$
By \propref{tunnel escape at entry}, this means that
$$
\cd(q^k_1) = \vd(q^k_{R_n}) = \varnothing
\matsp{for}
1 \leq k < \hK.
$$

For $0\leq k < \hK$, denote
$$
q_1^- := q_1^k,
\hspace{5mm}
q_1^+ := q_1^{k+1}
\matsp{and}
\ticV^n_1 := \cV^n_1(q_1^-, q_1^+).
$$
By \thmref{order on piece}, we see that for all $1 \leq i < R_n$, there exists $Q^i \in \cC_0$ such that we have $q_i^\pm \in Q^i$. 

Let
$
0 =: m_0 < m_1 < \ldots < m_I := R_n
$
be the increasing sequence of all integers such that for $0 \leq i \leq I$, we have
$
q_{m_i}^-, q_{m_i}^+ \in \cT_{c_0}.
$
Further denote
$$
k_i := m_i - \hM_-
\matsp{and}
l_i := m_i + \hM_+.
$$
Then for $l_i\leq m \leq k_{i+1}$, there exists $p^m \in \cP$ such that
$
q_m^-, q_m^+ \in \bcB_{p^m}.
$

We claim that for all $0\leq i < l$ and $k_{i+1} \leq m \leq l_i$, the following holds. Let $q_1 \in \ticV^n_1$. Then $q_m \in \cB_{p^m}$. Moreover, if $q_m$ is not contained in a thickened valuable crescent of the form $\hcT_{m - m_j}$ for some $j \leq i$ (which contains either $q_m^-$ or $q_m^+$), then $E^{h,n}_{q_m}$ is sufficiently horizontal in $\cB_{p^m}$.

Proceeding by induction, suppose the claim holds for all $m < k_i$. Let $1<  j < i$ be an integer such that
$$
\vd^{m_j}(q_{m_j}^-) = \vd^{m_j}(q_{m_j}^+)= 0.
$$
Then by \propref{tunnel inv}, if $-k^\pm <0$ is the the tunnel escape moment for $q_{m_j}^\pm$, then
$$
\vd^m(q_m^\pm)= 0
\matsp{for}
m_j-k^\pm \leq m \leq m_j.
$$
By the induction hypothesis and \lemref{trans crit curv} ii), we see that $\ticV^n_{m_j} \subset \hcT_{c_0}$.

Suppose that for some $1 < j < i$, we have
$
\ticV^n_{m_j} \subset \hcT_{c_0}.
$
Let $-k^\pm < 0$ and $l^\pm >1$ be the tunnel and crescent escape moments for $q_{m_j}^\pm$ and $q_{m_j+1}^\pm$ respectively. Then by \propref{nice cover}, for $m_j-k^\pm\leq  m \leq m_j+l^\pm$, we have $\cT_{c_{m-m_j}} \subset Q^m$.

By these observations, we can conclude using the induction hypothesis and \lemref{trans crit curv} ii) that
$
\ticV^n_{m_j} \subset \hcT_{c_0}
\matsp{for all}
1 < j \leq i.
$
Let $q_{m_i} \in \ticV^n_{m_i} \subset \hcT_{c_0}$. Denote by $-k <0$ and $l >1$ the tunnel and crescent escape time for $q_{m_i}$ and $q_{m_i+1}$. Since $m_i-k < k_i$, by the induction hypothesis and \lemref{trans crit curv} iii), we see that $q_{m_i+l} \in \cB_{p^{m_i+l}}$, and $E^{v,n}_{q_{m_i+l}}$ is sufficiently horizontal in $\cB_{p^{m_i+l}}$.

Lastly, observe that if the induction hypothesis holds for all $m <  m'$ with $l_i\leq m' < k_{i+1}$, then by \propref{pres ver hor curv} i), and \eqref{eq:no stray} applied to $W^{v,n}(q)$ for all $q \in \ticV^n_m$ that is not contained in a thickened critical tunnel of the form $\hcT_{c_{-m_j+m}}$ with $i < j \leq I$ (which contains either $q_m^-$ or $q_m^+$), the induction hypothesis must also hold for $m$. The result follows.
\end{proof}

Let $p_1 \in \Lambda^n_1$. By \lemref{entry for reg lem}, $p_1$ is $R_n$-times forward $(1, \delta)_v$-regular along $E^{v,n}_{p_1}$. Let
$$
\{\tiPhi_{p_m} : \ticU_{p_m} \to \tiU_{p_m}\}_{m=1}^{R_n}
$$
be a linearization of the $(R_n-1)$-times forward orbit of $p_1$ with vertical direction $E^{v,n}_{p_1}$. Note that
$$
F^{R_n-1}(\ticU_{p_1}^{\bdelta R_n}) \subset \ticU_{p_{R_n}} \subset \cU_{c_0}.
$$
For $q_1 \in \ticU_{p_1}^{\bdelta R_n}$, write
$
z_{R_n} = (x_{R_n}, y_{R_n}) := \Phi_{c_0}(q_{R_n}),
$
and let
$$
W^v(q_{R_n}) := \Phi_{c_0}^{-1}(\bbL^v_{x_{R_n}} \cap U_{c_0}).
$$
Moreover, for $1\leq i \leq R_n$, define
$$
W^{v,n}(q_i) := F^{i-R_n}(W^v(q_{R_n})).
$$
By \propref{loc linear} and \ref{proper ver curv}, we see that $W^{v,n}(q_1)$ is vertically proper in $\cT_{c_1}$ and sufficiently stable-aligned in $\cU_{c_1}$.

Choose $\tic_1^n \in \hcU_{c_1}^{\bdelta R_n}$ and $\tic_{1+R_n}^n \in \hcU_{c_{1+R_n}}^{\bdelta R_n}$ such that
\begin{equation}\label{eq:defn vn1}
\cV^n_1 := \cV^n_1(\tic_1^n, \tic_{1+R_n}^n) \supset \{p \in \cU_{c_1} \; | \; \dist(p, \cV^n_1(c_1, c_{1+R_n})) < \lambda^{\bdelta R_n}\}.
\end{equation}
Denote
$
V^n_1 := \Phi_{c_1}(\cV^n_1),
$
and for $1 \leq i \leq R_n$,
$
\cV^n_i := F^{i-1}(\cV^n_1).
$

\begin{lem}\label{adoption 1}
Let $q_0 \in \hcT_{c_0}$. Let $-k <0$ and $l > 1$ be the tunnel and crescent escape moment for $q_0$ and $q_1$ respectively. Suppose that $q_l$ is $N$-times forward $(O(L), \delta)_v$-regular along  $\tiE^v_{q_l} \in \bbP^2_{q_l}$ for some
$
N> -\logl \bL.
$
Then $q_{-k}$ is $(N+k+l)$-times forward $(O(1), \delta)_v$-regular along $\tiE^v_{q_{-k}}$.
\end{lem}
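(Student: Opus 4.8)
The plan is to transport the given forward regularity of $q_l$ backwards — first across the valuable crescent to $q_1$, then across the single critical step $q_0\mapsto q_1$, and finally across the critical tunnel to $q_{-k}$ — invoking at each stage the recovery machinery already built for crescents and tunnels. I would begin by fixing the uniformizing charts of \thmref{unif reg crit} near $c_0$ and $c_1$, writing $z_0=(x_0,y_0):=\Phi_{c_0}(q_0)$ and $z_1=(x_1,y_1):=\Phi_{c_1}(q_1)$, and recording the escape-moment bounds: by \propref{tunnel escape} and \propref{crescent escape} (together with \propref{freq pliss back} and \propref{freq pliss for}) one has $k\gtrsim\delta^{-1}\logl|x_0|$ and $l\gtrsim\delta^{-1}\logl|x_1|$, and — the quantitative crux — the inclusion $q_0\in\hcT_{c_0}$ forces $q_1\in\hcT_{c_1}$, hence $|x_1|\le\bft={L_1}^{-1}$; combined with the freedom to take the $\underline{(\cdot)}$ in $L=1/\underline{\bft}$ with exponent arbitrarily close to $1$, this yields $k+l\gtrsim-\delta^{-1}\logl L$, which is exactly what forces the regularity constant to drop from $O(L)$ to $O(1)$.

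For the middle step I would argue as follows. Since $q_l$ is $N$-times forward $(O(L),\delta)_v$-regular along $\tiE^v_{q_l}$ with $N>-\logl\bL$, \propref{vert angle shrink} shows $\tiE^v_{q_l}$ is sufficiently vertical in $\cU_{c_l}$. Because $q_i\in\cT_{c_i}\subset\cU_{c_i}$ for $1\le i<l$, I would iterate $\tiE^v$ backwards from $q_l$ to $q_1$ using \propref{loc linear} — inside the crescent the dynamics is $C^1$-close to $A(x,y)=(x,\lambda y)$ — so the direction stays sufficiently vertical and the norm over those $l-1$ steps contracts like $\lambda^{l-1}$; the surplus contraction $\lambda^{\delta(l-1)}$ absorbs the factor $L$ exactly as in \lemref{1 tunnel recover}, giving that $q_1$ is $(l-1+N)$-times forward $(O(1),\delta)_v$-regular along $\tiE^v_{q_1}$. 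By \propref{stable center align} (and treating the ``shallow crescent'' sub-case via \lemref{edge fast escape}, as in \lemref{cutoff for reg}) the direction $\tiE^v_{q_1}$ is moreover sufficiently stable-aligned in $\cU_{c_1}$.

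It then remains to close up with the tunnel. With $\tiE^v_{q_1}$ sufficiently stable-aligned, \propref{tunnel escape} i) gives that $q_{-k}$ is $k$-times forward $(O(1),\delta)_v$-regular along $\tiE^v_{q_{-k}}$, while ii) keeps $\tiE^v_{q_{-n}}$ sufficiently vertical for $0\le n\le k$; the single critical step $q_0\mapsto q_1=F(q_0)$ costs only the bounded factor $\|DF|_{\tiE^v_{q_0}}\|<\lambda^{-\eta}=O(1)$ by $\eta$-homogeneity. Concatenating the three blocks (tunnel, critical step, crescent-plus-tail) and using \propref{loc linear} to keep constants bounded across the joins yields the claimed $(N+k+l)$-times forward $(O(1),\delta)_v$-regularity of $q_{-k}$ along $\tiE^v_{q_{-k}}=DF^{-(k+l-1)}(\tiE^v_{q_l})$; one checks every intermediate exponent $1\le n\le N+k+l$ is covered by splitting $DF^n|_{\tiE^v_{q_{-k}}}$ at the joint $q_1$.

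The main obstacle is the backward transfer across the crescent in the second step: one must simultaneously keep $\tiE^v$ sufficiently (stable-)aligned throughout, verify that the orbit $q_l,\dots,q_1$ remains in the charts $\cU_{c_i}$, and — the genuinely quantitative point — confirm that the $l-1$ crescent steps supply enough excess contraction to knock the constant from $O(L)$ down to $O(1)$. This forces the careful case split between the deep interior of the crescent (handled by the \lemref{1 tunnel recover}-type estimate, using $|x_1|\le\bft$) and its shallow edge (handled by the fast-escape estimate of \lemref{edge fast escape}, where the short escape time keeps the homogeneity factor $\lambda^{-\eta j}$ under control); verifying that the bound $k+l\gtrsim-\delta^{-1}\logl L$ of the first step is strong enough, given the permissible range of the $\underline{(\cdot)}$ exponent, is the delicate bookkeeping I expect to take the most care.
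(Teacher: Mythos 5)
Your proposal is correct and takes essentially the same route as the paper: the paper's proof is a one-line citation of \propref{vert angle shrink}, \propref{stable center align}, and \propref{tunnel escape}, which is precisely the chain of steps you lay out — align $\tiE^v_{q_l}$ with the linearized vertical by \propref{vert angle shrink}, pull back across the crescent (where the controlled vertical contraction supplies the excess $\lambda^{\delta(l-1)}$ that absorbs the factor $L$, as in the argument of \lemref{1 tunnel recover}), invoke \propref{stable center align} to get sufficient stable alignment of $\tiE^v_{q_1}$, and then apply \propref{tunnel escape}~i) to carry the regularity across the critical tunnel to $q_{-k}$, concatenating at the joint. You correctly identify the quantitative crux — that $q_0\in\hcT_{c_0}$ forces $l\gtrsim -\delta^{-1}\logl L$ so the constant drops to $O(1)$ — which the paper leaves implicit; the only trivial blemish is an off-by-one in your expression $\tiE^v_{q_{-k}}=DF^{-(k+l-1)}(\tiE^v_{q_l})$, which should read $DF^{-(k+l)}$ since $q_{-k}$ to $q_l$ is $k+l$ steps.
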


\begin{proof}
The result follows immediately from \propref{vert angle shrink}, \ref{stable center align} and \ref{tunnel escape}.
\end{proof}

\begin{lem}\label{adoption 2}
Let $p_0 \in \hcT_{c_0}$ and $\tiE^v_{p_0} \in \bbP^2_{p_{-j}}$. Let $-k_0 <0$ be the tunnel escape moment for $p_0$, and let $-k_1 \leq -k_0$ be a Pliss moment in the backward orbit of $c_0$. Suppose $p_{-j}$ is $j$-times forward $(O(L), \delta)_v$-regular along  $\tiE^v_{p_{-j}}$ and $c_{-j} \in \cB_{p_{-j}}$ for all $k_0< j< k_1$. If $p_{-k_0}$ is $k_0$-times forward $(O(1), \delta)_v$-regular along $\tiE^v_{p_{-k_0}}$, then $p_{-k_1}$ is $k_1$-times forward $(O(1), \delta)_v$-regular along $\tiE^v_{p_{-k_1}}$.
\end{lem}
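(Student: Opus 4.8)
The plan is to read \lemref{adoption 2} as a Pliss-moment \emph{resetting} statement, completely parallel to \lemref{adoption 1}: the point $p_{-k_1}$ sits at a moment where the critical orbit carries \emph{clean} $(1,\delta)_v$-forward regularity, and the forward orbit of $p$ from time $-k_1$ up to time $0$ shadows the critical orbit closely enough---inside the thickened tunnel $\hcT_{c_0}$ for $-k_0\le m\le 0$, and inside the regular boxes $\cB_{p_{-j}}$, which contain $c_{-j}$, for $k_0<j<k_1$---that this clean regularity can be transported back to $p_{-k_1}$. Concatenating it with the $(O(1),\delta)_v$-forward regularity of $p_{-k_0}$ supplied by the hypothesis then produces the conclusion with a uniform constant.

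\textbf{Steps 1--2 (the critical orbit at the Pliss moment, and transport through the near-critical regime).} Since $c_0$ is infinitely backward $(O(1),\bepsilon)_v$-regular along $\hE^v_{c_0}$, \propref{freq pliss back} applies to its backward orbit, and by the definition of a Pliss moment $c_{-k_1}$ is $(\infty,k_1)$-times $(1,\delta)_v$-regular along $\hE^v_{c_{-k_1}}$; in particular $\|DF^m|_{\hE^v_{c_{-k_1}}}\|\le\lambda^{(1-\delta)m}$ for $1\le m\le k_1$. Next, the hypothesis gives that $p_{-j}$ is $j$-times forward $(O(L),\delta)_v$-regular along $\tiE^v_{p_{-j}}$ for $k_0<j<k_1$; once $k_1$ is large compared to $L$---which, using $1-\delta>\bepsilon$, makes $\|DF^{k_1-1}|_{\tiE^v_{p_{-k_1+1}}}\|<\bL^{-1}\lambda^{\bepsilon(k_1-1)}$---\propref{vert angle shrink} forces $\tiE^v_{p_{-k_1+1}}$ to be exponentially close to the invariant contracting direction at $p_{-k_1+1}$, and hence, pulling back one backward step via \propref{loc linear} (a backward iterate only aligns a direction further toward the invariant vertical direction), $\tiE^v_{p_{-k_1}}$ is sufficiently vertical at $p_{-k_1}$, which by $\eta$-homogeneity still lies within a fixed small distance of $c_{-k_1}$. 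Since $c_{-j}\in\cB_{p_{-j}}$ and vertical directions are consistent inside regular boxes (\propref{consist ver hor}, \propref{pres ver hor curv}), \propref{loc linear} then compares the one-dimensional cocycle of $DF$ at $p_{-k_1}$ along $\tiE^v_{p_{-k_1}}$ with that at $c_{-k_1}$ along $\hE^v_{c_{-k_1}}$ over the first $k_1-k_0$ steps, so that the previous paragraph yields $\|DF^m|_{\tiE^v_{p_{-k_1}}}\|\le O(1)\,\lambda^{(1-\delta)m}$ for $1\le m\le k_1-k_0$.

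\textbf{Step 3 (concatenation) and the main obstacle.} For $k_1-k_0\le m\le k_1$ one simply multiplies the one-dimensional cocycle,
\[
\|DF^m|_{\tiE^v_{p_{-k_1}}}\|=\|DF^{k_1-k_0}|_{\tiE^v_{p_{-k_1}}}\|\cdot\|DF^{m-(k_1-k_0)}|_{\tiE^v_{p_{-k_0}}}\|,
\]
bounding the first factor by Steps 1--2 and the second by the hypothesis that $p_{-k_0}$ is $k_0$-times forward $(O(1),\delta)_v$-regular along $\tiE^v_{p_{-k_0}}$; combining the two ranges of $m$ shows that $p_{-k_1}$ is $k_1$-times forward $(O(1),\delta)_v$-regular along $\tiE^v_{p_{-k_1}}$. (The degenerate case $k_1=k_0+1$, where the hypothesis on the intermediate $p_{-j}$ is empty, is handled by a single extra backward step: $p_{-k_1}$ lies in a fixed neighborhood of $c_{-k_1}$, $\tiE^v_{p_{-k_1}}$ is sufficiently vertical there, and, $-k_1$ being a Pliss moment, $DF$ contracts it by at most $\lambda^{1-\delta}$.) I expect the main obstacle to be Step 2: one must guarantee that the direction $\tiE^v$, carried across the stretch where the orbit of $p$ is only within a \emph{fixed-size}, rather than exponentially small, neighborhood of the critical orbit, stays sufficiently vertical throughout, and that the regularity constant genuinely drops from $O(L)$ to $O(1)$ precisely at $-k_1$; this forces the near-linearization estimates (\propref{loc linear}), the vertical-alignment mechanism (\propref{vert angle shrink}), the consistency of vertical directions inside regular boxes (Propositions~\ref{consist ver hor} and~\ref{pres ver hor curv}), and the clean regularity of the critical orbit at Pliss moments to be used in concert.
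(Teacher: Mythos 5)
Your proof is essentially the paper's proof, which is a one-liner: ``The result follows immediately from \propref{vert angle shrink}.'' You have correctly identified the key mechanism---the finite-time $(O(L),\delta)_v$-forward regularity of $p_{-k_1+1}$ forces $\tiE^v_{p_{-k_1}}$, via \propref{vert angle shrink}, to align with the canonical vertical direction at the Pliss moment $-k_1$, where $c_{-k_1}$ carries clean $(1,\delta)_v$-regularity; the shadowing hypotheses (tunnel for $j\le k_0$, regular boxes for $k_0<j<k_1$) keep the orbit inside the relevant linearization neighborhoods so that \propref{loc linear} transfers this regularity to $p_{-k_1}$ with an $O(1)$ constant---and your concatenation step and the degenerate case $k_1=k_0+1$ are consistent with how the estimate must be assembled.
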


\begin{proof}
The result follows immediately from \propref{vert angle shrink}.
\end{proof}

\begin{prop}\label{entry reg}
The statement of \lemref{entry for reg lem} generalizes to $q_0 \in \cV^n_{R_n}$.

Similarly, the statement of \lemref{entry back reg lem} generalizes to $q_1 \in \cV^n_1$.
\end{prop}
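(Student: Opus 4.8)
The plan is to replay the inductive argument in the proof of \lemref{entry for reg lem}, modified in two ways: (i) the reference orbit segments now live in the thin valuable-chart domains $\cV^n_\bullet$ rather than in $\Lambda^n_\bullet$, so every regularity-recovery step that, for points of $\Lambda$, was handled by \thmref{ref uni}, \lemref{1 tunnel recover} and \propref{tunnel escape}, must now be routed through the adoption Lemmas \ref{adoption 1} and \ref{adoption 2}, which are stated for arbitrary points of $\hcT_{c_0}$; and (ii) the geometric confinement of the backward orbit of $q_0$ is supplied by \propref{v fit} in place of \thmref{order on piece} and \thmref{combin connect}. The backward assertion (generalizing \lemref{entry back reg lem} to $q_1 \in \cV^n_1$) is the mirror image, obtained by tracking valuable depth $\vd^i$ instead of critical depth and using \propref{crescent escape} together with the horizontal analogues.

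First I would set up the geometric skeleton. For $q_0 \in \cV^n_{R_n}$ one has $q_{-i} = F^{-i}(q_0) \in \cV^n_{R_n-i}$ for $0 \le i < R_n$; by \propref{v fit} together with \eqref{eq:defn vn1} each $q_{-i}$ lies in a thickened covering element $\hQ_i$, either a thickened regular box or a thickened tunnel or crescent, and the vertical curve $W^{v,n}(q_{-i})$ through $q_{-i}$ is sufficiently vertical and vertically proper in $\hQ_i$, and sufficiently stable-aligned in $\cU_{c_1}$ when $\hQ_i = \cT_{c_1}$. This last point is \propref{proper ver curv}, whose proof already extends verbatim from $\Lambda^n_0$ to $\cV^n_{R_n}$ by one further application of \propref{loc linear}. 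Along a maximal stretch of indices on which $\cd^i(q_{-i}) = \varnothing$ and $q_{-i}$ meets only thickened regular boxes, \propref{pres ver hor curv}(i)--(ii) keeps $E^{v,n}_{q_{-i}} := DF^{-i}(E^v_{q_0})$ sufficiently vertical, so that \propref{loc linear} and $\eta$-homogeneity force contraction at rate $\lambda^{(1-\bepsilon)}$ per step; this yields $(O(1),\delta)_v$-regularity on that stretch, improved to $(1,\delta)_v$ on the portion inside $\cT_{c_1}$ by \lemref{1 tunnel recover}.

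The induction then proceeds over the returns of the backward orbit of $q_0$ to the critical tunnel $\cT_{c_0}$, exactly as in \lemref{entry for reg lem}. At a return with tunnel escape moment $-k<0$ and crescent escape moment $l>1$, I would apply \lemref{adoption 1} to push the regularity inherited from the next return across the crescent and the tunnel, and \lemref{adoption 2} to carry it from the tunnel escape moment up to the following negative Pliss moment of the backward orbit of $c_0$; the hypothesis $c_{-j}\in\cB_{p_{-j}}$ of \lemref{adoption 2} is precisely the assertion that the actual critical orbit sits in a regular box around $q_{-j}$, which holds because $\cV^n_{R_n}$ is a $\lambda^{(1-\bdelta)R_n}$-neighborhood of $\Lambda^n_0 \ni c_0$ and because of the proper verticality just recorded. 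Direction deviations created at each return are controlled by \propref{vert angle shrink} and \propref{stable center align} as in the proof of \thmref{ref uni}(a), and the accumulated regularity factor is bounded by the total time spent non-aligned near the critical orbit, which by an estimate of the form \eqref{eq:total nonalign time} is $O(\logl\bft) = O(\logl L)$; hence the overall constant is $O(L)$, and it drops to $1$ as soon as $q_{-i}$ enters $\cT_{c_1}$.

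The step I expect to be the main obstacle is the bookkeeping that certifies the $\lambda^{(1-\bdelta)R_n}$-thickening never spoils a critical-recurrence estimate. At every tunnel- or crescent-escape moment met along the backward orbit of $q_0$, one must know that the horizontal coordinate $x_0$ of $\Phi_{c_0}(q_{-i})$ --- hence the escape time, which is of order $\bepsilon^{-1}\logl|x_0|$ --- is comparable to that of the underlying $\Lambda^n$-reference point, and in particular is much larger than $\lambda^{(1-\bdelta)R_n}$, so the perturbation is swamped. Here \lemref{trap off piece} and \propref{tunnel escape at entry} are the key inputs: they bound how close the pieces and tunnels of depth $n+1$ can come to $c_0$, and thereby exclude the degenerate scenario in which $q_0$ lies so near $c_0$ that its own displacement from $\Lambda$ dominates the geometry being exploited.
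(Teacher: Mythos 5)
Your plan reproduces the main scaffolding of the paper's argument but misses the one structural trick that makes the paper's proof short. The paper does not replay the induction of \lemref{entry for reg lem} for the arbitrary point $q_0\in\cV^n_{R_n}$; instead, it uses \propref{v fit} to anchor the backward orbit of $q_0$ between the orbits of two genuine limit-set points $q_0^-,q_0^+\in\Lambda^n_0$, applies \lemref{entry for reg lem} \emph{as a black box} to the reference point $q_{-m}^+\in\Lambda$, and then transfers regularity from $q_{-m}^+$ to $q_{-m}$ by \propref{loc linear}, using the fact that $q_{-m}$ lies inside a regular box $\cB_{q_{-m}^+}$ and that $E^{v,n}_{q_{-m}}$ is sufficiently vertical there (\propref{vert angle shrink}). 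The adoption lemmas then enter at the tunnel returns exactly once per return: \lemref{adoption 1} pushes the inductively known regularity of $q_{-k_i}$ across the crescent--tunnel passage, while \lemref{adoption 2} is applied to the \emph{reference} point $q_{-k_i}^+$ (not to the arbitrary point), for which the hypothesis ``$c_{-j}\in\cB_{p_{-j}}$'' makes clean sense because $q_{-j}^+$ is a Pesin-regular point of $\Lambda$. Your version applies \lemref{adoption 2} directly to $q$ and appeals to the $\lambda^{(1-\bdelta)R_n}$-thickening of $\Lambda^n_0$ to justify the hypotheses; this is not wrong, but it forces you to re-establish finite-time Pesin regularity for arbitrary points of $\cV^n$ where the paper only ever needs it for points of $\Lambda$ plus a single $\propref{loc linear}$ perturbation step. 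Finally, your concern about the thickening spoiling critical-recurrence estimates, and your citation of \lemref{trap off piece} and \propref{tunnel escape at entry} as the fix, is correct in spirit but is already absorbed into \propref{v fit} (which uses \propref{tunnel escape at entry}); it does not need to reappear in the proof of \propref{entry reg} itself. The backward half you treat by the same mirroring as the paper, which simply cites \thmref{ref uni}(b).
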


\begin{proof}
By \propref{v fit}, there exist $q_0^-, q_0^+ \in \Lambda^n_0$ such that the following holds for all $0\leq i < R_n$. Let
$$
\ticV^n_{-i} := \cV^n_{R_n-i}(q_{-i}^-, q_{-i}^+) \ni q_{-i}.
$$
Then we have
$
\ticV^n_{-i} \subset \hQ^{-i}
$
for some $Q^{-i} \in \cC_0$.

Let
$
0 =: m_0 < m_1 < \ldots < m_I < R_n
$
be the increasing set of all moments such that for $0\leq i \leq I$, we have
$
q_{-m_i} \in \hcT_{c_0}
$ and $
\cd^{m_i}(q_{-m_i}) = 0.
$
Let $(-k_i +m_i) < 0$ and $(l_i-m_i) > 1$ be the tunnel and crescent escape moments for $q_{-m_i}$ and $q_{-m_i+1}$ respectively.

Proceeding by induction, suppose that the claim is true for $m < m_i$. By \thmref{order on piece} and \propref{v fit}, there exists $p_{-k_i} \in \Lambda_{Q^{-k_i}}$ such that $p_{-m} \in Q^{-m}$ for $l_i \leq m \leq k_i$, and
$
\cd^{k_i}(p_{-k_i}) = \varnothing.
$
For simplicity, assume that
$
p_{-k_i} = q_{-k_i}^+.
$
In general,
$
p_{-k_i} = c_{-d}
$
where $d = k_i - m_j$ for some $j < i$, and $\cT_{c_{-d}}$ contains either $q_{-k_i}^-$ or $q_{-k_i}^+$. The argument in this case requires some minor technical adjustments which we will omit.

By the induction hypothesis and \lemref{adoption 1}, $q_{-k_i}$ is $k_i$-times forward $(O(1), \delta)_v$-regular along $E^{v,n}_{q_{-k_i}}$. By \lemref{adoption 2}, $q_{-k_i}^+$ is also $k_i$-times forward $(O(1), \delta)_v$-regular along $E^{v,n}_{q_{-k_i}^+}$. Moreover, by \lemref{vert angle shrink}, we see that $E^{v,n}_{q_{-k_i}}$ is sufficiently vertical in a regular box $\cB_{q_{-k_i}^+}$ at $q_{-k_i}^+$.

By \lemref{entry for reg lem}, for $k_i < m < m_{i+1}$, the point $q_{-m}^+$ is $m$-times $(O(L), \delta)_v$-regular along $E^{v,n}_{q_{-m}^+}$. Since $q_{-m} \in \cB_{q_{-m}^+}$, we conclude by  \propref{loc linear} that $q_{-m}$ is $m$-times $(O(L), \delta)_v$-regular along $E^{v,n}_{q_{-m}}$. The first part of the result follows.

The proof of the second part is nearly identical to that of \thmref{ref uni} b), and will be omitted.
\end{proof}

\begin{proof}[Proof of \thmref{val chart}]
By \propref{entry reg}, the set $\cV^n_1$ can be covered by regular charts obtained by linearizing the $(R_n-1)$-time forward orbit of any point $p_1 \in \cV^n_1$ with vertical direction $E^{v,n}_{p_1}$. The desired diffeomorphism $\bPsi^n_{c_1} : \cV^n_1 \to V^n_1$ is produced by gluing these charts. It remains to prove that
$
F^{R_n}(\cV^n_1) \Subset \cV^n_1.
$

Write
$$
(x, y) := \Phi_{c_1}(c_{1+R_n}),
\hspace{5mm}
(a, b) := \Phi_{c_0}(c_{R_n})
\matsp{and}
(u, v) := \Phi_{c_0}(c_{2R_n}).
$$
Observe that
$$
\Phi_{c_1}(\cV^n_1) \supset (-\lambda^{\bdelta R_n}, x + \lambda^{\bdelta R_n}) \times \bbI(l_{c_1}).
$$
By \propref{entry reg}, we also see that
$$
\Phi_{c_0}(\cV^n_{R_n}) \subset (-\lambda^{\bdelta R_n} + a, u+\lambda^{\bdelta R_n}) \times \bbI(\lambda^{(1-\bdelta)R_n}).
$$
Since for
$$
(u', v') := \Phi_{c_1} \circ F \circ \Phi_{c_0}^{-1}(u, v),
$$
we have $0< u' < x$, the result now follows from \thmref{unif reg crit}.
\end{proof}


\appendix

\section{Regular Unicriticality in 1D}\label{sec:reg unicrit 1d}

Let $f : I \to I$ be a $C^2$-unimodal map on an interval $I \subset \bbR$ with the critical point at $0 \in I$. Suppose that $f$ is infinitely renormalizable. That is, there exist a nested sequence $I =: I^0_1 \Supset I^1_1 \Supset \ldots$ of intervals and a sequence of natural numbers $1 =: R_0 < R_1 < \ldots$ such that $I^n_1$ is $R_n$-periodic and
$$
r_{n-1} := R_n/R_{n-1} \geq 2
\matsp{for}
n \in \bbN.
$$
We assume that $f$ is {\it of bounded type}: there exists a constant $K >1$ such that $r_n < K$ for all $n \in \bbN$.

Let
$$
I^n_0 := f^{-1}(I^n_1) \ni 0
\matsp{and}
I^n_i := f^i(I^n_0)
\matsp{for}
0 \leq i < R_n.
$$
The renormalization limit set of $f$ is given by
$$
\Lambda := \bigcap_{n=1}^\infty \bigcup_{i=0}^{R_n-1} f^i(I^n_1).
$$
Denote the orbit of $c_0 := 0$ by
$$
c_{-n} := (f^n|_\Lambda)^{-1}(0)
\matsp{and}
c_n := f^n(0)
\matsp{for}
n \in \bbN.
$$

For $x \in \Lambda \setminus \{c_0\}$, define $\depth_c(x)$ as the largest integer $d \geq 0$ such that
$
x \in I^d_0 \setminus I^{d+1}_0.
$
Also define $\entry_d(x)$ as the integer $0 < s < r_d$ such that
$
f^{sR_d}(x) \in I^{d+1}_0.
$

\begin{thm}[Real {\it a priori} bounds]\label{1d a priori}
There exist uniform constants $\sigma \in (0,1)$ and $\nu >0$ such that the following statements hold for all $n \in \bbN$.
\begin{enumerate}[i)]
\item For
$
x \in I^n_0 \setminus I^{n+1}_0,
$
let
$
s := \entry_n(x).
$
Then
$$
|(f^{iR_n})'(x)| > \nu
\matsp{for}
0 \leq i \leq s.
$$
\item We have
$$
|(f^{iR_n}|_{I^n_1})'| > \nu
\matsp{for}
0 \leq i < r_n.
$$
\end{enumerate}
\end{thm}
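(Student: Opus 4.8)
The statement is the standard \emph{real a priori bounds} for bounded-type infinitely renormalizable unimodal maps, phrased as a uniform lower bound on derivatives along the relevant orbit segments. The plan is to derive it from the classical Sullivan--McMullen--Lyubich--Avila renormalization theory, which gives uniform \emph{geometric bounds} (bounded geometry / complex bounds) for such maps. I would proceed as follows. First, recall that bounded type and infinite renormalizability give, via the standard theory, uniform bounds on the moduli of the ``first renormalization'' configuration: the intervals $I^n_0, I^n_1, \dots, I^n_{R_n-1}$ and the gaps between them have commensurate sizes up to a uniform constant, and the return maps $f^{R_n}|_{I^n_1}$ have uniformly bounded distortion on suitable extensions (Koebe space). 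This is where the bounded-type hypothesis is essential and where essentially all the depth of the argument sits; I would cite it rather than reprove it.

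Second, I would use these geometric bounds together with the Koebe distortion theorem to control $(f^{iR_n})'$. For part ii), the orbit $I^n_1 \mapsto I^n_2 \mapsto \cdots \mapsto I^n_{r_n R_{n-1}} = I^n_{R_n}$ visits each of the pairwise disjoint intervals $I^n_j$ exactly once (these are the pieces of the $R_n$-cycle); since they are disjoint and contained in $I$, their total length is at most $|I|$, and bounded geometry forces each to have definite size relative to a Koebe neighborhood that maps univalently. Hence the distortion of each single iterate $f: I^n_j \to I^n_{j+1}$ is bounded, and since the image interval has definite length relative to the domain (again by bounded geometry of the cycle, which is where one uses that $f$ restricted to each $I^n_j$ with $j \not\equiv R_n$ is a diffeomorphism with Koebe space, and at the critical passage one uses the quadratic nature of $c_0$), the derivative $|(f^{iR_n}|_{I^n_1})'|$ is bounded below by a uniform $\nu$. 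The one delicate point is the passage through the critical point, i.e. the step $I^n_{R_n} \ni 0 \mapsto I^n_1$: here $f' = 0$ somewhere, but we only need the bound on $(f^{iR_n})'$ for $0 \le i < r_n$, and along such orbit segments the critical point is passed at most once at the very end rather than in the interior, so the cumulative derivative stays bounded below by combining the bounded-distortion estimates for the non-critical passages with the control on how close orbit points come to $0$ (slow recurrence, which in 1D is again a consequence of real bounds).

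Third, for part i), I would run essentially the same argument one level down: for $x \in I^n_0 \setminus I^{n+1}_0$ with $s = \entry_n(x)$, the orbit $x, f^{R_n}(x), \dots, f^{sR_n}(x)$ enters $I^{n+1}_0$ only at time $s$, and before that it moves among pieces of the $R_n$-cycle (more precisely, the forward $R_n$-images $f^{iR_n}(x)$ land in various $I^n_{j}$-type pieces determined by the combinatorics), which by bounded type occur in uniformly bounded number of distinct ``positions'' before returning. Bounded geometry of the cycle plus Koebe gives each single application of $f^{R_n}$ a bounded distortion and a definite derivative; composing at most $s < r_n$ of them, with $r_n$ uniformly bounded by the bounded-type assumption, yields the uniform lower bound $\nu$. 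I would organize the composition estimate as a telescoping product $|(f^{sR_n})'(x)| = \prod_{i=0}^{s-1} |(f^{R_n})'(f^{iR_n}(x))|$ and bound each factor below using the real bounds.

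\textbf{Main obstacle.} The genuine content — and the place I expect to spend the most care — is establishing (or precisely invoking) the uniform geometric bounds on the renormalization configurations of bounded-type maps, i.e. the ``real bounds'' in their raw geometric form, and then tracking the critical passage carefully enough that the quadratic degeneracy of $f$ at $0$ does not destroy the uniform lower bound: one must ensure that orbit segments of the stated length pass near $0$ at most once and only at a controlled (definite) scale. Everything else is a Koebe/bounded-distortion bookkeeping exercise once those ingredients are in hand. Since this is the 1D model case presented in the appendix for motivation, I would keep the exposition brief and lean heavily on the cited literature for the geometric bounds, giving full detail only for the derivative composition and the critical-passage step.
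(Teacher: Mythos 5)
The paper does not actually give a proof of Theorem~\ref{1d a priori}: it is stated in the appendix as the classical ``real \emph{a priori} bounds'' for bounded-type infinitely renormalizable unimodal maps and is invoked as a known result, so there is no in-paper argument to compare yours against. Your overall plan (invoke Sullivan-type real/complex bounds to get commensurability of the intervals $I^n_i$ and Koebe space for the diffeomorphic branches, then compose at most $r_n<K$ factors) is the standard route, and is the one the authors presumably have in mind.

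However, your handling of the critical passage has a genuine imprecision that would break a careful write-up. You assert that ``each single application of $f^{R_n}$'' has bounded distortion and a definite derivative on $I^n_1$; this is false, since $f^{R_n}|_{I^n_1}$ is the first-return (pre-renormalization) map and is \emph{unimodal}, with a critical point in $I^n_1$ where its derivative vanishes. You then patch this by claiming ``the critical point is passed at most once at the very end,'' but for $x\in I^n_1$ the orbit crosses $I^n_0\ni 0$ once per block of $R_n$ iterates, i.e.\ at times $R_n-1,2R_n-1,\dots$; the critical passage is \emph{not} a one-off event. What actually rescues the estimate is the hypothesis, visible in part~(i) via ``$x\in I^n_0\setminus I^{n+1}_0$'' and ``$s=\entry_n(x)$'': along the orbit segment in question every sample point $f^{jR_n}(x)$ remains at depth exactly $n$ (never enters $I^{n+1}_0$) until the very last moment, hence stays a definite distance from $0$ on the scale $|I^n_0|$. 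Bounded type then gives $|I^{n+1}_0|\asymp|I^n_0|$, so $|f'|\asymp |I^n_0|$ at each such passage, and the quadratic scaling $|I^n_1|\asymp|I^n_0|^2$ makes each chain-rule factor $|(f^{R_n})'|\asymp 1$; composing $\le r_n\le K$ of them gives a uniform $\nu$. Without tracking this ``no premature deepening'' condition your argument would need to bound $|(f^{R_n})'|$ near the critical point of the return map, where no such bound holds. (Part~(ii) as printed likewise needs an implicit restriction of the same flavor, and the constant $\sigma$ in the statement is never used; both appear to be minor slips in the paper, not in your reasoning.)
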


Fix $\epsilon_0 >0$ sufficiently small. For $\epsilon \in (0, \epsilon_0)$, let
$
\bepsilon := \epsilon^\alpha
$ and $
\uepsilon := \epsilon^{1/\alpha}
$
for some universal constant $\alpha \in (0,1)$. For $t > 0$ and $n \geq 0$, denote
$$
D^t_{-n} :=[c_{-n}- te^{-\uepsilon n}, c_{-n} + te^{-\uepsilon n}].
$$

\begin{defn}
We say that $f$ is {\it $(\epsilon, \uepsilon)$-regularly unicritical on $\Lambda$} if for any $t >0$, there exists $L = L(t)\geq 1$ such that the following condition holds. If
$$
x \in \Lambda \setminus \bigcup_{i=0}^{N-1} D^t_{-i}
$$
for some $N \in \bbN$, then
$$
|(f^n)'(x)| > L e^{-\epsilon n}
\matsp{for}
1 \leq n \leq N.
$$
\end{defn}

\begin{thm}\label{reg unicrit 1d}
For any $\epsilon \in (0, \epsilon_0)$, the infinitely renormalizable unimodal map $f$ is $(\epsilon, \uepsilon)$-regularly unicritical on its renormalization limit set $\Lambda$.
\end{thm}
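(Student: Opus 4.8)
The plan is to deduce \thmref{reg unicrit 1d} from the real \emph{a priori} bounds (\thmref{1d a priori}) by converting the combinatorial control of derivatives along orbits into the exponential lower bound demanded by the definition of regular unicriticality. The central mechanism is the classical principle that in a bounded-type infinitely renormalizable unimodal map, the only obstruction to uniform expansion is proximity to the critical orbit, and the closer approaches happen on exponentially shrinking scales. So first I would fix $x \in \Lambda \setminus \bigcup_{i=0}^{N-1} D^t_{-i}$ and decompose the orbit segment $x, f(x), \dots, f^{N-1}(x)$ according to the deepest renormalization level $I^d_0$ that each backward iterate of the relevant critical point realizes. Concretely, for a point $y = f^j(x)$ with $1 \le j \le N$, write $j$ as a concatenation of blocks of lengths that are multiples of the periods $R_d$ — this is the standard ``combinatorial addressing'' of $\Lambda$ via the odometer. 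Each such block of length $iR_d$ (for $0 \le i \le \entry_d$) contributes a factor bounded below by $\nu$ from \thmref{1d a priori} i), and the top-level block of length $i R_n$ with $i < r_n \le K$ contributes a factor bounded below by $\nu$ from part ii).

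Next I would carry out the bookkeeping that turns a product of finitely many factors of size $\nu$ into a bound of the form $L e^{-\epsilon n}$. The number of blocks needed to express an orbit segment of length $n$ is roughly proportional to the number of distinct renormalization scales involved, which in the bounded-type case is $O(\log n)$ — but this naive count only gives subexponential loss, not the required $e^{-\epsilon n}$. The honest statement is better: between two consecutive deep returns to $I^{d+1}_0$, the orbit spends a definite proportion of its time at scale $d$ or coarser, and the expansion accrued along those coarse pieces (away from deep critical approaches) is genuinely exponential with a rate controlled by the bounded geometry. So the key estimate is: if $x \notin D^t_{-i}$ for $i < N$, then the orbit $x, \dots, f^N(x)$ can make a close approach to level $d$ (i.e.\ enter $I^d_0 \setminus I^{d+1}_0$ at a point near $c_0$ of scale $\asymp e^{-\uepsilon m}$) only at times $m$ with $m$ bounded below in terms of the scale — this is the one-dimensional analog of \lemref{no pliss near crit}/\propref{return crit disc}, and it follows from the bounded-type hypothesis together with the exponential contraction of the $I^n_0$'s. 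Summing the ``bad'' contributions $\sum \uepsilon \cdot (\text{depth})$ over all close approaches before time $N$ gives a total loss of at most $\bepsilon N + C\log L(t)$, analogous to the estimate \eqref{eq:total nonalign time} in the two-dimensional proof of \thmref{ref uni} a). Choosing $\alpha$ (hence the relation among $\epsilon$, $\bepsilon$, $\uepsilon$) appropriately absorbs this into the exponent $\epsilon$, and the finitely many factors of $\nu$ near each close approach are absorbed into the constant $L$.

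More precisely, I would organize the argument around ``Pliss-type'' return moments exactly as in the body: let $-l < 0$ be the largest index with $c_{-l}$ the first moment (reading backward from a deep critical approach) at which the orbit has exited the scale-$d$ neighborhood, apply \thmref{1d a priori} i) to control the return, and iterate. The exponential accounting is: each close approach of scale $e^{-\uepsilon m}$ forces the preceding uninterrupted expanding stretch to have length $\gtrsim \uepsilon^{-1} m$ (again by the recurrence estimate), so the cumulative ``non-expanding time'' is a summable geometric-type series in the depths, bounded by a constant times $\logl t$, i.e.\ by $C\log L(t)$. Concatenating, $|(f^n)'(x)| > \nu^{O(\log L(t))} e^{-\bepsilon n} \ge L(t) e^{-\epsilon n}$ after adjusting constants. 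I expect the main obstacle to be precisely this last point: making the exponential accounting tight enough — i.e.\ verifying that the \emph{a priori} bounds really do yield an exponential (not merely subexponential) expansion rate away from deep critical approaches, and that the loss from the neighborhoods $D^t_{-i}$ scales like $\bepsilon n$ rather than $\epsilon n$ exactly, so that there is room to absorb it. This is where the hypothesis $r_n < K$ (bounded type) is essential and where one must be careful that the implied constants in \thmref{1d a priori} do not degenerate; the rest is the combinatorial substitution that is entirely parallel to, but much simpler than, the two-dimensional argument already given in \secref{sec:unicrit}.
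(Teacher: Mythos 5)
Your plan identifies the correct ingredients — the \emph{a priori} bounds of \thmref{1d a priori}, the decomposition of the orbit at its deepest critical approach, and \lemref{approach} — and the opening paragraph is essentially the paper's argument. But you then talk yourself out of it with a confusion about the direction of the inequality, and pivot to a more elaborate construction that is both unnecessary and, as written, wrong in the exponent.

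The confusion is here: you claim the naive count ``only gives subexponential loss, not the required $e^{-\epsilon n}$.'' But the required bound $|(f^n)'(x)| > L(t)^{-1}e^{-\epsilon n}$ is a \emph{lower} bound, and a polynomial lower bound dominates an exponentially decaying one. Concretely, each renormalization level contributes a factor $\geq\nu$ from \thmref{1d a priori}, and because $r_j\geq 2$ forces $R_d\geq 2^d$, the deepest levels $d_c,\,d_v$ reached before time $n$ satisfy $d_c,d_v < \log_2 n$. The product of the $\nu$'s is therefore $\nu^{d_c+d_v} > n^{2\log\nu}$, a polynomial in $n$ — and $n^{2\log\nu} \gg L^{-1}e^{-\epsilon n}$ for $n$ large, with no need for exponential expansion. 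Indeed there \emph{is no} exponential expansion here (the Lyapunov exponent on $\Lambda$ is zero), so the ``honest statement'' you propose to prove in your second paragraph is actually false. The only piece that scales exponentially in $n$ is the single critical factor $|f'(x_m)|\asymp |x_m|$, and that is precisely what the hypothesis $x\notin\bigcup_{i<N} D^t_{-i}$ controls via \lemref{approach}: it gives $|x_m| > te^{-\uepsilon R_d} > te^{-\uepsilon K m}$, and the bounded-type constant $K$ together with $\uepsilon=\epsilon^{1/\alpha}\ll\epsilon$ gives $K\uepsilon\leq\epsilon$ for $\epsilon$ small, so this factor is $> te^{-\epsilon m}$. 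Concatenating the three factors $\nu^{d_c}\cdot|f'(x_m)|\cdot\nu^{d_v}$ finishes the proof; there is no iteration over Pliss returns, no sum over close approaches, and no inductive accounting analogous to \eqref{eq:total nonalign time}.

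There is also a hard error in your concluding estimate: you claim $|(f^n)'(x)| > \nu^{O(\log L(t))}e^{-\bepsilon n}$ and deduce the required bound, but $\bepsilon=\epsilon^\alpha>\epsilon$, so $e^{-\bepsilon n}<e^{-\epsilon n}$ and the inequality $\nu^{O(\log L)}e^{-\bepsilon n}\geq L^{-1}e^{-\epsilon n}$ fails for large $n$ (the left side is smaller term by term). The exponent appearing after applying \lemref{approach} must be controlled by $\uepsilon$ (times the bounded-type constant), which is \emph{smaller} than $\epsilon$; the gap $\epsilon-\uepsilon$ is exactly what is used to absorb the polynomial factor $\nu^{d_c+d_v}$ into the constant $L(t)$. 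Your formula has the hierarchy of marginal exponents upside down, and this is not a cosmetic issue — it is the mechanism by which the universal constant $\alpha\in(0,1)$ is chosen.
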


To prove \thmref{reg unicrit 1d}, we first need the following lemma.

\begin{lem}\label{approach}
For $t > 0$ and $n \in \bbN$, let
$$
C^t_n(0) := [-te^{-\uepsilon R_n}, te^{-\uepsilon R_n}] \subset I^n_0
\matsp{and}
C^t_n := \bigcup_{i=0}^{R_n-1} (f^i|_{I^n_{R_n-i}})^{-1}(C^t_n(0)).
$$
Then
$$
C^t_n \subset \bigcup_{i=0}^{R_n-1}D^t_{-i}.
$$
\end{lem}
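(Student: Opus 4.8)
\textbf{Proof plan for Lemma \ref{approach}.}
The plan is to unwind the definition of $C^t_n$ directly and track, for each point $x \in C^t_n$, how its backward orbit under the cycle of renormalization intervals $I^n_1, \dots, I^n_{R_n}$ relates to the sets $D^t_{-i}$. By definition, any $x \in C^t_n$ satisfies $x \in (f^i|_{I^n_{R_n-i}})^{-1}(C^t_n(0))$ for some $0 \le i < R_n$, meaning $x \in I^n_{R_n-i}$ and $f^i(x) \in C^t_n(0) \subset I^n_0$, with $f^i(x)$ within distance $te^{-\uepsilon R_n}$ of the critical point $c_0 = 0$. The first step is to reindex: writing $j := R_n - i$ (so $1 \le j \le R_n$), the point $x$ lies in $I^n_j$ and $f^{R_n-j}(x)$ lies in the critical piece $I^n_0$ very close to $c_0$. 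Since $f^{R_n - j}$ restricted to $I^n_j$ is the relevant branch of the cycle and $f^{R_n-j}(c_{-(R_n-j)}) = c_0$ wait --- more precisely, the point of $I^n_j$ that maps exactly to $c_0$ under $f^{R_n-j}$ is $c_{-(R_n-j)}$ (using the convention that $c_0 = 0 \in I^n_0$ and pulling back along the limit set), so $x$ is close to $c_{-(R_n-j)}$; one then needs to convert "$f^{R_n-j}(x)$ is $te^{-\uepsilon R_n}$-close to $c_0$'' into "$x$ is $te^{-\uepsilon(R_n-j)}$-close to $c_{-(R_n-j)}$,'' i.e. into the statement $x \in D^t_{-(R_n-j)}$.

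The core of the argument is therefore a distortion/contraction estimate for the backward branches of the cycle. I would invoke the real \emph{a priori} bounds (Theorem \ref{1d a priori}): part i) gives that along the returns $f^{iR_n}$ before the first entry into the next-depth critical piece, derivatives are bounded below by $\nu$, and more generally the bounded-geometry of the renormalization intervals forces the branch $f^{R_n-j}: I^n_j \to I^n_0$ to have derivative bounded below by a definite amount on the relevant scale. The key quantitative input is that $\uepsilon$ is small enough (recall $\uepsilon = \epsilon^{1/\alpha}$ with $\alpha \in (0,1)$ universal) that the exponential gain $e^{\uepsilon(R_n - j)}$ from passing from scale $e^{-\uepsilon R_n}$ at $c_0$ to scale $e^{-\uepsilon(R_n-j)}$ at $c_{-(R_n-j)}$ dominates any loss coming from the (at most exponentially small, by hyperbolicity away from the critical point, or at worst polynomially controlled near it) contraction of the branch $f^{R_n-j}$. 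Concretely: if $|f^{R_n-j}(x) - c_0| < t e^{-\uepsilon R_n}$, then by the mean value theorem and the lower derivative bound there is a constant $C$ with $|x - c_{-(R_n-j)}| < C t e^{-\uepsilon R_n} \le t e^{-\uepsilon(R_n-j)}$ provided $e^{\uepsilon j} \ge C$, and the remaining small values of $j$ (where $e^{\uepsilon j} < C$) are handled by enlarging the implied constant in $D^t_{-i}$ or, since there are only boundedly many such $j$ per depth and the intervals nest, absorbed directly. Setting $i := R_n - j$ ranging over $0 \le i \le R_n - 1$ then yields $x \in D^t_{-i}$, which is exactly the desired containment $C^t_n \subset \bigcup_{i=0}^{R_n-1} D^t_{-i}$.

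The main obstacle I anticipate is making the derivative lower bound on the composite branch $f^{R_n-j}|_{I^n_j}$ genuinely uniform in $n$ and $j$: Theorem \ref{1d a priori} is stated for the return maps $f^{iR_n}$ and for $f^{iR_n}|_{I^n_1}$, not for arbitrary partial cycles $f^{R_n-j}$, so one must decompose $R_n - j$ appropriately (a partial orbit within one period, possibly straddling several sub-periods) and combine part i), part ii), and the bounded-type hypothesis $r_n < K$ to get a clean estimate. A secondary subtlety is the behavior of $f^{R_n-j}$ near $c_0$ when $j$ is close to $R_n$, where the branch passes close to the critical point and the derivative degenerates quadratically; here one uses that the target set $C^t_n(0)$ is a symmetric interval about $c_0$ and that the quadratic critical behavior, combined with the choice of $\uepsilon \ll \epsilon \ll 1$, still gives the required scale comparison (the factor $1/\bdelta$-type pinching is exactly what makes $e^{-\uepsilon R_n}$ pull back to something comfortably inside $e^{-\uepsilon(R_n-j)}$). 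Once these two points are dispatched, the rest is the routine mean-value-theorem bookkeeping sketched above.
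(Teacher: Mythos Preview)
Your outline matches the paper's approach: control $|C^t_n(i)|$ via a lower bound on $|(f^i)'|$ along the branch $I^n_{R_n-i}\to I^n_0$, obtained by decomposing $i$ across renormalization levels and applying Theorem~\ref{1d a priori}. The paper makes this precise via the mixed-radix expansion $i = s_{n-1}R_{n-1} + \cdots + s_0 R_0$ (with $0\le s_j < r_j$), applying Theorem~\ref{1d a priori}~i) once per level to obtain $|(f^i)'| \ge \nu^{n}$ on the branch and hence $|C^t_n(i)| < \nu^{-n}|C^t_n(0)|$.

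The gap in your plan is the quantitative bookkeeping. You hope for a \emph{uniform} derivative constant $C$ and flag this as the ``main obstacle'' without resolving it; in fact no uniform bound holds, and the correct loss is $\nu^{-n}$, growing with the depth $n$. What makes this harmless---and what your sketch does not isolate---is that the number of factors in the decomposition is the \emph{depth} $n$, not the time $i$: since $R_n \ge 2^n$, the loss $\nu^{-n}$ is sub-exponential in $R_n$ and is absorbed by the $e^{-\uepsilon R_n}$ in $|C^t_n(0)|$, giving $|C^t_n(i)| < \nu^{-n}\cdot 2t e^{-\uepsilon R_n} \le t e^{-\uepsilon i}$ (up to the $\uepsilon$-convention). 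Your fallback (``boundedly many remaining small $j$'') does not work, since with the true constant $C=\nu^{-n}$ the exceptional set has size $O(n)$, not $O(1)$. Your secondary worry about quadratic degeneration near $c_0$ is also misplaced: the intermediate pieces $I^n_{R_n-i+k}$ for $0<k<i$ never coincide with $I^n_0$, so the depth-$n$ critical point is not visited mid-orbit; passage through \emph{lower}-depth critical pieces is precisely what the one-factor-per-level decomposition accounts for.
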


\begin{proof}
Write
$$
i = s_{n-1}R_{n-1} + \ldots + s_0 R_0,
$$
where $0 \leq s_j < r_j$ for $0\leq j <n$. By \thmref{1d a priori} i), for any $J \subset I^{j+1}_{R_{j+1}}$, we have
$$
\left|f^{-s_jR_j}|_{I^{j+1}_{R_{j+1}}}(J)\right| < \nu^{-1} |J|.
$$
It follows that for
$$
C^t_n(i) :=  (f^i|_{I^n_{R_n-i}})^{-1}(C^t_n(0)),
$$
we have
$$
|C^t_n(i)| < \nu^{-n}|C^t_n(0)|.
$$
\end{proof}

For $x \in \Lambda \setminus \{c_1\}$, define $\depth_v(x)$ as the largest integer $d \geq 0$ such that
$
x \in I^d_1 \setminus I^{d+1}_1.
$

\begin{proof}[Proof of \thmref{reg unicrit 1d}]
Let
$$
x_0 \in \Lambda \setminus \bigcup_{i=0}^{N-1} D^t_{-i}
$$
for some $N \in \bbN$. For $0 \leq n \leq N$, let $n = m + k+1$, where
$$
d_c := \depth_c(x_m) = \max_{0 \leq i \leq n} \depth_c(x_i).
$$
Let
$$
d_v := \depth_v(x_{m+1}) = \max_{0\leq i \leq n} \depth_v(x_i).
$$
Decompose $m$ as
$$
m = s_0 R_0 + \ldots + s_{d_c-1}R_{d_c-1},
$$
with $0\leq s_l < r_l$ for $0\leq l <d_c$. Similarly, we have
$$
k = t_0 R_0 + \ldots + t_{d_v-1}R_{d_v-1},
$$
with $0\leq t_l < r_l$ for $0\leq l <d_v$. 

By \thmref{1d a priori}, we have
$$
|(f^m)'(x_0)| > \nu^{d_c}
\matsp{and}
|(f^k)'(x_{m+1})| > \nu^{d_v}.
$$
Moreover,
$$
|f'(x_m)| \asymp |x_m| > te^{-\bepsilon m}
$$
by \lemref{approach}. Since
$
d_c, d_v < \log n,
$
the result follows.
\end{proof}


\section{Pliss Lemma}

Let $\alpha_1 < \alpha_2 < \alpha_3$. For some $N \in \bbN\cup\{ \infty\}$, consider a sequence $\{a_n\}_{n=1}^N \subset (\alpha_1, \infty)$. Suppose either:
\begin{equation}\label{pliss1}
\frac{1}{i}(a_1 + a_2 + \ldots + a_i) \leq \alpha_2
\matsp{for}
1\leq i \leq N,
\end{equation}
or more weakly (assuming $N = \infty$):
\begin{equation}\label{pliss2}
\limsup_{i \to \infty}\frac{1}{i}(a_1 + a_2 + \ldots + a_i) \leq \alpha_2.
\end{equation}
An integer $1 \leq k \leq N$ is a {\it direction-preserving Pliss moment} if
$$
\frac{1}{i+1}(a_k +a_{k+1} \ldots + a_{k + i}) \leq \alpha_3
\matsp{for all}
0 \leq i \leq N - k.
$$
Similarly, $1 \leq m \leq N$ is a {\it direction-reversing Pliss moment} if either $m=1$, or $m > 1$ and
$$
\frac{1}{i}(a_{m -i} + \ldots +a_{m-2}+ a_{m-1}) \leq \alpha_3
\matsp{for all}
1 \leq i < m.
$$
Lastly, $1\leq n \leq N$ is an {\it absolute Pliss moment} if it is both a direction-preserving and direction-reversing Pliss moment.

Let $\{k_i\}, \{m_i\}, \{n_i\} \subset \{1, \ldots, N\}$ be the increasing sequences of all direction-preserving, direction-reversing and absolute Pliss moments respectively.

\begin{lem}\label{plisslem}
Let $\{j_i\} = \{k_i\}$ or $\{m_i\}$. If \eqref{pliss1} holds, then we have
$$
\frac{i}{j_i} \geq \frac{\alpha_2 - \alpha_3}{\alpha_1 - \alpha_3}
\matsp{for all}
i \geq 1.
$$
If \eqref{pliss2} holds, then we have
$$
\liminf_{i\to \infty}\frac{i}{j_i} \geq \frac{\alpha_2 - \alpha_3}{\alpha_1 - \alpha_3}.
$$
\end{lem}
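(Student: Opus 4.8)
The statement is the classical Pliss lemma, phrased separately for direction-preserving moments $\{k_i\}$ and direction-reversing moments $\{m_i\}$. By symmetry (reversing the order of the sequence), it suffices to treat one case carefully, say the direction-preserving case $\{j_i\} = \{k_i\}$; the direction-reversing case follows by the identical argument applied to the reversed sequence $\tilde a_n := a_{N+1-n}$, noting that \eqref{pliss1}/\eqref{pliss2} for $\{a_n\}$ gives the corresponding averaged bound for $\{\tilde a_n\}$ (in the finite case one uses that $\frac1i\sum_{l=1}^i a_l \le \alpha_2$ for \emph{all} $i$, which is symmetric under reversal only through a telescoping argument — see below). So the real content is a single one-sided combinatorial estimate.

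The plan is to run the standard ``large-average intervals'' argument on the partial sums. Set $S_0 := 0$ and $S_n := a_1 + \cdots + a_n$, and define the auxiliary sequence $b_n := \alpha_3 - a_n$, with partial sums $T_n := n\alpha_3 - S_n$. First I would observe that $k$ is a direction-preserving Pliss moment exactly when $T_{k+i} - T_{k-1} = \sum_{l=k}^{k+i}(\alpha_3 - a_l) \ge 0$ for all $0 \le i \le N-k$, i.e. when $T_{k-1}$ is a strict running minimum of $\{T_m\}_{m \ge k-1}$ in the sense $T_{k-1} \le T_m$ for all $m \ge k-1$ (with the convention matching the $\le \alpha_3$ inequalities). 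Next I would fix $i$ and look at the index $j := j_i = k_i$. The key claim is a counting bound: among the indices $0, 1, \dots, j$, those $m$ for which $T_{m}$ is \emph{not} a running minimum from $m$ onward are precisely the non-Pliss positions, and a standard ``rising sun / chaining'' lemma shows that if $m$ is not such a minimum then it lies inside a maximal block $[p,q]$ on which $T_q < T_{p-1}$, i.e. $\sum_{l=p}^q a_l > (q-p+1)\alpha_3$. Summing over the disjoint non-Pliss blocks covering $\{1,\dots,j\}\setminus\{k_1,\dots,k_i\}$ and using the trivial lower bound $a_l > \alpha_1$ on those blocks, while the Pliss positions contribute at least $i\alpha_1$ trivially (or are handled via the global average), I get
$$
j\alpha_2 \;\ge\; S_j \;=\; \sum_{\text{Pliss } l \le j} a_l + \sum_{\text{non-Pliss blocks}} \sum_{l \in \text{block}} a_l \;>\; i\,\alpha_1 + (j - i)\,\alpha_3,
$$
using \eqref{pliss1} on the left. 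Rearranging, $j(\alpha_2 - \alpha_3) > i(\alpha_1 - \alpha_3)$, and since $\alpha_1 - \alpha_3 < 0$ this gives $\dfrac{i}{j} > \dfrac{\alpha_2-\alpha_3}{\alpha_1-\alpha_3}$, which is the desired inequality (the weak form with $\ge$ is then immediate, and suffices). For the hypothesis \eqref{pliss2}, I would instead fix a large $i$, choose $j = j_i$, and apply \eqref{pliss2} with a vanishing error: for any $\epsilon > 0$ and $j$ large enough, $S_j \le j(\alpha_2 + \epsilon)$, run the same block decomposition, conclude $\frac{i}{j} \ge \frac{\alpha_2 + \epsilon - \alpha_3}{\alpha_1 - \alpha_3}$ for all large $i$, and let $\epsilon \to 0$ in the liminf.

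\textbf{Main obstacle.} The delicate point is the combinatorial bookkeeping in the block decomposition: verifying that every index in $\{1,\dots,j\}$ is either a direction-preserving Pliss moment or belongs to exactly one maximal ``deficit block'' $[p,q]$ with $\sum_{l=p}^q a_l > (q-p+1)\alpha_3$, and that these blocks are pairwise disjoint and fit inside $\{1,\dots,j_i\}$ without overshooting past $j_i$. This is the content of the classical rising-sun lemma applied to $T_m$, but one must be careful about the endpoint conventions (whether $j_i$ itself is an endpoint of a block or a Pliss moment — by definition it is a Pliss moment) and about the fact that deficit blocks terminate at or before $j_i$ precisely because $j_i$ is a running minimum of $T$. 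A secondary, purely bureaucratic obstacle is the reduction of the direction-reversing case to the direction-preserving case under hypothesis \eqref{pliss1}: reversing the sequence turns the family of prefix-average bounds $\{\frac1i S_i \le \alpha_2\}_i$ into suffix-average bounds, and one checks by a short telescoping computation that these are in fact equivalent for the purpose of running the block argument (alternatively, one just re-derives the block estimate directly for the reversed sequence, which is mechanical). I expect the rising-sun chaining step to be where all the actual care is needed; everything else is arithmetic.
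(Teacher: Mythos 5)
The paper states this lemma without proof, so there is no argument of its own to compare against; what follows concerns the internal soundness of your sketch. Your treatment of the direction-preserving case $\{j_i\}=\{k_i\}$ is correct: identifying Pliss moments with running minima of $T_n := n\alpha_3 - S_n$, chaining the non-Pliss indices of $\{1,\dots,j_i\}$ into disjoint blocks $[p,q]$ with $\sum_{l=p}^q a_l > (q-p+1)\alpha_3$, checking that no block crosses the running minimum at $j_i-1$, and concluding from $j\alpha_2 \ge S_j > i\alpha_1 + (j-i)\alpha_3$ with $\alpha_1-\alpha_3<0$ all go through, and inserting a small $\epsilon$ handles \eqref{pliss2}.

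The gap is the direction-reversing case: the reduction to $\tilde a_n := a_{N+1-n}$ is unsound, not merely bureaucratic, for two independent reasons. First, \eqref{pliss1} for $\{a_n\}$ does not give \eqref{pliss1} for $\{\tilde a_n\}$: take $\alpha_1=-1$, $\alpha_2=0$, $\alpha_3=1$, $N=2$, $a_1=-1/2$, $a_2=1/2$; then $S_1, S_2 \le 0$, yet $\tilde a_1 = a_2 = 1/2 > \alpha_2$, so no telescoping identity converts prefix-average bounds into suffix-average bounds. Second, $m$ is direction-reversing Pliss for $a$ iff $N+2-m$ is direction-preserving Pliss for $\tilde a$, so the correspondence reverses the enumeration; the conclusion for $\tilde a$ controls $N+2-m_{p+1-i}$ rather than $m_i$ and does not yield the density statement you want. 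A correct direct argument exists, but the block structure is genuinely shifted, not a mirror image: $m$ is direction-reversing Pliss iff $T_{m-1}$ is a running \emph{maximum} of $T_0,\dots,T_{m-1}$, and between consecutive such moments $m_j < m_{j+1}$ one shows by induction that $T_l < T_{m_j-1}$ for $m_j-1 < l < m_{j+1}-1$, hence
$$
\sum_{l=m_j}^{m_{j+1}-2} a_l > (m_{j+1}-1-m_j)\,\alpha_3,
$$
an estimate over an index set that contains the Pliss moment $m_j$ and omits the non-Pliss index $m_{j+1}-1$, i.e.\ is displaced by one from the non-Pliss set. Writing the block sum over $\{m_j,\dots,m_{j+1}-1\}$ as $a_{m_{j+1}-1} + \sum_{l=m_j}^{m_{j+1}-2}a_l > \alpha_1 + (m_{j+1}-1-m_j)\alpha_3$, summing over $j=1,\dots,i-1$, adding $a_{m_i}>\alpha_1$, and using $m_1=1$ gives $S_{m_i} > i\alpha_1 + (m_i-i)\alpha_3$, after which the same algebra closes the proof. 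You should replace the reversal step with this direct argument.
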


\begin{lem}\label{abspliss}
Let $\{n_i\}\subset \{1, \ldots, N\}$ be the increasing sequence of all absolute Pliss moments. If \eqref{pliss1} holds, then
$$
\frac{i+2}{n_i} \geq \frac{2\alpha_2-\alpha_1 - \alpha_3}{\alpha_1 - \alpha_3}.
$$
If \eqref{pliss2} holds, then
$$
\liminf_{i\to \infty}\frac{i}{n_i} \geq \frac{2\alpha_2-\alpha_1 - \alpha_3}{\alpha_1 - \alpha_3}.
$$
\end{lem}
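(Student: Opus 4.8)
\textbf{Proof proposal for Lemma~\ref{abspliss}.}

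The plan is to reduce the two-sided statement about absolute Pliss moments to the one-sided statement of Lemma~\ref{plisslem} by a combinatorial counting argument. First I would set $\beta := \tfrac{\alpha_2-\alpha_3}{\alpha_1-\alpha_3}\in(0,1)$, so that Lemma~\ref{plisslem} tells us that among $\{1,\dots,N\}$ the direction-preserving Pliss moments $\{k_i\}$ satisfy $i/k_i\ge\beta$ (resp.\ $\liminf i/k_i\ge\beta$ under \eqref{pliss2}), and likewise the direction-reversing Pliss moments $\{m_i\}$ satisfy $i/m_i\ge\beta$. An absolute Pliss moment is by definition a point lying in both sets, so for each $P\le N$ I would count: the number of integers in $\{1,\dots,P\}$ that are direction-preserving is at least $\beta P$, the number that are direction-reversing is at least $\beta P$, hence by inclusion–exclusion the number $\#\{n_i\le P\}$ that are both is at least $\beta P + \beta P - P = (2\beta-1)P$. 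Plugging $\beta = \tfrac{\alpha_2-\alpha_3}{\alpha_1-\alpha_3}$ and simplifying gives $2\beta - 1 = \tfrac{2\alpha_2-\alpha_3-(\alpha_1-\alpha_3)}{\alpha_1-\alpha_3}=\tfrac{2\alpha_2-\alpha_1-\alpha_3}{\alpha_1-\alpha_3}$, which is exactly the claimed density.

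More carefully, under hypothesis \eqref{pliss1} I would work with the finite counting inequality. Write $K(P)$, $M(P)$, $A(P)$ for the number of direction-preserving, direction-reversing, and absolute Pliss moments in $\{1,\dots,P\}$. From Lemma~\ref{plisslem} applied to $\{k_i\}$, the $i$-th direction-preserving moment satisfies $k_i \le i/\beta$, so taking $i = K(P)$ and using $k_{K(P)} \le P < k_{K(P)+1} \le (K(P)+1)/\beta$ we get $K(P) > \beta P - 1$, and similarly $M(P) > \beta P - 1$. Then $A(P) \ge K(P) + M(P) - P > (2\beta-1)P - 2$. Taking $P = n_i$ (so $A(n_i) = i$) yields $i > (2\beta-1)n_i - 2$, i.e.\ $i + 2 > (2\beta-1)n_i$, hence $\tfrac{i+2}{n_i} \ge 2\beta - 1 = \tfrac{2\alpha_2-\alpha_1-\alpha_3}{\alpha_1-\alpha_3}$ (handling the non-strict inequality by the usual limiting/closedness argument, or by noting one can always use $\ge$ in place of $>$ after accounting for the off-by-one in the indexing). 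For hypothesis \eqref{pliss2}, I would instead use the $\liminf$ forms: for every $\eta>0$ and all large $P$, $K(P)\ge(\beta-\eta)P$ and $M(P)\ge(\beta-\eta)P$, so $A(P)\ge(2\beta-2\eta-1)P$; taking $P=n_i$ and letting $i\to\infty$ then $\eta\to0$ gives $\liminf_i i/n_i \ge 2\beta-1$, as required.

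The main obstacle I anticipate is purely bookkeeping rather than conceptual: getting the off-by-one constants in the finite version exactly right, specifically verifying that the ``$+2$'' in $\tfrac{i+2}{n_i}$ is the correct additive correction coming from the two separate off-by-ones in bounding $K(P)$ and $M(P)$ from below via Lemma~\ref{plisslem} (which itself is stated with $\tfrac{i}{j_i}\ge\beta$ rather than $\tfrac{i+1}{j_i}\ge\beta$). I would double-check the edge cases $m=1$ (always a direction-reversing moment) and the smallest direction-preserving moment, and confirm that the inclusion–exclusion bound $A(P)\ge K(P)+M(P)-P$ is valid since both the direction-preserving and direction-reversing moments are subsets of $\{1,\dots,P\}$. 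One should also note that absolute Pliss moments are precisely $\{k_i\}\cap\{m_i\}$ by definition, so no further structural input is needed. The rest follows by the same algebraic simplification of $2\beta-1$ already indicated.
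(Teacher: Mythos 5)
Your inclusion--exclusion strategy is the right one and the algebra $2\beta-1=\tfrac{2\alpha_2-\alpha_1-\alpha_3}{\alpha_1-\alpha_3}$ checks out; the only weak spot is the way you derive the intermediate count $K(P)>\beta P-1$ for an \emph{arbitrary} $P$. That step uses $P<k_{K(P)+1}\le(K(P)+1)/\beta$, which silently assumes the $(K(P)+1)$-st direction-preserving Pliss moment exists; if $P$ lies at or beyond the last such moment, the chain breaks. In the case you actually need, $P=n_i$, this can be avoided entirely and gives a cleaner and slightly sharper bound: since $n_i$ is an absolute Pliss moment, it is in particular direction-preserving, so $n_i=k_{K(n_i)}$, and \lemref{plisslem} applied with $j=K(n_i)$ gives directly $K(n_i)/n_i\ge\beta$, i.e.\ $K(n_i)\ge\beta n_i$ (no ``$-1$'' loss); likewise $M(n_i)\ge\beta n_i$. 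Inclusion--exclusion then yields $i=A(n_i)\ge K(n_i)+M(n_i)-n_i\ge(2\beta-1)n_i$, so in fact $\tfrac{i}{n_i}\ge\tfrac{2\alpha_2-\alpha_1-\alpha_3}{\alpha_1-\alpha_3}$ without the ``$+2$'' --- the additive cushion in the lemma statement is harmless slack you need not account for. For the $\liminf$ case under \eqref{pliss2} your argument is fine as written: there are infinitely many Pliss moments of each type (otherwise $\liminf_{i\to\infty}i/k_i\ge\beta$ would be vacuous), so for $P$ large the successor $k_{K(P)+1}$ always exists, and the $\eta$-argument goes through; alternatively the same ``$n_i=k_{K(n_i)}$'' observation avoids the issue there too. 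The paper does not include a proof of this lemma, so there is nothing to compare against, but your route is the natural and correct one.
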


\begin{rem}
By replacing $\alpha_1, \alpha_2, \alpha_3$ and $a_n$'s by their negative values, we can obtain the same lower density bounds as in \lemref{plisslem} and \lemref{abspliss} for sequences that have lower bounds on their average sums rather than upper bounds.
\end{rem}


\bigskip

\begin{tabular}{l l l}
\emph{Sylvain Crovisier} &&
\emph{Mikhail Lyubich}\\

Laboratoire de Math\'ematiques d'Orsay &&
Institute for Mathematical Sciences\\

CNRS - Univ. Paris-Saclay &&
Stony Brook University\\

Orsay, France &&
Stony Brook, NY, USA\\

&&\\

\emph{Enrique Pujals} &&
\emph{Jonguk Yang}\\

Graduate Center - CUNY &&
Institut für Mathematik\\

New York, USA &&
Universität Zürich\\

&& Zürich, Switzerland

\end{tabular}

\end{document}